\newtheoremstyle{newremark}
  {5pt}
  {5pt}
  {\rmfamily}
  {}
  {\rmfamily\bf}
  {.}
  {.5em}
  {}
\newtheorem{theorem}{Theorem}
\newtheorem{lemma}[theorem]{Lemma}
\newtheorem{corollary}[theorem]{Corollary}
\newtheorem{proposition}[theorem]{Proposition}
\theoremstyle{newremark}
\newtheorem{remark}[theorem]{Remark}
\newtheorem{definition}[theorem]{Definition}
\newtheorem*{definition*}{Definition} 
\newtheorem*{notations*}{Notations}
\numberwithin{theorem}{section}
\numberwithin{equation}{section}
\newcommand{\N}{\mathbb{N}} 
\newcommand{\R}{\mathbb{R}} 
\def\XXint#1#2#3{{%
\setbox0=\hbox{$#1{#2#3}{\int}$}
\vcenter{\hbox{$#2#3$}}\kern-.5\wd0}}
\renewcommand{\leq}{\leqslant}
\renewcommand{\geq}{\geqslant}
\renewcommand{\subset}{\subseteq}
\newcommand{\LL}{\mathop{\hbox{\vrule height 6pt width .5pt depth 0pt
\vrule height .5pt width 3pt depth 0pt}}\nolimits}
\newcommand{\res}{\mathop{\hbox{\vrule height 7pt width .5pt depth 0pt
\vrule height .5pt width 6pt depth 0pt}}\nolimits}
\newcommand{\Om}{\Omega}
\newcommand{\eps}{\varepsilon}
\newcommand{\e}{{\rm e}}
\newcommand{\de}{{\rm d}}
\begin{document}


\title[\bf Asymptotics for the fractional Allen-Cahn equation]{Asymptotics for the fractional Allen-Cahn equation\\  
and stationary nonlocal minimal surfaces}

\author{Vincent Millot}
\address{Vincent Millot\\ Universit\'e Paris Diderot\\ 
 Lab. J.L. Lions (CNRS UMR 7598)\\ Paris, France}
\email{millot@ljll.univ-paris-diderot.fr}
\author{Yannick Sire}
\address{Yannick Sire\\ Johns Hopkins University \\  
Department of Mathematics\\ Baltimore, USA}
\email{sire@math.jhu.edu}
\author{Kelei Wang}
\address{Kelei Wang\\ School of Mathematics and Statistics\\
Wuhan University, China}
\email{wangkelei@whu.edu.cn}



\begin{abstract}

This article is mainly devoted to the asymptotic analysis of a fractional version of the (elliptic) Allen-Cahn equation in a bounded domain $\Omega\subset\R^n$, with or without a source term in the right hand side of the equation (commonly called chemical potential). Compare to the usual Allen-Cahn equation, the Laplace operator is here replaced by the fractional Laplacian $(-\Delta)^s$ with $s\in(0,1/2)$, as defined in Fourier space. In the singular limit $\eps\to 0$, we show that arbitrary solutions with uniformly bounded energy converge both in the energetic and geometric sense to surfaces of prescribed nonlocal mean curvature in $\Omega$ whenever the chemical potential remains  bounded in suitable Sobolev spaces. With no chemical potential, the notion of surface of prescribed nonlocal mean curvature reduces to the stationary version of the nonlocal minimal surfaces  introduced by L.A.~Caffarelli, J.M. Roquejoffre, and O.~Savin~\cite{CRS}.  Under the same Sobolev regularity assumption on the chemical potential, we also prove that  surfaces of prescribed nonlocal mean curvature have a Minkowski codimension equal to one, and that the associated sets have a locally finite fractional $2s^\prime$-perimeter  in $\Omega$ for every $s^\prime\in(0,1/2)$.  
\end{abstract}

\maketitle


\tableofcontents


\section{Introduction}

In the van der Waals-Cahn-Hilliard theory of phase transitions, two-phase systems are driven by energy functionals of the form 
\begin{equation}\label{EN0}
\int_{\Om} \eps|\nabla u|^2+\frac{1}{\eps}W(u) \,\de x \,,\qquad\eps\in(0,1)\,,
\end{equation}
where $u:\Om\subset\R^n\to\R$ is a normalized density distribution of the two phases, and the (smooth) potential $W:\R\to[0,\infty)$ has  exactly two global minima at $\pm 1$ with  $W(\pm1)=0$  (see e.g. \cite{Gurt}). Here and after  $\Omega$ denotes a smooth  and bounded open set in dimension $n\geq 2$. Critical points satisfy the so-called elliptic Allen-Cahn (or scalar Ginzburg-Landau) equation
\begin{equation}\label{AC0}
-\Delta u_\eps+\frac{1}{\eps^{2}}W^\prime(u_\eps)=0\quad\text{in $\Om$}\,. 
\end{equation}
When $\eps$ is small, a control on the potential  implies  that $u_\eps\simeq\pm 1$ away from a region whose volume is of order $\eps$. Formally, the transition layer from the phase $-1$ to the phase $+1$ has a characteristic width of order~$\eps$. It should take place along an hypersurface which is expected to be a critical point of the area functional, i.e., a minimal surface. More precisely, the region $\{u_\eps\simeq 1\}$, which is essentially delimited  by this hypersurface and the container $\Om$, should be a  stationary set in $\Om$ of the perimeter functional, at least as $\eps\to 0$. 

For energy minimizing solutions (under their own boundary condition), this picture has been  justified first in \cite{ModMort} through one of the first    examples of $\Gamma$-convergence. The  result shows that  if the energy is equibounded, then $u_\eps\to u_*$ in $L^1(\Om)$ as $\eps\to0$ for some function $u_*\in BV(\Om;\{\pm1\})$ (up to subsequences). The set $\{u_*=1\}$ minimizes (locally) its perimeter in $\Om$, and up to a multiplicative constant, the energy converges to the relative perimeter of $\{u_*=1\}$ in $\Omega$. 
The analogous analysis concerning  global minimization of the energy under a volume constraint has been addressed in \cite{Mod,Ster}. 

The case of general critical points has been treated more recently in \cite{HT}. It presents a slightly different feature. Namely, if the energy is equibounded, then the energy density converges in the sense of measures as $\eps\to0$ to a stationary integral $(n-1)$-varifold, i.e., a generalized minimal hypersurface with integer multiplicity.  The multiplicity of the limiting hypersurface comes from an eventual folding of the diffuse interface $\{|u_\eps|\lesssim 1/2\}$ as $\eps\to0$. In such a case, the interface between the two regions $\{u_*=1\}$ and $\{u_*=-1\}$ can be strictly smaller than the support of the  limiting varifold. In fact, the boundary  of the region $\{u_*=1\}$ corresponds to the set of points where the varifold has odd multiplicity. In particular, the perimeter of $\{u_*=1\}$ can be strictly smaller than the the limit of the energy. This energy loss effect is in strong analogy with the lack of strong compactness as $\eps\to 0$ of solutions of the (vectorial) Ginzburg-Landau system with a potential well $\{W=0\}$ given by a smooth and compact manifold $\mathcal{M}\subset\R^d$, see \cite{LW1,LW2}. 
\vskip3pt

In the last few years, there have been many studies on nonlocal or fractional  versions of equation \eqref{AC0} and energy \eqref{EN0} (see e.g. \cite{ALBBEL,ABScr,ABS,CC,CC2,CS1,CS2,CSM,PSV,SVold,SV1,SV2,SV}). Many of them are motivated by physical problems such as  
stochastic Ising models from statistical mechanics, or the Peirls-Nabarro model for dislocations in crystals \cite{GM,Im,ImSoug}. In this article, we consider one of the simplest fractional version of equation \eqref{AC0} where the Laplace operator is replaced by the fractional Laplacian $(-\Delta)^s$, i.e., the Fourier multiplier  of symbol $(2\pi|\xi|)^{2s}$, with exponent $s\in (0,1/2)$. In details, we are interested in the asymptotic behavior as $\varepsilon\to 0$ of weak solutions $v_\varepsilon:\R^n\to\R$ of the fractional Allen-Cahn equation 
\begin{equation}\label{GLIntro}
 (-\Delta)^{s} v_\varepsilon+\frac{1}{\varepsilon^{2s}}W^\prime(v_\varepsilon) =0\quad \text{in $\Omega$}\,,
 \end{equation}
 subject to an exterior Dirichlet condition of the form 
\begin{equation}\label{Dircondintro}
v_\eps= g_\eps \qquad\text{on $\R^n\setminus\Omega$}\,,
\end{equation}
where $g_\eps:\R^n\to\R$ is a given smooth and bounded function. 
For $s\in(0,1)$, the action of the integro-differential operator $ (-\Delta)^{s} $ on a smooth bounded function $v:\R^n\to\R$ is defined   by 
\begin{equation}\label{formpvfraclap}
 (-\Delta)^{s}  v(x):={\rm p.v.} \left(\gamma_{n,s}\int_{\mathbb{R}^n}\frac{v(x)-v(y)}{|x-y|^{n+2s}}\,\de y\right) \quad \text{with } \gamma_{n,s}:=s2^{2s}\pi^{-\frac{n}{2}}\frac{\Gamma\big(\frac{n+2s}{2}\big)}{\Gamma(1-s)} \,,
 \end{equation}
where the notation ${\rm p.v.}$ means that the integral  is taken in the {\sl Cauchy principal value} sense. 
In terms of distributions, the action of $ (-\Delta)^{s}  v$ on a test function $\varphi\in\mathscr{D}(\Omega)$ is defined by
\begin{multline}\label{tardtard}
\big\langle   (-\Delta)^{s}  v,\varphi\big\rangle_\Omega:= \frac{\gamma_{n,s}}{2} \iint_{\Omega\times\Omega}\frac{\big(v(x)-v(y)\big)(\varphi(x)-\varphi(y)\big)}{|x-y|^{n+2s}}\,\de x\de y\\
+\gamma_{n,s} \iint_{\Omega\times(\R^n\setminus\Omega)}\frac{\big(v(x)-v(y)\big)\varphi(x)}{|x-y|^{n+2s}}\,\de x\de y\,.
 \end{multline}
This formula defines indeed a distribution on $\Omega$ whenever $v\in L^2_{\rm loc}(\R^n)$ satisfies
\begin{multline}\label{defenergE}
\mathcal{E}(v,\Omega):=\frac{\gamma_{n,s}}{4}  \iint_{\Omega\times\Omega}\frac{|v(x)-v(y)|^2}{|x-y|^{n+2s}}\,\de x\de y\\
+\frac{\gamma_{n,s}}{2} \iint_{\Omega\times(\R^n\setminus\Omega)} \frac{|v(x)-v(y)|^2}{|x-y|^{n+2s}}\,\de x\de y<\infty\,.
\end{multline}
More precisely, if \eqref{defenergE} holds, then   $ (-\Delta)^{s} v$ belongs to $H^{-s}(\Omega)$. To include the Dirichlet condition \eqref{Dircondintro}, one considers the  restricted class of functions given by the affine space $H^{s}_{g_\eps}(\Omega):=g_\eps+H_{00}^{s}(\Omega)$. Since $\mathcal{E}(\cdot,\Omega)$ is exactly the quadratic form induced by \eqref{tardtard}, the functional $\mathcal{E}(\cdot,\Omega)$  can be  thought as {\sl fractional Dirichlet energy} in~$\Omega$ associated to $ (-\Delta)^{s} $. Integrating  the potential in \eqref{GLIntro}, we obtain the {\sl fractional Allen-Cahn energy} in $\Omega$ associated to equation \eqref{GLIntro}, i.e., 
\begin{equation}\label{defFGLenerg}
\mathcal{E}_\varepsilon(v,\Omega):=\mathcal{E}(v,\Omega)+ \frac{1}{\varepsilon^{2s}}\int_\Omega W(v)\,\de x\,.
\end{equation}
In this way, we define weak solutions of \eqref{GLIntro} as critical points  of  $\mathcal{E}_\varepsilon(\cdot,\Omega)$  
with respect to perturbations supported in $\Omega$. 

Concerning minimizers of  $\mathcal{E}_\varepsilon(\cdot,\Omega)$  over $H^s_{g_\eps}(\Omega)$, their asymptotic behavior as $\eps\to 0$ has been investigated quite recently in \cite{SV1} through a $\Gamma$-convergence analysis. 
The  result reveals a dichotomy between the two cases $s\geq 1/2$ and $s<1/2$. In the case $s\geq 1/2$, the normalized energies
$$\widetilde{\mathcal{E}}_\varepsilon(\cdot,\Om):=\begin{cases}
\eps^{2s-1}\mathcal{E}_\varepsilon(\cdot,\Om) & \text{if $s\in(1/2,1)$}\,,\\
|\ln \eps|^{-1} \mathcal{E}_\varepsilon(\cdot,\Om) & \text{if $s=1/2$}\,,
\end{cases}$$
$\Gamma\big(L^1(\Omega)\big)$-converge as $\eps\to 0$ to the functional $\widetilde{\mathcal{E}}_0(\cdot,\Om)$ defined on $BV(\Om;\{\pm 1\})$ by 
$$\widetilde{\mathcal{E}}_0(v,\Om):=\sigma\, {\rm Per}\big(\{v=1\},\Om\big)\,,$$
where $\sigma=\sigma(W,n,s)$ is a positive constant, and ${\rm Per}(E,\Om)$ denotes the distributional (relative) perimeter of the set $E$ in $\Om$. In other words, for $s\geq 1/2$, fractional Allen-Cahn  energies (and thus minimizers) behave essentially as in the classical case, and area-minimizing  hypersurfaces arise in the limit $\eps\to0$. 
For $s\in(0,1/2)$, the variational convergence of $\mathcal{E}_\varepsilon(\cdot,\Om)$ appears to be much simpler since $H^s$-regularity does not exclude (all) characteristic functions. In particular, there is no need in this case to normalize  $\mathcal{E}_\varepsilon(\cdot,\Om)$. 
Assuming that $g_\eps\to g$ in $L^1_{\rm loc}(\R^n\setminus\Omega)$ for some function $g$ satisfying $|g|=1$ a.e. in $\R^n\setminus\Omega$, the functionals $\mathcal{E}_\varepsilon(\cdot,\Om)$ (restricted to $H^{s}_{g_\eps}(\Omega)$) converge as $\eps\to 0$ both in the variational and pointwise sense to 
$$ \mathcal{E}_0(v,\Om):=\begin{cases}
\mathcal{E}(v,\Omega) & \text{if $v\in H^{s}_g(\Om;\{\pm1\})$}\,,\\
+\infty & \text{otherwise}\,.
\end{cases}$$
Now it is worth noting that 
\begin{equation}\label{idDirPersharpint}
 \mathcal{E}(v,\Omega)  = 2\gamma_{n,s} P_{2s}\big(\{v=1\},\Omega\big)\quad \forall v\in H^{s}_g(\Om;\{\pm1\})\,,
 \end{equation}
where $P_{2s}(E,\Omega)$ is the so-called {\sl fractional $2s$-perimeter} in $\Omega$ of a set $E\subset\R^n$, i.e., 
\begin{multline*}
P_{2s}(E,\Omega):=\int_{E\cap\Omega}\int_{E^c\cap\Omega}\frac{\de x\de y}{|x-y|^{n+2s}}+\int_{E\cap\Omega}\int_{E^c\setminus\Omega}\frac{\de x\de y}{|x-y|^{n+2s}}\\
+\int_{E\setminus\Omega}\int_{E^c\cap\Omega}\frac{\de x\de y}{|x-y|^{n+2s}}\,.
\end{multline*}
As a consequence of this $\Gamma$-convergence result, a sequence $\{v_\eps\}$ of minimizing solutions  of  \eqref{GLIntro}-\eqref{Dircondintro} with $s\in(0,1/2)$ converges as $\eps\to 0$ (up to subsequences) to some function 
$v_*\in H^{s}_g(\Om)$ of the form $v_*=\chi_{E_*} -\chi_{\R^n\setminus E_*}$, and the limiting set $E_*\subset \R^n$ is minimizing its $2s$-perimeter in $\Om$, i.e., 
\begin{equation}\label{defminimizingnonlocminsurf}
P_{2s}(E_*,\Om)\leq P_{2s}(F,\Om) \qquad \forall F\subset\R^n\,,\; F\setminus\Omega=E_*\setminus\Om\,.
\end{equation}
Sets satisfying the minimality condition \eqref{defminimizingnonlocminsurf} have been introduced  in \cite{CRS}. Their boundary $\partial E_*\cap \Omega$ are referred to as (minimizing) {\sl nonlocal ($2s$-)minimal surfaces} in~$\Omega$.  By the minimality condition \eqref{defminimizingnonlocminsurf}, the first inner variation of the $2s$-perimeter vanishes at $E_*$, i.e.,   
\begin{equation}\label{defstatnonlocminsurf}
\delta P_{2s}(E_*,\Om)[X]:=\left[\frac{\de}{\de t}  P_{2s}\big(\phi_t(E_*) , \Omega\big)\right]_{t=0}=0 
\end{equation}
 for any vector field $X\in C^1(\R^n;\R^n)$ compactly supported in $\Omega$, where $\{\phi_t\}_{t\in\R}$ denotes the flow generated by $X$. 
If the boundary $\partial E\cap \Omega$ of a set $E\subset \R^n$ is smooth enough (e.g. a $C^{2}$-hypersurface), the first variation of the $2s$-perimeter at $E$ can be computed explicitly (see e.g. \cite[Section 6]{FFMMM}), and it gives 
 \begin{equation}\label{smoothfirstcarper}
 \delta P_{2s}(E,\Omega)[X]= \int_{\partial E \cap \Omega} \mathrm{H}^{(2s)}_{\partial E}(x) \,X\cdot \nu_E \,\de\mathcal{H}^{n-1}\,,
 \end{equation}
 where $\nu_E$ denotes the unit exterior normal field on $\partial E$, and $\mathrm{H}^{(2s)}_{\partial E}$ is the so-called {\sl nonlocal} (or fractional) {\sl ($2s$-)mean curvature} of $\partial E$, defined by
$$ \mathrm{H}^{(2s)}_{\partial E}(x):={\rm p.v.}\left(\int_{\mathbb{R}^n}\frac{\chi_{\R^n\setminus E}(y)-\chi_E(y)}{|x-y|^{n+2s}}\,\de y\right)\,,\quad x\in\partial E\,.$$
(See \cite{AbaVal} for its geometric interpretation.) Therefore, a set $E_*$ whose boundary is a minimizing nonlocal  $2s$-minimal surface in $\Omega$ (i.e., such that \eqref{defminimizingnonlocminsurf} holds) satisfies in the weak sense  the Euler-Lagrange equation 
\begin{equation}\label{zeromeancurv}
\mathrm{H}^{(2s)}_{\partial E_*} = 0\quad \text{on $\partial E_*\cap\Omega$}\,.
\end{equation}
The weak sense here being precisely relation \eqref{defstatnonlocminsurf}. It has been proved in \cite{CRS} that minimizing nonlocal $2s$-minimal surfaces also satisfies \eqref{zeromeancurv} in a suitable viscosity sense.  This is one of the key ingredient in the regularity theory of \cite{CRS}. It  states  that a minimizing nonlocal minimal surface is a $C^{1,\alpha}$-hypersurface away from a (relatively) closed subset of Hausdorff dimension less than $(n-2)$.  Since then, the $C^{1,\alpha}$ regularity has been improved to $C^\infty$ in \cite{BFV}, and the size of the singular set reduced to $(n-3)$ in \cite{SV1bis}. Whether or not  the singular set can be further reduced remains an open question (see \cite{DDPW,FV} in this direction). 
 \vskip3pt

One of the main objective of this article is to extend the results of \cite{SV1} on the fractional Allen-Cahn equation \eqref{GLIntro} to the case of arbitrary critical points for $s\in(0,1/2)$, i.e., in the regime of nonlocal minimal surfaces. Since we do not assume any kind of minimality, the geometrical objects arising in the limit $\eps\to0$ are not  the ``minimizing'' nonlocal minimal surfaces of  \cite{CRS} (i.e., solutions of \eqref{defminimizingnonlocminsurf}). Our main theorem shows that the limiting equation is  in fact relation \eqref{defstatnonlocminsurf}, which can be interpreted as a weak formulation of the zero nonlocal $2s$-mean curvature equation \eqref{zeromeancurv}. 
We shall referred to as  {\sl stationary~nonlocal~$2s$-minimal surface} in~$\Omega$, the boundary $\partial E_*\cap\Omega$ of a set $E_*\subset\R^n$ satisfying relation \eqref{defstatnonlocminsurf} (i.e., a critical point under inner variations in $\Omega$ of the $2s$-perimeter). 
\vskip3pt

In all our results, we make use of the following set of structural assumptions on the double well potential $W:\R\to [0,\infty)$. 
\begin{enumerate}
\item[(H1)] $W\in C^2\big(\R;[0,\infty)\big)$.
\vskip3pt

\item[(H2)]  $\{W=0\}=\{\pm 1\}$ and $W^{\prime\prime}(\pm1)>0$.
\vskip3pt

\item[(H3)]  There exist $p\in (1,\infty)$  and  a constant  $\boldsymbol{c}_W>0$  such that for all $t\in\R$, 
$$\frac{1}{\boldsymbol{c}_W}\big(|t|^{p-1}-1\big) \leq |W^\prime(t)|\leq \boldsymbol{c}_W\big(|t|^{p-1}+1\big)\,.$$   
\end{enumerate}
Those assumptions are of course satisfied  by the prototypical potential $W(t)=(1-t^2)^2/4$. Notice that assumption (H3) implies that $W$ has a $p$-growth at infinity so that finite energy solutions of  \eqref{GLIntro} belongs to $L^p(\Omega)$. Assuming that (H1)-(H2)-(H3) hold, we will prove that any weak solution of \eqref{GLIntro}-\eqref{Dircondintro} actually belongs to $C^{1,\alpha}_{\rm loc}(\Omega)\cap C^0(\R^n)$ for some $\alpha\in(0,1)$.

 \begin{theorem}\label{main1new}
Assume that $s\in(0,1/2)$ and that {\rm (H1)-(H2)-(H3)} hold. Let $\Omega\subset \R^n$ be a smooth and bounded open set. For a given sequence $\eps_k\downarrow  0$, let $\{g_k\}_{k\in\mathbb{N}}\subset C^{0,1}_{\rm loc}(\R^n)$ be such that $\sup_k\|g_k\|_{L^\infty(\R^n\setminus\Omega)}<\infty$ and $g_k\to g$ in $L^1_{\rm loc}(\R^n\setminus \Omega)$ for a function $g$ satisfying $|g|=1$  a.e. in~$\R^n\setminus\Omega\,$. For each $k\in\mathbb{N}$, let $v_k\in H_{g_k}^s(\Omega)\cap L^p(\Omega)$ be a weak solution of 
 \begin{equation}\label{eqthm}
 \begin{cases}
 \displaystyle  (-\Delta)^{s} v_k+\frac{1}{\varepsilon_k^{2s}}W^\prime(v_k) =0 & \text{in $\Omega$}\,,\\
  v_k=g_k & \text{in $\R^n\setminus \Omega$}\,.
\end{cases}
\end{equation}
 If $\sup_k \mathcal{E}_{\varepsilon_k}(v_k,\Omega)<\infty$, then there exist a (not relabeled) subsequence and a set $E_*\subset \R^n$ of finite $2s$-perimeter in $\Omega$ such that 
  \begin{itemize}[leftmargin=25pt]
\item[ \rm  (i)] $v_k\to v_*:=\chi_{E_*}-\chi_{\R^n\setminus E_*}$ strongly in $H^{s^\prime}_{\rm loc}(\Omega)\cap L^2_{\rm loc}(\R^n)$ for every $s^\prime<\min(2s,1/2)$;  
\vskip5pt

\item[\rm (ii)] the set $E_*\cap \Omega$ is open; 
\vskip5pt

\item[(iii)] the boundary $\partial E_*\cap\Omega$ is a stationary nonlocal $2s$-minimal surface in $\Omega$ (i.e., relation \eqref{defstatnonlocminsurf} holds).  
\end{itemize}
 In addition, for every smooth open set $\Omega'\subset\Omega$ such that $\overline{\Omega'}\subset\Omega$, 
 \begin{itemize}[leftmargin=25pt]
\item[ \rm  (iv)] $ \mathcal{E}(v_k,\Omega') \to 2\gamma_{n,s}P_{2s}(E_*,\Omega^\prime)$; 
\vskip5pt

\item[ \rm  (v)] $\int_{\Omega^\prime}W(v_k)\,\de x=O(\varepsilon_k^{\min(4s,\alpha)})$ for every $\alpha\in(0,1)$; 
\vskip5pt

\item[\rm  (vi)] $\displaystyle \frac{-1}{\eps_k^{2s}}\,W^\prime(v_k)\to  \left(\frac{\gamma_{n,s}}{2}\int_{\R^n}\frac{|v_*(x)-v_*(y)|^2}{|x-y|^{n+2s}}\,\de y\right)v_*(x)$ strongly in  $H^{-s}(\Omega^\prime)$ and 
\vskip3pt

\noindent weakly in $L^{\bar p}(\Omega^\prime)$ for every $\bar p<1/2s\,$; 
\vskip5pt 

\item[\rm  (vii)] $v_k\to v_*$ in $C^{1,\alpha}_{\rm loc}(\Omega\setminus \partial E_*)$ for some $\alpha=\alpha(n,s)\in(0,1)$; 
\vskip5pt 

\item[\rm  (viii)] for each $t\in(-1,1)$, the level set $L^t_k:=\{v_k=t\}$ converges locally uniformly  in $\Omega$ to $\partial E_*\cap \Omega$, i.e., for every compact set $K\subset \Omega$ and every $r>0$,
$$L^t_k\cap K\subset \mathscr{T}_r(\partial E_*\cap \Omega) \quad \text{and}\quad \partial E_*\cap K\subset \mathscr{T}_r(L_k^t\cap \Omega)$$
whenever $k$ is large enough. Here, $\mathscr{T}_r(A)$ represents the open tubular neighborhood of radius $r$ of a set $A$. 
\end{itemize}
 \end{theorem}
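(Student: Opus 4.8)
The plan is to prove Theorem~\ref{main1new} by adapting the classical Allen-Cahn machinery of Hutchinson-Tonegawa and Modica-Mortola to the nonlocal setting, passing through the Caffarelli-Silvestre extension to make the problem local. The starting point is to write each $v_k$ as the trace on $\R^n\times\{0\}$ of its $s$-harmonic extension $u_k(x,z)$ to the half-space $\R^{n+1}_+$, where $u_k$ satisfies the degenerate elliptic equation $\mathrm{div}(z^{1-2s}\nabla u_k)=0$ in $\R^{n+1}_+$ with the Neumann-type boundary condition $-\lim_{z\to 0}z^{1-2s}\partial_z u_k = \kappa_s\, \varepsilon_k^{-2s}W'(v_k)$ on $\R^n$. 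Under the uniform energy bound $\sup_k\mathcal{E}_{\varepsilon_k}(v_k,\Omega)<\infty$, standard elliptic estimates for this extension give uniform local bounds, and by (H1)-(H3) together with the maximum principle and De~Giorgi-Nash-Moser type arguments one upgrades to $\|v_k\|_{L^\infty}$ bounds and $C^{1,\alpha}_{\rm loc}(\Omega)\cap C^0(\R^n)$ regularity. Combined with an energy-layer (clearing-out) estimate—showing that on any ball where the rescaled potential energy $\varepsilon_k^{-2s}\int W(v_k)$ is small, $v_k$ is forced close to $\pm 1$—this yields the compactness of item~(i): a subsequence converges strongly in $H^{s'}_{\rm loc}(\Omega)\cap L^2_{\rm loc}(\R^n)$ to a limit $v_*$ taking values $\pm1$ a.e., so $v_*=\chi_{E_*}-\chi_{\R^n\setminus E_*}$, and the lower semicontinuity plus the $\Gamma$-convergence upper bound from \cite{SV1} identify $\lim\mathcal{E}(v_k,\Omega')$ with $2\gamma_{n,s}P_{2s}(E_*,\Omega')$, giving items~(iv) and the finiteness of the $2s$-perimeter.

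\textbf{The key steps in order:} (a) set up the extension, derive a monotonicity formula for the rescaled extended energy $E_{\varepsilon_k}(u_k, B_r(x,0))$ centered at boundary points, valid because $s\in(0,1/2)$ ensures the potential term scales subcritically; (b) use the monotonicity formula to prove a density lower bound and the clearing-out lemma, then deduce items~(v) and (vii)–(viii) from local energy decay away from the interface and the $C^{1,\alpha}$ estimates; (c) prove the discrepancy vanishes—i.e., the difference between the ``Dirichlet'' and ``potential'' parts of the extended energy density tends to zero in the limit—which is the crux of showing the limiting energy measure is well-behaved and that $\varepsilon_k^{-2s}W'(v_k)$ converges as claimed in~(vi), using a Modica-type gradient bound and the equation; (d) pass to the limit in the stationarity (first inner variation) identity: since each $v_k$ solves \eqref{eqthm}, it is stationary for $\mathcal{E}_{\varepsilon_k}(\cdot,\Omega)$ under domain variations $x\mapsto x+tX(x)$, yielding a Pohozaev-type identity $\delta\mathcal{E}_{\varepsilon_k}(v_k,\Omega)[X]=0$ for $X\in C^1_c(\Omega;\R^n)$; then use the energy convergence on subdomains (iv) together with the identification \eqref{idDirPersharpint} to pass to the limit and obtain $\delta P_{2s}(E_*,\Omega)[X]=0$, which is precisely~\eqref{defstatnonlocminsurf}, proving~(iii). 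Finally (e) openness of $E_*\cap\Omega$ in item~(ii) follows from~(vii)–(viii): near any point of $E_*\cap\Omega$ not on $\partial E_*$, $v_k\to 1$ uniformly, so $v_*\equiv 1$ on a neighborhood.

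\textbf{The main obstacle} I expect is the vanishing-discrepancy / stationarity passage to the limit (steps (c)–(d)). In the local case one has Modica's pointwise gradient inequality $\varepsilon|\nabla u_\varepsilon|^2\le \varepsilon^{-1}W(u_\varepsilon)+o(1)$ proven via a maximum-principle argument on $P=\varepsilon|\nabla u|^2-\varepsilon^{-1}W(u)$; here the nonlocal (or degenerate-weighted) structure makes such a pointwise comparison much more delicate, and one must instead control the discrepancy in an integrated sense, exploiting the monotonicity formula and the fact that excess energy concentrates on a codimension-one set. Showing that the full first-inner-variation functional $\delta\mathcal{E}_{\varepsilon_k}(v_k,\Omega)[X]$—which contains a nonlocal double-integral ``stress-energy'' term—converges to $\delta P_{2s}(E_*,\Omega)[X]$ requires carefully splitting the double integral into near-diagonal and far pieces, using strong $L^2_{\rm loc}$ convergence of $v_k$ for the far piece and the energy-density convergence (iv) plus equi-integrability for the near-diagonal piece; controlling the near-diagonal contribution uniformly is where the subcriticality $s<1/2$ and the clearing-out estimate must be used most sharply.
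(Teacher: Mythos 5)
Your overall structure—extension to $\R^{n+1}_+$, monotonicity formula, clearing-out, pass to limit in the stationarity identity—matches the paper's scaffolding, but the argument you put at the crux is not the one used, and one of your steps is actually wrong.

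The central gap is in how you obtain item~(iv) and strong convergence of the energy. You write that ``lower semicontinuity plus the $\Gamma$-convergence upper bound from \cite{SV1}'' identifies $\lim \mathcal{E}(v_k,\Omega')$ with $2\gamma_{n,s}P_{2s}(E_*,\Omega')$, but this cannot be right: the $\Gamma$-convergence upper bound in~\cite{SV1} concerns minimizers, and the whole point of Theorem~\ref{main1new} is that the $v_k$ are arbitrary critical points. For arbitrary critical points of the classical Allen--Cahn energy, energy does in general concentrate with higher multiplicity (folding of interfaces, as in Hutchinson--Tonegawa), and $\Gamma$-convergence gives no handle on this. The paper's key contribution, and what your proposal is missing, is that in the regime $s\in(0,1/2)$ folding is impossible: the limiting defect measure $\mu_{\rm sing}$ has a finite $(n-2s)$-dimensional density a.e.\ on its support, and since $n-2s\notin\Z$, Marstrand's theorem forces $\mu_{\rm sing}\equiv 0$. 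This is what gives strong $H^1_{\rm loc}$ convergence of the extensions and item~(iv), not $\Gamma$-convergence. Moreover, the small-energy compactness (Corollary~\ref{strongcoro}) is not obtained via a Modica-type vanishing discrepancy—the paper explicitly avoids any pointwise gradient bound. Instead it uses the convexity of $W$ near $\pm 1$ (Lemma~\ref{minimality}): once $|u_\eps\mp 1|\le \boldsymbol{\delta}_W$ on a disc, $u_\eps$ is a minimizer of the convexified functional under its own boundary data, and strong compactness then follows from a classical cut-and-paste comparison argument. Your step~(c) on controlling the discrepancy ``in an integrated sense'' is precisely the obstacle the paper circumvents.

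A second, independent gap: you do not address how items~(i) (the full range $s'<\min(2s,1/2)$), (v), and~(vi) (weak $L^{\bar p}$ convergence) are obtained. The paper derives these from a quantitative volume estimate $\mathscr{L}^n(\mathscr{T}_r(\{|u_\eps|<1-\boldsymbol{\delta}_W\}))\lesssim r^{\alpha}$ for $r\gtrsim\eps$, which is proved via the Cheeger--Naber / Focardi--Marchese--Spadaro quantitative stratification principle adapted to the Allen--Cahn boundary reaction (Section~\ref{imprsect}). That volume bound feeds into an $L^{\bar p}$ bound on $\eps_k^{-2s}W'(v_k)$ and, via a Bessel-potential/Besov embedding argument (Proposition~\ref{keypropimpr}), into the $H^{s'}$ compactness with $s'<\min(2s,1/2)$. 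Without a device of comparable strength, your plan only yields convergence in $H^{s'}$ for $s'<s$ (the energy topology), and no rate on the potential term in~(v).

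Finally, your step~(d) on passing to the limit in the stationarity identity by splitting the double integral into near-diagonal and far pieces is unnecessary once strong $H^1_{\rm loc}$ convergence of the extended functions is established: the first inner variation localizes to a weighted stress--energy tensor on the half-ball (Corollary~\ref{firstvargen}), and one passes to the limit termwise. That is both simpler and what the paper does.
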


Comparing this result to what is known on the classical Allen-Cahn equation \eqref{AC0}, we can now say that the main difference lies in the strong compactness of solutions (at and  above the energy regularity level), and the resulting continuity of the energy. In some sense, such compactness is not really surprising as one may guess that $H^{s^\prime}$-regularity with $s^\prime\in(0,1/2)$ is not strong enough to capture folding of interfaces. The key   argument in proving compactness in the energy space rests on the fractional scaling of the equation and the {\sl Marstrand's Theorem} (see e.g. \cite{Matti}), a purely measure theoretic result. In the same flavour,  strong convergence of solutions to the $p$-Ginzburg-Landau system (involving the $p$-Laplacian) towards stationary $p$-harmonic maps has been proved in \cite{W} for non-integer values of the exponent $p$. Compactness at the $H^{s^\prime}$-level with $s^\prime<\min(2s,1/2)$ is in turn a much more delicate issue. We establish such compactness combining fine elliptic estimates in the region $|v_k|\simeq 1$ together with quantitative estimates on the volume of the sublevel sets $\{|v_k| \lesssim 1/2\}$. To derive these volume estimates,  we apply the {\sl quantitative stratification principle} of singular sets  introduced in \cite{CheegNab} (in the context of harmonic maps and minimal currents) and generalized to an abstract framework in \cite{FMS}. We point out that this stratification principle does not apply verbatim to our setting since solutions of \eqref{GLIntro} are smooth, and non trivial adjustments have to be made. To the best of our knowledge, this is the first  time  that the quantitative stratification principle is applied to an Allen-Cahn (or Ginzburg-Landau) type equation. 

\begin{remark}
We emphasize that Theorem \ref{main1new} applies to minimizing solutions of  \eqref{eqthm} since the function $\chi_\Omega-g_k\chi_{\R^n\setminus\Omega}$ is an admissible competitor of uniformly bounded energy. In particular, this theorem extend the result of \cite{SV1} for $s\in(0,1/2)$ to arbitrary solutions (with uniformly  bounded energy) together with a full set of new estimates. However, if we assume that each $v_k$ is minimizing, i.e.,  $\mathcal{E}_{\varepsilon_k}(v_k,\Omega)\leq \mathcal{E}_{\varepsilon_k}(w,\Omega)$ for every $w\in H^s_{g_k}(\Omega)$, then \cite{SV1}  shows that the limiting set $E_*$ is a minimizing nonlocal minimal surface in $\Omega$ in the sense of~\cite{CRS}, i.e., $E_*$ satisfies \eqref{defminimizingnonlocminsurf}. 
\end{remark}

\begin{remark}
Non trivial examples of (entire) stationary nonlocal minimal surfaces have been constructed in \cite{DDPW}. These examples are nonlocal analogues of classical minimal surfaces such as catenoids, or Lawson cones (see also \cite{CFSMT,CFW} for Delaunay type surfaces with constant nonlocal mean curvature). It would be very interesting to construct solutions of the fractional Allen-Cahn equation concentrating as $\eps\to 0$ on such surfaces.  
\end{remark}

In proving Theorem \ref{main1new}, we actually investigate the more general case where \eqref{GLIntro} is replaced by 
\begin{equation}\label{nonhomeq}
 (-\Delta)^{s} v_\eps+\frac{1}{\varepsilon^{2s}}W^\prime(v_\eps) =f_\eps \quad \text{in $\Omega$}\,,
 \end{equation}
with a smooth right hand side $f_\eps$ controlled (with respect to $\eps$) in a suitable Sobolev space. Considering such inhomogeneous equation is a way to analyse the asymptotic behavior of an arbitrary sequence of (smooth) functions $v_\eps\in H^s_{g_\eps}(\Omega)$ satisfying $\mathcal{E}_\eps(v_\eps,\Omega)=O(1)$ and 
$$\big\| (-\Delta)^{s} v_\eps+\varepsilon^{-2s}W^\prime(v_\eps) \big\|_{W^{1,q}(\Omega)}=O(1) \quad \text{as $\eps\to 0$}\,,$$
for some suitable exponent  $q$. 

In the classical case $s=1$, such analysis has been pursued in  \cite{Ton1,Ton2} (in continuation to~\cite{HT}). For $s=1$, one considers a sequence $\{u_\eps\}$ of (uniformly bounded) smooth functions on $\Omega$ with uniformly bounded energy \eqref{EN0}, and satisfying 
\begin{equation}\label{expoton}
\|-\eps\Delta u_\eps+\eps^{-1}W^\prime(u_\eps)\|_{W^{1,q}(\Omega)}=O(1)\quad\text{for some $q>n/2$}\,.
\end{equation}
Under this assumption, there is still a well defined limiting interface as $\eps\to0$, which is given by an $(n-1)-$integral varifold with bounded first variation. In addition, the measure theoretic mean curvature of this varifold is given by the weak $W^{1,q}$-limit of  $-\eps\Delta u_\eps+\eps^{-2}W^\prime(u_\eps)$, and it belongs to $L^{r}$,  $r:=q(n-1)/(n-q)>(n-1)$, with respect to the $(n-1)$-dimensional measure on  the interface. The range of exponents in \eqref{expoton} thus leads to the maximal range of integrability exponents in Allard's regularity theory  \cite{All,Sim2}, and the limiting interface is (partially) regular, see \cite{Scha}.   

Considering the inhomogeneous equation \eqref{nonhomeq} (complemented with the exterior Dirichlet condition \eqref{Dircondintro}), we assume that $f_\eps\in C^{0,1}(\Omega)$ satisfies 
$$\eps^{2s}\|f_\eps\|_{L^\infty(\Omega)}+\|f_\eps\|_{W^{1,q}(\Omega)}=O(1) \quad \text{for some $q>n/(1+2s)$}\,.$$
In this setting, we have proved that the main conclusions in Theorem \ref{main1new} hold (see Theorem~\ref{main1} and Theorem \ref{thmlastone} for precise statements) with a limiting set $E_*$ satisfying 
\begin{equation}\label{weakPNMC}
\delta P_{2s}(E_*,\Omega)[X]=\frac{1}{\gamma_{n,s}}\int_{E_*\cap\Omega} {\rm div}(fX)\,\de x \qquad\forall X\in C^1_c(\Omega;\R^n)\,,
\end{equation}
where $f$ is the weak limit of $f_\eps$ in $W^{1,q}(\Omega)$ as $\eps\to 0$.  In view of \eqref{smoothfirstcarper}, the boundary of $E_*$ satisfies in the weak sense
\begin{equation}\label{presnonlocmceq}
{\rm H}^{(2s)}_{\partial E_*}=\frac{1}{\gamma_{n,s}} f\quad \text{on $\partial E_*\cap \Omega$}\,.
\end{equation}
We shall refer to this equation as the {\sl prescribed nonlocal ($2s$-)mean curvature equation} in $\Omega$, and to weak solutions as  {\sl surfaces of prescribed nonlocal ($2s$-)mean curvature}. 
\vskip3pt

Our analysis of the fractional Allen-Cahn equation naturally leads to the regularity problem for stationary nonlocal minimal surfaces, or more generally, for weak solutions of \eqref{presnonlocmceq} with $f\in W^{1,q}(\Omega)$ and $q>n/(1+2s)$. 
In this direction, we have obtained partial results (compare to \cite{CRS}), and some of the main conclusions can be summarized in the following theorem (see Section \ref{complrestprescrNMMC} for the complete set of results).  

\begin{theorem}\label{mainthm2}
For $s\in(0,1/2)$, let $E_*\subset\R^n$ be  a Borel set satisfying $P_{2s}(E_*,\Omega)<\infty$ and \eqref{weakPNMC} for some function $f\in W^{1,q}(\Omega)$ and $q>n/(1+2s)$. Then, 
\begin{itemize}[leftmargin=30pt]
\item[ \rm  (i)] $E_*\cap \Omega$ is (essentially) open; 
\vskip5pt

\item[ \rm  (ii)] if $\partial E_*\cap \Omega$ is not empty, it has a Minkowski codimension equal to 1;  
\vskip5pt

\item[ \rm  (iii)] $P_{2s^\prime}(E_*,\Omega^\prime)<\infty$ for every $s^\prime\in(0,1/2)$ and every open set $\Omega^\prime$ such that $\overline{\Omega^\prime}\subset \Omega$. 
\end{itemize}
\end{theorem}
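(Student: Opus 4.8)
\noindent\emph{Proof strategy for Theorem~\ref{mainthm2}.} The plan is to study $v_*:=\chi_{E_*}-\chi_{\R^n\setminus E_*}$ through its Caffarelli--Silvestre extension, to reinterpret \eqref{weakPNMC} as stationarity of a weighted Dirichlet energy on the half-space, and to extract from it an almost-monotonicity formula; everything else is then measure-theoretic. I would proceed exactly as in the analysis underlying Theorem~\ref{main1new}, but directly at the level of sets (no $\varepsilon$, hence no $W$-term). Let $\bar v_*$ solve $\mathrm{div}(z^{1-2s}\nabla\bar v_*)=0$ in $\R^{n+1}_+=\{(x,z):z>0\}$ with trace $v_*$ on $\{z=0\}$. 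Since $s<1/2$, finiteness of $P_{2s}(E_*,\Omega)$ is equivalent to local finiteness in $\Omega\times(0,\infty)$ of $\int z^{1-2s}|\nabla\bar v_*|^2$ (the extension counterpart of \eqref{idDirPersharpint}), and inner variations of $P_{2s}$ along horizontal fields $X\in C^1_c(\Omega;\R^n)$ correspond, up to the constant $2\gamma_{n,s}$, to inner variations of $\int z^{1-2s}|\nabla\bar v_*|^2$ along the natural extensions of $X$, which are tangent to $\{z=0\}$. Thus \eqref{weakPNMC} says that this weighted Dirichlet energy is stationary under half-space-preserving deformations, up to the explicit perturbation $\gamma_{n,s}^{-1}\int_{E_*}\mathrm{div}(fX)$. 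Testing with truncated radial fields centered at $x_0\in\Omega$ yields a Pohozaev identity, and since $q>n/(1+2s)$ renders the $f$-term genuinely lower order, it produces $\gamma>0$ and $C,r_0>0$ (depending on $\Omega'\Subset\Omega$) such that
\[
\Theta_{x_0}(r):=r^{1-n}\Big(\tfrac12\int_{B_r^+(x_0)}z^{1-2s}|\nabla\bar v_*|^2-(\text{$f$-correction})\Big)+Cr^{\gamma}
\]
is nondecreasing on $(0,r_0)$ for all $x_0\in\Omega'$; in particular $\Theta_{x_0}(0^+):=\lim_{r\downarrow0}\Theta_{x_0}(r)$ exists and $x_0\mapsto\Theta_{x_0}(0^+)$ is upper semicontinuous.

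Next I would prove a clearing-out lemma: there is $\theta_0>0$ such that $\Theta_{x_0}(0^+)<\theta_0$ forces $v_*$ to be a.e.\ constant near $x_0$. One blows up: by monotonicity the rescalings $\bar v_*^{(r)}(x,z):=\bar v_*(x_0+rx,rz)$ have weighted energy in $B_1^+$ tending to $0$; by the trace inequality $\|v\|_{H^s(B_1)}\lesssim\|\bar v\|_{H^1(B_1^+,z^{1-2s})}$ and compactness, their traces converge in $L^2_{\mathrm{loc}}(\{z=0\})$ to a constant, and since the traces take values in $\{\pm1\}$ this means $|E_*\cap B_r(x_0)|/r^n$ or $|E_*^c\cap B_r(x_0)|/r^n\to0$; boundary regularity for the degenerate equation then upgrades this to $v_*\equiv\pm1$ a.e.\ near $x_0$. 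Set $\Sigma:=\{x_0\in\Omega:\Theta_{x_0}(0^+)\geq\theta_0\}$, relatively closed in $\Omega$ by upper semicontinuity; by clearing-out $\Omega\setminus\Sigma$ splits into two open sets $\widetilde E_*$ and $\widetilde{\mathrm{ext}}$ carrying, a.e., $v_*\equiv1$, resp.\ $v_*\equiv-1$. A Fubini computation using the local finiteness of $\int z^{1-2s}|\nabla\bar v_*|^2$ shows $\int_B\Theta_{x_0}(r)\,dx_0\to0$ as $r\downarrow0$ for every $B\Subset\Omega$, whence by monotonicity $\theta_0|\Sigma\cap B|\leq\int_B\Theta_{x_0}(0^+)\,dx_0=0$, so $|\Sigma|=0$. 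Therefore $\widetilde E_*$ is open and $|\widetilde E_*\triangle E_*|=0$, which is (i); moreover $\partial\widetilde E_*\cap\Omega=\Sigma$, the inclusion $\subseteq$ being immediate and $\supseteq$ following because $v_*$ being a.e.\ constant near a point would force $\Theta_{x_0}(0^+)=0$ there by interior/boundary estimates.

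For (ii) fix $\Omega'\Subset\Omega''\Subset\Omega$ with $\Sigma\cap\Omega'\neq\emptyset$. Taking $\{x_i\}\subset\Sigma\cap\Omega'$ maximal $r$-separated, the balls $B_{r/2}(x_i)$ are disjoint, the $B_r(x_i)$ have bounded overlap and cover $\Sigma\cap\Omega'$, and by the definition of $\Sigma$ and monotonicity $\tfrac12\int_{B_r^+(x_i)}z^{1-2s}|\nabla\bar v_*|^2\geq\tfrac12\theta_0 r^{n-1}$ for $r<r_0$; summing against the local energy bound on $\Omega''\times(0,r_0)$ gives $\#\{x_i\}\leq Cr^{-(n-1)}$, hence $|\mathscr{T}_r(\Sigma\cap\Omega')|\leq Cr$ and $\overline{\dim}_{\mathcal{M}}(\Sigma\cap\Omega')\leq n-1$. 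For the reverse inequality, $\Sigma$ separates the nonempty open sets $\widetilde E_*\cap B_\rho(x_0)$ and $\widetilde{\mathrm{ext}}\cap B_\rho(x_0)$ which, by the density (clean-ball) estimates provided by the clearing-out analysis, have measure $\geq c\rho^n$; a standard slicing argument then yields $|\mathscr{T}_r(\Sigma)\cap B_\rho(x_0)|\geq c_\rho r$, so $\underline{\dim}_{\mathcal{M}}(\Sigma\cap\Omega')\geq n-1$. Since $\partial E_*\cap\Omega=\Sigma$ for the good representative, this is (ii). Finally (iii): if $\Sigma=\emptyset$ then $E_*\cap\Omega$ is a.e.\ clopen in $\Omega$ and $P_{2s'}(E_*,\Omega')<\infty$ trivially; otherwise, for a.e.\ $x\in\Omega'$, $v_*$ is a.e.\ constant on the ball of radius $d(x):=\mathrm{dist}(x,\Sigma\cup(\R^n\setminus\Omega))>0$, so $\int_{\R^n}|v_*(x)-v_*(y)|\,|x-y|^{-n-2s'}\,dy\leq C\,d(x)^{-2s'}$, whence $P_{2s'}(E_*,\Omega')\leq C\int_{\Omega'}d(x)^{-2s'}\,dx+(\text{finite far terms})$; the upper Minkowski bound $|\{d<t\}\cap\Omega'|\leq Ct$ together with $s'<1/2$ makes $\int_{\Omega'}d(x)^{-2s'}\,dx<\infty$.

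The main obstacle is the first part: establishing, for the merely $\{\pm1\}$-valued (hence discontinuous on $\{z=0\}$) extension $\bar v_*$, the precise correspondence between horizontal inner variations of $P_{2s}$ and inner variations of the weighted Dirichlet energy, and converting \eqref{weakPNMC} into a clean almost-monotonicity formula in which the $W^{1,q}$ datum is controlled exactly under the threshold $q>n/(1+2s)$ — and then proving the clearing-out lemma with no minimality or comparison argument available, purely from monotonicity, blow-up compactness, and the rigidity of $\{\pm1\}$-valued traces. The subsequent measure-theoretic deductions, in particular the slicing lower bound, are comparatively routine once the density estimates are in hand.
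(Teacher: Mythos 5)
The central gap is in item (ii), which is the heart of the theorem. First, your normalization exponent is off: the scale-invariant density produced by the Pohozaev/inner-variation identity for $\int z^{1-2s}|\nabla\bar v_*|^2$ is $r^{2s-n}\,\mathbf{E}\big(\bar v_*,B_r^+(x_0)\big)$ plus an $f$-correction (cf.\ Lemma~\ref{monotformsurf}), not $r^{1-n}$. Indeed, the extension of a half-space is $0$-homogeneous, so $\int_{B_r^+}z^{1-2s}|\nabla\bar v_*|^2\sim r^{n-2s}$; with your $r^{1-n}$ weight the density at every boundary point behaves like $r^{1-2s}\to 0$ (since $s<1/2$), forcing $\Theta_{x_0}(0^+)\equiv 0$ and $\Sigma=\emptyset$, which makes the whole argument vacuous. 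More seriously, even with the correct exponent the one-shot packing estimate cannot reach codimension $1$: disjointness of $B_{r/2}^+(x_i)$ together with the density lower bound $\int_{B_{r/2}^+(x_i)}z^{1-2s}|\nabla\bar v_*|^2\gtrsim\theta_0\,r^{n-2s}$ and the local energy bound gives only $\#\{x_i\}\lesssim r^{-(n-2s)}$, hence $\mathscr{L}^n\big(\mathscr{T}_r(\Sigma\cap\Omega^\prime)\big)\lesssim r^{2s}$, i.e.\ $\overline{\rm dim}_{\mathscr{M}}\leq n-2s$, which for $s<1/2$ is strictly worse than $n-1$. This cheap estimate is essentially what the remark after Corollary~\ref{openset} records ($\mathscr{H}^{n-2s}(\Sigma)=0$), and the entire difficulty of the theorem is to improve $n-2s$ to $n-1$. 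The paper does this not by packing but via the quantitative stratification of \cite{CheegNab,FMS}: one introduces $0$-homogeneous tangent cones $\varphi$ (relying on the energy-space compactness of Theorem~\ref{compactminsurf}, which itself hinges on Marstrand's theorem), classifies them by the dimension of the spine $S(\varphi)$, and runs a multi-scale cover--refine scheme using the distance-to-cone functionals ${\bf d}_j$ and the structural Lemmas~\ref{gapdistlemma}, \ref{lemme2stratsurf}, \ref{condiistrat} to obtain $\mathscr{L}^n(\mathscr{T}_r)\lesssim r^{1-\kappa_0}$ for every $\kappa_0>0$ (Theorem~\ref{volsingsrat}). None of this appears in your proposal. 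And since your item (iii) is deduced from the Minkowski bound of item (ii) (via the layer-cake estimate $\int_{\Omega^\prime}d(x)^{-2s^\prime}\,dx<\infty$), it does not close either.

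A secondary issue is the clearing-out: the step ``boundary regularity for the degenerate equation then upgrades [density $\to 0$ or $1$ at $x_0$] to $v_*\equiv\pm1$ a.e.\ near $x_0$'' has no content for a $\{\pm1\}$-valued (hence wildly discontinuous) trace. The paper's Lemma~\ref{epsregminsurf} is both simpler and correct: the monotonicity formula gives smallness of $\boldsymbol{\Theta}_u(f,y,\rho)$ uniformly for all $y\in D_{r/2}(x_0)$ and all $0<\rho<r/2$, the weighted Poincar\'e inequality of Lemma~\ref{poincare} controls the oscillation $A_\rho(y)=4\omega_n\big(1-\tfrac{|E\cap D_\rho(y)|}{|D_\rho|}\big)\tfrac{|E\cap D_\rho(y)|}{|D_\rho|}$, and continuity of $(y,\rho)\mapsto|E\cap D_\rho(y)|/|D_\rho|$ pins the density to one of $[0,1/4]$ or $[3/4,1]$ throughout $D_{r/2}(x_0)\times(0,r/2)$, after which Lebesgue differentiation gives constancy on the whole disc --- no blow-up compactness is needed. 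As a positive note: assuming (ii), your derivation of (iii) via $\int_{\R^n}|v_*(x)-v_*(y)|\,|x-y|^{-n-2s^\prime}\de y\lesssim d(x)^{-2s^\prime}$ and the layer-cake formula is a cleaner shortcut than the paper's route through Propositions~\ref{keypropimpr} and \ref{imprintsharp} (Bessel-potential and Triebel--Lizorkin embeddings); the paper's version is developed because Proposition~\ref{keypropimpr} is independently needed in Theorem~\ref{thmlastone}.
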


This theorem is obtained through a blow-up analysis for solutions of \eqref{weakPNMC}. Such analysis rests on a preliminary result stating that solutions of \eqref{weakPNMC} are compact in the energy space. This is of course the sharp interface analogue of the compactness property for the fractional Allen-Cahn equation, and it  relies again on Marstrand's Theorem. Note that such compactness doesn't hold if $P_{2s}$ is replaced by the usual (distributional) perimeter of sets (see \cite{Scha}). With this compactness at hand, we have  applied the quantitative stratification principle of \cite{CheegNab,FMS} to solutions of  \eqref{weakPNMC}, leading to conclusions (ii) and (iii).

\begin{remark}
Theorem \ref{mainthm2} is new even in the case $f=0$, i.e., in the case of stationary nonlocal minimal surfaces. Whether or not solutions to \eqref{defstatnonlocminsurf} or \eqref{weakPNMC} are more regular (in the spirit of the minimizing case \cite{CRS}) remains an open question. Let us mention that, in the recent article \cite{cinserval}, it has been proved that (some) {\sl stable solutions} of \eqref{defstatnonlocminsurf} have locally finite perimeter in $\Omega$. In particular, their boundary are rectifiable.  Note that item (iii) in Theorem \ref{mainthm2} goes somehow in this direction. Indeed, if we knew that  $(1-2s^\prime)P_{2s^\prime}(E_*,\Omega^\prime)=O(1)$ as $s^\prime\uparrow 1/2$, then it would say that $E_*$ has finite perimeter in the open set $\Omega^\prime$ since $(1-2s^\prime)P_{2s}(\cdot,\Omega^\prime)$ converges to the usual perimeter functional as $s^\prime\to 1/2$, see \cite{ADPM,Dav}. Unfortunately, the bound  $P_{2s^\prime}(E_*,\Omega^\prime)<\infty$ is obtained by a compactness argument (hinged on the quantitative stratification principle), and no explicit dependence on $s^\prime$ seems to follow. 
\end{remark}

\begin{remark}
A set $E_*\subset \R^n$  satisfying 
\begin{multline}\label{nonlocalminf}
P_{2s}(E_*,\Omega)-\frac{1}{\gamma_{n,s}}\int_{E_*\cap\Omega}f\,\de x\leq P_{2s}(F,\Omega)-\frac{1}{\gamma_{n,s}}\int_{F\cap\Omega}f\,\de x \quad\\[5pt] 
\forall F\subset\R^n\,,\; F\setminus\Omega=E_*\setminus\Om\,,
\end{multline}
provides a solution of \eqref{weakPNMC}. It corresponds to a minimizing solution of the prescribed nonlocal $2s$-mean curvature equation. Since $f\in W^{1,q}(\Omega)$ with $q>n/(1+2s)$, we have $f\in L^r(\Omega)$ with $r:=nq/(n-q)>n/2s$. Hence we can apply in this case the  regularity theory for nonlocal {\sl almost minimal} surfaces of \cite{CapGui}. Combined with \cite{SV1bis},  it shows that  $\partial E_*\cap\Omega$ is a $C^{1,\alpha}$-hypersurface for every $\alpha<(1+2s-n/q)/(n+2s)$ away from a relatively closed subset of Hausdorff dimension less then $(n-3)$ (and discrete for $n=3$). 
\end{remark}

\begin{remark}
The notion of stationary nonlocal minimal surface is strongly related to {\sl stationary fractional $s$-harmonic maps} into a sphere. With this respect, this article is natural continuation to the analysis of the fractional Ginzburg-Landau equation and $1/2$-harmonic maps \cite{MilSir} by the two first authors. Fractional harmonic maps into a sphere were originally introduced in \cite{DaRi} for $s=1/2$ and $n=1$. A mapping $v:\R^n\to\mathbb{S}^{d-1}$ (of finite fractional Dirichlet energy) is called a weakly $s$-harmonic map in    $\Omega$ if  
$$\left[\frac{\de}{\de t} \mathcal E \left(\frac{v+t \varphi}{|v+t \varphi|} , \Omega\right)\right]_{t=0}=0 \qquad \forall \varphi\in\mathscr{D}(\Omega;\R^d)\,.$$
As shown in \cite{MilSir} for $s=1/2$, this condition leads (in the weak sense) to the Euler-Lagrange equation 
\begin{equation}\label{sharmmap}
(-\Delta)^sv(x)= \left(\frac{\gamma_{n,s}}{2}\int_{\R^n}\frac{|v(x)-v(y)|^2}{|x-y|^{n+2s}}\,\de y\right)v(x) \quad \text{in $\Omega$}\,.
\end{equation} 
For {\sl any} set $E\subset \R^n$ of finite $2s$-perimeter in $\Omega$, the function $v=\chi_E-\chi_{\R^n\setminus E}$ turns out to satisfy equation \eqref{sharmmap} (see Lemma \ref{lemeqharmmap}). In other words, if we identify $\{\pm 1\}$ with $\{\pm1\}\times\{0\}\subset \R\times\mathbb{R}^{d-1}$, the function $\chi_E-\chi_{\R^n\setminus E}$ is a weakly $s$-harmonic map into $\mathbb{S}^{d-1}$ in the open set $\Omega$ (explaining in particular item (vi) in Theorem \ref{main1new}). As a consequence, no regularity can be expected for weakly $s$-harmonic maps for $s<1/2$. This is of course in analogy with the non-regularity result of \cite{Riv} for usual weakly harmonic maps into a manifold (for $n\geq 3$).   
Stationary $s$-harmonic maps into $\mathbb{S}^{d-1}$ are defined as weakly $s$-harmonic maps  satisfying the additional stationarity condition $\delta\mathcal{E}(v,\Omega)=0$ (where $\delta\mathcal{E}(\cdot,\Omega)$ denotes the first inner variation of $\mathcal{E}(\cdot,\Omega)$). One may expect that, for such $s$-harmonic maps, some partial regularity holds (see \cite{DaRi,MilSir} in the case $s=1/2$). In  view of \eqref{idDirPersharpint}, if a set $E_*\subset\R^n$ satisfies \eqref{defstatnonlocminsurf} (i.e., whose boundary is a stationary nonlocal $2s$-minimal surface in $\Omega$), then the function $\chi_{E_*}-\chi_{\R^n\setminus E_*}$ is a stationary $s$-harmonic map in $\Omega$. It shows that, for general stationary $s$-harmonic maps into a sphere, the singular set (or discontinuity set) can have a positive $\mathcal{H}^{n-1}$-measure if $s<1/2$ (compare to the vanishing $\mathcal{H}^{n-1}$-measure of the singular set  for stationary $1/2$-harmonic maps, see \cite{MilSir}). 
\end{remark}

As it is customary by now, our analysis rely on the Caffarelli-Silvestre extension procedure~\cite{CaffSil} to the open upper half space $\R^{n+1}_+:=\R^n\times (0,\infty)$. This extension allows us to represent $(-\Delta)^s$ as the Dirichlet-to-Neumann operator associated to the degenerate elliptic operator $L_s:=-{\rm div}(z^{1-2s}\nabla\cdot)$  on $\R^{n+1}_+$, where $z\in(0,\infty)$ denotes the extension variable. In this way, we rewrite solutions to the fractional Allen-Cahn equation or the prescribed nonlocal $2s$-mean curvature equation as 
$L_s$-harmonic functions in $\R^{n+1}_+$ satisfying nonlinear boundary conditions.  In the spirit of \cite{CRS}, this extension leads to fundamental monotonicity formulas. All the functional and variational aspects surrounding the fractional Laplacian $(-\Delta)^s$ and the Caffarelli-Silvestre extension are presented in Section \ref{prelim}. In Section \ref{FractAC}, we prove some basic (but necessary) regularity estimates on solutions to the fractional Allen-Cahn equation and $L_s$-harmonic functions with Allen-Cahn degenerate boundary reaction. A first part of the asymptotic analysis as $\eps\to0$ is performed in Section \ref{epsregtitle} for  Allen-Cahn degenerate boundary reactions. Consequences for the fractional Allen-Cahn equation are then given in Section~\ref{FGLasymp}.  Section \ref{GHSNMC} is devoted to the analysis of surfaces of prescribed nonlocal mean curvature. Finally, we prove in Section \ref{imprsect} the aforementioned volume estimate on transition sets, and complete our asymptotic analysis of the fractional Allen-Cahn equation.

\subsection*{{Notation}}
Throughout the paper, $\R^n$ is identified with $\partial  \mathbb{R}^{n+1}_+=\R^n\times\{0\}$. More generally, sets $A\subset\mathbb{R}^n$ are identified with $A\times\{0\}\subset\partial  \mathbb{R}^{n+1}_+$. Points in $\mathbb{R}^{n+1}$ are written $\mathbf{x}=(x,z)$ with $x\in\mathbb{R}^n$ and $z\in\mathbb{R}$.  
We shall denote by $B_r(\mathbf{x})$ the open ball in $\mathbb{R}^{n+1}$ of radius $r$ centered at $\mathbf{x}=(x,z)$, while $D_r(x):= B_r(\mathbf{x})\cap\mathbb{R}^{n}$ is the open ball (or disc) in $\R^n$ centered at $x$. For an arbitrary set $G\subset  \mathbb{R}^{n+1}$, we write 
$$G^+:=G\cap \mathbb{R}^{n+1}_+\quad\text{ and }\quad\partial^+ G:=\partial G\cap \mathbb{R}^{n+1}_+\,.$$
If $G\subset\R^{n+1}_+$ is a bounded open set, we shall say that $G$ is  {\bf admissible} whenever 
\begin{itemize}
\item $\partial G$ is Lipschitz regular;  
\vskip2pt
\item the (relative) open set $\partial^0G\subset\R^n$ defined by 
$$\partial^0G:=\Big\{\mathbf{x}\in\partial G\cap\partial\R^{n+1}_+ : B^+_{r}(\mathbf{x})\subset G \text{ for some $r>0$}\Big \}\,,$$
is non empty and has Lipschitz boundary; 
\vskip2pt

\item $\partial G=\partial^+ G\cup\overline{\partial^0G}\,$.
\end{itemize}

Finally, we shall always denote by $C$ a generic positive constant which may only depend on the dimension $n$, and possibly changing from line to line. If a constant depends on additional given parameters, we shall write those parameters using the subscript notation.

   								 
\section{Functional spaces and the fractional Laplacian} \label{prelim}   
								 

\subsection{$H^{s}$-spaces for $s\in(0,1/2)$}
 
For an open set $\Omega\subset \mathbb{R}^n$,  the fractional Sobolev space $H^{s}(\Omega)$ is made of functions $v\in L^2(\Omega)$ such that\footnote{The normalization constant $\gamma_{n,s}$ is chosen in such a way that $\displaystyle [v]^2_{H^{s}(\R^n)}=\int_{\R^n}(2\pi|\xi|)^{2s}|\hat v|^2\,\de\xi\,$. }  
$$[v]^2_{H^{s}(\Omega)}:=\frac{\gamma_{n,s}}{2}\iint_{\Omega\times \Omega} \frac{|v(x)-v(y)|^2}{|x-y|^{n+2s}}\,\de x\de y<\infty\,,\quad \gamma_{n,s}:=s\,2^{2s}\pi^{-\frac{n}{2}}\frac{\Gamma\big(\frac{n+2s}{2}\big)}{\Gamma(1-s)} \,.$$ 
It is a separable Hilbert space normed by $\|\cdot\|^2_{H^{s}(\Omega)}:= \|\cdot\|^2_{L^2(\Omega)}+[\cdot]^2_{H^{s}(\Omega)}$. 
The space $ H^{s}_{\rm loc}(\Omega)$ denotes the class of functions whose restriction to any relatively compact  open subset $\Omega'$ of $\Omega$ belongs to $H^{s}(\Omega')$. 
The linear subspace $H^{s}_{00}(\Omega) \subset H^{s}(\mathbb{R}^n)$ is defined by 
$$H^{s}_{00}(\Omega):=\big\{v\in H^{s}(\mathbb{R}^n) :  v=0 \text{ a.e. in } \R^n\setminus\Omega\big\}\,. $$
Endowed with the  induced norm,   
 $H^{s}_{00}(\Omega)$ is also an Hilbert space, and for  $v\in H^{s}_{00}(\Omega)$, 
\begin{align}\label{defErond}
[v]^2_{H^{s}(\mathbb{R}^n)}&=2\mathcal{E}(v,\Omega) \\
\nonumber&=\frac{\gamma_{n,s}}{2}\iint_{\Omega\times\Omega}  \frac{|v(x)-v(y)|^2}{|x-y|^{n+2s}}\,\de x\de y 
+ \gamma_{n,s} \iint_{\Omega\times \Omega^{c}}  \frac{|v(x)|^2}{|x-y|^{n+2s}}\,\de x\de y \\
\nonumber &=[v]^2_{H^{s}(\Omega)} + \int_{\Omega} \boldsymbol{\rho}_\Omega(x) |v(x)|^2\,\de x\,,
\end{align}
where $\mathcal{E}(\cdot,\Omega)$ is  the {\it fractional Dirichlet energy} defined in \eqref{defenergE}, and 
$$ \boldsymbol{\rho}_\Omega(x) := \gamma_{n,s}\int_{\R^n\setminus \Omega}\frac{1}{|x-y|^{n+2s}}\,\de y\,.$$
Since $s\in(0,1/2)$, if $\Omega$ is bounded and its boundary is smooth enough (e.g. if  $\partial \Omega$ is Lipschitz regular), then 
$$ \int_{\Omega} \boldsymbol{\rho}_\Omega(x) |v(x)|^2\,\de x\leq C_\Omega \|v\|^2_{H^{s}(\Omega)}\qquad \forall v\in H^s(\Omega)\,,$$
for a constant $C_\Omega=C_\Omega(s)>0$. As a consequence, if $v\in H^s(\Omega)$ and $\widetilde v$ denotes the extension of $v$ by zero  outside $\Omega$, then  
$$ \|v\|_{H^{s}(\Omega)}\leq \|\widetilde v\|_{H^{s}(\R^n)}\leq (C_\Omega+1)^{\frac{1}{2}}\|v\|_{H^{s}(\Omega)}\,.$$
In particular, if $\partial \Omega$ is smooth enough, then $H^{s}_{00}(\Omega)=\big\{\widetilde v : v\in  H^s(\Omega)\big\}$ 
(see \cite[Corollary~1.4.4.5]{G}), and  (see \cite[Theorem~1.4.2.2]{G}) 
\begin{equation}\label{densitysmoothH1/200}
H^{s}_{00}(\Omega)= \overline{\mathscr{D}(\Omega)}^{\,\|\cdot\|_{H^{s}(\mathbb{R}^n)}}\,.
 \end{equation}
The topological dual space of $H^{s}_{00}(\Omega)$ is denoted by $H^{-s}(\Omega)$.
\vskip5pt

We are interested in the class of functions 
$$\widehat{H}^{s}(\Omega):=\Big\{v\in L^{2}_{\rm loc}(\mathbb{R}^n) : \mathcal{E}(v,\Omega)<\infty\Big\} \,.$$
The following properties hold for any bounded open subsets $\Omega$ and $\Omega'$ of $\R^n$: 
\begin{itemize}
\vskip5pt
\item $ \widehat{H}^{s}(\Omega)$ is a linear space; 
\vskip5pt
\item $ \widehat{H}^{s}(\Omega) \subset  \widehat{H}^{s}(\Omega')$ whenever $\Omega'\subset\Omega$, and    
$ \mathcal{E}(v,\Omega')\leq \mathcal{E}(v,\Omega)\,$;
\vskip5pt
\item $ \widehat{H}^{s}(\Omega)\cap H^{s}_{\rm loc}(\mathbb{R}^n) \subset  \widehat{H}^{s}(\Omega')\,$;
\vskip5pt
\item $H^{s}_{\rm loc}(\mathbb{R}^n)\cap L^\infty(\mathbb{R}^n) \subset  \widehat{H}^{s}(\Omega)\,$.
\end{itemize}
From Lemma \ref{adminHchap} below, it is straightforward to show that $\widehat{H}^{s}(\Omega)$ is actually  a Hilbert space for the scalar product induced by the norm $v\mapsto \big(\|v\|^2_{L^2(\Omega)}+ \mathcal{E}(v,\Omega)\big)^{1/2}$  (see e.g. \cite[proof of Lemma 2.1]{MilSir}). 

\begin{lemma}\label{adminHchap}
Let $x_0\in\Omega$ and $\rho>0$ be  such that $D_{\rho}(x_0)\subset\Omega$.  There exists a constant $C_\rho>0$, independent of~$x_0$, such that 
$$\int_{\R^n}\frac{|v(x)|^2}{(|x-x_0|+1)^{n+2s}}\,\de x\leq C_{\rho}\left(\mathcal{E}\big(v,D_{\rho}(x_0)\big)+\|v\|^2_{L^2(D_{\rho}(x_0))}\right)$$
for every $v\in  \widehat{H}^{s}(\Omega)$. 
\end{lemma}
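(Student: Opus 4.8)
The plan is to reduce the global weighted $L^2$-bound to a purely local fractional Dirichlet estimate by a covering/telescoping argument in dyadic annuli centered at $x_0$. First I would split $\R^n$ into the disc $D_\rho(x_0)$ and the dyadic shells $A_j := D_{2^{j+1}\rho}(x_0)\setminus D_{2^j\rho}(x_0)$ for $j\geq 0$. On each shell the weight $(|x-x_0|+1)^{-(n+2s)}$ is comparable to $(2^j\rho)^{-(n+2s)}$ (up to a constant depending on $\rho$), so the task becomes to control $\sum_j (2^j\rho)^{-(n+2s)}\int_{A_j}|v|^2\,\de x$ by $\mathcal{E}(v,D_\rho(x_0)) + \|v\|^2_{L^2(D_\rho(x_0))}$.

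\textbf{Key steps.} The heart of the matter is a Poincar\'e-type inequality at each scale: for a ball $D_R(x_0)$ with $R\geq \rho$, I want to bound the average of $|v|^2$ over an annulus at radius $R$ by the average of $|v|^2$ over $D_\rho(x_0)$ plus a fractional Gagliardo seminorm contribution. Concretely, write $\bar v_\rho$ for the mean of $v$ over $D_\rho(x_0)$; then for $x\in A_j$ and $y\in D_\rho(x_0)$ one has $|x-y|\leq C 2^j\rho$, hence
\begin{equation*}
|v(x)-\bar v_\rho|^2 \leq \fint_{D_\rho(x_0)} |v(x)-v(y)|^2\,\de y \leq C (2^j\rho)^{n+2s} \rho^{-n}\int_{D_\rho(x_0)}\frac{|v(x)-v(y)|^2}{|x-y|^{n+2s}}\,\de y\,.
\end{equation*}
Integrating in $x$ over $A_j\subset D_{2^{j+1}\rho}(x_0)$ and using that for $x\in A_j,\ y\in D_\rho(x_0)$ the pair lies in $D_{2^{j+1}\rho}(x_0)\times D_{2^{j+1}\rho}(x_0)$, the right-hand side is bounded by $C (2^j\rho)^{n+2s}\rho^{-n}\,\mathcal{E}\big(v, D_{2^{j+1}\rho}(x_0)\big)$. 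But $D_{2^{j+1}\rho}(x_0)$ need not sit inside $\Omega$; here I would instead use directly that $\mathcal{E}(v,D_R(x_0))$ for arbitrarily large $R$ is controlled — since $v\in\widehat H^s(\Omega)$ only gives finiteness on $\Omega$, I must be more careful: I take $y$ ranging over $D_\rho(x_0)\subset\Omega$ and keep the pair $(x,y)$ in the larger disc $D_{2^{j+1}\rho}(x_0)$, noting that the Gagliardo integrand over $D_{2^{j+1}\rho}(x_0)\times D_\rho(x_0)$ with $x$ possibly outside $\Omega$ is exactly the kind of term controlled by $\mathcal E(v,D_\rho(x_0))$ via the interaction term $\iint_{D_\rho(x_0)\times D_\rho(x_0)^c}$. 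So the bound should read in terms of $\mathcal E(v,D_\rho(x_0))$ directly, not at scale $2^{j+1}\rho$. Then
\begin{equation*}
\int_{A_j}|v-\bar v_\rho|^2\,\de x \leq C (2^j\rho)^{n+2s}\rho^{-n}\,\mathcal{E}\big(v,D_\rho(x_0)\big)\,,
\end{equation*}
while $\int_{A_j}|\bar v_\rho|^2\,\de x\leq C(2^j\rho)^n |\bar v_\rho|^2\leq C (2^j\rho)^n \rho^{-n}\|v\|^2_{L^2(D_\rho(x_0))}$. Multiplying by $(2^j\rho)^{-(n+2s)}$ and summing over $j\geq 0$, the first series is geometric with ratio $1$ (it telescopes to a constant times $\rho^{-n}\mathcal E(v,D_\rho(x_0))$ after accounting for the $j$-independence — actually each term contributes $C\rho^{-n}\mathcal E$, so one needs the decay to come from elsewhere), which signals that I should instead bound $\int_{A_j}|v-\bar v_\rho|^2$ more sharply by $C(2^j\rho)^{2s}$ times an $L^2$ density, i.e. use that the Gagliardo seminorm on the full annulus up to radius $2^{j+1}\rho$ has extra integrability. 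The cleaner route is to telescope through intermediate means $\bar v_{2^k\rho}$: $|\bar v_{2^k\rho}-\bar v_{2^{k+1}\rho}|^2 \leq C (2^k\rho)^{2s-n}\mathcal E(v,D_{2^{k+1}\rho}(x_0))$, but again this needs control on larger discs.

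\textbf{Main obstacle.} The real difficulty — and the point where I expect to invest the most care — is that $\mathcal{E}(v,\Omega)<\infty$ controls Gagliardo energy only inside $\Omega$, whereas the weighted integral on the left reaches out to all of $\R^n$. The resolution must exploit the interaction term $\gamma_{n,s}\iint_{D_\rho(x_0)\times D_\rho(x_0)^c}\frac{|v(x)-v(y)|^2}{|x-y|^{n+2s}}$ appearing in $\mathcal E(v,D_\rho(x_0))$: this term already integrates $|v(x)-v(y)|^2/|x-y|^{n+2s}$ over $x\in D_\rho(x_0)^c$ (all of the exterior!) against $y\in D_\rho(x_0)$. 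So for $x$ far from $x_0$, $|x-y|\simeq |x-x_0|+1$ (absorbing the $+1$ using $\rho$-dependence when $|x-x_0|\lesssim 1$), giving
\begin{equation*}
\int_{\R^n\setminus D_\rho(x_0)}\frac{|v(x)-v(y)|^2}{(|x-x_0|+1)^{n+2s}}\,\de x \leq C_\rho\, \mathcal{E}\big(v,D_\rho(x_0)\big)\quad\text{for a.e. }y\in D_\rho(x_0)\,.
\end{equation*}
Averaging in $y$ over $D_\rho(x_0)$, then using $|v(x)|^2\leq 2|v(x)-v(y)|^2 + 2|v(y)|^2$ and that $\int_{\R^n\setminus D_\rho(x_0)}(|x-x_0|+1)^{-(n+2s)}\,\de x =: C_\rho'<\infty$ (here $2s<1$ is irrelevant; convergence holds since $n+2s>n$), I obtain
\begin{equation*}
\int_{\R^n\setminus D_\rho(x_0)}\frac{|v(x)|^2}{(|x-x_0|+1)^{n+2s}}\,\de x \leq C_\rho\Big(\mathcal{E}\big(v,D_\rho(x_0)\big) + \fint_{D_\rho(x_0)}|v(y)|^2\,\de y\Big)\,,
\end{equation*}
and the contribution from $x\in D_\rho(x_0)$ is trivially $\leq \|v\|^2_{L^2(D_\rho(x_0))}$ since the weight is $\leq 1$. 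Uniformity of $C_\rho$ in $x_0$ is automatic because every constant above depends only on $\rho$, $n$, $s$ through translation-invariant quantities. I would finally double-check the delicate comparison $|x-y|\gtrsim_\rho |x-x_0|+1$ for $x\notin D_\rho(x_0)$, $y\in D_\rho(x_0)$: when $|x-x_0|\geq 2\rho$ one has $|x-y|\geq |x-x_0|/2$ and also $|x-x_0|+1\leq |x-x_0|(1+1/(2\rho))$, so the ratio is bounded below by $c\min(1,\rho)$; when $\rho\leq|x-x_0|<2\rho$ one has $|x-y|\geq |x-x_0|-\rho$ which can be small, so there I instead bound the integrand crudely by noting the region has finite measure $\leq C\rho^n$ and $|v(x)-v(y)|^2/|x-y|^{n+2s}$ integrated over this annulus against $y\in D_\rho$ is again part of $\mathcal E(v,D_\rho(x_0))$ with weight comparable to $\rho^{-(n+2s)}\geq C_\rho(|x-x_0|+1)^{-(n+2s)}$. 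This case analysis, and keeping the $\rho$-dependence of constants transparent, is the only genuinely fiddly part; the structure of the argument is otherwise a standard exploitation of the nonlocal interaction term.
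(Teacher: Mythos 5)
The paper does not actually prove this lemma; it defers to the proof of Lemma 2.1 in \cite{MilSir}, so a line-by-line comparison is not possible here. That said, your central idea --- control the far-field contribution of $|v|^2$ using the interaction term $\iint_{D_\rho(x_0)\times D_\rho(x_0)^c}$ built into $\mathcal{E}(v,D_\rho(x_0))$, after splitting $|v(x)|^2\le 2|v(x)-v(y)|^2+2|v(y)|^2$ and averaging over $y\in D_\rho(x_0)$ --- is exactly right and is the standard proof of this estimate.

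Two things are worth straightening out. First, the dyadic-shell telescoping argument in your opening two paragraphs is a dead end (you noticed it ``signals'' trouble) and is not needed; the direct interaction-term argument you land on handles all of $\R^n\setminus D_\rho(x_0)$ in one stroke. Second, and more importantly, your final ``double-check'' paragraph verifies the \emph{wrong} direction of the comparison. To replace the weight $(|x-x_0|+1)^{-(n+2s)}$ by the kernel $|x-y|^{-(n+2s)}$ up to a $\rho$-dependent constant, you need the \emph{upper} bound $|x-y|\le C_\rho(|x-x_0|+1)$, equivalently $(|x-x_0|+1)^{-(n+2s)}\le C'_\rho\,|x-y|^{-(n+2s)}$. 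For $y\in D_\rho(x_0)$ and arbitrary $x$ this is immediate from the triangle inequality: $|x-y|\le |x-x_0|+\rho\le (1+\rho)(|x-x_0|+1)$, with no case analysis and no danger when $x$ is near $\partial D_\rho(x_0)$. The lower bound $|x-y|\gtrsim_\rho |x-x_0|+1$ that you set out to check (and which indeed can fail as $x\to\partial D_\rho(x_0)$) is simply not required, so that whole paragraph can be deleted. A smaller slip: the displayed inequality asserted ``for a.e.~$y\in D_\rho(x_0)$'' is not literally true (the left side varies with $y$ while the right side is a fixed number); it becomes correct, as your next sentence intends, only after integrating in $y$ over $D_\rho(x_0)$, at which point the left side becomes (a constant times) the interaction part of $\mathcal E(v,D_\rho(x_0))$.
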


\begin{remark}\label{remEDB}
If $v\in \widehat{H}^{s}(\Omega)$, then  
$v+ H^{s}_{00}(\Omega)\subset  \widehat{H}^{s}(\Omega)$.   
Conversely,  if 
$v=g$ a.e. in $\R^n\setminus\Omega$ for some functions $v$ and $g$ in $\widehat{H}^{s}(\Omega)$, then $v-g\in H^{s}_{00}(\Omega)$. 
As a consequence, for $g\in \widehat{H}^{s}(\Omega)$, 
$$H^{s}_g(\Omega):= \left\{v\in \widehat{H}^{s}(\Omega;\mathbb{R}^m) : v=g\, \text{ a.e. in $\R^n\setminus\Omega$}\right\}=g+H^{s}_{00}(\Omega)\,.$$ 
Note that  $H^{s}_g(\Omega)\subset  H^{s}_{\rm loc}(\mathbb{R}^n)$ whenever $g\in \widehat{H}^{s}(\Omega)\cap H^{s}_{\rm loc}(\mathbb{R}^n)$. 
\end{remark}


\subsection{The fractional Laplacian}


Let  $\Omega\subset \mathbb{R}^n$ be a bounded open set.  We define the fractional  Laplacian $(-\Delta)^s: \widehat{H}^{s}(\Omega)\to (\widehat{H}^{s}(\Omega))^\prime$ as the continuous linear operator induced by the quadratic form $\mathcal{E}(\cdot,\Omega)$. More precisely,  given a function $v\in \widehat{H}^{s}(\Omega)$, we define its {\it distributional fractional Laplacian} $ (-\Delta)^{s} v$ through 
its action on $ \widehat{H}^{s}(\Omega)$ by setting 
\begin{multline}\label{deffraclap}
\big\langle  (-\Delta)^{s} v, \varphi\big\rangle_\Omega:=\frac{\gamma_{n,s}}{2}\iint_{\Omega\times\Omega}  \frac{\big(v(x)-v(y)\big)\big(\varphi(x)-\varphi(y)\big)}{|x-y|^{n+2s}}\,\de x\de y\\ 
+ \gamma_{n,s} \iint_{\Omega\times \Omega^c}  \frac{\big(v(x)-v(y)\big)\big(\varphi(x)-\varphi(y)\big)}{|x-y|^{n+2s}}\,\de x\de y \,. 
\end{multline}
If $v$ is a smooth bounded function, then the distribution $ (-\Delta)^{s} v$ can be rewritten from \eqref{deffraclap} as a pointwise defined function which coincides with the one given by  formula~\eqref{formpvfraclap}. Notice also that the restriction of the linear form $ (-\Delta)^{s} v$ to the subspace $H^{s}_{00}(\Omega)$ belongs to $H^{-s}(\Omega)$ with the  estimate 
\begin{equation}\label{estinormH-1/2fraclap}
\| (-\Delta)^{s} v\|^2_{H^{-s}(\Omega)}\leq 2\mathcal E(v,\Omega)\,.
\end{equation}
In this way, $(-\Delta)^{s} v $ appears to be the first outer variation of   $\mathcal{E}(\cdot,\Omega)$ at $v$ with respect to pertubations supported in $\Omega$, i.e.,  
\begin{equation}\label{firstvarcalE}
\big\langle  (-\Delta)^{s} v, \varphi\big\rangle_\Omega=\left[\frac{\de}{\de t} \mathcal E (v+t \varphi , \Omega)\right]_{t=0} 
\end{equation}
for all $\varphi \in H^{s}_{00}(\Omega)$.

\begin{remark}
If $\Omega'\subset \Omega$ are two open sets and $v\in \widehat{H}^{s}(\Omega)$, then 
$$\big\langle  (-\Delta)^{s} v, \varphi\big\rangle_\Omega= \big\langle  (-\Delta)^{s} v, \varphi\big\rangle_{\Omega'}$$
for all $\varphi\in H^{s}_{00}(\Omega')$.
\end{remark}


 \subsection{Weighted Sobolev spaces}


For an open set $G\subset \R^{n+1}$, we define the weighted $L^2$-space  
$$L^2(G,|z|^a\de \mathbf{x}):= \Big\{u\in L^1_{\rm loc}(G) :  |z|^{\frac{a}{2}} u \in L^2(G)\Big\} \quad \text{with }a:=1-2s\,,$$
normed by 
$$\|u\|^2_{L^2(G,|z|^a\de \mathbf{x})}:=\int_G |z|^{a}|u|^2\,\de \mathbf{x}\,. $$
Accordingly, we introduce the weighted Sobolev space 
$$H^1(G,|z|^a\de \mathbf{x}):= \Big\{u\in L^2(G,|z|^a\de \mathbf{x}) : \nabla u \in L^2(G,|z|^a\de \mathbf{x})\Big\} \,,$$ 
normed by 
$$\|u\|_{H^1(G,|z|^a\de \mathbf{x})}:=\|u\|_{L^2(G,|z|^a\de \mathbf{x})}+\|\nabla u\|_{L^2(G,|z|^a\de \mathbf{x})}\,. $$
Both $L^2(G,|z|^a\de \mathbf{x})$ and $H^1(G,|z|^a\de \mathbf{x})$ are separable Hilbert spaces when equipped with the scalar product induced by their respective Hilbertian norm. 

If  $\Omega$ denotes a (relatively) open subset of $ \partial\R^{n+1}_+\simeq \R^n$ such that  $\Omega\subset\partial G$, we set 
$$L^2_{{\rm loc}}(G\cup\Omega, |z|^a\de \mathbf{x}):= \Big\{u\in L^1_{\rm loc}(G) :  |z|^{\frac{a}{2}} u \in L^2_{\rm loc}(G\cup\Omega)\Big\} \,,$$
and
$$ H^1_{{\rm loc}}(G\cup\Omega,|z|^a\de \mathbf{x}) := \Big\{u\in L^2_{{\rm loc}}(G\cup\Omega,|z|^a\de \mathbf{x}) : \nabla u \in L^2_{{\rm loc}}(G\cup\Omega,|z|^a\de \mathbf{x})\Big\}\,.$$

\begin{remark}\label{trace}
For a bounded admissible open set $G\subset \R^{n+1}_+$, the space $L^2(G,|z|^a\de \mathbf{x})$ embeds continuously into $L^\gamma(G)$  for every $1\leq \gamma<\frac{1}{1-s}$ by H\"older's inequality. In particular, 
\begin{equation}\label{contembedd}
H^1(G,|z|^a\de \mathbf{x})\hookrightarrow W^{1,\gamma}(G) 
\end{equation}
continuously for every $1<\gamma<\frac{1}{1-s}$. As a first consequence,  $H^1(G,|z|^a\de \mathbf{x})\hookrightarrow L^{1}(G)$ with compact embedding. Secondly, for such $\gamma$'s,  the compact  linear trace operator 
\begin{equation}\label{conttracW1}
u\in W^{1,\gamma}(G)\mapsto u_{|\partial^0 G}\in L^1(\partial^0G)
\end{equation}
induces a compact linear trace operator from $H^1(G,|z|^a\de \mathbf{x})$ into $L^1(\partial^0G)$,  extending the usual trace of smooth functions. We may denote by $u_{|\partial^0G}$ the trace of $u\in H^1(G,|z|^a\de \mathbf{x})$ on $\partial^0G$, or simply by $u$ if it is clear from the context. Finally, we write   
$H^1(G,|z|^a\de \mathbf{x})\cap L^p(\partial^0G)$ the class of functions $u\in H^1(G,|z|^a\de \mathbf{x})$ such that  $u_{|\partial^0G}\in L^p(\partial^0 G)$. 
\end{remark}

\begin{lemma}\label{poincare}
There exists a constant $\boldsymbol{\lambda}_{n,s}>0$ depending only on $n$ and $s$ such that for every $r>0$, and every $u\in H^1(B_r^+,|z|^a\de{\bf x})$, 
$$ \big\| u - [u]_r  \big\|_{L^1(D_r)} \leq \boldsymbol{\lambda}_{n,s}\,r^{\frac{n+2s}{2}}\|\nabla u\|_{L^2(B_r^+,|z|^a\de{\bf x})}\,,$$
where $[u]_r$ denotes the average of $u$ over $D_r$.  
\end{lemma}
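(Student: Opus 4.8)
The plan is to prove the inequality first on the unit half-ball $B_1^+$ and then obtain the general case $B_r^+$ by scaling. For the unit case, I would argue by contradiction and compactness. Suppose no such constant $\boldsymbol{\lambda}_{n,s}$ exists; then there is a sequence $\{u_j\}\subset H^1(B_1^+,|z|^a\de\mathbf{x})$ with $\|u_j-[u_j]_1\|_{L^1(D_1)}=1$ and $\|\nabla u_j\|_{L^2(B_1^+,|z|^a\de\mathbf{x})}\to 0$. Replacing $u_j$ by $u_j-[u_j]_1$ we may assume $[u_j]_1=0$, so $\|u_j\|_{L^1(D_1)}=1$ and $\nabla u_j\to 0$ in $L^2(B_1^+,|z|^a\de\mathbf{x})$. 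The first task is to show $\{u_j\}$ is bounded in $H^1(B_1^+,|z|^a\de\mathbf{x})$; this needs a \emph{global} Poincaré-type bound controlling $\|u_j\|_{L^2(B_1^+,|z|^a\de\mathbf{x})}$ by $\|\nabla u_j\|_{L^2(B_1^+,|z|^a\de\mathbf{x})}+\|u_j\|_{L^1(D_1)}$, which one establishes by the same compactness scheme applied to the (already available) continuous and compact embeddings $H^1(B_1^+,|z|^a\de\mathbf{x})\hookrightarrow W^{1,\gamma}(B_1^+)\hookrightarrow\hookrightarrow L^1(B_1^+)$ from Remark~\ref{trace}, together with the compact trace operator into $L^1(\partial^0 B_1^+)=L^1(D_1)$.

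Granting the $H^1(B_1^+,|z|^a\de\mathbf{x})$-bound, I would extract a subsequence with $u_j\rightharpoonup u$ weakly in $H^1(B_1^+,|z|^a\de\mathbf{x})$ and, by the compact embeddings of Remark~\ref{trace}, $u_j\to u$ strongly in $L^1(B_1^+)$ and $u_j\to u$ strongly in $L^1(D_1)$. Lower semicontinuity of the weighted Dirichlet integral under weak convergence gives $\|\nabla u\|_{L^2(B_1^+,|z|^a\de\mathbf{x})}\le\liminf_j\|\nabla u_j\|_{L^2(B_1^+,|z|^a\de\mathbf{x})}=0$, so $\nabla u=0$ a.e.\ in the connected open set $B_1^+$, hence $u$ is a.e.\ equal to a constant $c$. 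The strong $L^1$-convergence on $D_1$ yields $[u]_1=\lim_j[u_j]_1=0$, so the trace of $u$ on $D_1$ has zero average; but that trace is the constant $c$, whence $c=0$ and $u\equiv 0$. This contradicts $\|u\|_{L^1(D_1)}=\lim_j\|u_j\|_{L^1(D_1)}=1$. Thus the inequality holds on $B_1^+$ with some constant $\boldsymbol{\lambda}_{n,s}$.

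For general $r>0$, set $u_r(\mathbf{x}):=u(r\mathbf{x})$ for $\mathbf{x}\in B_1^+$. A change of variables shows $[u_r]_1=[u]_r$, and the homogeneities $\de(r\mathbf{x})=r^{n+1}\de\mathbf{x}$, $|rz|^a=r^{1-2s}|z|^a$, $\nabla(u_r)=r(\nabla u)(r\cdot)$ give $\|u_r-[u_r]_1\|_{L^1(D_1)}=r^{-n}\|u-[u]_r\|_{L^1(D_r)}$ and $\|\nabla u_r\|_{L^2(B_1^+,|z|^a\de\mathbf{x})}^2=r\cdot r^{-(n+1)}\cdot r^{2s}\cdot r^{-(1-2s)}\cdot r^{2}\,\|\nabla u\|_{L^2(B_r^+,|z|^a\de\mathbf{x})}^2$; collecting the powers of $r$ and applying the unit-ball inequality to $u_r$ produces exactly the stated estimate with the scaling factor $r^{(n+2s)/2}$. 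The main obstacle is purely in the unit-ball step: one must be careful that the Poincaré/compactness argument genuinely uses the $A_2$-Muckenhoupt nature of the weight $|z|^{1-2s}$ only through the embedding results of Remark~\ref{trace}, and that the constant obtained depends only on $n$ and $s$ (it does, since the whole argument is run once on the fixed domain $B_1^+$); the scaling step is then routine bookkeeping of exponents.
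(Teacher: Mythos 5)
Your high-level strategy (contradiction plus compactness on the unit half-ball, then scaling) matches the spirit of the paper's proof, but the route you choose differs in one crucial respect and contains a genuine gap.

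The paper does \emph{not} run the compactness argument in the weighted space. It first proves the purely Euclidean inequality
\[
\| u - [u]_1 \|_{L^1(D_1)} \leq c_n \int_{B_1^+} |\nabla u|\,\de\mathbf{x}
\qquad\text{for all } u\in W^{1,1}(B_1^+),
\]
by contradiction using only the compact embedding $W^{1,1}(B_1^+)\hookrightarrow L^1(B_1^+)$ (Rellich--Kondrachov) and the continuity of the trace operator, normalizing $\|u_k\|_{L^1(B_1^+)}=1$; then it obtains the weighted bound at the very last step via H\"older's inequality,
\[
\int_{B_1^+} |\nabla u|\,\de\mathbf{x}\leq\Bigl(\int_{B_1^+} z^{-a}\,\de\mathbf{x}\Bigr)^{1/2}\Bigl(\int_{B_1^+} z^{a}|\nabla u|^2\,\de\mathbf{x}\Bigr)^{1/2},
\]
which is finite because $-a=2s-1>-1$. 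The weight only enters through this one H\"older step, and this is precisely where the $s$-dependence of $\boldsymbol{\lambda}_{n,s}$ originates.

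Your version instead works directly in $H^1(B_1^+,|z|^a\de\mathbf{x})$. To extract a weakly convergent subsequence you then need $\{u_j\}$ bounded in $H^1(B_1^+,|z|^a\de\mathbf{x})$, and you acknowledge this requires an auxiliary ``global Poincar\'e'' estimate
$\|v\|_{L^2(B_1^+,|z|^a\de\mathbf{x})}\leq C\bigl(\|\nabla v\|_{L^2(B_1^+,|z|^a\de\mathbf{x})}+\|v\|_{L^1(D_1)}\bigr)$,
which you propose to obtain ``by the same compactness scheme'' using the embeddings of Remark~\ref{trace}. This is where the argument breaks down: Remark~\ref{trace} only supplies compactness of $H^1(B_1^+,|z|^a\de\mathbf{x})\hookrightarrow L^1(B_1^+)$ and of the trace into $L^1(D_1)$. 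To close the contradiction in the auxiliary estimate you must upgrade weak $H^1$-convergence plus strong $L^1(B_1^+)$-convergence to strong convergence in the \emph{weighted} $L^2(B_1^+,|z|^a\de\mathbf{x})$, i.e., you need compactness of $H^1(B_1^+,|z|^a\de\mathbf{x})\hookrightarrow L^2(B_1^+,|z|^a\de\mathbf{x})$. That is a true statement (it holds for $A_2$ weights, cf.\ Fabes--Kenig--Serapioni), but it is \emph{not} among the embeddings available in Remark~\ref{trace} and is not self-evident; asserting it ``by the same scheme'' is circular. Either cite/prove the weighted Rellich theorem explicitly, or better, follow the paper's route: prove the $W^{1,1}$ Poincar\'e first (no weight at all in the compactness argument) and slot in H\"older at the end. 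Your scaling computation for general $r$ is correct and the same as the paper's.
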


\begin{proof} 
By scaling it suffices to consider the case $r=1$. We claim that there exists a constant $c_n>0$ such that for every $u\in W^{1,1}(B_1^+)$,  
\begin{equation}\label{poinc}
 \big\| u - [u]_1  \big\|_{L^1(D_1)} \leq c_n \int_{B_1^+}|\nabla u|\,\de \mathbf{x}\,.
 \end{equation}
Then the conclusion follows from H\"older's inequality. To prove \eqref{poinc} it is enough to consider functions $u\in W^{1,1}(B_1^+)$ satisfying $[u]_1=0$. Then 
we argue by contradiction assuming that there exists a sequence $\{u_k\}_{k\in\mathbb{N}}\subset W^{1,1}(B_1^+)$ such that $[u_k]_1=0$ and $\|u_k\|_{L^1(D_1)}> k \|\nabla u_k\|_{L^1(B^+_1)}$ for every $k\in\mathbb{N}$. Replacing $u_k$ by $u_k/\|u_k\|_{L^1(B_1^+)}$ if necessary, we can assume that $\|u_k\|_{L^1(B_1^+)}=1$ for each $k\in\mathbb{N}$. The trace operator being continuous, we can find a constant $\mathbf{t}_n>0$ such that 
$$\|u_k\|_{L^1(D_1)}\leq \mathbf{t}_n( \|\nabla u_k\|_{L^1(B^+_1)}+ \|u_k\|_{L^1(B_1^+)})\,. $$
Therefore $\|u_k\|_{L^1(D_1)}\leq 2\mathbf{t}_n$ whenever $k$ is large enough. Then $\|\nabla u_k\|_{L^1(B^+_1)}\leq 2\mathbf{t}_n/k$. By the compact embedding  $W^{1,1}(B_1^+)\hookrightarrow L^{1}(B_1^+)$ and the condition $[u_k]_1=0$, we deduce that  $u_k\to 0$ strongly in $W^{1,1}(B_1^+)$, which is in contraction with our normalization choice $\|u_k\|_{L^1(B_1^+)}=1$. 
\end{proof}

\begin{remark}[\bf Smooth approximation]\label{smoothapprox}
If $G\subset \R^{n+1}_+$ is an admissible bounded open set, any function $u\in H^1(G,|z|^a\de \mathbf{x})$ with compact support in $G\cup\partial^0 G$ can be approximated in the $H^1(G,|z|^a\de \mathbf{x})$-norm sense by a  sequence $\{u_k\}_{k\in\mathbb{N}}$ of smooth functions compactly supported in $G\cup\partial^0 G$. To construct such a sequence, one can proceed as follows. First notice that the set $\widetilde G:= \big\{(x,z)\in\R^{n+1} : (x,|z|)\in G\cup\partial^0 G\big\}$  is open in $\mathbb{R}^{n+1}$. The symmetrized function $\widetilde u(x,z):=u(x,|z|)$  then belongs to $H^1(\widetilde G,|z|^a\de \mathbf{x})$, and has compact support in $\widetilde G$. By classical (convolution) arguments,  we can find a sequence $\{\widetilde u_k\}_{k\in\mathbb{N}}$ of smooth functions with compact support in $\widetilde G$ converging to $\widetilde u$ in the $H^1(\widetilde G,|z|^a\de \mathbf{x})$-norm sense. Then we obtain the required sequence $\{u_k\}_{k\in\mathbb{N}}$  by considering the restriction of $\widetilde u_k$ to $G\cup\partial^0 G$. 

If the function $u\in H^1(G,|z|^a\de \mathbf{x})$ is compactly supported in $G\cup\overline\Omega$ for some smooth and bounded  open set $\Omega\subset \R^n$ such that $\overline\Omega\subset\partial^0 G$, the sequence $\{u_k\}_{k\in\mathbb{N}}$ can be chosen in such a way that each $u_k$ is compactly supported in $G\cup\Omega$. Indeed, by a diagonal argument, it is enough to show that $u$ can be approximated in the $H^1(G,|z|^a\de \mathbf{x})$-norm by a  sequence $\{\widehat u_k\}_{k\in\mathbb{N}}\subset H^1(G,|z|^a\de \mathbf{x})$ made of functions compactly supported in the set $G\cup\Omega$.  To this purpose, we first reduce the problem to the case of a bounded function $u$ through the usual truncation argument. From the smoothness assumption on $\partial \Omega$, and since $\partial \Omega$ is a set of codimension 2 in $\R^{n+1}$, it has a vanishing $H^1$-capacity in $\R^{n+1}$. Hence, 
we can find a sequence of cut-off functions $\zeta_k:\R^{n+1}\to[0,1]$ such that $\zeta_k=1$ in a neighborhood of $\partial\Omega$, $\zeta_k\to 0$ a.e. in $\R^{n+1}$ and $\zeta_k\to 0$ strongly in $H^1(\mathbb{R}^{n+1})$ (see e.g. \cite[Theorem 3, p.154]{EvGa}). Setting $\widehat u_k:=(1-\zeta_k)u$, we observe  that $\widehat u_k$ has compact support in $G\cup\Omega$, and   
$$\|\widehat u_k-u\|^2_{H^1(G,|z|^a\de \mathbf{x})}\leq  C\left(\int_G z^a\zeta^2_k|\nabla u|^2\,\de \mathbf{x}+\|u\|^2_{L^\infty(G)}\|\zeta_k\|^2_{H^1(G)}\right)\mathop{\longrightarrow}\limits_{k\to\infty} 0\,,$$
by dominated convergence.
\end{remark}


 \subsection{The Dirichlet-to-Neumann operator}


Consider the function $\mathbf{K}_{n,s}:\R^{n+1}_+\to [0,\infty)$ defined by 
$$\mathbf{K}_{n,s}(\mathbf{x}):=\sigma_{n,s}\,\frac{z^{2s}}{|\mathbf{x}|^{n+2s}}\,, \qquad \sigma_{n,s}:=\pi^{-\frac{n}{2}}\frac{\Gamma(\frac{n+2s}{2})}{\Gamma(s)}\,,$$
where $\mathbf{x}:=(x,z)\in\mathbb{R}^{n+1}_+:=\R^n\times(0,\infty)$. The choice of the constant $\sigma_{n,s}$ is made in such a way that
\footnote{Indeed, changing variables one obtains
\begin{multline*}
\int_{\R^n}\frac{1}{(|x|^2+1)^{\frac{n+2s}{2}}}\,\de x=|\mathbb{S}^{n-1}|\int_0^\infty\frac{r^{n-1}}{(r^2+1)^{\frac{n+2s}{2}}}\,\de r\\
=\frac{|\mathbb{S}^{n-1}|}{2}\int_0^\infty\frac{t^{\frac{n}{2}-1}}{(t+1)^\frac{n+2s}{2}}\,\de t= \frac{|\mathbb{S}^{n-1}|}{2}\,{\rm B}(n/2,s)\,,
\end{multline*}
where ${\rm B}(\cdot,\cdot)$ denotes the Euler Beta function.}   
$\int_{\R^n}\mathbf{K}_{n,s}(x,z)\,\de x=1$ for every $z>0$. 

As shown in  \cite{CaffSil}, the function $\mathbf{K}_{n,s}$ solves 
$$\begin{cases}
{\rm div}(z^{a}\nabla \mathbf{K}_{n,s})= 0 & \text{in $\R^{n+1}_+$}\,,\\
\mathbf{K}_{n,s}=\delta_0 & \text{on $\partial\R^{n+1}_+$}\,,
\end{cases}$$
where $\delta_0$ is the Dirac distribution at the origin. In other words, the function $\mathbf{K}_{n,s}$ can be interpreted as the {\it ``fractional Poisson kernel''} by analogy with the standard case $s=1/2$. 

From now on, for a measurable function $v$ defined over $\mathbb{R}^n$, we shall denote by 
$v^\e$ its extension to the half-space $\mathbb{R}^{n+1}_+$ given by the convolution (in the $x$-variables) of $v$ with the 
 fractional Poisson kernel $\mathbf{K}_{n,s}$,  i.e., 
\begin{equation}\label{poisson}
v^\e(x,z):= \sigma_{n,s}\int_{\mathbb{R}^n}\frac{z^{2s} v(y)}{(|x-y|^2+z^2)^{\frac{n+2s}{2}}}\,\de y\,.
\end{equation}
Notice that $v^\e$ is well defined if $v$ belongs to the Lebesgue space $L^q$ over $\R^n$ with respect to the probability measure 
\begin{equation}\label{defmeasm}
\mathfrak{m}:=\sigma_{n,s}(1+|y|^2)^{-\frac{n+2s}{2}}\,\de y
\end{equation}
for some $1\leq q \leq\infty$. In particular,  $v^\e$ can be defined  whenever 
$v\in \widehat{H}^{s}(\Omega)$ for some bounded  open set $\Omega\subset\R^n$ by Lemma~\ref{adminHchap}. 
Moreover, if  $v\in L^\infty(\R^n)$, then $v^\e\in L^\infty(\R_+^{n+1})$ and  
\begin{equation}\label{bdlinftyext}
\|v^\e\|_{L^\infty(\R_+^{n+1})} \leq \|v\|_{L^\infty(\R^n)}\,.
\end{equation}
For an admissible function $v$, the extension  $v^\e$ has a pointwise trace on $\partial\R^{n+1}_+=\R^n$ which is equal to $v$ at every Lebesgue point. 
 In addition, $v^\e$ solves the equation 
\begin{equation}\label{eqextharm}
\begin{cases} 
{\rm div}(z^{a}\nabla v^\e) = 0 & \text{in $\mathbb{R}_+^{n+1}$}\,,\\
v^\e = v  & \text{on $\partial\R^{n+1}_+$}\,. 
\end{cases}
\end{equation}
By analogy with the standard case $s=1/2$ (for which \eqref{eqextharm} reduces to the Laplace equation), we may say that $v^\e$ is the {\it fractional harmonic extension} of $v$. 
\vskip3pt

The following continuity property is elementary and can be obtained exactly as in \cite[Lemma~2.5]{MilSir}. 

\begin{lemma}\label{context}
For every $R>0$, the restriction operator $\mathfrak{R}_R:L^2(\R^n,\mathfrak{m})\to L^2(B_R^+,|z|^a\de\mathbf{x})$ defined by 
\begin{equation}\label{Pfrak}
\mathfrak{R}_R(v):={v^\e}_{|B_R^+} \,,
\end{equation}
is continuous. 
\end{lemma}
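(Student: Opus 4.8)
The plan is to reduce the statement to an explicit pointwise bound on the extension operator followed by an application of Fubini's theorem. First I would write, for $v\in L^2(\R^n,\mathfrak m)$ and $\mathbf x=(x,z)\in B_R^+$,
\[
|v^\e(x,z)|\leq \sigma_{n,s}\int_{\R^n}\frac{z^{2s}}{(|x-y|^2+z^2)^{\frac{n+2s}{2}}}\,|v(y)|\,\de y = \int_{\R^n}\mathbf K_{n,s}(x-y,z)\,|v(y)|\,\de y\,,
\]
and by Jensen's inequality applied to the probability measure $\mathbf K_{n,s}(x-y,z)\,\de y$,
\[
|v^\e(x,z)|^2\leq \int_{\R^n}\mathbf K_{n,s}(x-y,z)\,|v(y)|^2\,\de y\,.
\]
Multiplying by $z^a=z^{1-2s}$ and integrating over $B_R^+$, Tonelli's theorem gives
\[
\|\mathfrak R_R(v)\|^2_{L^2(B_R^+,|z|^a\de\mathbf x)}\leq \int_{\R^n}|v(y)|^2\left(\int_{B_R^+}z^{a}\,\mathbf K_{n,s}(x-y,z)\,\de x\,\de z\right)\de y\,.
\]

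The remaining point is to bound the inner integral by a constant multiple of the density of $\mathfrak m$ at $y$, namely by $C_R\,\sigma_{n,s}(1+|y|^2)^{-\frac{n+2s}{2}}$. For $|y|\leq 2R$ one simply uses that $\int_{B_R^+}z^{a}\mathbf K_{n,s}(x-y,z)\,\de x\,\de z\leq \int_0^R z^{a}\big(\int_{\R^n}\mathbf K_{n,s}(x-y,z)\,\de x\big)\de z = \int_0^R z^{1-2s}\,\de z = R^{2-2s}/(2-2s)$, which is comparable to $(1+|y|^2)^{-\frac{n+2s}{2}}$ up to a constant depending on $R$ and $n,s$ (since $1+|y|^2\leq 1+4R^2$ there). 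For $|y|>2R$ one has $|x-y|\geq |y|/2$ for all $x\in D_R$, hence $|x-y|^2+z^2\geq |y|^2/4\geq (1+|y|^2)/8$ using $|y|>2R\geq$ (some fixed amount, or after enlarging the constant), so that $\mathbf K_{n,s}(x-y,z)\leq \sigma_{n,s}\,8^{\frac{n+2s}{2}}\,z^{2s}(1+|y|^2)^{-\frac{n+2s}{2}}$, and integrating the harmless factor $z^{a}\cdot z^{2s}=z$ over $z\in(0,R)$ and $x$ over $D_r$ contributes only $R$-dependent constants. Combining the two cases yields $\|\mathfrak R_R(v)\|^2_{L^2(B_R^+,|z|^a\de\mathbf x)}\leq C_R\,\|v\|^2_{L^2(\R^n,\mathfrak m)}$, i.e. continuity of $\mathfrak R_R$.

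I do not expect a genuine obstacle here: the argument is the standard one for the boundedness of a Poisson-type operator, and the only mildly delicate bookkeeping is the split into $|y|\lesssim R$ and $|y|\gtrsim R$ to recover the weight $(1+|y|^2)^{-\frac{n+2s}{2}}$ of $\mathfrak m$. Since the excerpt explicitly points to \cite[Lemma~2.5]{MilSir} for an identical statement, one could alternatively just invoke that reference; but the self-contained computation above is short enough to include directly.
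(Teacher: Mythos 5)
Your proof is correct and takes the standard route (Jensen's inequality using that $\mathbf K_{n,s}(\cdot,z)\,\de y$ is a probability measure, then Tonelli, then a case split on $|y|\lessgtr 2R$ to recover the weight of $\mathfrak m$), which is precisely the argument the paper alludes to by referring to \cite[Lemma~2.5]{MilSir}. The only cosmetic slip is $D_r$ for $D_R$ near the end, and the inequality $|y|^2/4\geq(1+|y|^2)/8$ as written requires $|y|\geq 1$ rather than just $|y|>2R$; you already flag that the constant can simply be enlarged, and indeed $|y|^2/(4(1+|y|^2))\geq R^2/(1+4R^2)$ for $|y|>2R$ closes this cleanly.
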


It has been proved in \cite{CaffSil} that $v^\e$  belongs to the weighted space $H^1(\mathbb{R}_+^{n+1},|z|^a\de\mathbf{x})$ whenever 
$v\in H^{s}(\mathbb{R}^n)$. In addition, the $H^{s}$-seminorm of $v$ coincides with the weighted $L^2$-norm of $\nabla v^\e$, extending a well known identity for $s=1/2$. 

\begin{lemma}[\cite{CaffSil}]\label{normexth1/2}
Let $v\in H^{s}(\mathbb{R}^n)$, and let $v^\e$ be its fractional harmonic extension to~$\mathbb{R}^{n+1}_+$ given by \eqref{poisson}. Then 
$v^\e$ belongs to $H^1(\mathbb{R}_+^{n+1},|z|^a\de \mathbf{x})$ and
\begin{align}
\nonumber [v]^2_{H^{s}(\mathbb{R}^n)} &= d_s\|\nabla v^\e\|^2_{L^2(\R_+^{n+1},|z|^a\de \mathbf{x})} \\
\label{isomtry}&=\inf\left\{ d_s\|\nabla u\|^2_{L^2(\R_+^{n+1},|z|^a\de \mathbf{x})} : u\in H^1(\mathbb{R}^{n+1}_+,|z|^a\de \mathbf{x})\,, \ u=v \text{ on $\mathbb{R}^n$} \right\}\,,
\end{align}
where $d_s:=2^{2s-1}\frac{\Gamma(s)}{\Gamma(1-s)}$.
\end{lemma}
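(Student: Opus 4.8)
The statement is the classical Caffarelli–Silvestre extension identity, and the natural route is to reduce everything to an already well‑understood computation in Fourier variables on $\R^n$, combined with an ODE analysis in the extension variable $z$. First I would take $v\in H^s(\R^n)$ and write $\widehat v$ for its Fourier transform in $x$; since the Poisson‑type kernel $\mathbf{K}_{n,s}(\cdot,z)$ acts by convolution in $x$, the extension $v^\e(\cdot,z)$ has Fourier transform $\widehat{v^\e}(\xi,z)=\varphi(2\pi|\xi|z)\,\widehat v(\xi)$, where $\varphi=\varphi_s$ is (a constant multiple of) the radial profile of $\widehat{\mathbf{K}_{n,s}}$. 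The key facts to extract are that $\varphi$ solves the Bessel‑type ODE $\varphi''+\frac{a}{t}\varphi'-\varphi=0$ on $(0,\infty)$ with $a=1-2s$, that $\varphi(0)=1$ (this is the normalization $\int \mathbf{K}_{n,s}(x,z)\,\de x=1$ already recorded in the excerpt), and that $\varphi$ decays at $+\infty$; these are exactly the properties that make $v^\e$ solve \eqref{eqextharm}, which the excerpt attributes to \cite{CaffSil} and which I would simply cite.

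**Main computation.** With this Fourier representation in hand, I would compute the weighted Dirichlet integral directly. Using Plancherel in $x$ for each fixed $z$,
\[
\|\nabla v^\e\|^2_{L^2(\R^{n+1}_+,|z|^a\de\mathbf{x})}
=\int_0^\infty z^a\!\!\int_{\R^n}\Big(|\partial_z\widehat{v^\e}(\xi,z)|^2+(2\pi|\xi|)^2|\widehat{v^\e}(\xi,z)|^2\Big)\,\de\xi\,\de z\,.
\]
Substituting $\widehat{v^\e}(\xi,z)=\varphi(2\pi|\xi|z)\widehat v(\xi)$ and changing variables $t=2\pi|\xi|z$ (so $z^a\,\de z=(2\pi|\xi|)^{-1-a}t^a\,\de t$ and the factor $(2\pi|\xi|)^2$ is absorbed, leaving $(2\pi|\xi|)^{2s}$) gives
\[
\|\nabla v^\e\|^2_{L^2(\R^{n+1}_+,|z|^a\de\mathbf{x})}
=\Big(\int_0^\infty t^a\big(\varphi'(t)^2+\varphi(t)^2\big)\,\de t\Big)\int_{\R^n}(2\pi|\xi|)^{2s}|\widehat v(\xi)|^2\,\de\xi\,,
\]
and the last integral is precisely $[v]^2_{H^s(\R^n)}$ by the normalization footnote in the excerpt. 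So the identity holds with $d_s^{-1}$ equal to the $t$‑integral, and one checks $d_s=2^{2s-1}\Gamma(s)/\Gamma(1-s)$ by an integration by parts against the ODE: $\int_0^\infty t^a(\varphi'^2+\varphi^2)\,\de t=-[t^a\varphi\varphi']_0^\infty+\int_0^\infty\varphi(-\,(t^a\varphi')'+t^a\varphi)\,\de t=-\lim_{t\to0^+}t^a\varphi(t)\varphi'(t)$, and the boundary behavior $\varphi'(t)\sim c\,t^{1-a}=c\,t^{2s}$ near $0$ (forced by the ODE and $\varphi(0)=1$) pins down the constant, which one then identifies with the standard Gamma‑function expression; alternatively one just cites \cite{CaffSil} for the value of $d_s$ once the identity itself is established.

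**The variational (infimum) part.** For the second equality in \eqref{isomtry}, the inequality "$\inf\le$" is trivial since $v^\e$ is an admissible competitor. For "$\inf\ge$", take any $u\in H^1(\R^{n+1}_+,|z|^a\de\mathbf{x})$ with trace $v$ on $\R^n$; fix $z$ and expand in Fourier in $x$, so $\|\nabla u\|^2$ splits fiberwise as $\int z^a\int(|\partial_z\widehat u|^2+(2\pi|\xi|)^2|\widehat u|^2)$, and for each frequency $\xi$ we are minimizing a one‑dimensional weighted Dirichlet energy $\int_0^\infty z^a(|g'|^2+(2\pi|\xi|)^2|g|^2)\,\de z$ over $g$ with $g(0)=\widehat v(\xi)$. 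The minimizer of this scalar problem is exactly $g(z)=\varphi(2\pi|\xi|z)\widehat v(\xi)$ (it solves the Euler–Lagrange ODE $-(z^a g')'+(2\pi|\xi|)^2 z^a g=0$ with the right boundary value and finite energy), so the fiberwise minimum is achieved by $v^\e$ and summing over $\xi$ gives the claim. A small technical point I would address: justifying that an $H^1(|z|^a)$ function genuinely has a well‑defined trace and that the Fourier‑fiber decomposition is legitimate — but the excerpt's Remark \ref{trace} already provides the trace, and a density argument (Remark \ref{smoothapprox}) lets one reduce to smooth $u$ where Fubini and Plancherel are unproblematic.

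**Expected main obstacle.** The only genuinely delicate point is the boundary term analysis in the integration by parts that produces the explicit constant $d_s$ — one must know the precise asymptotics $\varphi(t)=1-c_s t^{2s}+\dots$ as $t\to0^+$ (equivalently, that the Neumann data of $v^\e$ is $-d_s^{-1}(-\Delta)^s v$), and that $t^a\varphi\varphi'\to0$ as $t\to\infty$. Since the excerpt already attributes both the solvability of \eqref{eqextharm} and the normalization of $\mathbf{K}_{n,s}$ to \cite{CaffSil}, in practice I would quote \cite{CaffSil} for the value of $d_s$ and present only the Fourier computation establishing the proportionality $[v]^2_{H^s(\R^n)}=d_s\|\nabla v^\e\|^2$ and the fiberwise minimization; this keeps the argument short and self‑contained modulo the cited extension result.
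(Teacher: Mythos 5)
The paper does not prove this lemma; it is stated with the attribution $[\textnormal{\cite{CaffSil}}]$ and simply cited, so there is no in-text argument to compare yours against. Your Fourier-fiber approach is exactly the standard proof from the cited Caffarelli--Silvestre paper: diagonalize in $x$, reduce to the scaled one-dimensional profile $\varphi$ solving $(t^{a}\varphi')'=t^{a}\varphi$ with $\varphi(0)=1$, change variables $t=2\pi|\xi|z$ to pull out the factor $(2\pi|\xi|)^{2s}$, and then minimize fiberwise for the variational part. This is correct, and your remarks about traces and density are exactly the points one would invoke (the paper supplies them in its Remarks on the trace operator and smooth approximation).

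One small slip worth flagging: in the integration by parts you wrote
$\int_0^\infty t^a(\varphi'^2+\varphi^2)\,\de t
= -\big[t^a\varphi\varphi'\big]_0^\infty + \int_0^\infty \varphi\bigl(-(t^a\varphi')'+t^a\varphi\bigr)\,\de t$,
but the boundary term should appear with a plus sign: integrating $\int t^a\varphi'\cdot\varphi'\,\de t$ by parts gives $\bigl[t^a\varphi'\varphi\bigr]_0^\infty-\int (t^a\varphi')'\varphi\,\de t$, so after the ODE kills the bulk term you are left with $\bigl[t^a\varphi'\varphi\bigr]_0^\infty=-\lim_{t\to0^+}t^a\varphi(t)\varphi'(t)$, which is indeed positive since $\varphi'(t)\sim -c\,t^{2s-1}$ near $0$. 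Your final expression for the boundary limit is right, so this is only a transcription error in the intermediate line, not a gap. Since you ultimately defer the exact evaluation of $d_s$ to \cite{CaffSil}, the argument stands as a complete and faithful reconstruction of the cited result.
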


\begin{remark}\label{remtrace}
Let  $G\subset \R^{n+1}_+$ be an admissible bounded  open set. For any function $u\in H^1(\mathbb{R}^{n+1}_+,|z|^a\de\mathbf{x})$ compactly supported in $G\cup\partial^0G$, 
the trace $u_{|\mathbb{R}^n}$  belongs to $H^s_{00}(\partial^0G)$. Indeed, if $u$ is smooth in $\overline{\mathbb{R}^{n+1}_+}$, then we can apply identity  \eqref{isomtry}. In the general case, it suffices to apply the approximation procedure in Remark \ref{smoothapprox} to reach the conclusion. 
\end{remark}

If $v\in  \widehat{H}^{s}(\Omega)$ for a bounded open set $\Omega\subset\R^n$,  
we have the following estimates on $v^\e$ extending Lemma \ref{normexth1/2} to the local setting. The proof follows closely the arguments in \cite[Lemma 2.7]{MilSir}, and we shall omit it. 

\begin{lemma}\label{hatH1/2toH1}
Let $\Omega\subset \mathbb{R}^n$ be a bounded open set. 
For every $v\in \widehat{H}^{s}(\Omega)$, the  extension $v^\e$ 
given by \eqref{poisson} belongs to $H^1_{{{\rm loc}}}(\R^{n+1}_+\cup\Omega,|z|^a\de \mathbf{x})\cap L^2_{{\rm loc}}\big(\overline{\R^{n+1}_+},|z|^a\de \mathbf{x}\big)$. In addition, for every $x_0\in\Omega$, $R>0$, and  $\rho>0$ 
such that $D_{3\rho}(x_0)\subset\Omega$, there exist constants $C_{s,R,\rho}>0$  and $C_{s,\rho}>0$, independent of $v$ and $x_0$, such that  
$$\big\|v^\e\big\|^2_{L^2(B_R^+(x_0),|z|^a\de \mathbf{x})}\leq C_{s,R,\rho} \left(\mathcal{E}\big(v,D_{2\rho}(x_0)\big)+\|v\|^2_{L^2(D_{2\rho}(x_0))}\right)\,, $$
and 
$$ \big\|\nabla v^\e\big\|^2_{L^2(B_\rho^+(x_0),|z|^a\de \mathbf{x})}\leq C_{s,\rho} \left(\mathcal{E}\big(v,D_{2\rho}(x_0)\big)+\|v\|^2_{L^2(D_{2\rho}(x_0))}\right)\,. $$
\end{lemma}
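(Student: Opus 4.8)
The plan is to prove the two estimates by localizing the global Caffarelli--Silvestre identity from Lemma~\ref{normexth1/2}. The obstacle is that $v\in\widehat H^s(\Omega)$ need not lie in $H^s(\R^n)$, so we cannot apply Lemma~\ref{normexth1/2} directly to $v$; the point of the local bound $D_{3\rho}(x_0)\subset\Omega$ is precisely to give us enough room to cut off and replace $v$ by a genuine $H^s(\R^n)$-function without altering $v^\e$ too much on the smaller ball. By translation invariance it suffices to treat $x_0=0$, and we may write $D_\rho=D_\rho(0)$, $B_R^+=B_R^+(0)$.

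\textbf{Step 1: the sub-mean-value / Poisson estimate.} First I would record the pointwise bound controlling $v^\e$ on $B_R^+$ by a weighted tail integral of $v$. From the explicit kernel \eqref{poisson}, for $(x,z)\in B_R^+$ one has
\[
|v^\e(x,z)|\leq \sigma_{n,s}\int_{\R^n}\frac{z^{2s}|v(y)|}{(|x-y|^2+z^2)^{\frac{n+2s}{2}}}\,\de y
\leq C_{s,R}\int_{\R^n}\frac{|v(y)|}{(|y|+1)^{n+2s}}\,\de y\quad\text{when }|y|\geq 2R,
\]
while the contribution of $\{|y|<2R\}$ is handled by splitting into $\{|x-y|<z\}$ and $\{|x-y|\geq z\}$ and using that $\mathbf K_{n,s}(x-\cdot,z)$ is an approximate identity; integrating in $(x,z)$ over $B_R^+$ against $z^a\,\de\mathbf x$ then yields
\[
\|v^\e\|_{L^2(B_R^+,|z|^a\de\mathbf x)}^2\leq C_{s,R}\left(\|v\|_{L^2(D_{2R+1})}^2+\Big(\int_{\R^n}\frac{|v(y)|}{(|y|+1)^{n+2s}}\,\de y\Big)^2\right).
\]
Now Lemma~\ref{adminHchap} (applied with $\rho$ there equal to, say, $\min(\rho,1)$, after possibly enlarging $R$ so that $D_{2R+1}\supset D_\rho$ — or more carefully by also covering $D_{2R+1}\cap\Omega$ by finitely many discs of radius $\rho$ inside $\Omega$ using $D_{3\rho}(0)\subset\Omega$, plus the tail control of Lemma~\ref{adminHchap} outside) bounds the right-hand side by $C_{s,R,\rho}\big(\mathcal E(v,D_{2\rho})+\|v\|_{L^2(D_{2\rho})}^2\big)$, and Cauchy--Schwarz turns the tail integral into the $L^2(\R^n,\mathfrak m)$-type quantity estimated by Lemma~\ref{adminHchap}. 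This gives the first inequality. (Here one uses $\widehat H^s(\Omega)\subset\widehat H^s(\Omega')$ with $\mathcal E(v,\Omega')\leq\mathcal E(v,\Omega)$ to replace $\Omega$ by $D_{2\rho}$ freely, enlarging constants as needed, since the statement only asks for $D_{3\rho}(x_0)\subset\Omega$.)

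\textbf{Step 2: the gradient estimate via a cutoff and Lemma~\ref{normexth1/2}.} For the second inequality, fix a cutoff $\eta\in C_c^\infty(D_{3\rho})$ with $\eta\equiv 1$ on $D_{2\rho}$, $0\leq\eta\leq1$, $|\nabla\eta|\leq C/\rho$, and set $w:=\eta v$. Then $w\in H^s_{00}(D_{3\rho})\subset H^s(\R^n)$: indeed the standard fractional-Leibniz estimate gives $[w]_{H^s(\R^n)}^2\leq C\big([v]_{H^s(D_{3\rho})}^2+\rho^{-2s}\|v\|_{L^2(D_{3\rho})}^2\big)\leq C_{s,\rho}\big(\mathcal E(v,D_{3\rho})+\|v\|_{L^2(D_{3\rho})}^2\big)$, where one uses that the kernel $|x-y|^{-n-2s}$ with $s<1/2$ is not too singular and that $\eta$ is Lipschitz with support in $D_{3\rho}$, treating the diagonal and off-diagonal pieces separately. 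By Lemma~\ref{normexth1/2}, $w^\e\in H^1(\R^{n+1}_+,|z|^a\de\mathbf x)$ with $\|\nabla w^\e\|_{L^2(\R^{n+1}_+,|z|^a\de\mathbf x)}^2=d_s^{-1}[w]_{H^s(\R^n)}^2$. Finally I would estimate $v^\e-w^\e=(v-w)^\e=((1-\eta)v)^\e$ on the small ball $B_\rho^+$: since $(1-\eta)v$ vanishes on $D_{2\rho}$, the kernel representation shows $\nabla\big((1-\eta)v\big)^\e$ is controlled on $B_\rho^+$ by the same type of tail integral as in Step~1 (differentiating the smooth kernel away from the vanishing set only produces extra bounded factors on $B_\rho^+$), hence $\|\nabla v^\e-\nabla w^\e\|_{L^2(B_\rho^+,|z|^a\de\mathbf x)}^2\leq C_{s,\rho}\big(\mathcal E(v,D_{2\rho})+\|v\|_{L^2(D_{2\rho})}^2\big)$ again via Lemma~\ref{adminHchap}. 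Combining with the bound on $\|\nabla w^\e\|^2$ and using $\mathcal E(v,D_{2\rho})\leq \mathcal E(v,D_{3\rho})$ — then absorbing the $D_{3\rho}$-quantity into the $D_{2\rho}$-quantity is not literally possible, so one simply states the final bound in terms of $D_{2\rho}$ after noting the hypothesis guarantees $D_{3\rho}\subset\Omega$ and rereading the claimed inequality, which already tolerates the constant $C_{s,\rho}$ and the domain $D_{2\rho}(x_0)$ on the right — wait: to be safe I would in fact prove it with $D_{2\rho}$ throughout by starting from a cutoff supported in $D_{2\rho}$ and equal to $1$ on $D_{\rho}$, which still suffices to control $\nabla v^\e$ on $B_\rho^+$ and keeps all right-hand sides on $D_{2\rho}$.

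\textbf{Main obstacle.} The only genuinely delicate point is the fractional Leibniz estimate $[\eta v]_{H^s(\R^n)}\lesssim [v]_{H^s(D_{2\rho})}+\rho^{-s}\|v\|_{L^2(D_{2\rho})}$ together with the tail control of $((1-\eta)v)^\e$ on $B_\rho^+$: both require carefully splitting the double integral according to whether the two points are close (use $|\eta(x)-\eta(y)|\le C\rho^{-1}|x-y|$ and $s<1/2$ so that $|x-y|^{2-n-2s}$ is integrable near the diagonal) or far (use boundedness of $\eta$ and the decay of the kernel, feeding into Lemma~\ref{adminHchap}). Everything else is the routine reduction to $x_0=0$ by translation, monotonicity $\mathcal E(v,\Omega')\le\mathcal E(v,\Omega)$, and invocations of Lemma~\ref{adminHchap}, Lemma~\ref{context}, and Lemma~\ref{normexth1/2}, exactly as in \cite[Lemma~2.7]{MilSir}; hence the paper's remark that the proof is omitted.
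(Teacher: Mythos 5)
Your overall strategy is the right one, and it is essentially the approach the paper points to (cutoff, Caffarelli--Silvestre extension identity via Lemma~\ref{normexth1/2}, tail control via Lemma~\ref{adminHchap}). Two points deserve tightening, though.

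\textbf{Cutoff geometry.} Your final fix (``cutoff supported in $D_{2\rho}$ and equal to $1$ on $D_{\rho}$'') does not give enough separation. With that choice, $(1-\eta)v$ is supported in $\{|y|\geq\rho\}$, while on $B_\rho^+$ the horizontal variable $x$ ranges over $D_\rho$, so $|x-y|$ is not bounded away from zero and the pointwise bound on $\nabla\big((1-\eta)v\big)^\e$ degenerates near $\partial D_\rho\times\{0\}$. Take instead $\eta\equiv 1$ on $D_{3\rho/2}$ and $\operatorname{supp}\eta\subset D_{2\rho}$: then for $(x,z)\in B_\rho^+$ and $y\in\operatorname{supp}((1-\eta)v)$ one has $|x-y|\geq\rho/2$, so the kernel and its first derivatives are bounded on $B_\rho^+$ by $C_\rho\, z^{2s-1}(1+|y|)^{-n-2s}$, the $z^{2s-1}$ factor is integrable against $z^a\,\de z$ on $(0,\rho)$ since $a+2(2s-1)=2s-1>-1$, and the $y$-integral is handled by Cauchy--Schwarz and Lemma~\ref{adminHchap} applied on $D_{2\rho}(x_0)\subset\Omega$. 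With this cutoff the Leibniz estimate reads $[\eta v]^2_{H^s(\R^n)}\leq C\big([v]^2_{H^s(D_{2\rho})}+\rho^{-2s}\|v\|^2_{L^2(D_{2\rho})}\big)$, and the right-hand side lands on $D_{2\rho}$ as required; the meandering discussion at the end of your Step~2 is resolved by this single choice.

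\textbf{Simplifications.} Your Step~1 can be replaced by a one-line composition of Lemma~\ref{context} (continuity of $\mathfrak R_R:L^2(\R^n,\mathfrak m)\to L^2(B_R^+,|z|^a\de\mathbf x)$) with Lemma~\ref{adminHchap}; re-deriving the kernel bound from scratch is correct but unnecessary. For Step~2, you can avoid the fractional Leibniz rule entirely by using the decomposition $v=v\chi_{D_{2\rho}}+v\chi_{\R^n\setminus D_{2\rho}}$. Since $s<1/2$ and $\partial D_{2\rho}$ is smooth, the extension by zero lies in $H^s_{00}(D_{2\rho})\subset H^s(\R^n)$ with $\big[v\chi_{D_{2\rho}}\big]^2_{H^s(\R^n)}\leq C_{s,\rho}\|v\|^2_{H^s(D_{2\rho})}\leq C_{s,\rho}\big(\mathcal E(v,D_{2\rho})+\|v\|^2_{L^2(D_{2\rho})}\big)$ (these facts are recorded in Section~2.1 of the paper). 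Apply Lemma~\ref{normexth1/2} to the first piece; the second piece vanishes identically on $D_{2\rho}$, so for $(x,z)\in B_\rho^+$ the kernel representation directly gives a bound on $\nabla\big(v\chi_{\R^n\setminus D_{2\rho}}\big)^\e$ by the $\mathfrak m$-weighted tail of $v$, again via Lemma~\ref{adminHchap}. This sidesteps the only genuinely delicate computation you flagged.
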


\begin{remark}\label{H1/2loctoH1loc}
By the previous lemma, for any $v\in \widehat H^{s}(\Omega)\cap H^{s}_{\rm loc}(\mathbb{R}^n)$, the fractional harmonic extension $v^\e$ 
belongs to $H^1_{{{\rm loc}}}(\overline{\R^{n+1}_+},|z|^a\de \mathbf{x})$, and for any $R>0$, 
$$\big\| v^\e\big\|^2_{H^1(B_R^+,|z|^a\de \mathbf{x})}\leq C_{s,R} \left(\mathcal{E}\big(v,D_{2R}\big)+\|v\|^2_{L^2(D_{2R})}\right)\,. $$
\end{remark}
\vskip5pt

If $v\in \widehat H^{s}(\Omega)$ for some bounded open set $\Omega\subset\R^n$ with Lipschitz boundary, the divergence free vector field $z^{a}\nabla v^\e$ admits a distributional normal trace on $\Omega$, that we denote by $\mathbf{\Lambda}^{(2s)}v$.  
More precisely, we define $\mathbf{\Lambda}^{(2s)} v$  through its action on a test function $\varphi\in \mathscr{D}(\Omega)$ by setting
\begin{equation}\label{defNeumOp}
\left\langle \mathbf{\Lambda}^{(2s)} v, \varphi\right\rangle_\Omega := \int_{\mathbb{R}^{n+1}_+}z^{a}\nabla v^\e\cdot\nabla\Phi\,\de \mathbf{x}\,,
\end{equation}
where $\Phi$ is any smooth extension of $\varphi$ compactly supported in  $\mathbb{R}_+^{n+1}\cup\Omega$. Note that the right hand side of \eqref{defNeumOp} 
is well defined by Lemma~\ref{hatH1/2toH1}. Using equation \eqref{eqextharm} and the divergence theorem, it is routine to check that 
the integral in \eqref{defNeumOp} does not depend on the choice of the extension $\Phi$. In the light of \eqref{densitysmoothH1/200} and Lemma \ref{normexth1/2}, we infer that $\mathbf{\Lambda}^{(2s)}:\widehat H^{s}(\Omega)\to H^{-s}(\Omega)$ defines a continuous linear operator. It can be thought as a {\it fractional  Dirichlet-to-Neumann operator}. Indeed, whenever $v$ is smooth, the distribution $\mathbf{\Lambda}^{(2s)}v$ is the pointwise defined function given by 
$$\mathbf{\Lambda}^{(2s)} v(x)=-\lim_{z\downarrow0}z^{a}\partial_z v^\e(x,z)=2s\, \lim_{z\downarrow0} \frac{v^\e(x,0)-v^\e(x,z)}{z^{2s}}$$  
for  $x\in \Omega$.  
\vskip5pt

In the case $\Omega=\R^n$, it has been proved in \cite{CaffSil} that $\mathbf{\Lambda}^{(2s)}$ coincides with $ (-\Delta)^{s} $, up to a constant multiplicative factor. In our localized  setting, this identity still holds, and it can be obtained essentially as in \cite[Lemma 2.9]{MilSir}.

\begin{lemma}\label{repnormderfraclap}
If $\Omega\subset \mathbb{R}^n$ is a bounded open set with  Lipschitz boundary, then 
$$ (-\Delta)^{s} = d_s \mathbf{\Lambda}^{(2s)} \text{ on $\widehat H^{s}(\Omega)$}\,.$$
\end{lemma}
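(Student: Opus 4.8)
The plan is to prove that $(-\Delta)^s = d_s \mathbf{\Lambda}^{(2s)}$ as operators on $\widehat{H}^s(\Omega)$ by reducing to the global case, which is known from \cite{CaffSil}, via a density/localization argument. Since both $(-\Delta)^s$ and $\mathbf{\Lambda}^{(2s)}$ are continuous linear maps from $\widehat{H}^s(\Omega)$ into $H^{-s}(\Omega)$ (the former by \eqref{estinormH-1/2fraclap}, the latter by the discussion following \eqref{defNeumOp}), it suffices to check the identity of the two distributions when tested against an arbitrary $\varphi\in\mathscr{D}(\Omega)$; by \eqref{densitysmoothH1/200} this then propagates to all of $H^s_{00}(\Omega)$.

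First I would fix $\varphi\in\mathscr{D}(\Omega)$ and let $\Phi$ be a smooth extension of $\varphi$ with compact support in $\mathbb{R}^{n+1}_+\cup\Omega$; a convenient choice is the fractional harmonic extension $\Phi=\varphi^\e$ itself (truncated if necessary, but since $\varphi$ is compactly supported in $\Omega$ one can also work with $\varphi^\e$ directly, controlling tails). By definition \eqref{defNeumOp},
\[
\left\langle \mathbf{\Lambda}^{(2s)} v,\varphi\right\rangle_\Omega=\int_{\mathbb{R}^{n+1}_+}z^a\nabla v^\e\cdot\nabla\Phi\,\de\mathbf{x}\,.
\]
The strategy is to recognize the right-hand side as a bilinear form that, by the global theory, equals $d_s$ times the bilinear form defining $(-\Delta)^s$. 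Concretely, one approximates: pick a cutoff $\eta\in\mathscr{D}(\Omega)$ with $\eta\equiv 1$ on a neighborhood of $\operatorname{spt}\varphi$, set $v_\eta:=\eta v\in H^s(\mathbb{R}^n)$ (extended by $0$), and note $(v_\eta)^\e$ and $v^\e$ agree to high order near $\operatorname{spt}\Phi\cap\partial\mathbb{R}^{n+1}_+$. Using the equation \eqref{eqextharm} satisfied by $v^\e$ together with the divergence theorem on a large half-ball — exactly the computation that shows the integral in \eqref{defNeumOp} is extension-independent — one reduces $\int_{\mathbb{R}^{n+1}_+}z^a\nabla v^\e\cdot\nabla\Phi$ to the same integral with $v^\e$ replaced by $(v_\eta)^\e$, up to an error that vanishes because the difference $v^\e-(v_\eta)^\e$ is $z^a$-harmonic with boundary trace supported away from $\operatorname{spt}\varphi$, making the relevant boundary integral zero. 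Then Lemma~\ref{normexth1/2} (polarized) applied to $v_\eta\in H^s(\mathbb{R}^n)$ gives
\[
\int_{\mathbb{R}^{n+1}_+}z^a\nabla (v_\eta)^\e\cdot\nabla\varphi^\e\,\de\mathbf{x}=\frac{1}{d_s}\,\frac{\gamma_{n,s}}{2}\iint_{\mathbb{R}^n\times\mathbb{R}^n}\frac{(v_\eta(x)-v_\eta(y))(\varphi(x)-\varphi(y))}{|x-y|^{n+2s}}\,\de x\,\de y\,,
\]
and since $v_\eta=v$ on a neighborhood of $\operatorname{spt}\varphi$ the right-hand side equals $d_s^{-1}\langle(-\Delta)^s v,\varphi\rangle_\Omega$ by comparing with \eqref{deffraclap} (here the two pieces of \eqref{deffraclap}, the $\Omega\times\Omega$ part and the $\Omega\times\Omega^c$ part, recombine into the single $\mathbb{R}^n\times\mathbb{R}^n$ integral, which is legitimate because $\varphi$ vanishes outside $\Omega$). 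Combining the displays yields $\langle\mathbf{\Lambda}^{(2s)}v,\varphi\rangle_\Omega=d_s^{-1}\langle(-\Delta)^s v,\varphi\rangle_\Omega$, which is the claim.

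The main obstacle I anticipate is the careful justification of the "localization" step: replacing $v$ by $\eta v$ inside the weighted Dirichlet integral and showing the error terms genuinely vanish. One must be precise that $v^\e$ is only in $H^1_{\mathrm{loc}}(\mathbb{R}^{n+1}_+\cup\Omega,|z|^a\de\mathbf{x})$, not globally $H^1$, so integration by parts on $\mathbb{R}^{n+1}_+$ is not immediate; the fix is to integrate over $B_R^+$, use that $\operatorname{spt}\Phi$ is compact, and control the flux across $\partial^+B_R^+$ as $R\to\infty$ using the decay estimates for $v^\e$ available from Lemma~\ref{adminHchap} and Lemma~\ref{hatH1/2toH1}, or simply choose $\Phi$ compactly supported from the start so no $R\to\infty$ limit is needed and only the behavior near $\partial^0$ matters. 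The cleanest route, which the paper presumably follows (''essentially as in \cite[Lemma 2.9]{MilSir}''), is: take $\Phi$ compactly supported in $\mathbb{R}^{n+1}_+\cup\Omega$, so that $\operatorname{spt}\Phi\cap\partial\mathbb{R}^{n+1}_+\subset\subset\Omega$, pick $\eta\equiv 1$ there, and then $v^\e-(v_\eta)^\e$ solves \eqref{eqextharm} with boundary data $(1-\eta)v$ vanishing on $\operatorname{spt}\Phi\cap\partial\mathbb{R}^{n+1}_+$, so $\int z^a\nabla(v^\e-(v_\eta)^\e)\cdot\nabla\Phi=0$ by the divergence theorem (the $z^a$-harmonicity kills the interior term and the support condition kills the boundary term). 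After that, everything is the global identity of \cite{CaffSil} plus bookkeeping with \eqref{deffraclap}.
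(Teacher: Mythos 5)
The overall architecture is sensible (reduce to the global Caffarelli--Silvestre identity via a cutoff $\eta$, polarize Lemma~\ref{normexth1/2}, recombine the $\Omega\times\Omega$ and $\Omega\times\Omega^c$ pieces of \eqref{deffraclap} into a single $\R^n\times\R^n$ integral), and several pieces of the argument are correct. However, there is a genuine gap at the critical ``localization'' step, and the gap is not a technicality: the claimed vanishing is simply false.

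You assert that $\int_{\R^{n+1}_+}z^a\nabla\big(v^\e-(v_\eta)^\e\big)\cdot\nabla\Phi\,\de\mathbf{x}=0$ ``because the difference is $z^a$-harmonic with boundary trace supported away from $\operatorname{spt}\varphi$.'' Write $w^\e:=v^\e-(v_\eta)^\e$, which is the extension of $w:=(1-\eta)v$. Integrating by parts over $\{z>\delta\}$ and letting $\delta\downarrow 0$, the interior term vanishes by $z^a$-harmonicity, but the surviving boundary term is
$$
\int_{\R^{n+1}_+}z^a\nabla w^\e\cdot\nabla\Phi\,\de\mathbf{x}
=-\int_{\R^n}\boldsymbol{\partial}^{(2s)}_z w^\e(x)\,\varphi(x)\,\de x\,,
$$
i.e.\ the weighted \emph{normal derivative} of $w^\e$ tested against $\varphi$ --- not the trace of $w^\e$ tested against anything. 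The operator $\boldsymbol{\partial}^{(2s)}_z$ is nonlocal: using the Poisson kernel representation, for $x$ in the neighborhood where $w\equiv 0$ one finds $\boldsymbol{\partial}^{(2s)}_z w^\e(x)=2s\,\sigma_{n,s}\int_{\R^n}\frac{w(y)}{|x-y|^{n+2s}}\,\de y$, which is generically nonzero as soon as $w\not\equiv 0$ (e.g.\ take $w\ge 0$ supported far from $\operatorname{spt}\varphi$ and $\varphi\ge 0$: the error is strictly negative). So replacing $v$ by $v_\eta$ inside the extension integral is \emph{not} free; the discarded term is nonzero.

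In fact, if the error did vanish you would have proved $\langle\mathbf{\Lambda}^{(2s)}v,\varphi\rangle_\Omega=d_s^{-1}\langle(-\Delta)^s v_\eta,\varphi\rangle$, which is generally \emph{different} from $d_s^{-1}\langle(-\Delta)^s v,\varphi\rangle_\Omega$: the difference is exactly the tail contribution $\frac{\gamma_{n,s}}{2}\iint\frac{(w(x)-w(y))(\varphi(x)-\varphi(y))}{|x-y|^{n+2s}}$, which (since $w$ vanishes near $\operatorname{spt}\varphi$) reduces to $-\gamma_{n,s}\int\big(\int\frac{w(y)}{|x-y|^{n+2s}}\de y\big)\varphi(x)\,\de x$. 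Reassuringly, this matches $d_s$ times the boundary term you discarded (using $d_s\cdot 2s\,\sigma_{n,s}=\gamma_{n,s}$), so the lemma is true and your decomposition can be repaired: you must treat the far piece $w$ explicitly, showing by direct computation of the Poisson kernel and of the off-diagonal integral that $d_s\langle\mathbf{\Lambda}^{(2s)}w,\varphi\rangle=\langle(-\Delta)^s w,\varphi\rangle_\Omega$ (both nonzero, both equal to the same Riesz-type potential paired with $\varphi$). Combining that with the global identity for $v_\eta\in H^s(\R^n)$ gives the result. The paper itself does not write out a proof --- it refers to \cite[Lemma 2.9]{MilSir} --- so I cannot compare line by line, but the fix above is the natural completion of the route you chose.
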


A local counterpart of Lemma \ref{normexth1/2} concerning the minimality of $v^\e$ can be obtained from the above identity. This is the purpose of Corollary \ref{minenergdirchfrac} below, which is inspired from \cite[Lemma 7.2]{CRS}. 
From now on, we use the notation
\begin{equation}\label{defEbold}
{\bf E}(u,G):= \frac{d_s}{2}\int_{G}z^{a}|\nabla u|^2\,\de \mathbf{x}\,,
\end{equation}
for an open set $G\subset \R^{n+1}_+$ and $u\in H^1(G,|z|^a\de{\bf x})$. We shall refer to ${\bf E}(\cdot, G)$ as the {\it weighted Dirichlet energy} in the domain $G$.  

\begin{corollary}\label{minenergdirchfrac}
Let $\Omega\subset \mathbb{R}^n$ be a bounded open set, and $G\subset \R^{n+1}_+$ be an admissible bounded  open set  
such that $\overline{\partial^0 G}\subset \Omega$. 
Let $v\in \widehat H^{s}(\Omega)$, and let $v^\e$ be its fractional harmonic extension to $\mathbb{R}^{n+1}_+$ given by~\eqref{poisson}. Then, 
\begin{equation}\label{ineqenergDfrac}
{\bf E}(u,G)-{\bf E}(v^\e,G)
\geq  \mathcal{E}(u,\Omega) -\mathcal{E}(v,\Omega)
\end{equation}
for all $u\in H^1(G,|z|^a\de\mathbf{x})$ such that $u-v^\e$ is compactly supported in $G\cup \partial^0G$. 
In the right hand side of \eqref{ineqenergDfrac}, the trace of $u$ on $\partial^0G$ is extended by $v$ outside $\partial^0G$. 
\end{corollary}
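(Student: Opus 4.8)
The statement to be proved is Corollary~\ref{minenergdirchfrac}: the fractional harmonic extension $v^\e$ minimizes the weighted Dirichlet energy ${\bf E}(\cdot,G)$ among competitors with the same boundary data on $\partial^+G$, up to the correction term $\mathcal{E}(u,\Omega)-\mathcal{E}(v,\Omega)$ arising from the trace on $\partial^0G$. The natural approach is to test the weak formulation of the equation $\mathrm{div}(z^a\nabla v^\e)=0$ against the difference $w:=u-v^\e$, which by hypothesis is compactly supported in $G\cup\partial^0G$, and then combine this with the Dirichlet-to-Neumann identification from Lemma~\ref{repnormderfraclap}.

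\textbf{Step 1.} First I would reduce to a convenient class of competitors. Write $w:=u-v^\e$; by hypothesis $w\in H^1(G,|z|^a\de{\bf x})$ is compactly supported in $G\cup\partial^0G$. Extending $w$ by zero, it belongs to $H^1(\R^{n+1}_+,|z|^a\de{\bf x})$ and is compactly supported in $\R^{n+1}_+\cup\Omega$ (using $\overline{\partial^0G}\subset\Omega$). By the smooth approximation procedure of Remark~\ref{smoothapprox} it suffices to establish the inequality (or rather the underlying identity) for $w$ smooth and compactly supported in $G\cup\partial^0G$, then pass to the limit; the energies ${\bf E}$, $\mathcal{E}$ and the cross term are all continuous for the relevant norms by Lemma~\ref{hatH1/2toH1} and Remark~\ref{remtrace}.

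\textbf{Step 2.} Now expand the energy. We have
\begin{equation*}
{\bf E}(u,G)-{\bf E}(v^\e,G)={\bf E}(w,G)+d_s\int_G z^a\nabla v^\e\cdot\nabla w\,\de{\bf x}\,.
\end{equation*}
Since $w$ is compactly supported in $G\cup\partial^0G$, it is an admissible test function for the definition \eqref{defNeumOp} of $\mathbf{\Lambda}^{(2s)}v$ (extend it by zero outside $G$ to get a function compactly supported in $\R^{n+1}_+\cup\Omega$), so
\begin{equation*}
d_s\int_G z^a\nabla v^\e\cdot\nabla w\,\de{\bf x}=d_s\big\langle\mathbf{\Lambda}^{(2s)}v,\,w_{|\R^n}\big\rangle_\Omega=\big\langle(-\Delta)^s v,\,w_{|\R^n}\big\rangle_\Omega
\end{equation*}
by Lemma~\ref{repnormderfraclap}. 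The trace $w_{|\R^n}$ lies in $H^s_{00}(\partial^0G)\subset H^s_{00}(\Omega)$ by Remark~\ref{remtrace}. Using \eqref{firstvarcalE} (the first outer variation of $\mathcal{E}(\cdot,\Omega)$) together with the convexity — in fact, quadratic nature — of $\mathcal{E}(\cdot,\Omega)$, we get
\begin{equation*}
\big\langle(-\Delta)^s v,\,w_{|\R^n}\big\rangle_\Omega=\mathcal{E}(v+w_{|\R^n},\Omega)-\mathcal{E}(v,\Omega)-\mathcal{E}(w_{|\R^n},\Omega)\,,
\end{equation*}
since $\mathcal{E}(\cdot,\Omega)$ is the quadratic form whose associated bilinear form is $\langle(-\Delta)^s\cdot,\cdot\rangle_\Omega$. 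Note $v+w_{|\R^n}$ agrees with $u_{|\partial^0G}$ on $\partial^0G$ and with $v$ outside, which is exactly the trace convention in the statement, so $\mathcal{E}(v+w_{|\R^n},\Omega)=\mathcal{E}(u,\Omega)$ with that convention.

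\textbf{Step 3.} Assembling the pieces,
\begin{equation*}
{\bf E}(u,G)-{\bf E}(v^\e,G)={\bf E}(w,G)+\mathcal{E}(u,\Omega)-\mathcal{E}(v,\Omega)-\mathcal{E}(w_{|\R^n},\Omega)\,.
\end{equation*}
It remains to observe that ${\bf E}(w,G)-\mathcal{E}(w_{|\R^n},\Omega)\ge 0$. This is precisely the global minimality of the fractional harmonic extension, Lemma~\ref{normexth1/2}: extending $w$ by zero to all of $\R^{n+1}_+$ gives a function in $H^1(\R^{n+1}_+,|z|^a\de{\bf x})$ with trace $w_{|\R^n}\in H^s_{00}(\Omega)\subset H^s(\R^n)$, whence by \eqref{isomtry} we have $d_s\|\nabla(\text{zero extension of }w)\|^2_{L^2(\R^{n+1}_+,|z|^a\de{\bf x})}\ge [w_{|\R^n}]^2_{H^s(\R^n)}=2\mathcal{E}(w_{|\R^n},\Omega)$, the last equality by \eqref{defErond}; and the left side is $2{\bf E}(w,\R^{n+1}_+)=2{\bf E}(w,G)$ since $w$ vanishes off $G$. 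This yields ${\bf E}(w,G)\ge\mathcal{E}(w_{|\R^n},\Omega)$ and hence \eqref{ineqenergDfrac}.

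\textbf{Main obstacle.} The computation is essentially bookkeeping; the one point requiring care is the justification that $w=u-v^\e$ (extended by zero) is a legitimate test function simultaneously for \eqref{defNeumOp} and for the identity $(-\Delta)^s=d_s\mathbf{\Lambda}^{(2s)}$, i.e.\ that its trace genuinely lies in $H^s_{00}(\partial^0G)$ and that no boundary contribution on $\partial^+G$ is lost. This is handled by the compact-support hypothesis together with Remark~\ref{remtrace} and the density/approximation statement of Remark~\ref{smoothapprox}, which is why Step~1 (the reduction to smooth compactly supported $w$) is done first. A secondary subtlety is that $v^\e$ need not itself be globally in $H^1(\R^{n+1}_+,|z|^a\de{\bf x})$ when $v$ is merely in $\widehat H^s(\Omega)$, so all manipulations must stay localized to $G$ — which they do, since every integral above is over $G$ or involves only $w$, which has compact support.
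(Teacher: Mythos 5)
Your proof is correct and follows essentially the same route as the paper's: write $w=u-v^\e$, identify $d_s\int_G z^a\nabla v^\e\cdot\nabla w$ with $\langle(-\Delta)^s v,w_{|\R^n}\rangle$ via Lemma~\ref{repnormderfraclap}, use the quadratic nature of $\mathcal{E}(\cdot,\Omega)$ to re-express the cross term, and conclude with the minimality bound ${\bf E}(w,G)\geq\mathcal{E}(w_{|\R^n},\Omega)$ coming from Lemma~\ref{normexth1/2}. The only difference is presentational: you package the algebra in a single polarization identity, while the paper splits it across two auxiliary identities before converting $\mathcal{E}(\cdot,\partial^0G)$ to $\mathcal{E}(\cdot,\Omega)$ at the very end.
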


\begin{proof}
Let $u\in H^1(G,|z|^a\de\mathbf{x})$ such that $u-v^\e$ is compactly supported in $G\cup\partial^0G$. We extend $u$ by 
$v^\e$ outside~$G$. Then $w:=u-v^\e\in H^1(\mathbb{R}^{n+1}_+,|z|^a\de\mathbf{x})$ and $w$ is compactly supported in $G\cup\partial^0G$. Hence  
$w_{|\R^n} \in H^{s}_{00}(\partial^0 G)$ by Remark \ref{remtrace}. Since $v\in \widehat H^{s}(\partial^0 G)$, we deduce from Remark \ref{remEDB}
that the trace of $u$ on $\mathbb{R}^n$ belongs to $ \widehat{H}^{s}(\partial^0 G)$. 

Using Lemma \ref{normexth1/2} and Lemma \ref{repnormderfraclap}, we  estimate
\begin{align}
\nonumber {\bf E}(u,G)-{\bf E}(v^\e,G)
& = \frac{d_s}{2}\int_{\R^{n+1}_+} z^a|\nabla w|^2\,\de \mathbf{x}+d_s \int_{\mathbb{R}^{n+1}_{+}}z^a \nabla v^\e \cdot \nabla w\,\de \mathbf{x} \\
\nonumber& = \frac{d_s}{2}\int_{\R^{n+1}_+} z^a|\nabla w|^2\,\de \mathbf{x} +  \big\langle  (-\Delta)^{s} v,w_{|\mathbb{R}^n}\big\rangle_{\partial^0 G} \\
\nonumber& \geq  \big[w_{|\mathbb{R}^n}\big]^2_{H^{s}(\mathbb{R}^n)}+  \big\langle  (-\Delta)^{s} v,w_{|\mathbb{R}^n}\big\rangle_{\partial^0 G} \\
& = \mathcal{E}(w_{|\mathbb{R}^n},\partial^0 G)+  \big\langle  (-\Delta)^{s} v,w_{|\mathbb{R}^n}\big\rangle_{\partial^0 G} \,. \label{estiequiv1}
\end{align}
Using the fact that $u_{|\mathbb{R}^n},v \in \widehat{H}^{s}(\partial^0 G)$, we derive  that 
\begin{equation}\label{estiequiv2}
 \mathcal{E}(w_{|\R^n},\partial^0G)=  \mathcal{E}(u_{|\mathbb{R}^n},\partial^0G)+ \mathcal{E}(v,\partial^0 G)- \big\langle  (-\Delta)^{s} v,u_{|\mathbb{R}^n}\big\rangle_{\partial^0 G}\,,
 \end{equation}
and 
\begin{equation}\label{estiequiv3}
\big\langle  (-\Delta)^{s} v,w_{|\mathbb{R}^n}\big\rangle_{\partial^0 G}= \big\langle  (-\Delta)^{s} v,u_{|\mathbb{R}^n}\big\rangle_{\partial^0 G}-  2\mathcal{E}(v,\partial^0G)\,.
\end{equation}
Gathering \eqref{estiequiv1}-\eqref{estiequiv2}-\eqref{estiequiv3} yields 
$${\bf E}(u,G)-{\bf E}(v^\e,G) \geq  \mathcal{E}(u_{|\R^n},\partial^0 G) -\mathcal{E}(v,\partial^0 G)\,.$$
Since $u_{|\R^n}=v$ outside $\partial^0 G$, we infer that 
$$\mathcal{E}(u_{|\R^n},\partial^0 G)- \mathcal{E}(v,\partial^0G)
=\mathcal{E}(u_{|\R^n},\Omega)- \mathcal{E}(v,\Omega)\,,$$
and the conclusion follows.  
\end{proof}

The crucial observation for us  is that \eqref{ineqenergDfrac} leads to a local representation (in terms of~$v^\e$) of the first inner variation of $\mathcal{E}(\cdot,\Omega)$ at a function $v\in\widehat H^s(\Omega)$. We recall that, given $X\in C^{1}(\R^n;\R^n)$ compactly supported in $\Omega$, the first inner variation $\delta\mathcal{E}(v,\Omega)$ evaluated at $X$ is defined by 
$$\delta\mathcal{E}(v,\Omega)[X] :=\left[ \frac{\de}{\de t} \,\mathcal{E}(v\circ\phi_{-t},\Omega)\right]_{t=0} \,,$$
where $\{\phi_t\}_{t\in\mathbb{R}}$ denotes the flow on $\R^n$ generated by $X$, i.e., for $x\in\R^{n}$, the map $t\mapsto \phi_t(x)$ is defined as the unique solution of the ordinary differential equation
$$
\begin{cases}
\displaystyle\frac{\de}{\de t} \phi_t(x)=  X\big(\phi_t(x)\big) \,,\\
\phi_0(x)=x\,.
\end{cases}
$$
Now we can state our representation result. 

\begin{corollary}\label{firstvargen}
Let $\Omega\subset \mathbb{R}^n$ be a bounded open set, and $G\subset \R^{n+1}_+$ be an admissible bounded  open set  
such that $\overline{\partial^0 G}\subset \Omega$. 
For each $v\in \widehat H^s(\Omega)$, and each $X\in C^1(\mathbb{R}^n;\mathbb{R}^n)$ compactly supported in $\partial^0 G$, we have 
\begin{multline*}
\delta\mathcal{E}(v,\Omega)[X]= \frac{d_s}{2}\int_{G}z^{a}\Big( |\nabla v^\e|^2{\rm div}\,\mathbf{X} 
-2(\nabla v^\e\otimes\nabla v^\e):\nabla\mathbf{X}
\Big)\,\de \mathbf{x}\\+\frac{d_sa}{2}\int_{G}z^{a-1} |\nabla v^\e|^2 {\mathbf X}_{n+1}\,\de \mathbf{x} \,,
\end{multline*}
where $\mathbf{X}=(\mathbf{X}_1,\ldots,\mathbf{X}_{n+1})\in C^1(\overline G;\mathbb{R}^{n+1})$ is any vector field compactly supported in $G\cup\partial^0 G$ satisfying $\mathbf{X}=(X,0)$ on $\partial^0 G$.
\end{corollary}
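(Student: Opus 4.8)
The plan is to derive the identity from the minimality inequality \eqref{ineqenergDfrac} of Corollary \ref{minenergdirchfrac}, as anticipated in the paragraph preceding the statement. Although the fractional harmonic extension of a transported trace is \emph{not} the transport of the extension, the minimality of $v^\e$ is still strong enough to force the first inner variation of $\mathcal{E}(\cdot,\Omega)$ at $v$ to coincide with the first inner variation of the weighted Dirichlet energy $\mathbf{E}(\cdot,G)$ at $v^\e$, which is then computed by hand.

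Let $\{\Phi_t\}_{t\in\R}$ be the flow on $\overline G$ generated by $\mathbf{X}$. Since $\mathbf{X}$ is compactly supported in $G\cup\partial^0 G$ and $\mathbf{X}=(X,0)$ is tangent to $\{z=0\}$ along $\partial^0 G$, the flow $\Phi_t$ is complete, equals the identity outside $\mathrm{supp}\,\mathbf{X}$, maps $G$ onto $G$ and $\partial^0 G$ onto $\partial^0 G$, and restricts on $\partial^0 G$ to the flow $\phi_t$ of $X$ (under the identification $\partial^0 G\subset\R^n$). For $|t|$ small I set $w_t:=v\circ\phi_{-t}$ and $u_t:=v^\e\circ\Phi_{-t}$. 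The first task is to check that $u_t$ is an admissible competitor in Corollary \ref{minenergdirchfrac}: $w_t\in\widehat H^s(\Omega)$ and $w_t=v$ outside $\partial^0 G$ (since $\phi_{-t}$ is a $C^1$-diffeomorphism equal to the identity near $\partial\Omega$); $u_t-v^\e$ is compactly supported in $G\cup\partial^0 G$; $u_t\in H^1(G,|z|^a\de\mathbf{x})$ because composing with $\Phi_{-t}$ only alters a neighborhood of $\mathrm{supp}\,\mathbf{X}$, on which $v^\e\in H^1(\cdot,|z|^a\de\mathbf{x})$ by Lemma \ref{hatH1/2toH1} and on which the weight $z^a$ is comparable to $\big((\Phi_{-t})_{n+1}\big)^a$ (because $\Phi_{-t}$ preserves $\{z=0\}$ with transverse derivative close to $1$ for $|t|$ small); and the trace of $u_t$ on $\partial^0 G$ equals $v\circ\phi_{-t}=w_t$ there, extended by $v=w_t$ outside.

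Corollary \ref{minenergdirchfrac} then yields, for all small $t$,
$$\mathbf{E}(u_t,G)-\mathbf{E}(v^\e,G)\ \geq\ \mathcal{E}(w_t,\Omega)-\mathcal{E}(v,\Omega)\,,$$
with equality at $t=0$, so $t=0$ is a minimum point of the nonnegative function $t\mapsto\big[\mathbf{E}(u_t,G)-\mathbf{E}(v^\e,G)\big]-\big[\mathcal{E}(w_t,\Omega)-\mathcal{E}(v,\Omega)\big]$ and its derivative vanishes there. Since $t\mapsto\mathcal{E}(v\circ\phi_{-t},\Omega)$ is differentiable at $0$ with derivative $\delta\mathcal{E}(v,\Omega)[X]$ by definition (the differentiability itself coming from $\mathcal{E}(v,\Omega)<\infty$ by a change of variables in \eqref{defenergE} and differentiation under the integral), and $t\mapsto\mathbf{E}(v^\e\circ\Phi_{-t},G)$ is differentiable at $0$, we get
$$\delta\mathcal{E}(v,\Omega)[X]=\left[\frac{\de}{\de t}\,\mathbf{E}(v^\e\circ\Phi_{-t},G)\right]_{t=0}\,.$$
It remains to compute this right-hand side, the classical inner variation of a weighted Dirichlet energy. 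Changing variables $\mathbf{x}=\Phi_t(\mathbf{y})$ and using $\Phi_t(G)=G$ together with $\nabla(v^\e\circ\Phi_{-t})(\Phi_t(\mathbf{y}))=\big((D\Phi_t(\mathbf{y}))^{-1}\big)^{\top}\nabla v^\e(\mathbf{y})$,
$$\mathbf{E}(v^\e\circ\Phi_{-t},G)=\frac{d_s}{2}\int_G\big((\Phi_t(\mathbf{y}))_{n+1}\big)^a\,\Big|\big((D\Phi_t(\mathbf{y}))^{-1}\big)^{\top}\nabla v^\e(\mathbf{y})\Big|^2\,\det D\Phi_t(\mathbf{y})\,\de\mathbf{y}\,,$$
and differentiating at $t=0$ under the integral (dominated convergence, the $t$-derivative of the integrand being bounded by $Cz^a|\nabla v^\e|^2$), with $\Phi_0=\mathrm{id}$ and the standard derivatives $\tfrac{\de}{\de t}\Phi_t=\mathbf{X}$, $\tfrac{\de}{\de t}D\Phi_t=\nabla\mathbf{X}$, $\tfrac{\de}{\de t}\det D\Phi_t=\mathrm{div}\,\mathbf{X}$, $\tfrac{\de}{\de t}(\Phi_t)_{n+1}=\mathbf{X}_{n+1}$ at $t=0$, reproduces exactly the two integrals of the statement. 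As a consistency check, the right-hand side is independent of the chosen extension $\mathbf{X}$ of $(X,0)$ because the left-hand side is; this can also be seen directly by an integration by parts using $\mathrm{div}(z^a\nabla v^\e)=0$.

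The genuinely delicate point, and the one I would treat most carefully, is the competitor check in the second paragraph: ensuring that $u_t\in H^1(G,|z|^a\de\mathbf{x})$ with the asserted trace on $\partial^0 G$, despite $v^\e$ being only locally $H^1$ up to $\partial^0 G$ and despite the degeneracy of $z^a$ at $\{z=0\}$ — here the transversality of the flow to $\partial\R^{n+1}_+$, forced by $\mathbf{X}_{n+1}=0$ on $\partial^0 G$, is essential. The rest — the two differentiations under the integral sign and the matrix calculus behind the last display — is routine.
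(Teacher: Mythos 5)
Your proposal is correct and follows essentially the same route as the paper: both derive the identity from the minimality inequality \eqref{ineqenergDfrac} of Corollary \ref{minenergdirchfrac}, compare $\mathbf{E}(v^\e\circ\Phi_{-t},G)-\mathbf{E}(v^\e,G)$ with $\mathcal{E}(v\circ\phi_{-t},\Omega)-\mathcal{E}(v,\Omega)$, deduce equality of the $t$-derivatives at $t=0$ from the fact that the difference is nonnegative and vanishes at $t=0$, and then compute the inner variation of the weighted Dirichlet energy. Your write-up makes explicit the admissibility check for $u_t$ (including the role of tangency $\mathbf{X}_{n+1}=0$ in keeping $z^a$ comparable to $((\Phi_{-t})_{n+1})^a$) and the change-of-variables computation, both of which the paper disposes of with a one-line remark and a reference to standard computations; no genuine difference of method.
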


\begin{proof}
Let $\mathbf{X}=(\mathbf{X}_1,\ldots,\mathbf{X}_{n+1})\in C^1(\overline{G};\R^{n+1})$ be an arbitrary  vector field  compactly supported in $G\cup\partial^0G$ and satisfying $\mathbf{X}=(X,0)$ on $\partial^0G$. Then 
consider a compactly supported $C^1$-extension of $\mathbf{X}$ to the whole space $\R^{n+1}$, still denoted by $\mathbf{X}$, such that $\mathbf{X}=(X,0)$ on $\R^n$. We define $\{\Phi_t\}_{t\in\R}$   to be the flow on $\mathbb{R}^{n+1}$  generated by $\mathbf{X}$,  i.e., for $\mathbf{x}\in\mathbb{R}^{n+1}$, the map $t\mapsto \Phi_t(x)$ is defined as the unique solution of the differential equation
$$\begin{cases}
\displaystyle\frac{\de}{\de t} \Phi_t(\mathbf{x})= \mathbf{X}\big(\Phi_t(\mathbf{x})\big) \,,\\
\Phi_0(\mathbf{x})=\mathbf{x}\,.
\end{cases}
$$
Noticing that $\Phi_t=(\phi_t,0)$ on~$\R^n$ and that ${\rm supp}\big(\Phi_t-{\rm id}_{\mathbb{R}^{n+1}}\big)\cap\overline{\mathbb{R}^{n+1}_+}\subset G\cup\partial^0 G$, 
 we   infer from Corollary~\ref{minenergdirchfrac} that 
$${\bf E}(v^\e\circ\Phi_{-t},G)-{\bf E}(v^\e,G)
\geq \mathcal{E}\big(v\circ\phi_{-t},\Omega\big)-\mathcal{E}(v,\Omega)\,.$$
Dividing both sides of this inequality by $t\not=0$, and then letting $t\downarrow0$ and $t\uparrow0$, we obtain 
$$ \left[\frac{\de}{\de t} {\bf E}(v^\e\circ\Phi_{-t},G) 
\right]_{t=0}
=\left[\frac{\de}{\de t} \,\mathcal{E}\big(v\circ\phi_t,\Omega\big)\right]_{t=0} \,.$$
On the other hand, standard computations (see e.g. \cite[Chapter 2.2]{Sim}) yield 
\begin{multline}\label{tim1516new}
\left[\frac{\de}{\de t} {\bf E}(v^\e\circ\Phi_{-t},G) 
\right]_{t=0}
=\frac{d_s}{2}\int_{G}z^{a}\Big( |\nabla v^\e|^2{\rm div}\,\mathbf{X} 
-2(\nabla v^\e\otimes\nabla v^\e):\nabla\mathbf{X}
\Big)\,\de \mathbf{x}\\
+\frac{d_sa}{2}\int_{G}z^{a-1} |\nabla v^\e|^2 {\mathbf X}_{n+1}\,\de \mathbf{x} \,,
\end{multline}
and the conclusion follows.
\end{proof}

\begin{remark}\label{varuptobound}
For an admissible bounded open set $G\subset\R^{n+1}_+$ and $u\in H^1(G,|z|^a\de{\bf x})$, we can define {\it  the first inner variation up to the boundary $\partial^0G$  
of ${\bf E}(\cdot,G)$ 
at $u$} as 
$$\delta{\bf E}\big(u,G\cup\partial^0G\big)[{\bf X}]:=\left[ \frac{\de}{\de t} \,\mathbf{E}(u\circ\Phi_{-t},G)\right]_{t=0} \,,$$
where (as in the previous proof) $\{\Phi_t\}_{t\in\R}$ denotes the flow on $\R^{n+1}$ generated by a given vector field $\mathbf{X}=(X,\mathbf{X}_{n+1})\in C^1(\overline G;\mathbb{R}^{n+1})$  compactly supported in $G\cup\partial^0 G$ and satisfying $\mathbf{X}_{n+1}=0$ on $\partial^0 G$. Then, one obtains 
\begin{multline}\label{formulafirstvarbound}
\delta{\bf E}\big(u,G\cup\partial^0G\big)[{\bf X}]=\frac{d_s}{2}\int_{G}z^{a}\Big( |\nabla u|^2{\rm div}\,\mathbf{X} 
-2(\nabla u\otimes\nabla u):\nabla\mathbf{X}
\Big)\,\de \mathbf{x}\\+\frac{d_sa}{2}\int_{G}z^{a-1} |\nabla u|^2 {\mathbf X}_{n+1}\,\de \mathbf{x} \,.
\end{multline}
Hence, we can rephrased the conclusion of Corollary \ref{firstvargen} as $\delta\mathcal{E}(v,\Omega)=\delta{\bf E}\big(v^\e,G\cup\partial^0G\big)$. 
\end{remark}

															 
\section{The fractional Allen-Cahn equation: a priori estimates}\label{FractAC}    
															 

We consider in this section a bounded open set $\Omega\subset \mathbb{R}^n$ with (at least) Lipschitz boundary. We are interested  
in weak solutions  $v_\varepsilon\in \widehat H^{s}(\Omega)\cap L^p(\Omega)$ of the fractional Allen-Cahn  equation
\begin{equation}\label{eqfractGL}
 (-\Delta)^{s} v_\varepsilon+\frac{1}{\varepsilon^{2s}}W'(v_\varepsilon) =f\quad\text{in $\Omega$}\,,
\end{equation}
with a source term $f$ belonging to either $L^\infty(\Omega)$  or  $C^{0,1}(\Omega)$. 
The notion of weak solution is understood in the duality sense according to the formulation \eqref{deffraclap} of the fractional Laplacian, i.e., 
$$\big\langle  (-\Delta)^{s} v_\varepsilon, \varphi\big\rangle_{\Omega} + \frac{1}{\varepsilon^{2s}}\int_{\Omega}W'(v_\varepsilon)\,\varphi\,\de x=\int_\Omega f\varphi\,\de x \qquad 
\forall \varphi\in H^{s}_{00}(\Omega)\cap L^p(\Omega)\,.$$
Such solutions correspond  to  critical points in $\Omega$ of the functional 
$$\mathcal{F}_\varepsilon(v,\Omega):=\mathcal{E}_\varepsilon(v,\Omega)-\int_\Omega f v\,\de x\,,$$
where $\mathcal{E}_\varepsilon(\cdot,\Omega)$ is the  fractional Allen-Cahn energy in \eqref{defFGLenerg}. 
In other words, we are interested in maps $v_\varepsilon\in \widehat H^{s}(\Omega)\cap L^p(\Omega)$ satisfying  
\begin{equation}\label{ptcritAC}
\left[\frac{\de}{\de t} \mathcal{F}_\varepsilon(v_\varepsilon+t\varphi,\Omega) \right]_{t=0} =0\qquad 
\forall \varphi\in H^{s}_{00}(\Omega)\cap L^p(\Omega)\,.
\end{equation}

\begin{remark}
An elementary way to construct solutions of \eqref{eqfractGL} is of course to minimize $\mathcal{F}_\varepsilon(\cdot,\Omega)$ under an exterior Dirichlet condition. 
Indeed,  given $g\in \widehat H^{s}(\Omega)\cap L^p(\Omega)$, the minimization problem
\begin{equation}\label{GLminProb}
\min\Big\{ \mathcal{F}_\varepsilon(v,\Omega) : v\in H^{s}_{g} (\Omega)\cap L^p(\Omega) \Big\}\,, 
\end{equation}
is easily solved using the Direct Method of Calculus of Variations, and it obviously returns a solution of \eqref{ptcritAC}.   
\end{remark}


\subsection{Degenerate Allen-Cahn boundary reactions}


To obtain a priori estimates on weak solutions of \eqref{eqfractGL}, we rely on the fractional harmonic extension to $\R^{n+1}_+$ 
introduced in Section \ref{prelim}.  
According to Lemmas \ref{hatH1/2toH1} \& \ref{repnormderfraclap}, and \eqref{defNeumOp},  if $v_\varepsilon\in \widehat H^{s}(\Omega)\cap L^p(\Omega)$ is a weak solution of \eqref{eqfractGL}, then its 
fractional harmonic extension $v^\e_\varepsilon$ given by \eqref{poisson}  satisfies 
$$d_s \int_{\R^{n+1}_+} z^{a}\, \nabla v^\e_\varepsilon \cdot\nabla\phi\,\de \mathbf{x}+ \frac{1}{\varepsilon^{2s}}\int_\Omega W^\prime(v^\e_\varepsilon)\, \phi\,\de x=\int_\Omega f \phi\,\de x $$
for every smooth function $\phi :\overline{\mathbb{R}^{n+1}_+}\to\mathbb{R}$  compactly supported in $\R^{n+1}_+\cup\Omega$, or equivalently,   
for every $\phi\in H^1(\mathbb{R}_+^{n+1},|z|^a\de\mathbf{x})\cap L^p(\Omega)$  compactly supported in $\R^{n+1}_+\cup\Omega$ (by Remark~\ref{smoothapprox}). 
In particular, given an admissible bounded open set $G\subset \R^{n+1}_+$ such that $\overline{\partial^0G}\subset\Omega$, the extension $v_\eps^\e$ obviously satisfies     
\begin{equation}\label{varformbdgleq}
d_s \int_{G} z^{a}\, \nabla v^\e_\varepsilon \cdot\nabla\phi\,\de \mathbf{x}+ \frac{1}{\varepsilon^{2s}}\int_{\partial^0G} W^\prime(v^\e_\varepsilon)\, \phi\,\de x=\int_{\partial^0 G} f\phi\,\de x 
\end{equation}
 for every $\phi \in H^1(G,|z|^a\de \mathbf{x})\cap L^p(\partial^0G)$ compactly supported in $G\cup\partial^0G$. In other words, the extension $v^\e_\varepsilon$ is a critical point of the functional $\mathbf{F}_\eps(\cdot,G)$ defined on the weighted space 
 $ H^1(G,|z|^a\de \mathbf{x})\cap L^p(\partial^0G)$ by 
\begin{equation}\label{defEnergGLB}
\mathbf{F}_\varepsilon(u,G):=\mathbf{E}_\varepsilon(u,G)-\int_{\partial^0G} f u \,\de x\,,
\end{equation}
with
$$\mathbf{E}_\varepsilon(u,G):=\mathbf{E}(u,G)+ \frac{1}{\varepsilon^{2s}}\int_{\partial^0G} W(u)\,\de x\,, $$
where ${\bf E}(\cdot,G)$ is {\it the weighted Dirichlet energy} defined in \eqref{defEbold}.
\vskip3pt

In general, if a function $u_\eps$ is a critical point of $\mathbf{F}_\eps(\cdot,G)$ such that both $u_\eps$ and $z^{a}\partial_z u_\eps$ are continuous in  $G$ up to $\partial^0G$, then $u_\eps$ satisfies in the pointwise sense the Euler-Lagrange equation 
\begin{equation}\label{eqext}
\begin{cases}
{\rm div}(z^{a}\nabla u_\eps)= 0 & \text{in $G$}\,,\\[10pt]
\displaystyle d_s \boldsymbol{\partial}^{(2s)}_{z} u_\eps=\frac{1}{\varepsilon^{2s}} W^\prime (u_\eps)-f & \text{on $\partial^0G$}\,,
\end{cases}
\end{equation} 
where we have set for $\mathbf{x}=(x,0)\in\partial^0G$, 
$$\boldsymbol{\partial}^{(2s)}_{z} u_\eps(\mathbf{x}):=\lim_{z\downarrow0} z^{a}\partial_{z} u_\eps(x,z)\,.$$
We shall refer  to as {\it weak solution} of  equation \eqref{eqext} a critical point of $\mathbf{F}_\eps(\cdot,G)$.


\subsection{Regularity for degenerate boundary reactions} 


Our strategy now  consists in deriving a priori estimates for weak solutions of \eqref{eqext}. 
Concerning regularity, the starting point is the following linear estimate  given in \cite[proof of Lemma~4.5]{CS1}.  

\begin{lemma}[\cite{CS1}]\label{prereg}
Let $\mathbf{f}\in L^\infty(D_2)$ and $u\in H^1(B_2^+,|z|^a\de\mathbf{x})\cap L^\infty(B_2^+)$ be a weak solution of 
\begin{equation}\label{eqlin}
\begin{cases} 
{\rm div}(z^{a}\nabla u)=0 & \text{in $B_2^+$}\,,\\[8pt]
\displaystyle \boldsymbol{\partial}^{(2s)}_z u=\mathbf{f} & \text{on $D_{2}$}\,.
\end{cases}
\end{equation} 
There exist $\boldsymbol{\beta}_*=\boldsymbol{\beta}_*(n,s)\in(0,1)$, and a positive constant $\mathbf{c}_{n,s}$ depending only on $n$ and $s$ such that 
\begin{equation}\label{estischauderlin}
\|u\|_{C^{0,\boldsymbol{\beta}_*}(\overline{B_1}^+)}\leq  \mathbf{c}_{n,s}\big( \| \mathbf{f}\|_{L^\infty(D_2)}+\|u\|_{L^\infty(B_2^+)}\big)\,.
\end{equation}
In addition, if $\mathbf{f}\in C^{0,\sigma}(D_2)$ with $\sigma\in(0,1)$, then $z^{a}\partial_{z}u\in C^{0,\gamma}(\overline B_1^+)$ for some $\gamma\in(0,1)$.   
\end{lemma}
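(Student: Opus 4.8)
\textbf{Proof strategy for Lemma \ref{prereg}.} The statement is a boundary regularity result for the degenerate-elliptic operator $\mathrm{div}(z^a\nabla\cdot)$ with a Neumann-type boundary condition, so the natural approach is to reflect and rescale. First I would even-reflect the solution across $\{z=0\}$: since $\boldsymbol{\partial}^{(2s)}_z u=\mathbf f$ has a bounded right-hand side, the even extension $\widetilde u(x,z):=u(x,|z|)$ satisfies, in the distributional sense on the full ball $B_2$, an equation $\mathrm{div}(|z|^a\nabla\widetilde u)=|z|^a\widetilde{\mathbf f}\,\mathcal H^n\restr\{z=0\}$ (a measure supported on the hyperplane, with density $\mathbf f$). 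The weight $|z|^a$ is a Muckenhoupt $A_2$ weight because $a=1-2s\in(-1,1)$, which is exactly the regime in which the Fabes–Kenig–Serapioni theory applies: weak solutions of the homogeneous equation $\mathrm{div}(|z|^a\nabla\cdot)=0$ enjoy interior De Giorgi–Nash–Moser estimates, i.e. they are Hölder continuous with a universal exponent and satisfy the Harnack inequality. So the first step is to reduce to the homogeneous case by absorbing the boundary term.

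The cleanest way to handle the inhomogeneous boundary term is to subtract off a particular solution. One knows explicitly (or constructs via the Poisson-type kernel $\mathbf K_{n,s}$ from the excerpt, or via the Riesz-type potential adapted to the weight) a function $w$ on $B_{3/2}^+$ with $\mathrm{div}(z^a\nabla w)=0$, $\boldsymbol\partial^{(2s)}_z w=\mathbf f$ on $D_{3/2}$, and the bound $\|w\|_{C^{0,\beta}(\overline{B_1}^+)}\leq C\|\mathbf f\|_{L^\infty(D_2)}$ (and, if $\mathbf f\in C^{0,\sigma}$, also $z^a\partial_z w\in C^{0,\gamma}$); this is standard potential theory for the extension operator and is essentially the content of the linear estimates in \cite{CaffSil,CS1}. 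Then $u-w$ solves the homogeneous problem with zero Neumann data, so its even reflection solves $\mathrm{div}(|z|^a\nabla(\widetilde{u-w}))=0$ across $B_{3/2}$. Applying the interior Hölder estimate of Fabes–Kenig–Serapioni on $B_1\subset B_{3/2}$ gives
\[
\|u-w\|_{C^{0,\boldsymbol\beta_*}(\overline{B_1}^+)}\leq C\|u-w\|_{L^2(B_{3/2}^+,|z|^a\de\mathbf x)}\leq C\big(\|u\|_{L^\infty(B_2^+)}+\|\mathbf f\|_{L^\infty(D_2)}\big),
\]
and combining with the bound on $w$ yields \eqref{estischauderlin}. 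For the last assertion, with $\mathbf f\in C^{0,\sigma}$ one uses the $C^{0,\gamma}$ bound on $z^a\partial_z w$ together with the fact that for the homogeneous problem the conormal derivative $z^a\partial_z(u-w)$ extends continuously by zero (odd reflection of a homogeneous-equation solution), and is in fact Hölder up to the boundary by the same De Giorgi–Nash–Moser machinery applied to the flux; adding the two gives $z^a\partial_z u\in C^{0,\gamma}(\overline{B_1}^+)$.

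The main obstacle — really the only non-bookkeeping point — is establishing the right regularity framework for the degenerate weight and correctly matching the reflection with the boundary condition: one must check that even reflection of a weak solution of the Neumann problem is a genuine weak solution of $\mathrm{div}(|z|^a\nabla\cdot)=|z|^a\mathbf f\,\mathcal H^n\restr\{z=0\}$ across the hyperplane (so that no spurious distributional contribution on $\{z=0\}$ is dropped), and that the $A_2$-weighted De Giorgi–Nash–Moser theory is genuinely applicable, including near the degeneracy set $\{z=0\}$. Since the estimate is quoted from \cite[proof of Lemma 4.5]{CS1}, I would in practice cite that proof, but the self-contained argument is the reflection plus subtracting an explicit solution of the inhomogeneous boundary-value problem plus the Fabes–Kenig–Serapioni interior estimates; the only genuinely delicate step is the regularity of $z^a\partial_z u$ up to $\{z=0\}$ when $\mathbf f$ is Hölder, which requires a Schauder-type iteration for the weighted operator rather than just De Giorgi–Nash–Moser.
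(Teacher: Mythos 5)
The paper does not prove this lemma; it is quoted from \cite[proof of Lemma 4.5]{CS1}, so there is no internal proof to compare your sketch against. Your overall strategy — reflection across $\{z=0\}$, subtraction of a global particular solution, and the Fabes--Kenig--Serapioni weighted De Giorgi--Nash--Moser estimate — is the right one and is essentially the Cabr\'e--Sire line of argument, but the sketch as written has a genuine gap and two smaller slips.

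The genuine gap is the construction of $w$. You invoke the kernel $\mathbf K_{n,s}$, but that kernel solves the \emph{Dirichlet} problem for the extension; it does not produce Neumann data. To achieve $\boldsymbol{\partial}^{(2s)}_z w=\mathbf f$ one must convolve a cutoff of $\mathbf f$ with the conjugate, fundamental-solution-type kernel $\Gamma_{n,s}(\mathbf x)\sim|\mathbf x|^{-(n-2s)}$, for which $\lim_{z\downarrow 0}z^a\partial_z\Gamma_{n,s}=\delta_0$; then $w|_{z=0}$ is the Riesz potential $I_{2s}\mathbf f$, and the $C^{0,2s}$ estimate (recall $2s<1$) comes from the classical mapping property of $I_{2s}$ on compactly supported $L^\infty$ data. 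Citing the ``linear estimates in \cite{CaffSil,CS1}'' for the bound on $w$ is circular, since Lemma 4.5 of \cite{CS1} is the very statement being proved; you must instead appeal to an explicit global construction as just described. The two smaller points: (i) the even reflection $\widetilde u$ satisfies $\mathrm{div}(|z|^a\nabla\widetilde u)=c\,\mathbf f\,\mathcal H^n\LL\{z=0\}$ as a distribution --- the factor $|z|^a$ you put on the right-hand side should not be there, since it vanishes identically on $\{z=0\}$ (here $a=1-2s>0$); and (ii) the flux $U:=z^a\partial_z(u-w)$ is \emph{not} a solution of the same operator, but of the conjugate equation $\mathrm{div}(z^{-a}\nabla U)=0$ (weight $|z|^{2s-1}$, still $A_2$ because $2s-1\in(-1,0)$) with zero Dirichlet trace; it is after odd reflection of $U$, and with this change of weight, that Fabes--Kenig--Serapioni applies. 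Your closing remark that the $C^{0,\gamma}$ bound on $z^a\partial_z u$ needs a Schauder-type iteration is correct, but the earlier sentence claiming ``the same De Giorgi--Nash--Moser machinery'' for the flux suppresses this necessary change of operator.
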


For $f\in C^{0,1}(D_2)$, bootstrapping estimate \eqref{estischauderlin} yields the following interior regularity  for bounded  weak solutions of \eqref{eqext}.

\begin{theorem}\label{regint}
Let  $f\in C^{0,1}(D_2)$ and $u_\varepsilon\in H^1(B_{2}^+,|z|^ad\mathbf{x})\cap L^\infty(B_{2}^+)$ be a weak solution of 
\begin{equation}\label{pasdidee}
\begin{cases} 
{\rm div}(z^{a}\nabla u_\varepsilon)=0 & \text{in $B_{2}^+$}\,,\\[8pt]
\displaystyle d_s\boldsymbol{\partial}^{(2s)}_z u_\varepsilon= \frac{1}{\varepsilon^{2s}}W^\prime(u_\varepsilon)-f & \text{on $D_{2}$}\,.
\end{cases}
\end{equation}
Then $u_\eps\in C^\infty(B_2^+)$, $u_\varepsilon\in C^{0,\boldsymbol{\beta}_*}\big(\overline B_1^+\big)$, $\nabla_xu_\eps\in C^{0,\boldsymbol{\beta}_*}(\overline B_1^+)$, and  $z^{a}\partial_{z}u_\eps\in C^{0,\gamma}(\overline B_1^+)$ for some $\gamma\in(0,1)$ (with $\boldsymbol{\beta}_*$  given by Lemma \ref{prereg}).
\end{theorem}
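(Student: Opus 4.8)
The plan is to bootstrap from the linear estimate in Lemma~\ref{prereg} by a standard iteration on the H\"older regularity of the boundary data. First I would observe that, since $u_\eps\in L^\infty(B_2^+)$ and $W\in C^2$, hypothesis (H1) gives $\mathbf{f}:=\varepsilon^{-2s}W^\prime(u_\eps)-f\in L^\infty(D_2)$ directly (no growth issue arises because $u_\eps$ is already bounded). Applying Lemma~\ref{prereg} on a slightly smaller pair of balls then yields $u_\eps\in C^{0,\boldsymbol{\beta}_*}(\overline B_{r}^+)$ for some $r\in(1,2)$. Composing with $W^\prime\in C^1$ (again by (H1)) and using $f\in C^{0,1}\subset C^{0,\boldsymbol\beta_*}$, the boundary datum $\mathbf f$ becomes $C^{0,\sigma}$ with $\sigma=\boldsymbol\beta_*$ (or $\sigma=\min(\boldsymbol\beta_*,1)$). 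The second part of Lemma~\ref{prereg} then gives $z^a\partial_z u_\eps\in C^{0,\gamma}(\overline B_1^+)$ for some $\gamma\in(0,1)$.

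Next I would upgrade the tangential regularity. Since the equation ${\rm div}(z^a\nabla u_\eps)=0$ and the boundary condition are invariant under translations in the $x$-variables, difference quotients $\tau_h u_\eps:=\big(u_\eps(\cdot+he_i,\cdot)-u_\eps\big)/|h|$ solve the same type of system \eqref{eqlin} with right-hand side the difference quotient of $\mathbf f$. Using that $W^\prime$ is $C^1$ and $u_\eps$ is already $C^{0,\boldsymbol\beta_*}$, the datum for the difference-quotient equation is bounded in $L^\infty$ uniformly in $h$, so Lemma~\ref{prereg} gives a uniform $C^{0,\boldsymbol\beta_*}$ bound on $\tau_h u_\eps$; letting $h\to0$ yields $\nabla_x u_\eps\in C^{0,\boldsymbol\beta_*}(\overline B_1^+)$. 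One may have to relabel constants and shrink radii at each step, but all of this is routine. Interior smoothness $u_\eps\in C^\infty(B_2^+)$ away from $\{z=0\}$ is classical: on any compact subset of $B_2^+$ the weight $z^a$ is smooth and bounded away from $0$ and $\infty$, so ${\rm div}(z^a\nabla u_\eps)=0$ is a uniformly elliptic equation with smooth coefficients, and interior Schauder/elliptic regularity gives $u_\eps\in C^\infty(B_2^+)$.

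The genuinely delicate point — and the step I expect to be the main obstacle — is the interplay between the boundary Schauder theory for the degenerate operator ${\rm div}(z^a\nabla\cdot)$ and the fact that we want estimates up to $\partial^0 G$ where the weight $z^a$ degenerates (it vanishes if $a>0$, i.e.\ $s<1/2$, which is exactly our range). Lemma~\ref{prereg} already encapsulates the correct statement, so the work is really in carefully tracking that each application stays within its hypotheses: one needs $\mathbf f$ to be genuinely $C^{0,\sigma}$ on a disc strictly containing $D_1$, which forces working on a nested sequence of balls $B_{r_j}^+$ with $1<\dots<r_2<r_1<2$ and at each stage re-deriving the $L^\infty$ or $C^{0,\sigma}$ bound on $\varepsilon^{-2s}W^\prime(u_\eps)-f$ from the regularity gained in the previous stage. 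A subtlety worth flagging explicitly is that the H\"older exponent does not improve past $\boldsymbol\beta_*$ for $u_\eps$ and $\nabla_x u_\eps$ (the boundary condition caps it), while $z^a\partial_z u_\eps$ lands in a possibly different class $C^{0,\gamma}$; one should not expect to match these exponents, and the statement of Theorem~\ref{regint} is careful to keep them separate. Finally I would record that all constants depend only on $n$, $s$, $\|u_\eps\|_{L^\infty(B_2^+)}$, $\|f\|_{C^{0,1}(D_2)}$, $\varepsilon$, and $W$ through $\boldsymbol c_W$ and the $C^2$-norm of $W$ on the relevant interval — the $\varepsilon$-dependence being harmless here since $\varepsilon$ is fixed in this section.
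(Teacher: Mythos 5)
Your outline is right about the first two items: applying Lemma~\ref{prereg} with the $L^\infty$ boundary datum $\mathbf f:=\varepsilon^{-2s}W'(u_\eps)-f$ gives $u_\eps\in C^{0,\boldsymbol\beta_*}$ near $D_2$, and once $u_\eps$ is H\"older the composition $W'(u_\eps)$ is also H\"older, so the second part of Lemma~\ref{prereg} yields $z^a\partial_z u_\eps\in C^{0,\gamma}$. Interior smoothness is also as you say.

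There is, however, a genuine gap in the step that is supposed to give $\nabla_x u_\eps\in C^{0,\boldsymbol\beta_*}$, and it is exactly at the point you wave through as ``routine''. You introduce the first-order difference quotient $\tau_h u_\eps=\big(u_\eps(\cdot+he_i,\cdot)-u_\eps\big)/|h|$ and claim its boundary datum is ``bounded in $L^\infty$ uniformly in $h$''. Writing that datum as $c_h\,\tau_h u_\eps|_{z=0}-\tau_h f$, where
$c_h(x)=\varepsilon^{-2s}\,\dfrac{W'(u_\eps(x+h,0))-W'(u_\eps(x,0))}{u_\eps(x+h,0)-u_\eps(x,0)}$
is indeed bounded by $\varepsilon^{-2s}\|W''\|_{L^\infty}$, the term $\tau_h u_\eps|_{z=0}$ is \emph{not} uniformly bounded when $u_\eps$ is only known to be $C^{0,\boldsymbol\beta_*}$ with $\boldsymbol\beta_*<1$: it is of size $|h|^{\boldsymbol\beta_*-1}$, which blows up as $h\to 0$. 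Lemma~\ref{prereg} also requires the solution itself, here $\tau_h u_\eps$, to be in $L^\infty$ with a uniform bound, and that too fails. So the argument as stated is circular: you need Lipschitz-in-$x$ regularity of $u_\eps$ in order to run the first-order difference-quotient scheme, but that is precisely what you are trying to prove.

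The paper's proof supplies the missing mechanism. Instead of $\tau_h u_\eps$, one considers the H\"older-normalized increment $w_h:=\big(\bar u(\cdot+h,\cdot)-\bar u\big)/|h|^{\boldsymbol\beta_*}$, which \emph{is} bounded in $L^\infty$ uniformly in $h$ precisely because $u_\eps\in C^{0,\boldsymbol\beta_*}$. Its boundary datum $c_h\,w_h|_{z=0}-\big(f(\cdot+h)-f\big)/|h|^{\boldsymbol\beta_*}$ is then uniformly bounded (using $f\in C^{0,1}\subset C^{0,\boldsymbol\beta_*}$), so Lemma~\ref{prereg} gives a uniform $C^{0,\boldsymbol\beta_*}$ bound on $w_h$. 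Evaluating the H\"older estimate of $w_h$ at the shift $-h$ yields the second-order difference-quotient bound $|\bar u(x+h,z)-2\bar u(x,z)+\bar u(x-h,z)|\le C|h|^{2\boldsymbol\beta_*}$, and the Zygmund/Stein results (Propositions 8 and 9 in Chapter V.4 of Stein's book) upgrade this to $u_\eps(\cdot,z)\in C^{0,2\boldsymbol\beta_*}$ (if $2\boldsymbol\beta_*<1$) or $C^{1,2\boldsymbol\beta_*-1}$ (if $2\boldsymbol\beta_*>1$), uniformly in $z$. One then iterates this step --- replacing the normalizing exponent by the newly gained one and shrinking the half-ball --- roughly $\lfloor 1/\boldsymbol\beta_*\rfloor$ times until the exponent crosses $1$, at which point $u_\eps(\cdot,z)$ is Lipschitz in $x$; only then does the first-order difference quotient $\widetilde w_\delta$ become uniformly bounded in $L^\infty$, and Lemma~\ref{prereg} applied to $\widetilde w_\delta$ delivers $\nabla_x u_\eps\in C^{0,\boldsymbol\beta_*}$. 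This intermediate passage through second-order difference quotients and the Zygmund class is the essential idea absent from your proposal.
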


\begin{proof}
Regularity in the interior of the half ball $B_2^+$ follows from the usual elliptic theory. Then, to prove the announced regularity near $D_1$, we  first apply Lemma \ref{prereg} to  deduce that $u_\varepsilon\in C^{0,\boldsymbol{\beta}_*}_{\rm loc}\big( B_2^+\cup D_2\big)$ and  $z^{a}\partial_{z}u_\eps\in C^{0,\gamma}_{\rm loc}( B_2^+\cup D_2)$. Now it only remains to show that $\nabla_x u_\eps$ is H\"older continuous up to $D_1$. Denote by  $k_*\in\mathbb{N}$  the integer part of $1/\boldsymbol{\beta}_*$. Choosing the universal constant $\boldsymbol{\beta}_*$ slightly smaller if necessary, we may assume without loss of generality that $k_*<1/\boldsymbol{\beta}_*$. Then $(k_*+1)\boldsymbol{\beta}_*\in(1,2)$. 

Fix an arbitrary point $x_0\in \overline D_1$, and for $\mathbf{x}=(x,z)\in B^+_{1}\cup D_{1}$ define the translated function  
$$\bar u(\mathbf{x}):=u_\eps(x+x_0,z)\,.$$ 
Given a non vanishing  $h\in D_{1/8}$,  we set for $\mathbf{x}\in B^+_{7/8}\cup D_{7/8}$, 
\begin{equation}\label{defwh}
w_h(\mathbf{x}):=\frac{\bar u(x+h,z)-\bar u(\mathbf{x})}{|h|^{\boldsymbol{\beta}_*}} \,.
\end{equation}
Then $w_h\in H^1(B_{7/8}^+,|z|^a\de \mathbf{x})\cap L^\infty(B_{7/8}^+)$ and $\|w_h\|_{L^\infty(B^+_{7/8})}$ is bounded independently of~$h$. In addition, 
$w_h$ weakly  solves equation \eqref{eqlin} in $B^+_{7/8}$ with right hand side 
$$\mathbf{f}_h(x):=\frac{W^{\prime}\big(\bar u(x+h,0)\big)-W^{\prime}\big(\bar u(x,0)\big)}{\eps^{2s}\big(\bar u(x+h,0)-\bar u(x,0)\big)}\, w_h(x,0) - \frac{f(x_0+x+h)-f(x_0+x)}{|h|^{\boldsymbol{\beta}_*}}\,. $$
By assumption $W\in C^2(\mathbb{R})$ and $f\in C^{0,1}(D_2)$, so that 
$\|\mathbf{f}_h\|_{L^\infty(D_{7/8})}$ is bounded independently of $h$. Hence Lemma \ref{prereg} yields $w_h\in C^{0,\boldsymbol{\beta}_*}(\overline B^+_{7/16})$, and $\|w_h\|_{C^{0,\boldsymbol{\beta}_*}(B^+_{7/16})}$  is bounded independently of $h$. 
In particular, 
$$\frac{|w_h(x,z)-w_h(x-h,z)|}{|h|^{\boldsymbol{\beta}_*}} \leq C_1\qquad\forall (x,z)\in \overline D_{1/8}\times[0,1/8]\,,$$
for some constant $C_1$ independent of $h$. In view of the arbitrariness of $h$, we deduce that  
\begin{equation}\label{improvhold}
\sup_{x\in \overline{D}_{1/8}}  \big|\bar u(x+h,z)-2\bar u(x,z)+\bar u(x-h,z)\big|\leq C_1|h|^{2{\boldsymbol{\beta}_*}}
\end{equation}
for every $h\in \overline D_{1/8}$ and $z\in[0,1/8]$. 

Let us now fix a  cut-off function $\zeta\in C^\infty(\R^n;[0,1])$ such that $\zeta(x)=1$ for $|x|\leq 1/16$ and $\zeta(x)=0$ for $|x|\geq 1/8$. 
Given $z\in[0,1/8]$,  we define for $x\in\mathbb{R}^n$, 
$$\vartheta_{z}(x):=\zeta(x)\bar u(x,z)\,.$$
For $h\in\mathbb{R}^n$,  we denote by $D^2_h\vartheta_{z}$  the second order difference quotient of $\vartheta_{z}$ on $\R^n$ given   by 
$$D^2_h\vartheta_{z}(x):= \vartheta_{z}(x+h)-2\vartheta_{z}(x)+\vartheta_{z}(x-h)\,. $$
From \eqref{improvhold}, it is elementary to show that  
$$\|\vartheta_z\|_{L^\infty(\R^n)}+\sup_{|h|>0}\frac{\|D_h^2\vartheta_z\|_{L^\infty(\R^n)}}{|h|^{2{\boldsymbol{\beta}_*}}}\leq C_2\,, $$
for a constant $C_2$ independent of $z\in[0,1/8]$. 
\vskip3pt

We now have to distinguish two cases. 
\vskip3pt

\noindent{\it Case 1).} If $k_*=1$ (i.e., ${\boldsymbol{\beta}_*}>1/2$), then we infer from \cite[Proposition 9 in Chapter~V.4]{Stein} that $\vartheta_z\in C^{1,\boldsymbol{\alpha}_*}(\R^n)$ with $\boldsymbol{\alpha}_*=2\boldsymbol{\beta}_*-1$, and $\|\vartheta_z\|_{C^{1,\boldsymbol{\alpha}_*}(\R^n)}\leq \widetilde C_2$ for a constant $ \widetilde C_2$ independent of $z\in[0,1/8]$. As a consequence  
$\bar u(\cdot,z)\in C^{1,\boldsymbol{\alpha}_*}(D_{1/16})$, and $\|\bar u(\cdot,z)\|_{C^{1,\boldsymbol{\alpha}_*}(D_{1/16})}\leq \widetilde C_2$ for every $z\in[0,1/8]$. 

We fix $j\in\{1,\ldots,n\}$,  $\delta\in(0,1/32)$, and we define for $\mathbf{x}=(x,z)\in B^+_{1/32}\cup D_{1/32}$, 
$$\widetilde w_\delta(\mathbf{x}):=\frac{\bar u(x+\delta e_j,z)-\bar u(\mathbf{x})}{\delta} \,.$$
Then $\widetilde w_\delta \in   H^1(B_{1/32}^+,|z|^a\de\mathbf{x})\cap L^\infty(B_{1/32}^+)$ and $\|\widetilde w_\delta\|_{L^\infty((B^+_{1/32})}$ is bounded independently of $\delta$.  In addition, 
$\widetilde w_\delta$  weakly solves equation \eqref{prereg} in $B^+_{1/32}$ with right hand side 
$$\widetilde{\mathbf{f}}_\delta(x):=\frac{W^{\prime}(\bar u(x+\delta e_j,0))-W^{\prime}(\bar u(x,0))}{\eps^{2s}(\bar u(x+\delta e_j,0)-\bar u(x,0))}\, \widetilde w_\delta(x,0) - \frac{f(x_0+x+\delta e_j)-f(x_0+x)}{\delta} \,. $$
Again, since $W\in C^{2}(\mathbb{R})$ and $f\in C^{0,1}(D_2)$, we have $\widetilde{\mathbf{f}}_\delta\in L^\infty(D_{1/32})$ and $\|\widetilde{\mathbf{f}}_\delta\|_{L^\infty(D_{1/32})}$ is bounded independently of $\delta$. Then Lemma \ref{prereg} yields $\widetilde w_\delta\in C^{0,\boldsymbol{\beta}_*}(\overline B^+_{1/64})$, and 
$$\frac{|\widetilde w_\delta(\mathbf{x}_1)-\widetilde w_\delta(\mathbf{x}_2)|}{|\mathbf{x}_1-\mathbf{x}_2|^{\boldsymbol{\beta}_*}}\leq C_3 \quad\forall \mathbf{x}_1,\mathbf{x}_2\in \overline B^+_{1/64}\,, \mathbf{x}_1\not=\mathbf{x}_2\,,$$
for a constant $C_3$ independent of $\delta$. Letting $\delta\to 0$, we finally deduce that 
$$\frac{|\partial_j \bar u(\mathbf{x}_1)-\partial_j \bar u(\mathbf{x}_2)|}{|\mathbf{x}_1-\mathbf{x}_2|^{\boldsymbol{\beta}_*}}\leq C_3 \quad\forall \mathbf{x}_1,\mathbf{x}_2\in \overline B^+_{1/64}\,, \mathbf{x}_1\not=\mathbf{x}_2\,.$$
Since the index $j$ is arbitrary, it shows that $\nabla_x u_\eps$ is indeed of class $C^{0,\boldsymbol{\beta}_*}$ in a neighborhood of the point $(x_0,0)$.   
\vskip3pt

\noindent{\it Case 2).} We now assume that $k_*\geq 2$ (i.e., $\boldsymbol{\beta}_*<1/2$). Then we infer from \cite[Proposition~8 in Chapter~V.4]{Stein} that $\vartheta_z\in C^{0,2\boldsymbol{\beta}_*}(\R^n)$ and $\|\vartheta_z\|_{C^{0,2\boldsymbol{\beta}_*}(\R^n)}\leq \widehat C_2$ for a constant $ \widehat C_2$ independent of $z\in[0,1/8]$. As a consequence, for every $z\in[0,1/8]$, we have 
$\bar u(\cdot,z)\in C^{0,2\boldsymbol{\beta}_*}(D_{1/16})$, and $\|\bar u(\cdot,z)\|_{C^{0,2\boldsymbol{\beta}_*}(D_{1/16})}\leq \widehat C_2$. We then repeat the argument starting with the function $w_h$ given in \eqref{defwh} with $\boldsymbol{\beta}_*$ replaced by $2\boldsymbol{\beta}_*$ and the point $\mathbf{x}$ lying in a smaller half ball. After iterating $k_*$ times this procedure we are back to Case~1, and we conclude that   $\nabla_x u_\eps$ is of class $C^{0,\boldsymbol{\beta}_*}$ in a neighborhood of   $(x_0,0)$.
\end{proof} 
 
\begin{remark}\label{remclear} 
Note that for $\eps\geq 1/2$, Lemma \ref{prereg} also shows that any weak solution  $u_\eps\in H^1(B_{2}^+,|z|^ad\mathbf{x})\cap L^\infty(B_{2}^+)$ of \eqref{pasdidee} satisfies 
$$\|u_\eps\|_{C^{0,\boldsymbol{\beta}_*}(\overline{B_1}^+)}\leq \mathbf{c}_*$$ 
for some constant $\mathbf{c}_*>0$ depending only on $n$, $s$, $W$, $\|f\|_{L^\infty(D_2)}$, and $\|u_\eps\|_{L^\infty(B_2^+)}$.  
\end{remark}  

A fundamental consequence of the previous regularity result is that bounded weak solutions of \eqref{eqext} with $f\in C^{0,1}(\partial^0G)$ are stationary points of $\mathbf{F}_\eps(\cdot,G)$, i.e., critical points with respect to inner variations up to $\partial^0G$. In other words, we have 

\begin{corollary}\label{statAC}
Let $G\subset \mathbb{R}^{n+1}_+$ be an  admissible bounded  open set, and $f\in C^{0,1}(\partial^0G)$. If  $u_\eps\in H^1(G,|z|^a\de\mathbf{x})\cap L^\infty(G)$ is a weak solution of  \eqref{eqext}, then 
$$\delta{\bf E}\big(u_\eps,G\cup\partial^0G\big)[{\bf X}]
+\frac{1}{\eps^{2s}}\int_{\partial^0 G}W(u_\eps)\,{\rm div}X\,\de x=\int_{\partial^0G} u_\eps\,{\rm div}(fX)\,\de x$$
for every vector field $\mathbf{X}=(X,\mathbf{X}_{n+1})\in C^1(\overline G;\R^{n+1})$ compactly supported in $G\cup \partial^0G$ such that $\mathbf{X}_{n+1}=0$ on $\partial^0G$. 
\end{corollary}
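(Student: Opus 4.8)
The plan is to turn the weak formulation \eqref{varformbdgleq} into a domain‑variation (Pohozaev‑type) identity after upgrading $u_\eps$ to a classical solution. The starting point is Theorem \ref{regint}: localizing around $\mathrm{supp}\,\mathbf{X}$ and using the admissibility of $G$ (so that every point of $\partial^0G$ lying in $\mathrm{supp}\,\mathbf{X}$ carries an interior half‑ball $B^+_r$), one gets that $u_\eps\in C^\infty(G)$, that $u_\eps$ and $\nabla_x u_\eps$ are of class $C^{0,\boldsymbol{\beta}_*}$ up to $\partial^0G$, and that $z^{a}\partial_z u_\eps$ extends continuously up to $\partial^0G$ with boundary value $\boldsymbol{\partial}^{(2s)}_z u_\eps=\lim_{z\downarrow 0}z^{a}\partial_z u_\eps$; in particular $u_\eps$ solves \eqref{eqext} in the classical (pointwise) sense in a neighbourhood of $\mathrm{supp}\,\mathbf{X}$.

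With this regularity in hand I would start from formula \eqref{formulafirstvarbound} for $\delta\mathbf{E}(u_\eps,G\cup\partial^0G)[\mathbf{X}]$ (legitimate for $u_\eps\in H^1(G,|z|^a\de\mathbf{x})$ by Remark \ref{varuptobound}) and rewrite its integrand, pointwise where $u_\eps$ is smooth, as $\mathrm{div}_{\mathbf{x}}\big(z^{a}|\nabla u_\eps|^2\mathbf{X}-2z^{a}(\nabla u_\eps\cdot\mathbf{X})\nabla u_\eps\big)+2(\nabla u_\eps\cdot\mathbf{X})\,\mathrm{div}(z^{a}\nabla u_\eps)$; this is a routine ``weighted stress–energy tensor'' identity in which the second‑derivative terms cancel, and the last summand vanishes since $\mathrm{div}(z^{a}\nabla u_\eps)=0$. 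Applying the divergence theorem on $G_\delta:=G\cap\{z>\delta\}$ (where all functions are smooth) and letting $\delta\downarrow 0$, the contribution of $\partial^+G$ drops out because $\mathbf{X}$ is compactly supported in $G\cup\partial^0G$, while on the slice $\{z=\delta\}$ the outward normal of $G_\delta$ is $-e_{n+1}$. Using the boundary bounds of Theorem \ref{regint}, the estimate $|\mathbf{X}_{n+1}(x,z)|\leq C z$ (valid since $\mathbf{X}_{n+1}\in C^1$ vanishes on $\{z=0\}$) and $a=1-2s\in(0,1)$, every term carrying a surviving factor $z^{a}$ or $z^{1-a}$ tends to $0$, so that only $2\int_{\partial^0G}(\nabla_x u_\eps\cdot X)\,\boldsymbol{\partial}^{(2s)}_z u_\eps\,\de x$ remains; a parallel dominated‑convergence argument (the integrand of \eqref{formulafirstvarbound} is bounded by $Cz^{a}|\nabla u_\eps|^2\in L^1(G)$) shows that the integral over $G_\delta$ converges to the one over $G$. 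This yields $\delta\mathbf{E}(u_\eps,G\cup\partial^0G)[\mathbf{X}]=d_s\int_{\partial^0G}(\nabla_x u_\eps\cdot X)\,\boldsymbol{\partial}^{(2s)}_z u_\eps\,\de x$.

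It then remains to insert the Neumann condition $d_s\boldsymbol{\partial}^{(2s)}_z u_\eps=\eps^{-2s}W'(u_\eps)-f$ from \eqref{eqext} and integrate by parts on $\partial^0G\subset\R^n$, where $X$ has compact support: the chain rule gives $\int_{\partial^0G}W'(u_\eps)(\nabla_x u_\eps\cdot X)\,\de x=\int_{\partial^0G}\nabla_x\big(W(u_\eps)\big)\cdot X\,\de x=-\int_{\partial^0G}W(u_\eps)\,\mathrm{div}X\,\de x$, and likewise $\int_{\partial^0G}f\,(\nabla_x u_\eps\cdot X)\,\de x=\int_{\partial^0G}(fX)\cdot\nabla_x u_\eps\,\de x=-\int_{\partial^0G}u_\eps\,\mathrm{div}(fX)\,\de x$ (valid because $u_\eps$ is $C^1$ near $\partial^0G$ and $fX\in C^{0,1}_c(\partial^0G;\R^n)$ since $f\in C^{0,1}$). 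Combining these identities and rearranging produces precisely the asserted equality.

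The main obstacle is the passage to the limit $\delta\downarrow 0$ in the divergence theorem: one must control the boundary contributions against the degenerate weight $z^{a}$, and the crucial quantitative inputs that make this possible are exactly the boundary regularity of Theorem \ref{regint} — boundedness of $\nabla_x u_\eps$ and continuity of $z^{a}\partial_z u_\eps$ up to $\partial^0G$ — together with the first‑order vanishing of $\mathbf{X}_{n+1}$ on $\partial^0G$. Once classical regularity is available, the algebraic stress–energy identity and the two integrations by parts on $\R^n$ are entirely standard.
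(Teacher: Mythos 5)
Your proposal is correct and follows essentially the same route as the paper's proof: regularity up to $\partial^0G$ from Theorem~\ref{regint}, integration by parts on the truncated domain $G\cap\{z>\delta\}$ using the divergence-free structure, passage to the limit $\delta\downarrow 0$ controlling the boundary slices via the continuity of $z^{a}\partial_z u_\eps$, the boundedness of $\nabla_x u_\eps$, and the first-order vanishing of $\mathbf{X}_{n+1}$, and finally two integrations by parts on $\partial^0G$ after inserting the Neumann condition. The intermediate ``stress–energy'' reformulation you write out explicitly is exactly what the paper carries out when it applies the equation and integrates by parts to pass from $V_\delta$ to its boundary form.
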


\begin{proof} Let $\mathbf{X}=(X,\mathbf{X}_{n+1})\in C^1(\overline{G};\R^{n+1})$ be an arbitrary  vector field  compactly supported in $G\cup\partial^0G$ and satisfying $\mathbf{X}_{n+1}=0$ on $\partial^0G$. 
For $\delta\geq 0$, we set 
\begin{multline*}
V_\delta:= \frac{d_s}{2}\int_{G\cap\{z>\delta\}}z^{a}\Big( |\nabla u_\eps|^2{\rm div}\,\mathbf{X} 
-2(\nabla u_\eps\otimes\nabla u_\eps):\nabla\mathbf{X}
\Big)\,\de \mathbf{x}\\
+\frac{d_sa}{2}\int_{G\cap\{z>\delta\}}z^{a-1} |\nabla u_\eps|^2 {\mathbf X}_{n+1}\,\de \mathbf{x}\,,
\end{multline*}
so that $V_0=\lim_{\delta\downarrow0} V_\delta$. 

For each $\delta>0$ we can use equation \eqref{eqext} and integrate by parts to find  
\begin{multline*}
V_\delta=d_s\int_{G\cap\{z=\delta\}}\big(z^a\partial_z u_\eps\big)\big(X\cdot\nabla_x u_\eps\big)\,\de x+\frac{d_s\delta^{2s}}{2}\int_{G\cap\{z=\delta\}}\big|z^a\partial_zu_\eps\big|^2\frac{\mathbf{X}_{n+1}}{z}\,\de x\\
-\frac{d_s}{2}\int_{G\cap\{z=\delta\}}z^a|\nabla_xu_\eps|^2\mathbf{X}_{n+1}\,\de x\,.
\end{multline*}
By the regularity estimates in Theorem \ref{regint}, we can let $\delta\downarrow 0$ to derive
\begin{multline*}
V_0= \int_{\partial^0G}\big(\boldsymbol{\partial}^{(2s)}_z u_\eps\big)\big(X\cdot\nabla_x u_\eps\big)\,\de x\\
=\frac{1}{\eps^{2s}}\int_{\partial^0G}W^\prime(u_\eps)X\cdot\nabla_x u_\eps\,\de x-\int_{\partial^0G}  f X\cdot\nabla_x u_\eps\,\de x\,.
\end{multline*}
Integrating this last term by parts, we conclude that 
$$V_0=  -\frac{1}{\varepsilon^{2s}}\int_{\partial^0G} W(u_\eps)\,{\rm div} X\,\de x+ \int_{\partial^0G} u_\eps\,{\rm div}(fX)\, \de x\,,$$
which, in view of Remark \ref{varuptobound},  is the announced identity. 
\end{proof}

\subsection{Regularity and Maximum Principle for the fractional equation}
By estimate \eqref{bdlinftyext}, a bounded weak solution of the fractional equation \eqref{eqfractGL} yields  a bounded weak solution of \eqref{eqext} after extension. Hence Theorem \ref{regint} and Remark \ref{remclear} provide the following interior regularity for bounded weak solutions of  the fractional equation. 

 \begin{corollary}
 Let $v_\varepsilon \in  \widehat H^{s}(\Omega)\cap L^\infty(\R^n)$ be a weak solution of \eqref{eqfractGL} with $f\in L^\infty(\Omega)$. 
 Then $v_\varepsilon \in C^{0,\boldsymbol{\beta}_*}_{\rm loc}(\Omega)$ with $\boldsymbol{\beta}_*$  given by Lemma \ref{prereg}. In addition, if $f\in C^{0,1}(\Omega)$, then $v_\varepsilon \in C^{1,\boldsymbol{\beta}_*}_{\rm loc}(\Omega)$. 
 \end{corollary}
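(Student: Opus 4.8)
The plan is to deduce this corollary directly from the regularity theory for the degenerate boundary reaction problem \eqref{eqext} developed in Lemma~\ref{prereg} and Theorem~\ref{regint}, simply transferring those estimates through the Caffarelli--Silvestre extension. No new analytic input is needed; the point is only to check that the local hypotheses of those results are met and that the boundary trace of the (continuous) extension is $v_\varepsilon$.

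First I would extend $v_\varepsilon$ to its fractional harmonic extension $v^\e_\varepsilon$ via \eqref{poisson}. By Lemma~\ref{hatH1/2toH1}, $v^\e_\varepsilon\in H^1_{\rm loc}\big(\R^{n+1}_+\cup\Omega,|z|^a\de\mathbf{x}\big)$, and by \eqref{bdlinftyext} it is bounded on $\R^{n+1}_+$ with $\|v^\e_\varepsilon\|_{L^\infty(\R^{n+1}_+)}\le\|v_\varepsilon\|_{L^\infty(\R^n)}$. As recorded around \eqref{varformbdgleq}, $v^\e_\varepsilon$ is a weak solution of \eqref{eqext} on every admissible bounded open set $G\subset\R^{n+1}_+$ with $\overline{\partial^0G}\subset\Omega$. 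Fixing $x_0\in\Omega$ and $r>0$ with $D_{2r}(x_0)\subset\Omega$, the rescaled function $\mathbf{x}\mapsto v^\e_\varepsilon\big(x_0+\tfrac r2\mathbf{x}\big)$ solves an equation of the same type on $B_2^+$, with $\varepsilon$ replaced by $2\varepsilon/r$ and $f$ replaced by a correspondingly rescaled datum; this uses only that $\mathrm{div}(z^a\nabla\cdot)$ is form-invariant under dilations and that $\boldsymbol{\partial}^{(2s)}_z$ scales homogeneously of degree $2s$. The rescaled datum still lies in $L^\infty(D_2)$ (resp.\ in $C^{0,1}(D_2)$ when $f\in C^{0,1}(\Omega)$).

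For the H\"older statement, I would apply Lemma~\ref{prereg} to this rescaled solution with right-hand side $\mathbf{f}:=d_s^{-1}\big(\varepsilon^{-2s}W^\prime(v^\e_\varepsilon)-f\big)$: by assumption (H1), $W^\prime$ is continuous, hence bounded on the bounded range of $v^\e_\varepsilon$, so $\mathbf{f}\in L^\infty(D_2)$ and Lemma~\ref{prereg} gives $v^\e_\varepsilon\in C^{0,\boldsymbol{\beta}_*}(\overline B_1^+)$ near $(x_0,0)$. Since the trace of $v^\e_\varepsilon$ on $\R^n$ agrees with $v_\varepsilon$ at every Lebesgue point (as noted after \eqref{poisson}), restricting to $D_1$ shows $v_\varepsilon$ coincides a.e.\ near $x_0$ with a $C^{0,\boldsymbol{\beta}_*}$ function; as $x_0\in\Omega$ is arbitrary and $\boldsymbol{\beta}_*$ is universal, a covering argument yields $v_\varepsilon\in C^{0,\boldsymbol{\beta}_*}_{\rm loc}(\Omega)$. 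When $f\in C^{0,1}(\Omega)$, I would instead invoke Theorem~\ref{regint} on the same rescaled half-balls: it gives $\nabla_xv^\e_\varepsilon\in C^{0,\boldsymbol{\beta}_*}(\overline B_1^+)$ (together with $z^a\partial_zv^\e_\varepsilon\in C^{0,\gamma}$), so $v_\varepsilon=v^\e_\varepsilon|_{z=0}$ and $\nabla_xv_\varepsilon=\nabla_xv^\e_\varepsilon|_{z=0}$ are of class $C^{0,\boldsymbol{\beta}_*}$ near $x_0$, i.e.\ $v_\varepsilon\in C^{1,\boldsymbol{\beta}_*}_{\rm loc}(\Omega)$.

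I do not expect any genuine obstacle: all the difficulty is already absorbed into Lemma~\ref{prereg} and Theorem~\ref{regint}. The only steps requiring a line of justification are the dilation/translation invariance of \eqref{eqext} used to localize, and the identification of the boundary trace of $v^\e_\varepsilon$ with $v_\varepsilon$ (equivalently, the fact that the continuous representative of $v_\varepsilon$ produced on $D_1$ is the original function), both of which are routine.
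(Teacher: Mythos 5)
Your argument is correct and takes essentially the same route as the paper, which notes (just before the corollary) that \eqref{bdlinftyext} turns $v_\varepsilon$ into a bounded weak solution of \eqref{eqext} after the Caffarelli--Silvestre extension and then invokes Theorem~\ref{regint} and Remark~\ref{remclear}. Your explicit filling-in --- localize by translation/dilation to $B_2^+$, freeze the nonlinearity so that $\mathbf{f}=d_s^{-1}\big(\varepsilon^{-2s}W'(v^\e_\varepsilon)-f\big)\in L^\infty(D_2)$, apply Lemma~\ref{prereg} (resp.\ Theorem~\ref{regint} when $f\in C^{0,1}$), and transfer the resulting interior estimate to the trace $v_\varepsilon=v^\e_\varepsilon|_{z=0}$ --- is precisely what that implicit argument amounts to.
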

  
 The regularity issue then reduces to prove that a given weak solution of the fractional equation \eqref{eqfractGL} is bounded. If we complement  \eqref{eqfractGL} with a smooth exterior Dirichlet condition, this is indeed the case. 
  
 \begin{lemma}\label{Linftybd}
 Let $g \in C^{0,1}_{\rm loc}(\R^n)\cap L^{\infty}(\R^n)$, $f\in L^\infty(\Omega)$,  and let $v_\varepsilon \in H_{g}^{s}(\Omega)\cap L^p(\Omega)$ 
be a weak solution of \eqref{eqfractGL}. Then $v_\varepsilon \in L^\infty(\R^n)$. 
 \end{lemma}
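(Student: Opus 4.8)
The plan is to argue that $v_\eps$ is essentially bounded by the maximum of $\|g\|_{L^\infty(\R^n\setminus\Omega)}$ and a constant depending on $W$, $\eps$ and $\|f\|_{L^\infty(\Omega)}$, via a truncation (Stampacchia-type) test-function argument adapted to the nonlocal setting. First I would fix a level $M>0$, to be chosen, and consider the truncation $v_\eps^M:=\min\{v_\eps,M\}$, together with the test function $\varphi:=(v_\eps-M)^+=v_\eps-v_\eps^M$. The key point is that $\varphi\in H^s_{00}(\Omega)$: indeed, since $|g|=1$-type bounds give $\|g\|_{L^\infty(\R^n\setminus\Omega)}<\infty$, choosing $M\geq\|g\|_{L^\infty(\R^n\setminus\Omega)}$ forces $\varphi\equiv 0$ a.e.\ on $\R^n\setminus\Omega$; moreover $\varphi\in H^s(\R^n)$ because truncation is a bounded operation on $H^s$ and $v_\eps\in H^s_g(\Omega)\subset H^s_{\rm loc}(\R^n)$ with the finite energy $\mathcal E(v_\eps,\Omega)<\infty$; the $L^p$-integrability needed to plug $\varphi$ into the weak formulation follows from $v_\eps\in L^p(\Omega)$. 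Thus $\varphi$ is an admissible test function.

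Next I would insert $\varphi$ into the weak form of \eqref{eqfractGL}, namely
\begin{equation*}
\big\langle (-\Delta)^s v_\eps,\varphi\big\rangle_\Omega+\frac{1}{\eps^{2s}}\int_\Omega W'(v_\eps)\,\varphi\,\de x=\int_\Omega f\varphi\,\de x\,.
\end{equation*}
The crucial algebraic fact is the pointwise inequality $(v_\eps(x)-v_\eps(y))(\varphi(x)-\varphi(y))\geq |\varphi(x)-\varphi(y)|^2\geq 0$ for the truncation $\varphi=(v_\eps-M)^+$ (and likewise $\geq 0$ against the ``outside'' term in \eqref{deffraclap}, since $\varphi$ vanishes outside $\Omega$ and $v_\eps(x)-v_\eps(y)$ has the same sign as $\varphi(x)$ on the set $\{x\in\Omega,\ \varphi(x)>0\}$ when $M\geq\|g\|_{L^\infty}$). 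Hence $\langle (-\Delta)^s v_\eps,\varphi\rangle_\Omega\geq [\varphi]^2_{H^s(\R^n)}\geq 0$, which gives
\begin{equation*}
\frac{1}{\eps^{2s}}\int_{\{v_\eps>M\}} W'(v_\eps)(v_\eps-M)\,\de x\leq \int_{\{v_\eps>M\}} f\,(v_\eps-M)\,\de x\,.
\end{equation*}
By hypothesis (H2)–(H3), $W'(t)>0$ and in fact $W'(t)\geq \tfrac{1}{\boldsymbol c_W}(|t|^{p-1}-1)\to+\infty$ as $t\to+\infty$; choose $M_+>1$ large enough (depending only on $W$, $\eps$, $\|f\|_{L^\infty(\Omega)}$) so that $\eps^{-2s}W'(t)\geq \|f\|_{L^\infty(\Omega)}$ for all $t\geq M_+$, and also $M_+\geq\|g\|_{L^\infty(\R^n\setminus\Omega)}$. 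Then the right-hand side above is $\leq\|f\|_{L^\infty(\Omega)}\int_{\{v_\eps>M_+\}}(v_\eps-M_+)\,\de x$ while the left-hand side is $\geq\|f\|_{L^\infty(\Omega)}\int_{\{v_\eps>M_+\}}(v_\eps-M_+)\,\de x$, forcing $\int_{\{v_\eps>M_+\}}(v_\eps-M_+)\,\de x=0$, i.e.\ $v_\eps\leq M_+$ a.e.\ in $\Omega$ (and trivially in $\R^n\setminus\Omega$ since $M_+\geq\|g\|_{L^\infty}$). The symmetric choice with $\varphi:=(v_\eps+M_-)^-$ and the lower bound $W'(t)\leq-\tfrac{1}{\boldsymbol c_W}(|t|^{p-1}-1)$ for $t\ll-1$ yields $v_\eps\geq -M_-$ a.e. Combining, $v_\eps\in L^\infty(\R^n)$.

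The main obstacle I anticipate is the justification that $\varphi=(v_\eps-M)^+$ is a \emph{legitimate} test function in the class $H^s_{00}(\Omega)\cap L^p(\Omega)$ and that all the bilinear-form manipulations (splitting into the $\Omega\times\Omega$ and $\Omega\times\Omega^c$ pieces of \eqref{deffraclap}, discarding the nonnegative cross terms) are rigorous despite $v_\eps$ not being bounded a priori; this requires care that each double integral converges absolutely, which follows from $v_\eps\in\widehat H^s(\Omega)\cap L^p(\Omega)$, Lemma \ref{adminHchap}, and the fact that truncation only decreases the relevant Gagliardo seminorms. Once that measure-theoretic bookkeeping is in place, the sign structure of the fractional Laplacian does all the work and no further regularity of $v_\eps$ is needed.
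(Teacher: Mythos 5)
Your approach is genuinely different from the paper's. The paper works on the Caffarelli--Silvestre extension $v_\eps^\e$: it derives a Kato-type differential inequality for $\eta=\chi_R(|\mathbf x|)\sqrt{|v_\eps^\e|^2+\lambda^2}$ in $B_R^+$, tests it against power functions $\phi_{T,\gamma}=\rho_T^{2\gamma}\rho$, and runs a Moser iteration ($\gamma\to\infty$) to conclude $\|\rho\|_{L^\infty(D_R)}\leq M$, where $M$ controls $v_\eps^\e$ and its normal derivative on the spherical boundary $\partial^+B_R$. You instead run a one-step Stampacchia/De\,Giorgi truncation directly in the nonlocal weak formulation, testing with $\varphi=(v_\eps-M)^+$. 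This is cleaner and more elementary: it avoids the extension entirely, and it directly produces a bound of the same quantitative form as the paper's Corollary~\ref{modless1} (which in the paper is obtained separately via a Hopf lemma \emph{after} $L^\infty$-boundedness is known). Your verification that $\varphi\in H^s_{00}(\Omega)\cap L^p(\Omega)$ is admissible, that the pointwise inequality $(v_\eps(x)-v_\eps(y))(\varphi(x)-\varphi(y))\geq|\varphi(x)-\varphi(y)|^2$ holds, and that the $\Omega\times\Omega^c$ part of \eqref{deffraclap} is nonnegative once $M\geq\|g\|_{L^\infty(\R^n\setminus\Omega)}$, are all correct and are the right things to worry about.

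There is, however, one genuine (if small) logical gap in your final step. You choose $M_+$ so that $\eps^{-2s}W'(t)\geq\|f\|_{L^\infty(\Omega)}$ for $t\geq M_+$, and then claim that sandwiching $\|f\|_{L^\infty}I\leq\tfrac1{\eps^{2s}}\int W'(v_\eps)\varphi\leq\|f\|_{L^\infty}I$ (with $I:=\int_{\{v_\eps>M_+\}}(v_\eps-M_+)$) ``forces $I=0$''. It does not: this chain only says $\|f\|_{L^\infty}I\leq\|f\|_{L^\infty}I$, which carries no information. Two clean fixes are available. (a) Pick $M_+$ so that $\eps^{-2s}W'(t)\geq\|f\|_{L^\infty(\Omega)}+1$ for $t\geq M_+$; then the same chain gives $(\|f\|_{L^\infty}+1)I\leq\|f\|_{L^\infty}I$, hence $I\leq 0$, hence $I=0$. (b) Do not discard the positive Dirichlet contribution: your pointwise inequality and the sign analysis of the $\Omega\times\Omega^c$ term actually give $\langle(-\Delta)^s v_\eps,\varphi\rangle_\Omega\geq[\varphi]^2_{H^s(\R^n)}=2\mathcal E(\varphi,\Omega)$, so the weak formulation yields $[\varphi]^2_{H^s(\R^n)}+\tfrac1{\eps^{2s}}\int W'(v_\eps)\varphi\leq\int f\varphi$, and with the non-strict choice of $M_+$ you still conclude $[\varphi]^2_{H^s(\R^n)}\leq 0$, hence $\varphi\equiv0$ since $\varphi\in H^s_{00}(\Omega)$. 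Either fix closes the argument, and the symmetric truncation from below then gives $v_\eps\in L^\infty(\R^n)$ as claimed.
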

 
Let us start with an elementary lemma concerning the potential $W$.

\begin{lemma}\label{lemstrutpot}
 Let $W:\R\to[0,\infty)$ satisfying {\rm (H1)-(H2)-(H3)}. Then, for all $\delta>0$, 
 \begin{equation}\label{estiW}
 W^\prime(t)t-\delta |t|\geq 0\quad\text{whenever }|t|\geq (1+\boldsymbol{c}_W\delta)^{\frac{1}{p-1}}\,.
 \end{equation}
\end{lemma}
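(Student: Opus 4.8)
The plan is to prove Lemma \ref{lemstrutpot} by a direct estimate using only the lower bound in hypothesis (H3). First I would observe that the asserted inequality \eqref{estiW} is symmetric in $t$ (since $W^\prime(t)t = W^\prime(-t)(-t)$ in absolute value by the structure of a double-well, though more directly $W^\prime(t)t \geq |W^\prime(t)||t|$ when these have the same sign, so we should really write $W^\prime(t)t \geq |W^\prime(t)t| - $ nothing; in fact the cleanest route is to note that for $|t|$ large the sign of $W^\prime(t)$ agrees with the sign of $t$ because the minima are at $\pm 1$). So the main point is the quantitative one: for $|t|$ large, $W^\prime(t)t = |W^\prime(t)|\,|t|$.

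Granting that sign agreement, I would simply chain the lower bound from (H3):
\[
W^\prime(t)t = |W^\prime(t)|\,|t| \geq \frac{1}{\boldsymbol{c}_W}\big(|t|^{p-1}-1\big)|t| = \frac{1}{\boldsymbol{c}_W}\big(|t|^{p}-|t|\big).
\]
Then $W^\prime(t)t - \delta|t| \geq \frac{1}{\boldsymbol{c}_W}|t|^{p} - \big(\delta + \tfrac{1}{\boldsymbol{c}_W}\big)|t| = |t|\big(\tfrac{1}{\boldsymbol{c}_W}|t|^{p-1} - \delta - \tfrac{1}{\boldsymbol{c}_W}\big)$, and this last factor is nonnegative precisely when $|t|^{p-1} \geq 1 + \boldsymbol{c}_W\delta$, i.e. when $|t| \geq (1+\boldsymbol{c}_W\delta)^{1/(p-1)}$. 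That is exactly the claimed threshold.

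The one step requiring a small argument is justifying that $W^\prime(t)$ and $t$ have the same sign once $|t| \geq (1+\boldsymbol{c}_W\delta)^{1/(p-1)} \geq 1$. I would argue as follows: by (H2), $W$ vanishes only at $\pm 1$ and $W \geq 0$, so $W$ is strictly positive on $(1,\infty)$ with $W(1)=0$; since $W \in C^2$ and $|W^\prime| \geq \tfrac{1}{\boldsymbol{c}_W}(|t|^{p-1}-1) > 0$ for $|t|>1$, the derivative $W^\prime$ cannot vanish on $(1,\infty)$, hence keeps a constant sign there, and because $W$ increases away from its zero at $1$ (being positive nearby and never returning to $0$), that sign is $+$; thus $W^\prime(t) > 0 = \operatorname{sign}(t)\cdot(\text{positive})$ for $t > 1$. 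Symmetrically $W^\prime(t) < 0$ for $t < -1$. Therefore $W^\prime(t)t \geq 0$ and in fact $W^\prime(t)t = |W^\prime(t)|\,|t|$ on the relevant range, which is all that is needed. I expect the main (and only) obstacle to be presenting this sign discussion cleanly; the rest is a one-line computation.

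\begin{proof}
Fix $\delta>0$ and set $t_\delta:=(1+\boldsymbol{c}_W\delta)^{1/(p-1)}\geq 1$, and let $|t|\geq t_\delta$. We first claim that $W^\prime(t)$ and $t$ have the same sign. Indeed, by (H2) and (H3), for every $\tau$ with $|\tau|>1$ we have $|W^\prime(\tau)|\geq \tfrac{1}{\boldsymbol{c}_W}(|\tau|^{p-1}-1)>0$, so $W^\prime$ does not vanish on $(1,\infty)$ and hence has constant sign there. Since $W\geq 0$ with $W(1)=0$ and $W>0$ on $(1,\infty)$ (again by (H2)), $W$ is increasing on $(1,\infty)$, so that sign is positive; thus $W^\prime(\tau)>0$ for $\tau>1$. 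Symmetrically $W^\prime(\tau)<0$ for $\tau<-1$. In particular $W^\prime(t)t=|W^\prime(t)|\,|t|\geq 0$.

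Using the lower bound in (H3),
\[
W^\prime(t)t-\delta|t|=|W^\prime(t)|\,|t|-\delta|t|\geq \frac{1}{\boldsymbol{c}_W}\big(|t|^{p-1}-1\big)|t|-\delta|t|
=|t|\left(\frac{|t|^{p-1}-1}{\boldsymbol{c}_W}-\delta\right).
\]
The right-hand side is nonnegative if and only if $|t|^{p-1}\geq 1+\boldsymbol{c}_W\delta$, i.e. $|t|\geq t_\delta$, which holds by assumption. This proves \eqref{estiW}.
\end{proof}
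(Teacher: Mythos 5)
Your proof is correct and follows essentially the same route as the paper: both establish the sign of $W'$ on $\{|t|>1\}$ from the non-vanishing of $W'$ there (via the lower bound in (H3)) together with the fact that $\pm 1$ are global minimizers of $W\geq0$, and then chain the lower bound $|W'(t)|\geq \boldsymbol{c}_W^{-1}(|t|^{p-1}-1)$ to obtain the threshold $(1+\boldsymbol{c}_W\delta)^{1/(p-1)}$. Your sign discussion is a little more explicit than the paper's, but it is the same argument.
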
 
 
\begin{proof}
From the lower bound in (H3), it follows that $|W^\prime(t)|>0$ for $|t|>1$. Since $W$ achieves its minimum value at $\pm1$, we deduce that $W^\prime(t)\leq 0$ for $t\leq -1$, and $W^\prime(t)\geq 0$ for $t\geq 1$. Hence the lower bound in (H3) yields
$$W^\prime(t)t\geq \frac{1}{\boldsymbol{c}_W}\big(|t|^{p-1}-1\big)|t|\geq \delta|t| $$
for $|t|\geq (1+\boldsymbol{c}_W\delta)^{\frac{1}{p-1}}$. 
\end{proof} 
 
 \begin{proof}[Proof of Lemma \ref{Linftybd}]
We fix for the whole proof a radius $R>0$ such that $\overline\Omega\subset D_R$.  

\noindent{\it Step 1.}  By Remarks \ref{remEDB} \& \ref{H1/2loctoH1loc}, $v^\e_\varepsilon \in H^1_{{{\rm loc}}}(\overline{\R^{n+1}_+},|z|^a\de\mathbf{x})$ 
and $v^\e_\varepsilon$ weakly solves \eqref{eqext} with $G=B_R^+$.  By elliptic regularity 
 we have $v^\e_\varepsilon\in C^\infty(\R^{n+1}_+)$. 
 Since $g$ is locally Lipschitz continuous and ${\rm dist}(\partial^+B_R,\overline\Omega)>0$, we easily infer from formula  \eqref{poisson} that the function 
${\bf x}\in \partial^+B_R\mapsto |v_\eps^\e({\bf x})|+ z^{a}|\nabla v^\e_\varepsilon({\bf x})|$ is bounded. We set 
   $$M:=\| v^\e_\varepsilon\|_{L^\infty(\partial^+B_R)}+\left\|z^{a}\nabla v^\e_\varepsilon\right\|_{L^\infty(\partial^+B_R)}<\infty\,. $$
Let us consider a cut-off function $\chi_R\in C^\infty(\R;[0,1])$ such that $\chi_R(t)=1$ for $|t|\leq R$ and $\chi_R(t)=0$ for $|t|\geq 3R/2$. 
We introduce the scalar function
 $$\eta:= \chi_R(|\mathbf{x}|)\sqrt{|v^\e_\varepsilon|^2+\lambda^2} \in H^1(B^+_{2R},|z|^a\de\mathbf{x})\cap L^p(\Omega)\,,$$
 with
 $$\lambda:=\max\left(\big(1+\boldsymbol{c}_W\eps^{2s}\|f\|_{L^\infty(\Omega)}\big)^{\frac{1}{p-1}},1+\|g\|_{L^\infty(\R^n\setminus\Omega)}\right)\,,$$
and $\boldsymbol{c}_W$ the constant given in  assumption ({\rm H}3).

Fix a  nonnegative function $\phi \in C^1(\overline B^+_{2R})$ with compact support in 
$B^+_{2R}\cup\Omega$.  
Noticing that $ v^\e_\varepsilon /\eta \in H^1(B_R^+,|z|^a\de\mathbf{x})$,  we obtain
$$ \int_{B_R^+} z^{a}\nabla\eta \cdot \nabla \phi\,\de \mathbf{x}= 
 \int_{B_R^+} z^{a}\nabla v^\e_\varepsilon\cdot \nabla \left(\frac{  v^\e_\varepsilon }{\eta}\phi\right) \,\de \mathbf{x}
 - \int_{B_R^+} z^{a} \frac{\phi}{\eta}\left(1-\frac{(v^\e_\eps)^2}{\eta^2}\right) |\nabla v_\eps^\e|^2 \,\de \mathbf{x}\,.$$
On the other hand $\phi\geq 0$, so that 
 $$ \int_{B_R^+}  z^{a} \nabla\eta \cdot \nabla \phi\,\de \mathbf{x} \leq   \int_{B_R^+}  z^{a} \nabla v^\e_\varepsilon\cdot \nabla \left(\frac{  v^\e_\varepsilon }{\eta}\phi\right) \,\de \mathbf{x} \,.$$
 Using equation  \eqref{eqext}, we  infer that 
  \begin{equation}\label{inegKato}
 \int_{B_R^+}  z^{a} \nabla\eta \cdot \nabla \phi\,\de \mathbf{x} \leq 
 \int_{\partial^+{B_R} } z^{a} \frac{\partial v_\varepsilon^\e}{\partial\nu}v_\eps^\e\frac{\phi}{\eta}\,\de\mathscr{H}^n
 -\frac{1}{\varepsilon^{2s}}\int_\Omega \Big(W'(v_\eps^\e)v_\varepsilon^\e-\eps^{2s}f v^\e_\eps\Big) \frac{\phi}{\eta}\,\de x\,.
 \end{equation}
 Then we conclude by approximation  (see Remark \ref{smoothapprox}) that \eqref{inegKato} actually holds for any nonnegative $\phi \in H^1(B^+_{2R},|z|^a\de \mathbf{x})\cap L^p(\Omega)$ with compact support in $B^+_{2R}\cup \overline\Omega$. 
\vskip3pt
 
 \noindent{\it Step 2.} Given $T>0$ and $\gamma>0$, we  define  the functions
 $$\rho:=\max\{\eta-\sqrt{2}\lambda,0\} \,,\quad \rho_T:=\min(\rho,T)\,,\quad \psi_{T,\gamma}:= \rho^\gamma_T\rho\,,\quad \phi_{T,\gamma}:= \rho^{2\gamma}_T\rho\,.$$
 which all belong to $H^1(B_{2R}^+,|z|^a\de\mathbf{x})\cap L^p(\Omega)$. 
Setting $G_T:=\{0<\rho<T\}\cap B_R^+$,  
straightforward computations yield 
 $$\int_{B_R^+} z^{a} |\nabla \psi_{T,\gamma}|^2\,\de \mathbf{x}=\int_{B_R^+} z^{a} \rho_T^{2\gamma}|\nabla\eta|^2\,\de \mathbf{x}+ (\gamma^2+2\gamma)\int_{G_T} z^{a}\rho^{2\gamma}|\nabla\eta|^2\,\de \mathbf{x}\,,$$
 and 
 $$\int_{B_R^+} z^{a} \nabla\eta\cdot\nabla\phi_{T,\gamma}\,\de \mathbf{x}= \int_{B_R^+} z^{a} \rho_T^{2\gamma}|\nabla \eta|^2\,\de \mathbf{x}+ 2\gamma\int_{G_T} z^{a}\rho^{2\gamma}|\nabla \eta|^2\,\de \mathbf{x}\,.$$
 From this two last equalities, we  infer that 
 $$ \int_{B_R^+} z^{a} |\nabla \psi_{T,\gamma}|^2\,\de \mathbf{x}\leq (\gamma+1)\int_{B_R^+} z^{a} \nabla\eta\cdot\nabla\phi_{T,\gamma}\,\de \mathbf{x}\,.$$
Next we want to use $\phi_{T,\gamma}$ as a test function in  \eqref{inegKato}. To this purpose it is enough to show that $\rho$ has compact support in $B_{2R}^+\cup\overline\Omega$. Obviously, $\rho$ has compact support in $B_{2R}^+\cup D_{2R}$. 
 Since $v^\e_\eps=g_\eps$ on $D_{2R}\setminus\overline\Omega$, we have $|v_\eps^\e|\leq \lambda-1$ on $D_{2R}\setminus\overline\Omega$. Consider a point $\mathbf{x}_0=(x_0,0)$ with $x_0\in D_{2R}\setminus\overline\Omega$. From the smoothness of $g_\eps$ and \eqref{poisson}, we derive that $v_\eps^\e$ is continous at $\mathbf{x}_0$. Therefore there exists a radius $\delta>0$ such that $|v_\eps^\e|<\lambda$ in $\overline B_{\delta}^+(\mathbf{x}_0)$. Then $\rho=0$  in $\overline B_{\delta}^+(\mathbf{x}_0)$, and hence $\rho$ has compact support in $B_{2R}^+\cup\overline\Omega$. 
  
 Then, finally  using $\phi_{T,\gamma}$ as a test function in \eqref{inegKato},  we  deduce that
\begin{multline*}
\int_{B_R^+} z^{a} |\nabla \psi_{T,\gamma}|^2\,\de \mathbf{x}\leq (\gamma+1) 
 \int_{\partial^+B_R}  z^{a} \frac{\partial v_\varepsilon^\e}{\partial\nu}\frac{v_\eps^\e}{\eta}\, \rho_T^{2\gamma}\rho\,\de \mathscr{H}^n \\
 -\frac{\gamma+1}{\varepsilon^{2s}}\int_\Omega \Big(W'(v_\eps^\e)v_\varepsilon^\e -\eps^{2s}fv^\e_\eps\Big)\frac{\rho_T^{2\gamma}\rho}{\eta}\,\de x\,.
\end{multline*}
 Noting that $|v_\eps^\e|\geq \lambda$ on $\{\rho>0\}$, we have
 $$W'(v_\eps^\e)v_\eps^\e-\eps^{2s}fv^\e_\eps\geq W'(v_\eps^\e)v_\eps^\e-\eps^{2s}\|f\|_{L^\infty(\Omega)}|v^\e_\eps| \geq 0\quad\text{ on $\{\rho>0\}\cap \Omega$}\,,$$ 
by Lemma \ref{lemstrutpot}. Since $\rho\leq |v_\eps^\e|$, the previous estimate leads to
 $$\| \nabla( \rho_T^{\gamma}\rho)\|^2_{L^2(B^+_R,|z|^a\de\mathbf{x})}\leq (\gamma+1)\mathscr{H}^n(\partial^+B_R) M^{2\gamma+2} \,.$$
 By the continuous embedding \eqref{contembedd}, $\rho_T^{\gamma}\rho\in W^{1,1}(B_R^+)$. Moreover, since  $\rho_T^{\gamma}\rho$ vanishes on $D_R\setminus \overline\Omega$, we can apply the Poincar\'e inequality in \cite[Corollary 4.5.2]{Zi} and the continuity of the trace operator \eqref{conttracW1} to deduce that 
 $$\|\rho_T^\gamma\rho\|^2_{L^1(D_R)}\leq C_{R,\Omega} \| \nabla (\rho_T^{\gamma}\rho)\|^2_{L^1(B_R^+)}\,,$$
 for a constant $C_{R,\Omega}>0$ which only depends on $R$ and $\Omega$.  From the two previous inequality and \eqref{contembedd}, we derive 
 $$  \|\rho_T^\gamma\rho\|^2_{L^1(D_R)}\leq C_{s,R,\Omega} (\gamma+1)M^{2\gamma+2}\,.$$
 Next we let $T\to \infty$ in this last inequality to obtain 
 $$\|\rho\|^2_{L^{\gamma+1}(D_R)} \leq C_{s,R,\Omega}^{1/(\gamma+1)}(\gamma+1)^{1/(\gamma+1)}M^2\,.$$
 Letting now $\gamma\to\infty$ leads to $\|\rho\|_{L^\infty(D_R)}\leq M$, which in turn implies $v_\varepsilon\in L^\infty(\Omega)$. 
 Since $v_\varepsilon=g$ outside~$\Omega$, we have thus proved that $v_\varepsilon\in L^\infty(\R^n)$. 
 \end{proof}
  
In the case where  equation \eqref{eqfractGL} is complemented with a smooth exterior Dirichlet condition, weak solutions are thus bounded. Then we can apply  \cite[Theorem 2]{SerVal} to deduce continuity  across the boundary $\partial \Omega$, and finally obtain the following regularity result.  

\begin{theorem}\label{regDirich}
Assume that $\partial\Omega$ is smooth. Let $g\in   C^{0,1}_{\rm loc}(\R^n)\cap L^\infty(\R^n)$, $f\in L^\infty(\Omega)$,  
and let $v_\varepsilon \in H_{g}^{s}(\Omega)\cap L^p(\Omega)$ 
be a weak solution of \eqref{eqfractGL}. Then 
$v_\eps \in C^{0,\boldsymbol{\beta}_*}_{\rm loc}(\Omega)\cap C^0(\R^n)$ with $\boldsymbol{\beta}_*$  given by Lemma \ref{prereg}. 
\end{theorem}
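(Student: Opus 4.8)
The statement to prove is Theorem \ref{regDirich}: under the hypotheses $\partial\Omega$ smooth, $g\in C^{0,1}_{\rm loc}(\R^n)\cap L^\infty(\R^n)$, $f\in L^\infty(\Omega)$, any weak solution $v_\eps\in H^s_g(\Omega)\cap L^p(\Omega)$ of \eqref{eqfractGL} lies in $C^{0,\boldsymbol\beta_*}_{\rm loc}(\Omega)\cap C^0(\R^n)$. The plan is to combine three ingredients already established in the excerpt: (i) the $L^\infty$ bound from Lemma \ref{Linftybd}, which upgrades $v_\eps$ from merely $H^s_g(\Omega)\cap L^p(\Omega)$ to $v_\eps\in L^\infty(\R^n)$; (ii) the interior regularity corollary following Theorem \ref{regint}, which then gives $v_\eps\in C^{0,\boldsymbol\beta_*}_{\rm loc}(\Omega)$ since a bounded weak solution of the fractional equation \eqref{eqfractGL} has, via \eqref{bdlinftyext}, a bounded extension $v_\eps^\e$ solving the degenerate boundary-reaction problem \eqref{eqext} on $G=B_R^+$ for $R$ with $\overline\Omega\subset D_R$; and (iii) a boundary-continuity result for the fractional equation, namely \cite[Theorem 2]{SerVal}, to obtain continuity across $\partial\Omega$. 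Concatenating these yields membership in $C^{0,\boldsymbol\beta_*}_{\rm loc}(\Omega)\cap C^0(\R^n)$.

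First I would invoke Lemma \ref{Linftybd} directly: its hypotheses are exactly those of Theorem \ref{regDirich} (indeed slightly weaker, as smoothness of $\partial\Omega$ is not needed there), so $v_\eps\in L^\infty(\R^n)$. In particular $v_\eps\in\widehat H^s(\Omega)\cap L^\infty(\R^n)$, and because $W\in C^2$ and $v_\eps$ is bounded, $W'(v_\eps)\in L^\infty(\Omega)$, so $v_\eps$ solves \eqref{eqfractGL} with right-hand side $f-\eps^{-2s}W'(v_\eps)\in L^\infty(\Omega)$ (though one should keep the reaction term on the left to preserve the structure exploited by the interior estimate). Then the Corollary immediately preceding Lemma \ref{Linftybd} applies and gives $v_\eps\in C^{0,\boldsymbol\beta_*}_{\rm loc}(\Omega)$, which is the first half of the conclusion. (This step rests on extending $v_\eps$ to $v_\eps^\e$ via \eqref{poisson}, noting by \eqref{bdlinftyext} that $\|v_\eps^\e\|_{L^\infty(\R^{n+1}_+)}\le\|v_\eps\|_{L^\infty(\R^n)}$, observing by Remarks \ref{remEDB} and \ref{H1/2loctoH1loc} that $v_\eps^\e\in H^1_{\rm loc}(\overline{\R^{n+1}_+},|z|^a\de\mathbf x)$ solves \eqref{eqext} with $G=B_R^+$, and then applying Theorem \ref{regint} locally around interior points of $\Omega$.)

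For the global continuity $v_\eps\in C^0(\R^n)$, it remains to establish continuity at points of $\partial\Omega$ (continuity on $\R^n\setminus\overline\Omega$ being clear from the Lipschitz regularity of $g$ and the representation \eqref{poisson}, and continuity in $\Omega$ already obtained). Here I would rely on the boundary regularity theory for the fractional Laplacian with bounded data: since $v_\eps\in L^\infty(\R^n)$, the function $h:=f-\eps^{-2s}W'(v_\eps)\in L^\infty(\Omega)$, $v_\eps$ solves $(-\Delta)^s v_\eps=h$ in $\Omega$ with exterior datum $g\in C^{0,1}_{\rm loc}(\R^n)$, and $\partial\Omega$ is smooth; then \cite[Theorem 2]{SerVal} (a De Giorgi–Nash–Moser type continuity-up-to-the-boundary result for the fractional Laplacian) gives that $v_\eps$ is continuous in a neighbourhood of $\overline\Omega$, in particular across $\partial\Omega$. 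Combining the three regions gives $v_\eps\in C^0(\R^n)$.

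\textbf{Main obstacle.} The routine obstacle is checking carefully that the boundary-continuity citation \cite[Theorem 2]{SerVal} applies verbatim: one must match its regularity hypotheses on $\partial\Omega$ (smooth or $C^{1,1}$ suffices), on the exterior datum $g$ (continuity, or $C^{0,\alpha}$, which is implied by $g\in C^{0,1}_{\rm loc}\cap L^\infty$), and on the right-hand side ($L^\infty$ or $L^q$ with $q$ large, which holds since $h\in L^\infty(\Omega)$ after the $L^\infty$ bound). The only mildly delicate point is that $h$ depends on $v_\eps$ through $W'(v_\eps)$, so one genuinely needs Lemma \ref{Linftybd} \emph{first} to close the argument; without the a priori $L^\infty$ bound the nonlinearity $W'(v_\eps)$ is only in $L^{p/(p-1)}(\Omega)$ and the boundary-continuity theory would not directly apply. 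Everything else is a concatenation of results already proved in the excerpt.
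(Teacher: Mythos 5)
Your proposal is correct and follows exactly the same route as the paper: Lemma \ref{Linftybd} yields $v_\eps\in L^\infty(\R^n)$, the Corollary preceding it (built on Theorem \ref{regint} via the extension $v_\eps^\e$) then gives interior $C^{0,\boldsymbol\beta_*}_{\rm loc}(\Omega)$ regularity, and continuity across $\partial\Omega$ comes from \cite[Theorem 2]{SerVal} applied to $(-\Delta)^s v_\eps = f-\eps^{-2s}W'(v_\eps)\in L^\infty(\Omega)$ with the Lipschitz exterior datum $g$. You also correctly flag that the $L^\infty$ bound must come first so that the nonlinear reaction term $W'(v_\eps)$ lands in $L^\infty$ rather than merely $L^{p/(p-1)}$.
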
  

By means of the Hopf boundary lemma in \cite[Proposition 4.11]{CS1}, we now derive the following maximum principle for equation \eqref{eqfractGL}.  
 
\begin{corollary}\label{modless1}
Let $\Omega$, $g$, and $f$ be as in Theorem \ref{regDirich}. Let $v_\varepsilon \in H_{g}^{s}(\Omega)\cap L^p(\Omega)$ 
be a weak solution of~\eqref{eqfractGL}. Then,  
\begin{equation}\label{ptbd}
\|v_\varepsilon\|_{L^\infty(\R^n)}\leq \max\left(\big(1+\boldsymbol{c}_W\eps^{2s}\|f\|_{L^\infty(\Omega)}\big)^{\frac{1}{p-1}},\|g\|_{L^\infty(\R^n\setminus\Omega)}\right)\,,
\end{equation}
where $\boldsymbol{c}_W$ is the constant given in  assumption {\rm (H3)}.
\end{corollary}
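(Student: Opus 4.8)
\textbf{Proof strategy for Corollary \ref{modless1}.}
The plan is to revisit the proof of Lemma \ref{Linftybd}, keeping track of the explicit constant rather than merely concluding boundedness, and then to replace the crude bound on the boundary contribution coming from $\partial^+B_R$ by a limiting argument that lets $R$ be chosen freely. Recall that in Lemma \ref{Linftybd} the threshold $\lambda$ was chosen exactly as
$$\lambda=\max\left(\big(1+\boldsymbol{c}_W\eps^{2s}\|f\|_{L^\infty(\Omega)}\big)^{\frac{1}{p-1}},\,1+\|g\|_{L^\infty(\R^n\setminus\Omega)}\right),$$
so that on the set $\{|v_\eps^\e|\geq\lambda\}$ one has, by Lemma \ref{lemstrutpot} with $\delta=\eps^{2s}\|f\|_{L^\infty(\Omega)}$, the sign condition $W'(v_\eps^\e)v_\eps^\e-\eps^{2s}fv_\eps^\e\geq 0$ on $\Omega$. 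The function $v_\eps$ is continuous on $\R^n$ by Theorem \ref{regDirich}, and it equals $g$ outside $\Omega$, so it suffices to estimate $\sup_\Omega|v_\eps|$; equivalently, to show that the sets $\{v_\eps>\lambda'\}$ and $\{v_\eps<-\lambda'\}$ are empty for every $\lambda'>\lambda$, i.e. that $\{|v_\eps|>\lambda\}\cap\Omega=\emptyset$.

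First I would establish the one-sided differential inequality for the truncated extension. Set $w:=(v_\eps^\e-\lambda)^+$ on $\overline{\R^{n+1}_+}$ (treating the positive part; the negative part is symmetric, replacing $v_\eps$ by $-v_\eps$, which solves the same type of equation with $W$ unchanged since $W$ is even on $\{|t|\ge 1\}$ — more precisely one argues directly with $(-v_\eps^\e-\lambda)^+$ using that $W'(t)t\ge 0$ for $|t|\ge 1$ from (H3)). Since $v_\eps^\e$ weakly solves \eqref{eqext} on $B_R^+$ for any $R$ with $\overline\Omega\subset D_R$, testing against $w\varphi^2$ for $\varphi\in C_c^\infty(B_R^+\cup\Omega)$, $\varphi\ge 0$, and using that $w$ is supported in $\{|v_\eps^\e|\ge\lambda\}\subset\Omega$ on $\partial^0B_R$ (because $|v_\eps^\e|\le\lambda-1<\lambda$ near every point of $D_R\setminus\overline\Omega$ by continuity of the Poisson extension of the bounded Lipschitz datum $g$), the boundary reaction term contributes
$$-\frac{1}{\eps^{2s}}\int_{\Omega\cap\{v_\eps^\e>\lambda\}}\big(W'(v_\eps^\e)-\eps^{2s}f\big)\,v_\eps^\e\,\frac{w\varphi^2}{v_\eps^\e}\,\de x\le 0,$$
so $w$ is a weak subsolution of $\mathrm{div}(z^a\nabla w)=0$ in $B_R^+$ with the homogeneous Neumann condition $\boldsymbol{\partial}^{(2s)}_z w\le 0$ on $\partial^0 B_R$ — in the sense that $\int z^a\nabla w\cdot\nabla(w\varphi^2)\le 0$ for all such $\varphi$.

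Next I would close the argument by a maximum-principle / Caccioppoli computation on $B_R^+$. Extending $v_\eps^\e$ (hence $w$) by zero is not available, but since $v_\eps^\e=g^\e$ has the a priori bound from \eqref{bdlinftyext}, namely $\|v_\eps^\e\|_{L^\infty(\R^{n+1}_+)}\le\|v_\eps\|_{L^\infty(\R^n)}\le\max\big((1+\boldsymbol{c}_W\eps^{2s}\|f\|_\infty)^{1/(p-1)},\|g\|_{L^\infty(\R^n\setminus\Omega)}\big)$ once we know (Lemma \ref{Linftybd}) that $v_\eps$ is bounded, the cleanest route is: from the subsolution inequality, choosing $\varphi$ a standard cutoff between $B_{R}^+$ and $B_{2R}^+$ and letting $R\to\infty$ — using that $\mathcal{E}(v_\eps,\Omega)<\infty$ forces $\|\nabla v_\eps^\e\|_{L^2(z^a)}<\infty$ on all of $\R^{n+1}_+$ by Lemma \ref{hatH1/2toH1} together with the explicit decay of $\nabla v_\eps^\e$ far from $\Omega$ from \eqref{poisson} — we obtain $\int_{\R^{n+1}_+}z^a|\nabla w|^2\le 0$, whence $w$ is constant; since $w\to 0$ at infinity (again by \eqref{poisson}, as $v_\eps=g$ is bounded by $\lambda-1$ outside $\Omega$) we get $w\equiv 0$, i.e. $v_\eps^\e\le\lambda$ everywhere, and in particular $v_\eps\le\lambda$ on $\R^n$. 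Running the symmetric argument gives $v_\eps\ge-\lambda$, which is precisely \eqref{ptbd} after noting $\|g\|_{L^\infty(\R^n\setminus\Omega)}\le\lambda$ trivially but the sharp form replaces $1+\|g\|_\infty$ by $\|g\|_\infty$ since on $\{|v_\eps^\e|\ge\|g\|_{L^\infty(\R^n\setminus\Omega)}\}$ the support of $w$ already lies in $\overline\Omega$, so the extra $+1$ buffer used only for the compact-support technicality in Lemma \ref{Linftybd} is not needed for the final inequality.

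The main obstacle is the global integration-by-parts on the unbounded domain $\R^{n+1}_+$: one must justify discarding the boundary term on $\partial^+B_R$ as $R\to\infty$, which requires the quantitative decay $|v_\eps^\e(\mathbf{x})-g_{\mathrm{avg}}|+|z^a\nabla v_\eps^\e(\mathbf{x})|\to 0$ at the right rate from the Poisson representation \eqref{poisson}, combined with $\|z^{a/2}\nabla v_\eps^\e\|_{L^2(\R^{n+1}_+)}<\infty$. An alternative, avoiding the limit in $R$ entirely, is to run the De Giorgi–type iteration of Lemma \ref{Linftybd} verbatim but track that every constant entering the final $L^\infty$ bound can be absorbed, since $M=\|v_\eps^\e\|_{L^\infty(\partial^+B_R)}+\|z^a\nabla v_\eps^\e\|_{L^\infty(\partial^+B_R)}$ tends to $\|g\|_{L^\infty(\R^n\setminus\Omega)}$ as $R\to\infty$ — indeed $v_\eps^\e\to $ (a value bounded by $\|g\|_{L^\infty(\R^n\setminus\Omega)}$) and $z^a\nabla v_\eps^\e\to 0$ uniformly on $\partial^+B_R$ — so passing to the limit in the bound $\|\rho\|_{L^\infty(D_R)}\le M$ obtained there, with $\rho=(\,|v_\eps^\e|\vee\lambda-\lambda)^+$ in place of the $\sqrt 2\lambda$-shift, gives directly $\sup_\Omega(|v_\eps|-\lambda)^+\le\|g\|_{L^\infty(\R^n\setminus\Omega)}-\lambda\le 0$, i.e. \eqref{ptbd}.
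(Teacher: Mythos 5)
Your route is genuinely different from the paper's. The paper does not sharpen Lemma~\ref{Linftybd}; instead, once Theorem~\ref{regDirich} is in hand it runs a pointwise comparison: set $m_\eps:=\lambda^2-|v^\e_\eps|^2$, note that $-\mathrm{div}(z^a\nabla m_\eps)=2z^a|\nabla v^\e_\eps|^2\geq 0$ in $\R^{n+1}_+$ and $m_\eps\geq 0$ on $\R^n\setminus\Omega$, and argue that if $m_\eps$ attained a negative minimum at some $x_0\in\Omega$ then, $\mathbf{x}_0=(x_0,0)$ being a global minimum over $\overline{\R^{n+1}_+}$ by \eqref{bdlinftyext}, the strong maximum principle for the degenerate operator together with a Hopf boundary lemma would force $\boldsymbol{\partial}^{(2s)}_zm_\eps(\mathbf{x}_0)>0$, while the boundary condition plus Lemma~\ref{lemstrutpot} gives $\boldsymbol{\partial}^{(2s)}_zm_\eps(\mathbf{x}_0)\leq 0$ once $|v^\e_\eps(\mathbf{x}_0)|>\lambda$. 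This is short, purely local, and sidesteps any integration by parts at infinity.

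Your first variant (the Caccioppoli estimate on $w=(v^\e_\eps-\lambda)^+$) is a plausible alternative but is not yet correct as written. The claim that $\mathcal{E}(v_\eps,\Omega)<\infty$ forces $\|\nabla v^\e_\eps\|_{L^2(\R^{n+1}_+,|z|^a\de\mathbf{x})}<\infty$ is not what Lemma~\ref{hatH1/2toH1} gives (it is local), and it fails in general since $g\in C^{0,1}_{\rm loc}\cap L^\infty$ need not be in $H^s(\R^n)$. Fortunately the argument does not need it: what is required is $R^{-2}\int_{B_{2R}^+\setminus B_R^+}z^aw^2\to 0$, which does hold by writing $v^\e_\eps=g^\e+(v_\eps-g)^\e$, using $|g^\e|\leq\|g\|_\infty\leq\lambda$ (so $w\leq|(v_\eps-g)^\e|$) and the decay $|(v_\eps-g)^\e(\mathbf{x})|\leq Cz^{2s}|\mathbf{x}|^{-n-2s}\|v_\eps-g\|_{L^1(\Omega)}$ far from $\Omega$ — a different computation than the one you outline. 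Note also that your parenthetical ``$v_\eps=g$ is bounded by $\lambda-1$'' refers to the crude threshold of Lemma~\ref{Linftybd}, not the sharp $\lambda$ of \eqref{ptbd} (for which only $|g|\leq\lambda$), and that making $w\varphi^2$ admissible requires the capacity argument of Remark~\ref{smoothapprox} since its trace is supported in $\overline\Omega$ rather than compactly in $\Omega$. With these repairs the first variant goes through and is a legitimate, more elementary alternative to the paper's Hopf argument.

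Your second variant has a real gap. It hinges on $M=\|v^\e_\eps\|_{L^\infty(\partial^+B_R)}+\|z^a\nabla v^\e_\eps\|_{L^\infty(\partial^+B_R)}\to\|g\|_{L^\infty(\R^n\setminus\Omega)}$, and in particular on $z^a\nabla v^\e_\eps\to 0$ uniformly on $\partial^+B_R$. Near the equator this is unjustified and false in general: as $z\downarrow 0$ with $x\notin\overline\Omega$, $z^a\partial_z v^\e_\eps(x,z)\to -d_s^{-1}(-\Delta)^sv_\eps(x)$, which equals $-d_s^{-1}(-\Delta)^sg(x)$ plus a term decaying in $|x|$; for a generic $g\in C^{0,1}_{\rm loc}(\R^n)\cap L^\infty(\R^n)$ there is no reason for $(-\Delta)^sg$ to vanish at infinity, so $M$ need not converge to $\|g\|_\infty$. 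Moreover, the bound $\|\rho\|_{L^\infty(D_R)}\leq M$ in Lemma~\ref{Linftybd} is derived for $\rho$ built from $\eta=\chi_R\sqrt{|v^\e_\eps|^2+\lambda^2}$ via the Kato inequality \eqref{inegKato}; simply replacing $\rho$ by $(|v^\e_\eps|-\lambda)^+$ alters that derivation in ways you do not carry out. So the second variant does not constitute a proof.
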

 
\begin{proof}
We consider the  function $m_\varepsilon:=\lambda^2-|v^\e_\varepsilon|^2$ with $\lambda$ being the constant in the right hand side of \eqref{ptbd}. 
By Theorem  \ref{regDirich}, $m_\eps$ is continuous in $\overline\R^{n+1}_+$, and $z^a\partial_z m_\eps$ is continuous up to~$\Omega$. Moreover, 
$m_\varepsilon$ satisfies (in the pointwise sense)
$$\begin{cases} 
-{\rm div}\big(z^{a} \nabla m_\varepsilon\big)=2z^{a}|\nabla v^\e_\varepsilon|^2\geq 0 & \text{in $\R^{n+1}_+$}\,,\\
\displaystyle d_s\boldsymbol{\partial}^{(2s)}_z m_\varepsilon= -\frac{2}{\varepsilon^{2s}}W^\prime(v^\e_\eps)v^\e_\eps+2fv^\e_\eps & \text{on $\Omega$}\\
m_\varepsilon \geq 0 & \text{on $\R^n\setminus\Omega$}\,.
\end{cases}
$$
Assume that $m_\varepsilon$ achieves its minimum over $\R^n$ at a point $x_0\in\Omega$. Then $x_0$ is a point of maximum of $|v_\varepsilon|$, and hence $\mathbf{x}_0=(x_0,0)$  is an absolute minima of 
$m_\varepsilon$ over $\overline\R^{n+1}_+$ by \eqref{bdlinftyext}. If $m_\eps(\mathbf{x}_0)< 0$,  then $|v_\eps^\e(\mathbf{x}_0)|>\lambda$, and we obtain 
$\boldsymbol{\partial}^{(2s)}_z m_\varepsilon(\mathbf{x}_0)\leq 0$ from \eqref{estiW}. On the other hand, the strong maximum maximum principle of \cite[Corollary 2.3.10]{FKS} implies that $m_\eps>m_\eps(\mathbf{x}_0)$ in $\mathbb{R}^{n+1}_+$. Then, the 
Hopf boundary lemma of \cite[Proposition 4.11]{CS1} yields $\boldsymbol{\partial}^{(2s)}_z m_\varepsilon(\mathbf{x}_0)>0$ which gives a contradiction.  
\end{proof}

                                                              										                  
\section{Asymptotics for degenerate  Allen-Cahn boundary reactions}\label{epsregtitle}                         
                                                                                                              					            

In this section, our objective is to perform the asymptotic analysis as $\varepsilon\downarrow 0$ of the degenerate boundary reaction equation \eqref{equinG}. As described in Section \ref{FractAC}, any solution of the fractional Allen-Cahn equation yields a solution of \eqref{equinG} after applying  the extension procedure  \eqref{poisson}. Here again, the strategy is to  first  analyse equation  \eqref{equinG} and then to apply the results to the fractional equation. The main theorem here is Theorem \ref{asymptneum} below. Its application to the fractional equation will be the object of Section \ref{FGLasymp}.

 \begin{theorem}\label{asymptneum}
Let $G\subset\R^{n+1}_+$ be an admissible bounded  open set, and $\varepsilon_k\downarrow0$  a given sequence. Let $\{f_k\}_{k\in\mathbb{N}}\subset C^{0,1}(\partial^0G)$  
satisfying 
$$\sup_k\Big(\eps_k^{2s}\|f_k\|_{L^\infty(\partial^0G)} + \|f_k\|_{W^{1,q}(\partial^0G)}\Big)<\infty\quad \text{for some $q\in(\frac{n}{1+2s},n)$}\,. $$
Let  $\{u_k\}_{k\in\mathbb{N}}\subset H^1(G,|z|^a\de\mathbf{x})\cap L^\infty(G)$ satisfying $\sup_k\|u_k\|_{L^\infty(G)}< \infty$, and  such that each $u_k$ weakly solves  
\begin{equation}\label{equinG}
 \begin{cases}
{\rm div}\big(z^{a}\nabla u_k)= 0 & \text{in $G$}\,,\\[8pt]
\displaystyle d_s\boldsymbol{\partial}^{(2s)}_z u_k=\frac{1}{\varepsilon^{2s}_k}W^\prime(u_k) -f_k & \text{on $\partial^0G$}\,. 
\end{cases}
\end{equation}
If  $\sup_k \mathbf{F}_{\varepsilon_k}(u_k,G)<\infty$, then there exist a (not relabeled) subsequence, $u_*\in H^1(G,|z|^a\de\mathbf{x})$ and an open subset $E_*\subset\partial^0G$ such that  $u_*=\chi_{E_*}-\chi_{\partial^0G\setminus E_*}$ on $\partial^0G$,   $u_k\rightharpoonup u_*$ weakly in $H^1(G,|z|^a\de\mathbf{x})$, and $u_k\to u_*$ strongly in $H^1_{{\rm loc}}(G\cup\partial^0G,|z|^a\de\mathbf{x})$ as $k\to\infty$. In addition, 
\vskip5pt

\begin{itemize}[leftmargin=22pt]
\item[ \rm  (i)] $\varepsilon^{-2s}_kW(u_k)\to 0$ in $L^1_{\rm loc}(\partial^0G)$; 
\vskip5pt

\item[\rm  (ii)] $u_k\to u_*$ in $C^0_{\rm loc}(\partial^0G\setminus\partial E_*)$; 
\vskip5pt 

\item[\rm  (iii)] if $\sup_k\|f_k\|_{L^\infty(\partial^0G)}<\infty$, then $u_k\to u_*$ in $C^{0,\alpha}_{\rm loc}(\partial^0G\setminus\partial E_*)$ for every $\alpha\in(0,\boldsymbol{\beta}_*)$ with $\boldsymbol{\beta}_*$  given by Lemma \ref{prereg};
\vskip5pt

\item[\rm  (iv)] if $\sup_k\|f_k\|_{C^{0,1}(\partial^0G)}<\infty$, then $u_k\to u_*$ in $C^{1,\alpha}_{\rm loc}(\partial^0G\setminus\partial E_*)$ for every $\alpha\in(0,\boldsymbol{\beta}_*)$;
\vskip5pt

\item[\rm  (v)] for each $t\in(-1,1)$, the level set $L^t_k:=\{u_k=t\}$ converges locally uniformly  in $\partial^0G$ to $\partial E_*\cap \partial^0 G$, i.e., for every compact set $K\subset \partial^0G$ and every $r>0$,
$$L^t_k\cap K\subset \mathscr{T}_r(\partial E_*\cap \partial^0 G) \quad\text{and}\quad
\partial E_*\cap K\subset \mathscr{T}_r(L_k^t\cap  \partial^0 G)\,, $$
whenever $k$ is large enough;
\item[\rm  (vi)] if $f_k\rightharpoonup f_*$ weakly in $W^{1,q}(\partial^0G)$, then the function $u_*$ satisfies 
$$\delta{\bf E}\big(u_*,G\cup\partial^0G\big)[{\bf X}]=\int_{\partial^0G} u_*\, {\rm div}(f_*X)\,\de x$$
for every vector field $\mathbf{X}=(X,\mathbf{X}_{n+1})\in C^1(\overline G;\R^{n+1})$ compactly supported in $G\cup \partial^0G$ such that $\mathbf{X}_{n+1}=0$ on $\partial^0G$. 
\end{itemize}
\end{theorem}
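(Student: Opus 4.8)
## Proof strategy

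The plan is to analyze the sequence $\{u_k\}$ in three stages: first extract weak limits and derive a pointwise limit equation using energy bounds, then upgrade the convergence to the strong/geometric statements (i)--(v), and finally pass to the limit in the stationarity identity of Corollary \ref{statAC} to obtain (vi).

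\textbf{Stage 1: Compactness and identification of the limit.} The uniform bound $\sup_k \mathbf{F}_{\varepsilon_k}(u_k,G)<\infty$ together with $\sup_k\|u_k\|_{L^\infty(G)}<\infty$ and $\sup_k \varepsilon_k^{2s}\|f_k\|_{L^\infty(\partial^0 G)}<\infty$ gives a uniform bound on $\mathbf{E}(u_k,G)=\frac{d_s}{2}\int_G z^a|\nabla u_k|^2\,\de\mathbf{x}$ and on $\varepsilon_k^{-2s}\int_{\partial^0 G}W(u_k)\,\de x$; hence, up to a subsequence, $u_k\rightharpoonup u_*$ weakly in $H^1(G,|z|^a\de\mathbf{x})$, and by the compact trace and embedding of Remark \ref{trace}, $u_k\to u_*$ strongly in $L^1(\partial^0 G)$ and a.e. on $\partial^0 G$. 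Since $\int_{\partial^0 G}W(u_k)\,\de x\to 0$, Fatou gives $W(u_*)=0$ a.e., so $u_*\in\{\pm1\}$ a.e.\ on $\partial^0 G$ by (H2), i.e.\ $u_*=\chi_{E_*}-\chi_{\partial^0 G\setminus E_*}$ for some Borel set $E_*$; the openness of (a representative of) $E_*$ inside $\partial^0 G$ will follow from the continuity statement (ii) once established. I would next derive interior (in $G$) smooth convergence from elliptic estimates for $L_s$-harmonic functions and then the local strong $H^1$ convergence near $\partial^0 G$ by a Caccioppoli-type argument combined with the degenerate boundary regularity of Lemma \ref{prereg} / Theorem \ref{regint}, using that the reaction term $\varepsilon_k^{-2s}W'(u_k)$ is controlled when localized away from $\partial E_*$.

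\textbf{Stage 2: The geometric convergences (i)--(v).} The key quantitative input is a \emph{clearing-out} (or density) lemma: there is $\theta_0>0$ such that if $\varepsilon_k^{-2s}\int_{D_r^+(\mathbf x_0)\text{-type region}}W(u_k)\,\de x$ (appropriately scaled) is below $\theta_0$ on a ball, then $|u_k|>1/2$ on a smaller ball for $k$ large, with quantitative $C^{0,\boldsymbol\beta_*}$ (resp.\ $C^{0,\alpha}$, $C^{1,\alpha}$) bounds coming from Lemma \ref{prereg} and Theorem \ref{regint} applied after the fractional rescaling $u_k(\varepsilon_k\,\cdot)$. This is proven by a blow-up/contradiction argument using the monotonicity formula alluded to in the introduction together with the $L^\infty$ bound and the smallness of the energy density. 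Granting this, on any compact $K\subset\partial^0 G\setminus\partial E_*$ one has $|u_*|\equiv1$ with a definite sign on a neighborhood, hence eventually $|u_k|>1/2$ there; then (H3) forces $\varepsilon_k^{-2s}W'(u_k)\to0$ uniformly on $K$ (since $W'(\pm1)=0$ and $W''(\pm1)>0$ give exponential decay of $u_k\mp1$ in the rescaled variable), which feeds back through the linear estimates to give (ii), (iii), (iv) and, integrating, (i). Statement (v) is then a soft consequence: the first inclusion follows because $\{|u_k|\le 1-\delta\}$ cannot meet $K\subset\partial^0 G\setminus\partial E_*$ for $k$ large by (ii), and the second inclusion follows because near any point of $\partial E_*$ both values $\pm1$ are approached by $u_*$, so by uniform convergence on the two sides $u_k$ must cross every level $t\in(-1,1)$ nearby, via the intermediate value theorem along a segment.

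\textbf{Stage 3: Passing to the limit in stationarity, proof of (vi).} By Theorem \ref{regint}, each bounded weak solution $u_k$ of \eqref{equinG} is a stationary point, so Corollary \ref{statAC} gives, for every admissible $\mathbf{X}=(X,\mathbf{X}_{n+1})$ compactly supported in $G\cup\partial^0 G$ with $\mathbf{X}_{n+1}=0$ on $\partial^0 G$,
\begin{equation*}
\delta\mathbf{E}\big(u_k,G\cup\partial^0 G\big)[\mathbf{X}]+\frac{1}{\varepsilon_k^{2s}}\int_{\partial^0 G}W(u_k)\,{\rm div}\,X\,\de x=\int_{\partial^0 G}u_k\,{\rm div}(f_k X)\,\de x\,.
\end{equation*}
I would pass to the limit term by term: the first term converges to $\delta\mathbf{E}(u_*,G\cup\partial^0 G)[\mathbf{X}]$ using formula \eqref{formulafirstvarbound} together with the local strong $H^1(G\cup\partial^0 G,|z|^a\de\mathbf{x})$ convergence on the support of $\mathbf{X}$ (here one needs that strong convergence holds up to $\partial^0 G$ on that support, which is exactly what Stage 1/2 provides, together with Stage 2 to handle a neighborhood of $\partial E_*\cap\mathrm{supp}\,\mathbf{X}$ — this is the delicate point since the energy density could a priori charge $\partial E_*$); the middle term vanishes by (i); and the right-hand side converges to $\int_{\partial^0 G}u_*\,{\rm div}(f_* X)\,\de x$ because $u_k\to u_*$ strongly in, say, $L^2(\partial^0 G)$ while $f_k\rightharpoonup f_*$ weakly in $W^{1,q}(\partial^0 G)$ and, by Rellich, strongly in $L^{q}(\partial^0 G)$, so $\mathrm{div}(f_kX)\rightharpoonup\mathrm{div}(f_*X)$ weakly in $L^q$ and the product passes to the limit (using $u_k\to u_*$ strongly in $L^{q'}$ on the compact support).

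\textbf{Main obstacle.} The crux is Stage 2 — the clearing-out/quantitative regularity near $\partial^0 G$ that simultaneously (a) forces the potential energy density $\varepsilon_k^{-2s}W(u_k)$ to carry no mass in the limit away from $\partial E_*$ and, more subtly, (b) justifies that there is \emph{no} concentration of the \emph{Dirichlet} energy $z^a|\nabla u_k|^2$ on $\partial E_*$ that would obstruct the strong $H^1$ convergence needed to pass to the limit in $\delta\mathbf{E}$. Ruling out such concentration is where the monotonicity formula and the special structure for $s<1/2$ (the fact that characteristic functions have finite fractional energy, so the limit is genuinely $\chi_{E_*}-\chi_{E_*^c}$ rather than a folded varifold) enter in an essential way; this is precisely the ``strong compactness'' phenomenon highlighted in the introduction and is the part that requires the Marstrand-type measure-theoretic argument rather than soft functional analysis.
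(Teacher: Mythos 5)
Your proposal follows the paper's proof strategy in all its essentials: weak $H^1$ compactness and identification of the boundary trace $u_*\in\{\pm1\}$, the clearing-out lemma (Lemma~\ref{clear2}) combined with small-energy compactness (Corollary~\ref{strongcoro}, Proposition~\ref{epsreg}) to control the behavior away from $\partial E_*$, the monotonicity formula (Lemma~\ref{monotform}) plus Marstrand's theorem to rule out concentration of the weighted Dirichlet energy on the interface, and passage to the limit in the stationarity identity of Corollary~\ref{statAC}. Your ``Main obstacle'' paragraph correctly identifies where the work lies.

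Two technical slips are worth flagging. First, in Stage~1 you propose to obtain strong $H^1_{\rm loc}(G\cup\partial^0G)$ convergence by a Caccioppoli-type argument ``using that the reaction term is controlled when localized away from $\partial E_*$''; this cannot close the argument, because $\partial E_*$ is itself a subset of $\partial^0G$ and one needs non-concentration of $z^a|\nabla u_k|^2$ \emph{on} $\partial E_*$, not merely away from it. The paper does not use a Caccioppoli estimate at all here: it introduces the limit measure $\mu=\tfrac{d_s}{2}z^a|\nabla u_*|^2\mathscr{L}^{n+1}\LL G+\mu_{\rm sing}$, shows via Lemma~\ref{monotform} that $\mu_{\rm sing}$ is carried by a set of finite $\mathscr{H}^{n-2s}$-measure with positive finite density, and then kills $\mu_{\rm sing}$ by Marstrand. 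You do eventually say this in the final paragraph, but the Caccioppoli route as stated would not work. Second, the claim that $W''(\pm1)>0$ forces ``exponential decay of $u_k\mp1$ in the rescaled variable'' (and hence $\eps_k^{-2s}W'(u_k)\to0$ uniformly away from $\partial E_*$) is specific to the local case $s=1$; the fractional one-dimensional profile decays only algebraically. What the paper actually proves (Lemma~\ref{potbddfint}, via a barrier argument using \cite[Lemma~3.5]{TVZ}) is $|u_k\mp1|=O(\eps_k^{2s})$ locally in $L^\infty$, which gives $|W'(u_k)|=O(\eps_k^{2s})$, i.e.\ $\eps_k^{-2s}W'(u_k)$ is \emph{bounded}, not vanishing; boundedness of the right-hand side in Lemma~\ref{prereg} is then enough for the $C^{0,\alpha}$ and $C^{1,\alpha}$ statements of Proposition~\ref{imprcv}, which is what items (iii) and (iv) rest on. (Indeed $\eps_k^{-2s}W'(u_k)$ does not tend to zero away from the interface---it converges to a nonlocal curvature term, cf.\ Theorem~\ref{main1}(iii).)
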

\vskip5pt

We have divided the proof of this theorem in several steps according to the following subsections.


\subsection{Energy monotonicity and the clearing-out property}
 

 In this subsection, we prove two of the main ingredients, an energy monotonicty, and a clearing-out property reminiscent of Ginzburg-Landau theories. We start with the fundamental {\sl monotonicity formula}. 

 \begin{lemma}\label{monotform}
Let $q\in(\frac{n}{1+2s},n)$, $R>0$, and $\eps>0$. Given $f\in  C^{0,1}(D_{R})$, let $u_\eps \in  H^1(B_R^+,|z|^a\de \mathbf{x})\cap L^\infty(B^+_R)$ 
 be a weak solution  of 
\begin{equation}\label{pfff}
\begin{cases}
{\rm div}(z^{a}\nabla u_\eps)= 0 & \text{in $B_R^+$}\,,\\[5pt]
\displaystyle d_s\boldsymbol{\partial}_z^{(2s)} u_\eps=\frac{1}{\varepsilon^{2s}}W^\prime(u_\eps)-f & \text{on $D_R$}\,.
\end{cases}
\end{equation} 
There exists a constant $\mathbf{c}_{n,q}>0$ (depending only on $n$ and~$q$) such that 
for every point $\mathbf{x}_0=(x_0,0)\in D_R\times\{0\}$, the function $r\in(0,R-|\mathbf{x}_0|\,]\mapsto \boldsymbol{\Theta}^\eps_{u_\eps}(f,x_0,r)$ defined by 
$$ \boldsymbol{\Theta}^\eps_{u_\eps}(f,x_0,r):=\frac{1}{r^{n-2s}} \mathbf{E}_\eps\big(u_\eps,B_r^+(\mathbf{x}_0)\big)+\mathbf{c}_{n,q}\|u_\eps\|_{L^\infty(D_R)}\int_0^rt^{\theta_q-1}\|f\|_{\dot W^{1,q}(D_t(x_0))}\,\de t$$
with $\theta_q:=
1+2s-n/q$, is non-decreasing.
\end{lemma}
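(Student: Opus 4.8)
The plan is to derive a Pohozaev‑type (inner variation) identity for $u_\eps$ on the half‑balls $B_r^+(\mathbf{x}_0)$ and to read the monotonicity off it. After translating so that $\mathbf{x}_0=0$, put $E(r):=\mathbf{E}_\eps(u_\eps,B_r^+)$. Since $r\mapsto\mathbf{E}(u_\eps,B_r^+)$ and $r\mapsto\eps^{-2s}\int_{D_r}W(u_\eps)\,\de x$ are non‑decreasing and are integrals of $L^1$ densities, $E$ is absolutely continuous and $E'(r)=\frac{d_s}{2}\int_{\partial^+B_r^+}z^a|\nabla u_\eps|^2\,\de\mathscr H^n+\eps^{-2s}\int_{\partial D_r}W(u_\eps)\,\de\mathscr H^{n-1}$ for a.e.\ $r$. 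As $u_\eps$ is a weak solution of \eqref{pfff}, it is a critical point of $\mathbf{F}_\eps(\cdot,B_R^+)$, so by Theorem~\ref{regint} and Corollary~\ref{statAC} (applied on the admissible set $B_R^+$, whose flat part is $D_R$) it is stationary up to $D_R$.

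The core step is to feed into this stationarity identity the radial field cut off at scale $r$, namely $\mathbf{X}_\delta(\mathbf{x}):=\psi_\delta(|\mathbf{x}|)\,\mathbf{x}$, with $\psi_\delta\in C^1([0,\infty);[0,1])$, $\psi_\delta\equiv1$ on $[0,r-\delta]$, $\psi_\delta\equiv0$ on $[r,\infty)$, and $-\psi_\delta'$ an approximation of the Dirac mass at $r$; note $(\mathbf{X}_\delta)_{n+1}=\psi_\delta(|\mathbf{x}|)z$ vanishes on $D_R$ and $\mathbf{X}_\delta$ is compactly supported in $B_R^+\cup D_R$ when $r<R$. Writing $\rho=|\mathbf{x}|$ one has ${\rm div}\,\mathbf{X}_\delta=\rho\psi_\delta'+(n+1)\psi_\delta$ and $(\nabla u_\eps\otimes\nabla u_\eps):\nabla\mathbf{X}_\delta=\psi_\delta'\rho^{-1}(\mathbf{x}\cdot\nabla u_\eps)^2+\psi_\delta|\nabla u_\eps|^2$, so by \eqref{formulafirstvarbound} and $n-1+a=n-2s$,
$$\delta\mathbf{E}(u_\eps,B_R^+\cup D_R)[\mathbf{X}_\delta]=(n-2s)\tfrac{d_s}{2}\!\int z^a|\nabla u_\eps|^2\psi_\delta\,\de\mathbf{x}+\tfrac{d_s}{2}\!\int z^a|\nabla u_\eps|^2\rho\psi_\delta'\,\de\mathbf{x}-d_s\!\int z^a\psi_\delta'\tfrac{(\mathbf{x}\cdot\nabla u_\eps)^2}{\rho}\,\de\mathbf{x},$$
while the right‑hand side of Corollary~\ref{statAC} supplies $-\eps^{-2s}\!\int_{D_r}W(u_\eps)\,{\rm div}\,X_\delta\,\de x$ and $\int_{D_r}u_\eps\,{\rm div}(fX_\delta)\,\de x$. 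Letting $\delta\downarrow0$ through the coarea formula (so $\int H\psi_\delta'\to-\int_{\partial^+B_r^+}H\,\de\mathscr H^n$), which is legitimate because $u_\eps$ is smooth in $B_R^+$ and $C^{0,\boldsymbol{\beta}_*}$ up to $D_R$ by Theorem~\ref{regint}, and simplifying against $rE'(r)$ and $(n-2s)E(r)$, one obtains for a.e.\ $r$
$$rE'(r)-(n-2s)E(r)=d_s\,r\!\int_{\partial^+B_r^+}z^a(\partial_\nu u_\eps)^2\,\de\mathscr H^n+\frac{2s}{\eps^{2s}}\int_{D_r}W(u_\eps)\,\de x+\int_{D_r}f\,(x\cdot\nabla_x u_\eps)\,\de x.$$

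Since $s>0$ and $W\geq0$, the first two terms are nonnegative, hence $\frac{\de}{\de r}\!\big(E(r)/r^{n-2s}\big)=r^{2s-n-1}\big(rE'(r)-(n-2s)E(r)\big)\geq r^{2s-n-1}\!\int_{D_r}f\,(x\cdot\nabla_x u_\eps)\,\de x$. To estimate the source term, integrate by parts, $\int_{D_r}f\,(x\cdot\nabla_x u_\eps)\,\de x=r\!\int_{\partial D_r}u_\eps f\,\de\mathscr H^{n-1}-\int_{D_r}u_\eps\,{\rm div}(fx)\,\de x$, replace $u_\eps$ by $u_\eps$ minus its mean over $D_r$ (whose mean‑zero property kills the contribution of $f$ that is constant in $x$, so that after a Poincar\'e--Wirtinger inequality only $\nabla f$ survives), and handle the boundary integral on $\partial D_r$ by a trace inequality; using $\|u_\eps\|_{L^\infty(D_R)}<\infty$ and $|x|\leq r$ on $D_r$ this yields
$$\Big|\int_{D_r}f\,(x\cdot\nabla_x u_\eps)\,\de x\Big|\leq\mathbf{c}_{n,q}\,\|u_\eps\|_{L^\infty(D_R)}\,r^{\,n+1-n/q}\,\|f\|_{\dot W^{1,q}(D_r)},$$
and $n+1-n/q=n-2s+\theta_q$. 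Therefore $\frac{\de}{\de r}\!\big(E(r)/r^{n-2s}\big)\geq-\mathbf{c}_{n,q}\|u_\eps\|_{L^\infty(D_R)}r^{\theta_q-1}\|f\|_{\dot W^{1,q}(D_r)}$, which is exactly the negative of the derivative of $t\mapsto\mathbf{c}_{n,q}\|u_\eps\|_{L^\infty(D_R)}\int_0^t\tau^{\theta_q-1}\|f\|_{\dot W^{1,q}(D_\tau)}\,\de\tau$. As $\boldsymbol{\Theta}^\eps_{u_\eps}(f,0,\cdot)$ is absolutely continuous with a.e.\ derivative $\geq0$, it is non‑decreasing; undoing the translation gives the statement for general $\mathbf{x}_0$.

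The step I expect to be the main obstacle is the source‑term estimate: producing precisely the power $r^{n+1-n/q}$ with only the homogeneous seminorm $\|f\|_{\dot W^{1,q}(D_r)}$ (and no $\|f\|_{L^q}$) on the right forces one to organize the integrations by parts so that every contribution of $f$ that is constant in $x$ is paired with a mean‑zero factor, and to treat the boundary term on $\partial D_r$ by a trace and Poincar\'e argument (if necessary on dyadic annuli rather than pointwise in $r$). A secondary, more routine difficulty is the rigorous $\delta\downarrow0$ passage in the Pohozaev identity, which leans on the up‑to‑$D_R$ regularity of $u_\eps$ furnished by Lemma~\ref{prereg} and Theorem~\ref{regint}.
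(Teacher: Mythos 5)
Your derivation of the Pohozaev identity is exactly the paper's argument: take the radial field $\mathbf X(\mathbf x)=\eta(|\mathbf x|)\mathbf x$ in Corollary~\ref{statAC} (the paper sharpens the cut-off $\eta$ to $\chi_{[-r,r]}$ by a $BV$-weak$^*$ limit rather than a one-sided bump $\psi_\delta$, but this is cosmetic), read off the differential inequality, and conclude. Your identity for $rE'(r)-(n-2s)E(r)$ agrees, after one integration by parts, with the paper's \eqref{condmonot}.

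The genuine gap is in the plan for the source-term estimate. You aim to reduce everything to $\|\nabla f\|_{L^q}$ by replacing $u_\eps$ with $u_\eps-[u_\eps]_r$ and invoking Poincar\'e--Wirtinger, ``so that every contribution of $f$ that is constant in $x$ is paired with a mean-zero factor.'' This cannot succeed. If $f\equiv c\ne 0$, then $\int_{D_r}f\,(x\cdot\nabla_x u_\eps)\,\de x=c\,\big(r\int_{\partial D_r}u_\eps\,\de\mathscr H^{n-1}-n\int_{D_r}u_\eps\,\de x\big)$ is generically nonzero, and subtracting the bulk mean $[u_\eps]_r$ kills only the interior integral, not the boundary integral $r\int_{\partial D_r}(u_\eps-[u_\eps]_r)f$ (the boundary mean of $u_\eps$ is not $[u_\eps]_r$). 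So an estimate in terms of $\|\nabla f\|_{L^q}$ alone is false. Fortunately it is also unnecessary: in this paper the ``homogeneous'' norm is defined (see the remark following the lemma) as $\|f\|_{\dot W^{1,q}(A)}:=\|f\|_{L^{q^*}(A)}+\|\nabla f\|_{L^q(A)}$, with $q^*=nq/(n-q)$, so the zeroth-order piece is already in the budget. With that definition the bound $I(r)=\int_{D_r}|f|+r|\nabla f|\,\de x+r\int_{\partial D_r}|f|\,\de\mathscr H^{n-1}\leq\mathbf{c}_{n,q}\,r^{\,n+1-n/q}\,\|f\|_{\dot W^{1,q}(D_r)}$ follows directly from H\"older, the scaling of the Sobolev exponent (so $\int_{D_r}|f|\lesssim r^{n(1-1/q^*)}\|f\|_{L^{q^*}}$ with $n(1-1/q^*)=n+1-n/q$), and a rescaled $W^{1,q}\to L^1(\partial D_r)$ trace inequality; no mean-zero reorganization or dyadic annulus decomposition is needed.

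Two small remarks: the paper proves $C^1$-regularity of $r\mapsto\mathbf E_\eps(u_\eps,B_r^+)$ (via Theorem~\ref{regint}) so it can work pointwise in $r$ rather than a.e., but your absolute-continuity route reaches the same conclusion. Also, the constant $\mathbf c_{n,q}$ in the paper absorbs the factor $n$ from $\mathrm{div}(fX)$ and the trace constant, so it depends on $n$ and $q$ only, as stated.
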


\begin{remark}
In the statement  above, $\|f\|_{\dot W^{1,q}(A)}$ denotes the following  {\it $W^{1,q}$-homogeneous norm} of $f$ in $A$,
$$ \|f\|_{\dot W^{1,q}(A)}:=\|f\|_{L^{q^*}(A)}+\|\nabla f\|_{L^q(A)}\,,$$
where $q^*:=nq/(n-q)$. 
\end{remark}

\begin{proof}[Proof of Lemma \ref{monotform}]
Without loss of generality we may assume that $x_0=0$.  
By Theorem~\ref{regint} the function $r\in(0,R)\mapsto \mathbf{E}_\eps(u_\eps,B_r^+)$ is of class $C^1$, and then it is enough to seek for a constant $L$ such that for  $r\in(0,R)$, 
$$-\frac{(n-2s)}{r^{n+1-2s}} \mathbf{E}_\eps(u_\eps,B_r^+)+\frac{1}{r^{n-2s}} \frac{\de}{\de r} \mathbf{E}_\varepsilon(u_\eps,B_r^+)+ L r^{\theta_q-1}\|f\|_{\dot W^{1,q}(D_r)}\geq 0\,,$$
or equivalently, 
\begin{equation}\label{diffineq}
(n-2s) \mathbf{E}_\eps(u_\eps,B_r^+)-r \frac{\de}{\de r} \mathbf{E}_\varepsilon(u_\eps,B_r^+)\leq L r^{n+1-n/q}\|f\|_{\dot W^{1,q}(D_r)}\,.
\end{equation}
Note that for   $r\in(0,R)$, 
\begin{equation}\label{derenergbr}
\frac{\de}{\de r}\mathbf{E}_\varepsilon(u_\eps,B_r^+)=\frac{d_s}{2}\int_{\partial^+B_r} z^{a}\big|\nabla u_\eps \big|^2\,\de \mathscr{H}^n +\frac{1}{\eps^{2s}}\int_{\partial D_r}W(u_\eps)\,\de \mathscr{H}^{n-1}\,.
\end{equation}
To prove \eqref{diffineq}, we first consider an arbitrary even function $\eta\in C^\infty(\R;[0,1])$ with compact support in $(-R,R)$. Using the vector field $\mathbf{X}(\mathbf{x}):=\eta(|\mathbf{x}|)\mathbf{x}$ in Corollary \ref{statAC} and formula \eqref{formulafirstvarbound}, we find that
\begin{multline}\label{tim1843}
\frac{(n-2s)d_s}{2}\int_{B_R^+}z^{a}|\nabla u_\eps|^2\eta(|\mathbf{x}|)\,\de \mathbf{x}+ \frac{d_s}{2}\int_{B_R^+}z^{a}|\nabla u_\eps|^2\eta^\prime(|\mathbf{x}|)|\mathbf{x}|\,\de \mathbf{x} \\
-d_s\int_{B_R^+} z^a\Big|\frac{\mathbf{x}}{|\mathbf{x}|}\cdot\nabla u_\eps\Big|^2 \eta^\prime(|\mathbf{x}|)|\mathbf{x}|\,\de \mathbf{x}
+\frac{n}{\varepsilon^{2s}}\int_{D_R}W(u_\eps)\,\eta(|x|)\,\de x\\
+\frac{1}{\varepsilon^{2s}}\int_{D_R}W(u_\eps)\,\eta^\prime(|x|)|x|\,\de x\\ 
=\int_{D_R} \big(nf+x\cdot\nabla f\big)u_\eps \eta(|x|)\,\de x +\int_{D_R} fu_\eps\eta^\prime(|x|)|x|\,\de x\,.
\end{multline}
Given $r\in(0,R)$, we can consider a sequence $\{\eta_k\}_{k\in\mathbb{N}}$ of functions as above such that $\eta_k$ converges weakly* in $BV$ as $k\to\infty$ to the characteristic function of the interval $[-r,r]$. Using such sequences $\{\eta_k\}_{k\in\mathbb{N}}$ as test functions in \eqref{tim1843} and letting $k\to\infty$, we infer that   
\begin{multline}\label{condmonot}
(n-2s) \mathbf{E}_\eps(u_\eps,B_r^+)-r \frac{\de}{\de r} \mathbf{E}_\varepsilon(u_\eps,B_r^+) + d_sr\int_{\partial^+ B_r}z^a\Big|\frac{\mathbf{x}}{|\mathbf{x}|}\cdot\nabla u_\eps\Big|^2\,\de \mathscr{H}^n\\
+ \frac{2s}{\varepsilon^{2s}}\int_{D_t}W(u_\eps)\,\de x=\int_{D_r} \big(nf+x\cdot\nabla f\big)u_\eps  \,\de x - r\int_{\partial D_r} fu_\eps \,\de \mathscr{H}^{n-1}\,.
\end{multline}
Therefore, 
\begin{equation}\label{124514}
(n-2s) \mathbf{E}_\eps(u_\eps,B_r^+)-r \frac{\de}{\de r} \mathbf{E}_\varepsilon(u_\eps,B_r^+)\\
\leq \|u_\eps\|_{L^\infty(D_R)} I(r) \,,
\end{equation}
with 
\begin{equation}\label{definitionImonot}
I(r):=\int_{D_r} |f|+r|\nabla f|\,\de x+r\int_{\partial D_r}|f|\,\de\mathscr{H}^{n-1}\,.
\end{equation}
By Sobolev embedding and trace inequality, we have  
\begin{equation}\label{1245142}
I(r)\leq \mathbf{c}_{n,q}\,r^{n+1-\frac{n}{q}} \|f\|_{\dot W^{1,q}(D_{r})} \,,
\end{equation}
for a constant $\mathbf{c}_{n,q}$ depending only on $n$ and $q$.  Combining \eqref{124514} and \eqref{1245142} leads to \eqref{diffineq}, with $L=\mathbf{c}_{n,q}\|u_\eps\|_{L^\infty(D_R)}$. 
\end{proof}

\begin{lemma}\label{clear2}
Let $q\in(\frac{n}{1+2s},n)$. Given $b\geq 1$ and $T\geq 0$, there exists a non-decreasing function ${\boldsymbol{\eta}}_{b,T}:(0,1)\to (0,\infty)$ 
depending only $n$, $s$, $b$, $T$, and $W$, such that the following holds. Let $R\in(0,1]$, $\eps\in(0,R)$, and $f\in C^{0,1}(D_R)$ such that $\eps^{2s}\|f\|_{L^\infty(D_R)}\leq T$. 
If $u_\eps\in H^1(B_R^+,|z|^a\de \mathbf{x})\cap L^\infty(B^+_R)$ is a weak solution of  \eqref{pfff} satisfying $\|u_\eps\|_{L^\infty(B^+_R)}\leq b$, and for some $\delta\in(0,1)$, 
\begin{equation}\label{smallenerg}
\boldsymbol{\Theta}^\eps_{u_\eps}(f,0,R)\leq \boldsymbol{\eta}_{b,T}(\delta)\,,
\end{equation}
then $\big||u_\eps|-1\big|\leq \delta$  on  ${D}_{R/2}$. 
\end{lemma}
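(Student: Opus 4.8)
The plan is to argue by contradiction, exploiting the monotonicity formula of Lemma \ref{monotform} together with the a priori regularity estimates of Theorem \ref{regint} and Lemma \ref{prereg}. Suppose the conclusion fails: then there exist $\delta\in(0,1)$, sequences $R_k\in(0,1]$, $\eps_k\in(0,R_k)$, $f_k\in C^{0,1}(D_{R_k})$ with $\eps_k^{2s}\|f_k\|_{L^\infty(D_{R_k})}\leq T$, and weak solutions $u_k$ of \eqref{pfff} on $B_{R_k}^+$ with $\|u_k\|_{L^\infty(B_{R_k}^+)}\leq b$, such that $\boldsymbol{\Theta}^{\eps_k}_{u_k}(f_k,0,R_k)\to 0$ but $\big||u_k(x_k)|-1\big|>\delta$ for some point $x_k\in D_{R_k/2}$. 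The first reduction is a rescaling: set $\widetilde u_k(\mathbf{x}):=u_k(R_k\mathbf{x})$, which solves the same type of equation on $B_1^+$ with parameter $\widetilde\eps_k:=\eps_k/R_k\in(0,1)$ and right-hand side $\widetilde f_k(x):=R_k f_k(R_k x)$. One checks that $\widetilde\eps_k^{2s}\|\widetilde f_k\|_{L^\infty(D_1)}=\eps_k^{2s}R_k^{2s-1}R_k\|f_k\|_\infty\cdot R_k^{-2s}\leq\ldots$ — more precisely the scaling of the monotone quantity is designed so that $\boldsymbol{\Theta}^{\widetilde\eps_k}_{\widetilde u_k}(\widetilde f_k,0,1)=\boldsymbol{\Theta}^{\eps_k}_{u_k}(f_k,0,R_k)\to 0$, and the Dirichlet-energy and potential parts scale with the correct powers of $R_k\leq 1$. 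Thus after rescaling we may assume $R_k=1$, $\boldsymbol{\Theta}^{\widetilde\eps_k}_{\widetilde u_k}(\widetilde f_k,0,1)\to 0$, and $\big||\widetilde u_k(x_k)|-1\big|>\delta$ for some $x_k\in\overline D_{1/2}$.

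Next I would distinguish two cases according to the behaviour of $\widetilde\eps_k$. \emph{Case A: $\widetilde\eps_k\to 0$.} Here the vanishing of $\boldsymbol{\Theta}$ forces in particular $\widetilde\eps_k^{-2s}\int_{D_{3/4}}W(\widetilde u_k)\,\de x\to 0$ (since the potential term appears with a favorable power of $r$ in $\mathbf{E}_{\widetilde\eps_k}$, and the $f$-correction term is controlled by $T$ and the uniform $W^{1,q}$-type norm — one uses $\eqref{1245142}$ and the bound on $\widetilde\eps_k^{2s}\|\widetilde f_k\|_\infty$ to see the correction is $o(1)$). Combined with the uniform $L^\infty$-bound $\|\widetilde u_k\|_\infty\leq b$ and the uniform weighted-$H^1$ bound (also coming from $\boldsymbol{\Theta}$ small, plus Theorem \ref{regint} for the Hölder bounds on $\widetilde u_k$ and on $z^a\partial_z\widetilde u_k$ in a fixed smaller half-ball), we extract a subsequence with $\widetilde u_k\to u_\infty$ in $C^0_{\mathrm{loc}}(B_{3/4}^+\cup D_{3/4})$. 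Passing to the limit in $\int_{D_{3/4}}W(\widetilde u_k)\,\de x$ via Fatou gives $W(u_\infty)=0$ a.e., hence $|u_\infty|\equiv 1$ on $D_{3/4}$. But uniform convergence at $x_k\to x_\infty\in\overline D_{1/2}$ gives $\big||u_\infty(x_\infty)|-1\big|\geq\delta$, a contradiction. \emph{Case B: $\widetilde\eps_k\geq\eps_0>0$ along a subsequence.} Then $\widetilde\eps_k^{-2s}W^\prime(\widetilde u_k)$ is bounded in $L^\infty(D_{3/4})$ (using $\|\widetilde u_k\|_\infty\leq b$ and (H3)), and likewise $\widetilde f_k$ is bounded in $L^\infty$; so by Lemma \ref{prereg} (or Remark \ref{remclear}) $\widetilde u_k$ is bounded in $C^{0,\boldsymbol{\beta}_*}(\overline{B_{1/2}}^+)$, and again $\widetilde u_k\to u_\infty$ uniformly. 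The smallness of $\boldsymbol{\Theta}$ forces $\mathbf{E}(u_\infty,B_r^+(\mathbf{x}_0))=0$ for all small balls, hence $u_\infty$ is constant on $B_{1/2}^+$; since $\widetilde\eps_k$ is bounded below and the potential term in $\boldsymbol{\Theta}$ vanishes, $W(u_\infty)=0$, so $|u_\infty|\equiv 1$ — contradicting $\big||u_\infty(x_\infty)|-1\big|\geq\delta$ as before.

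Finally, to obtain the quantitative threshold $\boldsymbol{\eta}_{b,T}(\delta)$ as an explicit (non-decreasing) function, rather than just an abstract contradiction, one repeats the above with $\delta$ fixed and reads off: the function defined, for each $\delta\in(0,1)$, by
$$
\boldsymbol{\eta}_{b,T}(\delta):=\inf\big\{\boldsymbol{\Theta}^{\eps}_{u_\eps}(f,0,R)\ :\ \text{admissible data as above with }\ \big||u_\eps|-1\big|>\delta\ \text{somewhere on }D_{R/2}\big\}
$$
is strictly positive by the contradiction argument, and is non-decreasing in $\delta$ because enlarging $\delta$ shrinks the competitor class. Replacing it by $\min(\boldsymbol{\eta}_{b,T}(\delta),\delta)$ if one wants a bound below $1$, and noting that by construction it depends only on $n,s,b,T,W$, gives the stated function. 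The \textbf{main obstacle} I anticipate is the bookkeeping in the rescaling step: one must verify that \emph{all} the scaling exponents — the $r^{-(n-2s)}$ weight in the energy density, the $\eps^{-2s}$ in front of the potential, the power $\theta_q$ in the $f$-correction term, and the normalization $\eps<R$ — conspire so that $\boldsymbol{\Theta}$ is exactly scale-invariant (or at least that its vanishing is preserved), and that the hypothesis $\eps^{2s}\|f\|_{L^\infty}\leq T$ survives the rescaling with $R\leq 1$; the presence of the inhomogeneous term $f$ (absent in the classical Ginzburg-Landau clearing-out) is what makes this delicate, and it is handled precisely by the correction term built into $\boldsymbol{\Theta}^\eps_{u_\eps}$ in Lemma \ref{monotform} together with estimate \eqref{1245142}.
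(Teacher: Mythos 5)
Your overall strategy --- rescale, argue by contradiction, and use compactness via Arzel\`a--Ascoli --- matches the paper's in spirit, and your Case~B (where $\widetilde\eps_k:=\eps_k/R_k$ stays bounded away from $0$) is essentially the paper's Step~1. But Case~A contains a genuine gap. When $\widetilde\eps_k\to 0$, you invoke Theorem~\ref{regint} (equivalently Lemma~\ref{prereg}) to obtain H\"older bounds on $\widetilde u_k$ \emph{uniform in $k$}, and then conclude by Arzel\`a--Ascoli. That uniform bound is not available: the constants in Lemma~\ref{prereg} depend on $\|\mathbf f\|_{L^\infty}$, and in the present problem $\mathbf f=\widetilde\eps_k^{-2s}W'(\widetilde u_k)-\widetilde f_k$ blows up in $L^\infty$ as $\widetilde\eps_k\to 0$. (This is exactly why Remark~\ref{remclear} is stated only for $\eps\geq 1/2$.) Without a uniform H\"older bound you only get $\widetilde u_k\rightharpoonup u_\infty$ weakly in the weighted $H^1$ space and strongly in $L^1(D_{3/4})$; this does give $|u_\infty|=1$ a.e., but it does \emph{not} force $\widetilde u_k(x_k)\to u_\infty(x_\infty)$, so the claimed pointwise contradiction with $\big||\widetilde u_k(x_k)|-1\big|>\delta$ does not follow.

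The missing ingredient is a second application of the monotonicity formula (Lemma~\ref{monotform}) to zoom in to scale $\eps$ at the problematic point, which is exactly the paper's Step~2. For $\mathbf x_0\in\overline D_{R/2}\times\{0\}$ and $\eps<R/2$ one has
$$\boldsymbol{\Theta}^\eps_{u_\eps}(f,x_0,\eps)\leq \boldsymbol{\Theta}^\eps_{u_\eps}\big(f,x_0,R-|x_0|\big)\leq 2^{2s-n}\,\boldsymbol{\Theta}^\eps_{u_\eps}(f,0,R)\,,$$
so rescaling \emph{around $\mathbf x_0$ to the radius $\eps$} places you in the regime $\eps'/R'=1\geq 1/2$. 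There the boundary reaction term is bounded (by $2^{2s}\|W'\|_{L^\infty([-b,b])}$) and Remark~\ref{remclear} gives a uniform $C^{0,\boldsymbol{\beta}_*}$ estimate, so your Case~B machinery applies and closes the contradiction at $\mathbf x_0$. In short: one should not try to take a limit at the macroscopic scale $R$ when $\eps\ll R$; the correct structure is to prove the claim first in the regime $\eps\geq R/2$ (your Case~B) and then reduce the general case to that regime using monotonicity, which is where the dilation factor $2^{2s-n}$ in the threshold comes from. A minor but related remark: the correct rescaling of the source term is $f_R(x):=R^{2s}f(Rx)$ (not $Rf(Rx)$), which is what makes $\widetilde\eps^{2s}\|f_R\|_{L^\infty}=\eps^{2s}\|f\|_{L^\infty}\leq T$ hold exactly.
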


\begin{proof}
{\it Step 1.} We assume in this first step that $\varepsilon\geq R/2$. 
We claim that we can find a constant $\widetilde{\boldsymbol{\eta}}_{b,T}(\delta)>0$ depending only on $\delta$, $n$, $s$, $b$, $T$, and $W$, such that the condition 
$\boldsymbol{\Theta}^\eps_{u_\eps}(f,0,R) \leq \widetilde{\boldsymbol{\eta}}_{b,T}(\delta)$
implies $ \big||u_\eps|-1\big|\leq \delta$ in $\overline B^+_{R/2}$. To this purpose, we consider the rescaled function $\widetilde u_\varepsilon(\mathbf{x}):=u_\varepsilon(R \mathbf{x}) $, 
which satisfies
$$\begin{cases}
{\rm div}(z^{a} \nabla \widetilde u_\varepsilon) = 0 & \text{in $B_1^+$}\,,\\[8pt]
\displaystyle d_s\boldsymbol{\partial}_z^{(2s)} \widetilde u_\varepsilon =\frac{R^{2s}}{\eps^{2s}}W^\prime(\widetilde u_\eps)-f_R & \text{on $D_1$}\,,
\end{cases}$$
with $\eps/R\in[ 1/2,1)$, and $f_R(x):=R^{2s}f(Rx)$ satisfying 
$$\|f_R\|_{L^\infty(D_1)}\leq 2^{2s}\varepsilon^{2s}\|f\|_{L^\infty(D_R)} \leq 2^{2s}T\,.$$ 
Since $\|\widetilde u_\eps\|_{L^\infty(B_1^+)}\leq b$, we infer from Remark~\ref{remclear} that 
\begin{equation}\label{holdbdseq}
\|\widetilde u_\eps\|_{ C^{0,\boldsymbol{\beta}_*}(\overline B^+_{1/2})}\leq C_{b,T}\,,
\end{equation} 
for a constant $C_{b,T}$ depending only on $n$, $s$, $b$, $T$, and $W$.   

We now argue by contradiction assuming that for some sequences $\{R_k\}_{k\in\mathbb{N}}\subset(0,1]$, $\{\varepsilon_k\}_{k\in\mathbb{N}}\subset[R_k/2,R_k)$, $\{f_k\}_{k\in\mathbb{N}}\subset C^{0,1}(D_{R_k})$ with $\eps_k^{2s}\|f_k\|_{L^\infty(D_{R_k})}\leq T$, and points $\{\mathbf{x}_k\}_{k\in\mathbb{N}}\subset \overline B^+_{1/2}$, 
the function $\widetilde u_k:=\widetilde u_{\eps_k}$ satisfies 
$$ \big| |\widetilde u_k(\mathbf{x}_k)|-1\big|>\delta \quad\text{for every $k$}\,,$$
and
$$ \mathbf{E}_{\varepsilon_k/R_k}(\widetilde u_k,B_1^+)=\frac{1}{R_k^{n-2s}}\mathbf{E}_{\eps_k}(u_{\eps_k},B_{R_k}^+)\leq \boldsymbol{\Theta}^{\eps_k}_{u_{\eps_k}}(f_k,0,R_k)\to 0\quad \text{as $k\to\infty$}\,.$$
By the Arzel\`a-Ascoli Theorem and \eqref{holdbdseq}, we can find a (not relabeled) subsequence  such that $\widetilde u_k$ converges uniformly on $\overline B^+_{1/2}$. 
Since  $\mathbf{E}_{\varepsilon_k/R_k}(\widetilde u_k,B_1^+)\to 0$, the limit has to be a constant of modulus one. In particular, $|\widetilde u_k|\to 1$  uniformly on $\overline B^+_{1/2}$, which contradicts our assumption  $\big||\widetilde u_k(\mathbf{x}_k)|-1\big|>\delta$. 
\vskip3pt

\noindent{\it Step 2.} Define
$$\boldsymbol{\eta}_{b,T}(\delta):=2^{2s-n}\inf_{t\in[\delta,1)} \widetilde{\boldsymbol{\eta}}_{b,T}(t)\,.$$
Let $\delta\in(0,1)$ and assume that \eqref{smallenerg} holds for $R\in(0, 1]$ and $\eps\in(0,R)$. We fix an arbitrary point $\mathbf{x}_0\in \overline D_{R/2}\times\{0\}$. If $\eps\geq R/2$, then $\big||u_\eps(\mathbf{x}_0)|-1\big|\leq \delta$ by Step~1. If $\eps<R/2$, then $\varepsilon<R-|\mathbf{x}_0|$ and  by Lemma~\ref{monotform} we have 
$$\boldsymbol{\Theta}^\eps_{u_\eps}(f,x_0,\eps)\leq \boldsymbol{\Theta}^\eps_{u_\eps}\big(f,x_0,R-|x_0|\big)\leq 2^{2s-n}\boldsymbol{\Theta}^\eps_{u_\eps}(f,0,R)\,.$$
Our choice of $\boldsymbol{\eta}_{b,T}(\delta)$ then implies $\boldsymbol{\Theta}^\eps_{u_\eps}(f,x_0,\eps)\leq \widetilde{\boldsymbol{\eta}}_{b,T}(\delta)$,  
and  we infer from Step 1 that $\big||u_\eps|-1\big|\leq \delta$ in $\overline B^+_{\eps/2}(\mathbf{x}_0)$.
\end{proof}

\begin{remark}
 By Theorem \ref{regint}, $u_\eps$ is continuous up to $D_R$. Hence the conclusion of Lemma~\ref{clear2} implies that either $|u_\eps-1|\leq \delta$ on $D_{R/2}$, or $|u_\eps+1|\leq \delta$ on $D_{R/2}$. 
\end{remark}

 
 \subsection{Small energy compactness}
 

Our objective in this subsection is to prove that the small energy assumption \eqref{smallenerg} implies strong compactness in a half ball of smaller radius, and uniform convergence to either $+1$ or $-1$ on the bottom disc. By Lemma~\ref{clear2}, it suffices to prove such compactness assuming that the solution is already very close to $\pm 1$ on the disc. In this situation, the main ingredient to use is  the convexity of the potential $W$ near $\{\pm1\}$ to show the minimality of the solution.  Then compactness can be deduced by classical cut and paste arguments. To quantify the convexity of $W$ near $\{\pm1\}$, we introduce a structural constant $\boldsymbol{\delta}_W\in(0,1/2]$ (whose existence is ensured by assumptions (H1)-(H2)) such that 
\begin{equation}\label{quantconvex}
W^{\prime\prime}(t)\geq \frac{1}{2}\min\big\{W^{\prime\prime}(1),W^{\prime\prime}(-1)\big\}>0 \qquad\text{for $\big||t|-1|\leq \boldsymbol{\delta}_W$}\,.
\end{equation}
In this way, the restriction of $W$ to each interval $I_\kappa:=(\kappa-\boldsymbol{\delta}_W,\kappa+\boldsymbol{\delta}_W)$, $\kappa\in\{\pm 1\}$, 
is (strictly) convex. We now consider the modified potentials defined for $\kappa\in\{\pm1\}$ by 
$$\widetilde W_{\kappa}(t) :=
\begin{cases}
W(t) & \text{for $t\in I_\kappa$}\,,\\
W(\kappa-\boldsymbol{\delta}_W) +W^\prime(\kappa-\boldsymbol{\delta}_W)(t-\kappa+\boldsymbol{\delta}_W) & \text{for $t\leq \kappa-\boldsymbol{\delta}_W$}\,,\\
W(\kappa+\boldsymbol{\delta}_W) +W^\prime(\kappa+\boldsymbol{\delta}_W)(t-\kappa-\boldsymbol{\delta}_W) & \text{for $t\geq \kappa+\boldsymbol{\delta}_W$}\,.
\end{cases}$$
By construction, we have $\widetilde W_\kappa\in C^1(\R)$ and $ \widetilde W_\kappa$  is convex for each $\kappa\in\{\pm1\}$. 

\begin{lemma}\label{minimality}
Let $R>0$, $f\in L^{\infty}(D_R)$, and let $u_\varepsilon\in H^1(B^+_R,|z|^a\de\mathbf{x})\cap L^p(D_R)$ be a weak solution of \eqref{pfff}.  
If $|u_\varepsilon - \kappa|\leq \boldsymbol{\delta}_W$ on $D_{R}$ with $\kappa\in\{\pm1\}$, then 
\begin{multline*}
{\bf E}(u_\eps,B_R^+)+\frac{1}{\varepsilon^{2s}}\int_{D_{R}}\widetilde W_\kappa(u_\varepsilon)\,\de x-\int_{D_R}fu_\eps\,\de x\\
\leq {\bf E}(w,B_R^+)+\frac{1}{\varepsilon^{2s}}\int_{D_{R}}\widetilde W_\kappa(w)\,\de x -\int_{D_R}fw\,\de x\,,
\end{multline*}
for every $w\in H^1(B^+_R,|z|^a\de\mathbf{x})\cap L^p(D_R)$ such that $w-u_\varepsilon$ is compactly supported in $B^+_{R}\cup D_{R}$. 
\end{lemma}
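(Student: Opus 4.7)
The plan is to exploit the convexity of $\widetilde W_\kappa$ to show that $u_\eps$ is automatically a minimizer, once we observe that $u_\eps$ itself solves the \emph{same} Euler--Lagrange equation with $\widetilde W_\kappa$ in place of $W$. Indeed, since $|u_\eps-\kappa|\leq\boldsymbol{\delta}_W$ on $D_R$, the pointwise value of $u_\eps$ lies in $I_\kappa$ on $D_R$, and on $I_\kappa$ we have $W=\widetilde W_\kappa$ together with $W^\prime=\widetilde W_\kappa^\prime$. Hence $W^\prime(u_\eps)=\widetilde W_\kappa^\prime(u_\eps)$ a.e.\ on $D_R$, and the weak formulation \eqref{varformbdgleq} of \eqref{pfff} (applied with $G=B_R^+$) yields
\begin{equation*}
d_s\int_{B_R^+} z^{a}\nabla u_\eps\cdot\nabla\varphi\,\de\mathbf{x}+\frac{1}{\eps^{2s}}\int_{D_R}\widetilde W_\kappa^\prime(u_\eps)\,\varphi\,\de x=\int_{D_R} f\varphi\,\de x
\end{equation*}
for every $\varphi\in H^1(B_R^+,|z|^a\de\mathbf{x})\cap L^p(D_R)$ compactly supported in $B_R^+\cup D_R$.

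Now given a competitor $w$ as in the statement, set $\varphi:=w-u_\eps$, which is an admissible test function. Expand the weighted Dirichlet energy as a quadratic form,
\begin{equation*}
\mathbf{E}(w,B_R^+)-\mathbf{E}(u_\eps,B_R^+)=\mathbf{E}(\varphi,B_R^+)+d_s\int_{B_R^+}z^{a}\nabla u_\eps\cdot\nabla\varphi\,\de\mathbf{x}\geq d_s\int_{B_R^+}z^{a}\nabla u_\eps\cdot\nabla\varphi\,\de\mathbf{x},
\end{equation*}
and use the convexity and $C^1$-regularity of $\widetilde W_\kappa$ (which follow from \eqref{quantconvex} and its linear extension outside $I_\kappa$) to obtain the subgradient inequality
\begin{equation*}
\widetilde W_\kappa(w)-\widetilde W_\kappa(u_\eps)\geq \widetilde W_\kappa^\prime(u_\eps)\,\varphi = W^\prime(u_\eps)\,\varphi\quad\text{a.e.\ on }D_R.
\end{equation*}
Summing these two estimates and plugging in the weak equation above immediately gives the desired inequality.

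The main (minor) obstacle is checking admissibility and integrability: one needs $\varphi\in H^1(B_R^+,|z|^a\de\mathbf{x})\cap L^p(D_R)$ with compact support in $B_R^+\cup D_R$, which follows from the assumptions on $w$ and $u_\eps$; and one needs $\int_{D_R}\widetilde W_\kappa(w)\,\de x<\infty$, which holds because $\widetilde W_\kappa$ grows at most linearly at infinity, so $\widetilde W_\kappa(w)\leq C(1+|w|^p)$ for $w\in L^p(D_R)$. Beyond these verifications the argument is the standard one for convex variational problems, and no delicate estimates are required.
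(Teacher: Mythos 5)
Your argument is correct and follows essentially the same route as the paper: set $\phi = w - u_\eps$, expand $\mathbf{E}$ as a quadratic form to get $\mathbf{E}(w) - \mathbf{E}(u_\eps) \geq d_s\int z^a \nabla u_\eps\cdot\nabla\phi$, use convexity of $\widetilde W_\kappa$ together with $\widetilde W_\kappa' = W'$ on $I_\kappa$, and plug into the weak Euler--Lagrange equation. The only addition you make beyond the paper's proof is the explicit verification of admissibility and integrability of the competitor, which is a sound (if routine) point to flag.
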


\begin{proof}
Set $\phi:=w-u_\eps$, so that $\phi$ is compactly supported in $B^+_{R}\cup D_{R}$. By convexity of the potential $\widetilde W_\kappa$, we have
$$ \widetilde W_\kappa(u_\eps+\phi)\geq \widetilde W_\kappa(u_\eps)+\widetilde W_\kappa^\prime(u_\eps)\phi\quad\text{on $D_{R}$}\,.$$
Since $|u_\varepsilon- \kappa|\leq \boldsymbol{\delta}_W$ on $D_{R}$, we have $\widetilde W_\kappa^\prime(u_\eps)=W^\prime(u_\eps)$ on $D_{R}$. Then we derive from equation  \eqref{pfff}, 
\begin{align*}
&{\bf E}(u_\eps+\phi,B_R^+) +\frac{1}{\varepsilon^{2s}}\int_{D_{R}}\widetilde W_\kappa(u_\eps+\phi)\,\de x \\
&\begin{multlined}[t][12cm]
\geq {\bf E}(u_\eps,B_R^+) +\frac{1}{\varepsilon^{2s}}\int_{D_{R}}\widetilde W_\kappa(u_\eps)\,\de x\\
+d_s \int_{B^+_{R}}z^{a}\nabla u_\eps \nabla \phi \,\de \mathbf{x} +\frac{1}{\varepsilon^{2s}}\int_{D_{R}}W^\prime(u_\eps) \phi \,\de x
\end{multlined}\\
&\geq  {\bf E}(u_\eps,B_R^+) +\frac{1}{\varepsilon^{2s}}\int_{D_{R}}\widetilde W_\kappa(u_\eps)\,\de x+\int_{D_R} f\phi\,\de x\,,
\end{align*}
and the lemma is proved. 
\end{proof}

We now prove the announced compactness in energy space under the closeness assumption to $\{\pm 1\}$ on the bottom disc.

\begin{corollary}\label{strongcoro}
Let $R>0$, $\eps_k\downarrow0$ a given sequence, and $\{f_k\}_{k\in\mathbb{N}}\subset L^{\infty}(D_R)$ satisfying $\sup_k\|f_k\|_{L^q(D_R)}<\infty$ for some $q>1$. Let  $\{u_k\}_{k\in\mathbb{N}} \subset H^1(B_R^+,|z|^a\de\mathbf{x})\cap L^\infty(B^+_R)$ satisfying  $|u_k -\kappa|\leq \boldsymbol{\delta}_W$ on $D_{R}$ with $\kappa\in\{\pm1\}$, and such that $u_k$ solves in the weak sense  
\begin{equation}\label{tim1439}
\begin{cases}
{\rm div}(z^{a}\nabla u_k)= 0 & \text{in $B_R^+$}\,,\\[5pt]
\displaystyle d_s\boldsymbol{\partial}_z^{(2s)} u_k=\frac{1}{\varepsilon_k^{2s}}W^\prime(u_k) -f_k & \text{on $D_R$}\,.
\end{cases}
\end{equation}
If $\sup_k \big\{\mathbf{E}_{\varepsilon_k}(u_k,B_R^+) +\|u_k\|_{L^\infty(B_R^+)}\big\}< \infty$, then there exist a (not relabeled) subsequence  and $u_*\in H^1(B_R^+,|z|^a\de\mathbf{x})$ satisfying $u_*=\kappa$ on $D_R$ such that 
\begin{enumerate}
\item[\rm (i)] $u_k\to u_*$ strongly in   $H^1(B_{r}^+,|z|^a\de\mathbf{x})$  for every $r\in(0,R)$; 
\item[\rm (ii)] $\eps^{-2s}_k\int_{D_r}W(u_k)\,\de x\to 0$ for every $r\in(0,R)$.
\end{enumerate} 
\end{corollary}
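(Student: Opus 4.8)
The plan is to exploit the minimality property established in Lemma \ref{minimality}: since $|u_k-\kappa|\leq\boldsymbol{\delta}_W$ on $D_R$, each $u_k$ minimizes the modified functional
$$\mathbf{G}_k(w,B_r^+):={\bf E}(w,B_r^+)+\frac{1}{\varepsilon_k^{2s}}\int_{D_r}\widetilde W_\kappa(w)\,\de x-\int_{D_r}f_kw\,\de x$$
among competitors agreeing with $u_k$ near $\partial^+B_r\cup\partial D_r$. First I would extract a subsequence with $u_k\rightharpoonup u_*$ weakly in $H^1(B_R^+,|z|^a\de\mathbf{x})$ (using the uniform energy bound) and, by the compact trace embedding of Remark \ref{trace}, strongly in $L^1_{\rm loc}(D_R)$ and a.e.\ on $D_R$; passing to the limit we get $|u_*-\kappa|\leq\boldsymbol{\delta}_W$ on $D_R$. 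The key observation is that $\varepsilon_k^{-2s}\int_{D_r}\widetilde W_\kappa(u_k)\,\de x$ is uniformly bounded, and since $\widetilde W_\kappa(t)\gtrsim (t-\kappa)^2$ near $\kappa$ (by \eqref{quantconvex}), Fatou's lemma forces $u_*=\kappa$ a.e.\ on $D_R$.

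Next I would prove the strong convergence $u_k\to u_*$ in $H^1(B_r^+,|z|^a\de\mathbf{x})$ for $r<R$ by a comparison argument. Fix $r<r'<R$ and a cutoff; the standard device is to compare $u_k$ with the competitor $w_k$ obtained by interpolating, on a thin annular shell $B_{r'}^+\setminus B_r^+$, between $u_k$ (on the outer part) and $u_*^\e$ or simply the constant $\kappa$ (on $B_r^+$). Using Lemma \ref{minimality} applied on $B_{r'}^+$ with this competitor, together with the convergence of boundary integrals $\int f_k w$ (here one uses $\sup_k\|f_k\|_{L^q}<\infty$, $q>1$, Hölder, and the compact trace embedding into $L^{q'}$, after noting $q'<\frac{1}{1-s}$ may need the mild restriction $q>\frac{1}{s}$ or one argues more carefully — in any case $f_k u_k$ and $f_k w_k$ converge since $u_k\to\kappa$ strongly in $L^1(D_{r'})$ and $|u_k|\leq b$), one obtains
$$\limsup_k\Big({\bf E}(u_k,B_r^+)+\frac{1}{\varepsilon_k^{2s}}\int_{D_r}\widetilde W_\kappa(u_k)\,\de x\Big)\leq {\bf E}(u_*,B_r^+).$$
Combined with lower semicontinuity of ${\bf E}(\cdot,B_r^+)$ and nonnegativity of the potential term, this yields both $\varepsilon_k^{-2s}\int_{D_r}\widetilde W_\kappa(u_k)\to 0$ and ${\bf E}(u_k,B_r^+)\to{\bf E}(u_*,B_r^+)$, whence strong convergence in $H^1(B_r^+,|z|^a\de\mathbf{x})$ by the Hilbert-space argument (weak convergence plus norm convergence). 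Since $\widetilde W_\kappa(t)=W(t)$ for $|t-\kappa|\leq\boldsymbol{\delta}_W$, conclusion (ii) follows from $\varepsilon_k^{-2s}\int_{D_r}\widetilde W_\kappa(u_k)\to 0$.

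The main obstacle I anticipate is the construction of the comparison competitor that is both admissible (agreeing with $u_k$ near the outer boundary so that Lemma \ref{minimality} applies) and has controlled energy ${\bf E}(w_k,B_{r'}^+)\to{\bf E}(u_*,B_r^+)$ on the annular shell. The standard trick is a Fubini/mean-value selection of a good radius $\rho\in(r,r')$ on which $\int_{\partial^+B_\rho}z^a|\nabla(u_k-u_*^\e)|^2$ is small along a further subsequence, combined with the fact that $u_*^\e$ is the $\kappa$-harmonic extension of the constant $\kappa$, hence equals $\kappa$ identically with zero gradient — so actually the competitor can simply be taken to be $\kappa$ inside $B_\rho^+$ glued to $u_k$ outside, with a linear-in-$z$ or radial transition on a thin shell whose width is sent to zero. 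One must check that the transition-layer energy is controlled by $\int_{B_{r'}^+\setminus B_\rho^+}z^a|\nabla u_k|^2$ plus the $L^2$-oscillation of $u_k-\kappa$ there, both of which are small by the trace convergence and the energy bound; this is where the degenerate weight $z^a$ and the bottom-boundary geometry require the careful bookkeeping, but it is routine given Lemma \ref{poincare} and Remark \ref{trace}.
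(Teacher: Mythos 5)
Your overall strategy is essentially the same as the paper's: exploit the minimality property from Lemma~\ref{minimality}, extract a weak $H^1$-limit $u_*$, use the vanishing of the potential term to show $u_*=\kappa$ on $D_R$, and then close by a cut-and-paste comparison with a good-slice selection of the transition annulus. The first two steps are sound, and the target inequality $\limsup_k\big({\bf E}(u_k,B_r^+)+\eps_k^{-2s}\int_{D_r}\widetilde W_\kappa(u_k)\,\de x\big)\leq {\bf E}(u_*,B_r^+)$ is exactly what the paper proves. However, the final paragraph contains a genuine error in the choice of competitor.

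You assert that the competitor inside $B_\rho^+$ ``can simply be taken to be $\kappa$'' on the grounds that $u_*^\e$ is the extension of the constant $\kappa$ and therefore constant. This conflates the weak $H^1(B_R^+,|z|^a\de\mathbf{x})$-limit $u_*$ with a global extension of its bottom trace. The condition $u_*=\kappa$ holds only on the disc $D_R$; the data on $\partial^+B_R$ are unconstrained, so $u_*$ is generically non-constant in the half-ball — e.g.\ $u_*(x,z)=\kappa+cz^{2s}$ is $z^a$-harmonic with trace $\kappa$ on $D_R$ but has nonzero gradient. If you glue the constant $\kappa$ to $u_k$ across a shell of width $\delta$, the transition energy contains the term $\delta^{-2}\int_{\text{shell}} z^a|u_k-\kappa|^2\,\de\mathbf{x}$; by the compact embedding $H^1\hookrightarrow L^2$ in the weighted space this tends, as $k\to\infty$, to $\delta^{-2}\int_{\text{shell}} z^a|u_*-\kappa|^2\,\de\mathbf{x}$, a fixed positive quantity of order $\delta^{-1}$ that blows up as $\delta\to 0$ unless $u_*\equiv\kappa$. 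The comparison therefore does not close. (Had it closed, it would have forced $\lim_k{\bf E}(u_k,B_r^+)=0$, i.e.\ $u_*\equiv\kappa$, which is false in general.) The fix, which is what the paper does, is to take the competitor $w_j=\chi u_*+(1-\chi)u_{k_j}$, interpolating with the weak limit $u_*$ itself: then the dangerous term becomes $\delta_m^{-2}\int_{\text{shell}} z^a|u_{k_j}-u_*|^2\,\de\mathbf{x}$, which vanishes as $j\to\infty$ for each fixed shell, and a Fubini/pigeonhole selection of a good shell sends the remaining contributions to zero. Finally, your concern that a restriction $q>1/s$ might be needed for the $f_k$-terms is unfounded: strong $L^{q/(q-1)}(D_R)$-convergence of $u_k-\kappa$ follows by interpolation between the strong $L^2(D_R)$-convergence (from the potential bound) and the uniform $L^\infty$-bound, with no use of the compact trace embedding into $L^{q'}$.
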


\begin{proof} We may assume without loss of generality that $R=1$ and $|u_k- 1|\leq \boldsymbol{\delta}_W$  on $D_{1}$ (i.e., $\kappa=+1$). Let us set 
$$M:=\sup_k \big\{\mathbf{E}_{\varepsilon_k}(u_k,B_1^+) +\|u_k\|_{L^\infty(B_1^+)}\big\}\,.$$
From the assumption that $M$ is finite, we first deduce  that the sequence $\{u_k\}_{k\in\mathbb{N}}$ is bounded in $H^1(B_1^+,|z|^a\de\mathbf{x})$. Hence we can find a (not relateled) subsequence such that $u_k\rightharpoonup u_*$ weakly in $H^1(B_1^+,|z|^a\de\mathbf{x})$. 
On the other hand, since $|u_k- 1|\leq \boldsymbol{\delta}_W$ on $D_R$, we infer from \eqref{quantconvex} that 
$$\int_{D_1}|u_k-1|^2\,\de x\leq C \int_{D_1}W(u_k)\,\de x\leq C M\eps_k^{2s} \to 0\,,$$
so that $u_k\to 1$ strongly in $L^{2}(D_1)$, and therefore in $L^{q/(q-1)}(D_1)$.  By continuity of the linear trace operator, it also follows that $u_*=1$ on $D_1$. 

Let us now fix $r\in(0,1)$. We start selecting a subsequence $\{u_{k_j}\}_{j\in\mathbb{N}}$ such that 
$$\limsup_{k\to+\infty}  \mathbf{E}_{\eps_{k}}(u_{k},B_r^+)=\lim_{j\to+\infty}  \mathbf{E}_{\eps_{k_j}}(u_{k_j},B_r^+)\,.$$
For $\theta\in(0,1)$, we set $r_\theta:=1-\theta+\theta r$ and $L_\theta:= r_\theta -r $.  Given an arbitrary integer $m\geq 1$, we define $r_i:=r+i\delta_m$ where $i\in\{0,\ldots, m\}$ and $\delta_m:=L_\theta/m$. 
Since 
$$\sum_{i=0}^{m-1} \mathbf{E}_{\eps_{k_j}}(u_{k_j},B^+_{r_{i+1}}\setminus B^+_{r_i})\leq M\,,  $$
we can find a good index  $i_m\in \{0,\ldots,m-1\}$ and a (not relabeled)  further subsequence of $\{u_{k_j}\}_{j\in\mathbb{N}}$ such that 
$$\mathbf{E}_{\eps_{k_j}}(u_{k_j},B^+_{r_{i_m+1}}\setminus B^+_{r_{i_m}}) \leq \frac{M+1}{m}\quad \forall j\in\N\,.$$
From the weak convergence of $u_{k_j}$ towards $u_*$ and the lower semicontinuity of ${\bf E}$, we deduce that 
$${\bf E}\big(u_*,B^+_{r_{i_m+1}}\setminus B^+_{r_{i_m}}\big) \leq \frac{M+1}{m}\,.$$
Now consider a smooth cut-off function $\chi\in C_c^\infty(B_1,[0,1])$ such that $\chi=1$ in $B_{r_{i_m}}$, $\chi=0$ in $B_1\setminus B_{r_{i_m+1}}$, and satisfying 
$|\nabla\chi|\leq C\delta_m^{-1}$ for a constant $C$ only depending on~$n$. Then define 
$$w_j:=\chi u_* +(1-\chi)u_{k_j}\,, $$
so that $w_j\in H^1(B_1^+,|z|^a\de\mathbf{x})$ and $w_j-u_{k_j}$ is compactly supported in $B^+_{1}\cup D_{1}$. Since $|w_j-1|\leq \boldsymbol{\delta}_W$ on $D_1$, we infer from Lemma \ref{minimality} that 
$$ \mathbf{F}_{\varepsilon_{k_j}}(u_{k_j},B_1^+)\leq \mathbf{F}_{\varepsilon_{k_j}}(w_j,B_1^+)\,,$$
which leads to 
\begin{multline*}
 \mathbf{E}_{\varepsilon_{k_j}}(u_{k_j},B_{r}^+)\leq {\bf E}(u_*,B^+_{r_\theta}) + \mathbf{E}_{\varepsilon_{k_j}}(w_j,B_{r_{i_{m}+1}}^+\setminus B_{r_{i_{m}}}^+)\\
 +\|f_{k_j}\|_{L^q(D_1)}\|u_{k_j}-1\|_{L^{q/(q-1)}(D_1)}\,.
\end{multline*}
Using the convexity of $W(t)$ near $t=1$, we estimate 
\begin{multline*}
\mathbf{E}_{\varepsilon_{k_j}}(w_j,B_{r_{i_{m}+1}}^+\setminus B_{r_{i_{m}}}^+)\leq  {\bf E}\big(u_*,B_{r_{i_{m}+1}}^+\setminus B_{r_{i_{m}}}^+\big)\\
+\mathbf{E}_{\varepsilon_{k_j}}\big(u_{k_j},B_{r_{i_{m}+1}}^+\setminus B_{r_{i_{m}}}^+\big)
+C\delta_m^{-2}\int_{B_{r_{i_{m}+1}}^+\setminus B_{r_{i_{m}}}^+} z^{a}|u_{k_j}-u_*|^2\,\de \mathbf{x}\,.
\end{multline*}
From the compact embedding $H^1(B_1^+,|z|^a\de\mathbf{x})\hookrightarrow L^1(B_1^+)$ and the fact that $|u_{k_j}|\leq M$ in $B_1^+$, we infer that 
 $u_{k_j}\to u_*$ strongly in $L^2(B_1^+,|z|^a\de\mathbf{x})$.  Consequently,  
$$\limsup_{j\to\infty} \mathbf{E}_{\varepsilon_{k_j}}(w_j,B_{r_{i_{m+1}}}^+\setminus B_{r_{i_{m}}}^+)\leq \frac{2(M+1)}{m}\,.$$
Therefore,
$$ \lim_{j\to\infty}\mathbf{E}_{\varepsilon_{k_j}}(u_{k_j},B_{r}^+)\leq {\bf E}(u_*,B_{r_\theta}^+) + \frac{2(M+1)}{m}\,.$$
Finally, letting first $m\to\infty$ and then $\theta\to 1$, we conclude that 
$$\lim_{j\to+\infty} \mathbf{E}_{\varepsilon_{k_j}}(u_{k_j},B_{r}^+)\leq    {\bf E}(u_*,B_{r}^+)\,.$$
On the other hand, $\liminf_j  \mathbf{E}(u_{k_j},B_{r}^+)\geq {\bf E}(u_*,B_{r}^+)$ by lower semicontinuity, and consequently, 
$$\lim_{j\to\infty} \mathbf{E}(u_{k_j},B_{r}^+)= {\bf E}(u_*,B_{r}^+)\quad\text{and}\quad \lim_{j\to\infty} \frac{1}{\eps_{k_j}^{2s}}\int_{D_r}W(u_{k_j})\,\de x=0\,. $$
From the weak convergence of $u_{k_j}$, it classically follows that the sequence $\{u_{k_j}\}_{j\in\mathbb{N}}$ converges strongly in $H^1(B_r^+,|z|^a\de\mathbf{x})$ towards $u_*$.
\end{proof}

\begin{lemma}\label{eas}
If $u_*\in H^1(B_1^+,|z|^a\de\mathbf{x})\cap L^\infty(B_1^+)$ satisfies 
$$\begin{cases}
{\rm div}\big(z^{a}\nabla u_*)=0 & \text{in $B_1^+$}\,,\\
u_*=1 & \text{on $D_1$}\,,
\end{cases}$$
then $u_*\in C^{0,\alpha}_{\rm loc}(B_1^+\cup D_1)$, $\nabla_xu_*\in C^{0,\alpha}_{\rm loc}(B_1^+\cup D_1)$,  and $z^a\partial_zu_*\in C^{0,\alpha}_{\rm loc}(B_1^+\cup D_1)$ for some $\alpha=\alpha(n,s)\in(0,1)$. Moreover, for every $r\in(0,1)$, $\|u_*\|_{C^{0,\alpha}(B_r^+)}$,  $\|\nabla_x u_*\|_{C^{0,\alpha}(B_r^+)}$, and $\|z^a\partial_zu_*\|_{C^{0,\alpha}(B_r^+)}$ only depends $n$, $s$, $r$, and $\|u_*\|_{L^\infty(B_1^+)}$. In particular, 
\begin{equation}\label{samed1250}
\lim_{r\to0}\frac{1}{r^{n-2s}} {\bf E}\big(u_*,B^+_r({\bf x}_0)\big)=0 
\end{equation}
locally uniformly with respect to $\mathbf{x}_0\in D_1\times\{0\}$.
\end{lemma}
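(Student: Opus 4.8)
\textbf{Proof plan for Lemma \ref{eas}.} The statement is essentially an interior/boundary regularity theorem for $L_s$-harmonic functions with constant (hence smooth) Dirichlet boundary data on $D_1$, together with the resulting decay of the rescaled weighted Dirichlet energy. The plan is to reduce to the already-quoted linear Schauder-type estimate of Lemma \ref{prereg} (from \cite{CS1}) and the bootstrap mechanism used in the proof of Theorem \ref{regint}. First I would subtract the constant: set $w_*:=u_*-1$, so that $w_*\in H^1(B_1^+,|z|^a\de\mathbf{x})\cap L^\infty(B_1^+)$ solves $\mathrm{div}(z^a\nabla w_*)=0$ in $B_1^+$ with $w_*=0$ on $D_1$, equivalently $\boldsymbol{\partial}^{(2s)}_z w_*=0$ on $D_1$ in the weak sense. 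This is exactly equation \eqref{eqlin} with $\mathbf{f}\equiv 0$ (on slightly shrunk balls, after rescaling), so Lemma \ref{prereg} immediately gives $w_*\in C^{0,\boldsymbol{\beta}_*}_{\rm loc}(B_1^+\cup D_1)$ and $z^a\partial_z w_*\in C^{0,\gamma}_{\rm loc}(B_1^+\cup D_1)$, with the stated quantitative dependence only on $n$, $s$, the radius, and $\|u_*\|_{L^\infty(B_1^+)}$ (by covering $B_r^+$ with finitely many half-balls $B_2^+(\mathbf{x}_0)$ rescaled to unit size and applying \eqref{estischauderlin} with $\mathbf{f}=0$). Interior smoothness $u_*\in C^\infty(B_1^+)$ away from $\{z=0\}$ is immediate from the standard (non-degenerate) elliptic theory, since $z^a$ is smooth and bounded away from $0$ there.

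Next, to obtain Hölder regularity of $\nabla_x u_*$ up to $D_1$, I would run verbatim the difference-quotient bootstrap from the proof of Theorem \ref{regint}: tangential difference quotients $w_h(\mathbf{x})=(w_*(x+h,z)-w_*(x,z))/|h|^{\boldsymbol{\beta}_*}$ again solve \eqref{eqlin} with right-hand side $\mathbf{f}\equiv 0$ (the nonlinearity $W'$ and the source $f$ that complicated Theorem \ref{regint} are simply absent here), so the argument is in fact \emph{strictly easier} — no $W\in C^2$ or $f\in C^{0,1}$ hypotheses are needed, and one still concludes, via the iteration through \cite[Propositions 8 and 9 in Chapter V.4]{Stein} distinguishing the cases $\boldsymbol{\beta}_*\gtrless 1/2$, that $\nabla_x w_*=\nabla_x u_*$ is of class $C^{0,\alpha}$ near every point of $D_1$ for a suitable $\alpha=\alpha(n,s)\in(0,1)$. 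The quantitative bounds on $\|u_*\|_{C^{0,\alpha}(B_r^+)}$, $\|\nabla_x u_*\|_{C^{0,\alpha}(B_r^+)}$, $\|z^a\partial_z u_*\|_{C^{0,\alpha}(B_r^+)}$ track through each step of this bootstrap, since every invocation of Lemma \ref{prereg} produces a constant depending only on $n$, $s$, the radii used, and the sup-norm of the solution, and no other data enters.

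Finally, for the energy decay \eqref{samed1250}: fix a compact set $K\subset D_1$ and $r_0>0$ with $K+D_{r_0}\subset D_1$. For $\mathbf{x}_0=(x_0,0)$ with $x_0\in K$ and $r<r_0/2$, split $\nabla u_*=(\nabla_x u_*,\partial_z u_*)$. From the previous step, $|\nabla_x u_*|\leq C_K$ on $B_{r_0/2}^+(\mathbf{x}_0)$, hence $\int_{B_r^+(\mathbf{x}_0)}z^a|\nabla_x u_*|^2\,\de\mathbf{x}\leq C_K\int_{B_r^+}z^a\,\de\mathbf{x}=C_K' r^{n+2-2s}=o(r^{n-2s})$. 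For the normal derivative, write $z^a\partial_z u_*=:\psi$, which is continuous up to $D_1$ with $\psi(x,0)=\boldsymbol{\partial}^{(2s)}_z u_*(x)=0$ for $x\in D_1$ (the boundary condition); thus $|\psi(x,z)|\leq \omega(z)$ for $x\in K$, $z\le r_0/2$, with $\omega(z)\to 0$ as $z\to 0$ by uniform continuity. Then $z^a|\partial_z u_*|^2=z^{-a}\psi^2\leq z^{-a}\omega(z)^2$, and $\int_{B_r^+(\mathbf{x}_0)}z^{-a}\omega(z)^2\,\de\mathbf{x}\leq \omega(r)^2\int_0^r z^{-a}\,\de z\cdot |D_r|=C\,\omega(r)^2 r^{n+2s}=o(r^{n-2s})$ since $1-a=2s>0$ and $n+2s>n-2s$. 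Adding the two contributions gives $r^{-(n-2s)}\mathbf{E}(u_*,B_r^+(\mathbf{x}_0))\to 0$ as $r\to 0$, uniformly for $x_0\in K$, which is \eqref{samed1250}. The only mildly delicate point — the main obstacle — is the difference-quotient bootstrap for $\nabla_x u_*$ up to the boundary, but since it is identical to (and simpler than) the one already carried out in Theorem \ref{regint}, I would simply say it follows "exactly as in the proof of Theorem \ref{regint}, taking $\mathbf{f}\equiv 0$", and not reproduce it.
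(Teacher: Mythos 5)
There is a genuine gap, and it sits at the very first step. You write ``$w_*=0$ on $D_1$, equivalently $\boldsymbol{\partial}^{(2s)}_z w_*=0$ on $D_1$ in the weak sense,'' and then invoke Lemma~\ref{prereg} with $\mathbf{f}\equiv 0$. This equivalence is false: the hypothesis $u_*=1$ on $D_1$ is a \emph{Dirichlet} constraint on the trace of $u_*$, and carries no information about the conormal derivative. A concrete counterexample is $u_*(x,z)=1+z^{2s}$: it is $L_s$-harmonic (since $z^a\partial_z(z^{2s})=2s$ is constant), it is bounded, it belongs to $H^1(B_1^+,|z|^a\de\mathbf{x})$ (because $\int_0^1 z^{a}\cdot z^{2(2s-1)}\,\de z=\int_0^1 z^{-a}\,\de z<\infty$), it equals $1$ on $D_1$, yet $\boldsymbol{\partial}_z^{(2s)}u_*\equiv 2s\neq 0$. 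So Lemma~\ref{prereg}, which is a Schauder estimate for the \emph{Neumann-type} boundary value problem $\boldsymbol{\partial}^{(2s)}_z u=\mathbf{f}$, is simply not applicable here, and the whole bootstrap you borrow from Theorem~\ref{regint} (which also treats a Neumann boundary reaction) does not get off the ground. The same false identity $\psi(x,0)=\boldsymbol{\partial}^{(2s)}_z u_*(x)=0$ reappears in your energy argument; that particular part is salvageable because you actually only need boundedness of $z^a\partial_zu_*$ (the factor $r^{4s}$ already gives the decay), but the regularity you are relying on has not been established by the preceding steps.

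The missing idea, which the paper uses, is \emph{odd reflection}. Since $u_*-1$ has zero trace on $D_1$, its odd extension $\widetilde{w}(x,z):=(u_*-1)(x,z)$ for $z>0$ and $\widetilde{w}(x,z):=-(u_*-1)(x,-z)$ for $z<0$ is a weak solution of the \emph{interior} degenerate equation ${\rm div}(|z|^a\nabla\widetilde{w})=0$ in the full ball $B_1$ — this is precisely where the Dirichlet condition enters, as a compatibility condition for the reflection. After that there is no boundary condition to handle at all: the interior H\"older theory for $A_2$-weighted equations (Fabes--Kenig--Serapioni) gives $C^{0,\alpha}_{\rm loc}(B_1)$, tangential difference quotients improve this to $C^{0,\alpha}$ control of $\nabla_x u_*$ and eventually $\Delta_x u_*$, and one recovers $z^a\partial_zu_*$ by integrating $\partial_z(z^a\partial_zu_*)=z^a\Delta_xu_*$ in $z$. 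Your energy-decay computation then goes through unchanged using only boundedness of $z^a|\nabla u_*|$.
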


\begin{proof}
Considering $u_*-1$ instead of $u_*$, we can assume that $u_*=0$ on $D_1$. Then we extend $u_*$ to the whole ball $B_1$ by odd symmetry, i.e.,  $u_*(x,z):=-u_*(x,-z)$ for $z<0$. 
Since $u_*=0$  on $D_1$, we have $u_*\in H^1(B_1,|z|^a\de\mathbf{x})\cap L^\infty(B_1)$. 
In addition, $u_*$ solves ${\rm div}(|z|^a\nabla u_*)=0$ in the ball $B_1$ (in the weak sense), i.e., 
$$\int_{B_1}|z|^a\nabla u_*\cdot\nabla\phi\,\de \mathbf{x}=0 $$
for all $\phi\in H^1(B_1,|z|^a\de\mathbf{x})$ compactly supported in $B_1$. 
Standard elliptic regularity yields $u_*\in C^\infty(B_1\setminus D_1)$, and for every compact set $K\subset B_1\setminus D_1$, $\|\nabla u_*\|_{L^\infty(K)}$ only depends on $n$, $s$, $K$, and $\|u_*\|_{L^\infty(B_1^+)}$.  
Then the regularity result in \cite{FKS} (see also \cite[Section~3.2]{CS1}) tells us that $u_*\in C_{\rm loc}^{0,\alpha}(B_1)$ for some exponent $\alpha\in (0,1)$ depending only $n$ and $s$. And for $r\in(0,1)$, $\|u_*\|_{C^{0,\alpha}(B_r)}$ only depends on $n$, $s$, $r$, and $\|u_*\|_{L^\infty(B_1^+)}$. 
 By the argument used in the proof of Theorem \ref{regint} (based on finite difference quotients), we show that $\nabla_xu_*\in C^{0,\alpha}_{\rm loc}(B_1)$, and $\|\nabla_xu_*\|_{C^{0,\alpha}(B_r)}$ only depends on $n$, $s$, $r\in(0,1)$, and $\|u_*\|_{L^\infty(B_1^+)}$. 
 
Let us now fix a radius $r\in(0,1)$ and an index $j\in\{1,\ldots,n\}$. We set for $\delta\in(0,1-r)$, 
 $$w_\delta(x,z):=\frac{u_*(x+\delta e_j,z)-u_*(x,z)}{\delta}\,. $$
The function $w_\delta$ belongs to $H^1(B_r,|z|^a\de{\bf x})\cap L^\infty(B_r)$, and it satisfies (in the weak sense)  
$${\rm div}(|z|^a\nabla w_\delta)=0\quad\text{in $B_r$}\,.$$ 
Consider a cut-off $\chi\in C^1_c(B_r)$ such that $\chi\equiv 1$ in $B_{r-\tau}$ for some $\tau\in(0,r)$. Using the test function $\phi=\chi^2w_\delta$, we obtain 
 $$0=\int_{B_r}|z|^a\nabla w_\delta\cdot\nabla\phi\,\de \mathbf{x}= \int_{B_r}|z|^a\chi^2|\nabla w_\delta|^2\,\de \mathbf{x}+ 2\int_{B_r}|z|^a(\chi\nabla w_\delta)\cdot(w_\delta\nabla\chi)\,\de \mathbf{x}\,.$$
From Cauchy-Schwarz Inequality  we  infer that 
 $$ \int_{B_r}|z|^a\chi^2|\nabla w_\delta|^2\,\de \mathbf{x}\leq 4 \int_{B_r}|z|^aw^2_\delta|\nabla\chi|^2\,\de \mathbf{x}\leq C\,,$$
for a constant $C$ independent of $\delta$. Letting $\delta\to 0$, we obtain by lower semicontinuity that 
$$\int_{B_{r-\tau}} |z|^a|\nabla(\partial_j u_*)|^2\,\de \mathbf{x}\leq C\,. $$
In view of the arbitrariness of $\tau$ and $r$, we conclude that $\partial_ju_*\in H^1_{{\rm loc}}(B_1,|z|^a\de\mathbf{x})\cap L^\infty_{\rm loc}(B_1)$.  In addition, $\partial_j u_*$ satisfies 
${\rm div}(|z|^a\nabla (\partial_ju_*))=0$ in $B_1$ (in the weak sense). By the regularity results in \cite{FKS} and the consideration above, we  infer that  
$\nabla_{x}(\partial_j u_*)\in C_{\rm loc}^{0,\alpha}(B_1)$,  and $\|\nabla_x(\partial_ju_*)\|_{C^{0,\alpha}(B_r)}$ only depends on $n$, $s$, $r\in(0,1)$, and $\|u_*\|_{L^\infty(B_1^+)}$  (since $\|\partial_ju_*\|_{L^\infty(B_r)}$ only depends on $n$, $s$, $r$, and $\|u_*\|_{L^\infty(B_1^+)}$). 

From the arbitrariness of $j$, we conclude that  $\Delta_xu_*\in C^{0,\alpha}_{\rm loc}(B_1)$, and  
$\|\Delta_xu_*\|_{C^{0,\alpha}(B_r)} $ only depends on $n$, $s$, $r\in(0,1)$, and $\|u_*\|_{L^\infty(B_1^+)}$. On the other hand, 
$$\partial_z\big(z^{a}\partial_{z}u_*\big)= z^{a}\Delta_x u_*\quad\text{in $B_1^+$}\,.$$
Consequently, given $r\in(0,1)$ and writing 
$$z^a\partial_zu_*(x,z)=r^a\partial_zu_*(x,r)-\int_z^rt^a\Delta_x u_*(x,t)\,\de t $$
for $(x,z)\in B_1^+$ such that $(x,r)\in B_1^+$, we  deduce that $z^{a}\partial_{z}u_*$ is actually H\"older continuous up to $D_1$ for some exponent $\tilde\alpha=\tilde\alpha(n,s)\in(0,1)$ (perhaps smaller then $\alpha$), and $\|z^a\partial_zu_*\|_{C^{0,\tilde \alpha}(B^+_r)}$ only depends on $n$, $s$, $r$, and $\|u\|_{L^\infty(B_1^+)}$. 

Finally, if $\mathbf{x}_0\in D_R\times\{0\}$ for some $R\in(0,1)$,   we now have for $0<r<1/2(1-|\mathbf{x}_0|)$ the estimate $z^{a}|\nabla u_*|\leq C_R$ in $B^+_r(\mathbf{x}_0)$ with a constant $C_R$ independent of $\mathbf{x}_0$ and $r$. Hence,
$$\int_{B^+_r(\mathbf{x}_0)}z^{a}|\nabla u_*|^2\,\de\mathbf{x}\leq C_R\int_{B^+_r(\mathbf{x}_0)}z^{-a}\,\de{\bf x}\leq C_Rr^{n+2s} \,,$$
and \eqref{samed1250} follows.
\end{proof}

Combining Lemma \ref{clear2} with Corollary \ref{strongcoro} leads to the following

\begin{proposition}\label{epsreg}
Let $q\in(\frac{n}{1+2s},n)$, $b\geq 1$, $T>0$, and $\varepsilon_k\downarrow0$ a given sequence. Let $R\in(0,1]$ and $\{f_k\}_{k\in\mathbb{N}}\subset  C^{0,1}(D_R)$ such that 
\begin{equation}\label{bdfw1qdiscR}
\eps_k^{2s}\|f_k\|_{L^\infty(D_R)} +\|f_k\|_{\dot W^{1,q}(D_R)}\leq T\,.
\end{equation}
There exist two constants  
$\boldsymbol{\theta}_{b,T}>0$ and ${\bf R}_{b,T}>0$ (depending only on  $n$, $s$, $q$, $b$, $T$, and $W$) such that the following holds. 
Let $\{u_k\}_{k\in\mathbb{N}} \subset H^1(B_R^+,|z|^a\de\mathbf{x})\cap L^\infty(B_R^+)$ be such that $\|u_k\|_{L^\infty(B_R^+)}\leq b$, and $u_k$ solves \eqref{tim1439} in the weak sense. If $R\leq {\bf R}_{b,T}$ and 
\begin{equation}\label{condliminf} 
\liminf_{k\to\infty} {\bf E}_{\eps_k}(u_k,B_R^+)< \boldsymbol{\theta}_{b,T}R^{n-2s} \,,
\end{equation}
then there exist a (not relabeled) subsequence and $u_*\in H^1(B_R^+,|z|^a\de\mathbf{x})$ satisfying either $u_*=1$ on $D_{R/4}$, or $u_*=-1$ on $D_{R/4}$, such that 
\vskip3pt
\begin{enumerate}
\item[\rm (i)] $u_k\to u_*$ strongly in $H^1(B_{R/4}^+,|z|^a\de\mathbf{x})\,$; 
\vskip3pt
\item[\rm (ii)] $u_k\to u_*$  uniformly on $D_{R/4}\,$;
\vskip3pt
\item[\rm (iii)] $\eps_k^{-2s}\int_{D_{R/4}} W(u_k)\,\de x \to 0\,$. 
 \end{enumerate} 
\end{proposition}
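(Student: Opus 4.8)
The plan is to prove Proposition \ref{epsreg} by combining the clearing-out result (Lemma \ref{clear2}), the monotonicity formula (Lemma \ref{monotform}), and the small-energy compactness under the closeness assumption (Corollary \ref{strongcoro}). First I would fix the constants. Set $b\geq1$ and $T>0$ as given, and let $\boldsymbol{\delta}_W\in(0,1/2]$ be the structural convexity constant from \eqref{quantconvex}. Apply Lemma \ref{clear2} with this $b$, with $T$, and with $\delta=\boldsymbol{\delta}_W$ to obtain the threshold $\boldsymbol{\eta}_{b,T}(\boldsymbol{\delta}_W)>0$. The idea is to choose $\boldsymbol{\theta}_{b,T}$ and ${\bf R}_{b,T}$ so small that the hypothesis \eqref{condliminf} forces the normalized energy-with-error quantity $\boldsymbol{\Theta}^{\eps_k}_{u_k}(f_k,0,R/2)$ to lie below $\boldsymbol{\eta}_{b,T}(\boldsymbol{\delta}_W)$ for infinitely many $k$.

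Concretely, along a subsequence realizing the $\liminf$ in \eqref{condliminf}, one has $R^{2s-n}{\bf E}_{\eps_k}(u_k,B_R^+)<\boldsymbol{\theta}_{b,T}$. By monotonicity (Lemma \ref{monotform} applied at $\mathbf{x}_0=0$ with radius $R/2\leq R-|\mathbf{x}_0|$), $\boldsymbol{\Theta}^{\eps_k}_{u_k}(f_k,0,R/2)\leq\boldsymbol{\Theta}^{\eps_k}_{u_k}(f_k,0,R)$, and the latter is bounded by $2^{n-2s}R^{2s-n}{\bf E}_{\eps_k}(u_k,B_R^+)+\mathbf{c}_{n,q}\|u_k\|_{L^\infty}\int_0^R t^{\theta_q-1}\|f_k\|_{\dot W^{1,q}(D_t)}\,\de t$. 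Using \eqref{bdfw1qdiscR} the integral term is at most $\mathbf{c}_{n,q}\,b\,T\,R^{\theta_q}/\theta_q$ with $\theta_q=1+2s-n/q>0$. Hence, choosing ${\bf R}_{b,T}$ small enough that $\mathbf{c}_{n,q}bT\,{\bf R}_{b,T}^{\theta_q}/\theta_q\leq\tfrac12\boldsymbol{\eta}_{b,T}(\boldsymbol{\delta}_W)$ and $\boldsymbol{\theta}_{b,T}=2^{2s-n-1}\boldsymbol{\eta}_{b,T}(\boldsymbol{\delta}_W)$, we get $\boldsymbol{\Theta}^{\eps_k}_{u_k}(f_k,0,R/2)\leq\boldsymbol{\eta}_{b,T}(\boldsymbol{\delta}_W)$ for those $k$. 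Lemma \ref{clear2} (with radius $R/2$ in place of $R$) then yields $\big||u_k|-1\big|\leq\boldsymbol{\delta}_W$ on $D_{R/4}$; by the continuity remark following that lemma, for each such $k$ either $|u_k-1|\leq\boldsymbol{\delta}_W$ on $D_{R/4}$ or $|u_k+1|\leq\boldsymbol{\delta}_W$ on $D_{R/4}$. Passing to a further subsequence, the sign $\kappa\in\{\pm1\}$ is the same for all $k$.

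Now I apply Corollary \ref{strongcoro} on the half-ball $B^+_{R/4}$: the sequence $\{u_k\}$ satisfies $|u_k-\kappa|\leq\boldsymbol{\delta}_W$ on $D_{R/4}$, solves \eqref{tim1439} there, and has $\sup_k\{{\bf E}_{\eps_k}(u_k,B^+_{R/4})+\|u_k\|_{L^\infty(B^+_{R/4})}\}<\infty$ (the energy bound following from $\sup_k\mathbf{F}_{\eps_k}(u_k,B^+_R)$-type control that is implicit in \eqref{condliminf} along the subsequence together with the $L^\infty$ bound $b$; more precisely one uses that ${\bf E}_{\eps_k}(u_k,B^+_{R/4})\leq{\bf E}_{\eps_k}(u_k,B^+_R)$ is bounded along the chosen subsequence, and $\|u_k\|_{L^\infty}\leq b$). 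Also $\sup_k\|f_k\|_{L^q(D_{R/4})}<\infty$ from \eqref{bdfw1qdiscR} since $L^{q^*}\hookrightarrow L^q$ on the bounded disc. Corollary \ref{strongcoro} then gives, along a further subsequence, $u_*\in H^1(B^+_{R/4},|z|^a\de\mathbf{x})$ with $u_*=\kappa$ on $D_{R/4}$, such that $u_k\to u_*$ strongly in $H^1(B^+_r,|z|^a\de\mathbf{x})$ and $\eps_k^{-2s}\int_{D_r}W(u_k)\,\de x\to0$ for every $r\in(0,R/4)$ — in particular for $r=R/4$ after a trivial adjustment (one can run the whole argument starting from radius slightly larger than $R/4$, e.g. $R/3$, to get conclusions exactly at $R/4$; alternatively enlarge the clearing-out radius). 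This gives (i) and (iii). For (ii), uniform convergence on $D_{R/4}$, I invoke the uniform Hölder bound: by Remark \ref{remclear} if $\eps_k\geq$ (fixed multiple of) the local scale, or more robustly by rescaling and Lemma \ref{prereg}, the $u_k$ are uniformly bounded in $C^{0,\boldsymbol{\beta}_*}(\overline{D_{R/4}})$ once $\big||u_k|-1\big|\leq\boldsymbol{\delta}_W$ there (so $W'(u_k)/\eps_k^{2s}$ need not be controlled pointwise; instead one uses that on this region the equation can be compared — actually the cleanest route is: the strong $H^1$ convergence plus the equation force $\eps_k^{-2s}W'(u_k)\to0$ in $H^{-s}_{\rm loc}$, hence $u_k^\e\to u_*^\e$ locally, and Arzelà–Ascoli with the Hölder bound of Lemma \ref{prereg} applied to $u_k$ (whose boundary data $\eps_k^{-2s}W'(u_k)-f_k$ is bounded in $L^\infty_{\rm loc}(D_{R/4})$ because $|u_k-\kappa|\leq\boldsymbol{\delta}_W$ keeps $W'(u_k)$ small... no: $W'$ near $\kappa$ vanishes only at $\kappa$, so $\eps_k^{-2s}W'(u_k)$ is a priori large) — the correct argument is to use that $\eps_k^{-2s}\int_{D_r}W(u_k)\to0$ together with the convexity/minimality to upgrade $L^1$-smallness of $W(u_k)$ to uniform smallness of $|u_k-\kappa|$, exactly as in the proof of Lemma \ref{clear2} Step 1 applied now on every small sub-disc). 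I would therefore conclude (ii) by covering $D_{R/4}$ by small discs and applying the clearing-out Lemma \ref{clear2} with $\delta\downarrow0$ on each, using that the rescaled energies on these small discs tend to zero (from (i)) — this yields $|u_k-\kappa|\to0$ uniformly on $D_{R/4}$.

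The main obstacle I anticipate is the bookkeeping of radii and the passage from "conclusions for every $r<R/4$" to "conclusions at $r=R/4$", together with establishing the uniform convergence (ii) rigorously: one must carefully propagate the smallness of $\eps_k^{-2s}W(u_k)$ in $L^1$ to a uniform bound on $\big||u_k|-1\big|$, which requires re-running the clearing-out/$\varepsilon$-regularity machinery locally rather than quoting Corollary \ref{strongcoro} directly. A clean fix is to carry out the entire argument with $R/4$ replaced throughout by, say, $R/3$ for the clearing-out step and $R/4$ for the final conclusions, so that $D_{R/4}\subset\subset D_{R/3}$ and one always has a little room; then (ii) follows from Lemma \ref{clear2} applied at every point of $\overline{D_{R/4}}$ at scale comparable to $R$, using that $\boldsymbol{\Theta}^{\eps_k}_{u_k}(f_k,\mathbf{x}_0,r)\to0$ as $k\to\infty$ for each fixed small $r$ (a consequence of (i) and Lemma \ref{eas}-type decay for the limit), letting $\delta\to0$. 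All other steps are routine given the tools assembled in this section.
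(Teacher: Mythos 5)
Your proposal follows the paper's proof essentially line by line: choose $\boldsymbol{\theta}_{b,T}$ and $\mathbf{R}_{b,T}$ so that monotonicity plus the $W^{1,q}$-bound on $f_k$ forces the density $\boldsymbol{\Theta}^{\eps_k}_{u_k}$ below $\boldsymbol{\eta}_{b,T}(\boldsymbol{\delta}_W)$, invoke Lemma~\ref{clear2} to pin $|u_k|$ near $1$, invoke Corollary~\ref{strongcoro} for the strong $H^1$ convergence and vanishing of the rescaled potential, and then cover $D_{R/4}$ by small discs where Lemma~\ref{eas} gives density decay for $u_*$ so that Lemma~\ref{clear2} with $\delta\downarrow 0$ yields uniform convergence. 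Two small points. First, the cleaner bookkeeping (and the one the paper uses) is to apply Lemma~\ref{clear2} at radius $R$, giving $\big||u_k|-1\big|\leq\boldsymbol{\delta}_W$ on $D_{R/2}$, and then Corollary~\ref{strongcoro} on $B^+_{R/2}$ already delivers convergence on $B^+_{3R/8}\supset B^+_{R/4}$ with no endpoint issue; also the factor $2^{n-2s}$ you insert when estimating $\boldsymbol{\Theta}^{\eps_k}_{u_k}(f_k,0,R)$ is spurious (it belongs to the off-center estimate in Step~2 of Lemma~\ref{clear2}, not here) — harmless since it over-bounds, but then $\boldsymbol{\theta}_{b,T}=\tfrac12\boldsymbol{\eta}_{b,T}(\boldsymbol{\delta}_W)$ suffices. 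Second, your route to (ii) wanders through a few dead ends before landing on the correct mechanism; to make it airtight you should state explicitly that Lemma~\ref{eas} gives $r^{2s-n}\mathbf{E}\big(u_*,B^+_r(\bar{\mathbf{x}})\big)\to 0$ as $r\downarrow 0$ uniformly for $\bar{\mathbf{x}}\in D_{R/4}\times\{0\}$ — this is the input that lets you run the covering-plus-clearing-out argument for every $\delta$, rather than Hölder/Arzelà--Ascoli considerations, which (as you correctly observe mid-ramble) do not control $\eps_k^{-2s}W'(u_k)$ pointwise.
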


\begin{proof} 
Let $\boldsymbol{\theta}_{b,T}:=\frac{1}{2}{\boldsymbol{\eta}}_{b,T}(\boldsymbol{\delta}_W)$ where the constant $\boldsymbol{\delta}_W$ is given by \eqref{quantconvex}, and ${\boldsymbol{\eta}}_{b,T}$ given by Lemma \ref{clear2}. Then we choose 
$${\bf R}_{b,T}:=\min\left\{1,\left(\frac{\theta_q{\boldsymbol{\eta}}_{b,T}(\boldsymbol{\delta}_W)}{2b\,{\bf c}_{n,q}T}\right)^{1/\theta_q}\right\}\,.$$
If $R\leq {\bf R}_{b,T}$, then the a priori bound \eqref{bdfw1qdiscR} yields 
$${\bf c}_{n,q}\|u_k\|_{L^\infty(B_R^+)}\int_{0}^{R}t^{\theta_q-1}\|f_k\|_{\dot W^{1,q}(D_t(x_j))} \,\de t\leq\frac{1}{2}{\boldsymbol{\eta}}_{b,T}(\boldsymbol{\delta}_W) \,,$$
so that 
\begin{equation}\label{1420bientTD}
 \liminf_{k\to\infty}\boldsymbol{\Theta}^{\eps_k}_{u_k}(f_k,0,R) <  {\boldsymbol{\eta}}_{b,T}(\boldsymbol{\delta}_W)\,.
 \end{equation}
Select a (not relabeled) subsequence which achieves the $\liminf$ in \eqref{1420bientTD}. 
By the uniform energy bound, we can find a (not relabeled) subsequence such that $u_k\rightharpoonup u_*$ weakly in $H^1(B_R^+,|z|^a\de\mathbf{x})$.  From the compact embedding $H^1(B_R^+,|z|^a\de\mathbf{x})\hookrightarrow L^1(B_R^+)$, we deduce that $|u_*|\leq b$ in $B_R^+$. Since $ \boldsymbol{\Theta}^{\eps_k}_{u_k}(f_k,0,R) \leq  \boldsymbol{\theta}_{b,T}$ for $k$ sufficiently large, Lemma~\ref{clear2} shows that $\big||u_k|-1\big|\leq \boldsymbol{\delta}_W$ on $D_{R/2}$ for such $k$'s. Extracting another subsequence if necessary, we can assume without loss of generality that  $|u_k-1|\leq \boldsymbol{\delta}_W$ on the disc $D_{R/2}$. Then Corollary~\ref{strongcoro} yields  $u_*=1$ on $D_{R/2}$, $u_k\to u_*$ strongly in $H^1(B^+_{3R/8},|z|^a\de\mathbf{x})$,  and 
\begin{equation}\label{prevanishpot}
\frac{1}{\eps_k^{2s}}\int_{D_{3R/8}}W(u_k)\,\de x\to 0\,.
\end{equation}
Now fix $\delta\in (0, \boldsymbol{\delta}_W)$ arbitrary. By Lemma \ref{eas}, we can find a radius $r_\delta\leq R/8$ such that 
$${\bf E}\big(u_*,B^+_{r_\delta}(\bar{\mathbf{x}})\big) \leq \frac{{\boldsymbol{\eta}}_{b,T}(\delta)}{3}\,r_\delta^{n-2s}$$
for every $\bar{\mathbf{x}}\in D_{R/4}\times\{0\}$. Then consider a finite covering of $\overline{D}_{R/4}\times\{0\}$ by balls of radius $r_\delta/2$ centered at points of $\overline{D}_{R/4}\times\{0\}$. We denote by $\mathbf{x}_1=(x_1,0),\ldots, \mathbf{x}_L=(x_L,0)$ the centers of these balls. 
From the strong convergence of $\{u_k\}_{k\in\mathbb{N}}$ and \eqref{prevanishpot}, we deduce that for $k$ large enough, 
$$\frac{1}{r_\delta^{n-2s}} \mathbf{E}_{\varepsilon_k}(u_k,B^+_{r_\delta}(\mathbf{x}_j))\leq \frac{{\boldsymbol{\eta}}_{b,T}(\delta)}{2}\quad\forall j\in\{1,\ldots,L\}\,.$$
On the other hand, 
$${\bf c}_{n,q}\|u_k\|_{L^\infty(B_R^+)}\int_{0}^{r_\delta}t^{\theta_q-1}\|f_k\|_{\dot W^{1,q}(D_t(x_j))} \,\de t\leq \frac{b\,{\bf c}_{n,q}}{\theta_q}Tr_\delta^{\theta_q} \,.$$
Hence, choosing a smaller value for $r_\delta$ if necessary, we have 
$$  \boldsymbol{\Theta}^{\eps_k}_{u_k}(f_k,x_j,r_\delta) \leq {\boldsymbol{\eta}}_{b,T}(\delta)\quad\forall j\in\{1,\ldots,L\}\,.$$
Then Lemma \ref{clear2} shows that $|u_k-1|\leq \delta$ in $D_{r_{\delta}/2}(x_j)$ for every $j=1,\ldots,L$. 
Hence $|u_k-1|\leq \delta$ in $D_{R/4}$ whenever $k$ is large enough. 
\end{proof}

We now improve the previous convergence result under stronger assumptions on the sequence $\{f_k\}_{k\in\mathbb{N}}$. 

\begin{proposition}\label{imprcv}
In addition to the conclusions of Proposition \ref{epsreg}, 
\begin{enumerate}
\item[\rm (i)] if $\sup_k\|f_k\|_{L^\infty(D_R)}<\infty$, then $u_k\to u_*$ in $C^{0,\alpha}(D_{R/16})$ for every $\alpha\in(0,\boldsymbol{\beta}_*)$;
\vskip3pt

\item[\rm (ii)] if $\sup_k\|f_k\|_{C^{0,1}(D_R)}<\infty$, then $u_k\to u_*$ in $C^{1,\alpha}(D_{R/32})$ for every $\alpha\in(0,\boldsymbol{\beta}_*)$;
\end{enumerate}
where $\boldsymbol{\beta}_*$  is given by Lemma \ref{prereg}
\end{proposition}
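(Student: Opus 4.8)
The plan is to upgrade the $C^0$ (resp. $C^{0,\alpha}$) convergence already furnished by Proposition~\ref{epsreg} to $C^{0,\alpha}$ (resp. $C^{1,\alpha}$) convergence on slightly smaller discs, by invoking the a priori Schauder-type estimates behind Lemma~\ref{prereg} and Theorem~\ref{regint} and combining them with the vanishing of the potential energy. First I would record the key input: under the hypotheses of Proposition~\ref{epsreg}, on the disc $D_{R/4}$ we have $|u_k-\kappa|\leq\boldsymbol{\delta}_W$ for $k$ large, hence $u_k$ is (for such $k$) a weak solution of the \emph{semilinear} boundary reaction problem
\begin{equation*}
\begin{cases}
{\rm div}(z^a\nabla u_k)=0 & \text{in $B_{R/4}^+$}\,,\\[4pt]
d_s\boldsymbol{\partial}_z^{(2s)}u_k=\dfrac{1}{\varepsilon_k^{2s}}W'(u_k)-f_k & \text{on $D_{R/4}$}\,,
\end{cases}
\end{equation*}
and, crucially, $\varepsilon_k^{-2s}\big(W'(u_k)-W'(\kappa)\big)=\varepsilon_k^{-2s}W''(\xi_k)(u_k-\kappa)$ with $W'(\kappa)=0$, where $\varepsilon_k^{-2s}\int_{D_{R/4}}|u_k-\kappa|\,\de x\leq C\varepsilon_k^{-2s}\int_{D_{R/4}}W(u_k)\,\de x\to 0$ by (iii). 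Together with $\varepsilon_k^{-2s}W(u_k)\to 0$ in $L^1$ this is what will make the (singular) reaction term innocuous as $k\to\infty$.

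For item (i), assuming $\sup_k\|f_k\|_{L^\infty}<\infty$: the plan is to write the boundary datum as $\mathbf{g}_k:=d_s^{-1}\big(\varepsilon_k^{-2s}W'(u_k)-f_k\big)$ and apply the linear estimate \eqref{estischauderlin} of Lemma~\ref{prereg} on a scale-invariant chain of shrinking half-balls. The subtlety is that $\|\mathbf{g}_k\|_{L^\infty}$ is \emph{not} uniformly bounded (it blows up like $\varepsilon_k^{-2s}$ a priori), so one cannot simply quote \eqref{estischauderlin}. Instead I would argue locally: cover $\overline D_{R/8}$ by small discs $D_{\rho}(x_j)$, $\rho$ to be chosen; on each of them Lemma~\ref{clear2} combined with the energy bound and the monotonicity formula gives $|u_k-\kappa|\leq\delta$ on $D_{\rho/2}(x_j)$ for \emph{any} prescribed $\delta>0$ and $k$ large; then a De Giorgi--Nash--Moser / comparison argument, or simply iterating Lemma~\ref{prereg} as in the proof of Theorem~\ref{regint} with right-hand side $\mathbf{f}_h$ built from second difference quotients as in \eqref{defwh}, shows that the H\"older seminorm $[u_k]_{C^{0,\alpha}(D_{\rho/4}(x_j))}$ is controlled by $C\big(\varepsilon_k^{-2s}\|u_k-\kappa\|_{L^\infty(D_{\rho/2}(x_j))}+\|f_k\|_{L^\infty}+\|u_k\|_{L^\infty(B^+)}\big)$, and the first term tends to $0$. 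Passing through the compact embedding $C^{0,\boldsymbol{\beta}_*}\hookrightarrow C^{0,\alpha}$ for $\alpha<\boldsymbol{\beta}_*$ and using the already-established $C^0$ convergence of Proposition~\ref{epsreg}(ii) identifies the limit as $u_*$, giving $C^{0,\alpha}(D_{R/16})$ convergence.

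For item (ii), assuming $\sup_k\|f_k\|_{C^{0,1}}<\infty$: I would differentiate the equation in the tangential directions exactly as in the proof of Theorem~\ref{regint}. Setting $w=\partial_j u_k$ (or working with difference quotients to stay rigorous), $w$ solves ${\rm div}(z^a\nabla w)=0$ in $B^+$ with $d_s\boldsymbol{\partial}_z^{(2s)}w=\varepsilon_k^{-2s}W''(u_k)\partial_j u_k-\partial_j f_k$ on $D$; the delicate point is that the coefficient $\varepsilon_k^{-2s}W''(u_k)$ is large, but since $W''\geq c>0$ near $\kappa$ this term has the \emph{good sign} for an energy/Caccioppoli estimate (this is exactly the convexity used in Lemma~\ref{minimality}), so $\|\nabla\partial_j u_k\|_{L^2(B^+_{R/32},|z|^a)}$ stays bounded and, moreover, one recovers (via the already-known vanishing of $\varepsilon_k^{-2s}|u_k-\kappa|$ and $\varepsilon_k^{-2s}W(u_k)$) that $\varepsilon_k^{-2s}W''(u_k)\partial_j u_k$ is actually small in $L^\infty$; then Lemma~\ref{prereg} (the $C^{0,\sigma}$-to-$z^a\partial_z\in C^{0,\gamma}$ part) applied to $w$, together with the Stein finite-difference argument reconstructing $C^{1,\alpha}$ from second differences, yields uniform $C^{1,\alpha}$ bounds with $\alpha<\boldsymbol{\beta}_*$, hence convergence $u_k\to u_*$ in $C^{1,\alpha}(D_{R/32})$ after passing to the compactly embedded space.

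The main obstacle I anticipate is controlling the singular reaction term $\varepsilon_k^{-2s}W'(u_k)$: one must carefully exploit the two facts that (a) $|u_k-\kappa|\leq\delta$ with $\delta$ as small as we like on the relevant disc, so $W'(u_k)=W''(\xi_k)(u_k-\kappa)$ with bounded $W''(\xi_k)$, and (b) $\varepsilon_k^{-2s}\|u_k-\kappa\|_{L^\infty}\to 0$ — the latter being itself a consequence that needs to be extracted from the already-established $L^1$-vanishing of $\varepsilon_k^{-2s}W(u_k)$ by an interpolation or a Moser-iteration-type bound, rather than being immediate. Once $\varepsilon_k^{-2s}\|W'(u_k)\|_{L^\infty}\to 0$ is in hand, the rest is a fairly standard bootstrap along the lines of Theorem~\ref{regint}, with all constants made $k$-independent by scaling and the covering argument.
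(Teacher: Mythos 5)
Your overall architecture is close to the paper's — reduce to a quantitative decay estimate for $u_k-\kappa$, then run Schauder via finite difference quotients as in Theorem~\ref{regint} — but you explicitly flag the key step as an obstacle and then do not actually supply it, and the route you gesture at would not close the gap. The paper's proof of item~(i) hinges entirely on establishing the \emph{pointwise} estimate $\|u_k-\kappa\|_{L^\infty(D_{R/8})}\leq C\,\eps_k^{2s}$ (note: only boundedness of $\eps_k^{-2s}\|u_k-\kappa\|_{L^\infty}$, not vanishing — you over-claim that it tends to zero, which is neither proved nor needed). This $L^\infty$ bound is \emph{not} obtained by interpolation from the $L^1$-vanishing of $\eps_k^{-2s}W(u_k)$; that would be circular, since going from $\|u_k-\kappa\|_{L^2}=o(\eps_k^{s})$ to a quantitative $L^\infty$ bound would already require the higher regularity one is trying to prove. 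Instead the paper applies a pointwise comparison estimate (\cite[Lemma~3.5]{TVZ}) to the auxiliary subsolution $\upsilon_\eta:=\psi_\eta(u_k-\kappa)$ with $\psi_\eta(t)=\sqrt{t^2+\eta^2}-\eta$, using the quantitative convexity \eqref{quantconvex} to get the differential inequality $d_s\boldsymbol{\partial}^{(2s)}_z\upsilon_\eta\geq\frac{\boldsymbol{\kappa}_W}{\eps_k^{2s}}\upsilon_\eta-\|f_k\|_{L^\infty}$; the TVZ lemma then yields the $\eps_k^{2s}$-decay directly, and letting $\eta\to0$ gives the claim. Your suggested ``Moser iteration'' or ``interpolation'' is a placeholder for this genuinely different and specific maximum-principle step.

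For item~(ii) you propose a Caccioppoli estimate on $\partial_j u_k$ exploiting the good sign of $\eps_k^{-2s}W''(u_k)$; this at best yields an $L^2(|z|^a)$ bound on $\nabla\partial_j u_k$ and leaves the question of upgrading to $C^{1,\alpha}$ open, and your assertion that $\eps_k^{-2s}W''(u_k)\partial_j u_k$ is ``small in $L^\infty$'' is unsupported (it would again require a quantitative $L^\infty$ bound on $\partial_j u_k$). The paper instead applies the same $\psi_\eta$-comparison device, not to $\partial_j u_k$, but to the normalized difference quotient $w_h$ at scale $|h|^{\boldsymbol{\beta}_*}$ (uniformly bounded in $L^\infty$ by Step~1), obtaining $\|w_h\|_{L^\infty(D_{7/16})}\leq C\eps_k^{2s}$ independently of $h$ and $\eps_k$; this feeds into Lemma~\ref{prereg} to give a uniform $C^{0,\boldsymbol{\beta}_*}$ bound on $w_h$, and the Stein second-difference iteration from Theorem~\ref{regint} then produces the uniform $C^{1,\boldsymbol{\beta}_*}$ bound on $u_k$. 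In short: the gap in your argument is the quantitative $L^\infty$ bound, and the missing idea is the maximum-principle/comparison estimate exploiting the convexity of $W$ near $\{\pm1\}$ rather than any interpolation or energy mechanism.
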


\begin{proof}
{\it Step 1.} We start proving item (i). Assume that $u_*=1$ on $D_{R/4}$. By Proposition~\ref{epsreg}, we have for $k$ large enough, $|u_k-1|\leq \boldsymbol{\delta}_W$ on $D_{R/4}$. We shall prove that 
\begin{equation}\label{speedu}
\|u_k-1\|_{L^\infty(D_{R/8})}\leq C\eps_k^{2s} \,,
\end{equation}
for some constant $C$ independent of $\eps_k$. Note that the conclusion follows from this estimate. Indeed, if holds \eqref{speedu}, then the $C^2$-assumption on $W$ implies that 
$$\big\|W^\prime(u_k)\big\|_{L^\infty(D_{R/8})}\leq C\eps_k^{2s}\,, $$
and we can thus apply Lemma \ref{prereg} to infer that $u_k$ is bounded in $C^{0,\boldsymbol{\beta}_*}(B^+_{R/16})$. 

To prove \eqref{speedu} we proceed as follows. Fix an arbitrary parameter $\eta\in(0,1)$, and consider the nonnegative smooth convex function
$$\psi_\eta(t):=\sqrt{t^2+\eta^2}-\eta\,. $$  
Set $\upsilon_\eta:=\psi_\eta(u_k-1)\in H^1(B^+_{R/4},|z|^a\de{\bf x})\cap L^\infty(B^+_{R/4})$, and we observe that $\upsilon_\eta$ satisfies in the weak sense
$$ \begin{cases}
{\rm div}(z^{a}\nabla \upsilon_\eta)= z^a\psi^{\prime\prime}(u_k-1)|\nabla u_k|^2 & \text{in $B_{R/4}^+$}\,,\\[5pt]
\displaystyle d_s\boldsymbol{\partial}_z^{(2s)} \upsilon_\eta=\frac{\psi^{\prime}(u_k-1)}{\varepsilon_k^{2s}}W^\prime(u_k) -\psi^{\prime}(u_k-1)f_k & \text{on $D_{R/4}$}\,.
\end{cases}$$
On the other hand, \eqref{quantconvex} implies that 
$$(t-1)W^\prime(t)\geq \boldsymbol{\kappa}_W (t-1)^2\quad \text{for $|t-1|\leq \boldsymbol{\delta}_W$}\,,$$
where $\boldsymbol{\kappa}_W:=\frac{1}{2}\min\big\{W^{\prime\prime}(1),W^{\prime\prime}(-1)\big\}>0$. Noticing that $t\psi^{\prime}(t)\geq \psi(t)$ for every $t\in\R$, we thus have 
$$\psi^{\prime}(t-1)W^\prime(t)=\frac{(t-1)\psi^{\prime}(t-1)}{(t-1)^2}\,(t-1)W^{\prime}(t)\geq\boldsymbol{\kappa}_W\psi(t-1) \quad \text{for $|t-1|\leq \boldsymbol{\delta}_W$}\,.$$
Therefore $\upsilon_\eta$ satisfies 
$$ \begin{cases}
{\rm div}(z^{a}\nabla \upsilon_\eta)\geq 0 & \text{in $B_{R/4}^+$}\,,\\[5pt]
\displaystyle d_s\boldsymbol{\partial}_z^{(2s)} \upsilon_\eta\geq \frac{\boldsymbol{\kappa}_W}{\varepsilon_k^{2s}}\,\upsilon_\eta -\|f_k\|_{L^\infty(D_R)} & \text{on $D_{R/4}$}\,.
\end{cases}$$
By \cite[Lemma 3.5]{TVZ} it implies 
$$\|\upsilon_\eta\|_{L^\infty(D_{R/8})}\leq \frac{(1+\|f_k\|_{L^\infty(D_R)})\eps_k^{2s}}{\boldsymbol{\kappa}_W} \sqrt{(1+b)^2+\eta^2}\,. $$
Letting $\eta\to 0$, we deduce that \eqref{speedu} holds with $C=\boldsymbol{\kappa}_W^{-1}(1+b)(1+\sup_k\|f_k\|_{L^\infty(D_R)})$. 
\vskip3pt

\noindent{\it Step 2.} To prove the $C^{1,\alpha}$-convergence, we shall rely on the regularity argument developed in the proof of Theorem \ref{regint} (that we partially reproduce for clarity reason). To simplify the notation, we assume here (without loss of generality) that $R=32$. Fix an arbitrary point $x_0\in \overline D_{1}$, and for $\mathbf{x}=(x,z)\in B^+_{1}\cup D_{1}$ consider the translated function  
$\bar u_k(\mathbf{x}):=u_k(x+x_0,z)$.  For  $h\in D_{1/8}$, $h\not=0$, we set for $\mathbf{x}\in B^+_{7/8}\cup D_{7/8}$, 
$$w_h(\mathbf{x}):=\frac{\bar u_k(x+h,z)-\bar u_k(\mathbf{x})}{|h|^{\boldsymbol{\beta}_*}} \,.$$
By Step 1, we have $\|w_h\|_{L^\infty}(B^+_{7/8})\leq C$ for a constant $C$ independent of $h$ and $\eps_k$. 
Given $\eta\in(0,1)$, we can argue as in Step 1 to infer that the function  $\zeta_\eta:=\psi_\eta(w_h)\in H^1(B^+_{7/8},|z|^a\de{\bf x})\cap L^\infty(B^+_{7/8})$ satisfies 
$$ \begin{cases}
{\rm div}(z^{a}\nabla \zeta_\eta)\geq 0 & \text{in $B_{7/8}^+$}\,,\\[5pt]
\displaystyle d_s\boldsymbol{\partial}_z^{(2s)} \zeta_\eta\geq \frac{\boldsymbol{\kappa}_W}{\varepsilon_k^{2s}}\,\zeta_\eta -\|f_k\|_{C^{0,\boldsymbol{\beta}_*}(D_1)} & \text{on $D_{7/8}$}\,.
\end{cases}$$
Then \cite[Lemma 3.5]{TVZ} yields $\|w_h\|_{L^\infty(D_{7/16})}\leq C\eps_k^{2s}$ once we let $\eta\to 0$, for a constant $C$ independent of $h$ and $\eps_k$. From the equation satisfied by $w_h$, it implies through Lemma~\ref{prereg} that $w_h$ is bounded in $C^{0,\boldsymbol{\beta}_*}(B^+_{7/32})$ independently of $h$ and $\eps_k$.  As a consequence, 
$$\sup_{x\in \overline{D}_{1/16}}  \big|\bar u_k(x+h,z)-2\bar u_k(x,z)+\bar u_k(x-h,z)\big|\leq C|h|^{2{\boldsymbol{\beta}_*}}$$
for every $h\in \overline D_{1/16}$, $z\in[0,1/16]$, and a constant $C$ independent of $h$ and $\eps_k$. At this stage, we can reproduce the iteration scheme of Theorem  \ref{regint} by means of the above argument (relying on  \cite[Lemma 3.5]{TVZ}) to conclude that $\nabla_x u_k$ is bounded in $C^{0,\boldsymbol{\beta}_*}$ in a (uniform in size) neighborhood of $(x_0,0)$. 
\end{proof}

Note that (for later use) the proof above leads to the following estimate on the potential for a right hand side $f$ which is bounded. 

\begin{lemma}\label{potbddfint}
Let $R>0$, $f\in L^{\infty}(D_R)$, and let $u_\varepsilon\in H^1(B^+_R,|z|^a\de\mathbf{x})\cap L^\infty(D_R)$ be a weak solution of \eqref{pfff}.  
If $\big||u_\varepsilon| - 1\big|\leq \boldsymbol{\delta}_W$ on $D_{R}$, then 
$$W(u_\eps)\leq C_W(1+\|f\|_{L^\infty(D_R)})^2(1+\|u_\eps\|_{L^\infty(B_R^+)})^2 \frac{\eps^{4s}}{R^{4s}} \quad\text{on $D_{R/2}$}\,,$$
and 
$$\big|W^\prime(u_\eps)\big|\leq C_W(1+\|f\|_{L^\infty(D_R)})(1+\|u_\eps\|_{L^\infty(B_R^+)}) \frac{\eps^{2s}}{R^{2s}} \quad\text{on $D_{R/2}$}\,,$$
for a constant $C_W>0$ depending only on  the potential $W$. 
\end{lemma}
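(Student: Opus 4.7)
The argument closely follows Step 1 of the proof of Proposition \ref{imprcv}; the only additional ingredient is tracking the explicit dependence on $R$ through a standard rescaling.

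First I would reduce to a single branch. By Theorem \ref{regint}, $u_\eps$ is continuous on $D_R$, and since $\boldsymbol{\delta}_W \leq 1/2$ the intervals $I_{+1}$ and $I_{-1}$ are disjoint. Connectedness of $D_R$ therefore forces $u_\eps$ to take values in a single $I_\kappa$, $\kappa\in\{\pm 1\}$, and without loss of generality we may assume $|u_\eps - 1| \leq \boldsymbol{\delta}_W$ on $D_R$.

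The core of the proof is the sharp pointwise estimate
\begin{equation*}
\|u_\eps - 1\|_{L^\infty(D_{R/2})} \leq C_W \bigl(1+\|f\|_{L^\infty(D_R)}\bigr)\bigl(1+\|u_\eps\|_{L^\infty(B_R^+)}\bigr)\,\frac{\eps^{2s}}{R^{2s}}.
\end{equation*}
I would mimic the argument of Step 1 of Proposition \ref{imprcv}: introduce the smoothed modulus $\psi_\eta(t) := \sqrt{t^2+\eta^2}-\eta$, $\eta\in(0,1)$, and set $\upsilon_\eta := \psi_\eta(u_\eps - 1)$. Convexity of $\psi_\eta$ gives $\mathrm{div}(z^{a}\nabla \upsilon_\eta)\geq 0$ in $B_R^+$. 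On $D_R$, combining the elementary inequality $\psi_\eta(t) \leq t\,\psi_\eta'(t)$ with the quantitative convexity bound $(t-1)W'(t)\geq \boldsymbol{\kappa}_W (t-1)^2$, valid on $I_1$ by \eqref{quantconvex}, yields the boundary subsolution inequality
\begin{equation*}
d_s\,\boldsymbol{\partial}_z^{(2s)} \upsilon_\eta \geq \frac{\boldsymbol{\kappa}_W}{\eps^{2s}}\,\upsilon_\eta - \|f\|_{L^\infty(D_R)} \quad \text{on } D_R.
\end{equation*}
Rescale via $\tilde u(\mathbf{x}) := u_\eps(R\mathbf{x})$ to transfer the problem to $B_1^+$, with effective parameters $\tilde\eps = \eps/R$ and $\tilde f(x) := R^{2s} f(Rx)$ (so that $\|\tilde f\|_{L^\infty(D_1)} = R^{2s}\|f\|_{L^\infty(D_R)}$). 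Apply the maximum-principle type $L^\infty$ estimate of \cite[Lemma 3.5]{TVZ} to $\psi_\eta(\tilde u - 1)$, and finally let $\eta\downarrow 0$; undoing the rescaling produces the displayed bound.

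The two announced estimates then follow from a Taylor expansion at $t = 1$. Since $W(1)=W'(1)=0$ and $W\in C^2(\R)$, one has
\begin{equation*}
W(t)\leq \tfrac{M_W}{2}(t-1)^2, \qquad |W'(t)| \leq M_W\, |t-1| \quad \text{for } t\in I_1,
\end{equation*}
with $M_W := \sup_{I_1}|W''|$. Substituting the $L^\infty$ bound on $u_\eps - 1$ into these two pointwise inequalities gives the conclusion, after adjusting the constant $C_W$ to absorb $M_W$ and $\boldsymbol{\kappa}_W^{-1}$.

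The main technical point is the correct scaling behaviour of \cite[Lemma 3.5]{TVZ} under the map $\mathbf{x}\mapsto R\mathbf{x}$: it is exactly this that converts the bare $\eps^{2s}$ in the proof of Proposition \ref{imprcv} (where $R$ was fixed) into the factor $\eps^{2s}/R^{2s}$, and squaring produces the $\eps^{4s}/R^{4s}$ appearing in the estimate for $W(u_\eps)$. No other non-routine step is required, since the reduction to a single branch and the Taylor step are immediate.
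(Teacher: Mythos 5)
Your proposal follows essentially the same route as the paper: rescale to $R=1$, reduce to one branch by continuity and connectedness, run the smoothed-modulus subsolution argument of Proposition~\ref{imprcv}, Step~1 (with the boundary inequality coming from quantitative convexity of $W$), apply \cite[Lemma~3.5]{TVZ}, let $\eta\downarrow 0$, and finish with a Taylor expansion of $W$ at $\pm1$. The only slip is the citation for continuity: since here $f$ is merely $L^\infty$, Theorem~\ref{regint} does not apply directly and you should instead invoke Lemma~\ref{prereg} (or Remark~\ref{remclear}), whose hypotheses are satisfied because $W'(u_\eps)\in L^\infty$.
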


\begin{proof}
By rescaling equation \eqref{pfff}, it is enough to consider the case $R=1$. Then, observe that $u_\eps \in C^0(B_1^+\cup D_1)$ by Remark \ref{remclear}. Hence, either 
$|u_\varepsilon - 1 |\leq \boldsymbol{\delta}_W$ or $|u_\varepsilon+1|\leq \boldsymbol{\delta}_W$ on the disc $D_{1}$. Without loss of generality, we may assume that the first case occurs. Then the proof of Proposition \ref{imprcv} (Step 1) shows that 
$$|u_\eps-1|\leq  \frac{1}{\boldsymbol{\kappa}_W} (1+\|f\|_{L^\infty(D_1)})(1+\|u_\eps\|_{L^\infty(B_1^+)}) \eps^{2s}\quad\text{on $D_{1/2}$} \,.$$
Expanding $W$ near $t=1$ yields the announced result. 
\end{proof}

  
\subsection{Proof of Theorem \ref{asymptneum}}


We are now ready to give the proof of Theorem \ref{asymptneum}.

\begin{proof}  
{\it Step 1: Compactness.} Let $b\geq 1$ such that  $b\geq \sup_k\|u_k\|_{L^\infty(G)}$. By the assumptions on $\{u_k\}_{k\in\mathbb{N}}$, we have 
$${\sup_k}\,\mathbf{E}_{\varepsilon_k}(u_k,G)\leq {\sup_k}\big(\mathbf{F}_{\varepsilon_k}(u_k,G)+ b\|f_k\|_{L^1(\partial^0G)}\big)<\infty\,.$$
Hence there is a (not relabeled) subsequence  such that 
$u_k\rightharpoonup u_*$ weakly in $H^1(G,|z|^a\de \mathbf{x})$. By the compact embedding  $H^1(G,|z|^a\de\mathbf{x})\hookrightarrow L^1(G)$, we also have $u_k\to u_*$ strongly in $L^1(G)$. 
Since $|u_k|\leq b$, it implies that $|u_*|\leq b$ in $G$, and  $u_k\to u_*$ strongly in $L^2(G,|z|^a\de\mathbf{x})$. Moreover, 
by equation \eqref{equinG} and standard elliptic regularity, $u_k\to u_*$ in $ C^\ell_{\rm loc}(G)$ for all $\ell\in\mathbb{N}$, so that ${\rm div}\big(z^{a}\nabla u_*)= 0$  in $G$.  On the other hand, the uniform energy bound implies $|u_k|\to 1$ in $L^1(\partial^0G)$, and we infer from the continuity of the trace operator that $|u_*|=1$  on $\partial^0G$. 
\vskip3pt

We now wish to analyse the asymptotic behavior of $u_k$ near $\partial^0G$. For this we consider the measures  
$$\mu_k:= \frac{d_s}{2}z^{a}|\nabla u_k|^2\mathscr{L}^{n+1}\LL G +\frac{1}{\varepsilon^{2s}_k}W(u_k)\mathscr{H}^n \LL\partial^0G\,.$$ 
Since $\sup_k\mu_k(G\cup\partial^0G)<\infty$, we can find a further subsequence such that 
\begin{equation}\label{tim2253}
\mu_k\rightharpoonup \mu:=\frac{d_s}{2}z^{a}|\nabla u_*|^2\mathscr{L}^{n+1}\LL G+\mu_{\rm sing}\,,
\end{equation}
weakly* as Radon measures on $G\cup\partial^0G$ for some finite nonnegative measure $\mu_{\rm sing}$. 
Notice that the local smooth convergence of $u_k$ to $u_*$ in $G$ implies that 
\begin{equation}\label{tim1751}
{\rm spt}(\mu_{\rm sing})\subset \partial^0G
\end{equation}   
(here ${\rm spt}(\mu_{\rm sing})$ denotes the relative support of $\mu_{\rm sing}$ in $G\cup\partial^0G$). 

Since $\partial^0G$ is a Lipschitz domain of $\mathbb{R}^n$, there exits a constant $C$ depending only on $\partial^0G$ such that $\|f_k\|_{\dot W^{1,q}(\partial^0G)}\leq C \|f_k\|_{W^{1,q}(\partial^0G)}$. 
Then we set 
$$T:=\sup_k \bigg((2\eps_k)^{2s}\|f_k\|_{L^\infty(\partial^0G)}+ \|f_k\|_{\dot W^{1,q}(\partial^0G)}\bigg)<\infty\,.$$
Noticing that  
$$\int_\rho^r t^{\theta_q-1}\|f_k\|_{\dot W^{1,q}(D_t(x))}\,\de t\leq \frac{T}{\theta_q}(r^{\theta_q}-\rho^{\theta_q})\,, $$
we can apply Lemma \ref{monotform} to deduce that  
\begin{equation}\label{tim2332}
\rho^{2s-n}\mu_k(B_\rho(\mathbf{x}))+\frac{b\,\mathbf{c}_{n,q}}{\theta_q}T\rho^{\theta_q}\leq r^{2s-n}\mu_k(B_r(\mathbf{x}))+\frac{b\,\mathbf{c}_{n,q}}{\theta_q}Tr^{\theta_q}
\end{equation}
for every $\mathbf{x}\in\partial^0G$ and every $0<\rho<r<\min\big(1,{\rm dist}(\mathbf{x},\partial^+G)\big)$. Therefore,
\begin{equation}\label{tim1213}
\rho^{2s-n}\mu(B_\rho(\mathbf{x}))+\frac{b\,\mathbf{c}_{n,q}}{\theta_q}T\rho^{\theta_q}\leq r^{2s-n}\mu(B_r(\mathbf{x}))+\frac{b\,\mathbf{c}_{n,q}}{\theta_q}Tr^{\theta_q}
\end{equation}
for every $\mathbf{x}\in\partial^0G$ and every $0<\rho<r<\min\big(1,{\rm dist}(\mathbf{x},\partial^+G)\big)$. As a consequence, the $(n-2s)$-dimensional density 
\begin{equation}\label{existdens}
\Theta^{n-2s}(\mu,\mathbf{x}):=\lim_{r\downarrow 0}\, \frac{\mu(B_r(\mathbf{x}))}{{\omega}_{n-2s}r^{n-2s}}
\end{equation}
exists\footnote{Here we have set $\displaystyle\omega_{n-2s}:=\frac{\pi^{\frac{n-2s}{2}}}{\Gamma(1+\frac{n-2s}{2})}$.} 
and is finite at every point $\mathbf{x}\in\partial^0G$. 
Note that \eqref{tim2253} and \eqref{tim2332} yield
\begin{equation}\label{upbddensity}
\Theta^{n-2s}(\mu,\mathbf{x})\leq \frac{C}{\big({\rm dist}(\mathbf{x},\partial^+G)\big)^{n-2s}} \,\sup_{k} \mathbf{E}_{\eps_k}(u_k,G)+\frac{b\,\mathbf{c}_{n,q}}{\theta_q}T({\rm diam}\,\partial^0 G)^{\theta_q}<\infty 
\end{equation}
for all $\mathbf{x}\in\partial^0G$. On the other hand, by the smooth convergence of $u_k$ toward $u_*$ in $G$, 
$$\Theta^{n-2s}(\mu,\mathbf{x})=0\quad\text{for all $x\in G$}\,. $$
In addition, we observe that $\mathbf{x}\in\partial^0G\mapsto \Theta^{n-2s}(\mu,\mathbf{x})$ is upper semicontinuous \footnote{Indeed, assume that ${\bf x}_j\to {\bf x}\in \partial^0G$, and choose a sequence $r_m\downarrow0$ such that $\mu(\partial B_{r_m}({\bf x}))=0$. By  \eqref{tim1213}, we have $\limsup_j\Theta^{n-2s}(\mu,\mathbf{x}_j)\leq \omega^{-1}_{n-2s}r_m^{n-2s}\mu(B_{r_m}({\bf x}))+Cr_m^{\theta_q}$, and the conclusion follows letting $r_m\to0$.}.
\vskip3pt

Next we define the concentration set
\begin{multline}\label{defconcentrset}
\Sigma:=\bigg\{\mathbf{x}\in \partial^0G :  \inf_r\Big\{ \liminf_{k\to\infty}\, r^{2s-n}\mu_k(B_r(\mathbf{x})) : \\
0<r<\min\big(1,{\rm dist}(\mathbf{x},\partial^+G)\big)\Big\}\geq\boldsymbol{\theta}_{b,T}\bigg\}\,, 
\end{multline}
where $\boldsymbol{\theta}_{b,T}>0$ is the constant given by Proposition~\ref{epsreg}. From \eqref{tim2332} and \eqref{tim1213} we infer that 
\begin{multline*}
\Sigma =\bigg\{\mathbf{x}\in \partial^0G :  \lim_{r\downarrow 0} \,\liminf_{k\to\infty}\, r^{2s-n}\mu_k(B_r(\mathbf{x})) \geq\boldsymbol{\theta}_{b,T}\bigg\} \\
 = \bigg\{\mathbf{x}\in \partial^0G :  \lim_{r\downarrow 0}\, r^{2s-n}\mu(B_r(\mathbf{x})) \geq \boldsymbol{\theta}_{b,T}\bigg\} \,,$$
\end{multline*} 
and consequently, 
\begin{equation}\label{tim1721}
\Sigma=\bigg\{\mathbf{x}\in \partial^0G: \Theta^{n-2s}(\mu,\mathbf{x})\geq \frac{\boldsymbol{\theta}_{b,T}}{{\omega}_{n-2s}} \bigg\} \,. 
\end{equation}
In particular, $\Sigma$ is a relatively closed subset of $\partial^0G$ since $\Theta^{n-2s}(\mu,\cdot)$ is upper semicontinuous. 
Moreover, by a well known property of  densities (see e.g. \cite[Theorem~2.56]{AFP}), we have
\begin{equation}\label{train}
 \frac{\boldsymbol{\theta}_{b,T}}{{\omega}_{n-2s}}\mathscr{H}^{n-2s}(\Sigma)  \leq \mu(\Sigma)<\infty\,.
 \end{equation}
 On the other hand, it follows from \eqref{upbddensity} and \cite[Theorem~2.56]{AFP} that $\mu_{\rm sing}\LL \Sigma$ is absolutely continuous with respect to $\mathscr{H}^{n-2s}\LL\Sigma$. 

We now claim that ${\rm spt}(\mu_{\rm sing})\subset\Sigma$. Indeed, for $\mathbf{x}_0\in \partial^0G\setminus\Sigma$, we can find a radius 
$$0<r <\min\Big\{\mathbf{R}_{b,T},{\rm dist}(\mathbf{x}_0,\partial^+G\cup\Sigma)\Big\}$$ 
(with $\mathbf{R}_{b,T}$ given by Proposition~\ref{epsreg})
 such  that $r^{2s-n}\mu(B_r(\mathbf{x}_0))< \boldsymbol{\theta}_{b,T}$ and $\mu(\partial B_r(\mathbf{x}_0))=0$. Then 
$$\lim_{k\to\infty} \mathbf{E}_{\eps_k}(u_k,B^+_r(\mathbf{x}_0))=\mu(B_r(\mathbf{x}_0))< \boldsymbol{\theta}_{b,T}r^{n-2s}\,,$$
and we deduce from Proposition \ref{epsreg} that $\mu_{\rm sing}(B_{r/4}(\mathbf{x}_0))=0$. Hence 
$$\mu_{\rm sing}(\partial^0G\setminus \Sigma)=0\,,$$ 
and thus $\mu_{\rm sing}$ is supported by $\Sigma$. In conclusion, we thus proved that $\mu_{\rm sing}$ is absolutely continuous with respect to the Radon measure $\mathscr{H}^{n-2s}\LL\Sigma$. 

We are now ready to show that $\mu_{\rm sing}\equiv 0$. We argue by contradiction assuming that $\mu_{\rm sing}(\Sigma)>0$. By \cite[Corollary 3.2.3]{Zi}, we can find a Borel subset $\widetilde \Sigma\subset\Sigma$ such that $\mathscr{H}^{n-2s}(\Sigma\setminus\widetilde\Sigma)=0$ and 
$$\lim_{r\downarrow 0} \frac{1}{r^{n-2s}}{\bf E}\big(u_*, B^+_r(\mathbf{x}_0)\big)=0\quad\text{for every $\mathbf{x}_0\in\widetilde\Sigma$}\,.$$
Then $\mu_{\rm sing}(\widetilde\Sigma)=\mu_{\rm sing}(\Sigma)>0$. 
Moreover, by our choice of $\widetilde\Sigma$, the density 
$$\Theta^{n-2s}(\mu_{\rm sing},\mathbf{x}_0):=\lim_{r\downarrow 0}\, \frac{\mu_{\rm sing}(B_r(\mathbf{x}_0))}{{\omega}_{n-2s}r^{n-2s}}$$ 
exists at every $\mathbf{x}_0\in \widetilde\Sigma$, and 
$$\Theta^{n-2s}(\mu_{\rm sing},\mathbf{x}_0)=\Theta^{n-2s}(\mu,\mathbf{x}_0)\in(0,\infty)\,. $$
By Marstrand's Theorem (see e.g. \cite[Theorem 14.10]{Matti}), it implies that $(n-2s)$ is an integer, which is an obvious contradiction. Hence $\mu_{\rm sing}\equiv 0$. 

Note that \eqref{train}  now yields $\mathscr{H}^{n-2s}(\Sigma)=0$. 
Moreover, we infer from \eqref{tim2253} that for every admissible open set  $G'$ such that  $\overline{G'}\subset G\cup\partial^0G$, 
$${\bf E}(u_*,G^\prime) 
\leq \liminf_{k\to\infty} {\bf E}(u_k,G^\prime) \\
\leq \lim_{k\to\infty}\mathbf{E}_{\eps_k}(u_k,G')= {\bf E}(u_*,G^\prime)\,.$$
Therefore $u_k\to u_*$ strongly in $H^1_{{\rm loc}}(G\cup\partial^0G,|z|^a\de\mathbf{x})$, and $\eps_k^{-2s}W(u_k)\to 0$ in $L^1_{\rm loc}(\partial^0G)$. 
\vskip3pt

\noindent{\it Step 2: Uniform convergence.} Let us define
$$E^+:=\Big\{\mathbf{x}=(x,0)\in\partial^0G : \text{$u_*=1$  a.e. on $D_r(x)$ for some $r\in(0,{\rm dist}(\mathbf{x},\partial^+G))$}\Big \} \,,$$
and 
$$E^-:=\Big\{\mathbf{x}=(x,0)\in\partial^0G : \text{$u_*=-1$   a.e. on $D_r(x)$ for some $r\in(0,{\rm dist}(\mathbf{x},\partial^+G))$}\Big \} \,.$$
By construction, $E^+$ and $E^-$ are disjoint relatively open subsets of $\partial^0G$. 

We claim that $E^\pm\cap \Sigma=\emptyset$. Indeed, assume for instance that  $\mathbf{x}_0=(x_0,0)\in E^+$. Then we can find $r>0$ such that $u_*=1$   on $D_r(x_0)$.  By Lemma \ref{eas} we have 
$$\Theta^{n-2s}(\mu,\mathbf{x}_0)=\lim_{\rho\to 0} \frac{1}{\rho^{n-2s}}{\bf E}\big(u_*,  B^+_\rho(\mathbf{x}_0)\big)=0\,,$$
whence $\mathbf{x}_0\not \in \Sigma$. 

Next we claim that $\partial^0G=E^+\cup\Sigma\cup E^-$. Indeed, if $\mathbf{x}_0=(x_0,0)\in\partial^0G\setminus \Sigma$, then we can find a radius $r>0$ such that  
$\lim_k \mathbf{E}_{\eps_k}(u_k,B^+_{r}(\mathbf{x}_0))< \boldsymbol{\theta}_{b,T}r^{n-2s}$. By Proposition~\ref{epsreg}, either $u_k\to 1$ or $u_k\to -1$ uniformly in $D_{r/4}(x_0)$. Therefore, either $u_*=1$ or $u_*=-1$  on  $D_{r/4}(x_0)$. Hence $\mathbf{x}_0\in E^+\cup E^-$. 

Since $\mathscr{L}^n(\Sigma)=0$, it implies in particular that 
$$u_*=\chi_{E^+}-\chi_{\partial^0G\setminus E^+}\text{ on $\partial^0G$}\,. $$

Now we show that 
$$\partial E^+\cap\partial^0G=\Sigma=\partial E^-\cap\partial^0G\,.$$ 
Indeed, if $\mathbf{x}_0=(x_0,0)\in \partial E^+\cap\partial^0G$, then $D_r(x_0)\cap E^+\not=\emptyset$ for every $r>0$. Since $E^+$ is open, $D_r(x_0)\cap E^+$ contains a small disc for every $r>0$. Thus $D_r(x_0)\not\subset E^-$ for every $r>0$, and thus  $x_0\in\Sigma$.  This shows that $\partial E^+\cap\partial^0G\subset\Sigma$.  The other way around, if $x_0\in\Sigma$, then $x_0\not\in E^-$. Thus   $\mathscr{L}^n(\{u_*=-1\}\cap D_r(x_0))< \mathscr{L}^n(D_r(x_0))$ for every $r>0$. Since $\mathscr{L}^n(\Sigma)=0$, we deduce that for every $r>0$ there exists $x\in E^+\cap D_r(x_0)$. Hence $\Sigma\subset \partial E^+\cap\partial^0G$. 
\vskip3pt

We claim that $u_k\to \pm 1$ locally uniformly in $E^\pm$ (respectively). We only show that $u_k\to 1$ locally uniformly in $E^+$, the other case being completely analogous. Fix an arbitrary compact set $K\subset E^+$. By Lemma \ref{eas}, we can find a radius $r_K\leq \min\big\{{\rm dist}(K,\partial E^+), \mathbf{R}_{b,T}\big\}$ such that 
$${\bf E}\big(u_*,B^+_{r_K}(\bar{\mathbf{x}})\big) <\boldsymbol{\theta}_{b,T}r_K^{n-2s}$$
for every $\bar{\mathbf{x}}\in K\times\{0\}$. Then we deduce from Step 1 that 
$$\lim_{k\to\infty} \mathbf{E}_{\eps_k}\big(u_k,B^+_{r_K}(\bar{\mathbf{x}})\big) < \boldsymbol{\theta}_{b,T}r_K^{n-2s}$$ 
for every $\bar{\mathbf{x}}\in K\times\{0\}$. By Proposition \ref{epsreg} and a standard covering argument, it implies that $u_k\to u_*=1$ uniformly on $K$.  
Then items (iii) and (iv) follow from Proposition~\ref{imprcv}. 
\vskip3pt

\noindent{\it Step 3: Convergence of level sets.} We now prove (v). We fix $t\in(-1,1)$, a compact set $K\subset\partial^0G$, and a radius $r>0$. First, from (iii) we deduce that $|u_k|\to 1$ uniformly on $K\setminus \mathscr{T}_r(\Sigma)$. Therefore, $L_k^t\cap K\subset \mathscr{T}_r(\Sigma)$ for $k$ large enough. Then we consider a covering of $\Sigma\cap K$ made by finitely many discs $D_{r/2}(x_1),\ldots, D_{r/2}(x_J)$ (included in $\partial^0G$, choosing a smaller radius if necessary). Then, for each $j$ we can find a point $x^+_j\in D_{r/2}(x_j)\cap E^+$ and a point $x^-_j\in D_{r/2}(x_j)\cap E^-$. From (ii) we infer that for $k$ large enough, 
$$u_k(x^+_j)\geq 1/2(1+t) \quad\text{and}\quad u_k(x^-_j)\leq 1/2(-1+t)\qquad \forall j\in\{1,\ldots,J\}\,.$$
Then, by the mean value theorem, for $k$ large enough we can find for each $j$ a point $x_j^k\in[x_j^-,x_j]\cup[x_j,x_j^+]\subset D_{r/2}(x_j)$ such that $u_k(x_j^k)=t$. Now, if $x$ is an arbitrary point in $\Sigma\cap K$, then $x\in D_{r/2}(x_{j_x})$ for some $j_x\in\{1,\ldots,J\}$, and thus $|x-x_{j_x}^k|\leq |x-x_{j_x}|+|x_{j_x}-x_{j_x}^k|<r$. Hence $\Sigma\cap K\subset \mathscr{T}_r(L_k^t)$ whenever $k$ is sufficiently large.
\vskip3pt

\noindent{\it Step 4: Proof of {\rm (vi)}.} Let $\mathbf{X}=(X,\mathbf{X}_{n+1})\in C^1(\overline G;\R^{n+1})$ be a compactly supported  vector field in $G\cup \partial^0G$ such that $\mathbf{X}_{n+1}=0$ on $\partial^0G$.
By Corollary \ref{statAC}, we have 
$$\delta{\bf E}\big(u_k,G\cup\partial^0G\big)[{\bf X}]
+\frac{1}{\eps_k^{2s}}\int_{\partial^0 G}W(u_k)\,{\rm div}X\,\de x=\int_{\partial^0G} u_k\,{\rm div}(f_kX)\,\de x\,.$$
From formula \eqref{formulafirstvarbound} and the convergences established in Step 1, we can pass to the limit $k\to\infty$ in this identity to infer that 
$$\delta{\bf E}\big(u_*,G\cup\partial^0G\big)[{\bf X}]=\int_{\partial^0G} u_*{\rm div}(fX)\,\de x\,,$$
and the proof is complete. 
\end{proof}

 														 
\section{Asymptotics for the fractional Allen-Cahn equation}\label{FGLasymp}   		 
														 

 The object of this section is to prove a general convergence result as $\varepsilon\downarrow0$ for the fractional equation \eqref{fracalllcahnf}. As we already explained, we  rely on the results obtained in Theorem \ref{asymptneum} for the degenerate equation with boundary reaction. In Section \ref{imprsect}, we will improve some of the convergences below under stronger assumptions on the sequence of right hand sides $\{f_k\}_{k\in\mathbb{N}}$.

 \begin{theorem}\label{main1}
 Let $\Omega$ be a smooth bounded open set, and $\eps_k\downarrow 0$ a given sequence. 
 Let $\{g_k\}_{k\in\N}\subset C^{0,1}_{\rm loc}(\R^n)$ be such that $\sup_k\|g_k\|_{L^\infty(\R^n\setminus\Omega)}<\infty$ and $g_k\to g$ in $L^1_{\rm loc}(\R^n\setminus \Omega)$ for a function $g$ satisfying $|g|=1$  a.e. in~$\R^n\setminus\Omega\,$.  Let $\{f_k\}_{k\in\mathbb{N}}\subset C^{0,1}(\Omega)$  
satisfying 
$$\sup_k\big(\eps_k^{2s}\|f_k\|_{L^\infty(\Omega)} + \|f_k\|_{W^{1,q}(\Omega)}\big)<\infty\quad \text{for some $n/(1+2s)<q<n$}\,,$$
and such that $f_k \rightharpoonup f$ weakly in  $W^{1,q}(\Omega)$. 
 Let $\{v_k\}_{k\in\N}\subset H_{g_k}^s(\Omega)\cap L^p(\Omega)$ be a sequence such that $v_k$ weakly solves
 \begin{equation}\label{fracalllcahnf}
 \begin{cases}
 \displaystyle  (-\Delta)^{s} v_k+\frac{1}{\varepsilon_k^{2s}}W^\prime(v_k) =f_k & \text{in $\Omega$}\,,\\
  v_k=g_k & \text{in $\R^n\setminus \Omega$}\,.
\end{cases}
\end{equation}
 If $\sup_k \mathcal{F}_{\varepsilon_k}(v_k,\Omega)<\infty$, then there exist a (not relabeled) subsequence and a Borel set $E_*\subset \R^n$ of finite $2s$-perimeter in $\Omega$ such that $v_k\to v_*:=\chi_{E_*}-\chi_{\R^n\setminus E_*}$ strongly in $H^s_{\rm loc}(\Omega)$ and $L^2_{\rm loc}(\R^n)$. Moreover, $E_*\cap \Omega$ is an open set, and 
 \begin{equation}\label{meancurveq}
\delta P_{2s}(E_*,\Omega)[X]=\frac{1}{\gamma_{n,s}}\int_{E_*\cap\Omega}{\rm div}(fX)\,\de x \quad\text{for every $X\in C_c^1(\Omega;\R^n)$}\,.
 \end{equation}
In addition, for every smooth open subset $\Omega'\subset\Omega$ such that $\overline{\Omega'}\subset\Omega$, 
 \begin{itemize}[leftmargin=22pt]
\item[ \rm  (i)] $ \mathcal{E}(v_k,\Omega') \to 2\gamma_{n,s}P_{2s}(E_*,\Omega')$; 
\vskip5pt

\item[ \rm  (ii)] $\displaystyle\frac{1}{\varepsilon_k^{2s}}W(v_k)\to 0$ in $L^1(\Omega^\prime)$; 
\vskip5pt

\item[\rm  (iii)] $\displaystyle f_k(x)-\frac{1}{\varepsilon_k^{2s}}W^\prime(v_k(x))\to \left(\frac{\gamma_{n,s}}{2}\int_{\R^n}\frac{|v_*(x)-v_*(y)|^2}{|x-y|^{n+2s}}\,\de y\right)v_*(x)$ strongly in  $H^{-s}(\Omega^\prime)$; 
\vskip5pt 

\item[\rm  (iv)] $v_k\to v_*$ in $C^0_{\rm loc}(\Omega\setminus\partial E_*)$;
\vskip5pt

\item[\rm  (v)] if $\sup_k\|f_k\|_{L^\infty(\Omega)}<\infty$, then $v_k\to v_*$ in $C^{0,\alpha}_{\rm loc}(\Omega\setminus\partial E_*)$ for every $\alpha\in(0,\boldsymbol{\beta}_*)$
with $\boldsymbol{\beta}_*$  given by Lemma \ref{prereg};
\vskip5pt 

\item[\rm  (vi)] if $\sup_k\|f_k\|_{C^{0,1}(\Omega)}<\infty$, then $v_k\to v_*$ in $C^{1,\alpha}_{\rm loc}(\Omega\setminus\partial E_*)$ for every $\alpha\in(0,\boldsymbol{\beta}_*)$;
\vskip5pt 

\item[\rm  (vii)] for each $\delta\in(-1,1)$, the level set $L^\delta_k:=\{v_k=\delta\}$ converges locally uniformly  in $\Omega$ to $\partial E_*\cap \Omega$, i.e., for every compact set $K\subset \Omega$ and every $r>0$,
$$L^\delta_k\cap K\subset \mathscr{T}_r(\partial E_*\cap \Omega) \quad \text{and}\quad \partial E_*\cap K\subset \mathscr{T}_r(L_k^\delta\cap \Omega)$$
whenever $k$ is large enough. 
\end{itemize}
 \end{theorem}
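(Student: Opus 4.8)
The plan is to transplant everything to the half space via the Caffarelli--Silvestre extension and then read off Theorem~\ref{main1} from Theorem~\ref{asymptneum}. First I would record the a priori estimates: by the maximum principle (Corollary~\ref{modless1}) the sequence is bounded, $\|v_k\|_{L^\infty(\R^n)}\le b$ for some $b\ge1$, and by Theorem~\ref{regDirich} each $v_k\in C^{0,\boldsymbol{\beta}_*}_{\rm loc}(\Omega)\cap C^0(\R^n)$. Hence the fractional harmonic extensions $v_k^\e$ (which satisfy $\|v_k^\e\|_{L^\infty(\R^{n+1}_+)}\le b$) are $L_s$-harmonic in $\R^{n+1}_+$ and weak solutions of the degenerate boundary reaction equation \eqref{equinG} on $\Omega$ with right hand side $f_k$. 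I would then fix an exhaustion of $\Omega$ by smooth bounded open sets $\Omega_1\subset\Omega_2\subset\cdots$ with $\overline{\Omega_j}\subset\Omega$ and $\bigcup_j\Omega_j=\Omega$, and set $G_j:=\Omega_j\times(0,1)$, an admissible bounded open set with $\partial^0G_j=\Omega_j$ and $\overline{\partial^0G_j}\subset\Omega$. For each fixed $j$ the hypotheses of Theorem~\ref{asymptneum} hold: the bounds on $f_k$ restrict from $\Omega$ to $\Omega_j$, and $\sup_k\mathbf F_{\eps_k}(v_k^\e,G_j)<\infty$ follows from $\sup_k\mathcal F_{\eps_k}(v_k,\Omega)<\infty$, since $\mathbf E(v_k^\e,G_j)\lesssim\mathcal E(v_k,\Omega_j')+\|v_k\|^2_{L^2(\Omega_j')}$ (covering $\overline{G_j}$ by finitely many half-balls, using Lemma~\ref{hatH1/2toH1} near $\{z=0\}$ and interior estimates plus Lemma~\ref{context} away from it), while $\int_{\Omega_j}f_kv_k$ is controlled by $\|f_k\|_{L^{q^*}(\Omega)}\|v_k\|_{L^\infty}$ with $q^*=nq/(n-q)$.

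Applying Theorem~\ref{asymptneum} on each $G_j$ and diagonalizing, I would extract one subsequence along which $v_k^\e\rightharpoonup u_*$ weakly in $H^1(G_j,|z|^a\de\mathbf x)$ and strongly in $H^1_{\rm loc}(G_j\cup\partial^0G_j,|z|^a\de\mathbf x)$ for every $j$, with a relatively open set $E_*\cap\Omega$ (the coherent union of the sets produced on the $\Omega_j$) such that $u_*=\chi_{E_*\cap\Omega}-\chi_{\Omega\setminus E_*}$ on $\Omega$, together with conclusions (i)--(vi) of Theorem~\ref{asymptneum} on each $\Omega_j$. To describe $v_*$ on all of $\R^n$, I would use the exterior datum: since $v_k=g_k\to g$ in $L^1_{\rm loc}(\R^n\setminus\overline\Omega)$ with $|g|=1$ a.e., set $E_*\setminus\overline\Omega:=\{x\notin\overline\Omega: g(x)=1\}$, so $v_*:=\chi_{E_*}-\chi_{\R^n\setminus E_*}$ has $|v_*|=1$ a.e. Because $v_k\to v_*$ locally uniformly on $\Omega\setminus(\partial E_*\cap\Omega)$ by Theorem~\ref{asymptneum}(ii), $\mathscr H^{n-2s}(\partial E_*\cap\Omega)=0$ (hence $\mathscr L^n(\partial E_*\cap\Omega)=0$), and $g_k\to g$ a.e. up to a further subsequence, we get $v_k\to v_*$ a.e. in $\R^n$; the uniform bound then upgrades this, by dominated convergence, to convergence in $L^q_{\rm loc}(\R^n)$ for every $q<\infty$, in particular in $L^2_{\rm loc}(\R^n)$ and in $L^2(\R^n,\mathfrak m)$. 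By Lemma~\ref{context}, $v_k^\e=\mathfrak R_R(v_k)\to\mathfrak R_R(v_*)=v_*^\e$ strongly in $L^2(B_R^+,|z|^a\de\mathbf x)$ for every $R$, and comparing with the weak $H^1(G_j)$-limit forces $u_*=v_*^\e$ on every $G_j$.

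With $u_*=v_*^\e$ in hand, lower semicontinuity (Fatou, via a.e. convergence on $\R^n$) gives $\mathcal E(v_*,\Omega)\le\liminf_k\mathcal E(v_k,\Omega)<\infty$, so $v_*\in\widehat H^s(\Omega)$ and, by \eqref{idDirPersharpint}, $\mathcal E(v_*,\Omega)=2\gamma_{n,s}P_{2s}(E_*,\Omega)<\infty$. For \eqref{meancurveq}: given $X\in C^1_c(\Omega;\R^n)$ choose $j$ with $\mathrm{supp}\,X\subset\Omega_j$ and $\mathbf X=(X,\mathbf X_{n+1})\in C^1(\overline{G_j};\R^{n+1})$ compactly supported in $G_j\cup\partial^0G_j$ with $\mathbf X_{n+1}=0$ on $\partial^0G_j$; Theorem~\ref{asymptneum}(vi) (using $f_k\rightharpoonup f$ in $W^{1,q}$) gives $\delta\mathbf E(v_*^\e,G_j\cup\partial^0G_j)[\mathbf X]=\int_\Omega v_*\,{\rm div}(fX)\,\de x$, which by Corollary~\ref{firstvargen} equals $\delta\mathcal E(v_*,\Omega)[X]$; since $v_*\circ\phi_{-t}$ stays in $H^s_g(\Omega;\{\pm1\})$ with $\{v_*\circ\phi_{-t}=1\}=\phi_t(E_*)$ for small $t$, one has $\mathcal E(v_*\circ\phi_{-t},\Omega)=2\gamma_{n,s}P_{2s}(\phi_t(E_*),\Omega)$, and $\int_\Omega v_*\,{\rm div}(fX)=2\int_{E_*\cap\Omega}{\rm div}(fX)$ (because $\int_\Omega{\rm div}(fX)=0$), yielding \eqref{meancurveq} after dividing by $2\gamma_{n,s}$. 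Items~(ii) and (iv)--(vii) are Theorem~\ref{asymptneum}(i)--(v) restricted to $\Omega_j\supset\Omega'$ (the extra hypotheses on $\|f_k\|_{L^\infty}$, $\|f_k\|_{C^{0,1}}$ feeding into (iii)--(iv) there). For the strong $H^s_{\rm loc}(\Omega)$ convergence and item~(i), I would use the local trace estimate $\|w\|_{H^s(\Omega')}\lesssim\|w^\e\|_{H^1(B,|z|^a\de\mathbf x)}$ (the trace half of the extension correspondence) to pass from strong $H^1_{\rm loc}$ convergence of $v_k^\e$ to strong $H^s_{\rm loc}(\Omega)$ convergence of $v_k$, then split $\mathcal E(v_k,\Omega')$ into an interior part (handled by that convergence on a slightly larger $\Omega''$ with $\overline{\Omega'}\subset\Omega''$, $\overline{\Omega''}\subset\Omega$) and a far-field interaction with $(\Omega'')^c$ (handled by dominated convergence, as $|x-y|$ is then bounded below and $|v_k|\le b$); the same splitting shows $v_k\to v_*$ in $\widehat H^s(\Omega')$, whence item~(iii) follows from continuity of $(-\Delta)^s:\widehat H^s(\Omega')\to H^{-s}(\Omega')$, the identity $f_k-\eps_k^{-2s}W'(v_k)=(-\Delta)^sv_k$ in $\Omega$, and Lemma~\ref{lemeqharmmap} identifying $(-\Delta)^sv_*$ with the right hand side in~(iii).

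The genuinely hard analysis --- energy-space compactness via Marstrand's theorem, the clearing-out, and the quantitative stratification --- is entirely absorbed into Theorem~\ref{asymptneum}, so the remaining obstacles here are more technical: \emph{(a)} the two-sided comparison between the weighted Dirichlet energy of $v^\e$ on half-balls and the nonlocal energy $\mathcal E(v,\cdot)$ (equivalently the local $H^s$-seminorm of the trace), needed both to verify the energy hypothesis of Theorem~\ref{asymptneum} and to upgrade $H^1_{\rm loc}$ convergence to $H^s_{\rm loc}$ convergence and prove item~(i); and \emph{(b)} keeping the limiting set $E_*$ and the limit $u_*=v_*^\e$ coherent across the exhaustion $\{G_j\}$ while matching the interior description with the exterior datum $g$. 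Everything else is bookkeeping.
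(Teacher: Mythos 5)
Your proposal follows the paper's overall architecture — a priori $L^\infty$ and energy bounds, Caffarelli--Silvestre extension, Theorem~\ref{asymptneum} applied on an exhaustion by admissible sets, then reconstruction of $v_*$ and the stationarity identity — and it is correct, but the proof of item~(i) and of the strong $H^s_{\rm loc}$ convergence takes a genuinely different route from the paper's Step~3. There, the paper establishes the weak$^*$ convergence of the density measure $\mathrm{e}_s(v_k,\Omega)\,\mathscr L^n\LL\Omega$ towards $\mathrm{e}_s(v_*,\Omega)\,\mathscr L^n\LL\Omega$ by testing against $\varphi\in\mathscr D(\Omega)$, decomposing $\int\mathrm{e}_s(v_k,\Omega)\varphi$ into $\langle(-\Delta)^s v_k,\varphi v_k\rangle_\Omega$ (which passes to the limit through Lemma~\ref{repnormderfraclap} and the strong $H^1_{\rm loc}$ convergence of the extensions) plus two terms handled by dominated convergence; $[v_k]^2_{H^s(\Omega')}\to[v_*]^2_{H^s(\Omega')}$ then follows, and $\mathcal E(v_k-v_*,\Omega')\to0$ is obtained indirectly via the bilinear-form identity $\mathcal E(v_k-v_*,\Omega')=\mathcal E(v_k,\Omega')+\mathcal E(v_*,\Omega')-\langle(-\Delta)^sv_k,v_*\rangle_{\Omega'}$ together with \eqref{estinormH-1/2fraclap}. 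You instead invoke a local trace inequality $\|u|_{D_{\rho/2}(x_0)}\|_{H^s(D_{\rho/2}(x_0))}\lesssim\|u\|_{H^1(B_\rho^+(x_0),|z|^a\de\mathbf x)}$ to read $H^s_{\rm loc}(\Omega)$ convergence of the traces directly off the $H^1_{\rm loc}$ convergence of the extensions, then recover $\mathcal E(v_k,\Omega')$ and $\mathcal E(v_k-v_*,\Omega')$ by a near/far split. That trace inequality is not stated in the paper, and you correctly flag it as the one real technical debt; it does follow from the paper's toolkit (multiply by $\zeta\in C^\infty_c(B_\rho(x_0))$ with $\zeta\equiv1$ on $B_{\rho/2}(x_0)$, observe that $(\zeta u)|_{\R^n}\in H^s_{00}(D_\rho(x_0))$ by Remark~\ref{remtrace}, bound $[(\zeta u)|_{\R^n}]_{H^s(\R^n)}$ via the infimum characterization \eqref{isomtry}, and absorb the $L^2$ term with the fractional Poincar\'e inequality on $H^s_{00}$). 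Both routes are valid: the paper's circumvents any trace theorem by an algebraic manipulation of the extension identity and directly produces a useful object (weak$^*$ convergence of the energy density), while yours is arguably more transparent once the trace inequality is in hand but requires importing that extra lemma. The remaining departures — the slab-shaped exhaustion $G_j=\Omega_j\times(0,1)$ instead of one exhausting all of $\R^{n+1}_+$, and building $v_*$ from the interior limit plus the exterior datum rather than from $L^2(\R^n,\mathfrak m)$ weak compactness up front — are cosmetic reshufflings of the paper's Steps~1--2; Step~4 (the derivation of \eqref{meancurveq} from Theorem~\ref{asymptneum}(vi) and Corollary~\ref{firstvargen}) is identical.
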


\begin{proof}
{\it Step 1.} First we recall that, under the assumptions of the theorem, we have proved in Section \ref{FractAC} that $v_k\in C^{1,\boldsymbol{\beta}_*}_{\rm loc}(\Omega)\cap C^0(\R^n)$ and $\sup_k\|v_k\|_{L^\infty(\R^n)}<\infty$. Then the assumption $\sup_k \mathcal{F}_{\varepsilon_k}(v_k,\Omega)<\infty$ clearly implies  $\sup_k \mathcal{E}_{\varepsilon_k}(v_k,\Omega)<\infty$. In turn, Lemma~\ref{adminHchap} shows that the sequence $\{v_k\}_{k\in\mathbb{N}}$ is bounded in $L^2(\R^n,\mathfrak{m})$, where the measure $\mathfrak{m}$ is defined in \eqref{defmeasm}. Therefore, we can find a (not relabeled) subsequence and $v_*\in L^2(\R^n,\mathfrak{m})$ such that $v_k\rightharpoonup v_*$ weakly in $L^2(\R^n,\mathfrak{m})$. In particular, $v_k\rightharpoonup v_*$ weakly in $L^2_{\rm loc}(\R^n)$. On the other hand, the uniform energy bound shows that  $|v_k|\to 1$ in $L^1(\Omega)$, and $\{v_k\}_{k\in\mathbb{N}}$ is bounded in $H^{s}(\Omega)$. Hence  $v_k\rightharpoonup v_*$ weakly in $H^{s}(\Omega)$, and from the compact embedding $H^{s}(\Omega)\hookrightarrow L^2(\Omega)$, it implies that $v_k\to v_*$ strongly in $L^2(\Omega)$. By assumption we have $g_k\to g$ in $L^1_{\rm loc}(\R^n\setminus\Omega)$ and $\sup_k\|g_k\|_{L^\infty(\R^n\setminus\Omega)}<\infty$, so that $g_k\to g$ in $L^2_{\rm loc}(\R^n\setminus\Omega)$. Since $v_k=g_k$ in $\R^n\setminus \Omega$, we conclude that $v_*=g$ in $\R^n\setminus\Omega$ and $v_k\to v_*$ strongly in $L^2_{\rm loc}(\R^n)$. Extracting a further subsequence if necessary, we may assume that $v_k\to v_*$ a.e. in $\R^n$. Since $|g|=1$ a.e. in $\R^n$, we derive that  $|v_*|=1$ a.e. in $\R^n$. Hence we can find a Borel set $F\subset \R^n$ such that  
$$v_*= \chi_F-\chi_{\R^n\setminus F} \text{ a.e. in $\R^n$.}$$ 
Moreover, we easily infer from Fatou's lemma that
\begin{equation}\label{liminf}
\mathcal{E}(v_*,\Omega)\leq \liminf_{k\to\infty} \mathcal{E}(v_k,\Omega) <\infty\,.
\end{equation}

We end this first step showing that $v_k^\e\rightharpoonup v_*^\e$ weakly in $H^1_{{\rm loc}}(\R^{n+1}_+\cup\Omega,|z|^a\de \mathbf{x}) $. Indeed, we start deducing from Lemma~\ref{context} that $v_k^\e\rightharpoonup v_*^\e$ weakly in $L^2_{{\rm loc}}(\overline{\R^{n+1}_+}, |z|^a\de \mathbf{x})$.  On the other hand, the uniform energy bound together with Lemma \ref{hatH1/2toH1} and standard elliptic estimates shows that $\{v_k^\e\}_{k\in\mathbb{N}}$ is bounded in $H^1_{{\rm loc}}(\R^{n+1}_+\cup\Omega,|z|^a\de \mathbf{x})$, whence the announced weak convergence. 
\vskip3pt

\noindent{\it Step 2.} Let us now consider an increasing  sequence $\{G_l\}_{l\in\N}$ of  bounded admissible open sets such that  $\overline{\partial^0G_l}\subset\Omega$ for every $l\in\N$, $\cup_l G_l=\R^{n+1}_+$, and $\cup_l\partial^0G_l=\Omega$. By \eqref{bdlinftyext}, 
Step 1, and the results in Section \ref{FractAC},  $v_k^\e\in H^1(G_l,|z|^a\de \mathbf{x})\cap L^\infty(G_l)$  satisfies $\sup_k\|v_k^\e\|_{L^\infty(G_l)}\leq \sup_k\|v_k\|_{L^\infty(\R^n)}<\infty$, and each $v_k$ solves 
$$ \begin{cases}
{\rm div}(z^a\nabla v^\e_k)= 0 & \text{in $G_l$}\,,\\[8pt]
\displaystyle d_s\boldsymbol{\partial}^{(2s)}_z v^\e_k=\frac{1}{\varepsilon^{2s}_k}W^\prime(v^\e_k) -f & \text{on $\partial^0 G_l$}\,,  
\end{cases}
$$
for every $l\in\N$. In addition, $\sup_k {\bf E}_{\eps_k}(v_k^\e,G_l)<\infty$ for every $l\in\N$, still by Step 1. Therefore, we can find a further subsequence such that the conclusions of Theorem~\ref{asymptneum}  hold in every $G_l$, and $v_*^\e$ is the limiting function in each $G_l$ by Step 1. In particular,  $v_k^\e\to v_*^\e$ strongly in $H^1_{{\rm loc}}(\R^{n+1}_+\cup\Omega,|z|^a\de \mathbf{x})$.

For each $l\in\mathbb{N}$, denote by $E_l$ the limiting open subset of $\partial^0G_l$ provided by Theorem~\ref{asymptneum}, and observe that $E_l=E_{l+1}\cap \partial^0G_l$ for every $l\in\N$ (see the proof of Theorem~\ref{asymptneum}, Step 2). Then we define $E_\Omega:=\cup_l E_l$, so that $E_\Omega$ is an open subset of $\Omega$,   
$E_l=E_\Omega\cap  \partial^0G_l$ for every $l\in\N$, and $v_*=\chi_{E_\Omega}- \chi_{\Omega\setminus E_\Omega}$ a.e. in $\Omega$.   Setting 
$$E_*:=(F\setminus\Omega)\cup E_\Omega \,,$$
it follows that $v_*=\chi_{E_*}- \chi_{\R^n\setminus E_*}$ a.e. in $\R^n$. In particular, $E_*$ has finite $2s$-perimeter in $\Omega$ since 
$$\mathcal{E}(v_*,\Omega)=2\gamma_{n,s}P_{2s}(E_*,\Omega)\,.$$ 
Finally, the conclusions of  Theorem~\ref{asymptneum}  in each $G_l$ clearly imply the announced results stated in (ii), (iv), (v), (vi), and (vii).
\vskip3pt

\noindent{\it Step 3.} Now we show items (i), (iii), and the strong convergence of $v_k$ in $H^s_{\rm loc}(\Omega)$. To this purpose, we fix a smooth open set $\Omega'\subset\Omega$ such that $\overline{\Omega'}\subset\Omega$. Setting for an arbitrary function $v\in\widehat H^s(\Omega)$,
$${\rm e}_s\big(v(x),\Omega\big):=\frac{\gamma_{n,s}}{2}\int_\Omega\frac{|v(x)-v(y)|^2}{|x-y|^{n+2s}}\,\de y+\gamma_{n,s} \int_{\R^n\setminus\Omega}\frac{|v(x)-v(y)|^2}{|x-y|^{n+2s}}\,\de y\,,$$
we claim that 
$${\rm e}_s(v_k,\Omega)\,\mathscr{L}^n\LL\Omega \rightharpoonup {\rm e}_s(v_*,\Omega)\,\mathscr{L}^n\LL\Omega$$
weakly* as Radon measures on $\Omega$. Indeed, by the uniform energy bound, we can extract a subsequence such that 
${\rm e}_s(v_k,\Omega)\,\mathscr{L}^n\LL\Omega \mathop{\rightharpoonup}\limits^* \nu$ for some finite Radon measure $\nu$ on $\Omega$. Then 
we fix $\varphi\in\mathscr{D}(\Omega)$ arbitrary. Notice that 
  \begin{align*}
  \int_\Omega {\rm e}_s(v_k,\Omega)\varphi\,\de x      = &\,  \big\langle  (-\Delta)^s  v_k,\varphi v_k\big\rangle_\Omega  \\
  &   -\frac{\gamma_{n,s}}{2}\iint_{\Omega\times\Omega} \frac{(v_k(x)-v_k(y)) v_k(y)(\varphi(x)-\varphi(y))}{|x-y|^{n+2s}}\,\de x\de y\\
 & -\gamma_{n,s}\iint_{\Omega\times\Omega^c} \frac{(v_k(x)-v_k(y)) v_k(y)\varphi(x)}{|x-y|^{n+2s}}\,\de x\de y\\
=:&\,I_k-II_k-III_k\,.
  \end{align*}
We consider  a function $\Phi\in  C^\infty(\overline{\R^{n+1}_+})$ compactly supported in $G\cup\partial^0G$ for some bounded admissible open set $G\subset\R^{n+1}_+$
such that  $\overline{\partial^0G}\subset\Omega$ and   $\Phi_{|\R^n}=\varphi$. 
Since $\varphi v_k\in H^s_{00}(\Omega)$ and $\Phi v_k^\e\in H^1(G,|z|^a\de \mathbf{x})$ is compactly supported in $G\cup\partial^0G$,  Lemma~\ref{repnormderfraclap}  yields 
$$\big\langle  (-\Delta)^{s}  v_k,\varphi v_k\big\rangle_\Omega
= d_s\int_{G}z^a|\nabla v_k^\e|^2\Phi\,\de \mathbf{x}+d_s\int_{G}z^a\nabla v_k^\e\cdot(v_k^\e \nabla\Phi)\,\de \mathbf{x}\,.$$
Since $v_k^\e \to v_*^\e$ strongly in $H^1(G,|z|^a\de{\bf x})$, we obtain 
\begin{multline*}
\big\langle  (-\Delta)^{s}  v_k,\varphi v_k\big\rangle_\Omega\mathop{\longrightarrow}\limits_{k\to\infty} d_s \int_{G}z^a|\nabla v_*^\e|^2\Phi\,\de {\bf x}+d_s\int_{G}z^a\nabla v_*^\e\cdot(v_*^\e \nabla\Phi)\,\de {\bf x}\\
 =d_s\int_{\R^{n+1}_+}z^a\nabla v_*^\e\cdot\nabla(\Phi v_*^\e)\,\de {\bf x}\,.
\end{multline*}
By  Lemma \ref{repnormderfraclap} again, we have thus proved that 
\begin{equation}\label{tim1555}
\big\langle  (-\Delta)^{s}  v_k,\varphi v_k\big\rangle_\Omega\mathop{\longrightarrow}\limits_{k\to\infty} \big\langle  (-\Delta)^{s}  v_*,\varphi v_*\big\rangle_\Omega\,.
\end{equation}
On the other hand, we  easily deduce by dominated convergence that 
\begin{equation}\label{tim1627}
II_k \to \frac{\gamma_{n,s}}{2}\iint_{\Omega\times\Omega} \frac{(v_*(x)-v_*(y)) v_*(y)(\varphi(x)-\varphi(y))}{|x-y|^{n+2s}}\,\de x\de y
\end{equation}
and
\begin{equation}\label{tim1628}
III_k\to  \gamma_{n,s}\iint_{\Omega\times\Omega^c} \frac{(v_*(x)-v_*(y)) v_*(y)\varphi(x)}{|x-y|^{n+2s}}\,\de x\de y
\end{equation}
as $k\to\infty$. Gathering \eqref{tim1555}, \eqref{tim1627}, and \eqref{tim1628} leads to 
$$\int_\Omega {\rm e}_s(v_k,\Omega)\varphi\,\de x \mathop{\longrightarrow}\limits_{k\to\infty} \int_\Omega {\rm e}_s(v_*,\Omega)\varphi\,\de x
\,,$$
and thus $\nu={\rm e}_s(v_*,\Omega)\,\mathscr{L}^n\LL\Omega$ by the arbitrariness of $\varphi$. 

Since $\nu(\partial\Omega')=0$, we now derive that 
\begin{equation}\label{convdensit}
\int_{\Omega'} {\rm e}_s(v_k,\Omega)\,\de x \to \int_{\Omega'} {\rm e}_s(v_*,\Omega)\,\de x\,.
\end{equation}
Then, since $\Omega'$ is smooth and bounded, it has finite $2s$-perimeter in $\R^n$, and thus  
\begin{equation}\label{cpasdim}
\int_{\Omega'}\int_{\Omega\setminus\Omega'}\frac{1}{|x-y|^{n+2s}}\,\de x\de y \leq  \int_{\Omega'}\int_{\R^n\setminus\Omega'}\frac{1}{|x-y|^{n+2s}}\,\de x\de y=P_{2s}(\Omega',\R^n)<\infty\,.
\end{equation}
It now follows by dominated convergence and \eqref{convdensit} that 
\begin{multline}\label{convenerg2020}
\mathcal{E}(v_k,\Omega')= \frac{1}{2}\int_{\Omega'} {\rm e}_s(v_k,\Omega)\,\de x+\frac{\gamma_{n,s}}{4}\int_{\Omega'}\int_{\Omega\setminus\Omega'}\frac{|v_k(x)-v_k(y)|^2}{|x-y|^{n+2s}}\,\de x\de y \\
\mathop{\longrightarrow}\limits_{k\to\infty}  \frac{1}{2}\int_{\Omega'} {\rm e}_s(v_*,\Omega)\,\de x+\frac{\gamma_{n,s}}{4}\int_{\Omega'}\int_{\Omega\setminus\Omega'}\frac{|v_*(x)-v_*(y)|^2}{|x-y|^{n+2s}}\,\de x\de y\\
= \mathcal{E}(v_*,\Omega')=2\gamma_{n,s}P_{2s}(E_*,\Omega')\,.
\end{multline}
Using  \eqref{convdensit} again, the same argument shows that 
$$[v_k]^2_{H^s(\Omega')} \to [v_*]^2_{H^s(\Omega')}\,,$$
and thus $v_k\to v_*$ strongly in $H^s(\Omega')$, since we already know that $v_k\rightharpoonup v_*$ weakly in $H^s(\Omega')$. In turn, the strong convergence in $H^s(\Omega')$ and \eqref{cpasdim} easily imply $\big\langle(-\Delta)^sv_k,v_*\big\rangle_{\Omega^\prime}\to \big\langle(-\Delta)^sv_*,v_*\big\rangle_{\Omega}=2\mathcal{E}(v_*,\Omega^\prime)$ by dominated convergence. Consequently, 
$$\mathcal{E}(v_k-v_*,\Omega')=\mathcal{E}(v_k,\Omega')+\mathcal{E}(v_*,\Omega')-\big\langle(-\Delta)^sv_k,v_*\big\rangle_{\Omega^\prime}\longrightarrow 0\,. $$
Next we infer from \eqref{estinormH-1/2fraclap} that $(-\Delta)^s v_k \to (-\Delta)^s v_*$ strongly in $H^{-s}(\Omega^\prime)$.  

Then, fix some $\varphi\in\mathscr{D}(\Omega^\prime)$. Since $v_*^2=1$, we have the identity
\begin{equation}\label{identdim}
\big(v_*(x)-v_*(y))(\varphi(x)-\varphi(y)\big) = \frac{1}{2}|v_*(x)-v_*(y)|^2\big(v_*(x)\varphi(x)+v_*(y)\varphi(y)\big)\,,
\end{equation}
that we may insert in \eqref{deffraclap} to obtain
\begin{equation}\label{identdim2}
\big\langle (-\Delta)^s v_*,\varphi\big\rangle_{\Omega^\prime}=\int_{\Omega^{\prime}} \left(\frac{\gamma_{n,s}}{2}\int_{\R^n}\frac{|v_*(x)-v_*(y)|^2}{|x-y|^{n+2s}}\,\de y\right)v_*(x)\varphi(x)\,\de x\,.
\end{equation}
Using this equation and \eqref{fracalllcahnf}, item (iii) follows. 
\vskip3pt

\noindent{\it Step 4.} Now it only remains to show that $E_*$ satisfies \eqref{meancurveq}. Let $X\in C^1(\R^n;\R^n)$ compactly supported in $\Omega$, and $\mathbf{X}=(\mathbf{X}_1,\ldots,\mathbf{X}_{n+1})\in C^1(\overline{\R^{n+1}_+};\mathbb{R}^{n+1})$ compactly supported in $\R^{n+1}_+\cup\Omega$ satisfying $\mathbf{X}=(X,0)$ on $\Omega$. Setting $\{\phi_t\}_{t\in\R}$ to be the flow on $\R^n$ generated by $X$, we notice that 
\begin{equation}\label{identflowEenerg}
P_{2s}\big(\phi_t(E_*) , \Omega\big) = \frac{1}{2\gamma_{n,s}}\mathcal{E}(v_*\circ\phi_{-t},\Omega)\,.
\end{equation}
Since the support of $\mathbf{X}$ is contained in $G_l\cup\partial^0G_l$ for $l$ large enough, we can apply (vi) in Theorem \ref{asymptneum}. In view of Remark \ref{varuptobound} and \eqref{identflowEenerg}, we obtain 
\begin{multline*}
\delta P_{2s}(E_*,\Omega)[X]=\frac{1}{2\gamma_{n,s}}\delta\mathcal{E}(v_*,\Omega)[X]\\
=\frac{1}{2\gamma_{n,s}}\delta{\bf E}\big(v^\e_*,G_l\cup\partial^0G_l\big)[{\bf X}]= \frac{1}{2\gamma_{n,s}}\int_\Omega v_*\,{\rm div}(fX)\,\de x\\
=\frac{1}{\gamma_{n,s}}\int_{E_*\cap \Omega} {\rm div}(fX)\,\de x\,,  
\end{multline*}
by the divergence theorem, and the proof is complete. 
\end{proof}


\section{Surfaces of prescribed nonlocal mean curvature}\label{GHSNMC}


In this section, we investigate regularity properties in a Lipschitz bounded open set $\Omega\subset\R^n$ of a (Borel) set $E\subset \R^n$ which is a {\it weak solution} in $\Omega$ of the prescribed nonlocal $2s$-mean curvature equation  
\begin{equation}\label{eqcurvsec6}
{\rm H}^{(2s)}_{\partial E}=\frac{1}{\gamma_{n,s}} f\quad \text{on $\partial E\cap \Omega$}\,, 
\end{equation}
where $f$ is a given Sobolev function in $W^{1,q}(\Omega)$ with $q\in(\frac{n}{1+2s},n)$. The notion of weak solution corresponds to the following weak formulation of \eqref{eqcurvsec6}: 

\begin{definition}
A set $E\subset \R^n$ is a weak solution of \eqref{eqcurvsec6} if $P_{2s}(E,\Omega)<\infty$ and  
$$\delta P_{2s}(E,\Omega)[X]=\frac{1}{\gamma_{n,s}}\int_{E\cap\Omega}{\rm div}(fX)\,\de x \qquad \forall X\in C_c^1(\Omega;\R^n)\,.$$
\end{definition}

Introducing the ``phase function" $v_E:=\chi_E-\chi_{\R^n\setminus E}\in\widehat H(\Omega)$, this equation rewrites (as in the proof of Theorem \ref{main1}, Step 4)
\begin{equation}\label{campdev1847}
\delta\mathcal{E}(v_E,\Omega)[X]=\int_\Omega v_E\, {\rm div}(fX)\,\de x\qquad\forall X\in C_c^1(\Omega;\R^n)\,.
\end{equation}
As we already did for the fractional Allen-Cahn equation, we rely on the fractional harmonic extension $(v_E)^\e$ defined in  \eqref{poisson} which satisfies 
\begin{equation}\label{campdev2}
\begin{cases}
{\rm div}(z^a\nabla (v_E)^\e)=0 & \text{in $\R^{n+1}_+$}\,,\\
|(v_E)^\e|\leq 1 & \text{in $\R^{n+1}_+$}\,,\\
|(v_E)^\e|=1 & \text{on $\R^n$}\,,
\end{cases}
\end{equation}
and (by Remark \ref{varuptobound} and \eqref{campdev1847})
\begin{equation}\label{stateeq1458}
\delta{\bf E}\big((v_E)^\e,G\cup\partial^0G\big)[{\bf X}]= \int_{\partial^0G} (v_E)^\e\,{\rm div}(fX)\,\de x 
\end{equation}
for every vector field ${\bf X}=(X,{\bf X}_{n+1})\in C^1(\overline G;\R^{n+1})$ compactly supported in $G\cup\partial^0G$ satisfying ${\bf X}_{n+1}=0$ on $\partial^0 G$, whenever $G\subset\R^{n+1}_+$ is an admissible bounded open set such that $\overline{\partial^0G}\subset \Omega$.

Similarly to Section \ref{epsregtitle}, instead of  investigating only the regularity of $(v_E)^\e$  from \eqref{campdev2} and \eqref{stateeq1458}, we deal with the following more general situation. We consider an admissible bounded  open set $G\subset \R^{n+1}_+$ and a  function $u\in H^{1}(G,|z|^a\de{\bf x})\cap L^\infty(G)$ satisfying  
\begin{equation}\label{startequsurf}
\begin{cases}
{\rm div}(z^a\nabla u) =0 & \text{in $G$}\,,\\
|u| \leq b  & \text{in $G$}\,,\\
|u|=1 & \text{on $\partial^0 G$}\,,
\end{cases}
\end{equation}
for a given parameter $b\geq 1$ (whose importance will only appear in Section \ref{imprsect}), and 
\begin{equation}\label{critnonlocexteq}
\delta{\bf E}\big(u,G\cup\partial^0G\big)[{\bf X}]= \int_{\partial^0G} u\,{\rm div}(fX)\,\de x\,,
\end{equation}
where, again, $f$ belongs to $W^{1,q}(\partial^0G)$ with $q\in(\frac{n}{1+2s},n)$. 

Regularity estimates on the function $u$ at the boundary $\partial^0G$  will be our main concern in this section. The application to weak solutions of \eqref{eqcurvsec6} is the object of the very last subsection with some specific results.


\subsection{Energy monotonicity and clearing-out}\label{subres1} 


In this subsection, we consider an arbitrary solution $u\in H^{1}(G,|z|^a\de{\bf x})\cap L^\infty(G)$ of \eqref{startequsurf}-\eqref{critnonlocexteq}. We begin with the fundamental monotonicity formula involving the following density function:  
for a point $\mathbf{x}_0=(x_0,0)\in \partial^0G$ and $r>0$ such that $\overline B_r^+({\bf x}_0)\subset G$, we set  
$$\boldsymbol{\Theta}_u(f,x_0,r):=\frac{1}{r^{n-2s}}\mathbf{E}\big(u,B^+_r({\bf x}_0)\big)+ {\bf c}_{n,q}\,b\int_{0}^rt^{\theta_q-1}\|f\|_{\dot W^{1,q}(D_t(x_0))}\,\de t\,,$$
where the constants $\theta_q$ and $ {\bf c}_{n,q}$ are given by Lemma \ref{monotform}.

\begin{lemma}\label{monotformsurf}
For every $\mathbf{x}_0=(x_0,0)\in \partial^0G$ and $r>\rho>0$ such that $\overline B_r^+({\bf x}_0)\subset G$,  
$$\boldsymbol{\Theta}_u(f,x_0,r)- \boldsymbol{\Theta}_u(f,x_0,\rho)\geq 
d_s\int_{B^+_r({\bf x}_0)\setminus B^+_\rho({\bf x}_0)} z^a\frac{|(\mathbf{x}-{\bf x}_0)\cdot \nabla u|^2}{|\mathbf{x}-{\bf x}_0|^{n+2-2s}}\,\de{\bf x}\,.$$
Moreover, equality holds if $f=0$.  
\end{lemma}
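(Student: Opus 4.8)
The statement is a Pohozaev/monotonicity-type identity for the degenerate harmonic extension $u$ with a prescribed Neumann datum coming from the perturbation $f$. The natural strategy is to adapt the computation that produced Lemma~\ref{monotform}, but now working directly with $u$ (rather than a solution of the Allen--Cahn equation) and keeping track of the extra error term with the right sign. First I would exploit the inner-variation identity \eqref{critnonlocexteq}, together with formula \eqref{formulafirstvarbound} from Remark~\ref{varuptobound}, applied to a radial vector field of the form $\mathbf{X}(\mathbf{x})=\eta(|\mathbf{x}-\mathbf{x}_0|)(\mathbf{x}-\mathbf{x}_0)$ with $\eta\in C^\infty_c$ a cutoff approximating $\mathbf{1}_{[0,r]}-\mathbf{1}_{[0,\rho]}$ from above, exactly as in the proof of Lemma~\ref{monotform}. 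Note that this vector field has vanishing last component on $\partial^0G$, so \eqref{critnonlocexteq} applies; since $|u|=1$ on $\partial^0G$ there is no potential contribution (the term $\varepsilon^{-2s}\int W(u)\,\mathrm{div}X$ that appeared in Lemma~\ref{monotform} is simply absent here). Without loss of generality take $\mathbf{x}_0=0$.

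The computation yields, for a.e. $t\in(0,r)$, the differential identity
\begin{equation*}
(n-2s)\mathbf{E}(u,B_t^+)-t\frac{\de}{\de t}\mathbf{E}(u,B_t^+)+d_st\int_{\partial^+B_t}z^a\Big|\frac{\mathbf{x}}{|\mathbf{x}|}\cdot\nabla u\Big|^2\,\de\mathscr{H}^n=\int_{D_t}(nf+x\cdot\nabla f)u\,\de x-t\int_{\partial D_t}fu\,\de\mathscr{H}^{n-1}\,,
\end{equation*}
where I have used that $t\mapsto \mathbf{E}(u,B_t^+)$ is absolutely continuous (interior elliptic regularity of $u$ in $G$ gives smoothness away from $\partial^0G$, and the weighted energy is integrable up to the boundary). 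Dividing by $t^{n+1-2s}$ and rearranging, the left side becomes $\frac{\de}{\de t}\big(t^{2s-n}\mathbf{E}(u,B_t^+)\big)$ plus the quantity $d_s\,t^{2s-n}\int_{\partial^+B_t}z^a|\mathbf{x}\cdot\nabla u|^2/|\mathbf{x}|^2\,\de\mathscr{H}^n$, which after a coarea rewriting over the annulus integrates to $d_s\int_{B_r^+\setminus B_\rho^+}z^a|\mathbf{x}\cdot\nabla u|^2/|\mathbf{x}|^{n+2-2s}\,\de\mathbf{x}$; this is precisely the claimed nonnegative gain term, and it is the only term that survives when $f=0$, giving the asserted equality in that case.

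The remaining point is to bound the $f$-contribution. The right-hand side $\int_{D_t}(nf+x\cdot\nabla f)u-t\int_{\partial D_t}fu$ is controlled in absolute value by $b\,I(t)$ with $I(t)=\int_{D_t}(|f|+t|\nabla f|)\,\de x+t\int_{\partial D_t}|f|\,\de\mathscr{H}^{n-1}$, and by Sobolev embedding and the trace inequality (as in \eqref{1245142}) $I(t)\le \mathbf{c}_{n,q}\,t^{n+1-n/q}\|f\|_{\dot W^{1,q}(D_t)}$. Dividing by $t^{n+1-2s}$ turns this into $\mathbf{c}_{n,q}\,b\,t^{\theta_q-1}\|f\|_{\dot W^{1,q}(D_t)}$, which is exactly minus the derivative of the correction term $\mathbf{c}_{n,q}\,b\int_0^t\tau^{\theta_q-1}\|f\|_{\dot W^{1,q}(D_\tau)}\,\de\tau$ appearing in $\boldsymbol{\Theta}_u(f,x_0,\cdot)$. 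Integrating the resulting differential inequality from $\rho$ to $r$ gives $\boldsymbol{\Theta}_u(f,x_0,r)-\boldsymbol{\Theta}_u(f,x_0,\rho)\ge d_s\int_{B_r^+\setminus B_\rho^+}z^a|\mathbf{x}\cdot\nabla u|^2/|\mathbf{x}|^{n+2-2s}\,\de\mathbf{x}$, with equality when $f\equiv0$.

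\textbf{Main obstacle.} The delicate step is the justification of the Pohozaev identity up to the boundary $\partial^0G$: one must pass to the limit in the cutoff $\eta_k\uparrow\mathbf{1}_{[0,t]}$ inside \eqref{formulafirstvarbound} and control the weighted boundary terms on $\partial^+B_t$, which requires knowing that $z^a|\nabla u|^2$ is suitably integrable on spheres and that the trace of $u$ on $\partial^0G$ equals $\pm1$ in a way compatible with the integrations by parts generating $nf+x\cdot\nabla f$. Here the regularity theory of Section~\ref{FractAC} (continuity of $u$ and of $z^a\partial_z u$ up to $\partial^0G$ away from the free boundary, and the stationarity statement of Corollary~\ref{statAC}) is what makes the manipulation legitimate; one should also check the coarea rewriting of the annular gain term is valid for a.e. pair $\rho<r$ and then extend to all $\rho<r$ by monotonicity and continuity. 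Everything else is a routine adaptation of the proof of Lemma~\ref{monotform}.
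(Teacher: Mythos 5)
Your proposal follows the same route as the paper's proof, which is itself a brief "proceed exactly as in Lemma \ref{monotform}": use the stationarity identity \eqref{critnonlocexteq} with the radial cutoff vector field and formula \eqref{formulafirstvarbound}, obtain the Pohozaev-type differential inequality, bound the $f$-term by $b\,I(r)\le \mathbf{c}_{n,q}\,b\,r^{n+1-n/q}\|f\|_{\dot W^{1,q}(D_r)}$ via \eqref{1245142}, divide by $r^{n+1-2s}$, and integrate. One cosmetic nit: the absence of the $\varepsilon^{-2s}\int W(u)\,\mathrm{div}X$ term is best seen not from $|u|=1$ but from the fact that here stationarity is with respect to $\mathbf{E}(\cdot,G)$ alone (there is no potential in \eqref{critnonlocexteq}); your framing reaches the same conclusion, so this does not affect correctness.
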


\begin{proof}
We proceed exactly as in the proof of Lemma \ref{monotform}, assuming without loss of generality that $x_0=0$. Using \eqref{critnonlocexteq} and formula \eqref{formulafirstvarbound}, we infer that 
$$(n-2s) \mathbf{E}(u,B_r^+)-r \frac{\de}{\de r} \mathbf{E}(u,B_r^+)+ d_sr\int_{\partial^+ B_r}z^a\Big|\frac{\mathbf{x}}{|\mathbf{x}|}\cdot\nabla u\Big|^2\,\de \mathscr{H}^n
\leq b I(r) \,,$$
since $\|u\|_{L^\infty(\partial^0G)}\leq b$, where $I(r)$ is given by \eqref{definitionImonot}. Note that equality actually holds for $f=0$. In view of \eqref{1245142}, dividing by $r^{n+1-2s}$ and integrating the resulting inequality (or equality if $f=0$), the conclusion follows.  
\end{proof}

\begin{corollary}\label{directcorlmonot}
For every $\mathbf{x}=(x,0)\in \partial^0G\times\{0\}$, the limits 
$$\boldsymbol{\Theta}_u(x):=\lim_{r\downarrow 0} \boldsymbol{\Theta}_u(f,x,r)=\lim_{r\downarrow 0}  \frac{1}{r^{n-2s}}\mathbf{E}\big(u,B^+_r({\bf x}_0)\big)$$
exist, and the function $\boldsymbol{\Theta}_u:\partial^0G \to[0,\infty)$ is upper semicontinuous. In addition, 
\begin{equation}\label{cdv1117}
\boldsymbol{\Theta}_u(f,x_0,r)- \boldsymbol{\Theta}_u(x_0)\geq d_s\int_{B^+_r({\bf x}_0)} z^a\frac{|(\mathbf{x}-{\bf x}_0)\cdot \nabla u|^2}{|\mathbf{x}-{\bf x}_0|^{n+2-2s}}\,\de{\bf x}\,,
\end{equation}
and equality holds if $f=0$. 
\end{corollary}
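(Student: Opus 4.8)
The statement to prove is Corollary \ref{directcorlmonot}, which is a direct consequence of the monotonicity formula in Lemma \ref{monotformsurf}. The plan is to extract the existence of the limit $\boldsymbol{\Theta}_u(x)$ from the fact that $r\mapsto \boldsymbol{\Theta}_u(f,x,r)$ is, up to an absolutely continuous correction term, non-decreasing; then to identify the limit of the correction term; then to establish upper semicontinuity by a standard density argument; and finally to pass to the limit $\rho\downarrow 0$ in the quantitative inequality of Lemma \ref{monotformsurf}.

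First I would observe that Lemma \ref{monotformsurf} shows $r\mapsto\boldsymbol{\Theta}_u(f,x,r)$ is non-decreasing (since the right-hand side is non-negative). Hence the limit $\lim_{r\downarrow0}\boldsymbol{\Theta}_u(f,x,r)$ exists in $[0,\infty)$ — it is bounded below by $0$ and non-increasing as $r\downarrow0$, so the infimum is attained as a limit. Next, I would note that the correction term $\mathbf{c}_{n,q}\,b\int_0^r t^{\theta_q-1}\|f\|_{\dot W^{1,q}(D_t(x_0))}\,\de t$ tends to $0$ as $r\downarrow0$: indeed $\theta_q=1+2s-n/q>0$ by the assumption $q>n/(1+2s)$, and $\|f\|_{\dot W^{1,q}(D_t(x_0))}$ is bounded (in fact $\to0$) as $t\downarrow0$ since $f\in W^{1,q}(\partial^0G)$, so the integrand is $O(t^{\theta_q-1})$ which is integrable near $0$ and the integral from $0$ to $r$ vanishes as $r\to0$. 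Therefore $\boldsymbol{\Theta}_u(x_0)=\lim_{r\downarrow0}\boldsymbol{\Theta}_u(f,x_0,r)=\lim_{r\downarrow0}r^{2s-n}\mathbf{E}(u,B^+_r(\mathbf{x}_0))$, which gives the first assertion.

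For upper semicontinuity, I would fix $\mathbf{x}_j\to\mathbf{x}$ in $\partial^0G$ and a radius $r$ with $\overline{B^+_r(\mathbf{x})}\subset G$. For $j$ large, $\mathbf{x}_j\in D_{r/2}(x)\times\{0\}$ and $\overline{B^+_{r/2}(\mathbf{x}_j)}\subset G$, so monotonicity gives $\boldsymbol{\Theta}_u(x_j)\leq\boldsymbol{\Theta}_u(f,x_j,r/2)$. Since $\mathbf{E}(u,B^+_{r/2}(\mathbf{x}_j))\to\mathbf{E}(u,B^+_{r/2}(\mathbf{x}))$ as $j\to\infty$ whenever $\mathbf{E}(u,\partial B_{r/2}(\mathbf{x})\cap\R^{n+1}_+)=0$ (which holds for a.e.\ choice of radius; alternatively use a fixed sequence $r_m\downarrow0$ of such radii), and since $\|f\|_{\dot W^{1,q}(D_t(x_j))}\to\|f\|_{\dot W^{1,q}(D_t(x))}$ by continuity of translation in $L^q$, we get $\limsup_j\boldsymbol{\Theta}_u(x_j)\leq\boldsymbol{\Theta}_u(f,x,r_m)$; letting $r_m\downarrow0$ yields $\limsup_j\boldsymbol{\Theta}_u(x_j)\leq\boldsymbol{\Theta}_u(x)$. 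Finally, for \eqref{cdv1117}, I would apply Lemma \ref{monotformsurf} with the given $r$ and an arbitrary $\rho\in(0,r)$, then let $\rho\downarrow0$: the left-hand side converges to $\boldsymbol{\Theta}_u(f,x_0,r)-\boldsymbol{\Theta}_u(x_0)$ by the first part, while the right-hand side $d_s\int_{B^+_r(\mathbf{x}_0)\setminus B^+_\rho(\mathbf{x}_0)}z^a|(\mathbf{x}-\mathbf{x}_0)\cdot\nabla u|^2|\mathbf{x}-\mathbf{x}_0|^{-(n+2-2s)}\,\de\mathbf{x}$ increases to the integral over all of $B^+_r(\mathbf{x}_0)$ by monotone convergence; equality in the case $f=0$ is inherited from the equality case in Lemma \ref{monotformsurf}.

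The only mildly delicate point — hardly an obstacle — is justifying the continuity $\mathbf{E}(u,B^+_\rho(\mathbf{x}_j))\to\mathbf{E}(u,B^+_\rho(\mathbf{x}))$ in the upper semicontinuity argument; this is handled by choosing radii $\rho$ from a co-countable set where the measure $z^a|\nabla u|^2\,\de\mathbf{x}$ puts no mass on the sphere $\partial B_\rho(\mathbf{x})$, which suffices since we only need the conclusion along a sequence $\rho=r_m\downarrow0$. Everything else is a routine application of monotonicity plus the integrability $\theta_q>0$.
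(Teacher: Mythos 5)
Your proposal is correct and follows essentially the same route as the paper: monotonicity from Lemma \ref{monotformsurf} gives existence of the limit, the bound $\int_0^r t^{\theta_q-1}\|f\|_{\dot W^{1,q}(D_t(x_0))}\,\de t\leq\theta_q^{-1}\|f\|_{\dot W^{1,q}(\partial^0G)}r^{\theta_q}$ identifies it with the energy density limit, upper semicontinuity follows because $\boldsymbol{\Theta}_u$ is a decreasing pointwise limit of continuous functions, and letting $\rho\downarrow0$ with monotone convergence gives \eqref{cdv1117}. One small remark: your caution about choosing radii from a co-countable set where the sphere carries no measure is superfluous, since $z^a|\nabla u|^2\,\de\mathbf{x}$ is absolutely continuous with respect to Lebesgue measure and hence assigns zero mass to \emph{every} sphere, making $x\mapsto\mathbf{E}(u,B^+_r(\mathbf{x}))$ continuous for all $r$.
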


\begin{proof}
The existence of first limit defining $\boldsymbol{\Theta}_u(x)$ is of course a direct consequence of the monotonicity of the density function established in Lemma \ref{monotformsurf}. Existence and equality  for the second one follows from the existence of the first one and the estimate
$$\int_{0}^rt^{\theta_q-1}\|f\|_{\dot W^{1,q}(D_t(x_0))}\,\de t\leq  \frac{\|f\|_{\dot W^{1,q}(\partial^0G)}}{\theta_q}\,r^{\theta_q}\,.$$
Then $\boldsymbol{\Theta}_u$ is upper semicontinuous as a pointwise limit of a decreasing family of continuous functions. Finally, letting $\rho\to 0$ in Lemma \ref{monotformsurf} yields  \eqref{cdv1117}.  
\end{proof}

We continue with the following clearing-out property which can be seen as a {\sl small-energy} regularity result. 

\begin{lemma}\label{epsregminsurf}
There exist  a constant ${\boldsymbol{\eta}}_0>0$ (depending only on $n$ and $s$)  such that the following holds. For 
$\mathbf{x}_0=(x_0,0)\in \partial^0G$ and $r >0$ such that $\overline B_r^+({\bf x}_0)\subset G$,  the condition 
$$\boldsymbol{\Theta}_u(f,x_0,r)\leq {\boldsymbol{\eta}}_0$$ 
implies that  either $u=1$ on $D_{r/2}(x_0)$, or $u=-1$ on $D_{r/2}(x_0)$. 
\end{lemma}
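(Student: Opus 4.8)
\textbf{Proof plan for Lemma \ref{epsregminsurf}.} The plan is to argue by contradiction and compactness, exactly in the spirit of the ``$\eps$-regularity by blow-up'' scheme used for $L_s$-harmonic functions with nonlinear boundary reactions, but now in the simpler situation where there is no reaction term at all on $\partial^0 G$ (only the identity $|u|=1$). First I would reduce to a normalized configuration: by the scaling $u\mapsto u(x_0+r\,\cdot)$, which preserves equation \eqref{startequsurf}, rescales $f$ to $f_r(x):=r^{2s}f(x_0+rx)$ (so that $\|f_r\|_{\dot W^{1,q}(D_1)}\leq r^{\theta_q}\|f\|_{\dot W^{1,q}(\partial^0G)}$ by the homogeneity of the $\dot W^{1,q}$-norm and the definition of $\theta_q$), and sends $B_r^+(\mathbf{x}_0)$ to $B_1^+$, it suffices to treat the case $\mathbf{x}_0=0$, $r=1$, $\overline B_1^+\subset G$, and $\boldsymbol{\Theta}_u(f,0,1)=\mathbf{E}(u,B_1^+)+\mathbf{c}_{n,q}b\int_0^1 t^{\theta_q-1}\|f\|_{\dot W^{1,q}(D_t)}\,\de t\leq \boldsymbol{\eta}_0$. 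Note that $\|u\|_{L^\infty(B_1^+)}\leq b$ is fixed and that the contradiction sequence will have $f_k\to 0$ in $W^{1,q}(D_1)$ because the monotone quantity $\boldsymbol{\Theta}$ controls $\int_0^1 t^{\theta_q-1}\|f_k\|_{\dot W^{1,q}(D_t)}\,\de t$.

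The core of the argument is: suppose no such $\boldsymbol{\eta}_0$ works. Then there are sequences $\boldsymbol{\eta}_k\downarrow 0$, admissible $G_k$ with $\overline B_1^+\subset G_k$, functions $f_k\in W^{1,q}$, and solutions $u_k\in H^1(B_1^+,|z|^a\de\mathbf{x})\cap L^\infty$ of \eqref{startequsurf}--\eqref{critnonlocexteq} on $B_1^+$ (with parameter $b$), such that $\mathbf{E}(u_k,B_1^+)+\mathbf{c}_{n,q}b\int_0^1 t^{\theta_q-1}\|f_k\|_{\dot W^{1,q}(D_t)}\,\de t\leq\boldsymbol{\eta}_k\to0$, yet neither $u_k\equiv 1$ nor $u_k\equiv -1$ on $D_{1/2}$. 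From $\mathbf{E}(u_k,B_1^+)\to0$ and the uniform bound $|u_k|\leq b$, the sequence is bounded in $H^1(B_1^+,|z|^a\de\mathbf{x})$; extract a weakly convergent subsequence $u_k\rightharpoonup u_*$ with $\int_{B_1^+}z^a|\nabla u_*|^2=0$ by weak lower semicontinuity, hence $u_*$ is a constant $c$ with $|c|\le b$. Next I use Corollary \ref{strongcoro}-type compactness — or more directly the strong $H^1_{\mathrm{loc}}(B_1^+\cup D_1)$ convergence one gets here because there is no reaction (so $u_k$ solves the \emph{linear} problem ${\rm div}(z^a\nabla u_k)=0$ in $B_1^+$ with the \emph{constraint} $|u_k|=1$ on $D_1$, which is not a PDE one can bootstrap directly, but the energy decay forces convergence): from $\mathbf{E}(u_k,B_r^+)\to 0$ for every $r<1$ and the trace continuity $H^1(B_1^+,|z|^a\de\mathbf{x})\to L^1(D_1)$ (Remark \ref{trace}), the traces $u_k|_{D_1}$ converge in $L^1(D_r)$ to the constant $c$, while $|u_k|=1$ a.e.\ on $D_1$ forces $|c|=1$, i.e.\ $c=1$ or $c=-1$.

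The final step upgrades this $L^1$/energy convergence on the disc to the \emph{pointwise} statement $u_k\equiv\pm1$ on $D_{1/2}$, using the structural fact $|u_k|=1$ on $D_1$. The key observation is that $|u_k|=1$ a.e.\ on $D_1$ means $u_k$ takes only the two values $\pm1$ on $D_1$, so $D_1=A_k^+\cup A_k^-$ (up to null sets) with $A_k^\pm:=\{u_k=\pm1\}\cap D_1$; the convergence $u_k\to c=1$ (say) in $L^1(D_{3/4})$ then gives $\mathscr L^n(A_k^-\cap D_{3/4})\to 0$, and I need that for $k$ large this forces $A_k^-\cap D_{1/2}=\emptyset$. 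This is where the \emph{interior regularity} of $u_k$ in $G$ up to $\partial^0 G$ and the clearing-out geometry enters: by the regularity theory for the linear degenerate equation with the constraint reaction (as recorded via Lemma \ref{eas} and its proof, applied after odd reflection across the portions where the value is fixed), together with the energy monotonicity of Lemma \ref{monotformsurf} which propagates smallness of $\boldsymbol{\Theta}_u(f,x,\rho)$ to all nearby boundary points and scales, one concludes that $u_k$ is continuous on $D_{1/2}$ and that the connected set of $D_{1/2}$ where $|u_k|=1$ cannot split: if both $A_k^+\cap D_{1/2}$ and $A_k^-\cap D_{1/2}$ were nonempty and of positive measure, a uniform $C^{0,\alpha}$ estimate on $u_k^\e$ near $D_{1/2}$ (independent of $k$, depending only on $n,s,b$, coming from Lemma \ref{eas}-type bounds once the energy is small, cf.\ the proof of Corollary \ref{strongcoro}) would force an oscillation of order $1$ in $u_k$ on a ball of definite size, contradicting $\mathbf{E}(u_k,B_r^+)\to0$. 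Choosing $\boldsymbol{\eta}_0$ smaller than the threshold coming from this argument (and absorbing the $f$-contribution, which is harmless since $\int_0^1 t^{\theta_q-1}\|f\|_{\dot W^{1,q}(D_t)}\,\de t\le\frac{1}{\theta_q}\|f\|_{\dot W^{1,q}(\partial^0G)}\cdot 1$ and is made small by the hypothesis) yields the contradiction and proves the lemma.

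I expect the main obstacle to be making the last step fully rigorous: namely, extracting from ``small weighted Dirichlet energy in $B_1^+$'' plus ``$|u|=1$ on $D_1$'' the \emph{pointwise dichotomy} on $D_{1/2}$ rather than just an $L^1$-closeness. The cleanest route is probably to combine (a) strong $H^1_{\mathrm{loc}}(B_1^+\cup D_1,|z|^a\de\mathbf{x})$ convergence $u_k\to c=\pm1$ (obtained by a cut-and-paste minimality comparison exactly as in Corollary \ref{strongcoro}, noting that near $\{|u|=1\}$ one can run the same convexity/minimality trick trivially since the "potential" is replaced by a hard constraint, or simply by the linear energy-lowering comparison $w:=\chi u_* + (1-\chi)u_k$), together with (b) the uniform interior-up-to-$D_1$ Hölder bound for solutions of the linear degenerate equation with $L^\infty$-reaction from Lemma \ref{prereg}/Lemma \ref{eas} — here the reaction is $d_s\boldsymbol{\partial}_z^{(2s)}u_k$, which is \emph{not} a priori bounded, so one instead uses that on $\{u_k=\pm1\}\subset D_1$ the function $1\mp u_k\ge0$ vanishes and applies the De Giorgi/Harnack-type argument (as in the proof of Lemma \ref{eas}, via odd/even reflection and the Fabes–Kenig–Serapioni theory in \cite{FKS}) to conclude $u_k$ is locally $C^{0,\alpha}$ on $D_{1/2}$ with a bound depending only on $n,s,b$. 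Then $|u_k(\mathbf{x})-c|\le C\boldsymbol{\eta}_k^{\alpha/(n+2)}$ (say) on $D_{1/2}$, which is $<2$ for $k$ large, forcing $u_k\equiv c$ there since $u_k\in\{\pm1\}$ on $D_{1/2}$. This contradicts the assumption that $u_k$ is neither $\equiv1$ nor $\equiv-1$ on $D_{1/2}$, completing the proof.
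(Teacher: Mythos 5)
There is a genuine gap in your final step. You correctly reduce matters (by scaling and compactness) to: $\mathbf{E}(u_k,B_1^+)\to 0$, $|u_k|=1$ a.e.\ on $D_1$, and $u_k\to c\in\{\pm1\}$ in $L^1(D_{3/4})$; and you correctly observe that the remaining task is to pass from $\mathscr{L}^n(\{u_k\ne c\}\cap D_{3/4})\to 0$ to the pointwise conclusion $u_k\equiv c$ a.e.\ on $D_{1/2}$. But the mechanism you propose to close this --- a uniform $C^{0,\alpha}$ estimate on $D_{1/2}$ via Lemma~\ref{eas} or \cite{FKS} --- is not available here and is essentially circular. Lemma~\ref{eas} gives H\"older regularity of $u$ up to $D_1$ only under the hypothesis that the trace of $u$ is \emph{constant} (say $\equiv1$) on $D_1$, because the argument hinges on odd reflection of $u-1$ across the disc. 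Under the mere constraint $|u_k|=1$ a.e.\ on $D_1$, the trace is a two-valued jump function and $u_k$ is \emph{not} continuous up to $D_1$ in general; you cannot first establish a H\"older modulus and then deduce constancy, since constancy is exactly what makes the H\"older estimate accessible. Similarly, Corollary~\ref{strongcoro} is an Allen--Cahn statement with a genuine potential $W$ and a finite $\eps$; its convexity-based cut-and-paste does not transplant to the hard constraint $|u|=1$ without further argument. So the compactness scheme is left without a way to upgrade $L^1$-closeness to the required a.e.\ dichotomy.

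The paper's proof avoids this issue entirely with a direct, quantitative argument that is worth recording as the ``missing idea''. Writing $u=\chi_E-\chi_{\partial^0G\setminus E}$ on $\partial^0G$ and setting $\theta(y,\rho):=|E\cap D_\rho(y)|/|D_\rho|$, the key observations are: (a) $\theta$ is a \emph{continuous} function of $(y,\rho)\in D_{r/2}(x_0)\times(0,r/2)$ for any Borel set $E$, with no regularity of $u$ required; (b) the $L^1$-oscillation of a $\{\pm1\}$-valued function on a disc is exactly $4\omega_n\,\theta(1-\theta)$; and (c) the Poincar\'e inequality (Lemma~\ref{poincare}) bounds this oscillation by $\sqrt{\mathbf{E}(u,B_\rho^+(\mathbf{y}))/\rho^{n-2s}}$, which the monotonicity formula (Lemma~\ref{monotformsurf}) controls uniformly over $(y,\rho)$ by $\boldsymbol{\Theta}_u(f,x_0,r)$. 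Choosing $\boldsymbol{\eta}_0$ small forces $\theta(y,\rho)\in[0,1/4]\cup[3/4,1]$ for all $(y,\rho)$; continuity plus connectedness of the parameter domain forces a coherent choice; and Lebesgue differentiation at $\rho\downarrow0$ gives the a.e.\ conclusion. This sidesteps any need for a priori continuity of $u$ and also yields an explicit constant $\boldsymbol{\eta}_0$, which your contradiction argument would not.
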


\begin{proof}
Let us fix some ${\bf y}=(y,0)\in D_{r/2}(x_0)\times\{0\}$. By Lemma \ref{monotformsurf}, for $0<\rho<r/2$, 
$$\boldsymbol{\Theta}_u(y,\rho) \leq \boldsymbol{\Theta}_u(y,r/2)\leq 2^{n-2s} \boldsymbol{\Theta}_u(x_0,r)\leq 2^{n-2s}{\boldsymbol{\eta}}_0\,.$$
By the Poincar\'e inequality in Lemma \ref{poincare}, we deduce that 
$$A_\rho(y):=\frac{1}{\rho^n}\int_{D_\rho(y)}\big|u-[u]_{y,\rho}\big|\,\de x\leq  2^{n/2-s}\boldsymbol{\lambda}_{n,s}\sqrt{{\boldsymbol{\eta}}_0}\,,$$
where $[u]_{y,\rho}$ denotes the average of $u$ over $D_\rho(y)$. 
Since $|u|=1$ on $\partial^0G$, we can find a Borel subset $E\subset\partial^0G$ such that $u=\chi_E-\chi_{\partial^0G\setminus E}$ a.e. on $\partial^0G$. Then, 
$$A_\rho(y)= 4\omega_n\left(1-\frac{|E\cap D_\rho(y)|}{|D_\rho|}\right)\frac{|E\cap D_\rho(y)|}{|D_\rho|}\,.$$
Choosing 
$${\boldsymbol{\eta}}_0:=\frac{9\omega_n^2}{2^{n+4-2s}\boldsymbol{\lambda}^2_{n,s}} $$
leads to  $A_\rho(y)\leq 3\omega_n/4$.  In turn, this inequality implies 
$$|E\cap D_\rho(y)|/|D_\rho| \in [0,1/4]\cup[3/4,1]\,.$$ 
Since the function $(y,\rho)\in D_{r/2}(x_0)\times(0,r/2)\mapsto |E\cap D_\rho(y)|/|D_\rho|$ is continuous, we infer that either 
$$\frac{|E\cap D_\rho(y)|}{|D_\rho|} \in [0,1/4] \quad\text{for every $y\in D_{r/2}(x_0)$ and every $0<\rho<r/2$}\,,$$
or 
\begin{equation}\label{density1}
\frac{|E\cap D_\rho(y)|}{|D_\rho|} \in [3/4,1] \quad\text{for every $y\in D_{r/2}(x_0)$ and every $0<\rho<r/2$}\,.
\end{equation}
Now assume that \eqref{density1} holds (the other case being analogous). Then, by the Lebesgue differentiation theorem, we deduce that a.e. $y\in D_{r/2}(x_0)$ is a point of density $1$ for $E$. Consequently, $u=1$ a.e. on $D_{r/2}(x_0)$, and the lemma is proved. 
\end{proof}

\begin{corollary}\label{openset}
For every $(x,0)\in\partial^0G$, either $\boldsymbol{\Theta}_u(x)=0$ or $\boldsymbol{\Theta}_u(x)\geq {\boldsymbol{\eta}}_0$. As a consequence,  there is an open subset $E_u\subset\partial^0G$ such that $\partial E_u\cap\partial^0G=\big\{\boldsymbol{\Theta}_u\geq {\boldsymbol{\eta}}_0\big\}$ and 
$$u=\chi_{E_u}-\chi_{\partial^0G\setminus E_u} \quad\text{a.e. on $\partial^0G$}\,.$$ 
\end{corollary}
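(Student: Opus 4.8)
\textbf{Proof plan for Corollary \ref{openset}.}
The plan is to deduce the dichotomy $\boldsymbol{\Theta}_u(x)\in\{0\}\cup[{\boldsymbol{\eta}}_0,\infty)$ directly from the clearing-out Lemma \ref{epsregminsurf} together with the monotonicity of $\boldsymbol{\Theta}_u(f,x,\cdot)$ established in Lemma \ref{monotformsurf} (and Corollary \ref{directcorlmonot}). First I would fix $(x_0,0)\in\partial^0G$ and suppose $\boldsymbol{\Theta}_u(x_0)<{\boldsymbol{\eta}}_0$. Since $\boldsymbol{\Theta}_u(f,x_0,r)\downarrow \boldsymbol{\Theta}_u(x_0)$ as $r\downarrow 0$, and since the ``error'' term $\int_0^r t^{\theta_q-1}\|f\|_{\dot W^{1,q}(D_t(x_0))}\,\de t\leq \theta_q^{-1}\|f\|_{\dot W^{1,q}(\partial^0G)}r^{\theta_q}$ tends to $0$, one can pick $r>0$ small enough that $\overline B_r^+(\mathbf{x}_0)\subset G$ and $\boldsymbol{\Theta}_u(f,x_0,r)\leq {\boldsymbol{\eta}}_0$. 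Lemma \ref{epsregminsurf} then gives $u\equiv 1$ on $D_{r/2}(x_0)$ or $u\equiv -1$ on $D_{r/2}(x_0)$; in either case $u$ is locally constant near $x_0$ on $\partial^0G$, so $u^\e$ is smooth up to $\partial^0 G$ near $\mathbf{x}_0$ with vanishing boundary gradient (here I would invoke the boundary regularity already used elsewhere, e.g. the argument of Lemma \ref{eas} applied to $u-1$ or $u+1$, which gives $z^{-a}|\nabla u^\e|$ bounded near $\mathbf{x}_0$). This yields $\rho^{2s-n}\mathbf{E}(u,B_\rho^+(\mathbf{x}_0))\to 0$, i.e. $\boldsymbol{\Theta}_u(x_0)=0$. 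Hence the trichotomy collapses to the claimed dichotomy.

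Next I would define $E_u$ and verify its properties. Because $|u|=1$ a.e. on $\partial^0G$ (from \eqref{startequsurf}), there is a Borel set $E_u\subset\partial^0G$ with $u=\chi_{E_u}-\chi_{\partial^0G\setminus E_u}$ a.e.\ on $\partial^0G$; modifying $E_u$ on a Lebesgue-null set if necessary, I would take $E_u$ to be the set of points at which the density of $\{u=1\}$ equals $1$. The key point is that $E_u$ is open: if $\mathbf{x}_0\in E_u$ then $u=1$ a.e.\ on some disc $D_\rho(x_0)$, so by Lemma \ref{eas} (applied to $u^\e-1$) $\boldsymbol{\Theta}_u(x_0)=0$, and in fact, by the same clearing-out reasoning run in reverse, every point of a slightly smaller disc also lies in $E_u$; symmetrically the interior of $\{u=-1\}$ is open. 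Thus $\partial^0G$ decomposes (up to the null set $\{\boldsymbol{\Theta}_u\geq{\boldsymbol{\eta}}_0\}$) into the two disjoint open sets $\mathrm{int}\,E_u$ and $\mathrm{int}\,(\partial^0G\setminus E_u)$, and the topological boundary $\partial E_u\cap\partial^0G$ is contained in $\{\boldsymbol{\Theta}_u\geq{\boldsymbol{\eta}}_0\}$ (by the first part, a point with $\boldsymbol{\Theta}_u<{\boldsymbol{\eta}}_0$ has a whole neighborhood on one side). Conversely, if $\boldsymbol{\Theta}_u(x_0)\geq{\boldsymbol{\eta}}_0$ then $x_0$ cannot be interior to either $E_u$ or its complement (else $\boldsymbol{\Theta}_u(x_0)=0$), so $x_0\in\partial E_u\cap\partial^0G$; this gives the equality $\partial E_u\cap\partial^0G=\{\boldsymbol{\Theta}_u\geq{\boldsymbol{\eta}}_0\}$. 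Finally I would note this set is relatively closed in $\partial^0G$, consistently with the upper semicontinuity of $\boldsymbol{\Theta}_u$ from Corollary \ref{directcorlmonot}.

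I expect the main obstacle to be the implication ``$u$ constant on a small disc $\Rightarrow \boldsymbol{\Theta}_u(x_0)=0$'', i.e.\ the boundary decay of the weighted Dirichlet energy. This is not a formal consequence of the monotonicity formula alone; it requires a boundary regularity statement for $L_s$-harmonic functions that vanish (after subtracting the constant) on a flat piece of $\partial^0G$, precisely the content of Lemma \ref{eas}. Once that is granted, everything else is a soft topological/measure-theoretic bookkeeping argument built on Lemma \ref{epsregminsurf}. A minor subtlety worth flagging in the writeup is the distinction between the measure-theoretic $E_u$ (defined up to null sets) and its representative as a genuinely open set; I would state explicitly that $E_u$ is chosen to be the open set $\mathrm{int}\{u^\e_{|\partial^0G}=1\}$ in the pointwise-defined (continuous away from $\{\boldsymbol{\Theta}_u\geq{\boldsymbol{\eta}}_0\}$) sense, so that ``$u=\chi_{E_u}-\chi_{\partial^0G\setminus E_u}$ a.e.'' and ``$\partial E_u\cap\partial^0G=\{\boldsymbol{\Theta}_u\geq{\boldsymbol{\eta}}_0\}$'' hold simultaneously.
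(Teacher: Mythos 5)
Your proposal follows the same route as the paper: the dichotomy is a combination of the clearing-out Lemma \ref{epsregminsurf} (a small density at a finite scale forces $u\equiv\pm1$ on a smaller disc) with Lemma \ref{eas} (once $u$ is locally constant, the weighted Dirichlet energy decays like $r^{n+2s}$, so $\boldsymbol{\Theta}_u=0$), and the structure of $E_u$ is then read off from upper semicontinuity of $\boldsymbol{\Theta}_u$. The paper's own proof is extremely terse and simply points back to the argument already run in the proof of Theorem \ref{asymptneum}, which your writeup reproduces in a bit more detail.

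Two points should nevertheless be tightened. First, working with the density-$1$ set of $\{u=1\}$ as the definition of $E_u$ is not the cleanest choice: a priori a density-$1$ point could lie in $\{\boldsymbol{\Theta}_u\geq\boldsymbol{\eta}_0\}$, so openness is not automatic and your step ``if $\mathbf{x}_0\in E_u$ then $u=1$ a.e.\ on some disc'' is exactly the nontrivial implication you are trying to establish (the ``clearing-out run in reverse'' phrase papers over this). You do resolve this in your closing remark by switching to the paper's definition $E_u:=\{\mathbf{x}:\,u\equiv1\text{ on }D_r(x)\text{ for some }r\}$, which is open by construction, disjoint from $\Sigma:=\{\boldsymbol{\Theta}_u\geq\boldsymbol{\eta}_0\}$ by Lemma \ref{eas}, and whose complement relative to $\Sigma$ is $\{\mathbf{x}:\,u\equiv-1\text{ locally}\}$ by clearing-out; it is cleaner to adopt this from the outset. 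Second, your inclusion $\Sigma\subset\partial E_u\cap\partial^0G$ (``$x_0$ not interior to $E_u^c$, else $\boldsymbol{\Theta}_u(x_0)=0$'') implicitly uses that $\Sigma$ is $\mathscr{L}^n$-null (so that $D_r(x_0)\setminus\Sigma$ is dense, and if the disc avoids $E_u$ then $u\equiv-1$ a.e.\ on it); this nullity is not free, it comes from \cite[Corollary~3.2.3]{Zi} applied to the weighted Dirichlet energy (which is how the paper gets $\mathscr{H}^{n-2s}(\Sigma)=0$), and should be stated. With those two bookkeeping items supplied, your argument is complete and coincides with the paper's.
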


\begin{proof}
The alternative $\boldsymbol{\Theta}_u(x)=0$ or $\boldsymbol{\Theta}_u(x)\geq {\boldsymbol{\eta}}_0$ is a direct consequence of Lemma \ref{epsregminsurf} together with Lemma \ref{eas}. By upper semicontinuity of $\boldsymbol{\Theta}_u$, the set $\Sigma:=\{\boldsymbol{\Theta}_u\geq {\boldsymbol{\eta}}_0\}$ is relatively closed in $\partial^0G$, and  
$$E_u :=\Big\{\mathbf{x}=(x,0)\in\partial^0G : \text{$u=1$  on $D_r(x)$ for some $r\in(0,{\rm dist}(\mathbf{x},\partial^+G))$}\Big \}$$
is open and disjoint from $\Sigma$. Arguing as in the proof of Theorem \ref{asymptneum}, Step 4, we obtain that $u=\chi_{E_u}-\chi_{\partial^0G\setminus E_u}$ a.e. on  $\partial^0G$, and 
$\partial E_u\cap\partial^0G=\Sigma$. 
\end{proof}

\begin{remark}
By \cite[Corollary 3.2.3]{Zi}, we also have $\mathscr{H}^{n-2s}(\partial E_u\cap \partial^0G)=0$.  We will improve this a priori estimate later on. 
\end{remark}


\subsection{Compactness}\label{subsectcompsurf}


In this subsection, we are dealing with compactness issues for sequences  $\{u_k\}_{k\in\mathbb{N}}\subset H^{1}(G,|z|^a\de{\bf x})\cap L^\infty(G)$ satisfying 
$$\begin{cases}
{\rm div}(z^a\nabla u_k) =0 & \text{in $G$}\,,\\
|u_k|\leq b & \text{in $G$}\,,\\
|u_k|=1 & \text{on $\partial^0 G$}\,,
\end{cases}$$
and 
\begin{equation}\label{critnonlocexteqcoucou}
\delta{\bf E}\big(u_k,G\cup\partial^0G\big)[{\bf X}]=  \int_{\partial^0G} u_k\,{\rm div}(f_kX)\,\de x\,,
\end{equation}
for some $f_k\in W^{1,q}(\partial^0G)$  with $q\in(\frac{n}{1+2s},n)$, and a parameter $b\geq 1$ independent of $k$. 

\begin{theorem}\label{compactminsurf}
If $\sup_k{\bf E}(u_k,G)+\|f_k\|_{W^{1,q}(\partial^0G)}<\infty$, then there exist a (not relabeled) subsequence and a function $u\in H^1(G,|z|^a\de{\bf x})\cap L^\infty(G)$ satisfying \eqref{startequsurf} such that $u_k\rightharpoonup u$ weakly in $H^1(G,|z|^a\de{\bf x})$, and $u_k\to u$ 
strongly in $H^1_{\rm loc}(G\cup\partial^0G,|z|^a\de{\bf x})$. In addition, if $f_k\rightharpoonup f$ weakly in $W^{1,q}(\partial^0G)$, then $u$ satisfies \eqref{critnonlocexteq}. 
\end{theorem}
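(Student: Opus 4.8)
\textbf{Proof strategy for Theorem \ref{compactminsurf}.} The plan is to extract a weak limit, upgrade it to a strong local limit by ruling out energy concentration near $\partial^0G$, and then pass to the limit in the first inner variation. Since $\sup_k {\bf E}(u_k,G) + \|u_k\|_{L^\infty(G)} < \infty$, the sequence $\{u_k\}$ is bounded in $H^1(G,|z|^a\de{\bf x})$, so up to a subsequence $u_k \rightharpoonup u$ weakly in $H^1(G,|z|^a\de{\bf x})$ and strongly in $L^2(G,|z|^a\de{\bf x})$ by the compact embedding $H^1(G,|z|^a\de{\bf x}) \hookrightarrow L^1(G)$ combined with the uniform $L^\infty$-bound; in particular $|u| \leq b$ in $G$. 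Since each $u_k$ is $L_s$-harmonic in $G$, standard interior elliptic estimates give $u_k \to u$ in $C^\ell_{\rm loc}(G)$ for all $\ell$, so ${\rm div}(z^a\nabla u) = 0$ in $G$. The continuity of the trace operator $H^1(G,|z|^a\de{\bf x}) \to L^1(\partial^0G)$ (Remark \ref{trace}) shows $u_k \to u$ in $L^1(\partial^0G)$, hence $|u|=1$ a.e.\ on $\partial^0G$ because $|u_k|=1$ there, so $u$ satisfies \eqref{startequsurf}.

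The core of the argument is the strong $H^1_{\rm loc}$-convergence up to $\partial^0G$. Introduce the energy measures $\mu_k := \frac{d_s}{2} z^a |\nabla u_k|^2 \mathscr{L}^{n+1}\LL G$, which have uniformly bounded mass, so $\mu_k \rightharpoonup \mu := \frac{d_s}{2} z^a |\nabla u|^2 \mathscr{L}^{n+1}\LL G + \mu_{\rm sing}$ weakly* as Radon measures on $G \cup \partial^0G$, where $\mu_{\rm sing} \geq 0$; the $C^\ell_{\rm loc}$-convergence inside $G$ forces ${\rm spt}(\mu_{\rm sing}) \subset \partial^0G$. The monotonicity formula of Lemma \ref{monotformsurf}, applied to each $u_k$ with $f_k$ and using $\|f_k\|_{\dot W^{1,q}(\partial^0G)} \leq C\|f_k\|_{W^{1,q}(\partial^0G)} \leq T$ uniformly in $k$, passes to the limit and yields a monotonicity-type inequality for $\mu$; this gives existence and finiteness of the $(n-2s)$-density $\Theta^{n-2s}(\mu,\mathbf{x})$ at every $\mathbf{x} \in \partial^0G$, upper semicontinuity of $\mathbf{x} \mapsto \Theta^{n-2s}(\mu,\mathbf{x})$, and — via a well-known density comparison (\cite[Theorem 2.56]{AFP}) — absolute continuity of $\mu_{\rm sing}$ with respect to $\mathscr{H}^{n-2s}\LL \Sigma$ on the concentration set $\Sigma := \{\Theta^{n-2s}(\mu,\cdot) \geq \boldsymbol{\eta}_0/\omega_{n-2s}\}$. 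Away from $\Sigma$, the clearing-out Lemma \ref{epsregminsurf} combined with the strong interior convergence and Lemma \ref{eas} shows $\mu_{\rm sing}$ vanishes on $\partial^0G \setminus \Sigma$; hence $\mu_{\rm sing}$ is carried by $\Sigma$. The crucial step is then to argue, exactly as in the proof of Theorem \ref{asymptneum} (Step 1), that if $\mu_{\rm sing}(\Sigma) > 0$ one produces $\mathscr{H}^{n-2s}$-a.e.\ point of $\Sigma$ at which both $\Theta^{n-2s}(\mu_{\rm sing},\mathbf{x})$ and $\Theta^{n-2s}(\mu,\mathbf{x})$ exist, are equal, and lie in $(0,\infty)$; Marstrand's Theorem (\cite[Theorem 14.10]{Matti}) then forces $n-2s \in \mathbb{Z}$, a contradiction since $s \in (0,1/2)$. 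Therefore $\mu_{\rm sing} \equiv 0$, and comparing $\liminf_k {\bf E}(u_k,G') \geq {\bf E}(u,G')$ from lower semicontinuity with the reverse inequality ${\bf E}(u,G') = \mu(G') = \lim_k {\bf E}(u_k,G')$ (using that $\mu(\partial G')=0$ for a.e.\ choice of $G'$) yields $u_k \to u$ strongly in $H^1_{\rm loc}(G \cup \partial^0G,|z|^a\de{\bf x})$.

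Finally, assume $f_k \rightharpoonup f$ weakly in $W^{1,q}(\partial^0G)$. Fix a vector field ${\bf X} = (X,{\bf X}_{n+1}) \in C^1(\overline G;\R^{n+1})$ compactly supported in $G \cup \partial^0G$ with ${\bf X}_{n+1} = 0$ on $\partial^0G$. In the identity \eqref{critnonlocexteqcoucou}, the left-hand side is given by formula \eqref{formulafirstvarbound}, which is continuous under strong $H^1$-convergence on the compact support of ${\bf X}$ inside $G \cup \partial^0G$, so $\delta{\bf E}(u_k,G\cup\partial^0G)[{\bf X}] \to \delta{\bf E}(u,G\cup\partial^0G)[{\bf X}]$. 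For the right-hand side, write $\int_{\partial^0G} u_k\,{\rm div}(f_kX)\,\de x = \int_{\partial^0G} u_k X \cdot \nabla f_k\,\de x + \int_{\partial^0G} u_k f_k\,{\rm div}X\,\de x$; since $\nabla f_k \rightharpoonup \nabla f$ weakly in $L^q$, $f_k \to f$ strongly in $L^{q^*}$ (compact Sobolev embedding), and $u_k \to u$ strongly in $L^1(\partial^0G)$ with $\|u_k\|_{L^\infty} \leq b$, both terms converge to the corresponding terms with $u$ and $f$, giving $\int_{\partial^0G} u_k\,{\rm div}(f_kX)\,\de x \to \int_{\partial^0G} u\,{\rm div}(fX)\,\de x$. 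Hence $u$ satisfies \eqref{critnonlocexteq}. The main obstacle is the Marstrand-type dimension argument ruling out $\mu_{\rm sing}$: it requires carefully transporting the monotonicity inequality and the density-existence statements from the $u_k$ to $\mu$ and then to $\mu_{\rm sing}$, mimicking Step 1 of the proof of Theorem \ref{asymptneum} but now without any $\varepsilon$-regularization, which is exactly where the non-integrality of $n-2s$ is exploited. \qed
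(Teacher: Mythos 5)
Your argument reproduces the paper's proof: weak limit extraction, defect measure $\mu_{\rm sing}$ supported on $\partial^0G$, existence and upper semicontinuity of the $(n-2s)$-density via the monotonicity formula of Lemma~\ref{monotformsurf}, clearing-out away from a concentration set $\Sigma$, absolute continuity of $\mu_{\rm sing}$ with respect to $\mathscr{H}^{n-2s}\LL\Sigma$, Marstrand's theorem exploiting $n-2s\notin\Z$, and then passing to the limit in the inner variation identity using the strong $H^1_{\rm loc}$ convergence. The paper compresses all this by deferring to Step~1 of the proof of Theorem~\ref{asymptneum}; the only minor quibbles with your write-up are that the paper uses the threshold $\boldsymbol{\eta}_0/(2\omega_{n-2s})$ for $\Sigma$ so that the $f$-correction in Lemma~\ref{epsregminsurf} is absorbed without further argument, and that $W^{1,q}\hookrightarrow L^{q^*}$ is not compact at the endpoint exponent (compactness into $L^r$ for $r<q^*$ is what you actually need, and it suffices given the uniform $L^\infty$ bound on $u_k$).
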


\begin{proof}
Since the argument essentially follows the proof of Theorem \ref{asymptneum} (Step 1), we only sketch the main points. 
First, by assumption on the energy,  we can find a subsequence and $u\in H^1(G,|z|^a\de{\bf x})\cap L^\infty(G)$ satisfying \eqref{startequsurf}  such that 
$u_k\rightharpoonup u$ weakly in $H^1(G,|z|^a\de{\bf x})$ and strongly in $H^1_{\rm loc}(G,|z|^a\de{\bf x})$ . 
Consider the sequence of measures $\mu_k:=\frac{d_s}{2}z^a|\nabla u_k|^2 \mathscr{L}^{n+1}\LL G$ 
which admits a weakly* convergent (not relabeled) subsequence towards a limiting measure $\mu= \frac{d_s}{2}z^a|\nabla u|^2 \mathscr{L}^{n+1}\LL G +\mu_{\rm sing}$ with ${\rm spt}(\mu_{\rm sing})\subset\partial^0G$. From Lemma \ref{monotformsurf}, we infer that $\mu$ satisfies the monotonicity inequality \eqref{tim1213} with $T=\sup_k\|f_k\|_{\dot W^{1,q}(\partial^0G)}$. As a consequence, the density $\Theta^{n-2s}(\mu,{\bf x})$  (as defined in \eqref{existdens}) exists, is finite for every ${\bf x}\in\partial^0G$, and defines an upper semicontinuous function on $\partial^0G$. We define the concentration set $\Sigma$ as in \eqref{defconcentrset} with $\boldsymbol{\theta}_{b,T}$ replaced by $\boldsymbol{\eta}_0/2$.  
Then $\Sigma=\big\{\Theta^{n-2s}(\mu,\cdot)\geq \boldsymbol{\eta}_0/(2\omega_{n-2s}) \big\}\subset\partial^0G$, and $\mathscr{H}^{n-2s}(\Sigma)$ is finite. We continue exactly as Theorem \ref{asymptneum} to show that $\mu_{\rm sing}$ is absolutely continuous with respect $\mathscr{H}^{n-2s}\res\Sigma$, and that $\Theta^{n-2s}(\mu_{\rm sing},{\bf x})\in[0,\infty)$ exists at $\mathscr{H}^{n-2s}$-a.e. ${\bf x}\in \Sigma$. By Marstrand's Theorem, we must have $\mu_{\rm sing}\equiv 0$. In other words, $u_k\to u$ strongly in $H^1_{\rm loc}(G\cup\partial^0G,|z|^a\de{\bf x})$. In view of \eqref{formulafirstvarbound}, if $f_k\rightharpoonup f$ weakly in $W^{1,q}(\partial^0G)$, this strong convergence allows us to pass to the limit $k\to\infty$ in \eqref{critnonlocexteqcoucou} and obtain \eqref{critnonlocexteq}. 
\end{proof}

\begin{remark}
If $u_k\to u$ strongly in $H^1_{\rm loc}(G\cup\partial^0G,|z|^a\de{\bf x})$, 
$f_k\to f$ strongly in $W^{1,q}(\partial^0G)$, $x_k\to x$ and $r_k\to r>0$, then $\boldsymbol{\Theta}_{u_k}(f_k,x_k,r_k)\to\boldsymbol{\Theta}_{u}(f,x,r)$. 
\end{remark}

\begin{lemma}\label{updenstheta}
In addition to the conclusion of Theorem \ref{compactminsurf}, if $\{x_k\}_{k\in\mathbb{N}}\subset\partial^0 G$ is a sequence converging to $x\in\partial^0G$, then 
$$ \limsup_{k\to\infty} \,\boldsymbol{\Theta}_{u_k}(x_k)\leq \boldsymbol{\Theta}_{u}(x)\,.$$
\end{lemma}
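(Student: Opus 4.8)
The plan is to read the estimate off the monotonicity formula of Lemma~\ref{monotformsurf} (equivalently Corollary~\ref{directcorlmonot}), applied to each $u_k$ with a \emph{fixed} small radius $r$, and then to let $r\downarrow0$. Since $x_k\to x\in\partial^0G$ and $\partial^0G$ is relatively open, one may fix $r>0$ small enough (depending only on $x$) that $\overline{B_r^+(\mathbf{x}_k)}\subset G\cup\partial^0G$ for all $k$ large, so that the density functions $\boldsymbol{\Theta}_{u_k}(f_k,x_k,\cdot)$ are defined on $(0,r]$. Monotonicity then gives, for such $k$,
\begin{equation*}
\boldsymbol{\Theta}_{u_k}(x_k)\ \le\ \boldsymbol{\Theta}_{u_k}(f_k,x_k,r)\ =\ \frac{1}{r^{n-2s}}\,\mathbf{E}\big(u_k,B_r^+(\mathbf{x}_k)\big)+\mathbf{c}_{n,q}\,b\int_0^r t^{\theta_q-1}\|f_k\|_{\dot W^{1,q}(D_t(x_k))}\,\de t .
\end{equation*}
I would pass to the $\limsup$ in $k$ in this inequality, and afterwards send $r\downarrow0$, using Corollary~\ref{directcorlmonot} to recover $\boldsymbol{\Theta}_u(x)$.

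For the ``potential'' term I would not try to push the weak $W^{1,q}$-convergence of $f_k$ through norms over the moving discs $D_t(x_k)$ (weak convergence does not control those in general); instead I would just use the uniform bound $\sup_k\|f_k\|_{\dot W^{1,q}(\partial^0G)}\le C\sup_k\|f_k\|_{W^{1,q}(\partial^0G)}=:C'<\infty$ (valid since $\partial^0G$ is Lipschitz), which bounds that term by $\tfrac{\mathbf{c}_{n,q}bC'}{\theta_q}\,r^{\theta_q}$, a quantity that is uniform in $k$ and tends to $0$ as $r\downarrow0$ because $\theta_q=1+2s-n/q>0$. Thus this contribution is harmless.

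The only step requiring a little care is the convergence $\mathbf{E}(u_k,B_r^+(\mathbf{x}_k))\to\mathbf{E}(u,B_r^+(\mathbf{x}))$ with moving centres. Writing $g_k:=\tfrac{d_s}{2}z^a|\nabla u_k|^2$ and $g:=\tfrac{d_s}{2}z^a|\nabla u|^2$, the strong convergence $u_k\to u$ in $H^1_{\rm loc}(G\cup\partial^0G,|z|^a\de\mathbf{x})$ yields $g_k\to g$ in $L^1(\mathcal K)$ for a fixed compact set $\mathcal K\subset G\cup\partial^0G$ containing all the relevant closed half-balls, and one estimates
\begin{equation*}
\big|\mathbf{E}(u_k,B_r^+(\mathbf{x}_k))-\mathbf{E}(u,B_r^+(\mathbf{x}))\big|\ \le\ \int_{\mathcal K}|g_k-g|\,\de\mathbf{x}+\int_{B_r^+(\mathbf{x}_k)\,\triangle\,B_r^+(\mathbf{x})}g\,\de\mathbf{x}\ \longrightarrow\ 0 ,
\end{equation*}
the first term by $L^1$-convergence and the second by absolute continuity of the integral of the single function $g\in L^1$, since $|B_r^+(\mathbf{x}_k)\,\triangle\,B_r^+(\mathbf{x})|\to0$. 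Combining the three estimates gives $\limsup_k\boldsymbol{\Theta}_{u_k}(x_k)\le\boldsymbol{\Theta}_u(f,x,r)+\tfrac{\mathbf{c}_{n,q}bC'}{\theta_q}r^{\theta_q}$ for all small $r$; letting $r\downarrow0$ and invoking Corollary~\ref{directcorlmonot} (which gives $\boldsymbol{\Theta}_u(f,x,r)\to\boldsymbol{\Theta}_u(x)$) yields $\limsup_k\boldsymbol{\Theta}_{u_k}(x_k)\le\boldsymbol{\Theta}_u(x)$, as claimed. I do not anticipate any genuine obstacle here; the statement is essentially a soft consequence of the monotonicity formula together with the strong $H^1_{\rm loc}$-convergence supplied by Theorem~\ref{compactminsurf}.
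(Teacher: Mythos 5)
Your proof is correct and follows essentially the same route as the paper: both start from the monotonicity formula $\boldsymbol{\Theta}_{u_k}(x_k)\le\boldsymbol{\Theta}_{u_k}(f_k,x_k,r)$, use the uniform $\dot W^{1,q}$-bound to control the potential term by $Cr^{\theta_q}$, pass $k\to\infty$ in the energy term via the strong $H^1_{\rm loc}$-convergence, and then let $r\downarrow 0$. The only (cosmetic) difference is how the moving centers are handled: the paper absorbs $B_r^+(\mathbf{x}_k)$ into the fixed ball $B_{r+|x_k|}^+(\mathbf{x})$ and uses $|x_k|\to 0$, while you estimate the symmetric difference of the two half-balls directly by absolute continuity of $\int g$; both are equivalent.
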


\begin{proof}
Assume for simplicity that $x=0$. Applying Corollary \ref{directcorlmonot}, we obtain for $r>0$ sufficiently small and $r_k:=|x_k|<r$, 
$$ \boldsymbol{\Theta}_{u_k}(x_k) \leq  \boldsymbol{\Theta}_{u_k}(f_k,x_k,r) \leq \frac{1}{r^{n-2s}} {\bf E}(u_k,B^+_{r+r_k}) +Tr^{\theta_q}\,, $$
with $T:=( {\bf c}_{n,q}b/\theta_q)\sup_k\|f_k\|_{\dot W^{1,q}(\partial^0G)}<\infty$. 
Since $r_k\to0$ and $u_k$ converges strongly to $u$ in $H^1(B^+_{2r},|z|^a\de{\bf x})$, we have $ {\bf E}(u_k,B^+_{r+r_k}) \to  {\bf E}(u,B^+_{r}) $. Hence
$$\limsup_{k\to\infty} \, \boldsymbol{\Theta}_{u_k}(x_k) \leq   \frac{1}{r^{n-2s}} {\bf E}(u,B^+_{r}) +Tr^{\theta_q} \,.$$
Letting $r\downarrow0$ now leads to the conclusion.
\end{proof}

\begin{corollary}\label{Hausdcvbd}
In addition to the conclusion of Theorem \ref{compactminsurf}, the boundaries $\partial E_{u_k}\cap\partial^0G$ converge locally uniformly in $\partial^0G$ to $\partial E_u\cap\partial^0G$, i.e., for every compact subset $K\subset\partial^0G$ and every $r>0$, 
$$\partial E_{u_k}\cap K \subset\mathscr{T}_r(\partial E_u\cap \partial^0G) \quad\text{and}\quad \partial E_{u}\cap K\subset \mathscr{T}_r(\partial E_{u_k}\cap \partial^0G) $$
for $k$ large enough. 
\end{corollary}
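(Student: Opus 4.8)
\textbf{Proof sketch for Corollary \ref{Hausdcvbd}.}

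The plan is to prove the two inclusions separately, both by contradiction, using the density characterization of $\partial E_{u_k}\cap\partial^0G$ and $\partial E_u\cap\partial^0G$ from Corollary \ref{openset} together with the strong $H^1_{\mathrm{loc}}$-convergence from Theorem \ref{compactminsurf} and the density semicontinuity from Lemma \ref{updenstheta}.

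\emph{First inclusion: $\partial E_{u_k}\cap K\subset\mathscr{T}_r(\partial E_u\cap\partial^0G)$ for large $k$.} Suppose not. Then there is a compact $K\subset\partial^0G$, some $r>0$, a (not relabeled) subsequence, and points $\mathbf{x}_k\in\partial E_{u_k}\cap K$ with $\mathrm{dist}(\mathbf{x}_k,\partial E_u\cap\partial^0G)\geq r$. After passing to a further subsequence, $\mathbf{x}_k\to\mathbf{x}_\infty\in K$, and $\mathrm{dist}(\mathbf{x}_\infty,\partial E_u\cap\partial^0G)\geq r$, so $\mathbf{x}_\infty\notin\partial E_u$. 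By Corollary \ref{openset}, $\boldsymbol{\Theta}_u(\mathbf{x}_\infty)=0$: indeed, $\mathbf{x}_\infty\in E_u$ or $\mathbf{x}_\infty\in\partial^0G\setminus\overline{E_u}$, and in either case $u\equiv \pm1$ on a disc $D_\rho(x_\infty)$, whence $\boldsymbol{\Theta}_u(\mathbf{x}_\infty)=0$ by Lemma \ref{eas} (equation \eqref{samed1250}). On the other hand $\mathbf{x}_k\in\partial E_{u_k}$ forces $\boldsymbol{\Theta}_{u_k}(\mathbf{x}_k)\geq\boldsymbol{\eta}_0$ by Corollary \ref{openset}. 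This contradicts Lemma \ref{updenstheta}, which gives $\limsup_k\boldsymbol{\Theta}_{u_k}(\mathbf{x}_k)\leq\boldsymbol{\Theta}_u(\mathbf{x}_\infty)=0$.

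\emph{Second inclusion: $\partial E_u\cap K\subset\mathscr{T}_r(\partial E_{u_k}\cap\partial^0G)$ for large $k$.} Again suppose not: for some compact $K$, some $r>0$, and a subsequence there are points $\mathbf{x}_k\in\partial E_u\cap K$ with $\mathrm{dist}(\mathbf{x}_k,\partial E_{u_k}\cap\partial^0G)\geq r$; we may assume $\mathbf{x}_k\to\mathbf{x}_\infty\in\partial E_u\cap K$. Shrinking $r$, we may assume $D_{r}(x_k)\subset\partial^0G$ (eventually) and that $D_r(x_k)$ is disjoint from $\partial E_{u_k}$; since $E_{u_k}$ and its complement in $\partial^0G$ are open and cover $\partial^0G$ up to the negligible set $\partial E_{u_k}$, the disc $D_r(x_k)$ lies (up to measure zero) entirely in $E_{u_k}$ or entirely in $\partial^0G\setminus E_{u_k}$, i.e. $u_k\equiv\kappa_k$ a.e.\ on $D_r(x_k)$ with $\kappa_k\in\{\pm1\}$. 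Passing to a further subsequence, $\kappa_k\equiv\kappa$ is constant, and the discs $D_{r/2}(x_k)$ all contain a fixed disc $D_{r/4}(x_\infty)$ for $k$ large. By the strong $L^1$-convergence of traces (Remark \ref{trace} and $u_k\to u$ in $H^1_{\mathrm{loc}}(G\cup\partial^0G,|z|^a\de\mathbf{x})$) we get $u\equiv\kappa$ a.e.\ on $D_{r/4}(x_\infty)$, hence $\mathbf{x}_\infty\in E_u$ (if $\kappa=1$) or $\mathbf{x}_\infty\in\partial^0G\setminus\overline{E_u}$ (if $\kappa=-1$); either way $\mathbf{x}_\infty\notin\partial E_u$, contradicting $\mathbf{x}_\infty\in\partial E_u\cap K$.

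\emph{Main obstacle.} The first inclusion is immediate from Lemma \ref{updenstheta} and Corollary \ref{openset}. The delicate point is the second inclusion: one must convert the ``$\mathbf{x}_k$ far from $\partial E_{u_k}$'' hypothesis into the statement that $u_k$ is identically $+1$ (or identically $-1$) on a fixed disc independent of $k$, which requires the connectedness/covering argument above to pin down a single sign $\kappa$ on a common disc, and then transfer that identity to $u$ by the strong convergence of traces. Care is also needed so that the chosen discs stay a fixed distance away from $\partial^+G$, which is guaranteed by working with $K$ compact in $\partial^0G$. $\qed$
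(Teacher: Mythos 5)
Your proof is correct. The first inclusion is essentially the paper's argument (upper semicontinuity of $\boldsymbol{\Theta}_u$ from Corollary~\ref{openset} plus the density semicontinuity of Lemma~\ref{updenstheta}), rephrased as a sequential contradiction rather than a supremum extraction; the difference is cosmetic. For the second inclusion you take a genuinely different route. The paper covers $\partial E_u\cap K$ by finitely many discs $D_{r/2}(x_j)$, locates points $x_j^\pm$ in $E_u$ and in $\partial^0G\setminus\overline{E_u}$, and runs the same density argument (Lemma~\ref{updenstheta} plus Corollary~\ref{openset}) at $x_j^\pm$ to force $u_k=\pm1$ on fixed discs around those points, whence $\partial E_{u_k}$ must cross each $D_{r/2}(x_j)$. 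You instead argue by contradiction: a point $x_k\in\partial E_u\cap K$ at distance $\geq r$ from $\partial E_{u_k}$ gives a disc $D_r(x_k)\subset\partial^0G$ disjoint from the relatively closed set $\partial E_{u_k}\cap\partial^0G=\{\boldsymbol{\Theta}_{u_k}\geq\boldsymbol{\eta}_0\}$; by connectedness $D_r(x_k)$ lies in $E_{u_k}$ or in its open complement, so $u_k\equiv\kappa_k\in\{\pm1\}$ on $D_r(x_k)$; fixing the sign along a subsequence and using the $L^1$ trace convergence (Remark~\ref{trace}) transfers this to $u\equiv\kappa$ on a fixed disc around the limit point, contradicting membership in $\partial E_u$. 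Your version entirely sidesteps the density function $\boldsymbol{\Theta}$ in the second inclusion and is arguably the more elementary of the two, relying only on the topological decomposition of $\partial^0G$ from Corollary~\ref{openset} and the $L^1$ convergence of traces; the paper's version keeps the toolkit uniform across both inclusions at the cost of an extra covering step. Both are valid. One small point to tighten: justify "shrinking $r$" explicitly by noting that $K$ compact in $\partial^0G$ gives a uniform $r_0>0$ with $D_{r_0}(x)\subset\partial^0G$ for all $x\in K$, and that proving the inclusion for a smaller $r$ proves it for the original one.
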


\begin{proof}
We start proving the first inclusion. By Corollary \ref{openset}, $\boldsymbol{\Theta}_{u}(x)=0$ for every point $x\in K\setminus \mathscr{T}_r(\partial E_u\cap \partial^0G)$. Since $\boldsymbol{\Theta}_{u_k}$ is upper semicontinuous, we can find  a point $x_k\in  K\setminus \mathscr{T}_r(\partial E_u\cap \partial^0G)$ such that 
$$\boldsymbol{\Theta}_{u_k}(x_k)= \sup_{x\in K\setminus \mathscr{T}_r(\partial E_u\cap \partial^0G)}\boldsymbol{\Theta}_{u_k}(x)\,.$$
Then select a subsequence $\{k_j\}_{j\in\mathbb{N}}$ such that $\lim_j\boldsymbol{\Theta}_{u_{k_j}}(x_{k_j})=\limsup_k\boldsymbol{\Theta}_{u_k}(x_k)$. Extracting a further subsequence if necessary, we can assume that $x_{k_j}\to x_*\in K\setminus \mathscr{T}_r(\partial E_u\cap \partial^0G)$. Since $\boldsymbol{\Theta}_{u}(x_*)=0$, we infer from Lemma \ref{updenstheta} that 
$\limsup_k\boldsymbol{\Theta}_{u_k}(x_k)=0$. Consequently, $\boldsymbol{\Theta}_{u_k}(x_k)\leq {\boldsymbol{\eta}}_0/2$ for $k$ large enough, and Corollary  \ref{openset} shows that, for such integers $k$, $(\partial E_{u_k}\cap K)\setminus \mathscr{T}_r(\partial E_u\cap \partial^0G)=\emptyset$.  

To prove the second inclusion, we consider a covering of $\partial E_u\cap K$ made by finitely many discs $D_{r/2}(x_1),\ldots, D_{r/2}(x_J)$ (included in $\partial^0G$, choosing a smaller radius if necessary). Then, for each $j$, we can find a point $x^+_j\in D_{r/2}(x_j)\cap E_u$ and a point $x^-_j\in D_{r/2}(x_j)\setminus \overline E_u$. Since $D_{r/2}(x_j)\cap E_u$ and $D_{r/2}(x_j)\setminus \overline E_u$ are open sets, we can find a radius $\varrho>0$ such that $D_{2\varrho}(x_j^+)\subset D_{r/2}(x_j)\cap E_u$ and $D_{2\varrho}(x_j^-)\subset  D_{r/2}(x_j)\setminus \overline E_u$ for each $j\in\{1,\ldots,J\}$. Hence, $u=\pm 1$ on $D_{2\varrho}(x_j^\pm)$  for each $j\in\{1,\ldots,J\}$.
In particular, $\boldsymbol{\Theta}_{u}(x)=0$ for every $x\in \overline D_{\varrho}(x_j^\pm)$ and each $j\in\{1,\ldots,J\}$. Arguing as before (for the first inclusion), 
we infer from Lemma \ref{updenstheta} that 
$$\lim_{k\to\infty}\Big(\sup_{x\in \overline D_{\varrho}(x_j^\pm)}\boldsymbol{\Theta}_{u_k}(x)\Big)=0 \qquad\forall j\in\{1,\ldots,J\}\,. $$
Then Corollary  \ref{openset} implies that $\boldsymbol{\Theta}_{u_k}(x)=0$ for every $x\in  \overline D_{\varrho}(x_j^\pm)$ and $j\in\{1,\ldots,J\}$, whenever $k$ is large enough. Since each $D_{\varrho}(x_j^\pm)$ is connected, we must have either $u_k=+1$ or $u_k=-1$ on $D_{\varrho}(x_j^\pm)$ (otherwise $D_{\varrho}(x_j^\pm)$ could be written as the disjoint union of two non empty open sets). On the other hand, $u_k\to u$ in $L^1(D_{\varrho}(x_j^\pm))$ by Remark \ref{trace}, and we conclude that $u_k=u=\pm 1$ on $D_{\varrho}(x_j^\pm)$ for each  $j\in\{1,\ldots,J\}$, whenever $k$ is large enough. Hence, $D_{r/2}(x_j)\cap E_{u_k}\not=\emptyset$ and $D_{r/2}(x_j)\setminus \overline E_{u_k}\not=\emptyset$, and we have thus proved that $\partial E_{u_k}\cap D_{r/2}(x_j)\not=\emptyset$  for each  $j\in\{1,\ldots,J\}$, whenever $k$ is large enough. Therefore, $\partial E_u\cap K\subset \bigcup_j D_{r/2}(x_j)\subset \mathscr{T}_r(\partial E_{u_k}\cap\partial^0G)$ for $k$  sufficiently large. 
\end{proof}


\subsection{Tangent maps}\label{secttangmap}


We now return back a given solution $u\in H^1(G,|z|^a\de {\bf x})\cap L^\infty(G)$ of \eqref{startequsurf} and \eqref{critnonlocexteq}, and we apply the results of Subsection \ref{subsectcompsurf} to define the so-called ``tangent maps" of $u$ at a given point. To this purpose, we 
fix the point of study ${\bf x_0}=(x_0,0)\in\partial^0 G$ and a reference radius $\rho_0>0$ such that $B^+_{\rho_0}({\bf x}_0)\subset G$. We introduce the rescaled functions 
\begin{equation}\label{defrescmap}
u_{x_0,\rho}({\bf x}):=u({\bf x}_0+\rho{\bf x})\quad\text{and} \quad f_{x_0,\rho}(x):=f(x_0+\rho x)\,,
\end{equation}
which are defined for $0<\rho<\rho_0/r$, ${\bf x}\in B^+_r$ and $x\in D_r$, respectively. Changing variables, we observe that 
\begin{equation}\label{1912tang}
\boldsymbol{\Theta}_{u_{x_0,\rho}}(\rho^{2s}f_{x_0,\rho}, 0, r)
=\boldsymbol{\Theta}_{u}(f,x_0,\rho r)\,.
\end{equation}
This identity, together with Lemma \ref{monotformsurf}, leads to
\begin{multline}\label{1912tangbd}
\frac{1}{r^{n-2s}}{\bf E}(u_{x_0,\rho},B_r^+) 
\leq \boldsymbol{\Theta}_{u}(f,x_0,\rho r)\leq \boldsymbol{\Theta}_u(f,x_0,\rho_0)\\
\leq \frac{1}{\rho_0^{n-2s}}{\bf E}(u,G)+\frac{{\bf c}_{n,q}b \,\rho_0^{\theta_q}}{\theta_q}\|f\|_{\dot W^{1,q}(\partial^0G)}\,.
\end{multline}
Given a sequence $\rho_k\to0$, we deduce that  
\begin{equation}\label{enbdresc}
\limsup_{k\to\infty}{\bf E}(u_{x_0,\rho_k},B_r^+) <\infty \quad\text{for every $r>0$}\,.
\end{equation} 
As a consequence of Theorem \ref{compactminsurf}, we have the following 

\begin{lemma}\label{tangmapstat}
Every sequence $\rho_k\to0$ admits a subsequence $\{\rho^\prime_k\}_{k\in\mathbb{N}}$ such that  $u_{x_0,\rho^\prime_k}\to \varphi$ strongly in $H^1(B_r^+,|z|^a\de{\bf x})$ for every $r>0$, where $\varphi$ satisfies 
\begin{equation}\label{eqtangmap}
\begin{cases}
{\rm div}(z^a\nabla\varphi)=0 & \text{in $\R^{n+1}_+$}\,,\\
|\varphi|\leq b & \text{in $\R^{n+1}_+$}\,,\\
|\varphi|=1 & \text{on $\R^n$}\,,
\end{cases}
\end{equation}
and for each $r>0$, 
\begin{equation}\label{stattangmap}
\delta{\bf E}(\varphi,B_r^+\cup D_r)[{\bf X}]=0
\end{equation}
for every vector field ${\bf X}=(X,{\bf X}_{n+1})\in C^1(\overline B_r^+,\R^{n+1})$ compactly supported in $B_R^+\cup D_r$ such that ${\bf X}_{n+1}=0$ on $D_r$. 
\end{lemma}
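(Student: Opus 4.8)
The statement is essentially a packaging of the compactness theory already developed, applied to the blow-up sequence. The plan is to verify the hypotheses of Theorem~\ref{compactminsurf} for the rescaled sequence $\{u_{x_0,\rho_k}\}$ together with the rescaled right-hand sides $\{\rho_k^{2s}f_{x_0,\rho_k}\}$ on an exhausting family of admissible sets, then extract a diagonal subsequence and identify the properties of the limit.

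First I would fix an arbitrary $R>0$ and work on $G=B_R^+$ with $\partial^0G=D_R$; since $\rho_k\to 0$, for $k$ large enough $B^+_{R\rho_k}({\bf x}_0)\subset B^+_{\rho_0}({\bf x}_0)\subset G$, so $u_{x_0,\rho_k}$ is well defined on $B_R^+$. A change of variables in \eqref{critnonlocexteq}, exactly as in the derivation of \eqref{1912tang}, shows that $u_{x_0,\rho_k}$ satisfies \eqref{startequsurf} on $B_R^+$ with the same bound $b$, and satisfies \eqref{critnonlocexteqcoucou} on $B_R^+$ with $f_k$ replaced by $\rho_k^{2s}f_{x_0,\rho_k}$. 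The energy bound $\sup_k {\bf E}(u_{x_0,\rho_k},B_R^+)<\infty$ is precisely \eqref{enbdresc} (equivalently \eqref{1912tangbd}). For the Sobolev bound on the rescaled forcing: since $q<n$, a direct scaling computation gives $\|\rho_k^{2s}f_{x_0,\rho_k}\|_{\dot W^{1,q}(D_R)}=\rho_k^{2s}\big(\rho_k^{-n/q^*}\|f\|_{L^{q^*}(D_{R\rho_k}(x_0))}+\rho_k^{1-n/q}\|\nabla f\|_{L^q(D_{R\rho_k}(x_0))}\big)\leq \rho_k^{\theta_q}\|f\|_{\dot W^{1,q}(\partial^0G)}\to 0$, using $2s-n/q^*=2s-(n-q)/q=\theta_q-1+2s-(1+2s-n/q)\cdot\!\ldots$ — more simply, $\theta_q=1+2s-n/q>0$ and the two exponents $2s-n/q^*$ and $2s+1-n/q=\theta_q$ are both positive for $\rho_k\le 1$, so the whole quantity is $O(\rho_k^{\min(2s-n/q^*,\theta_q)})\to 0$; in particular the sequence is bounded in $\dot W^{1,q}$. (One also needs $\rho_k^{2s}\|f_{x_0,\rho_k}\|_{W^{1,q}(D_R)}$ bounded, which follows the same way once one notes $q^*\ge q$ so the $L^{q^*}$ bound controls the $L^q$ bound on the bounded set $D_{R\rho_k}(x_0)$.) Thus Theorem~\ref{compactminsurf} applies on $B_R^+$: up to a subsequence $u_{x_0,\rho_k}\rightharpoonup \varphi_R$ weakly, and strongly in $H^1_{\rm loc}(B_R^+\cup D_R,|z|^a\de{\bf x})$; moreover since $\rho_k^{2s}f_{x_0,\rho_k}\to 0$ strongly in $W^{1,q}(D_R)$ (hence weakly, with limit $0$), the limit $\varphi_R$ satisfies \eqref{critnonlocexteq} on $B_R^+$ with right-hand side $0$, i.e. $\delta{\bf E}(\varphi_R,B_R^+\cup D_R)[{\bf X}]=0$ for all admissible ${\bf X}$, which is \eqref{stattangmap}; and \eqref{startequsurf} passes to the limit giving \eqref{eqtangmap} restricted to $B_R^+$.

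Next I would run this over $R=1,2,3,\dots$ and take a diagonal subsequence $\{\rho_k'\}$ so that $u_{x_0,\rho_k'}\to\varphi$ strongly in $H^1(B_r^+,|z|^a\de{\bf x})$ for every $r>0$ (the limits $\varphi_R$ are mutually consistent by uniqueness of the strong limit). Strong $H^1_{\rm loc}$ convergence on all of $\R^{n+1}_+\cup\R^n$ gives ${\rm div}(z^a\nabla\varphi)=0$ in $\R^{n+1}_+$; the bound $|\varphi|\le b$ in $\R^{n+1}_+$ follows from $|u_{x_0,\rho_k'}|\le b$ and $L^1_{\rm loc}$ convergence (or just a.e.\ convergence along a further subsequence); and $|\varphi|=1$ on $\R^n$ follows from the compact trace operator of Remark~\ref{trace} applied on each $B_r^+$, since $|u_{x_0,\rho_k'}|=1$ on $D_r$. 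Finally \eqref{stattangmap} holds for each fixed $r$ because any test field ${\bf X}$ compactly supported in $B_r^+\cup D_r$ is handled on $G=B_r^+$ by the argument above.

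I do not expect a serious obstacle here: every ingredient is already in place. The one point requiring a little care is the scaling of the $W^{1,q}$-norm of the forcing term and checking that the exponent $\theta_q>0$ (guaranteed by $q>n/(1+2s)$) makes $\rho_k^{2s}f_{x_0,\rho_k}\to 0$ in $W^{1,q}_{\rm loc}$, so that the tangent map is genuinely \emph{stationary} (zero prescribed curvature) rather than merely satisfying \eqref{critnonlocexteq} with some nonzero limiting $f$. A secondary bookkeeping point is the diagonal extraction to pass from "for each $R$" to "for all $r>0$ simultaneously", which is routine.
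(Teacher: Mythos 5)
Your proof is correct and follows essentially the same route as the paper: apply Theorem~\ref{compactminsurf} with the energy bound \eqref{enbdresc}, then show the rescaled forcing tends to zero in $W^{1,q}_{\rm loc}$ thanks to $\theta_q>0$, and diagonalize over $R$. Your exponent bookkeeping can be streamlined by noting $2s-n/q^*=2s+1-n/q=\theta_q$ exactly, so that $\|\rho^{2s}f_{x_0,\rho}\|_{\dot W^{1,q}(D_r)}=\rho^{\theta_q}\|f\|_{\dot W^{1,q}(D_{\rho r}(x_0))}$ as a clean identity rather than a two-exponent bound.
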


\begin{proof}
In view of \eqref{enbdresc}, Theorem \ref{compactminsurf} yields the announced convergence and \eqref{eqtangmap}. Then observe that $u_{x_0,\rho}$ satisfies 
$$\delta{\bf E}(u_{x_0,\rho},B_r^+\cup D_r)[{\bf X}]=\int_{D_r} u_{x_0,\rho}\,{\rm div}(\rho^{2s} f_{x_0,\rho}X)\,\de x\,.$$
Rescaling variables, we obtain 
$$\|\rho^{2s}f_{x_0,\rho}\|_{\dot W^{1,q}(D_r)}=\rho^{\theta_q} \|f\|_{\dot W^{1,q}(D_{\rho r}(x_0))}\mathop{\longrightarrow}\limits_{\rho\to 0} 0\,.$$
Hence $\rho^{2s}f_{x_0,\rho}\to 0$ strongly in $W^{1,q}(D_r)$, and the conclusion follows from Theorem~\ref{compactminsurf}. 
\end{proof}

\begin{definition}
Every function $\varphi$ obtained by this process will be referred to as {\it tangent map of $u$ at the point $x_0$}.  The family of all tangent maps of $u$ at $x_0$ will be denoted by 
$T_{x_0}(u)$. 
\end{definition}

\begin{lemma}\label{homo2203}
If $\varphi\in T_{x_0}(u)$, then  
$$\boldsymbol{\Theta}_\varphi(0,0,r)=\boldsymbol{\Theta}_\varphi(0)= \boldsymbol{\Theta}_u(x_0)\quad\forall r>0\,,$$
and $\varphi$ is $0$-homogeneous, i.e., $\varphi(\lambda {\bf x})=\varphi({\bf x})$ for every $\lambda>0$ and every ${\bf x}\in\R^{n+1}_+$. 
\end{lemma}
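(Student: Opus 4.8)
\textbf{Plan for the proof of Lemma \ref{homo2203}.} The strategy is the classical one for tangent objects of monotone quantities: exploit that the monotonicity formula (Lemma \ref{monotformsurf}/Corollary \ref{directcorlmonot}) becomes an \emph{equality} when the right-hand side vanishes, which forces the radial derivative $(\mathbf{x}\cdot\nabla\varphi)$ to vanish. First I would identify the density of the tangent map. Let $\rho_k'\to 0$ be the sequence along which $u_{x_0,\rho_k'}\to\varphi$ strongly in $H^1(B_r^+,|z|^a\de\mathbf{x})$ for every $r>0$, with $\rho_k'^{2s}f_{x_0,\rho_k'}\to 0$ strongly in $W^{1,q}(D_r)$, as provided by Lemma \ref{tangmapstat}. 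Using the identity \eqref{1912tang}, the strong convergence, and the Remark right after Theorem \ref{compactminsurf} (continuity of $\boldsymbol{\Theta}$ under strong convergence together with $x_k\to x$, $r_k\to r>0$), for each fixed $r>0$ we get
$$\boldsymbol{\Theta}_\varphi(0,0,r)=\lim_{k\to\infty}\boldsymbol{\Theta}_{u_{x_0,\rho_k'}}(\rho_k'^{2s}f_{x_0,\rho_k'},0,r)=\lim_{k\to\infty}\boldsymbol{\Theta}_u(f,x_0,\rho_k'r)=\boldsymbol{\Theta}_u(x_0),$$
where the last equality is the existence of the limit $\boldsymbol{\Theta}_u(x_0)=\lim_{t\downarrow0}\boldsymbol{\Theta}_u(f,x_0,t)$ from Corollary \ref{directcorlmonot} (the term $\|f\|_{\dot W^{1,q}}$ piece of the density tends to $0$ as the radius $\to 0$). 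In particular $\boldsymbol{\Theta}_\varphi(0,0,r)$ is the constant $\boldsymbol{\Theta}_u(x_0)$, independent of $r$, and letting $r\downarrow 0$ gives $\boldsymbol{\Theta}_\varphi(0)=\boldsymbol{\Theta}_u(x_0)$ as well.

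Next I would run the monotonicity identity for $\varphi$ itself. By Lemma \ref{tangmapstat}, $\varphi$ is a stationary solution on every $B_r^+\cup D_r$ in the sense of \eqref{stattangmap}, which is exactly the $f=0$ case of \eqref{critnonlocexteq}. Hence Lemma \ref{monotformsurf} (and Corollary \ref{directcorlmonot}) apply to $\varphi$ with $f\equiv 0$ and give \emph{equality}:
$$\boldsymbol{\Theta}_\varphi(0,0,r)-\boldsymbol{\Theta}_\varphi(0,0,\rho)=d_s\int_{B_r^+\setminus B_\rho^+}z^a\,\frac{|\mathbf{x}\cdot\nabla\varphi|^2}{|\mathbf{x}|^{n+2-2s}}\,\de\mathbf{x}\qquad\forall\,0<\rho<r.$$
Since the left-hand side is $0$ by the previous paragraph, the integrand vanishes: $\mathbf{x}\cdot\nabla\varphi=0$ for a.e.\ $\mathbf{x}\in\R^{n+1}_+$.

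Finally I would upgrade ``$\mathbf{x}\cdot\nabla\varphi=0$ a.e.'' to ``$\varphi$ is $0$-homogeneous''. Since ${\rm div}(z^a\nabla\varphi)=0$ in $\R^{n+1}_+$, interior elliptic regularity gives $\varphi\in C^\infty(\R^{n+1}_+)$, so the identity $\mathbf{x}\cdot\nabla\varphi(\mathbf{x})=0$ holds pointwise on $\R^{n+1}_+$; differentiating $t\mapsto\varphi(t\mathbf{x})$ gives $\frac{\de}{\de t}\varphi(t\mathbf{x})=t^{-1}(t\mathbf{x})\cdot\nabla\varphi(t\mathbf{x})=0$ for $t>0$, hence $\varphi(\lambda\mathbf{x})=\varphi(\mathbf{x})$ for all $\lambda>0$ and all $\mathbf{x}\in\R^{n+1}_+$. (The boundary values $|\varphi|=1$ on $\R^n$ are then automatically $0$-homogeneous on $\R^n\setminus\{0\}$ as well, consistent with $\varphi$ being the extension of a $0$-homogeneous trace.) The only genuinely delicate point is making sure the density-convergence step is legitimate, i.e.\ that $\boldsymbol{\Theta}_\varphi(0,0,r)$ really equals $\lim_k\boldsymbol{\Theta}_u(f,x_0,\rho_k'r)$ for \emph{every} fixed $r$: this needs the strong $H^1_{\rm loc}$ convergence on the whole half-ball $B_r^+$ (not just away from the boundary) together with $\rho_k'^{2s}f_{x_0,\rho_k'}\to0$ in $W^{1,q}(D_r)$, both of which are exactly the content of Lemma \ref{tangmapstat} and the remark after Theorem \ref{compactminsurf}; no new estimate is required. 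Everything else is a direct application of results already established.
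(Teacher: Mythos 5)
Your proof is correct and follows essentially the same two-step strategy as the paper's own argument: first using the strong $H^1_{\rm loc}$ convergence, identity \eqref{1912tang}, and the existence of the density limit $\boldsymbol{\Theta}_u(x_0)$ to deduce that $\boldsymbol{\Theta}_\varphi(0,0,r)$ is independent of $r$; then applying the monotonicity \emph{identity} for $\varphi$ (which satisfies \eqref{stattangmap}, i.e.\ \eqref{critnonlocexteq} with $f\equiv 0$) to force the radial derivative to vanish, whence $0$-homogeneity follows by interior elliptic regularity. The only difference is that you spell out the ``a.e.'' to ``pointwise'' upgrade via smoothness of $\varphi$ in $\R^{n+1}_+$, which the paper leaves implicit.
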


\begin{proof}
From the strong convergence of $u_{x_0,\rho^\prime_k}$ toward $\varphi$ and the identity in \eqref{1912tang}, we first infer that
$$ \boldsymbol{\Theta}_\varphi(0,0,r)=\lim_{k\to\infty} \boldsymbol{\Theta}_{u_{x_0,\rho^\prime_k}}\big((\rho^\prime_k)^{2s}f_{x_0,\rho_k^\prime},0,r\big)=\boldsymbol{\Theta}_u(x_0)\quad\forall r>0\,.$$
Then the monotonicity formula in Lemma \ref{monotformsurf} applied to $\varphi$ implies that ${\bf x}\cdot \nabla\varphi({\bf x})=0$ for every ${\bf x}\in\R^{n+1}_+$, and the conclusion  follows.
\end{proof}


\subsection{Homogeneous solutions}  


In view of Lemma \ref{homo2203}, the study of tangent maps leads to  the study of $0$-homogeneous solutions, which is the purpose of this subsection. We start with the following observation. 

\begin{lemma}\label{lemhomogsol}
Let $\varphi\in H^1(B_1^+,|z|^a\de {\bf x})\cap L^\infty(B_1^+)$  be a solution of 
\begin{equation}\label{eqcone}
\begin{cases}
{\rm div}(z^a\nabla\varphi)=0 & \text{in $B_1^+$}\,,\\
|\varphi|\leq b & \text{in $B_1^+$}\,,\\
|\varphi|=1 & \text{on $D_1$}\,,
\end{cases}
\end{equation}
for some constant $b\geq 1$. Assume that there exists  $f\in W^{1,q}(D_1)$ with $n/(1+2s)<q<n$ such that 
\begin{equation}\label{statinB1}
\delta{\bf E}\big(\varphi,B_1^+\cup D_1\big)[{\bf X}]= \int_{D_1} \varphi\,{\rm div}(fX)\,\de x\,,
\end{equation}
for every vector field ${\bf X}=(X,{\bf X}_{n+1})\in C^1(\overline B_1^+,\R^{n+1})$ compactly supported in $B_1^+\cup D_1$ such that ${\bf X}_{n+1}=0$ on $D_1$. 
If $\boldsymbol{\Theta}_{\varphi}(f,0,1)=\boldsymbol{\Theta}_{\varphi}(0) $, then $\varphi$ is $0$-homogeneous and $f=0$. 
\end{lemma}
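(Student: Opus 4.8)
The statement asserts that if the density-ratio is constant on $(0,1)$ (i.e.\ $\boldsymbol{\Theta}_\varphi(f,0,1)=\boldsymbol{\Theta}_\varphi(0)$), then the stationary solution $\varphi$ must be $0$-homogeneous and the prescribing datum $f$ must vanish. The natural tool is the monotonicity formula of Lemma~\ref{monotformsurf}, but applied to a \emph{ball with $f\ne 0$ a priori}, so the first task is to track the $f$-dependent error terms carefully. Concretely, I would revisit the differential identity behind Lemma~\ref{monotformsurf}: for $0<\rho<r<1$,
\begin{equation*}
\boldsymbol{\Theta}_\varphi(f,0,r)-\boldsymbol{\Theta}_\varphi(f,0,\rho)\geq d_s\int_{B_r^+\setminus B_\rho^+}z^a\frac{|\mathbf{x}\cdot\nabla\varphi|^2}{|\mathbf{x}|^{n+2-2s}}\,\de\mathbf{x}\,,
\end{equation*}
with equality when $f=0$. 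Letting $\rho\downarrow 0$ and using $\boldsymbol{\Theta}_\varphi(0)=\lim_{\rho\downarrow 0}\boldsymbol{\Theta}_\varphi(f,0,\rho)$ (Corollary~\ref{directcorlmonot}), the hypothesis gives $\boldsymbol{\Theta}_\varphi(f,0,r)=\boldsymbol{\Theta}_\varphi(0)$ for \emph{every} $r\in(0,1]$, hence the monotone function is constant. Therefore the right-hand side vanishes: $\int_{B_1^+}z^a|\mathbf{x}\cdot\nabla\varphi|^2|\mathbf{x}|^{-n-2+2s}\,\de\mathbf{x}=0$, which forces $\mathbf{x}\cdot\nabla\varphi(\mathbf{x})=0$ a.e.\ in $B_1^+$, i.e.\ $\varphi$ is $0$-homogeneous. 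This part is essentially identical to the proof of Lemma~\ref{homo2203}.

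The more delicate point is to extract $f=0$. The equality case requires not only that the ``radial derivative'' term drops out but that \emph{every} inequality used to absorb $f$ into the density becomes an equality. Going back to the proof of Lemma~\ref{monotform} (equation \eqref{condmonot}) specialized to the present setting, testing the stationarity identity \eqref{statinB1} with the radial vector field $\mathbf{X}(\mathbf{x})=\eta(|\mathbf{x}|)\mathbf{x}$ and passing to characteristic functions yields, for a.e.\ $r\in(0,1)$,
\begin{equation*}
(n-2s)\,\mathbf{E}(\varphi,B_r^+)-r\frac{\de}{\de r}\mathbf{E}(\varphi,B_r^+)+d_sr\int_{\partial^+B_r}z^a\Big|\tfrac{\mathbf{x}}{|\mathbf{x}|}\cdot\nabla\varphi\Big|^2\,\de\mathscr{H}^n=\int_{D_r}(nf+x\cdot\nabla f)\varphi\,\de x-r\int_{\partial D_r}f\varphi\,\de\mathscr{H}^{n-1}\,.
\end{equation*}
Once we know $\varphi$ is $0$-homogeneous, the left-hand side simplifies: the first two terms combine (using $\frac{\de}{\de r}\mathbf{E}(\varphi,B_r^+)=\frac{n-2s}{r}\mathbf{E}(\varphi,B_r^+)$, which is exactly homogeneity of the energy density) and the radial-derivative boundary term is zero, so the whole left-hand side vanishes for a.e.\ $r\in(0,1)$. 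Thus
\begin{equation*}
\int_{D_r}\big(nf+x\cdot\nabla f\big)\varphi\,\de x=r\int_{\partial D_r}f\varphi\,\de\mathscr{H}^{n-1}\qquad\text{for a.e.\ }r\in(0,1)\,.
\end{equation*}
Observing that $\frac{\de}{\de r}\big(r^{-n}\int_{D_r}f\varphi\,\de x\big)=r^{-n-1}\big(r\int_{\partial D_r}f\varphi - n\int_{D_r}f\varphi\big)$ and that $x\cdot\nabla f$ integrated against $\varphi$ can be handled once $\varphi$ is recognized to be $0$-homogeneous (so $\varphi$ restricted to spheres is fixed), I would rewrite the identity as a statement that the function $r\mapsto r^{-n}\int_{D_r}f\varphi\,\de x$ has zero derivative, hence is constant on $(0,1)$; letting $r\downarrow 0$ and using $f\in L^{q^*}$ with $q^*=nq/(n-q)>n$ (so that $r^{-n}\int_{D_r}|f|\to 0$ by H\"older) shows this constant is $0$. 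Feeding this back, one gets $\int_{D_r}(nf+x\cdot\nabla f)\varphi\,\de x=0$ and $\int_{\partial D_r}f\varphi\,\de\mathscr{H}^{n-1}=0$ for a.e.\ $r$; varying the test vector field (replacing the purely radial $\mathbf{X}$ by $\eta(|\mathbf{x}|)\mathbf{x}$ composed with rotations, or more simply by $\psi(\mathbf{x})\mathbf{x}$ for general cutoffs) and exploiting that $\varphi=\pm1$ on $D_1$ with the concentration set $\{\boldsymbol{\Theta}_\varphi\ge\boldsymbol{\eta}_0\}$ being $\mathscr{H}^{n-2s}$-null, one concludes $f\varphi=0$ a.e., hence $f=0$ a.e.\ on $D_1$ (since $|\varphi|=1$ there).

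\textbf{Main obstacle.} The homogeneity half is routine (it is Lemma~\ref{homo2203} verbatim). The genuine difficulty is the second half: squeezing $f=0$ out of the equality case. The subtlety is that the monotone quantity $\boldsymbol{\Theta}_\varphi(f,0,r)$ bundles the energy ratio together with the antiderivative $\int_0^r t^{\theta_q-1}\|f\|_{\dot W^{1,q}(D_t)}\,\de t$, and its constancy only tells us the \emph{sum} does not change — one must re-derive the underlying Pohozaev-type identity \eqref{condmonot} and show that, once $\varphi$ is $0$-homogeneous, the left-hand side is identically zero, which pins down the $f$-moments against $\varphi$ on every sphere. The bookkeeping with the weighted Sobolev integrability $q>n/(1+2s)$ (needed to make the trace terms $\int_{\partial D_r}f\varphi$ meaningful and the small-ball limit vanish) is where care is required; I would isolate this as a preliminary computational lemma re-running the proof of Lemma~\ref{monotform} with explicit retention of all $f$-terms rather than estimating them away.
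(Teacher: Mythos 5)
Your treatment of the homogeneity half is correct and matches the paper: Corollary~\ref{directcorlmonot} applied at $x_0=0$ with the hypothesis $\boldsymbol{\Theta}_\varphi(f,0,1)=\boldsymbol{\Theta}_\varphi(0)$ forces $\mathbf{x}\cdot\nabla\varphi=0$, hence $0$-homogeneity.

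The second half — extracting $f=0$ — has a genuine gap. Your plan is to re-derive the Pohozaev identity \eqref{condmonot} with the radial field $\mathbf{X}(\mathbf{x})=\eta(|\mathbf{x}|)\mathbf{x}$ and then argue that, once $\varphi$ is $0$-homogeneous, the left-hand side vanishes and hence the $f$-moments are pinned down. The problem is that the resulting identity
\begin{equation*}
\int_{D_r}\big(nf+x\cdot\nabla f\big)\varphi\,\de x=r\int_{\partial D_r}f\varphi\,\de\mathscr{H}^{n-1}
\end{equation*}
is \emph{automatically true} once $\varphi$ is $0$-homogeneous on $D_1$, hence carries no information about $f$. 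Indeed, $x\cdot\nabla\varphi=0$ on $D_1$ (distributionally), so the left-hand side equals $\int_{D_r}\big(n(f\varphi)+x\cdot\nabla(f\varphi)\big)\,\de x=\int_{D_r}\mathrm{div}\big(x\,f\varphi\big)\,\de x = r\int_{\partial D_r}f\varphi\,\de\mathscr{H}^{n-1}$ by the divergence theorem — a tautology, not a constraint. Correspondingly, the claim that ``$r\mapsto r^{-n}\int_{D_r}f\varphi\,\de x$ has zero derivative'' is unjustified: the identity only gives $\frac{\de}{\de r}\big(r^{-n}\int_{D_r}f\varphi\,\de x\big)=r^{-n-1}\int_{D_r}(x\cdot\nabla f)\varphi\,\de x$, which is not obviously zero. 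Your fallback — varying to non-radial fields — would require knowing $\delta\mathbf{E}(\varphi,B_r^+\cup D_r)[\mathbf{X}]=0$ for \emph{general} $\mathbf{X}$, which is precisely the claim to be proved and does not follow from homogeneity alone.

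The paper's route is shorter and avoids the Pohozaev bookkeeping entirely: once $\varphi$ is $0$-homogeneous, the rescalings $\varphi_{0,\rho}$ are all equal to $\varphi$, so $T_0(\varphi)=\{\varphi\}$; but Lemma~\ref{tangmapstat} says every tangent map satisfies the stationarity condition \eqref{stattangmap} with vanishing right-hand side (because $\rho^{2s}f_{0,\rho}\to 0$ strongly in $W^{1,q}$ as $\rho\to 0$). Thus $\varphi$ itself satisfies \eqref{eqtangmap}--\eqref{stattangmap}, which is the intended content of ``$f=0$''. You should replace your Pohozaev calculation in Part~2 with this tangent-map argument.
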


\begin{proof}
As in the proof Lemma \ref{homo2203}, Corollary \ref{directcorlmonot} applied at $x_0=0$ leads to the homogeneity of $\varphi$. In turn, the homogeneity of $\varphi$ implies that 
$T_0(\varphi)=\{\varphi\}$, and the conclusion  follows from Lemma \ref{tangmapstat}. 
\end{proof}

\begin{definition}
We say that a function $\varphi\in L^1_{\rm loc}(\R^{n+1}_+)$ is a {\it nonlocal stationary cone} if $\varphi$ is $0$-homogeneous, $\varphi\in H^1(B_1^+,|z|^a\de {\bf x})\cap L^\infty(B_1^+)$, and $\varphi$ satisfies \eqref{eqtangmap}-\eqref{stattangmap} (for some constant $b\geq 1$). 
\end{definition}

Summing up the results of the previous subsection, tangent maps to a solution of \eqref{startequsurf}-\eqref{critnonlocexteq} are thus nonlocal stationary cones. We shall present in details the main properties of those ``cones''. We start with the following lemma explaining somehow the terminology. 

\begin{lemma}\label{relattangmap}
If $\varphi$ is a nonlocal stationary cone, then there is an open cone $\mathcal{C}_\varphi \subset \R^n$ such that 
$$\varphi=\big(\chi_{\mathcal{C}_\varphi}-\chi_{\R^n\setminus\mathcal{C}_\varphi}\big)^\e\,, $$
as defined in \eqref{poisson}. In particular, $|\varphi|\leq 1$ in $\R^{n+1}_+$. 
\end{lemma}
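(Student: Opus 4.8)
The plan is to exploit the $0$-homogeneity of $\varphi$ together with the fact that $|\varphi|=1$ a.e. on $\R^n$, and then the uniqueness of the fractional harmonic extension among $L^\infty$ solutions of the degenerate equation. First I would record that, since $\varphi$ solves $\mathrm{div}(z^a\nabla\varphi)=0$ in $\R^{n+1}_+$ with $|\varphi|=1$ a.e. on $\R^n=\partial\R^{n+1}_+$, there is a Borel set $\mathcal C_\varphi\subset\R^n$ with $\varphi=\chi_{\mathcal C_\varphi}-\chi_{\R^n\setminus\mathcal C_\varphi}$ a.e. on $\R^n$ (this is exactly the dichotomy $|\varphi|=1$ used already for the boundary trace in Theorem \ref{asymptneum} and Corollary \ref{openset}). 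The $0$-homogeneity of $\varphi$ in $\R^{n+1}_+$ forces its trace on $\R^n$ to be $0$-homogeneous as well, hence $\mathcal C_\varphi$ is (equivalent to) a cone: $\lambda\mathcal C_\varphi=\mathcal C_\varphi$ for all $\lambda>0$.

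The second and main step is to identify $\varphi$ with the Caffarelli--Silvestre extension $(\chi_{\mathcal C_\varphi}-\chi_{\R^n\setminus\mathcal C_\varphi})^\e$. Set $v:=\chi_{\mathcal C_\varphi}-\chi_{\R^n\setminus\mathcal C_\varphi}\in L^\infty(\R^n)$, so $v^\e$ is well defined by \eqref{poisson}, lies in $L^\infty(\R^{n+1}_+)$ by \eqref{bdlinftyext}, and solves \eqref{eqextharm}. Then $w:=\varphi-v^\e$ is a bounded function with $\mathrm{div}(z^a\nabla w)=0$ in $\R^{n+1}_+$ and $w=0$ (in the trace sense) on $\R^n$. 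The claim is $w\equiv 0$. I would argue this by the odd reflection trick already used in Lemma \ref{eas}: extend $w$ to $\R^{n+1}$ by $w(x,z):=-w(x,-z)$ for $z<0$; since $w$ vanishes on $\{z=0\}$ this produces a bounded weak solution of $\mathrm{div}(|z|^a\nabla w)=0$ on all of $\R^{n+1}$ in the $A_2$-weighted sense (one must check the trace is genuinely zero so no singular boundary term appears — this uses that $\varphi$ and $v^\e$ have the same trace $v$, which holds at every Lebesgue point of $v$, and that both are locally $H^1(|z|^a)$ up to the boundary, cf. Remark \ref{remtrace} and Lemma \ref{hatH1/2toH1}). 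A bounded entire $L_a$-harmonic function on $\R^{n+1}$ is constant by the Liouville property for degenerate elliptic operators with $A_2$ weights (Fabes--Kenig--Serapioni, as cited via \cite{FKS}); since $w=0$ on $\{z=0\}$ the constant is $0$, whence $\varphi=v^\e$.

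The remaining assertion $|\varphi|\le 1$ in $\R^{n+1}_+$ is then immediate from \eqref{bdlinftyext} applied to $v$, since $\|v\|_{L^\infty(\R^n)}=1$. I expect the main obstacle to be making the odd-reflection / entire-solution argument rigorous under only $H^1_{\mathrm{loc}}(|z|^a)$ regularity up to $\R^n$: one needs that the zero trace really kills the distributional conormal contribution on $\{z=0\}$ after reflection, so that $w$ is a weak solution across the hyperplane and not merely in the two half-spaces separately. An alternative, avoiding Liouville, is to invoke directly the uniqueness in Lemma \ref{normexth1/2} / Corollary \ref{minenergdirchfrac}: $v^\e$ is the unique minimizer of the weighted Dirichlet energy with boundary datum $v$, and $\varphi$, being a bounded $L_a$-harmonic function with the same trace, must coincide with it on every half-ball by the minimality inequality \eqref{ineqenergDfrac} applied to compactly supported perturbations $w=\varphi-v^\e$ (which have zero trace). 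Either route closes the proof; I would present the reflection+Liouville version as the cleaner one and remark that the energy-minimality version gives the same conclusion.
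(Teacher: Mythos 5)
Your argument is correct and follows essentially the same route as the paper: identify the boundary trace as $\chi_{\mathcal C_\varphi}-\chi_{\R^n\setminus\mathcal C_\varphi}$ for an open cone via Corollary \ref{openset} and homogeneity, form $w=\varphi-(\chi_{\mathcal C_\varphi}-\chi_{\R^n\setminus\mathcal C_\varphi})^\e$, note it is bounded, $L_a$-harmonic, with zero trace, and kill it by a Liouville theorem. The paper condenses your odd-reflection/FKS-regularity/entire-Liouville chain into a single citation of \cite[Corollary 3.5]{CS1} (together with the regularity from Lemma \ref{eas}, which handles the conormal-term concern you raise), so the two proofs differ only in how much of the Liouville step they unpack.
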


\begin{proof}
By Corollary \ref{openset}, there is an open set $\mathcal{C}_\varphi \subset \R^n$ such that $\varphi=\chi_{\mathcal{C}_\varphi}-\chi_{\R^n\setminus\mathcal{C}_\varphi}$ a.e. on $\R^n$. Since  $\varphi$ is $0$-homogeneous, we easily infer that  $\mathcal{C}_\varphi $ is an open cone. We  set 
$$w:=\varphi- \big(\chi_{\mathcal{C}_\varphi}-\chi_{\R^n\setminus\mathcal{C}_\varphi}\big)^\e\,.$$ 
Since $w$ is $0$-homogeneous, $w\in H^1_{\rm loc}\big(\overline{\mathbb{R}^{n+1}_+},|z|^a\de{\bf x}\big)\cap L^\infty(\mathbb{R}^{n+1}_+)$ with $\|w\|_{L^\infty(\R^{n+1}_+)}\leq 1+\|\varphi\|_{L^\infty(\R^{n+1}_+)}$, and $w$ satisfies
$$\begin{cases} 
{\rm div}(z^a\nabla w)=0& \text{in $\R^{n+1}_+$}\,,\\
w=0 & \text{on $\partial \R^{n+1}_+$}\,. 
\end{cases}$$
Note that, as in the proof of Lemma \ref{eas},  $w$ and $z^a\partial_z w$ are H\"older continuous up to $\partial\R^{n+1}_+$, and smooth in $\R^{n+1}_+$ by  elliptic regularity. Since $w$ is bounded, the Liouville type theorem in \cite[Corollary 3.5]{CS1} tells us that $w\equiv 0$. 
\end{proof}

\begin{remark}
If  $\varphi$ is a nonlocal stationary cone, then $ \boldsymbol{\Theta}_\varphi(\lambda y)=\boldsymbol{\Theta}_\varphi(y)$ for every $y\in\R^n\setminus\{0\}$ and $\lambda>0$. Indeed, by homogeneity of $\varphi $  we have for  each $\rho>0$, 
$$\boldsymbol{\Theta}_\varphi(0,\lambda y,\rho) = \boldsymbol{\Theta}_\varphi(0,y,\rho/\lambda)\,,$$
and the assertion follows letting $\rho\to 0$. 
\end{remark}

\begin{lemma}\label{lemSphi}
Let $\varphi$ be a nonlocal stationary cone. Then, 
$$ \boldsymbol{\Theta}_\varphi(y)\leq \boldsymbol{\Theta}_\varphi(0)\quad \forall y\in\R^n\,.$$
In addition, the set 
$$S(\varphi):=\Big\{y\in\R^n:  \boldsymbol{\Theta}_\varphi(y)=\boldsymbol{\Theta}_\varphi(0)\Big\} $$
is a linear subspace of $\R^n$, and $\varphi({\bf x}+{\bf y})=\varphi({\bf x})$ for every ${\bf y}\in S(\varphi)\times\{0\}$ and ${\bf x}\in\R^{n+1}_+$. 
\end{lemma}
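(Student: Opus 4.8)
The statement is the nonlocal analogue of the classical fact that a stationary cone has a "spine" — the set of points where the density is maximal — which is a linear subspace, and along which the cone is invariant. The plan is to exploit the monotonicity formula of Lemma \ref{monotformsurf} (with $f=0$, so we are in the equality case) together with the $0$-homogeneity of $\varphi$ to obtain a Federer-type dimension-reduction argument at the level of a single cone.

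First I would prove the upper bound $\boldsymbol{\Theta}_\varphi(y)\leq\boldsymbol{\Theta}_\varphi(0)$. For a point $\mathbf{y}=(y,0)$ and $r>|y|$ one has $B_{r-|y|}^+(\mathbf{y})\subset B_r^+$, hence
$$\boldsymbol{\Theta}_\varphi(f,y,r-|y|)=\frac{1}{(r-|y|)^{n-2s}}\mathbf{E}\big(\varphi,B^+_{r-|y|}(\mathbf{y})\big)\leq\Big(\frac{r}{r-|y|}\Big)^{n-2s}\boldsymbol{\Theta}_\varphi(f,0,r),$$
using $f=0$ (forced here since $\varphi$ is a nonlocal stationary cone, so \eqref{stattangmap} holds with no right-hand side). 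By homogeneity $\boldsymbol{\Theta}_\varphi(f,0,r)=\boldsymbol{\Theta}_\varphi(0)$ for all $r$, so letting $r\to\infty$ and using monotonicity in the first slot gives $\boldsymbol{\Theta}_\varphi(y)\leq\boldsymbol{\Theta}_\varphi(0)$. Next, to show $\varphi$ is invariant in the direction of any $y\in S(\varphi)$: fix such a $y$ and consider the tangent maps of $\varphi$ at $\mathbf{y}$. Since $\varphi$ is a solution of \eqref{eqtangmap}-\eqref{stattangmap} on all of $\R^{n+1}_+$, it is in particular a solution of \eqref{startequsurf}-\eqref{critnonlocexteq} (with $f=0$) on every admissible $G$, so Lemma \ref{tangmapstat} applies at $\mathbf{y}$; any tangent map $\psi\in T_y(\varphi)$ satisfies $\boldsymbol{\Theta}_\psi(0)=\boldsymbol{\Theta}_\varphi(y)=\boldsymbol{\Theta}_\varphi(0)$. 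Applying Lemma \ref{lemhomogsol} (with $f=0$) to $\varphi$ shifted so that its value at radius $1$ around $\mathbf{y}$ equals the density at $\mathbf{y}$: indeed, by the equality case of Lemma \ref{monotformsurf} applied at $\mathbf{y}$ with radii $0<\rho<r$, $\boldsymbol{\Theta}_\varphi(f,y,r)-\boldsymbol{\Theta}_\varphi(y)=d_s\int_{B^+_r(\mathbf{y})}z^a\frac{|(\mathbf{x}-\mathbf{y})\cdot\nabla\varphi|^2}{|\mathbf{x}-\mathbf{y}|^{n+2-2s}}\,\de\mathbf{x}$, and since by the upper bound and homogeneity $\boldsymbol{\Theta}_\varphi(f,y,r)\leq(1+|y|/(r-|y|))^{n-2s}\boldsymbol{\Theta}_\varphi(0)\to\boldsymbol{\Theta}_\varphi(0)=\boldsymbol{\Theta}_\varphi(y)$ as $r\to\infty$, we conclude $(\mathbf{x}-\mathbf{y})\cdot\nabla\varphi(\mathbf{x})=0$ for a.e. $\mathbf{x}$, i.e. $\varphi$ is $0$-homogeneous about the point $\mathbf{y}$ as well. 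Combined with $0$-homogeneity about the origin, a standard elementary argument (a function on $\R^{n+1}_+$ homogeneous about two distinct points $0$ and $\mathbf{y}$ is translation-invariant in the direction $\mathbf{y}$) gives $\varphi(\mathbf{x}+t\mathbf{y})=\varphi(\mathbf{x})$ for all $t\in\R$, $\mathbf{x}\in\R^{n+1}_+$.

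Finally, that $S(\varphi)$ is a linear subspace: it is clearly a cone (homogeneity of $\boldsymbol{\Theta}_\varphi$, noted in the Remark preceding the lemma), it contains $0$, and if $y_1,y_2\in S(\varphi)$ then the translation-invariance of $\varphi$ in the directions $\mathbf{y}_1$ and $\mathbf{y}_2$ just established, together with the fact that $\boldsymbol{\Theta}_\varphi$ is computed from $\varphi$, forces $\boldsymbol{\Theta}_\varphi$ to be invariant under translation by $y_1$ and $y_2$; hence $\boldsymbol{\Theta}_\varphi(y_1+y_2)=\boldsymbol{\Theta}_\varphi(y_2)=\boldsymbol{\Theta}_\varphi(0)$, so $y_1+y_2\in S(\varphi)$, and similarly $-y_1\in S(\varphi)$. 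Thus $S(\varphi)$ is closed under addition and scalar multiplication, i.e. a linear subspace.

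\textbf{Main obstacle.} The delicate point is the passage from "$\varphi$ has vanishing radial derivative about $\mathbf{y}$" to genuine translation-invariance; one must be careful that the two homogeneity relations (about $0$ and about $\mathbf{y}$) are used on the open half-space $\R^{n+1}_+$ where both scalings preserve the domain, and that the resulting translation flow $\mathbf{x}\mapsto\mathbf{x}+t\mathbf{y}$ also preserves $\R^{n+1}_+$ (true since $\mathbf{y}=(y,0)$ has zero last coordinate) — so no boundary terms or domain issues arise. A secondary subtlety is justifying $\boldsymbol{\Theta}_\varphi(f,y,r)\to\boldsymbol{\Theta}_\varphi(0)$ as $r\to\infty$ rigorously: this needs the (global) finiteness of the rescaled energies, which follows from \eqref{1912tangbd}/\eqref{enbdresc}-type bounds since $\varphi$ is itself a blow-up limit, or directly from homogeneity of $\varphi$. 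Everything else is a direct application of Lemma \ref{monotformsurf} in its equality case and Lemma \ref{lemhomogsol}.
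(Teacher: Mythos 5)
Your proof is correct and follows essentially the same line of reasoning as the paper's: the ball-inclusion trick combined with $0$-homogeneity for the upper bound $\boldsymbol{\Theta}_\varphi(y)\leq\boldsymbol{\Theta}_\varphi(0)$, the equality case of the monotonicity formula (effectively Corollary \ref{directcorlmonot} with $f=0$) to force $(\mathbf{x}-\mathbf{y})\cdot\nabla\varphi=0$, subtraction of the two homogeneity relations to get $\mathbf{y}\cdot\nabla\varphi=0$ and hence translation invariance, and then the subspace property. The only small differences are cosmetic: the paper establishes the full two-way equivalence ``$y\in S(\varphi)\iff\varphi$ is invariant in direction $y$'' and then reads off the subspace structure immediately, whereas you assemble it from the cone property of $\boldsymbol{\Theta}_\varphi$ plus closure under addition via the invariance; and your invocation of Lemma \ref{lemhomogsol} is superfluous, since what you actually use in that step is just the equality case of Lemma \ref{monotformsurf} at the shifted center.
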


\begin{proof}
By Corollary \ref{directcorlmonot}, we have for every ${\bf y}=(y,0)\in\partial\R^{n+1}_+$ and every $\rho>0$, 
\begin{equation}\label{2124}
\boldsymbol{\Theta}_\varphi(y)+d_s\int_{B_\rho^+({\bf y})} z^a\frac{|(\mathbf{x}-{\bf y})\cdot \nabla \varphi({\bf x})|^2}{|\mathbf{x}-{\bf y}|^{n+2-2s}}\,\de{\bf x} = \boldsymbol{\Theta}_\varphi(0,y,\rho)\,.
\end{equation}
On the other hand, by homogeneity of $\varphi$, 
$$\boldsymbol{\Theta}_\varphi(0,y,\rho) \leq \frac{(\rho+|z|)^{n-2s}}{\rho^{n-2s}}\,\boldsymbol{\Theta}_\varphi(0,0,\rho+|y|) =  \frac{(\rho+|y|)^{n-2s}}{\rho^{n-2s}}\,\boldsymbol{\Theta}_\varphi(0)\,.  $$
Inserting this inequality in \eqref{2124} and letting $\rho\to\infty$, we deduce that 
$$ \boldsymbol{\Theta}_\varphi(y)+d_s\int_{\R^{n+1}_+} z^a\frac{|(\mathbf{x}-{\bf y})\cdot \nabla \varphi({\bf x})|^2}{|\mathbf{x}-{\bf y}|^{n+2-2s}}\,\de{\bf x} \leq \boldsymbol{\Theta}_\varphi(0)\,.$$
Next, if $ \boldsymbol{\Theta}_\varphi(y)= \boldsymbol{\Theta}_\varphi(0)$, then $(\mathbf{x}-{\bf y})\cdot \nabla \varphi({\bf x})=0$ for every ${\bf x}\in\R^{n+1}_+$. By homogeneity of $\varphi$, we deduce that ${\bf y}\cdot \nabla \varphi({\bf x})=0$ for every ${\bf x}\in\R^{n+1}_+$, i.e, 
\begin{equation}\label{transinv}
\varphi({\bf x}+{\bf y})=\varphi({\bf x}) \quad \forall {\bf x}\in\R^{n+1}_+\,.
\end{equation}
The other way around, if ${\bf y}=(y,0)$ satisfies \eqref{transinv}, then $(\mathbf{x}-{\bf y})\cdot \nabla \varphi({\bf x})=0$ for every ${\bf x}\in\R^{n+1}_+$ (using again the homogeneity of $\varphi$). By \eqref{2124} and \eqref{transinv},  it implies that for each radius $\rho>0$, 
$$\boldsymbol{\Theta}_\varphi(y)= \boldsymbol{\Theta}_\varphi(0,y,\rho)= \boldsymbol{\Theta}_\varphi(0,0,\rho)=\boldsymbol{\Theta}_\varphi(0)\,,$$
and thus $y\in S(\varphi)$. From \eqref{transinv} it now follows that $S(\varphi)$ is a linear subspace of $\R^n$. 
\end{proof}

\begin{remark}\label{remSphisubbound}
If  $\varphi$ is a non constant nonlocal stationary cone, then $ \boldsymbol{\Theta}_\varphi(0)>0$ by Lemma~\ref{epsregminsurf}. In turn, we infer from Corollary \ref{openset} that  $S(\varphi)\subset \partial \mathcal{C}_\varphi$.
\end{remark}

\begin{remark}\label{spinedimn}
If  $\varphi$ is a nonlocal stationary cone such that ${\rm dim}\,S(\varphi)=n$, then either $\mathcal{C}_\varphi=\R^n$ or $\mathcal{C}_\varphi=\emptyset$, i.e., either $\varphi=1$ or $\varphi=-1$, respectively. As a consequence, if $\varphi\in T_{x_0}(u)$ for some solution $u$  of \eqref{startequsurf}-\eqref{critnonlocexteq}, then $ \boldsymbol{\Theta}_u(x_0)=\boldsymbol{\Theta}_\varphi(0)=0$. Now Corollary \ref{openset} yields $x_0\not\in \partial E_u\cap\partial^0G$. In other words,
$$x_0\in\partial E_u\cap\partial^0G \Longleftrightarrow\,{\rm dim}\,S(\varphi)\leq n-1 \text{ for all $\varphi\in T_{x_0}(u)$}\,.$$
\end{remark}

\begin{remark}\label{remhalfspa}
If  $\varphi$ is a nonlocal stationary cone such that ${\rm dim}\,S(\varphi)=n-1$, then $\mathcal{C}_\varphi$ is a half-space. Indeed, up to a rotation, we may assume that $S(\varphi)=\{0\}\times \R^{n-1}$, and Lemma~\ref{lemSphi} yields $\varphi({\bf x})=\varphi(x_1,z)$ for all ${\bf x}=(x_1,\ldots,x_n,z)\in\R^{n+1}_+$. As a consequence, either  $\mathcal{C}_\varphi=\{x_1>0\}$ or  $\mathcal{C}_\varphi=\{x_1<0\}$. 
\end{remark}

In view of the remark above, we introduce the half-space $P_1\subset\R^n$ defined by 
\begin{equation}\label{defP1}
P_1:=\{x_1>0\}\,,
\end{equation}
and its extension to $\R^{n+1}_+$, $\varphi_{\rm ref}:=(\chi_{P_1}-\chi_{\R^n\setminus P_1})^\e$. Then we set 
\begin{equation}\label{defthetaplane}
\boldsymbol{\theta}_{n,s}:=\frac{d_s}{2}\int_{B_1^+}z^a|\nabla\varphi_{\rm ref}|^2\,\de{\bf x}\,.
\end{equation}

\begin{lemma}\label{energhalfplane}
If $\varphi$ is a non constant nonlocal stationary cone, then $ \boldsymbol{\Theta}_\varphi(0)\geq \boldsymbol{\theta}_{n,s}$. In addition, equality holds if and only if $\mathcal{C}_\varphi$ is an open half-space.  
\end{lemma}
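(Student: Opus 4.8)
\textbf{Proof proposal for Lemma \ref{energhalfplane}.}

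The plan is to exploit the stratification of the spine $S(\varphi)$ together with a dimension-reduction argument, combined with the classification of lower-dimensional cones, to show that the half-space has the least density among all non-constant nonlocal stationary cones. First I would argue that it suffices to establish the inequality $\boldsymbol{\Theta}_\varphi(0)\geq \boldsymbol{\theta}_{n,s}$ together with the rigidity statement; the fact that a half-space actually realizes $\boldsymbol{\theta}_{n,s}$ is immediate from the definition \eqref{defthetaplane} and the $0$-homogeneity of $\varphi_{\rm ref}$ (which gives $\boldsymbol{\Theta}_{\varphi_{\rm ref}}(0)=\boldsymbol{\theta}_{n,s}$). Since $\varphi$ is non constant, Remark \ref{remSphisubbound} gives $\boldsymbol{\Theta}_\varphi(0)>0$ and $\dim S(\varphi)\leq n-1$; if $\dim S(\varphi)=n-1$, Remark \ref{remhalfspa} tells us $\mathcal{C}_\varphi$ is a half-space, hence (up to rotation and reflection) $\varphi=\varphi_{\rm ref}$, so $\boldsymbol{\Theta}_\varphi(0)=\boldsymbol{\theta}_{n,s}$ and we are done in that case. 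So the heart of the matter is the case $\dim S(\varphi)=:m\leq n-2$, where one must show the strict inequality $\boldsymbol{\Theta}_\varphi(0)>\boldsymbol{\theta}_{n,s}$.

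The key step is a dimension-reduction scheme à la Federer. After a rotation, write $S(\varphi)=\{0\}^{n-m}\times\R^m$, so by Lemma \ref{lemSphi} the cone $\varphi$ depends only on the first $n-m$ spatial variables and $z$; it is thus (the extension of) a nonlocal stationary cone $\psi$ in $\R^{n-m}$ with the same density $\boldsymbol{\Theta}_\varphi(0)=\boldsymbol{\Theta}_\psi(0)$, and $\psi$ has trivial spine $S(\psi)=\{0\}$. Pick a point $y_0\in\partial\mathcal{C}_\psi\setminus\{0\}$ (nonempty since $\psi$ is non constant and its cone is not a half-space when $n-m\geq 2$ and $S=\{0\}$ — note if $n-m=1$ the only cones with trivial spine are half-lines, i.e.\ half-spaces, contradicting $m\leq n-2$ unless... this needs care). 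At $y_0$, any tangent map $\chi\in T_{y_0}(\psi)$ is a nonlocal stationary cone by Lemma \ref{tangmapstat} and Lemma \ref{homo2203}, with $\boldsymbol{\Theta}_\chi(0)=\boldsymbol{\Theta}_\psi(y_0)$; moreover $\chi$ is translation invariant along the direction $y_0$ (by the usual argument that $\chi$, being a blow-up at $y_0$, splits off the line $\R y_0$, using homogeneity of $\psi$), so $\dim S(\chi)\geq 1$. Strict monotonicity in Lemma \ref{monotformsurf} forces $\boldsymbol{\Theta}_\psi(y_0)<\boldsymbol{\Theta}_\psi(0)$ unless $\psi$ is already translation invariant in the direction $y_0$, which is excluded since $S(\psi)=\{0\}$; hence $\boldsymbol{\Theta}_\chi(0)<\boldsymbol{\Theta}_\psi(0)$. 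Iterating this reduction at most $n-m-1$ times, one reaches a nonlocal stationary cone $\chi_*$ with $\dim S(\chi_*)=n-m-1$ inside $\R^{n-m}$ — hence, by the analog of Remark \ref{remhalfspa} in dimension $n-m$, a half-space — whose density equals $\boldsymbol{\theta}_{n-m,s}$; and along the chain of strict inequalities $\boldsymbol{\Theta}_\varphi(0)=\boldsymbol{\Theta}_\psi(0)>\boldsymbol{\theta}_{n-m,s}$. It remains to reconcile $\boldsymbol{\theta}_{n-m,s}$ with $\boldsymbol{\theta}_{n,s}$: since a half-space in $\R^{n-m}$ extended trivially by $\R^m$ is a half-space in $\R^n$, and the weighted Dirichlet energy of $\varphi_{\rm ref}$ restricted to a half-ball is dimension-independent in the appropriate normalization, one has $\boldsymbol{\theta}_{n-m,s}=\boldsymbol{\theta}_{n,s}$ (this is a direct computation from the explicit form of $(\chi_{P_1}-\chi_{\R^n\setminus P_1})^\e$, which only involves the coordinate $x_1$ and $z$). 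Putting these together yields $\boldsymbol{\Theta}_\varphi(0)>\boldsymbol{\theta}_{n,s}$, completing the strict case and hence the equality characterization.

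For the converse direction of the rigidity statement: if $\boldsymbol{\Theta}_\varphi(0)=\boldsymbol{\theta}_{n,s}$ then by the above dichotomy we cannot have $\dim S(\varphi)\leq n-2$ (that forces strict inequality), so $\dim S(\varphi)=n-1$ (it cannot be $n$ since $\varphi$ is non constant, by Remark \ref{spinedimn}), and Remark \ref{remhalfspa} concludes that $\mathcal{C}_\varphi$ is an open half-space.

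\textbf{Main obstacle.} The delicate point is the base case of the dimension reduction and the precise bookkeeping of spine dimensions: one must verify that a nonlocal stationary cone in $\R^k$ with $(k-1)$-dimensional spine is exactly a half-space (the analog of Remark \ref{remhalfspa}, which is stated only for the ambient dimension $n$ but whose proof is dimension-free), and that a cone in $\R^k$ with trivial spine and $k\geq 2$ which is non constant must have boundary points $y_0\neq 0$ where a genuine drop of density occurs — i.e.\ that strict monotonicity in Lemma \ref{monotformsurf} is not vacuous. The strict inequality $\boldsymbol{\Theta}_\psi(y_0)<\boldsymbol{\Theta}_\psi(0)$ is exactly where one uses that $\psi$ is \emph{not} translation invariant along $y_0$: if it were, $y_0$ would lie in $S(\psi)=\{0\}$, a contradiction. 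Making this rigorous requires the splitting lemma for tangent maps at $y_0\neq 0$ (blow-ups at non-origin points of a cone inherit an extra invariance direction), which follows from Lemma \ref{lemSphi} applied to the blow-up together with the homogeneity of $\psi$, but needs to be stated and proved carefully. Finally, one should double-check the normalization identity $\boldsymbol{\theta}_{n,s}=\boldsymbol{\theta}_{n-m,s}$, which hinges on the fact that $\varphi_{\rm ref}$ is constant in the extra $m$ variables, so that integrating $z^a|\nabla\varphi_{\rm ref}|^2$ over $B_1^+\subset\R^{n+1}_+$ versus $\R^{n-m+1}_+$ differs only through the slicing of the unit ball — a computation best done by passing through the global monotone quantity rather than the unnormalized energy on a fixed ball.
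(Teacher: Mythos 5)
Your plan — blow up at a non-trivial boundary point to gain one translation-invariant direction, iterate until the cone becomes a half-space, and chain the densities — is indeed the paper's strategy, and your flag at the end of your write-up is exactly where the argument goes wrong. The identity you would need, $\boldsymbol{\theta}_{n-m,s}=\boldsymbol{\theta}_{n,s}$, is \emph{false}, and the earlier claim that feeds into it, namely $\boldsymbol{\Theta}_\varphi(0)=\boldsymbol{\Theta}_\psi(0)$ after ``restricting'' $\varphi$ to a cone $\psi$ on $\R^{n-m}$, is false for the same reason. If $u$ is $0$-homogeneous on $\overline{\R^{k+1}_+}$ and $\tilde u({\bf x},x')=u({\bf x})$ is its trivial extension to $\overline{\R^{n+1}_+}$ ($x'\in\R^{n-k}$), then writing $z^a|\nabla u|^2=\rho^{k-1-2s}\,g(\omega)$ in polar coordinates on $\R^{k+1}_+$ and slicing $B_1^+\subset\R^{n+1}_+$ over the extra variables gives
$$\frac{\mathbf{E}(\tilde u,B_1^+\subset\R^{n+1}_+)}{\mathbf{E}(u,B_1^+\subset\R^{k+1}_+)}=(k-2s)\,\omega_{n-k}\int_0^1 t^{\,k-1-2s}(1-t^2)^{\frac{n-k}{2}}\,\de t=\frac{\pi^{\frac{n-k}{2}}\,\Gamma\big(\tfrac{k-2s+2}{2}\big)}{\Gamma\big(\tfrac{n-2s+2}{2}\big)}\,,$$
a dimensional constant that is $1$ only when $k=n$. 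So the density of a translation-invariant cone, computed in ambient dimension $n$, is \emph{not} equal to the density of the lower-dimensional cone it restricts to, and in particular $\boldsymbol{\theta}_{n,s}$ and $\boldsymbol{\theta}_{n-m,s}$ differ by this factor. In principle the two errors would partially cancel (both sides of your final comparison get multiplied by the same kind of constant), but the chain of iterated blow-ups mixes several such factors and you have not tracked them, so the argument as written does not close.

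The paper's proof makes the same dimension-reduction but never changes the ambient dimension: each tangent map $\psi_k$ is a cone on $\R^{n+1}_+$ that happens to be translation invariant in $n+1-k$ of the horizontal directions, and all densities $\boldsymbol{\Theta}_{\psi_k}(0)$ are computed in $\R^n$. Combining Lemma~\ref{lemSphi} (monotonicity of the density under translation) with Lemma~\ref{homo2203} (density of the tangent map equals the density of $\varphi$ at the blow-up point) yields the non-increasing chain
$$\boldsymbol{\Theta}_\varphi(0)\ \geq\ \boldsymbol{\Theta}_{\psi_n}(0)\ \geq\ \cdots\ \geq\ \boldsymbol{\Theta}_{\psi_2}(0)=\boldsymbol{\theta}_{n,s}\,,$$
with no dimensional constants to track. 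For the rigidity, the paper also proceeds differently: rather than invoking the strict monotonicity $\boldsymbol{\Theta}_\varphi(y_0)<\boldsymbol{\Theta}_\varphi(0)$ for $y_0\notin S(\varphi)$ and threading a strict inequality through the iteration (which can be made to work, but one has to re-prove the invariance inheritance carefully), it shows that equality $\boldsymbol{\Theta}_\varphi(0)=\boldsymbol{\theta}_{n,s}$ forces $\boldsymbol{\Theta}_\varphi(x)=\boldsymbol{\theta}_{n,s}$ at every $x\in\partial\mathcal{C}_\varphi$, hence $\partial\mathcal{C}_\varphi\subset S(\varphi)$; combined with Remark~\ref{remSphisubbound} this gives $S(\varphi)=\partial\mathcal{C}_\varphi$, which is then a linear subspace, necessarily a hyperplane, and Remark~\ref{remhalfspa} finishes. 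To repair your argument, drop the passage to lower ambient dimension entirely: keep every tangent map as a function on $\R^{n+1}_+$ and compute all densities there, and you will essentially recover the paper's proof.
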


\begin{proof}
{\it Step 1.} Since $\varphi$ is not trivial, by Corollary  \ref{openset}, Remark \ref{remSphisubbound}, and Lemma \ref{lemSphi}, we can find a point $y\in \mathbb{S}^{n-1}$ such that $\boldsymbol{\Theta}_\varphi(0)\geq \boldsymbol{\Theta}_\varphi(y)>0$. Rotating coordinates if necessary, we may assume that $y=e_n$, where $(e_1,\ldots,e_n)$ denotes the canonical basis of $\R^n$.  Let $\psi_n$ 
be a tangent map of $\varphi$ at $e_n$. We claim that $\psi_n$ is independent of the $x_n$-variable, i.e., $\partial_{x_n} \psi_n({\bf x})=0$ for every ${\bf x}\in \R^{n+1}_+$.  To prove this claim, we consider a sequence of radii $\rho_k\downarrow0$ such that $\varphi_{e_n,\rho_k}\to \psi_n$ strongly in $H^1(B_r^+,|z|^a\de{\bf x})$ for every $r>0$. By homogeneity of $\varphi$, we have for every ${\bf x}=(x,z)\in\R^{n+1}_+$, 
$$\partial_{x_n}\varphi_{e_n,\rho_k}({\bf x})= -\rho_k^2{\bf x}\cdot \nabla \varphi(e_n+\rho_k x,\rho_k z)= -\rho_k{\bf x}\cdot \nabla \varphi_{e_n,\rho_k}({\bf x})\,.$$
Consequently, 
$$\frac{1}{r^{n-2s}}\int_{B_r^+}z^a|\partial_{x_n}\varphi_{e_n,\rho_k}|^2\,\de{\bf x} \leq r^2\rho_k^2 \boldsymbol{\Theta}_\varphi(0,e_n,r\rho_k)\mathop{\longrightarrow}\limits_{k\to\infty}0\,,$$
and the claim follows. As a consequence, $\mathcal{C}_{\psi_n}=\mathcal{C}_{n-1}\times\R$ for some open cone $\mathcal{C}_{n-1}\subset\R^{n-1}$, and $\boldsymbol{\Theta}_{\psi_n}(0)=\boldsymbol{\Theta}_\varphi(y)>0$. Since $\psi_n$ is not trivial, we can now find a point $y\in \mathbb{S}^{n-2}\times\{0\}$ such that $\boldsymbol{\Theta}_\varphi(0)\geq {\Theta}_{\psi_n}(y)>0$. Rotating coordinates if necessary, we may assume that $y=e_{n-1}$, and we consider a tangent map $\psi_{n-1}$ of $\psi_n$ at $e_{n-1}$. Then, such a tangent map is independent of the $x_n$ and $x_{n-1}$ variables. Iterating this process, we produce for each $k\in\{n-1,\ldots,2\}$, a non trivial tangent map $\psi_k$ of $\psi_{k+1}$ at $e_k$  such that $\mathcal{C}_{\psi_k}=\mathcal{C}_{k-1}\times\R^{n+1-k}$ for some open cone  $\mathcal{C}_{k-1}\subset\R^{k-1}$, and $\boldsymbol{\Theta}_{\varphi}(0)\geq\boldsymbol{\Theta}_{\psi_k}(0)>0$.   At the last step of the process (i.e., $k=2$), we have either $\mathcal{C}_{1}=(0,+\infty)$ or $\mathcal{C}_{1}=(-\infty,0)$. In other words, either 
$\mathcal{C}_{\psi_2} =\{x_1>0\}$ or $\mathcal{C}_{\psi_2} =\{x_1<0\}$. Without loss of generality, we may assume that $\mathcal{C}_{\psi_2} =\{x_1>0\}$. Then, by Corollary  \ref{relattangmap} we have $\psi_{2}=(\chi_{P_1}-\chi_{\R^n\setminus P_1})^\e$ where $P_1$ is the reference half space \eqref{defP1}. By Lemma \ref{homo2203}, we conclude that $\boldsymbol{\Theta}_{\psi_2}(0)=\boldsymbol{\theta}_{n,s}$, and we have thus proved that $\boldsymbol{\Theta}_{\varphi}(0)\geq \boldsymbol{\theta}_{n,s}$.  
\vskip3pt

\noindent{\it Step 2.} Assume that $ \boldsymbol{\Theta}_\varphi(0)= \boldsymbol{\theta}_{n,s}$. From Step 1, Corollary \ref{openset}, and Lemma \ref{lemSphi} we infer that $ \boldsymbol{\Theta}_\varphi(x)= \boldsymbol{\theta}_{n,s}$ for every $x\in\partial \mathcal{C}_\varphi$. In view of Remark \ref{remSphisubbound}, it leads to $S(\varphi)=\partial \mathcal{C}_\varphi$. Since $\varphi$ is not trivial, we must have ${\rm dim}\,S(\varphi)=n-1$, and Remark \ref{remhalfspa} tells us that $\mathcal{C}_\varphi$ is a half-space. 
\end{proof}

For a constant $\Lambda\geq 0$ and $j\in\{0,\ldots,n\}$, we now introduce the following class of nonlocal stationary cones 
$$\mathscr{C}_j(\Lambda):=\Big\{\text{nonlocal stationary cones $\varphi$ such that ${\rm dim}\,S(\varphi)\geq j$ and $\boldsymbol{\Theta}_\varphi(0)\leq\Lambda$}\Big\}\,. $$
Note that $\mathscr{C}_{j+1}(\Lambda)\subset \mathscr{C}_j(\Lambda)$, and $\mathscr{C}_n(\Lambda)=\{+1,-1\}$ by Remark \ref{spinedimn}.

\begin{lemma}\label{compactcone}
For each $j\in\{0,\ldots,n\}$ and $r>0$, the set $\big\{\varphi_{| B_r^+}: \varphi\in\mathscr{C}_j(\Lambda)\big\} $ is strongly compact in $H^1(B_r^+,|z|^a\de{\bf x})$. 
\end{lemma}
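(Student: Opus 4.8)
\textbf{Proof strategy for Lemma \ref{compactcone}.}
The plan is to take an arbitrary sequence $\{\varphi_k\}_{k\in\mathbb{N}}\subset\mathscr{C}_j(\Lambda)$ and extract a subsequence converging strongly in $H^1(B_r^+,|z|^a\de{\bf x})$ to some $\varphi\in\mathscr{C}_j(\Lambda)$. The engine is Theorem \ref{compactminsurf}: each $\varphi_k$ is a solution of \eqref{startequsurf}--\eqref{critnonlocexteq} on every half-ball $B_R^+$ with $f_k\equiv 0$ and the uniform bound $\|\varphi_k\|_{L^\infty}\leq 1$ coming from Lemma \ref{relattangmap}. The energy bound needed to apply Theorem \ref{compactminsurf} on $B_R^+$ follows from the $0$-homogeneity of $\varphi_k$ together with $\boldsymbol{\Theta}_{\varphi_k}(0)\leq\Lambda$: indeed ${\bf E}(\varphi_k,B_R^+)=R^{n-2s}\boldsymbol{\Theta}_{\varphi_k}(0,0,R)=R^{n-2s}\boldsymbol{\Theta}_{\varphi_k}(0)\leq\Lambda R^{n-2s}$ by Lemma \ref{homo2203} (or directly since $r\mapsto r^{2s-n}{\bf E}(\varphi_k,B_r^+)$ is constant for homogeneous solutions with $f=0$, by Lemma \ref{monotformsurf}). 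So on a fixed large ball $B_R^+$ with $R>r$, Theorem \ref{compactminsurf} (with all $f_k=0$, hence $f=0$) gives a subsequence with $\varphi_k\rightharpoonup\varphi$ weakly and $\varphi_k\to\varphi$ strongly in $H^1_{\rm loc}(B_R^+\cup D_R,|z|^a\de{\bf x})$, where $\varphi$ satisfies \eqref{startequsurf} and \eqref{critnonlocexteq} with $f=0$; in particular $\varphi$ satisfies \eqref{eqtangmap}--\eqref{stattangmap}. A diagonal argument over $R\to\infty$ then upgrades this to strong $H^1$-convergence on every half-ball, in particular on $B_r^+$.

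Next I would check that the limit $\varphi$ is again a nonlocal stationary cone lying in $\mathscr{C}_j(\Lambda)$. Homogeneity of $\varphi$ is obtained from the monotonicity formula applied to $\varphi$ (Lemma \ref{monotformsurf} with $f=0$): by the strong convergence, $\boldsymbol{\Theta}_\varphi(0,0,\rho)=\lim_k\boldsymbol{\Theta}_{\varphi_k}(0,0,\rho)=\lim_k\boldsymbol{\Theta}_{\varphi_k}(0)$ is independent of $\rho$, so the equality case of Lemma \ref{monotformsurf} forces ${\bf x}\cdot\nabla\varphi({\bf x})=0$, i.e.\ $\varphi$ is $0$-homogeneous; alternatively one invokes Lemma \ref{lemhomogsol}. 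Thus $\varphi$ is a nonlocal stationary cone, and $\boldsymbol{\Theta}_\varphi(0)=\lim_k\boldsymbol{\Theta}_{\varphi_k}(0)\leq\Lambda$. It remains to show ${\rm dim}\,S(\varphi)\geq j$. Here I would use the spine characterization in Lemma \ref{lemSphi}: the spine $S(\varphi_k)$ is a linear subspace of dimension $\geq j$, so after a further subsequence the $S(\varphi_k)$ converge (in the Grassmannian) to a $j$-dimensional subspace $V$. For ${\bf y}=(y,0)$ with $y\in V$, one has $\boldsymbol{\Theta}_{\varphi_k}(y_k)=\boldsymbol{\Theta}_{\varphi_k}(0)$ for $y_k\in S(\varphi_k)$, $y_k\to y$; passing to the limit using Lemma \ref{updenstheta} (which applies since the $f_k$ are all zero, hence converge strongly) gives $\boldsymbol{\Theta}_\varphi(y)\geq\limsup_k\boldsymbol{\Theta}_{\varphi_k}(y_k)=\lim_k\boldsymbol{\Theta}_{\varphi_k}(0)=\boldsymbol{\Theta}_\varphi(0)$, and the reverse inequality $\boldsymbol{\Theta}_\varphi(y)\leq\boldsymbol{\Theta}_\varphi(0)$ holds by Lemma \ref{lemSphi}. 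Hence $y\in S(\varphi)$, so $V\subset S(\varphi)$ and ${\rm dim}\,S(\varphi)\geq j$.

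I expect the main obstacle to be the last step, controlling the dimension of the spine in the limit, since it requires the convergence $\boldsymbol{\Theta}_{\varphi_k}(y_k)\to\boldsymbol{\Theta}_\varphi(y)$ at a \emph{moving} sequence of points $y_k$; the upper bound comes for free from the upper-semicontinuity-type estimate Lemma \ref{updenstheta}, but one must carefully verify that along the spine $\boldsymbol{\Theta}_{\varphi_k}$ is \emph{constant} equal to $\boldsymbol{\Theta}_{\varphi_k}(0)$ (Lemma \ref{lemSphi}) so that no density is lost in the limit, and that the Grassmannian subsequence can be chosen compatibly with the $H^1$-convergence subsequence. A minor point to record is that $\mathscr{C}_n(\Lambda)=\{+1,-1\}$ is trivially compact, so the statement is only of interest for $j\leq n-1$; and one should note the edge case $\Lambda<\boldsymbol{\theta}_{n,s}$, where by Lemma \ref{energhalfplane} every $\varphi\in\mathscr{C}_j(\Lambda)$ is constant, so again compactness is immediate. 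The rest of the argument is routine once Theorem \ref{compactminsurf} and the monotonicity formula are in hand.
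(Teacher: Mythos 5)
Your proof is correct and takes essentially the same route as the paper: uniform energy bounds from homogeneity, compactness via Theorem~\ref{compactminsurf} with $f_k\equiv0$, homogeneity of the limit via the rigidity in the monotonicity formula (Lemma~\ref{lemhomogsol}), and then Grassmannian compactness of the spines combined with the upper density bound of Lemma~\ref{updenstheta} and the characterization in Lemma~\ref{lemSphi}. The only (harmless) overhead is the diagonal argument over $R\to\infty$; the paper simply applies Theorem~\ref{compactminsurf} on a single ball $B_2^+$ to get strong convergence on $B_1^+$, which already suffices both for the statement and for the homogeneity argument via Corollary~\ref{directcorlmonot} and Lemma~\ref{lemhomogsol}.
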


\begin{proof}
By homogeneity, it is enough to consider the case $r=1$. Let $\{\varphi_k\}_{k\in\mathbb{N}}\subset \mathscr{C}_j(\Lambda)$ be an arbitrary sequence. Still by homogeneity, we have $\boldsymbol{\Theta}_{\varphi_k}(0,0,2)=\boldsymbol{\Theta}_{\varphi_k}(0)\leq \Lambda$, so that 
$${\bf E}(\varphi_k,B^+_{2})\leq 2^{n-2s}\Lambda\,. $$
Since $|\varphi_k|\leq 1$ by Lemma \ref{relattangmap}, we can apply Theorem \ref{compactminsurf} to find a (not relabeled) subsequence   such that $\varphi_k\to\psi$ strongly in $H^1(B_1^+,|z|^a\de{\bf x})$ for a function $\psi$ satisfying \eqref{eqcone}-\eqref{statinB1} with $f=0$ and $b=1$. Then we deduce from Lemma \ref{updenstheta} that 
$$\boldsymbol{\Theta}_{\psi}(0)\geq \lim_{k\to\infty}\boldsymbol{\Theta}_{\varphi_k}(0)=\lim_{k\to\infty}\boldsymbol{\Theta}_{\varphi_k}(0,0,1) =\boldsymbol{\Theta}_{\psi}(0,0,1)\,.$$
In turn, Corollary \ref{directcorlmonot} shows that $\boldsymbol{\Theta}_{\psi}(0)=\boldsymbol{\Theta}_{\psi}(0,0,1)$, and thus $\psi$ is $0$-homogeneous by Lemma \ref{lemhomogsol}, and $\boldsymbol{\Theta}_{\psi}(0)=\lim_k\boldsymbol{\Theta}_{\varphi_k}(0)\leq \Lambda$. Consequently, $\psi$ is a nonlocal stationary cone, and it remains to show that  ${\rm dim}\,S(\psi)\geq j$. 

Extracting a further subsequence if necessary, we may assume that ${\rm dim}\,S(\varphi_k)$ is a constant integer $d\geq j$, and $S(\varphi_k)\to V$ in the Grassmannian  $G(d,n)$ of all $d$-dimensional linear subspaces of $\mathbb{R}^n$. For an arbitrary $y\in V\cap D_1$, there exists a sequence $\{y_k\}_{k\in\mathbb{N}}\subset D_1$ such that $y_k\in S(\varphi_k)$ and $y_k\to y$.  By Lemma \ref{updenstheta}, we have 
$$\boldsymbol{\Theta}_{\psi}(y)\geq \lim_{k\to\infty} \boldsymbol{\Theta}_{\varphi_k}(y_k)=\lim_{k\to\infty} \boldsymbol{\Theta}_{\varphi_k}(0)=\boldsymbol{\Theta}_{\psi}(0)\,,$$
and we deduce from Lemma \ref{lemSphi} that $y\in S(\psi)$. Therefore $V\subset S(\psi)$, and in particular ${\rm dim}\,S(\psi)\geq d$. 
\end{proof}


\subsection{Quantitative stratification}\label{subsectstrat}  


In this subsection, we are back again to the analysis  of the function $u\in H^1(G,|z|^a\de{\bf x})\cap L^\infty(G)$ solving \eqref{startequsurf}-\eqref{critnonlocexteq}.  
We are interested in regularity properties of the open subset $E_u\subset \partial^0G$ satisfying $u=\chi_{E_u}-\chi_{\partial^0 G\setminus E_u}$ on $\partial^0G$ (provided by Corollary~\ref{openset}). To this purpose, we introduced the following (standard)  stratification of the singular set of $u$, 
$${\rm Sing}^j(u):=\Big\{x\in\partial^0G: {\rm dim}\,S(\varphi)\leq j \text{ for all $\varphi\in T_x(u)$}\Big\}\,,\quad j=0,\ldots,n-1\,.$$ 
Obviously, 
$${\rm Sing}^j(u)\subset {\rm Sing}^{j+1}(u)\,, $$
and by Remark \ref{spinedimn}, 
\begin{equation}\label{identboundtopstrat}
\partial E_u\cap\partial^0G =  {\rm Sing}^{n-1}(u)\,.
\end{equation}
We also introduce the ``regular part'' $\Sigma_{\rm reg}(u)$ of $\partial E_u\cap\partial^0G$, 
$$\Sigma_{\rm reg}(u):= \big(\partial E_u\cap\partial^0G\big)\setminus  {\rm Sing}^{n-2}(u)\,.$$
The terminology {\sl regular part} is motivated by the following proposition showing that all blow-up limits of $\partial E_u$ at points of  $\Sigma_{\rm reg}(u)$ are hyperplanes. 

\begin{proposition}\label{regblowupset}
For every $x\in \Sigma_{\rm reg}(u)$ and $\varphi\in T_x(u)$, we have ${\rm dim}\,S(\varphi)=n-1$. In particular, if $x\in \Sigma_{\rm reg}(u)$, then every sequence $\rho_k\downarrow 0$ admits a subsequence $\{\rho^\prime_k\}_{k\in\mathbb{N}}$ and a half space $P\subset\mathbb{R}^n$, with $0\in\partial P$, such that  the rescaled boundaries 
$$\partial E_k:= (\partial E\cap\partial^0G -x)/ \rho^\prime_k$$ 
converge locally uniformly to the hyperplane $\partial P$, i.e., for every compact set $K\subset \R^n$ and every $r>0$,
$$\partial E_k\cap K\subset \mathscr{T}_r(\partial P) \quad\text{and}\quad  \partial P\cap K\subset  \mathscr{T}_r(\partial E_k)$$
whenever $k$ is large enough.
\end{proposition}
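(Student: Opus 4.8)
The statement has two parts: first, that every tangent map $\varphi\in T_x(u)$ at a point $x\in\Sigma_{\rm reg}(u)$ satisfies ${\rm dim}\,S(\varphi)=n-1$; second, the locally uniform convergence of rescaled boundaries to a hyperplane. I would prove the first part by a dimension-reduction argument combined with the structure theory of nonlocal stationary cones developed in the previous subsections. Fix $x\in\Sigma_{\rm reg}(u)$ and $\varphi\in T_x(u)$. By Lemma \ref{homo2203} and Lemma \ref{relattangmap}, $\varphi$ is a nonlocal stationary cone and $\varphi=(\chi_{\mathcal{C}_\varphi}-\chi_{\R^n\setminus\mathcal{C}_\varphi})^\e$ for an open cone $\mathcal{C}_\varphi$. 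Since $x\in\partial E_u\cap\partial^0G={\rm Sing}^{n-1}(u)$ but $x\notin{\rm Sing}^{n-2}(u)$, there exists at least one tangent map $\psi\in T_x(u)$ with ${\rm dim}\,S(\psi)\geq n-1$; by Remark \ref{spinedimn}, ${\rm dim}\,S(\psi)\ne n$ (otherwise $x\notin\partial E_u\cap\partial^0G$), so ${\rm dim}\,S(\psi)=n-1$ and, by Remark \ref{remhalfspa} and Lemma \ref{energhalfplane}, $\boldsymbol{\Theta}_\psi(0)=\boldsymbol{\theta}_{n,s}$. But $\boldsymbol{\Theta}_\psi(0)=\boldsymbol{\Theta}_u(x)$ for \emph{every} element of $T_x(u)$ by Lemma \ref{homo2203}, hence $\boldsymbol{\Theta}_\varphi(0)=\boldsymbol{\theta}_{n,s}$ for our chosen $\varphi$ as well. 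The equality case in Lemma \ref{energhalfplane} then forces $\mathcal{C}_\varphi$ to be an open half-space, which (combining Lemma \ref{lemSphi} with Remark \ref{remSphisubbound}, or directly Remark \ref{remhalfspa}) means precisely ${\rm dim}\,S(\varphi)=n-1$.

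\textbf{Second part: uniform convergence of rescaled boundaries.} Given a sequence $\rho_k\downarrow 0$, consider the rescaled functions $u_{x,\rho_k}$ defined in \eqref{defrescmap}. By the energy bound \eqref{enbdresc} and Theorem \ref{compactminsurf}, after passing to a subsequence $\{\rho'_k\}$ we have $u_{x,\rho'_k}\to\varphi$ strongly in $H^1_{\rm loc}(\R^{n+1}_+\cup\R^n,|z|^a\de{\bf x})$ for some $\varphi\in T_x(u)$, and by the first part ${\rm dim}\,S(\varphi)=n-1$, so $\mathcal{C}_\varphi$ is a half-space $P$ with $0\in\partial P$ (up to rotation, $P=\{x_1>0\}$ as in \eqref{defP1}). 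Since the rescaled auxiliary right-hand sides $\rho_k^{2s}f_{x,\rho_k}\to 0$ strongly in $W^{1,q}$ on every disc (as computed in the proof of Lemma \ref{tangmapstat}), Corollary \ref{Hausdcvbd} applies to the sequence $u_{x,\rho'_k}\to\varphi$: the boundaries $\partial E_{u_{x,\rho'_k}}\cap\partial^0G$ converge locally uniformly to $\partial E_\varphi\cap\partial^0G$. By scaling, $E_{u_{x,\rho'_k}}=(E_u\cap\partial^0G-x)/\rho'_k=E_k$, and $E_\varphi=P$, so $\partial E_\varphi\cap\R^n=\partial P$, the hyperplane. Unwinding the definition of locally uniform convergence gives exactly the two Hausdorff-type inclusions $\partial E_k\cap K\subset\mathscr{T}_r(\partial P)$ and $\partial P\cap K\subset\mathscr{T}_r(\partial E_k)$ for $k$ large.

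\textbf{Main obstacle.} The delicate point is verifying that Corollary \ref{Hausdcvbd} genuinely applies here, i.e.\ that the hypotheses of Theorem \ref{compactminsurf} and of the supporting lemmas (Lemma \ref{updenstheta}, upper semicontinuity of $\boldsymbol{\Theta}$) are met uniformly along the rescaling sequence on a \emph{fixed} admissible domain. One must fix a radius $R>0$ with $B_R^+(x)\subset G$, work on $G'=B_R^+$ (which is admissible with $\partial^0G'=D_R$), check that $u_{x,\rho'_k}$ solves \eqref{startequsurf}--\eqref{critnonlocexteq} on $B_R^+$ for all large $k$ with $f$ replaced by $(\rho'_k)^{2s}f_{x,\rho'_k}$, verify the uniform energy bound \eqref{1912tangbd}, and confirm the uniform $W^{1,q}$-bound on the rescaled forcing terms (they in fact tend to zero). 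Once this bookkeeping is in place the conclusion is immediate, but I would take care to state the rescaling covariance of $E_u$ and of the density $\boldsymbol{\Theta}$ explicitly — via the identity \eqref{1912tang} — so that the limiting cone is identified with $\varphi$ and not merely with \emph{some} stationary cone. The genuinely new content is all in the equality case of Lemma \ref{energhalfplane}, already proved; the present proposition is essentially its corollary together with the uniform-convergence machinery of Subsection \ref{subsectcompsurf}.
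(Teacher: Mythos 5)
Your proof is correct and follows essentially the same route as the paper: both arguments reduce to the observation that $\boldsymbol{\Theta}_u(x)=\boldsymbol{\theta}_{n,s}$ at points of $\Sigma_{\rm reg}(u)$ (via Lemma \ref{homo2203} and Remark \ref{remhalfspa}), then invoke the rigidity/equality case of Lemma \ref{energhalfplane} to force every tangent cone to be a half-space, and finally apply Corollary \ref{Hausdcvbd} to obtain the locally uniform convergence of the rescaled boundaries. Your treatment of the first claim (for \emph{all} $\varphi\in T_x(u)$) is slightly more explicit than the paper's, and your closing remarks on the bookkeeping for Corollary \ref{Hausdcvbd} are accurate but routine; no gaps.
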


\begin{proof}
By the very  definition of $\Sigma_{\rm reg}(u)$ and \eqref{identboundtopstrat}, if $x\in \Sigma_{\rm reg}(u)$, then there exists $\varphi_0\in  T_x(u)$ such that ${\rm dim}\,S(\varphi_0)=n-1$.  By Lemma~\ref{homo2203} and Remark \ref{remhalfspa}, we have $\boldsymbol{\Theta}_{u}(x)=\boldsymbol{\Theta}_{\varphi_0}(0)=\boldsymbol{\theta}_{n,s}$ as defined in  \eqref{defthetaplane}. 

Let $\rho_k\downarrow 0$ be an arbitrary sequence. By the results in Subsection \ref{secttangmap}, there exists a subsequence $\{\rho^\prime_k\}_{k\in\mathbb{N}}$ such that $u_{x,\rho^\prime_k}\to \varphi$ strongly in $H^1(B_r^+,|z|^a\de{\bf x})$ for every $r>0$, for some nonlocal stationary cone $\varphi\in T_x(u)$ satisfying $\boldsymbol{\Theta}_{\varphi}(0)=\boldsymbol{\Theta}_{u}(0)=\boldsymbol{\theta}_{n,s}$.  By Lemma~\ref{energhalfplane}, there is an open half-space $P\subset \mathbb{R}^n$, with $0\in \partial P$, such that  $\varphi=(\chi_P-\chi_{\mathbb{R}^n\setminus P})^\e$. Then the conclusion follows from Corollary \ref{Hausdcvbd}. 
\end{proof}

We are now ready to prove one of the main result of this section: the optimal estimate for the dimension of $\partial E_u\cap\Omega$ (here, ${\rm dim}_{\mathscr{M}} $ denotes the Minkowski dimension). 

\begin{theorem}\label{dimMink}
We have ${\rm dim}_{\mathscr{M}} (\partial E_u\cap\Omega^\prime) = n-1$ for every open subset $\Omega^\prime\subset \partial^0G$ such that $\overline{\Omega^\prime}\subset\partial^0G$ and $\partial E_u\cap\Omega^\prime\neq\emptyset$. In addition, ${\rm dim}_{\mathscr{H}}{\rm Sing}^j(u)\leq j$ for each $j\in\{1,\ldots,n-2\}$, and ${\rm Sing}^0(u)$ is countable. 
\end{theorem}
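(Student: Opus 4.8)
The plan is to establish the Minkowski dimension bound and the Hausdorff dimension bounds on the strata by applying the abstract quantitative stratification machinery of \cite{CheegNab,FMS}. The essential input needed to feed into that machinery has already been assembled in the preceding subsections: the almost-monotone density function $\boldsymbol{\Theta}_u(f,x_0,r)$ with the error term controlled by $\int_0^r t^{\theta_q-1}\|f\|_{\dot W^{1,q}(D_t)}\,\de t = O(r^{\theta_q})$ (Lemma \ref{monotformsurf}, Corollary \ref{directcorlmonot}); the strong compactness of solutions and of the rescaled maps $u_{x_0,\rho}$ in the energy space (Theorem \ref{compactminsurf}, Lemma \ref{tangmapstat}); the characterization of tangent objects as $0$-homogeneous \emph{nonlocal stationary cones} together with the fact that such cones are themselves compact in $H^1_{\rm loc}$ (Lemma \ref{compactcone}) and split off exactly the subspace $S(\varphi)$ (Lemma \ref{lemSphi}); and the $\varepsilon$-regularity / clearing-out statement (Lemma \ref{epsregminsurf}, Corollary \ref{openset}).

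\textbf{Key steps, in order.} First I would set up the quantitative strata: for $\eta>0$, $0<r\le 1$, and $j\in\{0,\dots,n-1\}$, define $\mathrm{Sing}^j_{\eta,r}(u)$ to be the set of $x\in\Omega'$ such that $\boldsymbol{\Theta}_u(f,x,s)-\boldsymbol{\Theta}_u(f,x,\eta s)<\eta$ fails for no scale $s\in[r,1]$ at which the density could be close (within $\eta$) to any cone with $\dim S\ge j+1$ — i.e., the scales at which $u_{x,s}$ is not $\eta$-close in $L^2(B_1^+,|z|^a)$ to a $(j+1)$-symmetric cone. The standard argument then proceeds: (i) a \emph{cone-splitting} lemma — if on a ball $u$ is simultaneously close to a cone with spine containing $j$ linearly independent directions \emph{and} close (at a comparable scale, at a point far from the current spine) to any cone, then it is close to a $(j+1)$-symmetric cone; this is pure linear algebra plus the compactness in Lemma \ref{compactcone} and the density gap in Lemma \ref{energhalfplane}/Corollary \ref{openset}, exactly as in \cite{CheegNab,FMS}. (ii) a covering/counting argument at dyadic scales using the uniform energy bound \eqref{1912tangbd} as the ``monotone quantity budget'': at each scale the density can drop by at least a definite amount $\beta(\eta)$ only boundedly often, which forces $\mathrm{Sing}^j_{\eta,r}(u)$ to be covered by at most $C r^{-j-\veps}$ balls of radius $r$. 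This yields $\dim_{\mathscr M}\mathrm{Sing}^j_{\eta,r}\le j$, uniformly in small $\eta$, hence via $\mathrm{Sing}^j(u)\subset\bigcup_\eta\bigcap_r\mathrm{Sing}^j_{\eta,r}(u)$ (using that a tangent map with $\dim S\le j$ forces, by compactness of cones, a quantitative drop at infinitely many scales) one gets $\dim_{\mathscr H}\mathrm{Sing}^j(u)\le j$ for $j=1,\dots,n-2$, and for $j=0$ a $0$-dimensional Minkowski content bound, hence countability (in fact local finiteness). Finally, for the top statement: by \eqref{identboundtopstrat}, $\partial E_u\cap\Omega' = \mathrm{Sing}^{n-1}(u)$, and the Minkowski-content estimate at level $j=n-1$ gives $\dim_{\mathscr M}(\partial E_u\cap\Omega')\le n-1$; the reverse inequality $\dim_{\mathscr M}(\partial E_u\cap\Omega')\ge n-1$ follows from Proposition \ref{regblowupset} and Corollary \ref{Hausdcvbd}, since near a point of $\Sigma_{\rm reg}(u)$ the set $\partial E_u$ is locally uniformly approximated by a genuine hyperplane, which has Minkowski dimension $n-1$ (one must note $\Sigma_{\rm reg}(u)\ne\emptyset$ whenever $\partial E_u\cap\Omega'\ne\emptyset$: if $\mathrm{Sing}^{n-2}(u)$ had full measure in $\partial E_u$ this would already contradict $\dim_{\mathscr M}\mathrm{Sing}^{n-2}(u)\le n-2<n-1$ combined with the fact that $\partial E_u$ carries positive $(n-1)$-content near any of its points, again by the clearing-out lower density bound $\boldsymbol{\Theta}_u\ge\boldsymbol{\eta}_0$ and Corollary \ref{directcorlmonot}).

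\textbf{Main obstacle.} The delicate point is that the abstract quantitative stratification of \cite{CheegNab,FMS} is formulated for objects (stationary varifolds, harmonic maps) whose tangent objects enjoy both homogeneity \emph{and} a genuine compactness with respect to an almost-monotone quantity; here the ``monotone quantity'' is $\boldsymbol{\Theta}_u(f,\cdot,r)$ which is only \emph{almost} monotone with an $O(r^{\theta_q})$ error coming from $f\in W^{1,q}$, $q>n/(1+2s)$. I expect the main work is to verify that this error is summable over dyadic scales in a way compatible with the counting argument — i.e. that $\sum_k (2^{-k})^{\theta_q}<\infty$ (which holds since $\theta_q=1+2s-n/q>0$) is enough to absorb the perturbation into the definition of the quantitative strata without degrading the exponent $j$. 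This is exactly the ``generalized'' framework of \cite{FMS}, so the adjustment should be mechanical but must be written carefully; everything else — cone splitting, the dimension reduction, the identification \eqref{identboundtopstrat}, and the lower Minkowski bound via Proposition \ref{regblowupset} — follows the established template.
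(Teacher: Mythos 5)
Your overall approach — feed the almost-monotone density $\boldsymbol{\Theta}_u(f,\cdot,\cdot)$, the compactness of Subsection~\ref{subsectcompsurf}, and the cone classification into the quantitative stratification machinery of \cite{CheegNab,FMS} to get the Minkowski volume bound for the top stratum and the Hausdorff bounds for the lower strata — is exactly what the paper does. The paper organizes this by explicitly verifying the structural axioms of \cite[Section~2.2]{FMS} in three self-contained lemmas (the density gap of Lemma~\ref{gapdistlemma}, the small-pinching statement of Lemma~\ref{lemme2stratsurf}, and the cone-splitting statement of Lemma~\ref{condiistrat}), then invoking \cite[Theorems~2.2 and 2.3]{FMS} wholesale in Theorem~\ref{volsingsrat}, and finally establishing $\mathrm{Sing}^j(u)\cap\Omega^{r_0}\subset\mathcal{S}^j_{r_0}(u)$ by a short tangent-map argument. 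You sketch the internals of the covering/counting scheme instead, which is fine in spirit; your caution about absorbing the $O(r^{\theta_q})$ error in the monotonicity is also the right thing to worry about and is indeed handled through the ``almost monotone'' axioms.

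The genuine gap is in your lower bound $\underline{\dim}_{\mathscr{M}}(\partial E_u\cap\Omega')\geq n-1$. First, your argument is circular: you assert that ``$\partial E_u$ carries positive $(n-1)$-content near any of its points, again by the clearing-out lower density bound $\boldsymbol{\Theta}_u\geq\boldsymbol{\eta}_0$'' in order to show $\Sigma_{\rm reg}(u)\neq\emptyset$, but positive $(n-1)$-content of $\partial E_u$ is precisely the quantity you are trying to bound from below, and the density lower bound $\boldsymbol{\Theta}_u\geq\boldsymbol{\eta}_0$ (an energy density estimate) does not by itself yield a lower bound on the $(n-1)$-Minkowski content of the boundary — this is a nontrivial implication you would have to prove. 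Second, even granting $\Sigma_{\rm reg}(u)\neq\emptyset$, the blow-up convergence of Proposition~\ref{regblowupset} holds along a \emph{subsequence} of scales at each regular point, which controls the covering number only along that subsequence and thus is not enough for the lower Minkowski dimension, which requires a lower bound on covering numbers at \emph{all} small scales. The paper's lower bound is a one-liner that avoids all of this: since $\partial E_u\cap\Omega'\neq\emptyset$ and $E_u\cap\Omega'$ is open (Corollary~\ref{openset}), $E_u\cap\Omega'$ is a nonempty, proper open subset of $\Omega'$, and the topological boundary of such a set has Hausdorff dimension at least $n-1$ (for instance, because orthogonal projection along a suitable direction maps $\partial E_u\cap\Omega'$ onto a set of positive $\mathscr{L}^{n-1}$-measure). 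Combined with $\dim_{\mathscr{H}}\leq\underline{\dim}_{\mathscr{M}}\leq\overline{\dim}_{\mathscr{M}}\leq n-1$, this pins down all three dimensions. Finally, a minor point: your parenthetical ``(in fact local finiteness)'' for $\mathrm{Sing}^0(u)$ is not justified — a $0$-Minkowski-dimension bound does not give finiteness (or even countability); the countability of $\mathcal{S}^0_{r_0}(u)$ is a separate output of \cite[Theorem~2.3]{FMS}.
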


We will prove Theorem \ref{dimMink} usnig the abstract stratification principle of \cite{FMS}, originally introduced in \cite{CheegNab}. To fit the setting of \cite{FMS}, we first need to introduce some notations. 

For a radius $r>0$, we set 
\begin{equation}\label{defOmegar}
\Omega^{r}:=\big\{x\in\R^n: B^+_{2r}((x,0))\subset G\big\}\,.
\end{equation}
In what follows, we {\it fix  three constants} $r_0> 0$, $H_0\geq0$, and $\Lambda_0\geq 0$ such that 
\begin{equation}\label{controlcurv}
 \|f\|_{\dot W^{1,q}(\partial^0G)}\leq H_0\,, 
 \end{equation}
 and 
\begin{equation}\label{controldensit}
\sup\Big\{\boldsymbol{\Theta}_u(f,x,\rho) : x\in\Omega^{r_0}\,,\;0<\rho\leq r_0\Big\}\leq \Lambda_0\,. 
\end{equation}
Note that the supremum above is indeed finite  by \eqref{1912tangbd}, and for $0<\rho\leq r_0$, 
$$\boldsymbol{\Theta}_u(f,x,\rho)\leq \frac{1}{r_0^{n-2s}}{\bf E}(u,G)+\frac{{\bf c}_{n,q}b\,({\rm diam}\,\partial^0G)^{\theta_q}}{\theta_q}H_0\quad\forall x\in\Omega^{r_0} \,.$$

For each $j\in \{0,\ldots,n\}$, $\rho\in(0,r_0)$, $x_0\in\Omega^{r_0}$ and ${\bf x}_0=(x_0,0)$, we now introduce the function ${\bf d}_j(\cdot,x_0,\rho): H^1(B^+_\rho({\bf x}_0),|z|^a\de{\bf x})\to [0,\infty)$
defined by
$${\bf d}_j(v,x_0,\rho):=\inf\Big\{\|v_{x_0,\rho}-\varphi\|_{L^1(B_1^+)}: \varphi\in\mathscr{C}_j(\Lambda_0)\Big\} \,,$$
where $v_{x_0,\rho}({\bf x}):=v({\bf x}_0+\rho {\bf x})$. Note that the infimum above is well defined by Remark \ref{trace}, and it is always achieved by Lemma~\ref{compactcone}. Moreover, 
$${\bf d}_0(\cdot,x_0,\rho)\leq {\bf d}_1(\cdot,x_0,\rho)\leq \ldots\leq {\bf d}_{n}(\cdot,x_0,\rho)\,,$$
and
$${\bf d}_n(v,x_0,\rho):=\min\Big\{ \|v_{x_0,\rho}-1\|_{L^1(B_1^+)},\|v_{x_0,\rho}+1\|_{L^1(B_1^+)} \Big\}\,. $$
Observe that each functional ${\bf d}_j(\cdot,x_0,\rho)$ is a (rescaled) $L^1$-distance function, and consequently they are $\rho^{-n}$-Lipschitz functions with respect to the $L^1(B_\rho^+({\bf x}_0))$-norm. In particular, each functional ${\bf d}_j(\cdot,x_0,\rho)$ is continuous with respect to strong convergence in $L^1(B_\rho^+({\bf x}_0))$. 
\vskip3pt

In the terminology of \cite[Section 2.1]{FMS}, the functions $\boldsymbol{\Theta}_u(f,\cdot,\rho)$ and ${\bf d}_j(u,\cdot,\cdot)$ will play the roles of {\sl density function} and {\sl control functions},  respectively (thanks to Lemma \ref{monotformsurf}). We need to show that they satisfy the structural assumptions of  \cite[Section 2.2]{FMS}. This is the purpose of the following lemmas.

\begin{lemma}\label{gapdistlemma}
There exists a constant 
$$\delta_0(r_0)=\delta_0(r_0,H_0,\Lambda_0,b,n,s,q)\in(0,1)$$ 
(independent of $u$ and $f$) such that for every for every $x\in\Omega^{r_0}$ and $\rho\in(0,r_0)$, 
$$\boldsymbol{\Theta}_u(x)>0\quad\Longrightarrow\quad  {\bf d}_n(u,x,\rho) \geq\delta_0 \,.$$
\end{lemma}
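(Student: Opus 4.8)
The statement is a ``gap lemma'': if $u$ is genuinely singular at $x$ (i.e.\ $\boldsymbol{\Theta}_u(x)>0$), then at \emph{every} scale $\rho\in(0,r_0)$ the rescaled function $u_{x,\rho}$ cannot be $L^1(B_1^+)$-close to the constants $\pm1$. The natural approach is a compactness/contradiction argument, exploiting the uniform bounds \eqref{controlcurv}--\eqref{controldensit} and the clearing-out property of Lemma \ref{epsregminsurf}. First I would assume, for contradiction, that there exist sequences $x_k\in\Omega^{r_0}$, $\rho_k\in(0,r_0)$, and solutions $u_k$ (with associated $f_k$) satisfying the uniform bounds, such that $\boldsymbol{\Theta}_{u_k}(x_k)>0$ but ${\bf d}_n(u_k,x_k,\rho_k)\to 0$. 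Writing $v_k:=(u_k)_{x_k,\rho_k}$, the latter says (up to relabelling) that $v_k\to 1$ (or $\to-1$) strongly in $L^1(B_1^+)$.

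\textbf{Key steps.} (1) Rescaling: each $v_k$ solves \eqref{startequsurf}-\eqref{critnonlocexteq} on $B_1^+$ with right-hand side $\tilde f_k:=\rho_k^{2s}(f_k)_{x_k,\rho_k}$, and $\|\tilde f_k\|_{\dot W^{1,q}(D_1)}=\rho_k^{\theta_q}\|f_k\|_{\dot W^{1,q}(D_{\rho_k}(x_k))}\le r_0^{\theta_q}H_0$, while $\boldsymbol{\Theta}_{v_k}(\tilde f_k,0,1)=\boldsymbol{\Theta}_{u_k}(f_k,x_k,\rho_k)\le\Lambda_0$. In particular ${\bf E}(v_k,B_1^+)\le\Lambda_0$ and $|v_k|\le b$. (2) Compactness: by Theorem \ref{compactminsurf} (applied on, say, $B_{1/2}^+$, or first on a slightly larger ball after a further preliminary rescaling to have room, or simply invoking the interior version of the estimates), a subsequence converges strongly in $H^1_{\rm loc}(B_1^+\cup D_1,|z|^a\de{\bf x})$ to a limit $v_\infty$ solving \eqref{startequsurf}-\eqref{critnonlocexteq} with some limiting $f_\infty$ (weak $W^{1,q}$-limit of $\tilde f_k$). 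By strong $L^1$ convergence, $v_\infty\equiv 1$ (resp.\ $\equiv-1$) on $D_1$, hence by the Liouville argument as in Lemma \ref{eas} / Lemma \ref{lemhomogsol}, $v_\infty\equiv 1$ (resp.\ $-1$) on $B_1^+$; in particular $\boldsymbol{\Theta}_{v_\infty}(0,0,r)=0$ for $r<1$. (3) Upper semicontinuity of density under this convergence: by Lemma \ref{updenstheta} (whose proof uses only the monotonicity formula Lemma \ref{monotformsurf} and the strong $H^1$-convergence, both available here), $\limsup_k\boldsymbol{\Theta}_{v_k}(0)\le\boldsymbol{\Theta}_{v_\infty}(0)=0$. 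Hence $\boldsymbol{\Theta}_{v_k}(0)<{\boldsymbol{\eta}}_0$ for $k$ large, and since $\boldsymbol{\Theta}_{v_k}(0)=\boldsymbol{\Theta}_{u_k}(x_k)$ (the density is scale-invariant, cf.\ \eqref{1912tang}), Corollary \ref{openset} forces $\boldsymbol{\Theta}_{u_k}(x_k)=0$, contradicting $\boldsymbol{\Theta}_{u_k}(x_k)>0$. This yields the claimed $\delta_0$, which depends only on $r_0,H_0,\Lambda_0,b,n,s,q$ since all bounds invoked depend only on these.

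\textbf{Main obstacle.} The delicate point is the compactness step (2): $v_k$ is only controlled on $B_1^+$, so to apply Theorem \ref{compactminsurf} and get strong $H^1_{\rm loc}(G\cup\partial^0G)$-convergence I need the energy and curvature bounds on a ball strictly containing the ball where I want convergence. The clean fix is to prove the lemma with $B_1^+$ replaced by $B_{1/2}^+$ in the definition of ${\bf d}_n$ having first noted, via the monotonicity formula and the covering/interior arguments already developed (Lemma \ref{monotformsurf}, and the interior strong convergence of Theorem \ref{compactminsurf} which only needs an ambient energy bound), that closeness to a constant on $B_{1/2}^+$ at all scales is equivalent (up to adjusting $r_0$ and $\delta_0$) to the stated closeness on $B_1^+$ at comparable scales; equivalently, one runs the contradiction argument with $v_k$ defined on $B_2^+$ by taking $\rho_k\in(0,r_0/2)$, which is harmless after shrinking $r_0$. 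A secondary subtlety is checking that the limiting object $v_\infty$ produced by Theorem \ref{compactminsurf} is still close in $L^1(B_1^+)$ to the constant — this is immediate from the strong $L^1$ convergence $v_k\to v_\infty$ on compact subsets of $B_1^+\cup D_1$ together with the uniform bound $|v_k|\le b$ and dominated convergence up to the boundary (Remark \ref{trace}). Once these bookkeeping points are handled, the argument is a direct compactness contradiction, so I expect no deeper difficulty.
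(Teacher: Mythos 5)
Your proposal is correct and follows essentially the same compactness/contradiction argument as the paper: rescale, invoke Theorem \ref{compactminsurf}, deduce $u_*=\pm1$ from ${\bf d}_n\to 0$, then contradict $\boldsymbol{\Theta}_{u_k}(x_k)>0$ via upper semicontinuity of the density (Lemma \ref{updenstheta}) combined with the $0$--or--$\geq\boldsymbol{\eta}_0$ dichotomy of Corollary \ref{openset}.

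Two small points. First, your ``main obstacle'' is not actually one: you do not need strong $H^1$-convergence (let alone $H^1(B_1^+)$-convergence) to pass to the limit in ${\bf d}_n$, only strong $L^1(B_1^+)$-convergence, and this follows immediately from the \emph{weak} $H^1(B_1^+,|z|^a\de\mathbf{x})$-convergence that Theorem \ref{compactminsurf} provides, via the compact embedding $H^1(B_1^+,|z|^a\de\mathbf{x})\hookrightarrow L^1(B_1^+)$ of Remark \ref{trace}. So there is no need to shrink $r_0$ or work on $B_{1/2}^+$; the paper runs the argument on $B_1^+$ directly. Second, the Liouville-type step you invoke to pass from ``$v_\infty\equiv\pm 1$ on $D_1$'' to ``$v_\infty\equiv\pm 1$ on $B_1^+$'' is superfluous: since ${\bf d}_n$ is the $L^1$-distance measured over the whole half-ball $B_1^+$, the conclusion ${\bf d}_n(u_*,0,1)=0$ already gives $u_*\equiv\pm1$ a.e.\ on $B_1^+$, and hence $\boldsymbol{\Theta}_{u_*}(0)=0$. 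These are cosmetic; the argument is sound.
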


\begin{proof}
Assume by contradiction that there exist sequences of functions $\{(u_k,f_k)\}_{k\in\mathbb{N}}$ solving \eqref{startequsurf}-\eqref{critnonlocexteq} and satisfying \eqref{controlcurv}-\eqref{controldensit}, points $\{x_k\}_{k\in\mathbb{N}}\subset\Omega^{r_0}$, and radii $\{\rho_k\}_{k\in\mathbb{N}}\subset (0,r_0)$ such that $\boldsymbol{\Theta}_{u_k}(x_k)>0$ and ${\bf d}_n(u_k,x_k,\rho_k) \leq 2^{-k}$. 

We continue with a general first step that we shall use again in the sequel. 
\vskip3pt
 
\noindent{\it Step 1, general compactness.} We consider the rescaled maps $\widetilde u_k:=(u_k)_{x_k,\rho_k}$ and $\widetilde f_k:=\rho_k^{2s}(f_k)_{x_k,\rho_k}$ as defined in \eqref{defrescmap}. By our choice of $\Lambda_0$, a simple change of variables yields   
$$\boldsymbol{\Theta}_{\widetilde u_k}(\widetilde f_k,0,1)\leq \Lambda_0 \quad\text{and}\quad \|\widetilde f_k\|_{\dot W^{1,q}(D_1)}\leq r_0^{\theta_q}H_0\,.$$
By Theorem \ref{compactminsurf}, we can find a (not relabeled) subsequence such that $\widetilde u_k\to u_*$ weakly in $H^1(B_1^+,|z|^a\de {\bf x})$ and strongly in $H^1(B_r^+,|z|^a\de {\bf x})$ for every $0<r<1$, and $\widetilde f_k\rightharpoonup f_*$ weakly in $W^{1,q}(D_1)$, where $(u_*,f_*)$ satisfies \eqref{eqcone}-\eqref{statinB1}. By Remark \ref{trace}, $\widetilde u_k\to u_*$ strongly in $L^1(B_1^+)$, so that 
$${\bf d}_j(\widetilde u_k,0,1)\to {\bf d}_j(u_*,0,1)\quad\text{for each $j\in\{0,\ldots,n\}$}\,.$$ 
In addition, by lower semicontinuity of ${\bf E}(\cdot, B_1^+)$ and Fatou's lemma, we have 
 \begin{equation}\label{lscthetw1q}
 \boldsymbol{\Theta}_{u}(f,0,1)\leq \liminf_{k\to\infty}\boldsymbol{\Theta}_{\widetilde u_k}(\widetilde f_k,0,1)\leq\Lambda_0\quad\text{and}\quad \|f\|_{\dot W^{1,q}(D_1)}\leq r_0^{\theta_q} H_0\,.
 \end{equation}
\vskip3pt

\noindent{\it Step 2, conclusion.} Since ${\bf d}_n(\widetilde u_k,0,1)\leq 2^{-k}$, we have ${\bf d}_n(u_*,0,1)=0$. In other words, either $u_*=1$ or $u_*=-1$, and consequently 
$\boldsymbol{\Theta}_{u_*}(0)=0$. On the other hand, by Corollary~\ref{openset}, $\boldsymbol{\Theta}_{\widetilde u_k}(0)=\boldsymbol{\Theta}_{u_k}(0)\geq {\boldsymbol{\eta}}_0>0$. 
Then Lemma \ref{updenstheta} yields $\boldsymbol{\Theta}_{u_*}(0)\geq \limsup_k \boldsymbol{\Theta}_{u_k}(0)>0$, which contradicts $\boldsymbol{\Theta}_{u_*}(0)=0$. 
\end{proof}

\begin{lemma}\label{lemme2stratsurf}
For every $\delta>0$, there exist constants 
$$\eta_1(\delta,r_0)=\eta_1(\delta,r_0,H_0,\Lambda_0,b,n,s,q)\in (0,1/4)$$ 
and   
$$\lambda_1(\delta,r_0)=\lambda_1(\delta,r_0,H_0,\Lambda_0,b,n,s,q)\in (0,1/4)$$ 
(independent of $u$ and $f$) such that for every $x\in\Omega^{r_0}$ and $\rho\in(0,r_0)$,   
$$\boldsymbol{\Theta}_u(f,x,\rho)-\boldsymbol{\Theta}_u(f,x,\lambda_1\rho) \leq \eta_1 \quad \Longrightarrow\quad {\bf d}_0(u,x,\rho) \leq\delta \,.$$ 
\end{lemma}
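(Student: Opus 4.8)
\textbf{Proof plan for Lemma \ref{lemme2stratsurf}.} The statement is a standard "almost-constant density forces almost-conical behaviour" compactness lemma, so the plan is to argue by contradiction using the compactness theory developed in Subsections \ref{subsectcompsurf}--\ref{secttangmap} together with the monotonicity formula of Lemma \ref{monotformsurf}. Fix $\delta>0$ and suppose no such pair $(\eta_1,\lambda_1)$ exists. Then for every choice of parameters, and in particular along sequences $\eta_1^{(k)}\downarrow 0$ and $\lambda_1^{(k)}\downarrow 0$, there are solutions $(u_k,f_k)$ of \eqref{startequsurf}--\eqref{critnonlocexteq} satisfying the uniform bounds \eqref{controlcurv}--\eqref{controldensit}, points $x_k\in\Omega^{r_0}$ and radii $\rho_k\in(0,r_0)$ with
\[
\boldsymbol{\Theta}_{u_k}(f_k,x_k,\rho_k)-\boldsymbol{\Theta}_{u_k}(f_k,x_k,\lambda_1^{(k)}\rho_k)\leq \eta_1^{(k)}
\qquad\text{but}\qquad
{\bf d}_0(u_k,x_k,\rho_k)>\delta\,.
\]
Here there is a subtlety if $\lambda_1^{(k)}\rho_k$ becomes smaller than the scale on which the density is controlled; but since $\boldsymbol{\Theta}_{u_k}(f_k,x_k,\cdot)$ is monotone non-decreasing and bounded below by $0$, the inner radius can be taken all the way to $0$, so I will replace the hypothesis by $\boldsymbol{\Theta}_{u_k}(f_k,x_k,\rho_k)-\boldsymbol{\Theta}_{u_k}(x_k)\leq\eta_1^{(k)}$, which is implied for $k$ large.

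\textbf{Rescaling and compactness.} Following Step 1 of the proof of Lemma \ref{gapdistlemma}, introduce the rescaled functions $\widetilde u_k:=(u_k)_{x_k,\rho_k}$ and $\widetilde f_k:=\rho_k^{2s}(f_k)_{x_k,\rho_k}$. By the change of variables \eqref{1912tang} and the bound \eqref{controldensit} we have $\boldsymbol{\Theta}_{\widetilde u_k}(\widetilde f_k,0,1)\leq\Lambda_0$ and $\|\widetilde f_k\|_{\dot W^{1,q}(D_1)}\leq r_0^{\theta_q}H_0$, hence also $\|\widetilde f_k\|_{\dot W^{1,q}(D_1)}\to 0$ is \emph{not} automatic here (unlike Lemma \ref{tangmapstat})—but boundedness suffices. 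Theorem \ref{compactminsurf} gives a subsequence with $\widetilde u_k\to u_*$ strongly in $H^1(B_r^+,|z|^a\de{\bf x})$ for every $r<1$ and $\widetilde f_k\rightharpoonup f_*$ weakly in $W^{1,q}(D_1)$, where $(u_*,f_*)$ solves \eqref{eqcone}--\eqref{statinB1}. By Remark \ref{trace}, $\widetilde u_k\to u_*$ strongly in $L^1(B_1^+)$, so ${\bf d}_0(\widetilde u_k,0,1)\to{\bf d}_0(u_*,0,1)$, whence ${\bf d}_0(u_*,0,1)\geq\delta>0$, i.e. $u_*\notin\mathscr{C}_0(\Lambda_0)$.

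\textbf{Deriving the contradiction: $u_*$ is a cone.} The key point is to show that the almost-monotonicity hypothesis forces $\boldsymbol{\Theta}_{u_*}(f_*,0,1)=\boldsymbol{\Theta}_{u_*}(0)$, so that by Lemma \ref{lemhomogsol} the limit $u_*$ is $0$-homogeneous with $f_*=0$, i.e. a nonlocal stationary cone; moreover $\boldsymbol{\Theta}_{u_*}(0)\leq\Lambda_0$ by lower semicontinuity (cf. \eqref{lscthetw1q}), so $u_*\in\mathscr{C}_0(\Lambda_0)$, contradicting ${\bf d}_0(u_*,0,1)\geq\delta$. To prove the density identity, fix $0<\sigma<1$ and write, using \eqref{1912tang},
\[
\boldsymbol{\Theta}_{\widetilde u_k}(\widetilde f_k,0,\sigma)-\boldsymbol{\Theta}_{\widetilde u_k}(\widetilde f_k,0,\sigma')
=\boldsymbol{\Theta}_{u_k}(f_k,x_k,\sigma\rho_k)-\boldsymbol{\Theta}_{u_k}(f_k,x_k,\sigma'\rho_k)
\leq \boldsymbol{\Theta}_{u_k}(f_k,x_k,\rho_k)-\boldsymbol{\Theta}_{u_k}(x_k)\leq \eta_1^{(k)}
\]
for all $0<\sigma'<\sigma<1$, by monotonicity (Lemma \ref{monotformsurf}). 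Passing $k\to\infty$: the term $\boldsymbol{\Theta}_{\widetilde u_k}(\widetilde f_k,0,\sigma)\to\boldsymbol{\Theta}_{u_*}(f_*,0,\sigma)$ by the strong $H^1_{\rm loc}$ convergence and the weak $W^{1,q}$ convergence of $\widetilde f_k$ (the homogeneous-norm integral $\int_0^\sigma t^{\theta_q-1}\|\widetilde f_k\|_{\dot W^{1,q}(D_t)}\,\de t$ passes to the limit by weak lower semicontinuity upgraded to convergence via the uniform bound and dominated convergence, exactly as in Step 1 of Lemma \ref{gapdistlemma} combined with the remark after Theorem \ref{compactminsurf}); similarly for $\sigma'$. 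Since $\eta_1^{(k)}\to 0$, we obtain $\boldsymbol{\Theta}_{u_*}(f_*,0,\sigma)=\boldsymbol{\Theta}_{u_*}(f_*,0,\sigma')$ for all $0<\sigma'<\sigma<1$, hence $\boldsymbol{\Theta}_{u_*}(f_*,0,\cdot)$ is constant on $(0,1)$ and equal to $\boldsymbol{\Theta}_{u_*}(0)$ by Corollary \ref{directcorlmonot}. This is the desired identity, and the contradiction follows.

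\textbf{Expected main obstacle.} The delicate point is the convergence $\boldsymbol{\Theta}_{\widetilde u_k}(\widetilde f_k,0,\sigma)\to\boldsymbol{\Theta}_{u_*}(f_*,0,\sigma)$ of the \emph{full} density (including the $\dot W^{1,q}$-tail term), since the $f_k$ converge only weakly; one must check that the weak convergence $\widetilde f_k\rightharpoonup f_*$ in $W^{1,q}(D_1)$ (which upgrades to strong convergence in $L^{q^*}_{\rm loc}$ by Rellich, and hence controls $\|\widetilde f_k\|_{\dot W^{1,q}(D_t)}$ from below by $\|f_*\|_{\dot W^{1,q}(D_t)}$ in the limit, while the uniform bound $\|\widetilde f_k\|_{\dot W^{1,q}(D_1)}\leq r_0^{\theta_q}H_0$ makes the integrand dominated) is enough to pass to the limit in $\int_0^\sigma t^{\theta_q-1}\|\widetilde f_k\|_{\dot W^{1,q}(D_t)}\,\de t$ — note only $\liminf$ is automatic, but that is precisely the inequality direction needed together with the reverse inequality coming from $\boldsymbol{\Theta}_{u_*}(f_*,0,\cdot)\geq\boldsymbol{\Theta}_{u_*}(0)$ of Corollary \ref{directcorlmonot}. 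Once this is handled, everything else is a routine application of the structural results on nonlocal stationary cones already established.
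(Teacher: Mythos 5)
You have the right contradiction argument and the right compactness machinery, but two concrete errors appear in the execution, and the second one is a genuine gap.

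First, the ``replaced hypothesis'' $\boldsymbol{\Theta}_{u_k}(f_k,x_k,\rho_k)-\boldsymbol{\Theta}_{u_k}(x_k)\leq\eta_1^{(k)}$ is \emph{not} implied by the negated statement: since $\boldsymbol{\Theta}_{u_k}(f_k,x_k,\lambda_1^{(k)}\rho_k)\geq\boldsymbol{\Theta}_{u_k}(x_k)$ by Corollary~\ref{directcorlmonot}, the negated hypothesis $\boldsymbol{\Theta}_{u_k}(f_k,x_k,\rho_k)-\boldsymbol{\Theta}_{u_k}(f_k,x_k,\lambda_1^{(k)}\rho_k)\leq\eta_1^{(k)}$ is \emph{weaker}, not stronger, than the one you substitute, so the implication you claim goes the wrong way. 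This slip is easily repaired: for any fixed $\sigma'\in(0,1)$ and $k$ large enough that $\lambda_1^{(k)}<\sigma'$, Lemma~\ref{monotformsurf} gives $\boldsymbol{\Theta}_{\widetilde u_k}(\widetilde f_k,0,\sigma)-\boldsymbol{\Theta}_{\widetilde u_k}(\widetilde f_k,0,\sigma')\leq\boldsymbol{\Theta}_{\widetilde u_k}(\widetilde f_k,0,1)-\boldsymbol{\Theta}_{\widetilde u_k}(\widetilde f_k,0,\lambda_1^{(k)})\leq\eta_1^{(k)}$ directly from the original hypothesis.

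Second, and more seriously, your proposed limit passage does not close. You correctly flag that the tail term $\int_0^{\sigma'}t^{\theta_q-1}\|\widetilde f_k\|_{\dot W^{1,q}(D_t)}\,\de t$ is only weakly lower semicontinuous (and indeed Rellich gives strong $L^r$ convergence only for $r<q^*$, so even the $L^{q^*}$ piece of $\|\cdot\|_{\dot W^{1,q}}$ is merely lower semicontinuous). But lower semicontinuity of \emph{both} terms of the difference $\boldsymbol{\Theta}_{\widetilde u_k}(\widetilde f_k,0,\sigma)-\boldsymbol{\Theta}_{\widetilde u_k}(\widetilde f_k,0,\sigma')$ gives no control on the difference in the limit: you need a lower bound (lsc) on the $\sigma$-term but an \emph{upper} bound on the $\sigma'$-term, and the appeal to the reverse inequality from Corollary~\ref{directcorlmonot} (which concerns the limit $u_*$, not the sequence $\widetilde u_k$) cannot supply that. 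The device used in the paper's proof, which your plan is missing, is to discard the lower tail term by the crude uniform estimate
\begin{equation*}
\boldsymbol{\Theta}_{\widetilde u_k}(\widetilde f_k,0,\sigma')\leq\frac{1}{(\sigma')^{n-2s}}{\bf E}\big(\widetilde u_k,B^+_{\sigma'}\big)+\frac{{\bf c}_{n,q}\,b\,r_0^{\theta_q}}{\theta_q}H_0\,(\sigma')^{\theta_q}\,,
\end{equation*}
where the first term converges exactly (strong $H^1(B^+_{\sigma'},|z|^a\de\mathbf{x})$ convergence, valid since $\sigma'<1$) and the second is $k$-independent and vanishes as $\sigma'\downarrow 0$. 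Combined with lower semicontinuity of $\boldsymbol{\Theta}_{\widetilde u_k}(\widetilde f_k,0,\sigma)$ (which you do keep), this yields $\boldsymbol{\Theta}_{u_*}(f_*,0,\sigma)\leq\boldsymbol{\Theta}_{u_*}(0)$ for every $\sigma\in(0,1)$; the opposite inequality is monotonicity, Lemma~\ref{lemhomogsol} then makes $u_*$ a nonlocal stationary cone with $f_*=0$, and the contradiction follows. Without this crude bound, your argument does not close.
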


\begin{proof}
Assume by contradiction that for some $\delta>0$, there exist sequences of functions $\{(u_k,f_k)\}_{k\in\mathbb{N}}$ solving \eqref{startequsurf}-\eqref{critnonlocexteq} and satisfying \eqref{controlcurv}-\eqref{controldensit}, points $\{x_k\}_{k\in\mathbb{N}}\subset\Omega^{r_0}$, and radii $\{\rho_k\}_{k\in\mathbb{N}}\subset (0,r_0)$ such that  
$$\boldsymbol{\Theta}_{u_k}(f_k,x_k,\rho_k)-\boldsymbol{\Theta}_{u_k}(f_k,x_k,\lambda_k\rho_k) \leq 2^{-k} \quad \text{and}\quad {\bf d}_0(u_k,x_k,\rho_k) \geq\delta \,,$$
where $\lambda_k\to0$ as $k\to\infty$. 
We consider the rescaled maps $\widetilde u_k:=(u_k)_{x_k,\rho_k}$ and $\widetilde f_k:=\rho_k^{2s}(f_k)_{x_k,\rho_k}$ as defined in \eqref{defrescmap}, so that 
$$\boldsymbol{\Theta}_{\widetilde u_k}(\widetilde f_k,0,1)-\boldsymbol{\Theta}_{\widetilde u_k}(\widetilde f_k,0,\lambda_k) \leq 2^{-k} \quad \text{and}\quad {\bf d}_0(\widetilde u_k,0,1) \geq\delta \,. $$ 
Then we apply Step 1 in the proof of Lemma \ref{gapdistlemma} to find a (not relabeled) sequence along which $\widetilde u_k$ and $\widetilde f_k$ converge to $u_*$ and $f_*$, respectively. As consequence of the established convergences, we first deduce that   ${\bf d}_0(u_*,0,1)\geq \delta$. 

On the other hand, by Lemma \ref{monotformsurf} we can estimate for $0<r<1$ and $k$ large enough  (in such a way that $\lambda_k<r$), 
\begin{multline*}
\boldsymbol{\Theta}_{\widetilde u_k}(\widetilde f_k,0,1)-  \frac{1}{r^{n-2s}}{\bf E}(\widetilde u_k,B_r^+)-\frac{{\bf c}_{n,q}b\, r_0^{\theta_q}}{\theta_q}H_0r^{\theta_q}\\
\leq \boldsymbol{\Theta}_{\widetilde u_k}(\widetilde f_k,0,1)-\boldsymbol{\Theta}_{\widetilde u_k}(\widetilde f_k,0,r) \leq 2^{-k}\,. 
\end{multline*}
Using \eqref{lscthetw1q} and the strong convergence of $\widetilde u_k$ in $H^1(B_r^+,|z|^a\de{\bf x})$, we can let $k\to\infty$ to deduce that 
$$  \boldsymbol{\Theta}_{u_*}(f_*,0,1)-  \frac{1}{r^{n-2s}}{\bf E}(u_*,B_r^+)\leq \frac{{\bf c}_{n,q} b\,r_0^{\theta_q}}{\theta_q}H_0r^{\theta_q}\,. $$
Letting $r\to 0$, we infer from  Corollary \ref{directcorlmonot} that  $\boldsymbol{\Theta}_{u_*}(f_*,0,1)=\boldsymbol{\Theta}_{u_*}(0)$. By Lemma~\ref{lemhomogsol}, $f_*=0$ and $u_*$ is a nonlocal  stationary cone. Moreover, \eqref{lscthetw1q} yields the estimate $\boldsymbol{\Theta}_{u_*}(0)\leq \Lambda_0$, so that $u_*\in\mathscr{C}_0(\Lambda_0)$. Hence ${\bf d}_0(u_*,0,1)=0$, which contradicts the previous estimate ${\bf d}_0(u_*,0,1)\geq \delta$. 
\end{proof}

\begin{lemma}\label{condiistrat}
For every $\delta,\tau\in(0,1)$, there exists a constant 
$$\eta_2(\delta,\tau,r_0)=\eta_2(\delta,\tau,r_0,H_0,\Lambda_0,b,n,s,q)\in (0,\delta\,]$$
(independent of $u$ and $f$) such that the following holds for every $\rho\in(0,r_0/5)$ and $x\in\Omega^{r_0}$. If 
$${\bf d}_j(u,x,4\rho) \leq\eta_2\quad \text{and}\quad {\bf d}_{j+1}(u,x,4\rho)\geq \delta\,,$$
hold for some $j\in\{0,\ldots,n-1\}$, then there exists a $j$-dimensional linear subspace $V\subset\R^n$ for which
$${\bf d}_0(u,y,4\rho)> \eta_2 \quad \forall y\in D_{\rho}(x)\setminus \mathscr{T}_{\tau \rho}(x+V)\,.$$ 
\end{lemma}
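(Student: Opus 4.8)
\textbf{Proof plan for Lemma \ref{condiistrat}.} The statement is a standard "cone-splitting" dichotomy, and the plan is to argue by contradiction along a sequence, exactly in the spirit of Lemmas \ref{gapdistlemma} and \ref{lemme2stratsurf}. Suppose the conclusion fails for some fixed $\delta,\tau\in(0,1)$: then there are solutions $\{(u_k,f_k)\}$ of \eqref{startequsurf}-\eqref{critnonlocexteq} satisfying \eqref{controlcurv}-\eqref{controldensit}, points $x_k\in\Omega^{r_0}$, radii $\rho_k\in(0,r_0/5)$ and indices $j_k\in\{0,\dots,n-1\}$ with ${\bf d}_{j_k}(u_k,x_k,4\rho_k)\leq 2^{-k}$, ${\bf d}_{j_k+1}(u_k,x_k,4\rho_k)\geq\delta$, and such that for \emph{every} $j_k$-dimensional subspace $V\subset\R^n$ there is a point $y_k^V\in D_{\rho_k}(x_k)\setminus\mathscr{T}_{\tau\rho_k}(x_k+V)$ with ${\bf d}_0(u_k,y_k^V,4\rho_k)\leq 2^{-k}$. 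Passing to a subsequence we may assume $j_k\equiv j$ is constant. Rescaling by setting $\widetilde u_k:=(u_k)_{x_k,4\rho_k}$ and $\widetilde f_k:=(4\rho_k)^{2s}(f_k)_{x_k,4\rho_k}$, we invoke Step 1 of the proof of Lemma \ref{gapdistlemma} (via Theorem \ref{compactminsurf}) to extract a further subsequence along which $\widetilde u_k\to u_*$ strongly in $H^1_{\rm loc}(B_1^+\cup D_1,|z|^a\de{\bf x})$ and in $L^1(B_1^+)$, $\widetilde f_k\rightharpoonup f_*$ in $W^{1,q}(D_1)$, with $(u_*,f_*)$ solving \eqref{eqcone}-\eqref{statinB1}, and with $\boldsymbol{\Theta}_{u_*}(f_*,0,1)\leq\Lambda_0$, $\|f_*\|_{\dot W^{1,q}(D_1)}\leq (4r_0/5)^{\theta_q}H_0$ by lower semicontinuity.

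The first consequence of the continuity of ${\bf d}_j(\cdot,0,1)$ under $L^1(B_1^+)$-convergence is that ${\bf d}_j(u_*,0,1)=0$ and ${\bf d}_{j+1}(u_*,0,1)\geq\delta$; thus $u_*\in\mathscr{C}_j(\Lambda_0)$, i.e.\ $u_*$ is a nonlocal stationary cone with $\dim S(u_*)\geq j$, and (since ${\bf d}_{j+1}(u_*,0,1)>0$ precludes membership in $\mathscr{C}_{j+1}(\Lambda_0)$, using Lemma \ref{compactcone} to know the infimum defining ${\bf d}_{j+1}$ is attained) we get the \emph{sharp} spine dimension $\dim S(u_*)=j$. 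In particular $u_*$ is not constant when $j\le n-1$; write $W:=S(u_*)$, an exactly $j$-dimensional linear subspace of $\R^n$, and by Lemma \ref{lemSphi}, $u_*$ is invariant under translations by $W\times\{0\}$ while $\boldsymbol{\Theta}_{u_*}(y)<\boldsymbol{\Theta}_{u_*}(0)$ for every $y\in\R^n\setminus W$.

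The second consequence concerns the points $y_k^V$. Apply the failure hypothesis with the fixed subspace $V=W$: there is $y_k:=y_k^{W}\in D_{\rho_k}(x_k)\setminus\mathscr{T}_{\tau\rho_k}(x_k+W)$ with ${\bf d}_0(u_k,y_k,4\rho_k)\leq 2^{-k}$. In the rescaled coordinates this says $\bar y_k:=(y_k-x_k)/(4\rho_k)\in D_{1/4}\setminus\mathscr{T}_{\tau/4}(W)$ and ${\bf d}_0(\widetilde u_k,\bar y_k,1)\leq 2^{-k}$; passing to a subsequence $\bar y_k\to \bar y_*\in \overline{D_{1/4}}\setminus\mathscr{T}_{\tau/4}(W)$, so in particular $\bar y_*\notin W$. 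A careful bookkeeping with the monotonicity formula (Lemma \ref{monotformsurf}) shows ${\bf d}_0(\cdot,\cdot,\cdot)$ is continuous enough — precisely, $\|(\widetilde u_k)_{\bar y_k,1}-(u_*)_{\bar y_*,1}\|_{L^1(B_1^+)}\to0$ because $\widetilde u_k\to u_*$ in $L^1(B_1^+)$ and $\bar y_k\to\bar y_*$ with the relevant balls eventually contained in $B_1^+$ after a mild shrinking of radius (or by re-centering at a slightly smaller scale) — hence ${\bf d}_0(u_*,\bar y_*,1)=0$. Thus the cone $u_*$ re-centered at $\bar y_*$ is, at scale $1$, $L^1$-close to within $0$ of some element of $\mathscr{C}_0(\Lambda_0)$; running the homogeneity argument of Lemma \ref{lemhomogsol} at the point $\bar y_*$ (the density is constant on the ray through $\bar y_*$ since $u_*$ is a cone, and ${\bf d}_0$-vanishing forces $\boldsymbol{\Theta}_{u_*}(\bar y_*,\cdot)$ to equal the density of a cone) yields $\boldsymbol{\Theta}_{u_*}(\bar y_*)=\boldsymbol{\Theta}_{u_*}(0)$, so $\bar y_*\in S(u_*)=W$ by Lemma \ref{lemSphi} — contradicting $\bar y_*\notin W$. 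This contradiction proves the lemma.

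\textbf{Main obstacle.} The delicate point is the second consequence: making rigorous that ${\bf d}_0(u_k,y_k,4\rho_k)\to 0$ together with $\bar y_k\to\bar y_*$ forces ${\bf d}_0(u_*,\bar y_*,1)=0$, and then upgrading this to $\boldsymbol{\Theta}_{u_*}(\bar y_*)=\boldsymbol{\Theta}_{u_*}(0)$. The subtlety is that $\bar y_k$ may approach the boundary of $D_{1/4}$ so that $B_1^+(\bar y_k)$ is not contained in $B_1^+$; this is handled by replacing $4\rho_k$ with a fixed fraction of it (say working at radius $\rho_k$ instead of $4\rho_k$, which only costs a harmless scale factor in all the distances and densities via Lemma \ref{monotformsurf}), so that all rescaled balls $B_{1/4}^+(\bar y_k)$ sit safely inside $B_1^+$ where strong $H^1$- and $L^1$-convergence is available. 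One must also check that the comparison cones realizing ${\bf d}_0(u_k,y_k,\cdot)$ stay in the fixed class $\mathscr{C}_0(\Lambda_0)$ — this uses the uniform density bound \eqref{controldensit} exactly as in Step 1 of Lemma \ref{gapdistlemma}. The remaining ingredients (sharp spine dimension from ${\bf d}_{j+1}\geq\delta$, and the passage from ${\bf d}_0=0$ at a point to equality of densities there) are exactly the cone-splitting mechanics already packaged in Lemmas \ref{lemhomogsol}, \ref{lemSphi} and \ref{compactcone}, so no new analytic input is needed beyond organizing the limiting argument.
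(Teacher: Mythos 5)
Your proof takes essentially the same contradiction-plus-blow-up route as the paper, so the comparison is mostly a matter of bookkeeping; the one place where the organization genuinely differs, and where your write-up is imprecise, is in how the final contradiction is extracted.

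The paper's Step 1 does not pass to the limit of the rescaled solutions $\widetilde u_k$ directly. Instead, it picks approximating cones $\varphi_k\in\mathscr{C}_j(\Lambda_0)$ with $\|\widetilde u_k-\varphi_k\|_{L^1(B_1^+)}\leq 2^{-k}$, shows via ${\bf d}_{j+1}\geq\delta$ that $\dim S(\varphi_k)=j$ for large $k$, rotates so all $\varphi_k$ share the spine $V$, and then takes the limit $\varphi_k\to\varphi$ in $\mathscr{C}_j(\Lambda_0)$. Working with $\varphi$ (a genuine globally-defined cone) rather than with $u_*$ is a small but real simplification: $\varphi_{y,1}$ is defined on all of $\mathbb{R}^{n+1}_+$, so there are no domain headaches when comparing $\psi$ to $\varphi_{y_*,1}$ on $B_{3/4}^+$. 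In your version, ${\bf d}_j(u_*,0,1)=0$ only tells you that $u_*$ \emph{agrees with} some $\varphi\in\mathscr{C}_j(\Lambda_0)$ on $B_1^+$; saying $u_*\in\mathscr{C}_j(\Lambda_0)$ is an abuse, and the comparison $(u_*)_{\bar y_*,1}=\psi$ involves values of $u_*$ outside $B_1^+$, where you no longer know $u_*=\varphi$. This is fixable — pass through $\varphi$ on the overlap $B_1^+\cap B_1^+(\bar{\bf y}_*)$, which is all the argument needs — but you should flag it.

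The more substantive imprecision is the last step. You write that ${\bf d}_0(u_*,\bar y_*,1)=0$ should be upgraded to $\boldsymbol{\Theta}_{u_*}(\bar y_*)=\boldsymbol{\Theta}_{u_*}(0)$, and then conclude $\bar y_*\in S(\varphi)$ from Lemma \ref{lemSphi}. But the density equality is not the natural \emph{input} here; it is the \emph{output} of spine membership. The direct argument (which the paper uses) is elementary: from $\varphi({\bf x})=\psi({\bf x}-\bar{\bf y}_*)$ on the overlap, homogeneity of $\psi$ gives
$$\varphi\big({\bf x}+t(\bar{\bf y}_*-{\bf x})\big)=\psi\big((1-t)({\bf x}-\bar{\bf y}_*)\big)=\psi({\bf x}-\bar{\bf y}_*)\,,\quad {\bf x}\in B_{1/2}^+\,,\ t\in(0,1)\,,$$
which is independent of $t$; differentiating at $t=0$ gives $(\bar{\bf y}_*-{\bf x})\cdot\nabla\varphi({\bf x})=0$, and subtracting the homogeneity relation ${\bf x}\cdot\nabla\varphi=0$ yields $\bar{\bf y}_*\cdot\nabla\varphi=0$ on $B_{1/2}^+$, hence everywhere by the $(-1)$-homogeneity of $\nabla\varphi$, so $\bar y_*\in S(\varphi)=V$. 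Your density-equality route can be made to work (e.g.\ by unique continuation for the degenerate-elliptic operator to propagate $\varphi=\psi(\cdot-\bar{\bf y}_*)$ to all of $\mathbb{R}^{n+1}_+$, then letting $r\to\infty$ in $\boldsymbol{\Theta}_\varphi(0,\bar y_*,r)$), but that is more machinery than the problem requires. Otherwise the proposal is sound and is essentially the paper's proof.
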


\begin{proof}
The proof is again by contradiction. Assume that for some $\delta,\tau\in(0,1)$ and some $j\in\{0,\ldots,n-1\}$, 
there exist sequences of functions $\{(u_k,f_k)\}_{k\in\mathbb{N}}$ solving \eqref{startequsurf}-\eqref{critnonlocexteq} and satisfying \eqref{controlcurv}-\eqref{controldensit}, points $\{x_k\}_{k\in\mathbb{N}}\subset\Omega^{r_0}$, and radii $\{\rho_k\}_{k\in\mathbb{N}}\subset (0,r_0/5)$ such that 
$${\bf d}_j(u_k,x_k,4\rho_k) \leq 2^{-k}\quad \text{and}\quad {\bf d}_{j+1}(u_k,x_k,4\rho_k)\geq \delta\,,$$
and such that the conclusion of the lemma does not hold. Now we consider  the rescaled functions $\widetilde u_k:=(u_k)_{x_k,4\rho_k}$ and $\widetilde f_k:=(4\rho_k)^{2s}(f_k)_{x_k,4\rho_k}$.
\vskip3pt

\noindent{\it Step 1.} For each $k$, we select $\varphi_k\in\mathscr{C}_j(\Lambda_0)$ such that $\|\widetilde u_k-\varphi_k\|_{L^1(B_1^+)}\leq 2^{-k}$ (which is possible by Lemma~\ref{compactcone}). Since 
\begin{equation}\label{20147fin}
{\bf d}_{j+1}(\varphi_k,0,1)\geq {\bf d}_{j+1}(\widetilde u_k,0,1)-\|\widetilde u_k-\varphi_k\|_{L^1(B_1^+)}\geq \delta-2^{-k}\,, 
\end{equation}
we infer that ${\rm dim}\,S(\varphi_k)=j$ for $k$ large enough. Extracting a (not relabeled) subsequence and rotating coordinates if necessary, we may assume that $S(\varphi_k)=V$ for some fixed linear subspace $V$ of dimension $j$.  Then, by Lemma \ref{compactcone} we can find a further (not relabeled) subsequence such that $\varphi_k\to \varphi$ strongly in $H^1(B_r^+,|z|^a\de {\bf x})$ for every $r>0$ and some $\varphi\in \mathscr{C}_j(\Lambda_0)$. In particular,  
$$\boldsymbol{\Theta}_\varphi(0)=\boldsymbol{\Theta}_\varphi(0,0,1)=\lim_{k\to\infty} \boldsymbol{\Theta}_{\varphi_k}(0,0,1)=\lim_{k\to\infty} \boldsymbol{\Theta}_{\varphi_k}(0)\,. $$
On the other hand, by Lemma \ref{updenstheta},
$$\boldsymbol{\Theta}_\varphi(y)\geq \lim_{k\to\infty} \boldsymbol{\Theta}_{\varphi_k}(y)=\lim_{k\to\infty} \boldsymbol{\Theta}_{\varphi_k}(0)= \boldsymbol{\Theta}_\varphi(0)\quad\forall y\in V\,. $$
Therefore, $V\subset S(\varphi)$ by Lemma \ref{lemSphi}. But letting $k\to \infty$ in \eqref{20147fin}, we deduce that  ${\bf d}_{j+1}(\varphi,0,1)\geq \delta$, and thus $S(\varphi)=V$. 
Since the conclusion of the lemma does not hold, for each $k$ we can find a point $y_k\in D_{1/4}\setminus \mathscr{T}_{\tau/4}(V)$ such that 
${\bf d}_0(\widetilde u_k,y_k,1)\to 0$ as $k\to\infty$.  
\vskip3pt

\noindent{\it Step 2.} Consider the translated function $\widehat u_k:=(\widetilde u_k)_{y_k,1}$, and select 
 $\psi_k\in\mathscr{C}_0(\Lambda_0)$ such that $\|\widehat u_k-\psi_k\|_{L^1(B_1^+)}= {\bf d}_0(\widetilde u_k,y_k,1)\to 0$. By Lemma \ref{compactcone} and Remark \ref{trace}, we can find a further (not relabeled) subsequence such that $\psi_k\to \psi$ strongly in $L^1(B_1^+)$ for  some $\psi\in \mathscr{C}_0(\Lambda_0)$. Then $\widehat u_k\to \psi$ strongly in $L^1(B_1^+)$. Now we extract a further (not relabeled) subsequence such that $y_k\to y_*$ for some $y_*\in \overline D_{1/4}\setminus \mathscr{T}_{\tau/4}(V)$. Observe that 
\begin{multline*}
\|\psi-\varphi_{y_k,1}\|_{L^1(B^+_{3/4})} \leq \|\psi-\widehat u_k\|_{L^1(B^+_{3/4})}+ \|(\widetilde u_k)_{y_k,1}-\varphi_{y_k,1}\|_{L^1(B^+_{3/4})}\\
 \leq  \|\psi-\widehat u_k\|_{L^1(B^+_{1})}+ \|\widetilde u_k-\varphi\|_{L^1(B^+_{1})}\,.
\end{multline*}
By continuity of translations in $L^1$, and since $\widetilde u_k\to \varphi$, we infer that 
$$\|\psi-\varphi_{y_*,1}\|_{L^1(B^+_{3/4})}=\lim_{k\to\infty} \|\psi-\varphi_{y_k,1}\|_{L^1(B^+_{3/4})} =0\,.$$
In other words, $\psi=\varphi_{y_*,1}$ on $B^+_{3/4}$.  As a consequence, setting ${\bf y}_*:=(y_*,0)$, for every ${\bf x}\in B^+_{1/2}$ and $t\in(0,1)$, 
$$\varphi\big({\bf x}+t({\bf y}_*-{\bf x})\big) =\psi\big((1-t){\bf x}+(t-1){\bf y}_*\big)=\psi({\bf y}_*-{\bf x})\,.$$ 
Differentiating first this identity with respect to $t$, and then letting $t\to 0$, we discover that $0=({\bf y}_*-{\bf x})\cdot\nabla\varphi({\bf x})={\bf y}_*\cdot\nabla\varphi({\bf x})$ for every ${\bf x}\in B^+_{1/2}$. By homogeneity of $\varphi$, it implies that ${\bf y}_*\cdot\nabla\varphi({\bf x})=0$ for every ${\bf x}\in\R^{n+1}_+$. Arguing as in the proof of Lemma \ref{lemSphi}, we deduce that 
$y_*\in S(\varphi)=V$, which contradicts the fact that  $y_*\in \overline D_{1/4}\setminus \mathscr{T}_{\tau/4}(V)$. 
\end{proof}

We finally prove the following corollary whose importance will be revealed in Section \ref{imprsect}. 

\begin{corollary}\label{corstrat3surf}
For every $\delta,\tau\in(0,1)$, there exists a constant 
$$\eta_3(\delta,\tau,r_0)=\eta_3(\delta,\tau,r_0,H_0,\Lambda_0,b,n,s,q)\in (0,\delta]$$
(independent of $u$ and $f$) such that for every $\rho\in(0,r_0/5]$ and $x\in\Omega^{r_0}$, the conditions
$${\bf d}_0(u,x,4\rho) \leq\eta_3\quad \text{and}\quad {\bf d}_{n}(u,x,4\rho)\geq \delta\,,$$
imply the existence of a linear subspace $V\subset\R^n$, with ${\rm dim}\,V\leq n-1$, for which
$${\bf d}_0(u,y,4\rho)> \eta_3 \quad \forall y\in D_{\rho}(x)\setminus \mathscr{T}_{\tau \rho}(x+V)\,.$$ 
\end{corollary}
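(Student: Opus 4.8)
The plan is to deduce Corollary \ref{corstrat3surf} from the chain of stratification lemmas \ref{gapdistlemma}--\ref{condiistrat}, exactly in the spirit of a finite iteration across the strata $j=0,1,\dots,n-1$. First I would observe that the two hypotheses ${\bf d}_0(u,x,4\rho)\leq\eta_3$ and ${\bf d}_n(u,x,4\rho)\geq\delta$ together with the monotone chain ${\bf d}_0(\cdot)\leq{\bf d}_1(\cdot)\leq\dots\leq{\bf d}_n(\cdot)$ force the existence of a unique index $j_*\in\{0,\dots,n-1\}$ such that ${\bf d}_{j_*}(u,x,4\rho)$ is small while ${\bf d}_{j_*+1}(u,x,4\rho)$ is bounded below; the precise thresholds must be chosen by a descending induction so that the quantity staying below one level's $\eta$ is the quantity bounded below at the next level. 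Concretely, set $\delta_n:=\delta$, and recursively $\delta_{j}:=\eta_2(\delta_{j+1},\tau,r_0)$ (from Lemma \ref{condiistrat}), then take $\eta_3:=\delta_0=\eta_2(\delta_1,\tau,r_0)$. Since each $\eta_2(\cdot)\in(0,\cdot\,]$, we have $\eta_3\leq\delta_1\leq\dots\leq\delta_n=\delta$, so $\eta_3\in(0,\delta]$ as required, and it depends only on the allowed parameters.

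Next I would run the dichotomy. If ${\bf d}_{j_*}(u,x,4\rho)\leq\eta_3\leq\delta_{j_*}=\eta_2(\delta_{j_*+1},\tau,r_0)$ and ${\bf d}_{j_*+1}(u,x,4\rho)\geq\delta_{j_*+1}$ for the largest such $j_*$, then Lemma \ref{condiistrat} (applied with $\delta\rightsquigarrow\delta_{j_*+1}$, $\eta_2\rightsquigarrow\delta_{j_*}$, and the index $j_*$) yields a $j_*$-dimensional subspace $V\subset\R^n$, hence ${\rm dim}\,V=j_*\leq n-1$, such that
$${\bf d}_0(u,y,4\rho)>\delta_{j_*}\geq\eta_3\qquad\forall\,y\in D_\rho(x)\setminus\mathscr{T}_{\tau\rho}(x+V)\,,$$
which is exactly the desired conclusion (we only need the weaker lower bound $\eta_3$). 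To see that such a $j_*$ always exists: by hypothesis ${\bf d}_0\leq\eta_3=\delta_0$ and ${\bf d}_n\geq\delta=\delta_n$; walking up the chain from $0$ to $n$, there is a first index $j$ where ${\bf d}_{j+1}$ jumps above the threshold $\delta_{j+1}$ while ${\bf d}_j$ is still $\leq\delta_j$ --- more carefully, let $j_*$ be the largest index in $\{0,\dots,n-1\}$ with ${\bf d}_{j_*}(u,x,4\rho)\leq\delta_{j_*}$; then $j_*$ is well-defined because ${\bf d}_0\leq\delta_0$, and ${\bf d}_{j_*+1}(u,x,4\rho)>\delta_{j_*+1}$ by maximality (if $j_*+1\leq n-1$), while for $j_*=n-1$ we use ${\bf d}_n\geq\delta=\delta_n$ directly. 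In either case the pair of hypotheses of Lemma \ref{condiistrat} is met.

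Finally, I would double-check the radius bookkeeping: Lemma \ref{condiistrat} requires $\rho\in(0,r_0/5)$, whereas the corollary allows $\rho\in(0,r_0/5]$; the endpoint $\rho=r_0/5$ is handled either by noting Lemma \ref{condiistrat} extends to the closed endpoint by the same compactness argument, or more cleanly by a trivial continuity/limiting remark, or simply by shrinking $r_0$ by an arbitrarily small factor since the statement is for every $r_0$. I would also record that all the constants $\delta_j$ were built only from $\delta,\tau,r_0,H_0,\Lambda_0,b,n,s,q$ via finitely many applications of $\eta_2$, so $\eta_3$ has the claimed dependence and is independent of $u$ and $f$. I do not expect any serious obstacle here: the entire content is the descending choice of thresholds so that the ``small at level $j_*$, large at level $j_*+1$'' pattern can be fed into Lemma \ref{condiistrat}; the only mild subtlety is making sure the index $j_*$ is chosen as a \emph{maximal} index (not merely any index) so that the lower bound ${\bf d}_{j_*+1}\geq\delta_{j_*+1}$ is genuinely available, and keeping the closed-versus-open radius endpoint consistent.
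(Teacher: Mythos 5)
Your proof is correct, and it reaches the same conclusion by essentially the same mechanism as the paper, namely by iterating Lemma \ref{condiistrat} across the strata with a carefully chosen decreasing chain of thresholds. The difference lies in the bookkeeping. The paper sets up a formal induction on $j\in\{1,\dots,n\}$, asserting a statement of the same shape as the corollary at each level $j$, and in the inductive step splits into two cases (${\bf d}_j$ small or large), using the recursive choice $\eta_{*,j+1}(\delta):=\eta_{*,j}(\eta_{*,j}(\delta))$ with $\eta_{*,1}=\eta_2$; this self-composition means the resulting $\eta_3=\eta_{*,n}(\delta)$ is $\eta_2$ iterated $2^{n-1}$ times on $\delta$. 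You instead define thresholds $\delta_n=\delta$, $\delta_j:=\eta_2(\delta_{j+1},\tau,r_0)$, take $\eta_3:=\delta_0$, and pick the \emph{maximal} index $j_*$ with ${\bf d}_{j_*}\leq\delta_{j_*}$ so that Lemma \ref{condiistrat} applies directly at $(j_*,\delta_{j_*+1})$; this yields the same qualitative conclusion with only $n$ nested applications of $\eta_2$. Your treatment of the closed endpoint $\rho=r_0/5$ (where Lemma \ref{condiistrat} is stated on the open interval) is also more careful than the paper's, which silently includes it in the inductive statement. One cosmetic remark: the phrase ``unique index $j_*$'' in your first paragraph is loose --- several indices may satisfy ``small at $j$, large at $j+1$'' --- but you immediately repair this by defining $j_*$ as the maximal index, which is what the argument actually uses, and the monotonicity of the ${\bf d}_j$'s is in fact not needed at all for the dichotomy to go through.
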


\begin{proof}
We argue by induction on the dimension $j\in\{1,\ldots,n\}$ assuming that there exists a constant $\eta_{*,j}(\delta,\tau,r_0)\in (0,\delta]$ 
such that for every $\rho\in(0,r_0/5]$ and $x\in\Omega^{r_0}$, the conditions
$${\bf d}_0(u,x,4\rho) \leq\eta_{*,j}\quad \text{and}\quad {\bf d}_{j}(u,x,4\rho)\geq \delta\,,$$
imply the existence of a linear subspace $V$, with ${\rm dim}\,V\leq j-1$, for which
$${\bf d}_0(u,y,4\rho)> \eta_{*,j} \quad \forall y\in D_{\rho}(x)\setminus \mathscr{T}_{\tau \rho}(x+V)\,.$$ 
By Lemma \ref{condiistrat} this property holds for $j=1$ with $\eta_{*,1}(\delta,\tau)=\eta_2(\delta,\tau)$. 

Now we assume that the property holds at step $j$, and we prove that it also holds at step $j+1$. To this purpose, we choose 
$$ \eta_{*,j+1}(\delta,\tau,r_0):=\eta_{*,j}\big( \eta_{*,j}(\delta,\tau,r_0) ,\tau,r_0\big)\in \big(0, \eta_{*,j}(\delta,\tau)\big]\subset (0,\delta] \,.$$
Then we distinguish two cases. 

\noindent {\it Case 1).} If ${\bf d}_{j}(u,x,4\rho)\leq \eta_{*,j}$, then ${\bf d}_{j}(u,x,4\rho)\leq \eta_{2}$ and 
we can apply Lemma \ref{condiistrat}  to find the required linear subspace $V$ of dimension $j=(j+1)-1$. 

\noindent {\it Case 2).} If ${\bf d}_{j}(u,x,4\rho)>\eta_{*,j}$, then we apply the induction hypothesis to find the required linear subspace $V$ of dimension less than $j-1$. 

Now the conclusion follows for $\eta_3(\delta,\tau,r_0):= \eta_{*,n}(\delta,\tau,r_0)$.
\end{proof}

We now introduce the so-called singular strata of $u$. For $\delta\in(0,1)$, $0<r\leq r_0$, and $j\in\{0,\ldots,n-1\}$, we set 
$$\mathcal{S}^j_{r_0,r,\delta}(u):=\Big\{x\in\Omega^{r_0}: \boldsymbol{\Theta}_u(x)>0\text{ and }{\bf d}_{j+1}(u,x,\rho)\geq\delta\text{ for all } r\leq\rho\leq r_0\Big\}\,,$$
$$\mathcal{S}^j_{r_0,\delta}(u):=\bigcap_{0<r\leq r_0} \mathcal{S}^j_{r_0,r,\delta}(u)\quad\text{and}\quad \mathcal{S}^j_{r_0}(u):=\bigcup_{0<\delta<1}\mathcal{S}^j_{r_0,\delta}(u)\,.$$
According to \cite{FMS}, we have the following result. 

\begin{theorem}\label{volsingsrat}
For every $\kappa_0\in(0,1)$, there exists a constant 
$$C=C(\kappa_0,r_0,H_0,\Lambda_0,b,n,s,q)>0$$ 
such that 
\begin{equation}\label{pff1710}
\mathscr{L}^n\Big(\mathscr{T}_r\big(\mathcal{S}^{n-1}_{r_0}(u)\big)\Big)\leq Cr^{1-\kappa_0}  \quad\forall r\in(0,r_0)\,. 
\end{equation}
In addition, ${\rm dim}_{\mathscr{H}}\big( \mathcal{S}^j_{r_0}(u)\big)\leq j$ for each $j\in\{1,\ldots,n-2\}$, and $ \mathcal{S}^0_{r_0}(u)$ is countable. 
\end{theorem}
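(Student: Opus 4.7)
The plan is to apply the abstract quantitative stratification principle of \cite{FMS} verbatim, with $\boldsymbol{\Theta}_u(f,\cdot,\cdot)$ as the almost-monotone density function and the family $\{{\bf d}_j(u,\cdot,\cdot)\}_{j=0}^{n}$ as the control functions. The structural hypotheses demanded by that framework have all been verified in the preceding subsection: almost-monotonicity and uniform boundedness of the density are given by Lemma \ref{monotformsurf} and \eqref{controldensit}; ordering, Lipschitz continuity in $L^1$, and rescaling invariance of the ${\bf d}_j$'s follow immediately from their definition, using compactness of the cone classes $\mathscr{C}_j(\Lambda_0)$ (Lemma \ref{compactcone}); the gap between trivial and nontrivial configurations is Lemma \ref{gapdistlemma}; the approximation property (small drop in density forces $L^1$-proximity to a $0$-symmetric cone) is Lemma \ref{lemme2stratsurf}; and the cone-splitting property, in the uniform form required by \cite{FMS}, is Corollary \ref{corstrat3surf}.

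Once the axioms are matched with those of \cite{FMS}, I would invoke the abstract covering theorem of that paper. For the top stratum, a crucial simplification comes from the gap: Lemma \ref{gapdistlemma} implies $\mathcal{S}^{n-1}_{r_0}(u) = \mathcal{S}^{n-1}_{r_0,\delta_0}(u)$, so the Minkowski estimate $\mathscr{L}^n(\mathscr{T}_r(\mathcal{S}^{n-1}_{r_0,\delta_0}(u))) \leq C r^{1-\kappa_0}$ delivered by \cite{FMS} yields \eqref{pff1710} directly, with $C$ depending only on $\kappa_0$ and the listed parameters (since $\delta_0$ is itself determined by them). For $j \in \{1,\dots,n-2\}$, the same abstract result applied to $\mathcal{S}^{j}_{r_0,\delta}(u)$ produces ${\rm dim}_{\mathscr{H}}(\mathcal{S}^{j}_{r_0,\delta}(u)) \leq j$ uniformly in $\delta \in (0,1)$, and ${\rm dim}_{\mathscr{H}}(\mathcal{S}^{j}_{r_0}(u)) \leq j$ follows by countable subadditivity of Hausdorff measure along a sequence $\delta = 1/k$. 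For $j=0$, the abstract stratification gives countability of each $\mathcal{S}^0_{r_0,\delta}(u)$, and hence of their countable union.

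The only substantive technical point is the treatment of the H\"older-type error $\propto r^{\theta_q}$ in the monotonicity formula of Lemma \ref{monotformsurf}: the density is not strictly monotone but only almost so. This is accommodated by \cite{FMS} precisely because $\theta_q = 1+2s-n/q > 0$ (thanks to the assumption $q > n/(1+2s)$); the error is summable along geometric scales and is absorbed cleanly into the iterative covering argument, which is the source of the dependence of $C$ on $H_0$ and on $r_0$. A minor bookkeeping step is also required to convert Corollary \ref{corstrat3surf}, stated at scale $4\rho$, into the formulation used in \cite{FMS}, but this is a purely cosmetic rescaling.
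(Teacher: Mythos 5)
Your overall route is the same as the paper's: verify the structural hypotheses of \cite{FMS}, invoke their Theorems~2.2 and 2.3, and use the gap property (Lemma~\ref{gapdistlemma}) to identify $\mathcal{S}^{n-1}_{r_0}(u)$ with $\mathcal{S}^{n-1}_{r_0,\delta_0(r_0)}(u)$, after which the Minkowski estimate and the dimension bounds, together with a union over $\delta=1/k$, deliver the theorem. However, you misidentify the cone-splitting axiom. You cite Corollary~\ref{corstrat3surf} as ``the uniform form required by \cite{FMS},'' but that corollary is a derived statement, obtained by iterating through the dimension index, and it is specialized to the comparison ``${\bf d}_0$ small and ${\bf d}_n$ large,'' which is what is needed later for the Allen--Cahn transition-set estimate in Section~7, not for the stratification here. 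The input that the abstract framework actually requires is the index-by-index cone splitting of Lemma~\ref{condiistrat}: it is only that lemma that, for each $j$, pushes the set where ${\bf d}_{j}$ is small and ${\bf d}_{j+1}$ is large into a tube around a $j$-dimensional subspace, and [FMS, Theorem 2.3] needs precisely this $j$-graded property to conclude $\dim_{\mathscr{H}}(\mathcal{S}^j_{r_0,\delta}(u))\leq j$ for every $j\leq n-2$. Feeding Corollary~\ref{corstrat3surf} into the framework would only support the top stratum ($j=n-1$), so your claim that ``the same abstract result applied to $\mathcal{S}^{j}_{r_0,\delta}(u)$ produces ${\rm dim}_{\mathscr{H}}(\mathcal{S}^{j}_{r_0,\delta}(u))\leq j$'' would not actually be justified by the lemma you invoke. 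Replace Corollary~\ref{corstrat3surf} with Lemma~\ref{condiistrat} in the verification of the \cite{FMS} hypotheses and the argument goes through exactly as in the paper.
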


\begin{proof}
By Lemma \ref{lemme2stratsurf} and Lemma \ref{condiistrat}, the functions $\boldsymbol{\Theta}_u(f,\cdot,\cdot)$ and ${\bf d}_j(u,\cdot,\cdot)$ satisfy the assumptions in \cite[Section 2.2]{FMS}. Then the dimension estimate on  $\mathcal{S}^j_{r_0}(u)$ for each $j\in\{1,\ldots,n-2\}$, and the fact that $ \mathcal{S}^0_{r_0}(u)$ is countable, follow from \cite[Theorem 2.3]{FMS}. 

According to Lemma \ref{gapdistlemma}, $\mathcal{S}^{n-1}_{r_0,\delta}(u)=\mathcal{S}^{n-1}_{r_0,\delta_0(r_0)}(u)$ for every $\delta\in(0,\delta_0(r_0)]$. Since the sets $\mathcal{S}^{n-1}_{r_0,\delta}(u)$ are decreasing with respect to $\delta$, we deduce that $\mathcal{S}^{n-1}_{r_0}(u)=\mathcal{S}^{n-1}_{r_0,\delta_0(r_0)}(u)$. Then, estimate \eqref{pff1710} follows from \cite[Theorem 2.2]{FMS}. 
\end{proof}

\begin{proof}[Proof of Theorem \ref{dimMink}]
We  choose $r_0>0$ in such a way that $\Omega^\prime\subset\Omega^{r_0}$. By Corollary \ref{openset} and Lemma \ref{gapdistlemma}, we have $\partial E_u\cap\Omega^\prime\subset \mathcal{S}^{n-1}_{r_0}(u)$. According to estimate \eqref{pff1710}, for every $\alpha\in(0,1)$ there exists a constant $C=C(\alpha,r_0)$ such that 
\begin{equation}
\mathscr{L}^n\big(\mathscr{T}_r(\partial E_u\cap\Omega^\prime)\big)\leq Cr^\alpha \quad\forall r\in(0,r_0)\,.
\end{equation}
Hence, 
$$\limsup_{r\downarrow 0}\left( n-\frac{\log\Big(\mathscr{L}^n\big(\mathscr{T}_r(\partial E_u\cap\Omega^\prime)\big)\Big)}{\log r}\right)\leq n-\alpha\quad\forall\alpha\in(0,1)\,, $$
and we obtain that the upper Minkowski dimension $\overline{\rm dim}_{\mathscr{M}}(\partial E_u\cap\Omega^\prime)$ is less than $n-1$. On the other hand, 
since $E_u\cap \Omega^\prime$ is a not empty open subset of $\Omega^\prime$, distinct from $\Omega^\prime$, we have ${\rm dim}_{\mathscr{H}}(\partial E_u\cap \Omega^\prime)\geq n-1$.  Since the lower Minkowski dimension $\underline{\rm dim}_{\mathscr{M}}(\partial E_u\cap\Omega^\prime)$ is greater than than the Hausdorff dimension, we conclude that ${\rm dim}_{\mathscr{M}}(\partial E_u\cap\Omega^\prime)=n-1$. 

To complete the proof, we show that 
\begin{equation}\label{identstrata}
{\rm Sing}^j(u)\cap\Omega^{r_0}\subset \mathcal{S}^j_{r_0}(u)\quad \text{for each } j\in\{0,\ldots,n-2\}\,,
\end{equation}
so that the conclusion follows from Theorem \ref{volsingsrat} (letting $r_0\to0$ along a decreasing sequence). To prove \eqref{identstrata}, 
we argue by contradiction assuming that there exists a point $x\in {\rm Sing}^j(u)\cap\Omega^{r_0}\setminus \mathcal{S}^j_{r_0}(u)$. Then, $x\not\in \mathcal{S}^{j}_{r_0,2^{-k}}(u)$ for every $k\in\mathbb{N}$. Hence, for each $k\in\mathbb{N}$, there exists a radius $r_k\in(0,r_0]$ such that $x\not\in  \mathcal{S}^{j}_{r_0,r_k,2^{-k}}(u)$, and therefore a radius $\rho_k\in[r_k,r_0]$ such that ${\bf d}_{j+1}(u,x,\rho_k)<2^{-k}$. Now we extract a (not relabeled) subsequence such that $\rho_k\to \rho_*$ for some $\rho_*\in[0,r_0]$. We distinguish the two following cases: 
\vskip3pt

\noindent{\it Case 1).} If $\rho_*=0$, then we can extract a further subsequence such that $(u)_{x,\rho_k}\to\varphi$ strongly in $H^1(B_1^+,|z|^a\de{\bf x})$ for some $\varphi\in T_x(u)$ (by Lemma \ref{tangmapstat}). In addition, 
$$ {\bf d}_{j+1}(\varphi,0,1)=\lim_{k\to\infty} {\bf d}_{j+1}(u_{x,\rho_k},0,1)=\lim_{k\to\infty} {\bf d}_{j+1}(u,x,\rho_k)=0\,,$$
so that $\varphi\in\mathscr{C}_{j+1}(\Lambda_0)$. Then ${\rm dim}\,S(\varphi)\geq j+1$ which contradicts $x\in {\rm Sing}^j(u)$. 
\vskip3pt

\noindent{\it Case 2).} If $\rho_*>0$,  then 
$${\bf d}_{j+1}(u_{x,\rho_*},0,1)=\lim_{k\to\infty} {\bf d}_{j+1}(u_{x,\rho_k},0,1)=\lim_{k\to\infty} {\bf d}_{j+1}(u,x,\rho_k)=0\,.$$
Hence there exists $\varphi\in\mathscr{C}_{j+1}(\Lambda_0)$ such that $u_{x,\rho_*}=\varphi$ on  $B^+_{1}$. Clearly, it implies that $T_x(u)=\{\varphi\}$, which contradicts  $x\in {\rm Sing}^j(u)$ as in Case 1). 
\end{proof}


\subsection{Application to the prescribed nonlocal mean curvature equation}\label{complrestprescrNMMC}


In this subsection, we apply the previous results to a weak solution $E\subset \R^n$ of the prescribed nonlocal $2s$-mean curvature equation \eqref{eqcurvsec6}. In order to do so, we may consider an increasing sequence of admissible bounded open sets $\{G_l\}_{l\in\mathbb{N}}$ such that $\overline{\partial^0G_l}\subset \Omega$, $\bigcup_l G_l=\R^{n+1}_+$, and $\bigcup_l\partial^0G_l=\Omega$. In view of \eqref{campdev2}-\eqref{stateeq1458}, we can apply to the  extended function $(v_E)^\e$ the different results from Subsection \ref{subres1} to Subsection \ref{subsectstrat} to reach the following main conclusions:

\begin{enumerate}
\item The set $E\cap\Omega$ is essentially open. More precisely, $\mathscr{L}^n\big((E\cap\Omega)\triangle E_{(v_E)^\e}\big)=0$ where $E_{(v_E)^\e}\subset\Omega$ is the open set provided by Corollary \ref{openset}. From now on, we will  identify the set $E\cap\Omega$ with $E_{(v_E)^\e}$. 
\item ${\rm dim}_{\mathscr{M}}(\partial E\cap\Omega^\prime)\leq n-1$ for every open subset $\Omega^\prime$ such that $\overline{\Omega^\prime}\subset\Omega$ (with equality if if $\partial E\cap\Omega^\prime$ is not empty).
\item There is a subset $\Sigma_{\rm sing}\subset \partial E\cap\Omega$ with ${\rm dim}_{\mathscr{H}}\Sigma_{\rm sing}\leq n-2$ (countable if $n=2$) such that the following holds: 
if $x_0\in  (\partial E\cap\Omega)\setminus \Sigma_{\rm sing}$, then every sequence $\rho_k\downarrow 0$ admits a (not relabeled) subsequence such that 
\begin{itemize}
\item $E_k:=(E-x_0)/\rho_k\to P$ in $L^1_{\rm loc}(\R^n)$ for some half-space $P\subset \R^n$, $0\in \partial P$;  
\item $\partial E_k$ converges locally uniformly to the hyperplane $\partial P$, i.e.,  for every compact set $K\subset \R^n$ and every $r>0$, $\partial E_k\cap K\subset \mathscr{T}_r(\partial P)$ and $ \partial P\cap K\subset  \mathscr{T}_r(\partial E_k)$ whenever $k$ is large enough.
\end{itemize}
\end{enumerate}

\begin{remark}
In the case of minimizing nonlocal minimal surfaces (i.e., solutions of \eqref{defminimizingnonlocminsurf}), or  minimizing solutions of \eqref{eqcurvsec6}  for $f\not=0$ (i.e., solutions of \eqref{nonlocalminf}), the set $\Sigma_{\rm sing}$ is a closed subset of $\partial E\cap\Omega$, and  $(\partial E\cap\Omega)\setminus \Sigma_{\rm sing}$ is locally the graph of a $C^{1,\alpha}$ function (at least), see \cite{CRS,CapGui}. The minimality condition allows one to prove that equation \eqref{eqcurvsec6} holds in a suitable viscosity sense. This is a key point to prove the {\sl improvement of flatness} of \cite{CRS}. Combined with property (3) above, it  leads to the regularity at points of $(\partial E\cap\Omega)\setminus \Sigma_{\rm sing}$. The improvement of flatness property also implies the existence of a constant $\delta>0$ such that $ \boldsymbol{\Theta}_\varphi(0)\geq \boldsymbol{\theta}_{n,s}+\delta$ for every {\sl minimizing} nonlocal cone $\varphi$ such that ${\rm dim}\,S(\varphi)\leq n-2$, and the closeness of  $\Sigma_{\rm sing}$ can be deduced from the upper semicontinuity of the density function $ \boldsymbol{\Theta}$. In the stationary case, it remains unclear whether or not an improvement of flatness holds. It is even unclear if this there an energy gap between hyperplanes and other nonlocal stationary cones. 
\end{remark}

\begin{remark}
In the minimizing case, we have the improved estimate ${\rm dim}_{\mathscr{H}}\Sigma_{\rm sing}\leq n-3$ as shown in \cite{SV1bis}. In the stationary case,  
the estimate ${\rm dim}_{\mathscr{H}}\Sigma_{\rm sing}\leq n-2$ is  optimal. Indeed, in the plane $\R^2$, the boundary of the open set $E:=\{x_1x_2>0\}$ is an entire stationary nonlocal minimal surface with $\Sigma_{\rm sing}=\{0\}$.
\end{remark}

Our objective for the rest of this subsection is to show that the Minkowski dimension estimate on $\partial E\cap\Omega$ leads to the following higher regularity result. 

\begin{theorem}\label{improvperim}
For every $s^\prime\in(0,1/2)$ and every open subset $\Omega^\prime\subset\Omega$ such that $\overline{\Omega^\prime}\subset \Omega$, 
$$P_{2s^\prime}(E,\Omega^\prime)<\infty\,.$$
\end{theorem}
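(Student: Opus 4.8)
The statement asserts that a set $E$ of finite $2s$-perimeter in $\Omega$, satisfying the prescribed nonlocal mean curvature equation \eqref{eqcurvsec6}, automatically has finite $2s'$-perimeter in every compactly contained $\Omega'$, for \emph{every} $s'\in(0,1/2)$. The key structural input from the preceding subsection is conclusion (2): $\mathrm{dim}_{\mathscr M}(\partial E\cap\Omega')\le n-1$ on compactly contained opens; in fact Theorem~\ref{dimMink} / Theorem~\ref{volsingsrat} gives the quantitative Minkowski bound
$$\mathscr L^n\big(\mathscr T_r(\partial E\cap\Omega'')\big)\le C r^{1-\kappa_0}$$
for every $\kappa_0\in(0,1)$, on any $\Omega''$ with $\overline{\Omega'}\subset\Omega''$, $\overline{\Omega''}\subset\Omega$. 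So the strategy is: fix $\Omega'\Subset\Omega''\Subset\Omega$, and estimate $P_{2s'}(E,\Omega')$ by splitting the double integral according to whether the pair of points is near $\partial E\cap\Omega''$ or far from it, using the small-volume estimate for the near part and the finiteness of $P_{2s}$ (plus smoothness of $E$ away from its boundary) for the far part.

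\textbf{Key steps.} First I would recall the definition of $P_{2s'}(E,\Omega')$ as a sum of three double integrals over $E\cap\Omega'$ vs.\ $E^c\cap\Omega'$, $E\cap\Omega'$ vs.\ $E^c\setminus\Omega'$, and $E\setminus\Omega'$ vs.\ $E^c\cap\Omega'$; the last two terms are controlled once we handle the interaction of $\Omega'$ with the complement, and this is the easy part — for the ``long-range'' contributions one just uses $|x-y|\ge\mathrm{dist}(\Omega',\partial\Omega'')$ on the relevant region together with $P_{2s'}(\Omega',\R^n)<\infty$ (finite since $\Omega'$ is taken smooth), exactly as in \eqref{cpasdim}. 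The heart is the first term, the ``interior'' interaction: write, for $x\in E\cap\Omega'$, the inner integral $\int_{E^c\cap\Omega''}|x-y|^{-n-2s'}\,\de y$ and note that, away from $\partial E\cap\Omega''$, $E\cap\Omega''$ coincides (up to measure zero) with an open set with $\mathrm{dim}_{\mathscr M}(\partial E\cap\Omega'')\le n-1$. The crucial geometric fact is that for a set whose topological boundary has upper Minkowski content controlled as above, one has, for each $x$,
$$\int_{E^c\cap\Omega''}\frac{\de y}{|x-y|^{n+2s'}}\ \lesssim\ \big(\mathrm{dist}(x,\partial E\cap\Omega'')\big)^{-2s'}\ +\ \text{(bounded terms)},$$
because the only obstruction to integrability of $|x-y|^{-n-2s'}$ near $y=x$ is the presence of $E^c$-points arbitrarily close to $x$, which forces $x$ near $\partial E$. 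Then I integrate this in $x$ over $E\cap\Omega'$: the function $x\mapsto \mathrm{dist}(x,\partial E\cap\Omega'')^{-2s'}$ is in $L^1(\Omega')$ precisely because of the Minkowski estimate — indeed, by the layer-cake / coarea-type computation,
$$\int_{\Omega'}\mathrm{dist}(x,\partial E\cap\Omega'')^{-2s'}\,\de x = 2s'\int_0^{\mathrm{diam}}t^{-2s'-1}\,\mathscr L^n\big(\{x\in\Omega':\mathrm{dist}(x,\partial E\cap\Omega'')<t\}\big)\,\de t + \text{bdry term},$$
and plugging in $\mathscr L^n(\cdots)\le\mathscr L^n(\mathscr T_t(\partial E\cap\Omega''))\le C t^{1-\kappa_0}$ makes the $t$-integral $\int_0 t^{-2s'-1+1-\kappa_0}\,\de t=\int_0 t^{-2s'-\kappa_0}\,\de t$, which converges as soon as $2s'+\kappa_0<1$. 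Since $2s'<1$ we may choose $\kappa_0<1-2s'$, so the integral is finite. This gives finiteness of the interior term; the remaining two terms are handled by the long-range bound above.

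\textbf{Main obstacle.} The step I expect to be most delicate is making rigorous the pointwise bound $\int_{E^c\cap\Omega''}|x-y|^{-n-2s'}\,\de y\lesssim \mathrm{dist}(x,\partial E\cap\Omega'')^{-2s'}$. One must be careful because $E^c\cap\Omega''$ can accumulate on $\partial\Omega''$ as well, not only on $\partial E$; this is dealt with by the fact that points of $E^c\cap\Omega''$ lying near $\partial\Omega''$ contribute a bounded amount for $x\in\Omega'$ (using $\mathrm{dist}(\Omega',\partial\Omega'')>0$), so only points of $E^c$ near $\partial E\cap\Omega''$ matter, and for those $|x-y|\ge\mathrm{dist}(x,\partial E\cap\Omega'')$ gives the claim after bounding the remaining integral $\int_{|x-y|\ge\rho}|x-y|^{-n-2s'}\,\de y=c\,\rho^{-2s'}$. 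A second, more bookkeeping-level subtlety is that the Minkowski estimate \eqref{pff1710} is stated for the singular strata $\mathcal S^{n-1}_{r_0}$ of the extended function $(v_E)^\e$ with constants depending on $r_0$ (the ``distance to $\partial^+G$'' truncation); one needs to choose the exhausting sets $G_l$ and the parameter $r_0$ so that $\partial E\cap\Omega''\subset\Omega^{r_0}$ and so that $\partial E\cap\Omega''\subset\mathcal S^{n-1}_{r_0}((v_E)^\e)$ via Corollary~\ref{openset} and Lemma~\ref{gapdistlemma} (the energy upper bound \eqref{controldensit} holds by \eqref{1912tangbd} since $P_{2s}(E,\Omega)<\infty$, and \eqref{controlcurv} holds since $f\in W^{1,q}$). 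Once these parameters are fixed, the estimate $\mathscr L^n(\mathscr T_r(\partial E\cap\Omega''))\le Cr^{1-\kappa_0}$ follows directly, and the argument above closes.
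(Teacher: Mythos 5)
Your proposal is correct and takes a genuinely different, more elementary route than the paper's proof. Both arguments rest on the same geometric input, namely the quantitative Minkowski estimate $\mathscr L^n\big(\mathscr T_r(\partial E\cap\Omega'')\big)\le C r^{1-\kappa_0}$ on compactly contained opens (proved via the stratification results of Section~\ref{subsectstrat}), but you then bound the fractional perimeter \emph{directly}: for $x\in E\cap\Omega'$, since $E\cap\Omega''$ is open, the disc of radius $\min\big(\mathrm{dist}(x,\partial E\cap\Omega''),\,\mathrm{dist}(\Omega',\partial\Omega'')\big)$ around $x$ lies in $E\cap\Omega''$, giving the pointwise bound $\int_{E^c\cap\Omega''}|x-y|^{-n-2s'}\de y\lesssim \mathrm{dist}(x,\partial E\cap\Omega'')^{-2s'}+C$, and the layer-cake computation (choosing $\kappa_0<1-2s'$, possible precisely because $s'<1/2$) shows this is integrable over $\Omega'$. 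The paper instead proceeds in two abstract stages: Proposition~\ref{imprintsharp} proves $(-\Delta)^s v_E\in L^{\bar p}_{\rm loc}(\Omega)$ for every $\bar p<1/2s$ — using the very same dyadic-shell-plus-Minkowski argument you use, but applied to the pointwise bound $|(-\Delta)^s v_E(x)|\lesssim \mathrm{dist}(x,\partial E)^{-2s}$ from Lemma~\ref{lemeqharmmap} — and then Proposition~\ref{keypropimpr} converts this into $W^{2s',\bar p}_{\rm loc}$-regularity of $v_E$ via Bessel-potential, Triebel--Lizorkin and Besov embeddings, which, because $v_E^2\equiv 1$, reduces to finiteness of $P_{2s'}$. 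Your argument exploits the two-valued structure of $v_E$ from the outset and thereby sidesteps Proposition~\ref{keypropimpr} entirely; the paper's route costs the Triebel--Lizorkin machinery but buys a general fractional elliptic regularity lemma stated as being of independent interest. Both arguments use the full range $\kappa_0\in(0,1)$ (resp.\ $\alpha\in(0,1)$) of the Minkowski estimate to reach every $s'<1/2$, so the conclusion is identical.
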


The proof of Theorem \ref{improvperim} (postponed to this end of the subsection) rests on a general regularity result, which might be of independent interest.  

\begin{proposition}\label{keypropimpr}
Let $v\in \widehat H^s(\Omega)$ be such that $v\in L^{\infty}_{\rm loc}(\Omega)$ and $(-\Delta)^s v\in L^{\bar p}_{\rm loc}(\Omega)$ for  some exponent $\bar p\in(1,\infty)$. Then, for every $s^\prime\in(0,s)$ and every open subsets $\Omega^\prime,\Omega^{\prime\prime}$ of $\Omega$ such that $\overline{\Omega^{\prime\prime}}\subset \Omega^\prime$ and $\overline{\Omega^\prime}\subset \Omega$, 
\begin{equation}\label{sameditroptard}
\left(\iint_{\Omega^{\prime\prime}\times \Omega^{\prime\prime}} \frac{|v(x)-v(y)|^{\bar p}}{|x-y|^{n+2s^\prime\bar p}}\,\de x\de y\right)^{1/\bar p}\leq C\Big(\|(-\Delta)^s v\|_{L^{\bar p}(\Omega^\prime)}+\|v\|_{L^{\infty}(\Omega^\prime)}\Big)\,,
\end{equation}
for some constant $C=C(n,s,s^\prime,\bar p,\Omega^{\prime},\Omega^{\prime\prime})$ independent of $v$.  
\end{proposition}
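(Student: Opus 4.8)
The estimate \eqref{sameditroptard} is a local fractional Sobolev-type bound: it says that controlling $(-\Delta)^s v$ in $L^{\bar p}$ and $v$ in $L^\infty$ locally forces $v$ to lie in the Sobolev--Slobodeckij space $W^{2s'\!,\bar p}$ on slightly smaller sets, for every $s'<s$. The natural route is through the Caffarelli--Silvestre extension $v^\e$ together with the representation $(-\Delta)^s=d_s\boldsymbol{\Lambda}^{(2s)}$ from Lemma~\ref{repnormderfraclap}, reducing the problem to interior Schauder/$L^p$-type estimates for the degenerate equation $\mathrm{div}(z^a\nabla u)=0$ with a prescribed (Neumann) boundary datum in $L^{\bar p}$. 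First I would fix nested open sets $\overline{\Omega''}\subset\Omega'_1\subset\overline{\Omega'_1}\subset\Omega'\subset\overline{\Omega'}\subset\Omega$ and a cutoff $\chi\in\mathscr D(\Omega')$ with $\chi\equiv 1$ on $\Omega'_1$. Writing $v=\chi v+(1-\chi)v$, the term $(1-\chi)v$ is smooth on $\Omega''$ by the usual nonlocal smoothing (its contribution to the double integral over $\Omega''\times\Omega''$ is bounded by $C\|v\|_{L^\infty(\Omega')}$ plus an $L^1(\mathfrak m)$-type tail which is itself controlled by $\mathcal E(v,\Omega')+\|v\|_{L^2(\Omega')}$ via Lemma~\ref{adminHchap}, hence by the right-hand side). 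So it suffices to treat $w:=\chi v$, which is globally defined on $\R^n$, bounded, and satisfies $(-\Delta)^s w=g$ on $\Omega'_1$ with $g:=\chi(-\Delta)^s v+[(-\Delta)^s,\chi]v\in L^{\bar p}(\Omega'_1)$, the commutator term being bounded pointwise on $\Omega'_1$ by a convolution of $|v|$ against an integrable kernel, hence by $C\|v\|_{L^\infty(\Omega')}$ (using again the tail estimate of Lemma~\ref{adminHchap} for the far part).

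\textbf{Key steps.} Second, I would invoke an interior regularity estimate for $(-\Delta)^s w=g$ with $g\in L^{\bar p}_{\rm loc}$ and $w\in L^\infty$: on a ball $D_{2r}(x_0)\Subset\Omega'_1$ one has $\|w\|_{C^{2s-n/\bar p}(D_r(x_0))}\le C(\|g\|_{L^{\bar p}(D_{2r}(x_0))}+\|w\|_{L^\infty(\R^n)})$ when $2s-n/\bar p>0$, and more generally an estimate in $W^{2s,\bar p}_{\rm loc}$ — this is the content of the Calder\'on--Zygmund / Schauder theory for the fractional Laplacian (it can be proved via the extension and the estimates of Lemma~\ref{prereg} and Theorem~\ref{regint}, applied to $w^\e$ which solves $\mathrm{div}(z^a\nabla w^\e)=0$ with $\boldsymbol{\partial}^{(2s)}_z w^\e=d_s^{-1}g$ on $\Omega'_1$; one localizes with a cutoff in $\R^{n+1}_+$ and uses that $w^\e$ is controlled on the lateral boundary by $\|w\|_{L^\infty}$). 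Third — and this is the step that actually produces the gain $s'<s$ rather than $s'=s$ — I would pass from the $W^{2s,\bar p}$ (or Hölder) interior bound to the claimed Gagliardo seminorm bound via the elementary embedding inequality: for a bounded function $w$ that is, say, in $W^{2s,\bar p}(\Omega'_1)$ (or merely in $C^{0,\sigma}_{\rm loc}$ for $\sigma$ close to $2s$ when $2s\bar p>n$), one has for any $s'<s$
\[
\iint_{\Omega''\times\Omega''}\frac{|w(x)-w(y)|^{\bar p}}{|x-y|^{n+2s'\bar p}}\,\de x\,\de y\le C\Big(\|w\|_{W^{2s,\bar p}(\Omega'_1)}^{\bar p}+\|w\|_{L^\infty}^{\bar p}\Big),
\]
obtained by splitting the inner integral into $|x-y|<1$, where $|x-y|^{-n-2s'\bar p}=|x-y|^{-n-2s\bar p}\,|x-y|^{2(s-s')\bar p}$ and the extra positive power absorbs the borderline divergence, and $|x-y|\ge 1$, where one uses $|w(x)-w(y)|^{\bar p}\le 2^{\bar p}\|w\|_{L^\infty}^{\bar p}$ and $\int_{|h|\ge1}|h|^{-n-2s'\bar p}\,\de h<\infty$. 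Combining the three steps and recalling $\mathcal E(v,\Omega')+\|v\|_{L^2(\Omega')}\le C(\|(-\Delta)^s v\|_{L^{\bar p}(\Omega')}+\|v\|_{L^\infty(\Omega')})$ (interpolation plus the energy identity, or directly from the weak formulation tested against $\chi^2 v$) yields \eqref{sameditroptard}.

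\textbf{Main obstacle.} The delicate point is step two: a clean statement of the interior $W^{2s,\bar p}$ (equivalently, $L^{\bar p}$-Calder\'on--Zygmund) estimate for $(-\Delta)^s$ in a form that only assumes $v\in\widehat H^s(\Omega)\cap L^\infty_{\rm loc}$ and $(-\Delta)^sv\in L^{\bar p}_{\rm loc}$ — in particular with no global integrability on $v$ beyond what the energy provides. One has to be careful that the localization cutoff interacts well with the nonlocal operator (the commutator $[(-\Delta)^s,\chi]$ is lower order but not compactly supported, so its $L^{\bar p}$ bound on $\Omega'_1$ genuinely needs the far-field decay of $v$, which is exactly what Lemma~\ref{adminHchap} supplies), and that the extension argument is run on an admissible domain $G\subset\R^{n+1}_+$ with $\overline{\partial^0 G}\subset\Omega$ so that Lemma~\ref{hatH1/2toH1} and Theorem~\ref{regint} apply. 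Once this localized regularity estimate is in hand, step three is purely elementary and step one is routine. I would also note that when $2s\bar p\le n$ one cannot use Hölder continuity and must genuinely work in the fractional Sobolev scale, using the fact (again via the extension) that $w^\e$ has $\nabla_x w^\e$ in a weighted $L^{\bar p}$-type space after differentiating the equation, but the conclusion \eqref{sameditroptard} for $s'<s$ survives because of the absorbing power $|x-y|^{2(s-s')\bar p}$.
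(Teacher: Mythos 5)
Your high-level plan (cut off with $\zeta$, estimate $(-\Delta)^s(\zeta v)$ in $L^{\bar p}$ via a commutator/tail decomposition, then invoke fractional regularity theory) is the same as the paper's, and your Step 1 is essentially the paper's Step 1 with a different notation. The gap is in the middle: you assert that $(-\Delta)^s w\in L^{\bar p}(\R^n)$ with $w$ bounded gives $w\in W^{2s,\bar p}_{\rm loc}$, calling this "the content of Calder\'on--Zygmund theory for the fractional Laplacian", and then present the passage from $W^{2s,\bar p}$ to $W^{2s',\bar p}$ as the step that "actually produces the gain $s'<s$". This is where the argument goes wrong. Calder\'on--Zygmund theory for $(-\Delta)^s$ gives membership in the Bessel potential space $\mathscr{L}^{\bar p}_{2s}(\R^n)$, which coincides with the Triebel--Lizorkin space $F^{2s}_{\bar p,2}(\R^n)$; it does \emph{not} give the Sobolev--Slobodeckij space $W^{2s,\bar p}(\R^n)=B^{2s}_{\bar p,\bar p}(\R^n)$ unless $\bar p\geq 2$, because the correct embedding is $F^{2s}_{\bar p,2}\subset B^{2s}_{\bar p,\max(\bar p,2)}$, and $B^{2s}_{\bar p,2}\subset B^{2s}_{\bar p,\bar p}$ fails for $\bar p<2$. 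Since the application in the paper takes $\bar p<1/(2s)$ (so in particular $\bar p<2$ whenever $s>1/4$), the case $\bar p<2$ cannot be avoided. Thus the $\epsilon$-loss $s'<s$ is genuinely paid in the passage from the $F$-scale to the $B$-scale, namely $F^{2s}_{\bar p,2}\subset B^{2s}_{\bar p,\max(\bar p,2)}\subset B^{2s'}_{\bar p,\bar p}=W^{2s',\bar p}$, not in the trivial downgrading $W^{2s,\bar p}\hookrightarrow W^{2s',\bar p}$ that you present as Step 3. Your "absorbing power $|x-y|^{2(s-s')\bar p}$" observation is correct but irrelevant, because with $w$ bounded on a bounded domain the inclusion $W^{2s,\bar p}(\Omega'_1)\hookrightarrow W^{2s',\bar p}(\Omega'')$ is free anyway — if one actually had $w\in W^{2s,\bar p}$ there would be no need to lose anything.

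There is also a missing step that the paper supplies: to place $w$ in the Bessel potential space one needs $(I-\Delta)^sw\in L^{\bar p}(\R^n)$, not just $(-\Delta)^sw\in L^{\bar p}(\R^n)$. The paper bridges this using Stein's kernel identity $(1+4\pi^2|\xi|^2)^s=1+\widehat\Phi_s(\xi)+(2\pi|\xi|)^{2s}+(2\pi|\xi|)^{2s}\widehat\Phi_s(\xi)$ with $\Phi_s\in L^1(\R^n)$, which converts $w+\Phi_s*w+g+\Phi_s*g$ into $(I-\Delta)^sw$ once $g:=(-\Delta)^sw\in L^{\bar p}(\R^n)$ is in hand. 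Your proposal bypasses this global identification entirely and tries to stay with a local $W^{2s,\bar p}$ estimate obtained from Lemma~\ref{prereg} and Theorem~\ref{regint}; but those results only give H\"older bounds from $L^\infty$ data and cannot, as stated, yield the needed $L^{\bar p}$-scale regularity, let alone resolve the $F$-versus-$B$ distinction. To repair the argument you would need to establish $(I-\Delta)^s(\zeta v)\in L^{\bar p}(\R^n)$ as the paper does, and then replace the "$W^{2s,\bar p}$ via CZ" claim by the correct chain $\mathscr{L}^{\bar p}_{2s}=F^{2s}_{\bar p,2}\subset B^{2s'}_{\bar p,\bar p}=W^{2s',\bar p}$ for $s'<s$.
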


\begin{proof}
{\it Step 1.} We fix a cut-off function $\zeta\in C_c^\infty(\Omega^\prime;[0,1])$ such that $\zeta=1$ in $\Omega^{\prime\prime}$. Define $w:=\zeta v$, and notice that  $w\in H^s_{00}(\Omega^\prime)\cap L^\infty(\Omega^\prime)$. In particular, $(-\Delta)^sw\in H^{-s}(\R^n)$. The objective of this first step is to show that $(-\Delta)^sw\in L^{\bar p}(\R^n)$ with 
\begin{equation}\label{samed1620}
 \|(-\Delta)^sw\|_{L^{\bar p}(\R^n)}\leq C\big(\|(-\Delta)^s v\|_{L^{\bar p}(\Omega^\prime)}+\|v\|_{L^\infty(\Omega^\prime)}\big)\,,
\end{equation}
for some constant $C$ independent of $v$. 

We start writing for $\varphi\in\mathscr{D}(\R^n)$,
\begin{multline*}
\langle (-\Delta)^sw,\varphi\rangle=\frac{\gamma_{n,s}}{2}\iint_{\Omega^\prime\times \Omega^\prime}\frac{\big(w(x)-w(y)\big)(\varphi(x)-\varphi(y))}{|x-y|^{n+2s}}\,\de x\de y\\
+ \gamma_{n,s}\iint_{\Omega^\prime\times (\R^n\setminus \Omega^\prime)}\frac{\big(w(x)-w(y)\big)(\varphi(x)-\varphi(y))}{|x-y|^{n+2s}}\,\de x\de y\,.
\end{multline*}
Since ${\rm spt}\,w\subset \Omega^\prime$, we have 
$$\langle (-\Delta)^sw,\varphi\rangle=\frac{\gamma_{n,s}}{2}\iint_{\Omega^\prime\times \Omega^\prime}\frac{\big(w(x)-w(y)\big)(\varphi(x)-\varphi(y))}{|x-y|^{n+2s}}\,\de x\de y+\int_{\R^n}g_1\varphi\,\de x\,,$$
where 
$$g_1(x):= \gamma_{n,s}\chi_{\Omega^\prime}(x)\zeta(x)v(x)\int_{\R^n\setminus \Omega^\prime}\frac{\de y}{|x-y|^{n+2s}}-\gamma_{n,s}\chi_{\R^n\setminus \Omega^\prime}(x)\int_{\Omega^\prime}\frac{\zeta(y)v(y)}{|x-y|^{n+2s}}\,\de y\,,$$
and $g_1\in L^{\bar p}(\R^n)\cap L^\infty(\R^n)$. Now we write 
\begin{multline*}
\big(w(x)-w(y)\big)\big(\varphi(x)-\varphi(y)\big)=\big(v(x)-v(y)\big)\big(\zeta(x)\varphi(x)-\zeta(y)\varphi(y)\big)\\ 
+v(y)\big(\zeta(x)-\zeta(y)\big)\varphi(x)-v(x)\big(\zeta(x)-\zeta(y)\big)\varphi(y)
\end{multline*}
to realize that 
\begin{multline*}
\frac{\gamma_{n,s}}{2}\iint_{\Omega^\prime\times\Omega^\prime}\frac{\big(w(x)-w(y)\big)(\varphi(x)-\varphi(y))}{|x-y|^{n+2s}}\,\de x\de y\\
= \frac{\gamma_{n,s}}{2}\iint_{\Omega^\prime\times\Omega^\prime}\frac{\big(v(x)-v(y)\big)(\zeta(x)\varphi(x)-\zeta(y)\varphi(y))}{|x-y|^{n+2s}}\,\de x\de y\\
+\gamma_{n,s}\iint_{\Omega^\prime\times\Omega^\prime}\frac{v(y)\big(\zeta(x)-\zeta(y)\big)\varphi(x)}{|x-y|^{n+2s}}\,\de x\de y\,.
\end{multline*}
Consquently,
\begin{multline*}
\langle (-\Delta)^sw,\varphi\rangle= \frac{\gamma_{n,s}}{2}\iint_{\Omega^\prime\times \Omega^\prime}\frac{\big(v(x)-v(y)\big)(\zeta(x)\varphi(x)-\zeta(y)\varphi(y))}{|x-y|^{n+2s}}\,\de x\de y\\
+\int_{\R^n}(g_1+g_2)\varphi\,\de x\,,
\end{multline*}
where 
$$g_2(x):= \gamma_{n,s}\chi_{\Omega^\prime}(x)\int_{\Omega^\prime}\frac{v(y)\big(\zeta(x)-\zeta(y)\big)}{|x-y|^{n+2s}}\,\de y\,, $$
and $g_2\in L^{\bar p}(\R^n)\cap L^\infty(\R^n)$.  Since $\zeta\varphi\in\mathscr{D}(\Omega^\prime)$, we have
\begin{multline*}
\langle (-\Delta)^sw,\varphi\rangle=\langle (-\Delta)^sv,\zeta\varphi\rangle_{\Omega^\prime} -\gamma_{n,s}\iint_{\Omega^\prime\times(\R^n\setminus \Omega^\prime)}\frac{\big(v(x)-v(y)\big)\zeta(x)\varphi(x)}{|x-y|^{n+2s}}\,\de x\de y\\
+\int_{\R^n}(g_1+g_2)\varphi\,\de x\,,
\end{multline*}
so that 
$$\langle (-\Delta)^sw,\varphi\rangle=\langle (-\Delta)^sv,\zeta\varphi\rangle_{\Omega^\prime}+\int_{\R^n}(g_1+g_2+g_3)\varphi\,\de x\,,$$
where 
$$g_3(x):=-\gamma_{n,s}\zeta(x)\int_{\R^n\setminus \Omega^\prime}\frac{v(x)-v(y)}{|x-y|^{n+2s}}\,\de y\,, $$
and $g_3\in L^{\bar p}(\R^n)\cap L^\infty(\R^n)$ (recall that ${\rm spt}\,\zeta\subset \Omega^\prime$). By assumption, there exists $g_4\in L^{\bar p}(\Omega^\prime)$ such that $\langle (-\Delta)^sv,\psi\rangle_{\Omega^\prime}=\int_{\Omega^\prime}g_4\psi\,\de x$ for all $\psi\in\mathscr{D}(\Omega^\prime)$. Extending $g_4$ by zero outside $\Omega^\prime$, we conclude that 
$$\langle (-\Delta)^sw,\varphi\rangle=\int_{\R^n}g\varphi\,\de x\,, $$
with $g:=g_1+g_2+g_3+\zeta g_4\in L^{\bar p}(\R^n)$. Clearly, $\|g\|_{L^{\bar p}(\R^n)}\leq C\big(\|(-\Delta v\|_{L^{\bar p}(\Omega^\prime)}+\|v\|_{L^\infty(\Omega^\prime)}\big)$ for some constant $C$ independent of $v$,  and \eqref{samed1620} is proved.  
\vskip3pt

\noindent{\it Step 2.} We now claim that $(I-\Delta)^sw\in L^{\bar p}(\R^n)$ with 
\begin{equation}\label{dimmidi}
\big\|(I-\Delta)^sw\big\|_{L^{\bar p}(\R^n)}\leq C\big(\|(-\Delta v\|_{L^{\bar p}(\Omega^\prime)}+\|v\|_{L^\infty(\Omega^\prime)}\big)\,,  
\end{equation}
for some constant $C$ independent of $v$. 
Indeed, by \cite[proof of Lemma 2, Section 3.2]{Stein} there exists $\Phi_s\in L^1(\R^n)$ such that 
$$\big(1+4\pi^2|\xi|^2\big)^s=1+\widehat \Phi_s(\xi)+(2\pi |\xi|)^{2s} + (2\pi |\xi|)^{2s}\widehat \Phi_s(\xi)\,,$$
where $\widehat \Phi_s$ denotes the Fourier transform of $\Phi_s$. Since $\big(1+4\pi^2|\xi|^2\big)^s$ is the symbol of $(I-\Delta)^s$ in Fourier space,   we infer that 
 $$(I-\Delta)^sw=w+\Phi_s*w+g+\Phi_s*g\in L^{\bar p}(\R^n)\,,$$
 and \eqref{dimmidi} follows. 
\vskip3pt

\noindent{\it Step 3.} By Step 2, the function $w$ belongs to the Bessel potential space $\mathscr{L}^{\bar p}_{2s}(\R^n)$.  According to \cite[Section 2.3.5]{Triebel}, 
$\mathscr{L}^{\bar p}_{2s}(\R^n)$ coincides with the Triebel-Lizorkin space $F^{2s}_{p,2}(\R^n)$ (notice that $\mathscr{L}^{\bar p}_{2s}(\R^n)$ is denoted by $H^{2s}_{\bar p}(\R^n)$ in \cite{Triebel}). Then we use the continuous embeddings between Triebel-Lizorkin spaces and Besov spaces (recall that  $s^\prime<s$)
$$F^{2s}_{\bar p,2}(\R^n)\subset B^{2s}_{\bar p,\max(\bar p,2)}(\R^n)\subset B^{2s^\prime}_{\bar p,\bar p}(\R^n)\,,$$
see \cite[Proposition 2, p. 47]{Triebel}, to deduce that $w$ belongs to the Besov space $B^{2s^\prime}_{\bar p,\bar p}(\R^n)$. Recalling that $B^{2s^\prime}_{\bar p,\bar p}(\R^n)=W^{2s^\prime,\bar p}(\R^n)$ (the Slobodeckij-Sobolev space, see \cite[Section 2.3.5]{Triebel}), we have thus proved that 
\begin{multline*}
\|w\|_{W^{2s^\prime,\bar p}(\R^n)}:=\|w\|_{L^{\bar p}(\R^n)}+\left(\iint_{\R^n\times\R^n}\frac{|w(x)-w(y)|^{\bar p}}{|x-y|^{n+2s^\prime\bar p}}\,\de x\de y\right)^{1/\bar p}\\ 
\leq C\big(\|(-\Delta)^s v\|_{L^{\bar p}(\Omega^\prime)}+\|v\|_{L^\infty(\Omega^\prime)}\big)\,,
\end{multline*}
for some constant $C$ independent of $v$. Since $w=v$ on $\Omega^{\prime\prime}$, this estimate implies~\eqref{sameditroptard}.
\end{proof}

We continue with a simple observation (that we already made implicitly  during the proof of  Theorem \ref{main1}).

\begin{lemma}\label{lemeqharmmap}
Let $F\subset\R^n$ be a Borel set such that $P_{2s}(F,\Omega)<\infty$. Then the function $v_F:=\chi_F-\chi_{\R^n\setminus F}$ belongs to $\widehat H^s(\Omega)$, and $(-\Delta)^sv_F\in L^1(\Omega)$ with 
$$ (-\Delta)^sv_F(x)=\left(\frac{\gamma_{n,s}}{2}\int_{\R^n}\frac{|v_F(x)-v_F(y)|^2}{|x-y|^{n+2s}}\,\de y\right)v_F(x)\quad \text{for a.e. $x\in \Omega$}\,.$$
\end{lemma}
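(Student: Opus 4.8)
\textbf{Proof proposal for Lemma \ref{lemeqharmmap}.}

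The plan is to compute the distributional fractional Laplacian of $v_F$ directly from the bilinear form \eqref{deffraclap}, using the algebraic identity that turns the mixed difference term into a squared difference whenever the function takes values in $\{\pm 1\}$. First I would check that $v_F\in\widehat H^s(\Omega)$: since $v_F=\chi_F-\chi_{\R^n\setminus F}$, we have $|v_F(x)-v_F(y)|^2=4\,|\chi_F(x)-\chi_F(y)|$, so the energy $\mathcal{E}(v_F,\Omega)$ defined in \eqref{defenergE} equals $2\gamma_{n,s}P_{2s}(F,\Omega)$ (this is exactly \eqref{idDirPersharpint}, up to noting $|v_F|=1$ a.e.), which is finite by hypothesis; hence $v_F\in\widehat H^s(\Omega)$ and $(-\Delta)^sv_F$ is a well-defined element of $(\widehat H^s(\Omega))'$, in particular a distribution on $\Omega$.

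Next I would use the pointwise identity $v_F^2\equiv 1$ on $\R^n$. For any $\varphi\in\mathscr{D}(\Omega)$, applying \eqref{identdim} (with $v_*$ replaced by $v_F$), namely
\begin{equation*}
\big(v_F(x)-v_F(y)\big)\big(\varphi(x)-\varphi(y)\big)=\tfrac12|v_F(x)-v_F(y)|^2\big(v_F(x)\varphi(x)+v_F(y)\varphi(y)\big),
\end{equation*}
and inserting this into \eqref{deffraclap} (splitting over $\Omega\times\Omega$ and $\Omega\times\Omega^c$, and using the symmetry of the kernel in the diagonal term to merge the two occurrences of $v_F(x)\varphi(x)$ and $v_F(y)\varphi(y)$) gives exactly
\begin{equation*}
\big\langle(-\Delta)^sv_F,\varphi\big\rangle_\Omega=\int_\Omega\left(\frac{\gamma_{n,s}}{2}\int_{\R^n}\frac{|v_F(x)-v_F(y)|^2}{|x-y|^{n+2s}}\,\de y\right)v_F(x)\,\varphi(x)\,\de x,
\end{equation*}
which is precisely the computation carried out in \eqref{identdim2} during the proof of Theorem \ref{main1}, Step 3. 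This is the analogue there of the argument "$\langle(-\Delta)^sv_*,\varphi\rangle=\ldots$", and it carries over verbatim since it only used $|v_*|=1$ a.e.\ and $v_*\in\widehat H^s(\Omega)$.

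It then remains to observe that the function
\begin{equation*}
x\mapsto h_F(x):=\frac{\gamma_{n,s}}{2}\int_{\R^n}\frac{|v_F(x)-v_F(y)|^2}{|x-y|^{n+2s}}\,\de y
\end{equation*}
is in $L^1(\Omega)$, so that $(-\Delta)^sv_F$, a priori only a distribution, is represented by the $L^1$ function $h_Fv_F$ and the stated pointwise formula holds for a.e.\ $x\in\Omega$. This is immediate: $\int_\Omega h_F(x)\,\de x=\frac{\gamma_{n,s}}{2}\iint_{\Omega\times\R^n}\frac{|v_F(x)-v_F(y)|^2}{|x-y|^{n+2s}}\,\de x\de y$, and splitting the inner integral over $\Omega$ and $\R^n\setminus\Omega$ gives $2\gamma_{n,s}P_{2s}(F,\Omega)<\infty$ by comparison with \eqref{defenergE} (the $\Omega\times\Omega$ part contributes twice the first term of $\mathcal E$, the $\Omega\times\Omega^c$ part the second term). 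Since $|v_F|=1$, $|h_Fv_F|=h_F\in L^1(\Omega)$, which finishes the proof. I do not anticipate a serious obstacle here; the only mild point of care is bookkeeping the symmetrization of the kernel when passing from \eqref{deffraclap} to the single integral $\int_{\R^n}$ in the definition of $h_F$, and checking that no principal-value subtlety arises — but this is harmless because everything is tested against $\varphi\in\mathscr D(\Omega)$ and all integrals converge absolutely once $P_{2s}(F,\Omega)<\infty$.
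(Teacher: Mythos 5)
Your proposal is correct and takes exactly the route the paper intends: the paper's proof is the single line ``Argue as in \eqref{identdim}--\eqref{identdim2}'', and you have filled in precisely those details (the algebraic identity for $\{\pm1\}$-valued functions, the symmetrization of the $\Omega\times\Omega$ term, the $\Omega\times\Omega^c$ term with $\varphi$ vanishing off $\Omega$, and the $L^1$ bound on $h_F$ via $\mathcal{E}(v_F,\Omega)$). One small arithmetic slip: $\int_\Omega h_F\,\de x$ does not equal $2\gamma_{n,s}P_{2s}(F,\Omega)$ exactly -- your own bookkeeping (``twice the first term of $\mathcal{E}$ plus the second'') shows it equals $\mathcal{E}(v_F,\Omega)$ plus the interior double integral, i.e.\ at most $2\mathcal{E}(v_F,\Omega)=4\gamma_{n,s}P_{2s}(F,\Omega)$. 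Since only finiteness is needed, the conclusion stands unaffected.
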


\begin{proof}
Argue as in \eqref{identdim}-\eqref{identdim2}. 
\end{proof}

Back to our original set $E$, we combine Lemma \ref{lemeqharmmap} with the estimate on the Minkowski dimension to obtain 

\begin{proposition}\label{imprintsharp}
We have $(-\Delta)^sv_E\in L^{\bar p}_{\rm loc}(\Omega)$ for every $\bar p<1/2s$.
\end{proposition}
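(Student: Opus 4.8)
The statement to prove is that $(-\Delta)^s v_E \in L^{\bar p}_{\rm loc}(\Omega)$ for every $\bar p < 1/2s$, where $v_E = \chi_E - \chi_{\R^n\setminus E}$. By Lemma~\ref{lemeqharmmap}, we already know $(-\Delta)^s v_E \in L^1(\Omega)$ and, more precisely, that for a.e.\ $x\in\Omega$,
\begin{equation*}
(-\Delta)^s v_E(x) = \left(\frac{\gamma_{n,s}}{2}\int_{\R^n}\frac{|v_E(x)-v_E(y)|^2}{|x-y|^{n+2s}}\,\de y\right)v_E(x) = \gamma_{n,s}\int_{\R^n}\frac{\chi_{E^c}(y)\chi_E(x) + \chi_E(y)\chi_{E^c}(x)}{|x-y|^{n+2s}}\,\de y.
\end{equation*}
So the pointwise value of $|(-\Delta)^s v_E(x)|$ is exactly $\gamma_{n,s}$ times the ``one-sided $2s$-perimeter density'' at $x$: it equals $\int_{E^c}|x-y|^{-n-2s}\de y$ if $x\in E$, and $\int_E |x-y|^{-n-2s}\de y$ if $x\in E^c$. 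The plan is to show this density function lies in $L^{\bar p}_{\rm loc}(\Omega)$ for $\bar p<1/2s$, by exploiting the fact that the boundary $\partial E\cap\Omega$ has Minkowski dimension $\le n-1$ (conclusion (2) of Subsection~\ref{complrestprescrNMMC}), hence in particular its tubular neighborhoods have volume $\mathscr{L}^n(\mathscr{T}_r(\partial E\cap\Omega^\prime)) \le C r^{1-\kappa}$ for every $\kappa>0$ (as furnished by Theorem~\ref{volsingsrat}/Theorem~\ref{dimMink} via $\partial E\cap\Omega^\prime\subset\mathcal{S}^{n-1}_{r_0}(v_E^\e)$).

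First I would fix open sets $\Omega^{\prime\prime}\Subset\Omega^\prime\Subset\Omega$ and estimate $|(-\Delta)^s v_E(x)|$ for $x\in\Omega^{\prime\prime}$ by splitting the integral defining the density at the scale $d(x) := \rmdist(x,\partial E\cap\Omega^\prime)$ (truncated by $\rmdist(x,\partial\Omega^\prime)$, which is bounded below on $\Omega^{\prime\prime}$). For $|x-y| > d(x)$ the integrand $|x-y|^{-n-2s}$ is integrable over all of $\R^n$ and contributes at most $C d(x)^{-2s}$. For $|x-y| \le d(x)$: if $x\in E$ then the ball $D_{d(x)}(x)$ lies in $E$ (up to the relevant truncation), so the integrand there is $\chi_{E^c}(y)|x-y|^{-n-2s}=0$ on that part; similarly if $x\in E^c$. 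Hence the near-diagonal part vanishes, and we get the pointwise bound $|(-\Delta)^s v_E(x)| \le C\, d(x)^{-2s} + C$ for $x\in\Omega^{\prime\prime}$ (the additive constant absorbing the contribution from $|x-y|$ comparable to $\rmdist(x,\partial\Omega^\prime)$ and from far-away $E^c$/$E$). Then I would estimate
\begin{equation*}
\int_{\Omega^{\prime\prime}} d(x)^{-2s\bar p}\,\de x = \int_{\Omega^{\prime\prime}} \Big(\rmdist(x,\partial E\cap\Omega^\prime)\Big)^{-2s\bar p}\,\de x
\end{equation*}
using the layer-cake / coarea-type bound: writing $A_j := \{x\in\Omega^{\prime\prime} : 2^{-j-1} < d(x) \le 2^{-j}\}\subset\mathscr{T}_{2^{-j}}(\partial E\cap\Omega^\prime)$, we have $\mathscr{L}^n(A_j) \le C 2^{-j(1-\kappa)}$ by the Minkowski estimate, so $\int_{\Omega^{\prime\prime}} d^{-2s\bar p} \le \sum_j (2^{-j-1})^{-2s\bar p}\mathscr{L}^n(A_j) \le C\sum_j 2^{j(2s\bar p - 1 + \kappa)}$, which converges provided $2s\bar p - 1 + \kappa < 0$, i.e.\ $\bar p < \frac{1-\kappa}{2s}$; since $\bar p < 1/2s$ is strict, I can choose $\kappa>0$ small enough. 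This gives $(-\Delta)^s v_E \in L^{\bar p}(\Omega^{\prime\prime})$, and since $\Omega^{\prime\prime}\Subset\Omega$ was arbitrary, the local conclusion follows.

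The main obstacle, and the step requiring the most care, is making the Minkowski-dimension input precise and correctly localized: one must verify that $\partial E\cap\Omega^\prime$ is indeed contained in the singular stratum $\mathcal{S}^{n-1}_{r_0}(v_E^\e)$ for a suitable $r_0$ (so that Theorem~\ref{volsingsrat} applies with its quantitative volume bound $Cr^{1-\kappa_0}$), and that the constants in \eqref{pff1710} genuinely depend only on the allowed data through $H_0,\Lambda_0,b$ which are finite here because $f\in W^{1,q}(\Omega)$, $P_{2s}(E,\Omega)<\infty$, and $|v_E^\e|\le 1$. A secondary technical point is the careful truncation near $\partial\Omega^\prime$ in the pointwise bound $|(-\Delta)^s v_E(x)|\le C d(x)^{-2s}+C$ — one should really work with the truncated distance $\min(\rmdist(x,\partial E\cap\Omega^\prime),c_0)$ for a fixed $c_0>0$ depending on $\rmdist(\Omega^{\prime\prime},\partial\Omega^\prime)$, noting that $v_E=\pm1$ near $x$ throughout the ball $D_{c_0}(x)$ only when that ball avoids $\partial E$, but points with $d(x)\ge c_0$ form a set of finite measure on which $d^{-2s\bar p}$ is bounded anyway. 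Once these bookkeeping matters are handled, the summation argument is routine. Everything else (Lemma~\ref{lemeqharmmap}, the pointwise formula, the splitting of the singular kernel integral) is elementary.
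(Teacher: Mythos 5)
Your proposal is correct and follows essentially the same route as the paper's proof: you invoke the pointwise formula from Lemma~\ref{lemeqharmmap}, derive the pointwise bound $|(-\Delta)^s v_E(x)|\leq C\,\rmdist(x,\partial E\cap\Omega')^{-2s}$ on $\Omega''$ by observing that $v_E$ is constant on a disc around $x$ that avoids $\partial E\cap\Omega'$, apply the Minkowski-dimension (tubular-neighborhood volume) estimate from the preceding subsection, and close by a dyadic decomposition and geometric series. The only cosmetic difference is that the paper fixes an exponent $\alpha\in(2s\bar p,1)$ and works with $\mathscr{L}^n(\mathscr{T}_r)\leq r^\alpha$, whereas you write $\mathscr{L}^n(\mathscr{T}_r)\leq Cr^{1-\kappa}$ and choose $\kappa$ small; these are the same statement under $\alpha=1-\kappa$.
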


\begin{proof}
Let us fix two open subsets $\Omega^\prime,\Omega^{\prime\prime}$ of $\Omega$ such that  $\overline{\Omega^{\prime\prime}}\subset\Omega^{\prime}$ and $\overline{\Omega^\prime}\subset\Omega$. By Lemma~\ref{lemeqharmmap}, we have $(-\Delta)^sv_E\in L^{1}(\Omega^\prime)$. 
We claim that 
\begin{equation}\label{dim1819tard}
\big|(-\Delta)^sv_E(x)|\leq \frac{C(\Omega^\prime,\Omega^{\prime\prime})}{{\rm dist}(x,\partial E\cap\Omega^\prime)^{2s}}\quad\text{for a.e. $x\in\Omega^{\prime\prime}\setminus\partial E$}\,, 
\end{equation}
for some constant $C(\Omega^\prime,\Omega^{\prime\prime})$ independent of $E$. For $x\in \Omega^{\prime\prime}\setminus \partial E$, we set 
$$r_x:=\frac{1}{2}\min\left({\rm dist}(x,\partial E\cap\Omega^\prime), \min\Big\{|z-y|: z\in\overline{\Omega^{\prime\prime}}\,,\;y\in\R^n\setminus\Omega^\prime\Big\}\right)\,.$$
Since $D_{r_x}(x)\subset \Omega^\prime\setminus \partial E$, we can deduce from Lemma~\ref{lemeqharmmap} that
$$\big|(-\Delta)^sv_E(x)|\leq 2\gamma_{n,s}\int_{\R^n\setminus D_{r_x}(x)}\frac{1}{|x-y|^{n+2s}}\,\de y =\frac{C_{n,s}}{(r_x)^{2s}}\,,$$
and \eqref{dim1819tard} follows.

Let us now fix an exponent $\alpha\in(2s\bar p, 1)$. Since ${\rm dim}_{\mathscr{M}}(\partial E\cap\Omega^\prime)\leq n-1$, we can find a radius $R_\alpha\in(0,1)$ such that $\mathscr{L}^n(\mathscr{T}_r(\partial E\cap \Omega^\prime))\leq r^\alpha$ for every $r\in(0,2R_\alpha)$. Then, we estimate for an arbitrary integer $k\geq 1$, 
\begin{multline*}
\int_{\Omega^{\prime\prime}\setminus\mathscr{T}_{2^{-k}R_\alpha}(\partial E\cap\Omega^\prime)}\big|(-\Delta)^sv_E|^{\bar p}\,\de x\leq 
\int_{\Omega^{\prime\prime}\setminus\mathscr{T}_{R_\alpha}(\partial E\cap\Omega^\prime)}\big|(-\Delta)^sv_E|^{\bar p}\,\de x\\
+\sum_{j=0}^{k-1}\int_{\Omega^{\prime\prime}\cap\mathscr{A}_j}\big|(-\Delta)^sv_E|^{\bar p}\,\de x\,.
\end{multline*}
where we have set $\mathscr{A}_j:=\mathscr{T}_{2^{-j}R_\alpha}(\partial E\cap\Omega^\prime)\setminus\mathscr{T}_{2^{-(j+1)}R_\alpha}(\partial E\cap\Omega^\prime)$. 
Inserting \eqref{dim1819tard}, we derive 
$$\int_{\Omega^{\prime\prime}\setminus\mathscr{T}_{2^{-k}R_\alpha}(\partial E\cap\Omega^\prime)}\big|(-\Delta)^sv_E|^{\bar p}\,\de x\leq CR_\alpha^{-2s\bar p}\left(1+\sum_{j=0}^{\infty}\frac{1}{2^{(\alpha-2s\bar p)j}} \right) <\infty\,.$$
Letting $k\to\infty$, we can now conclude by dominated convergence. 
\end{proof}

\begin{proof}[Proof of Theorem \ref{improvperim}]
Fix two open subsets $\Omega^\prime,\Omega^{\prime\prime}$ of $\Omega$ such that  $\overline{\Omega^{\prime\prime}}\subset\Omega^{\prime}$ and $\overline{\Omega^\prime}\subset\Omega$. We choose a number $\theta>2$ such that $ \max(s,s^\prime)<1/\theta$. We set 
$\bar p:=1/(\theta s)<1/2s$, and $\bar s:=s^\prime/\bar p<s$. By Proposition \ref{imprintsharp}, $(-\Delta)^sv_E\in L^{\bar p}_{\rm loc}(\Omega)$, and in turn, Proposition \ref{keypropimpr} yields  
$$\iint_{E\cap\Omega^\prime\times(\Omega^\prime\setminus E)}\frac{1}{|x-y|^{n+2s^\prime}}\,\de x\de y=\frac{1}{2^{\bar p+1}}\iint_{\Omega^\prime\times\Omega^\prime}\frac{|v_E(x)-v_E(y)|^{\bar p}}{|x-y|^{n+2\bar s\bar p}}\,\de x\de y<\infty\,.$$
Then we observe that 
$$P_{2s^\prime}(E,\Omega^{\prime\prime})\leq \iint_{E\cap\Omega^\prime\times(\Omega^\prime\setminus E)}\frac{1}{|x-y|^{n+2s^\prime}}\,\de x\de y+C\,,$$
for a constant $C$ depending only $\Omega^\prime$ and $\Omega^{\prime\prime}$, $n$, and $s^\prime$. 
\end{proof}


\section{Volume of transition sets and  improved estimates}\label{imprsect}


In this section, we apply the quantitative stratification principle of the previous section in order to improve the convergence results of  Theorem \ref{main1}. In few words, we obtain a quantitative volume estimate on the transition set (i.e., where the solution takes values close to zero). This estimate, combined with Lemma~\ref{potbddfint}, leads to further estimates on the potential  in the case where $f_\eps$ is uniformly bounded. We stress again that the general framework of \cite{FMS} does not apply {\sl stricto sensu} to Allen-Cahn type equations, and non trivial adjustments  need to be made. As before, we start with estimates on the degenerate boundary Allen-Cahn equation. 


\subsection{Quantitative estimates for the Allen-Cahn boundary equation}\label{subsectstratAC}


In this subsection, we are considering a bounded admissible open set $G\subset \R^{n+1}_+$,  $\eps\in(0,1)$, and a weak solution $u_\eps\in H^{1}(G,|z|^a\de{\bf x})\cap L^\infty(G)$ of  
\begin{equation}\label{eqACagain}
\begin{cases}
{\rm div}(z^a\nabla u_\eps) =0 & \text{in $G$}\,,\\[8pt]
\displaystyle d_s\boldsymbol{\partial}^{(2s)}_z u_\eps=\frac{1}{\varepsilon^{2s}}W^\prime(u_\eps) -f_\eps & \text{on $\partial^0 G$}\,,
\end{cases}
\end{equation}
for some given function $f_\eps\in C^{0,1}(\partial^0G)$. We {\it fix constants} $r_0> 0$, $b\geq 1$, $q\in(\frac{n}{1+2s},n)$, $H_0\geq0$, and $\Lambda_0\geq 0$ such that 
\begin{equation}\label{controlLinftyueps}
 \|u_\eps\|_{L^\infty(G)}\leq b\,, 
 \end{equation}
\begin{equation}\label{controlcurveps}
 \eps^{2s}\|f_\eps\|_{L^\infty(\partial^0G)}+\|f_\eps\|_{\dot W^{1,q}(\partial^0G)}\leq H_0\,, 
 \end{equation}
 and 
\begin{equation}\label{controldensiteps}
\sup\Big\{\boldsymbol{\Theta}^{\eps}_{u_\eps}(f_\eps,x,\rho) : x\in\Omega^{r_0}\,,\;0<\rho\leq r_0\Big\}\leq \Lambda_0\,, 
\end{equation}
where the domain $\Omega^{r_0}$ is defined in \eqref{defOmegar}. 
\vskip3pt

Our aforementioned volume estimate is the following theorem, cornerstone of the  section.

\begin{theorem}\label{volesti}
For each $\alpha\in(0,1)$, there exist   ${\bf k_*}={\bf k_*}(\alpha,r_0,H_0,\Lambda_0,W,b,n,s,q)>0$  
and $C=C(\alpha,r_0,H_0,\Lambda_0,W,{\rm diam}(\partial^0G),b,n,s,q)$ such that  
\begin{equation}\label{volestieq}
\mathscr{L}^n\Big(\mathscr{T}_r\big(\{|u_\eps|<1-\boldsymbol{\delta}_W\}\cap \Omega^{r_0}\big)\Big)\leq Cr^\alpha\qquad\forall r\in({\bf k_*}\eps,r_0)\,, 
\end{equation}
where $\boldsymbol{\delta}_W\in(0,1/2]$ is given by \eqref{quantconvex}.
\end{theorem}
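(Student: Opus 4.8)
The plan is to mirror the sharp-interface argument of Subsection \ref{subsectstrat} at the diffuse level, replacing the $L^1$-distance functions ${\bf d}_j$ by the same quantities evaluated on the extended Allen-Cahn solutions, and replacing the density $\boldsymbol{\Theta}_u$ by the rescaled Allen-Cahn density $\boldsymbol{\Theta}^\eps_{u_\eps}$. First I would set up, for $x_0\in\Omega^{r_0}$ and scales $\rho\in(\mathbf{k}_*\eps,r_0)$, the rescaled functions $\widetilde u_{x_0,\rho}({\bf x}):=u_\eps({\bf x}_0+\rho{\bf x})$ and $\widetilde f_{x_0,\rho}:=\rho^{2s}f_\eps(x_0+\rho\cdot)$; these satisfy the boundary Allen-Cahn equation in $B_1^+$ with parameter $\eps/\rho<1/\mathbf{k}_*$, so all the $\eps$-regularity results (Proposition \ref{epsreg}, Proposition \ref{imprcv}, Remark \ref{remclear}) apply with uniform constants as long as $\eps/\rho$ stays below a threshold. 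The monotonicity formula (Lemma \ref{monotform}) combined with \eqref{controldensiteps} gives a uniform density bound $\boldsymbol{\Theta}^{\eps/\rho}_{\widetilde u_{x_0,\rho}}(\widetilde f_{x_0,\rho},0,1)\leq\Lambda_0$, which is exactly the hypothesis needed to run the small-energy compactness of Proposition \ref{epsreg}: in the limit $k\to\infty$ along any blow-up sequence one lands on a function that is either $\pm1$ (if the energy was small) or a nonlocal stationary cone (if the density does not drop), precisely as in the sharp-interface case.

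Next I would reprove, in the diffuse setting, the three structural lemmas (the analogues of Lemma \ref{gapdistlemma}, Lemma \ref{lemme2stratsurf}, Lemma \ref{condiistrat}, and Corollary \ref{corstrat3surf}) by the same compactness-contradiction scheme. The key new ingredient is that a blow-up sequence of Allen-Cahn solutions with uniformly bounded density converges, by Theorem \ref{asymptneum} (applied on fixed balls), strongly in $H^1_{\rm loc}$ and hence in $L^1_{\rm loc}$ to a limit $u_*$ which is $\chi_{E_*}-\chi_{E_*^c}$ on the bottom disc and satisfies the stationary (inner-variation) equation with right-hand side $f_*$; moreover the potential term vanishes in the limit. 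Thus $u_*$ is exactly an admissible object for Subsection \ref{secttangmap}, and when the density is stationary under scaling it is a nonlocal stationary cone by Lemma \ref{lemhomogsol}. The distance functions ${\bf d}_j(u_\eps,x,\rho):=\inf\{\|\widetilde u_{x,\rho}-\varphi\|_{L^1(B_1^+)}:\varphi\in\mathscr{C}_j(\Lambda_0)\}$ are continuous under $L^1$-convergence and the class $\mathscr{C}_j(\Lambda_0)$ is compact (Lemma \ref{compactcone}), so the contradiction arguments go through verbatim once the blow-up limit is identified. One then invokes the abstract stratification machinery of \cite{FMS} exactly as in Theorem \ref{volsingsrat} to get, for any $\kappa_0\in(0,1)$, the Minkowski-type bound $\mathscr{L}^n(\mathscr{T}_r(\mathcal S^{n-1}_{r_0}(u_\eps)))\leq Cr^{1-\kappa_0}$ with $C$ independent of $\eps$, for $r$ in a range bounded below by a constant times $\eps$.

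The final step is to connect $\{|u_\eps|<1-\boldsymbol{\delta}_W\}$ to the top stratum $\mathcal S^{n-1}_{r_0}(u_\eps)$ at scales $r\gtrsim\eps$. Here I would argue that if $x\in\Omega^{r_0}$ satisfies $|u_\eps(x)|<1-\boldsymbol{\delta}_W$, then the clearing-out Lemma \ref{clear2} forbids $\boldsymbol{\Theta}^\eps_{u_\eps}(f_\eps,x,\rho)$ from being below the threshold $\boldsymbol{\theta}_{b,T}$ for scales $\rho$ with $\mathbf{k}_*\eps\leq\rho\leq r_0$ (choosing $\mathbf{k}_*$ so that $\eps<\rho/2$ forces, via the clearing-out applied after rescaling, that $|u_\eps|$ would be within $\boldsymbol{\delta}_W$ of $\pm1$ on $D_{\rho/2}(x)$, a contradiction with closeness of the level); equivalently ${\bf d}_n(u_\eps,x,\rho)$ stays bounded below and ${\bf d}_{n}(u_\eps,x,\rho)\geq\delta$ for a structural $\delta$, which together with Lemma \ref{gapdistlemma}'s analogue places $x$ inside $\mathcal S^{n-1}_{r_0,\delta}(u_\eps)=\mathcal S^{n-1}_{r_0,\delta_0}(u_\eps)=\mathcal S^{n-1}_{r_0}(u_\eps)$, up to a controlled $\eps$-scale error. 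Hence $\{|u_\eps|<1-\boldsymbol{\delta}_W\}\cap\Omega^{r_0}\subset\mathscr{T}_{C\eps}(\mathcal S^{n-1}_{r_0}(u_\eps))$, and for $r\geq\mathbf{k}_*\eps$ one has $\mathscr{T}_r(\{|u_\eps|<1-\boldsymbol{\delta}_W\}\cap\Omega^{r_0})\subset\mathscr{T}_{2r}(\mathcal S^{n-1}_{r_0}(u_\eps))$, giving \eqref{volestieq} with $\alpha=1-\kappa_0$. The main obstacle I expect is purely technical bookkeeping: keeping all constants in the structural lemmas independent of $\eps$ while the effective parameter $\eps/\rho$ ranges over $(0,1/\mathbf{k}_*)$, and ensuring the blow-up limits are genuinely covered by Theorem \ref{asymptneum} (which requires the uniform energy and $W^{1,q}$ bounds to survive the rescaling — they do, since $\|\widetilde f_{x_0,\rho}\|_{\dot W^{1,q}(D_1)}=\rho^{\theta_q}\|f_\eps\|_{\dot W^{1,q}(D_{\rho}(x_0))}$ only improves). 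A secondary subtlety is that in the diffuse setting the limit's potential term must be shown to vanish (item (i) of Theorem \ref{asymptneum}) so that the limit really is a stationary cone and not merely a critical point of some $\eps$-energy; this is already provided by Theorem \ref{asymptneum} and Corollary \ref{strongcoro}, so it is a matter of citing them correctly.
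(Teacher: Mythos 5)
Your proposal follows essentially the same strategy as the paper: diffuse analogues of the structural lemmas of Subsection \ref{subsectstrat}, proved by compactness-and-contradiction via Theorem \ref{asymptneum} after rescaling; a verification that the rescaled parameters ($\eps/\rho$ and $\rho^{2s}f_\eps(x_0+\rho\cdot)$) stay in the admissible range with uniform constants; and the covering machinery of \cite{FMS} run over the range of scales $[{\bf k}_*\eps,r_0]$. Your rescaling bookkeeping and the observation that $\|\rho^{2s}f_\eps(x_0+\rho\cdot)\|_{\dot W^{1,q}(D_1)}=\rho^{\theta_q}\|f_\eps\|_{\dot W^{1,q}(D_{\rho}(x_0))}$ improves under blow-up are both correct.

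There is, however, a genuine slip in your final step that would derail the argument as written. You argue that if $|u_\eps(x)|<1-\boldsymbol{\delta}_W$ then ${\bf d}_n(u_\eps,x,\rho)\geq\delta$ for all $\rho\in[{\bf k}_*\eps,r_0]$, and then conclude that $x$ lies in $\mathcal S^{n-1}_{r_0}(u_\eps)$ ``up to a controlled $\eps$-scale error,'' giving the inclusion $\{|u_\eps|<1-\boldsymbol{\delta}_W\}\cap\Omega^{r_0}\subset\mathscr{T}_{C\eps}(\mathcal S^{n-1}_{r_0}(u_\eps))$. But the limiting stratum $\mathcal S^{n-1}_{r_0}(u_\eps)$ (intersection over \emph{all} small scales, as in the sharp-interface definition) is \emph{empty} for the smooth solution $u_\eps$: at scales $\rho\ll\eps$ any blow-up of $u_\eps$ converges to a constant, so ${\bf d}_n(u_\eps,x,\rho)\to 0$ as $\rho\to 0$ at every point. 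The tubular neighborhood of the empty set is empty, and the inclusion is vacuous. The fix — which is exactly what the paper's proof does — is to replace the limiting stratum by its scale-truncated version $\mathcal S^\eps_{r_0,r}:=\{x\in\Omega^{r_0}: {\bf d}_n(u_\eps,x,\rho)\geq\widetilde\delta_0\ \forall\rho\in[r,r_0]\}$, and to prove the \emph{direct} inclusion $\{|u_\eps|<1-\boldsymbol{\delta}_W\}\cap\Omega^{r_0}\subset\mathcal S^\eps_{r_0,r}$ for every $r\geq{\bf k}_*\eps$ (no tubular neighborhood needed), after which the \cite{FMS} covering argument is applied to $\mathscr{T}_r(\mathcal S^\eps_{r_0,r})$ with the truncation scale matching $r$. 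Correspondingly, the diffuse gap lemma (Lemma \ref{lemme1strateps}) should be stated with the hypothesis ``$|u_\eps(x,0)|<1-\boldsymbol{\delta}_W$ and $\rho\geq{\bf k}_0\eps$'' built in, rather than the sharp-interface hypothesis $\boldsymbol{\Theta}_u(x)>0$ which has no useful meaning for smooth $u_\eps$. Your proposed bridge from a pointwise bound on $|u_\eps|$ to an ${\bf d}_n$ lower bound via the clearing-out lemma is a valid alternative mechanism, but it is not ``equivalent'' — it requires an additional compactness step — whereas the paper proves the ${\bf d}_n$ lower bound directly by a blow-up contradiction using Theorem \ref{asymptneum}(ii).
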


The proof of Theorem \ref{volesti} follows in some sense the lines of  \cite[Theorem 2.2]{FMS} with a different set of structural assumptions adjusted to our setting. Since the solution $u_\varepsilon$ is smooth, there is of course no singular set, and no strict  analogue to \cite[Theorem 2.2]{FMS}. However, if we don't look at $u_\eps$ at too small scales, then the transition set  $\{|u_\eps|<1-\boldsymbol{\delta}_W\}$ can play the role of singular set. As one may guess, the threshold scale is $\eps$, explaining the restriction on the admissible radii in \eqref{volestieq}. The same  threshold appears of course in our ``structural assumptions'', provided by Lemmas \ref{lemme1strateps}, \ref{lemme2strateps}, and \ref{lemme3strateps} below.

\begin{lemma}\label{lemme1strateps}
There exist constants 
$$\widetilde\delta_0(r_0)=\delta_0(r_0,H_0,\Lambda_0,W,b,n,s,q)\in(0,1)$$ 
and 
$${\bf k}_0(r_0)={\bf k}_0(r_0,H_0,\Lambda_0,W,b,n,s,q) \geq 1$$ 
(independent of $\eps$, $u_\eps$, and $f_\eps$)   that for every $x\in\Omega^{r_0}$ and 
$\rho\in(0,r_0)$,
$$ |u_\eps(x,0)|<1-\boldsymbol{\delta}_W \quad\text{and}\quad {\bf k}_0\eps\leq \rho\quad\Longrightarrow\quad {\bf d}_n(u_\eps,x,\rho) \geq\widetilde\delta_0 \,.$$
\end{lemma}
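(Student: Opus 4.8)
\textbf{Proof strategy for Lemma \ref{lemme1strateps}.}
The plan is to argue by contradiction via a blow-up/compactness procedure, exactly in the spirit of Lemma \ref{gapdistlemma} but now for the $\varepsilon$-dependent equation. Suppose the statement fails: then there are sequences of parameters $\varepsilon_k\downarrow 0$ (we will see below why $\varepsilon_k$ must go to zero), solutions $u_k:=u_{\varepsilon_k}$ of \eqref{eqACagain} satisfying \eqref{controlLinftyueps}--\eqref{controldensiteps} with the fixed constants $r_0,b,q,H_0,\Lambda_0$, points $x_k\in\Omega^{r_0}$, and radii $\rho_k\in(0,r_0)$ with $\rho_k\geq k\,\varepsilon_k$ (so $\varepsilon_k/\rho_k\to 0$), such that $|u_k(x_k,0)|<1-\boldsymbol{\delta}_W$ while ${\bf d}_n(u_k,x_k,\rho_k)\leq 2^{-k}$. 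Introduce the rescaled functions $\widetilde u_k({\bf x}):=u_k({\bf x}_k+\rho_k{\bf x})$ and $\widetilde f_k(x):=\rho_k^{2s}f_k(x_k+\rho_k x)$, together with the rescaled relaxation parameter $\widetilde\varepsilon_k:=\varepsilon_k/\rho_k\to 0$. A change of variables as in \eqref{1912tang} and the definition of $\boldsymbol{\Theta}^\varepsilon$ in Lemma \ref{monotform} give $\boldsymbol{\Theta}^{\widetilde\varepsilon_k}_{\widetilde u_k}(\widetilde f_k,0,1)\leq\Lambda_0$ and $\|\widetilde f_k\|_{\dot W^{1,q}(D_1)}\leq r_0^{\theta_q}H_0$, $\widetilde\varepsilon_k^{2s}\|\widetilde f_k\|_{L^\infty(D_1)}\leq H_0$; in particular $\sup_k {\bf E}_{\widetilde\varepsilon_k}(\widetilde u_k,B_1^+)<\infty$ and $\sup_k\|\widetilde u_k\|_{L^\infty(B_1^+)}\leq b$.

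Next I would invoke the asymptotic machinery of Section \ref{epsregtitle}. Since $\widetilde\varepsilon_k\to 0$, Theorem \ref{asymptneum} (applied on, say, $B_{3/4}^+$, or on an exhausting family of admissible subdomains of $B_1^+$) produces a (not relabeled) subsequence along which $\widetilde u_k\rightharpoonup u_*$ weakly in $H^1(B_1^+,|z|^a\de{\bf x})$ and $\widetilde u_k\to u_*$ strongly in $H^1_{\rm loc}(B_1^+\cup D_1,|z|^a\de{\bf x})$, where $u_*=\chi_{E_*}-\chi_{D_1\setminus E_*}$ on $D_1$ for some open set $E_*\subset D_1$, and $\widetilde\varepsilon_k^{-2s}W(\widetilde u_k)\to 0$ in $L^1_{\rm loc}(D_1)$. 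By Remark \ref{trace} this gives $\widetilde u_k\to u_*$ strongly in $L^1(B_{r}^+)$ for $r<1$, hence ${\bf d}_n(u_*,0,r)=\lim_k {\bf d}_n(\widetilde u_k,0,r)$; since ${\bf d}_n(\widetilde u_k,0,1)\leq 2^{-k}\to 0$ and ${\bf d}_n$ is monotone in the radius (larger balls control smaller ones up to a dimensional factor — more simply, pass to $r=1$ directly using $L^1(B_1^+)$ convergence), we conclude ${\bf d}_n(u_*,0,1)=0$, i.e.\ $u_*\equiv 1$ or $u_*\equiv -1$ on $B_1^+$ and correspondingly $E_*=D_1$ or $E_*=\emptyset$. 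In particular $u_*=\pm 1$ on $D_{1/2}$.

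On the other hand, from item (ii)/(iii) of Theorem \ref{asymptneum} (uniform, indeed $C^{0,\alpha}$, convergence of $\widetilde u_k$ to $u_*$ on compact subsets of $D_1\setminus\partial E_*$), together with the fact that $\partial E_*\cap D_{1/2}=\emptyset$ in the case just obtained, we get $\widetilde u_k\to u_*=\pm 1$ uniformly on $\overline D_{1/4}$. But $\widetilde u_k(0,0)=u_k(x_k,0)$ satisfies $|\widetilde u_k(0,0)|<1-\boldsymbol{\delta}_W$ for every $k$, which forces $|u_*(0,0)|\leq 1-\boldsymbol{\delta}_W<1$, contradicting $u_*(0,0)=\pm 1$. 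This contradiction proves the lemma, with $\widetilde\delta_0$ and ${\bf k}_0$ the implicitly defined thresholds: more precisely, running the same argument once more one sees that for every $\eta>0$ there are $\delta(\eta)>0$ and $K(\eta)\geq 1$, depending only on $r_0,H_0,\Lambda_0,W,b,n,s,q$, such that $|u_\varepsilon(x,0)|<1-\boldsymbol{\delta}_W$ and $\rho\geq K(\eta)\varepsilon$ imply ${\bf d}_n(u_\varepsilon,x,\rho)\geq\delta(\eta)$; taking a fixed admissible $\eta$ gives $\widetilde\delta_0:=\delta(\eta)$ and ${\bf k}_0:=K(\eta)$.

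\textbf{Main obstacle.} The only delicate point is the dependence of the thresholds on $\varepsilon$ through the constraint $\rho\geq{\bf k}_0\varepsilon$: one must check that the blow-up parameter $\widetilde\varepsilon_k=\varepsilon_k/\rho_k$ genuinely tends to $0$ along the contradiction sequence, which is forced precisely by $\rho_k\geq k\,\varepsilon_k$ (if the threshold ${\bf k}_0$ were finite and fixed this is automatic once we take $k\to\infty$). A secondary technical care is that Theorem \ref{asymptneum} is stated for admissible open sets with controlled $W^{1,q}$ and $L^\infty\varepsilon^{2s}$ norms of the right-hand sides — all of which are inherited by $\widetilde f_k$ from \eqref{controlcurveps} after rescaling — and that the uniform $C^{0,\alpha}_{\rm loc}(D_1\setminus\partial E_*)$ convergence in item (iii) genuinely applies since $\sup_k\widetilde\varepsilon_k^{2s}\|\widetilde f_k\|_{L^\infty(D_1)}\leq H_0$ bounds $\|\widetilde f_k\|_{L^\infty(D_1)}$ once $\widetilde\varepsilon_k$ is bounded below along a tail — if not, one simply uses item (ii) (plain uniform convergence), which already suffices to reach the contradiction at the single point $(0,0)$.
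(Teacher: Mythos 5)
Your proof is correct and follows essentially the same path as the paper's: argue by contradiction, rescale to the unit half-ball with $\widetilde\eps_k=\eps_k/\rho_k\to 0$, transfer the $L^\infty$, $\dot W^{1,q}$, and density bounds to the rescaled data, invoke Theorem~\ref{asymptneum} to get strong compactness and a constant limit $u_*=\pm 1$ (because ${\bf d}_n\to 0$), then use the locally uniform convergence of item~(ii) to contradict $|\widetilde u_k(0,0)|<1-\boldsymbol{\delta}_W$. Your side remark correctly identifies that item~(iii) is unavailable (since $\|\widetilde f_k\|_{L^\infty}$ is not controlled when $\widetilde\eps_k\to 0$) but item~(ii) already suffices, which is exactly how the paper's argument proceeds.
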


\begin{proof}
Assume by contradiction that there exist sequences $\{\eps_k\}_{k\in\mathbb{N}}\subset(0,1)$, $\{(u_k,f_k)\}_{k\in\mathbb{N}}$ satisfying \eqref{eqACagain}-\eqref{controlLinftyueps}-\eqref{controlcurveps}-\eqref{controldensiteps}, points $\{x_k\}_{k\in\mathbb{N}}\subset\Omega^{r_0}$, and radii $\{\rho_k\}_{k\in\mathbb{N}}\subset(0,r_0)$ such that  $|u_k(x_k,0)|<1-\boldsymbol{\delta}_W$, $\eps_k/\rho_k\leq 2^{-k}$, and 
${\bf d}_n(u_k,x_k,\rho_k)\to 0$.  

Setting $\widetilde \eps_k:=\eps_k/\rho_k$,  consider the rescaled maps $\widetilde u_k:=(u_k)_{x_k,\rho_k}$ and $\widetilde f_k:=\rho_k^{2s}(f_k)_{x_k,\rho_k}$ as defined in \eqref{defrescmap}. Rescaling variables, we derive that 
\begin{equation}\label{rescACstrat} 
\begin{cases}
{\rm div}(z^a\nabla \widetilde u_k) =0 & \text{in $B_1^+$}\,,\\[8pt]
\displaystyle d_s\boldsymbol{\partial}^{(2s)}_z \widetilde u_k=\frac{1}{(\widetilde\varepsilon_k)^{2s}}W^\prime(\widetilde u_k) -\widetilde f_k & \text{on $D_1$}\,,
\end{cases}
\end{equation}
and
\begin{equation}\label{contrrescACstrat}
\| \widetilde u_k\|_{L^\infty(B_1^+)}\leq b\,,\quad   (\widetilde\eps_k)^{2s}\|\widetilde f_k\|_{L^\infty(D_1)}\leq H_0\,,\quad \|\widetilde f_k\|_{\dot W^{1,q}(D_1)}\leq r_0^{\theta_q}H_0\,,
\end{equation}
as well as
\begin{equation}\label{contrdensrescACstrat}
\boldsymbol{\Theta}^{\widetilde \eps_k}_{\widetilde u_k}(\widetilde f_k,0,1) =\boldsymbol{\Theta}^{\eps_k}_{u_k}(f_k,x_k,\rho_k)\leq \Lambda_0\,. 
\end{equation}
By Theorem \ref{asymptneum}, we can find a (not relabeled) subsequence such that $\widetilde u_k\to u_*$ weakly in $H^1(B_1^+,|z|^a\de{\bf x})$ and strongly in $H^1(B_r^+,|z|^a\de{\bf x})$ for every $r\in(0,1)$. Then $\widetilde u_k\to u_*$ strongly in $L^1(D_r)$ for every $r\in(0,1)$ by Remark \ref{trace}. 
On the other hand, ${\bf d}_n(\widetilde u_k,0,1)={\bf d}_n(u_k,x_k,\rho_k)\to 0$, so that either $u_*=1$ or $u_*=-1$ on $D_1$. Without loss of generality, we may assume that  $u_*=1$ on $D_1$. Then Theorem \ref{asymptneum} tells us that $\widetilde u_k\to 1$ uniformly on $D_r$ for every $r\in(0,1)$. In particular $\widetilde u_k(0)\to 1$ which contradicts our assumption $|\widetilde u_k(0)|=|u_k(x_k,0)|<1-\boldsymbol{\delta}_W$. 
\end{proof}

\begin{lemma}\label{lemme2strateps}
For every $\delta>0$, there exist constants 
$$\widetilde\eta_1(\delta,r_0)=\widetilde\eta_1(\delta,r_0,H_0,\Lambda_0,W,b,n,s,q)\in (0,1/4)\,,$$ 
$$\widetilde \lambda_1(\delta,r_0)=\widetilde\lambda_1(\delta,r_0,H_0,\Lambda_0,W,b,n,s,q)\in (0,1/4)\,,$$ 
and 
$${\bf k}_1(\delta,r_0)={\bf k}_1(\delta,r_0,H_0,\Lambda_0,W,b,n,s,q) \geq 1$$ 
(independent of $u_\eps$ and $f_\eps$) such that for every 
$\rho\in(0,r_0/5)$ and $x\in\Omega^{r_0}$,   
$$\boldsymbol{\Theta}^\eps_{u_\eps}(f_\eps,x,\rho)-\boldsymbol{\Theta}^\eps_{u_\eps}(f_\eps,x,\widetilde\lambda_1\rho) \leq \widetilde\eta_1\;\text{ and }\; {\bf k}_1\eps\leq \rho \quad \Longrightarrow\quad {\bf d}_0(u_\eps,x,\rho) \leq\delta \,.$$ 
\end{lemma}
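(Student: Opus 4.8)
\textbf{Proof strategy for Lemma \ref{lemme2strateps}.} The plan is to argue by contradiction via a compactness/blow-up scheme, exactly parallel to the proof of Lemma \ref{lemme2stratsurf} in the sharp-interface setting, but now keeping careful track of the ratio $\eps/\rho$ and using Theorem \ref{asymptneum} in place of Theorem \ref{compactminsurf}. Suppose the statement fails: then for some $\delta>0$ there are sequences $\{\eps_k\}\subset(0,1)$, pairs $\{(u_k,f_k)\}$ satisfying the standing hypotheses \eqref{eqACagain}--\eqref{controldensiteps}, points $x_k\in\Omega^{r_0}$, radii $\rho_k\in(0,r_0/5)$, and positive numbers $\lambda_k\to 0$, $\eps_k/\rho_k\to 0$, such that
$$\boldsymbol{\Theta}^{\eps_k}_{u_k}(f_k,x_k,\rho_k)-\boldsymbol{\Theta}^{\eps_k}_{u_k}(f_k,x_k,\lambda_k\rho_k)\leq 2^{-k}\quad\text{but}\quad {\bf d}_0(u_k,x_k,\rho_k)\geq\delta\,.$$
As in Lemma \ref{lemme1strateps}, I would rescale: set $\widetilde\eps_k:=\eps_k/\rho_k\to 0$, $\widetilde u_k:=(u_k)_{x_k,\rho_k}$, $\widetilde f_k:=\rho_k^{2s}(f_k)_{x_k,\rho_k}$. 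These satisfy the rescaled equation \eqref{rescACstrat} with the uniform bounds \eqref{contrrescACstrat}--\eqref{contrdensrescACstrat}, and by the scaling identity $\boldsymbol{\Theta}^{\widetilde\eps_k}_{\widetilde u_k}(\widetilde f_k,0,r)=\boldsymbol{\Theta}^{\eps_k}_{u_k}(f_k,x_k,\rho_k r)$ the assumed inequality becomes $\boldsymbol{\Theta}^{\widetilde\eps_k}_{\widetilde u_k}(\widetilde f_k,0,1)-\boldsymbol{\Theta}^{\widetilde\eps_k}_{\widetilde u_k}(\widetilde f_k,0,\lambda_k)\leq 2^{-k}$ while ${\bf d}_0(\widetilde u_k,0,1)\geq\delta$.

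Next I would invoke Theorem \ref{asymptneum} (applicable since $\widetilde\eps_k\to 0$, the energies are bounded by $\Lambda_0$ up to the monotonicity correction, and $\|\widetilde f_k\|_{W^{1,q}}$ is bounded, so after passing to a subsequence $\widetilde f_k\rightharpoonup f_*$ weakly in $W^{1,q}(D_1)$): along a subsequence, $\widetilde u_k\rightharpoonup u_*$ weakly in $H^1(B_1^+,|z|^a\de{\bf x})$ and strongly in $H^1_{\rm loc}(B_1^+\cup D_1,|z|^a\de{\bf x})$, with $u_*=\chi_{E_*}-\chi_{D_1\setminus E_*}$ on $D_1$, and by Remark \ref{trace} $\widetilde u_k\to u_*$ strongly in $L^1(B_\rho^+)$ for every $\rho<1$; moreover item (vi) of Theorem \ref{asymptneum} gives that $u_*$ solves \eqref{critnonlocexteq} with right-hand side $f_*$. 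Strong $L^1$ convergence yields ${\bf d}_j(\widetilde u_k,0,1)\to{\bf d}_j(u_*,0,1)$ for every $j$ (the distance functionals are continuous in $L^1$), so in particular ${\bf d}_0(u_*,0,1)\geq\delta$. For the density gap: using $\eps_k^{2s}\|f_k\|_{L^\infty}\leq H_0$ one sees $(\widetilde\eps_k)^{2s}\|\widetilde f_k\|_{L^\infty}\leq H_0$, and by the Arzel\`a--Ascoli/monotonicity machinery behind Theorem \ref{asymptneum} one has $\liminf_k \mathbf{E}_{\widetilde\eps_k}(\widetilde u_k,B_r^+)\geq \mathbf{E}(u_*,B_r^+)$ and, by the vanishing-potential conclusion (i) of Theorem \ref{asymptneum} together with the strong $H^1_{\rm loc}$ convergence, in fact $\mathbf{E}_{\widetilde\eps_k}(\widetilde u_k,B_r^+)\to\mathbf{E}(u_*,B_r^+)$ for $r<1$; combined with $\liminf_k\boldsymbol{\Theta}^{\widetilde\eps_k}_{\widetilde u_k}(\widetilde f_k,0,1)\geq\boldsymbol{\Theta}_{u_*}(f_*,0,1)$ (lower semicontinuity of energy plus Fatou on the $f$-term, as in \eqref{lscthetw1q}) and $\lambda_k\to 0$, passing to the limit in the density gap inequality gives, for every $r\in(0,1)$,
$$\boldsymbol{\Theta}_{u_*}(f_*,0,1)-\frac{1}{r^{n-2s}}\mathbf{E}(u_*,B_r^+)-\frac{{\bf c}_{n,q}b\,r_0^{\theta_q}}{\theta_q}H_0\,r^{\theta_q}\leq 0\,.$$
Letting $r\downarrow 0$ and using Corollary \ref{directcorlmonot}, $\boldsymbol{\Theta}_{u_*}(f_*,0,1)=\boldsymbol{\Theta}_{u_*}(0)$, whence by Lemma \ref{lemhomogsol} $f_*=0$ and $u_*$ is a nonlocal stationary cone; the density bound from \eqref{lscthetw1q} gives $\boldsymbol{\Theta}_{u_*}(0)\leq\Lambda_0$, so $u_*\in\mathscr{C}_0(\Lambda_0)$. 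Then ${\bf d}_0(u_*,0,1)=0$, contradicting ${\bf d}_0(u_*,0,1)\geq\delta$.

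Finally, I would extract the quantitative statement: having ruled out the existence of such sequences, a standard ``first failure'' argument produces, for the given $\delta$, a choice of $\widetilde\lambda_1=\widetilde\lambda_1(\delta,r_0)$, $\widetilde\eta_1=\widetilde\eta_1(\delta,r_0)$ and ${\bf k}_1={\bf k}_1(\delta,r_0)\geq 1$ (depending only on the listed structural constants) for which the implication holds for all admissible $x,\rho$; the dependence on $r_0,H_0,\Lambda_0,W,b,n,s,q$ is exactly that carried through the compactness argument via the uniform bounds \eqref{contrrescACstrat}--\eqref{contrdensrescACstrat}. The main obstacle I anticipate is the passage to the limit in the energy: one must be sure that the strong $H^1_{\rm loc}$ convergence together with conclusion (i) of Theorem \ref{asymptneum} (no energy concentration away from a negligible set, and $\eps_k^{-2s}W(\widetilde u_k)\to 0$ in $L^1_{\rm loc}$) is enough to get genuine convergence of the full Allen-Cahn density $\mathbf{E}_{\widetilde\eps_k}(\widetilde u_k,B_r^+)$, not merely lower semicontinuity, for almost every $r$ — this is precisely where the concentration analysis in the proof of Theorem \ref{asymptneum} is being used as a black box, and one should double-check that $r$ can be chosen with $\mu(\partial B_r)=0$ so that the limiting measure has no atom on the sphere. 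The rest is bookkeeping of the rescaling identities and the continuity of ${\bf d}_0$.
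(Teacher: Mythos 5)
Your proof is correct and follows essentially the same compactness/blow-up strategy as the paper's. The only structural difference is one of modularity: after rescaling and passing to the limit via Theorem~\ref{asymptneum}, the paper invokes the already-established sharp-interface Lemma~\ref{lemme2stratsurf} on the limiting pair $(u_*,f_*)$ --- rescaling by $5\rho_k$ so that $0\in\Omega^{2/5}$ for the limiting problem, and fixing $\widetilde\eta_1:=\tfrac12\eta_1(\delta/2,2/5,r_0^{\theta_q}H_0,\Lambda_0,b,n,s,q)$ explicitly --- whereas you re-run the argument behind that lemma in situ, sending $r\downarrow 0$ in the limiting density inequality, applying Corollary~\ref{directcorlmonot} to get $\boldsymbol{\Theta}_{u_*}(f_*,0,1)=\boldsymbol{\Theta}_{u_*}(0)$, and then Lemma~\ref{lemhomogsol} to conclude $f_*=0$ and $u_*\in\mathscr{C}_0(\Lambda_0)$, whence ${\bf d}_0(u_*,0,1)=0$. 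This is exactly the content of the paper's proof of Lemma~\ref{lemme2stratsurf}, so the two proofs carry identical mathematical content.

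The obstacle you anticipate --- needing to choose $r$ with $\mu(\partial B_r)=0$ to upgrade lower semicontinuity of $\mathbf{E}_{\widetilde\eps_k}(\widetilde u_k,B_r^+)$ to genuine convergence --- does not in fact arise. The conclusion of Theorem~\ref{asymptneum} is precisely that the defect measure $\mu_{\rm sing}$ vanishes identically, so convergence holds strongly in $H^1_{\rm loc}(B_1^+\cup D_1,|z|^a\,\de\mathbf{x})$ and $\widetilde\eps_k^{-2s}\int_{D_r}W(\widetilde u_k)\,\de x\to 0$ for \emph{every} $r<1$. Since $\overline{B_r^+}$ is compactly contained in $B_1^+\cup D_1$ for $r<1$, one gets $\mathbf{E}_{\widetilde\eps_k}(\widetilde u_k,B_r^+)\to\mathbf{E}(u_*,B_r^+)$ for every such $r$, with no good-radius selection needed. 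The ``concentration analysis'' is indeed used as a black box, but its output is already clean enough that your worry is already taken care of.
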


\begin{proof}
We choose 
$$\widetilde\eta_1(\delta,r_0):=\frac{1}{2}\eta_1(\delta/2,2/5,r_0^{\theta_q}H_0,\Lambda_0,b,n,s,q)\,, $$
where $\eta_1$ is given by Lemma \ref{lemme2stratsurf}. Then we argue again by contradiction assuming that for some constant $\delta>0$, there exist sequences $\{\eps_k\}_{k\in\mathbb{N}}\subset(0,1)$, $\{(u_k,f_k)\}_{k\in\mathbb{N}}$ satisfying \eqref{eqACagain}-\eqref{controlLinftyueps}-\eqref{controlcurveps}-\eqref{controldensiteps}, points $\{x_k\}_{k\in\mathbb{N}}\subset\Omega^{r_0}$, radii $\{\rho_k\}_{k\in\mathbb{N}}\subset(0,r_0/5)$, and $\lambda_k\to 0$  such that $\eps_k/\rho_k\leq 2^{-k}$, 
$$\boldsymbol{\Theta}^{\eps_k}_{u_k}(f_k,x_k,\rho_k)-\boldsymbol{\Theta}^{\eps_k}_{u_k}(f_k,x_k,\lambda_k\rho_k) \leq \widetilde\eta_1\,,\quad\text{and}\quad {\bf d}_0(u_k,x_k,\rho_k) \geq\delta\,.$$ 
Next we proceed as in the proof of Lemma \ref{lemme1strateps} rescaling variables as $\widetilde \eps_k:=\eps_k/(5\rho_k)$, $\widetilde u_k:=(u_k)_{x_k,5\rho_k}$, and $\widetilde f_k:=(5\rho_k)^{2s}(f_k)_{x_k,5\rho_k}$. Then, \eqref{rescACstrat}, \eqref{contrrescACstrat}, and \eqref{contrdensrescACstrat}  hold, as well as 
\begin{equation}\label{1451jhu}
 \sup\Big\{\boldsymbol{\Theta}^{\widetilde \eps_k}_{\widetilde u_k}(\widetilde f_k,x,\rho) : x\in D_{1/5}\,,\;0<\rho\leq 2/5\Big\}\leq \Lambda_0\,.
 \end{equation}
Now our assumptions lead to 
$$\boldsymbol{\Theta}^{\widetilde \eps_k}_{\widetilde u_k}(\widetilde f_k,0,1/5)-\boldsymbol{\Theta}^{\widetilde \eps_k}_{\widetilde u_k}(\widetilde f_k,0,\lambda_k/5) \leq \widetilde\eta_1\,,\quad\text{and}\quad {\bf d}_0(\widetilde u_k,0,1/5) \geq\delta\,.$$ 
By Theorem \ref{asymptneum}, we can find a (not relabeled) subsequence such that $\widetilde u_k\to u_*$ strongly in $H^1(B_r^+,|z|^a\de{\bf x})$  for every $r\in(0,1)$, and $\widetilde f_k\rightharpoonup f_*$ in $W^{1,q}(D_1)$, where the pair $(u_*,f_*)$ solves  \eqref{eqcone}-\eqref{statinB1}. Note that, by lower semicontinuity, we have $\|f_*\|_{\dot W^{1,q}(D_1)}\leq r_0^{\theta_q}H_0$. In addition, by Theorem  \ref{asymptneum} and Fatou's lemma, we deduce from \eqref{1451jhu} that
\begin{equation}\label{peuplusdutotu}
\sup\Big\{\boldsymbol{\Theta}_{u_*}(f_*,x,\rho) : x\in D_{1/5}\,,\;0<\rho\leq 2/5\Big\}\leq \Lambda_0\,.
\end{equation}

By means of Lemma \ref{monotform}, we now estimate for $0<r<1/5$ and $k$ large enough  (in such a way that $\lambda_k<r$), 
\begin{multline*}
\boldsymbol{\Theta}^{\widetilde\eps_k}_{\widetilde u_k}(\widetilde f_k,0,1/5)-  \frac{1}{r^{n-2s}}{\bf E}_{\widetilde\eps_k}(\widetilde u_k,B_r^+)-\frac{{\bf c}_{n,q}b\, r_0^{\theta_q}}{\theta_q}H_0r^{\theta_q}\\
\leq \boldsymbol{\Theta}_{\widetilde u_k}(\widetilde f_k,0,1/5)-\boldsymbol{\Theta}_{\widetilde u_k}(\widetilde f_k,0,r) \leq \widetilde\eta_1\,. 
\end{multline*}
Using Theorem \ref{asymptneum}, we can let $k\to\infty$ in this inequality to derive
\begin{multline*}
\boldsymbol{\Theta}_{u_*}(f_*,0,1/5)- \boldsymbol{\Theta}_{u_*}(f_*,0,r)\leq \boldsymbol{\Theta}_{u_*}(f_*,0,1/5)-  \frac{1}{r^{n-2s}}{\bf E}(u_*,B_r^+)\\
\leq \widetilde\eta_1+\frac{{\bf c}_{n,q}b\, r_0^{\theta_q}}{\theta_q}H_0r^{\theta_q}\,.
\end{multline*}
Choosing $r$ small enough in such a way that 
$$ \frac{{\bf c}_{n,q}b\, r_0^{\theta_q}}{\theta_q}H_0r^{\theta_q}\leq \widetilde\eta_1\quad\text{and} \quad r\leq \frac{1}{5}
\lambda_1(\delta/2,2/5,r_0^{\theta_q}H_0,\Lambda_0,b,n,s,q)\,, $$
where $\lambda_1$ is given Lemma \ref{lemme2stratsurf}, we infer from Lemma \ref{monotformsurf} that
$$\boldsymbol{\Theta}_{u_*}(f_*,0,1/5)- \boldsymbol{\Theta}_{u_*}(f_*,0,\lambda_1/5)\leq 2 \widetilde\eta_1=\eta_1\,.$$
Then Lemma \ref{lemme2stratsurf} yields $ {\bf d}_0(u_*,0,1/5) \leq\delta/2$.  On the other hand, by Remark \ref{trace}, $\widetilde u_k\to u_*$ in $L^1(D_{1/5})$, and thus 
$ {\bf d}_0(u_*,0,1/5) =\lim_k{\bf d}_0(\widetilde u_k,0,1/5) \geq\delta$, contradiction. 
\end{proof}

\begin{lemma}\label{lemme3strateps}
For every $\delta,\tau\in(0,1)$, there exist two constants 
$$\widetilde \eta_2(\delta,\tau,r_0)=\widetilde \eta_2(\delta,\tau,r_0,H_0,\Lambda_0,W,b,n,s,q)\in (0,\delta]$$
and 
$${\bf k}_2(\delta,\tau,r_0)= {\bf k}_2(\delta,\tau,r_0,H_0,\Lambda_0,W,b,n,s,q)\geq 1$$
(independent of $u_\eps$ and $f_\eps$) such that for every $\rho\in(0,r_0/25)$ and $x\in\Omega^{r_0}$, the conditions
$${\bf k}_2\eps\leq \rho\,,\quad{\bf d}_0(u_\eps,x,4\rho) \leq\widetilde \eta_2\quad \text{and}\quad {\bf d}_{n}(u_\eps,x,4\rho)\geq \delta\,,$$
imply the existence of a linear subspace $V\subset\R^n$, with ${\rm dim}\,V\leq n-1$, for which
$${\bf d}_0(u_\eps,y,4\rho)> \widetilde\eta_2 \quad \forall y\in D_{\rho}(x)\setminus \mathscr{T}_{\tau \rho}(x+V)\,.$$ 
\end{lemma}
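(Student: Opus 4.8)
The statement is the $\varepsilon$-scale analogue of Lemma~\ref{condiistrat}, and the natural strategy is a contradiction-and-compactness argument of exactly the same shape as the proofs of Lemmas~\ref{lemme1strateps} and~\ref{lemme2strateps}. First I would set
$$\widetilde\eta_2(\delta,\tau,r_0):=\eta_2\big(\delta,\tau,r_0^{\theta_q}H_0,\Lambda_0,b,n,s,q\big)\,,$$
where $\eta_2$ is the constant produced by Lemma~\ref{condiistrat} for the \emph{stationary} problem (note that $\eta_2\leq\delta$, which gives the required range for $\widetilde\eta_2$). Then I would assume for contradiction that the conclusion fails: there are sequences $\{\eps_k\}\subset(0,1)$, pairs $\{(u_k,f_k)\}$ satisfying \eqref{eqACagain}--\eqref{controlLinftyueps}--\eqref{controlcurveps}--\eqref{controldensiteps}, points $\{x_k\}\subset\Omega^{r_0}$, and radii $\{\rho_k\}\subset(0,r_0/25)$ with $\eps_k/\rho_k\to 0$ (this is what "no constant ${\bf k}_2$ works" amounts to, after extracting a diagonal subsequence over a sequence of candidate thresholds $\to\infty$), such that $\|u_k\|_{L^\infty}\leq b$, $\ {\bf d}_0(u_k,x_k,4\rho_k)\leq\widetilde\eta_2$, $\ {\bf d}_n(u_k,x_k,4\rho_k)\geq\delta$, but no linear subspace $V$ of dimension $\leq n-1$ has the stated property at scale $4\rho_k$ about $x_k$.

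Next I would rescale. Put $\widetilde\eps_k:=\eps_k/(4\rho_k)\to 0$, $\widetilde u_k:=(u_k)_{x_k,4\rho_k}$, $\widetilde f_k:=(4\rho_k)^{2s}(f_k)_{x_k,4\rho_k}$, exactly as in \eqref{defrescmap}. By the computations already carried out in Lemma~\ref{lemme2strateps} (change of variables for $\boldsymbol{\Theta}^\eps$ and for the $\dot W^{1,q}$-homogeneous norm), $\widetilde u_k$ solves \eqref{rescACstrat} on $B_1^+$ with parameter $\widetilde\eps_k$, satisfies \eqref{contrrescACstrat} and \eqref{contrdensrescACstrat}, and moreover
$$\sup\Big\{\boldsymbol{\Theta}^{\widetilde\eps_k}_{\widetilde u_k}(\widetilde f_k,x,\rho): x\in D_{1/5},\ 0<\rho\leq 2/5\Big\}\leq\Lambda_0\,.$$
Now apply Theorem~\ref{asymptneum}: along a (not relabeled) subsequence $\widetilde u_k\to u_*$ strongly in $H^1_{\rm loc}(B_1^+\cup D_1,|z|^a\de{\bf x})$ and $\widetilde f_k\rightharpoonup f_*$ weakly in $W^{1,q}(D_1)$, where $(u_*,f_*)$ satisfies \eqref{startequsurf}--\eqref{critnonlocexteq} on $B_1^+$ with $b=1$ (the limit obeys $|u_*|\leq b$, and in fact $=1$ on $D_1$ by the energy bound), with $\|f_*\|_{\dot W^{1,q}(D_1)}\leq r_0^{\theta_q}H_0$ and, by Fatou together with the density convergence in Theorem~\ref{asymptneum}, $\sup\{\boldsymbol{\Theta}_{u_*}(f_*,x,\rho):x\in D_{1/5},\ 0<\rho\leq 2/5\}\leq\Lambda_0$. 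Since ${\bf d}_j(\cdot,\cdot,\cdot)$ is continuous along $L^1(B_\cdot^+)$-convergence (Remark~\ref{trace}), we pass the two distance hypotheses to the limit: ${\bf d}_0(u_*,0,1)\leq\widetilde\eta_2$ and ${\bf d}_n(u_*,0,1)\geq\delta$. (Here one must be slightly careful with the exact radius: working at base scale $4\rho_k$ with the unit ball $B_1^+$, as in Lemma~\ref{lemme2strateps}, the conditions read at scale $1$, i.e.\ at scale $4\rho$ in the unrescaled picture, which matches the statement.)

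Finally, $(u_*,f_*)$ is precisely a pair to which Lemma~\ref{condiistrat} applies at the point $0$, scale $\rho=1/4$ (so $4\rho=1$), with $\tau$ fixed and $H_0$ replaced by $r_0^{\theta_q}H_0$: since ${\bf d}_0(u_*,0,1)\leq\widetilde\eta_2=\eta_2$ and ${\bf d}_n(u_*,0,1)\geq\delta$, there is a $V$ with ${\rm dim}\,V\leq n-1$ such that ${\bf d}_0(u_*,y,1)>\eta_2$ for all $y\in D_{1/4}\setminus\mathscr{T}_{\tau/4}(V)$. On the other hand, ${\bf d}_0(\widetilde u_k,\cdot,1)\to{\bf d}_0(u_*,\cdot,1)$ uniformly on the compact set $\overline D_{1/4}\setminus\mathscr{T}_{\tau/4}(V)$ (again by the strong $L^1$-convergence and the uniform $\rho^{-n}$-Lipschitz bound on ${\bf d}_0$), so for $k$ large ${\bf d}_0(\widetilde u_k,y,1)>\widetilde\eta_2$ on that set, i.e.\ (unrescaling) ${\bf d}_0(u_k,y,4\rho_k)>\widetilde\eta_2$ for all $y\in D_{\rho_k}(x_k)\setminus\mathscr{T}_{\tau\rho_k}(x_k+V)$ with ${\rm dim}\,V\leq n-1$. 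This contradicts the choice of the sequence, and the lemma follows. The only genuinely delicate point is the uniform passage of the distance functions to the limit on the exceptional region — i.e.\ ensuring that "the bad set shrinks" survives the limit — but this is handled by the Lipschitz continuity of ${\bf d}_0$ in the $L^1$-norm combined with the strong $H^1_{\rm loc}$ (hence strong $L^1$) convergence supplied by Theorem~\ref{asymptneum}; the rest is the by-now routine rescaling bookkeeping already done twice in this subsection.
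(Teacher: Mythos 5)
Your overall compactness-and-contradiction scheme is right, and it is indeed the same shape as the paper's proof. There is, however, one genuine and fatal error in the final step, plus one bookkeeping difference worth flagging.

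The fatal error is the invocation of Lemma~\ref{condiistrat}. That lemma requires, for some \emph{fixed} $j\in\{0,\ldots,n-1\}$, the two hypotheses $\mathbf{d}_j(u,x,4\rho)\leq\eta_2$ \emph{and} $\mathbf{d}_{j+1}(u,x,4\rho)\geq\delta$ — i.e., a gap of controlled size between two \emph{consecutive} distance functionals. From the hypotheses of Lemma~\ref{lemme3strateps} you only know $\mathbf{d}_0$ is small and $\mathbf{d}_n$ is large; since $\mathbf{d}_0\leq\mathbf{d}_1\leq\cdots\leq\mathbf{d}_n$ there must be a jump somewhere, but you cannot locate $j$, and even if $\mathbf{d}_j<\delta\leq\mathbf{d}_{j+1}$ you would still need $\mathbf{d}_j\leq\eta_2(\delta,\tau,\ldots)$, which does not follow. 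This is exactly the gap that Corollary~\ref{corstrat3surf} is designed to close: it iterates Lemma~\ref{condiistrat} over $j$ with a nested cascade of constants $\eta_{*,j}$ to turn the two "endpoint" hypotheses $\mathbf{d}_0\leq\eta_3$, $\mathbf{d}_n\geq\delta$ into the existence of a subspace $V$. The paper accordingly sets $\widetilde\eta_2=\tfrac12\eta_3(\delta,\tau,2/5,\ldots)$ and applies Corollary~\ref{corstrat3surf} — not Lemma~\ref{condiistrat} — to the limit pair $(u_*,f_*)$. Replacing your citation of Lemma~\ref{condiistrat} by Corollary~\ref{corstrat3surf} (and adjusting $\widetilde\eta_2$ accordingly) is essential.

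The secondary, non-fatal point concerns the rescaling factor. You rescale by $4\rho_k$, which puts the hypotheses at unit scale but forces you to apply the surface-level corollary with a different framework parameter, since the density bound and the "good" neighborhood must be verified on a disc containing all translates $B_1^+(y)$, $y\in D_{1/4}$. The paper rescales by $25\rho_k$, which is why $\rho_k$ is assumed smaller than $r_0/25$ and why the rescaled bound \eqref{1451jhu} holds on $D_{1/5}$ for radii up to $2/5$; Corollary~\ref{corstrat3surf} is then applied with framework parameter $2/5$ at scale $\rho=1/25$. Your factor would also work with the correct accounting, but it is one more place to slip; and if you keep the factor $4\rho_k$ you need to justify that the limit satisfies a density bound of the form \eqref{controldensit} on the disc you actually use. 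Your final passage to the limit via uniform $L^1$-convergence of $\mathbf{d}_0(\widetilde u_k,\cdot,1)$ on a compact set is legitimate (and a slightly different presentation from the paper's sequence argument, which instead halves the constant and extracts a single bad point $y_k$), but this part is not where the proof hinges.
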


\begin{proof}
We choose 
$$\widetilde\eta_2(\delta,\tau,r_0):=\frac{1}{2}\eta_3(\delta,\tau,2/5,r_0^{\theta_q}H_0,\Lambda_0,b,n,s,q)\,, $$
where $\eta_3$ is given by Corollary \ref{corstrat3surf}. We still argue by contradiction assuming that for some constants $\delta,\tau\in(0,1)$, there exist sequences $\{\eps_k\}_{k\in\mathbb{N}}\subset(0,1)$, $\{(u_k,f_k)\}_{k\in\mathbb{N}}$ satisfying \eqref{eqACagain}-\eqref{controlLinftyueps}-\eqref{controlcurveps}-\eqref{controldensiteps}, points $\{x_k\}_{k\in\mathbb{N}}\subset\Omega^{r_0}$, and radii $\{\rho_k\}_{k\in\mathbb{N}}\subset(0,r_0/25)$  such that 
$$\eps_k/\rho_k\leq 2^{-k}\,,\quad{\bf d}_0(u_k,x_k,4\rho_k) \leq\widetilde \eta_2\quad \text{and}\quad {\bf d}_{n}(u_k,x_k,4\rho_k)\geq \delta\,,$$
and such that the conclusion of the lemma fails. 

Once again, we rescale variables setting $\widetilde \eps_k:=\eps_k/(25\rho_k)$, $\widetilde u_k:=(u_k)_{x_k,25\rho_k}$, and $\widetilde f_k:=(25\rho_k)^{2s}(f_k)_{x_k,25\rho_k}$, so that \eqref{rescACstrat}, \eqref{contrrescACstrat}, \eqref{contrdensrescACstrat}, and \eqref{1451jhu} hold. Then we reproduce the proof of Lemma \ref{lemme2strateps} to find a (not relabeled) subsequence along which $(\widetilde u_k,\widetilde f_k)$ converges to some limiting pair $(u_*,f_*)$ solving \eqref{eqcone}-\eqref{statinB1}, and satisfying \eqref{contrrescACstrat}-\eqref{contrdensrescACstrat}-\eqref{peuplusdutotu}. In particular, $\widetilde u_k\to u_*$ strongly in $L^1(D_{1/5})$. As a consequence,  
$${\bf d}_0(u_*,0,4/25) \leq\widetilde \eta_2\quad\text{and}\quad {\bf d}_{n}(u_*,0,4/25)\geq \delta\,.$$ 
By Corollary \ref{corstrat3surf}, there exists a linear subspace $V\subset \R^n$, with ${\rm dim}\,V\leq n-1$, such that 
\begin{equation}\label{1825Yanhouse}
{\bf d}_0(u_*,y,4/25)> \eta_3 \quad \forall y\in D_{1/25}\setminus \mathscr{T}_{\tau /25}(V)\,.
\end{equation}
Since the conclusion of the lemma does not hold, we can find for each integer $k$ a point $y_k\in  D_{1/25}\setminus \mathscr{T}_{\tau /25}(V)$ such that 
${\bf d}_0(\widetilde u_k,y_k,4/25)\leq\widetilde\eta_2$.  Then extract a further subsequence such that $y_k\to y_*$ for some $y_*\in \overline D_{1/25}\setminus \mathscr{T}_{\tau /25}(V)$. Noticing that 
$$\|(u_*)_{y_*,1}-(\widetilde u_k)_{y_k,1} \|_{L^1(D_{4/25})} \leq \|(u_*)_{y_*,1}-(u_*)_{y_k,1} \|_{L^1(D_{4/25})}+ \|u_*-\widetilde u_k \|_{L^1(D_{1/5})}\,,$$
by continuity of translations in $L^1$, we have  $\|(u_*)_{y_*,1}-(\widetilde u_k)_{y_k,1} \|_{L^1(D_{4/25})}\to 0$.  Consequently, 
\begin{multline*}
{\bf d}_0(u_*,y_*,4/25)={\bf d}_0\big((u_*)_{y_*,1},0,4/25\big)\\
=\lim_{k\to\infty}{\bf d}_0\big((\widetilde u_k)_{y_k,1},0,4/25)=\lim_{k\to\infty}{\bf d}_0(\widetilde u_k,y_k,4/25) \,, 
\end{multline*}
and thus ${\bf d}_0(u_*,y_*,4/25)\leq \widetilde\eta_2$.  However \eqref{1825Yanhouse} yields  ${\bf d}_0(u_*,y_*,4/25)\geq \eta_3=2 \widetilde\eta_2$, contradiction.
\end{proof}

\begin{proof}[Proof of Theorem \ref{volesti}]
For $0<r\leq r_0$, we consider the set 
$$\mathcal{S}^\eps_{r_0,r}:=\bigg\{x\in\Omega^{r_0} : {\bf d}_n(u_\eps,x,\rho)\geq \widetilde\delta_0(r_0)\;\;\forall \,r\leq \rho\leq r_0\bigg\} \,,$$
where  $ \widetilde\delta_0(r_0)>0$ is given by Lemma \ref{lemme1strateps}. We fix the exponent $\alpha\in(0,1)$, and we set $\kappa_0:=1-\alpha\in(0,1)$. 

We will prove that there exist constants ${\bf k_*}={\bf k_*}(\kappa_0,r_0,H_0,\Lambda_0,W,b,n,s,q)\geq {\bf k}_0(r_0)$  
and $C=C(\kappa_0,r_0,H_0,\Lambda_0,W,b,n,s,q)$ such that  
\begin{equation}\label{estvolstarteps}
\mathscr{L}^n\big(\mathscr{T}_r(\mathcal{S}^\eps_{r_0,r})\big)\leq C r^{1-\kappa_0} \quad\forall r\in({\bf k}_*\eps,r_0)\,,
\end{equation}
where $ {\bf k}_0(r_0)$ is  given by Lemma  \ref{lemme1strateps}. Note that, since ${\bf k_*}\geq {\bf k}_0(r_0)$, we have 
$$\{|u_\eps|<1-\boldsymbol{\delta}_W\}\cap\Omega^{r_0}\subset \mathcal{S}^\eps_{r_0,r} \quad\forall r\in ({\bf k}_*\eps,r_0)\,,$$
by Lemma  \ref{lemme1strateps}. In other words, estimates \eqref{estvolstarteps} implies Theorem \ref{volesti}. 

Now the proof follows closely the arguments in \cite[proof of Theorem~2.2]{FMS} once adjusted to our setting, but  for the sake of clarity we partially reproduce it.  

We start fixing a number $\tau=\tau(\kappa_0,n)\in(0,1)$ such that $\tau^{\kappa_0/2}\leq 20^{-n}$. 
We consider the following constants according to Lemma \ref{lemme1strateps}, Lemma \ref{lemme2strateps}, and Lemma~\ref{lemme3strateps}:
\begin{enumerate}
\item[(i)] $\widetilde\eta_2:=\widetilde\eta_2\big(\widetilde\delta_0(r_0),\tau,r_0\big)$ and ${\bf k}_2:={\bf k}_2\big(\widetilde\delta_0(r_0),\tau,r_0\big)$; 
\item[(ii)] $\widetilde\eta_1:=\widetilde\eta_1\big(\widetilde\eta_2,r_0\big)$, $\widetilde\lambda_1:=\widetilde\lambda_1\big(\widetilde\eta_2,r_0\big)$, and ${\bf k}_1:={\bf k}_1\big(\widetilde\eta_2,r_0\big)$; 
\item[(iii)] ${\bf k}_3:=\max\{{\bf k}_0(r_0), {\bf k}_1, {\bf k}_2\}$.   
\end{enumerate}
Next we fix an integer $q_0\in\mathbb{N}$ such that $\tau^{q_0}\leq \widetilde\lambda_1$, and we set $M:=\lfloor q_0\Lambda_0/ \widetilde\eta_1\rfloor$ (the integer part of).  Set 
$p_0:=q_0+M+1$ and define 
$$\boldsymbol{\eps}_0:=\min\left\{1,\frac{r_0\tau^{p_0+1}}{25{\bf k}_3}\right\}\,,\quad {\bf k}_*:=\frac{{\bf k}_3}{\tau}\,.$$
Without loss of generality, we may assume that $\eps\in(0,\boldsymbol{\eps}_0)$ (since \eqref{estvolstarteps} is straightforward for $\eps\geq \boldsymbol{\eps}_0$). Let ${\bf k}_4={\bf k}_4(\eps)$ be defined by the relation $r_0\tau^{{\bf k}_4|\log\eps|}=25{\bf k}_3\eps$, and set 
$p_1=p_1(\eps):=\lfloor {\bf k}_4|\log\eps|\rfloor$ (the integer part of).  Note that our choice of $\boldsymbol{\eps}_0$ and ${\bf k}_*$ insures that $p_1\geq p_0+1$ and ${\bf k}_3\eps\leq\frac{r_0\tau^{p_1}}{25}\leq {\bf k}_*\eps$.
\vskip3pt

\noindent{\it Step 1. Reduction to $\tau$-adic radii.} We argue exactly as in \cite[Proof of Theorem 2.2, Step~1]{FMS} to show that  it suffices to prove \eqref{estvolstarteps} for each radius $r$ of the form  $r=\frac{r_0\tau^k}{25}$ for an integer $k$ satisfying $p_0\leq k\leq p_1$. 
\vskip3pt

\noindent{\it Step 2. Selection of good scales.} We fix an integer $k$ with $p_0\leq k\leq p_1$ and set $r=\frac{r_0\tau^k}{25}$.  
For an arbitrary $x\in\Omega^{r_0}$, we have 
\begin{multline*}
\sum_{l=q_0}^k \boldsymbol{\Theta}^\eps_{u_\eps}(f_\eps,x,4r_0\tau^{l})-\boldsymbol{\Theta}^\eps_{u_\eps}(f_\eps,x,4r_0\tau^{l+q_0})\\  
=\sum_{l=q_0}^k \sum_{i=l}^{l+q_0-1}\boldsymbol{\Theta}^\eps_{u_\eps}(f_\eps,x,4r_0\tau^{i})-\boldsymbol{\Theta}^\eps_{u_\eps}(f_\eps,x,4r_0\tau^{i+1})  \\
\leq q_0 \sum_{l=q_0}^{k+q_0-1}\boldsymbol{\Theta}^\eps_{u_\eps}(f_\eps,x,4r_0\tau^{l})-\boldsymbol{\Theta}^\eps_{u_\eps}(f_\eps,x,4r_0\tau^{l+1})\,,
\end{multline*}
and thus
$$\sum_{l=q_0}^k \boldsymbol{\Theta}^\eps_{u_\eps}(f_\eps,x,4r_0\tau^{l})-\boldsymbol{\Theta}^\eps_{u_\eps}(f_\eps,x,4r_0\tau^{l+q_0}) \leq q_0 \boldsymbol{\Theta}^\eps_{u_\eps}(f_\eps,x,4r_0\tau^{q_0}) \leq  q_0\Lambda_0\,. $$
Hence there exists a (possibly empty) subset $A(x)\subset\{q_0,\ldots,k\}$ with ${\rm Card}(A(x))\leq M$ such that  for every $l\in\{q_0,\ldots,k\}\setminus A(x)$, 
\begin{equation}\label{jhu1333}
\boldsymbol{\Theta}^\eps_{u_\eps}(f_\eps,x,4r_0\tau^{l})-\boldsymbol{\Theta}^\eps_{u_\eps}(f_\eps,x,4r_0\tau^{l+q_0})\leq \widetilde\eta_1\,.
\end{equation}
Next define $\mathfrak{A}:=\{A\subset\{q_0,\ldots,k\} : {\rm Card}(A)= M\}$, and set for $A\in\mathfrak{A}$,  
$$\mathcal{S}_A:= \bigg\{x\in\mathcal{S}^\eps_{r_0,r}:  \text{ \eqref{jhu1333} holds for each } l\in\{q_0,\ldots,k\}\setminus A\bigg\}\,.$$
By our previous discussion, we have $\mathcal{S}^\eps_{r_0,r}\subset \bigcup_{A\in\mathfrak{A}} \mathcal{S}_A$. 
In the next step, we shall prove that for any $A\in\mathfrak{A}$, 
\begin{equation}\label{Yanikoumouk}
\mathscr{L}^n\big(\mathscr{T}_r(\mathcal{S}_A)\big) \leq C r^{1-\kappa_0/2}\,.
\end{equation}
Since ${\rm Card}(\mathfrak{A})  \leq k^M\leq C|\log r|^M$, the conclusion follows from this estimate, i.e., 
$$\mathscr{L}^n\big( \mathscr{T}_r(\mathcal{S}^\eps_{r_0,r})\big) \leq\sum_{A\in\mathfrak{A}} \mathscr{L}^n\big(\mathscr{T}_r(\mathcal{S}_A)\big)\leq  C |\log r|^M  r^{1-\kappa_0/2}\leq C  r^{1-\kappa_0}\,,$$
for some constants $C=C(\kappa_0,r_0,H_0,\Lambda_0,W, {\rm diam}(\partial^0G),b,n,s,q)$. 
\vskip3pt

\noindent{\it Step 3. Proof of \eqref{Yanikoumouk}.} Again we follow \cite[Proof of Theorem 2.2, Step~3]{FMS}. We first consider a finite cover of $\mathscr{T}_{r_0\tau^{q_0}/25}(\mathcal{S}_A)$ made of discs $\{D_{r_0\tau^{q_0}}(x_{i,q_0})\}_{i\in I_{q_0}}$ with $x_{i,q_0}\in\mathcal{S}_A$, and 
$${\rm Card}(I_{q_0})\leq 5^n\tau^{-nq_0}r_0^{-n}({\rm diam}(\partial^0G)+1)^n\,. $$
We  argue now by iteration on the integer $j\in\{q_0+1,\ldots,k\}$, assuming that  we already have a cover $\{D_{r_0\tau^{j-1}}(x_{i,j-1})\}_{i\in I_{j-1}}$ of $\mathscr{T}_{r_0\tau^{j-1}/25}(\mathcal{S}_A)$ such that $x_{i,j-1}\in\mathcal{A}$. 
We select the next cover $\{D_{r_0\tau^{j}}(x_{i,j})\}_{i\in I_{j}}$ (still centered at points of $\mathcal{S}_A$) of $\mathscr{T}_{r_0\tau^{j}/25}(\mathcal{S}_A)$ according to the following two cases: $j-1\in A$ or $j-1\not\in A$. 

\noindent {\it Case 1)} If $j-1\in A$, then we proceed exactly as in  \cite[Proof of Theorem 2.2, Step~3, Case~(a)]{FMS} to produce the new cover $\{D_{r_0\tau^{j}}(x_{i,j})\}_{i\in I_{j}}$  in such a way that 
$${\rm Card}(I_{j})\leq 20^n{\rm Card}(I_{j-1})\tau^{-n}\,.  $$

\noindent {\it Case 2)} If $j-1\not\in A$, then \eqref{jhu1333} holds with $l=j-1$. By our choice of $q_0$ and Lemma~\ref{monotform}, we infer that 
\begin{multline*}
\boldsymbol{\Theta}^\eps_{u_\eps}(f_\eps,x_i,4r_0\tau^{j-1})-\boldsymbol{\Theta}^\eps_{u_\eps}(f_\eps,x_i,4r_0\widetilde\lambda_1\tau^{j-1})\\
\leq \boldsymbol{\Theta}^\eps_{u_\eps}(f_\eps,x_i,4r_0\tau^{j-1})-\boldsymbol{\Theta}^\eps_{u_\eps}(f_\eps,x_i,4r_0\tau^{j-1+q_0})\leq \widetilde\eta_1\quad\forall x\in\mathcal{S}_A\,.
\end{multline*}
Then Lemma \ref{lemme2strateps} yields ${\bf d}_0(u_\eps,x,4r_0\tau^{j-1}) \leq \widetilde\eta_2$ for every $x\in\mathcal{S}_A$. On the other hand, by the  definition of $\mathcal{S}_A$ we have ${\bf d}_n(u_\eps,x,4r_0\tau^{j-1}) \geq \widetilde\delta_0$ for every $x\in\mathcal{S}_A$. Applying Lemma~\ref{lemme3strateps} at each point $x_{i,j-1}$, we infer that for each $i\in I_{j-1}$, there is a linear subspace $V_i$, with ${\rm dim}\,V_i\leq n-1$, such that  $\mathcal{S}_A\cap D_{r_0\tau^{j-1}}(x_{i,j-1})\subset \mathscr{T}_{r_0\tau^j}(x_{i,j-1}+V_i)$. 
From this inclusion, we estimate for each $i\in I_{j-1}$, 
$$\mathscr{L}^n\bigg(\mathscr{T}_{r_0\tau^j}\big(\mathcal{S}_A\cap D_{r_0\tau^{j-1}}(x_{i,j-1})\big)\bigg)\leq 2^{n+1}\omega_{n-1}r^n_0\tau^{nj-n+1} \,.$$
By the covering lemma in \cite[Lemma 3.2]{FMS}), we can find a cover of $\mathscr{T}_{r_0\tau^j/25}(\mathcal{S}_A)$ by discs 
$\{D_{r_0\tau^{j}}(x_{i,j})\}_{i\in I_{j}}$ centered on $\mathcal{S}_A$  such that 
$$ {\rm Card}(I_{j})\leq 10^n\frac{2\omega_{n-1}}{\omega_n}{\rm Card}(I_{j-1})  \tau^{-(n-1)}\leq  20^n{\rm Card}(I_{j-1})  \tau^{-(n-1)} \,.$$
The iteration procedure stops at $j=k$, and it yields a cover $\{D_{r_0\tau^{k}}(x_{i,k})\}_{i\in I_{k}}$ of $\mathscr{T}_{r}(\mathcal{S}_A)$. Collecting the estimates from Case 1 and Case 2 (and using ${\rm Card}\,A=M$), we derive
\begin{multline*}
 {\rm Card}(I_{k})\leq   5^n\tau^{-nq_0}r_0^{-n}({\rm diam}(\partial^0G)+1)^n(20^n\tau^{-n})^M\left(20^n\tau^{-(n-1)}\right)^{k-q_0-M} \\
 \leq C\tau^{-k(n-1+\kappa_0/2)}\,,
 \end{multline*}
where $C$ depends on the announced parameters (recall that $\tau^{\kappa_0/2}\leq 20^{-n}$). Consequently, 
$$\mathscr{L}^n\big(\mathscr{T}_r(\mathcal{S}_A)\big) \leq\omega_n {\rm Card}(I_{k})r^n\leq C \tau^{k(1-\kappa_0/2)}\leq C r^{1-\kappa_0/2}\,, $$
and the proof is complete.  
\end{proof}

\begin{corollary}\label{corpotquant}
For  every $\alpha\in(0,1)$,  
$$\int_{\Omega^{2r_0}}W(u_\eps)\,\de x\leq C \eps^{\min(4s,\alpha)} \,,$$
for some constant $C=C(\alpha,r_0,\|f_\eps\|_{L^\infty(\partial^0G)},H_0,\Lambda_0,W,b,n,s,q)$.
\end{corollary}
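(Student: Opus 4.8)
The strategy is to combine the volume estimate of Theorem \ref{volesti} with the pointwise bound on the potential from Lemma \ref{potbddfint}, using a dyadic decomposition of $\Omega^{2r_0}$ according to the distance to the transition set. First I would fix $\alpha\in(0,1)$ and set $\Sigma_\eps:=\{|u_\eps|<1-\boldsymbol\delta_W\}\cap\Omega^{r_0}$. The domain of integration $\Omega^{2r_0}$ splits into the ``good'' region, where $\big||u_\eps|-1\big|\leq\boldsymbol\delta_W$ and the solution is nearly $\pm1$ on a full disc, and a small ``bad'' region near $\Sigma_\eps$. On the good region one wants the quadratic-in-$\eps^{2s}$ decay of $W(u_\eps)$; on the bad region one controls only the measure, together with the trivial bound $W(u_\eps)\leq\sup W(t)$ for $|t|\leq b$, which is a constant $C_{W,b}$ by (H1)–(H3).

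\textbf{Key steps.} \emph{Step 1.} For a point $x\in\Omega^{2r_0}$ with $\big||u_\eps(x,0)|-1\big|\leq\boldsymbol\delta_W$, let $d(x):=\mathrm{dist}(x,\Sigma_\eps)$; by continuity of $u_\eps$ (Theorem \ref{regint}/Remark \ref{remclear}) and a covering argument, if $d(x)>0$ then $\big||u_\eps|-1\big|\leq\boldsymbol\delta_W$ on $D_{d(x)/2}(x)$, so Lemma \ref{potbddfint} gives
$$W(u_\eps(x))\leq C_W\big(1+\|f_\eps\|_{L^\infty}\big)^2\big(1+b\big)^2\,\frac{\eps^{4s}}{(d(x)/2)^{4s}}\,.$$
\emph{Step 2.} Decompose $\Omega^{2r_0}$ as the union of $A_0:=\{x:d(x)\leq {\bf k}_*\eps\}=\mathscr T_{{\bf k}_*\eps}(\Sigma_\eps)\cap\Omega^{2r_0}$ and the dyadic shells $A_j:=\{x\in\Omega^{2r_0}: 2^{j-1}{\bf k}_*\eps< d(x)\leq 2^{j}{\bf k}_*\eps\}$ for $1\leq j\leq N$, where $N\sim\log_2(r_0/({\bf k}_*\eps))$, plus the remainder $A_\infty:=\{x\in\Omega^{2r_0}:d(x)>r_0\}$. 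On $A_0$ use $W(u_\eps)\leq C_{W,b}$ and Theorem \ref{volesti} (applied with radius $r={\bf k}_*\eps$, legitimate since the theorem is trivial at that scale or one absorbs it) to get $\mathscr L^n(A_0)\leq C(\eps)^{\alpha'}$ for a suitable exponent; on each $A_j$ with $j\geq1$ combine $\mathscr L^n(\mathscr T_{2^j{\bf k}_*\eps}(\Sigma_\eps))\leq C(2^j{\bf k}_*\eps)^{\alpha'}$ (from Theorem \ref{volesti}, valid since $2^j{\bf k}_*\eps\leq r_0$) with the Step 1 bound $W(u_\eps)\leq C\eps^{4s}(2^{j-1}{\bf k}_*\eps)^{-4s}$ to obtain
$$\int_{A_j}W(u_\eps)\,\de x\leq C\,\eps^{4s}\,(2^{j}{\bf k}_*\eps)^{-4s}\,(2^{j}{\bf k}_*\eps)^{\alpha'}=C'\,2^{j(\alpha'-4s)}\eps^{\alpha'}\,;$$
on $A_\infty$ the Step 1 bound with $d(x)>r_0$ gives $\int_{A_\infty}W(u_\eps)\le C\eps^{4s}$. \emph{Step 3.} Sum over $j$. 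Choosing $\alpha'\in(0,1)$ with $\alpha'<4s$ if $4s<1$ (resp.\ any $\alpha'<1$ if $4s\ge1$), the geometric series $\sum_j 2^{j(\alpha'-4s)}$ converges, yielding $\sum_{j\geq1}\int_{A_j}W(u_\eps)\leq C\eps^{\alpha'}$; together with the $A_0$ and $A_\infty$ contributions this gives $\int_{\Omega^{2r_0}}W(u_\eps)\,\de x\leq C\eps^{\min(4s,\alpha')}$. Since $\alpha'$ can be taken arbitrarily close to $\min(1,4s)$ from below, and the statement only asks for $\min(4s,\alpha)$ with $\alpha\in(0,1)$ arbitrary, one reindexes $\alpha'\rightsquigarrow\alpha$ to conclude.

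\textbf{Main obstacle.} The delicate point is the bookkeeping at the innermost scale: Theorem \ref{volesti} only controls $\mathscr L^n(\mathscr T_r(\Sigma_\eps))$ for $r\in({\bf k}_*\eps,r_0)$, so the contribution of $A_0$ (distance $\le{\bf k}_*\eps$) must be handled by taking $r\downarrow{\bf k}_*\eps$ and absorbing the constant $C_{W,b}$, giving $\mathscr L^n(A_0)\le C({\bf k}_*\eps)^{\alpha'}$ and hence $\int_{A_0}W\le C\eps^{\alpha'}$ — this is consistent because $\alpha'\le 4s$ forces $\eps^{\alpha'}\ge\eps^{4s}$ up to constants, so no loss occurs. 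One must also verify that ${\bf k}_*$, $r_0$, $\Lambda_0$, $H_0$ in the hypotheses of Theorem \ref{volesti} are the same constants available here, so that the final constant depends only on the advertised parameters $(\alpha,r_0,\|f_\eps\|_{L^\infty},H_0,\Lambda_0,W,b,n,s,q)$; this is immediate since Theorem \ref{volesti} and Lemma \ref{potbddfint} are both quoted under exactly the standing assumptions \eqref{controlLinftyueps}–\eqref{controldensiteps}. Everything else is the routine summation of a geometric series.
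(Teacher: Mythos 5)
Your overall strategy is the same as the paper's: a dyadic decomposition by distance to the transition set $\Sigma_\eps:=\{|u_\eps|<1-\boldsymbol\delta_W\}\cap\Omega^{r_0}$, the volume estimate of Theorem \ref{volesti} on each shell, and the pointwise decay of Lemma \ref{potbddfint} away from $\Sigma_\eps$. (The paper uses $\tau$-adic shells shrinking inward, you use $2$-adic shells growing outward, but this is cosmetic and simply flips the sign of the exponent in the geometric series.)

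However, as written your argument does not actually prove the statement when $4s<1$ and $\alpha\in(4s,1)$, where the corollary asserts the stronger bound $C_\alpha\,\eps^{4s}$. You explicitly restrict to $\alpha'<4s$ (in the case $4s<1$) so that $\sum_j 2^{j(\alpha'-4s)}$ converges, yielding $\int W\leq C_{\alpha'}\eps^{\alpha'}$. But $\eps^{\alpha'}$ with $\alpha'<4s$ is strictly weaker than $\eps^{4s}$, and the constant $C_{\alpha'}\sim(1-2^{\alpha'-4s})^{-1}$ blows up as $\alpha'\uparrow 4s$, so no amount of ``reindexing $\alpha'\rightsquigarrow\alpha$'' recovers the rate $\eps^{4s}$ for $\alpha\geq 4s$. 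The missing piece is the divergent case $\alpha'>4s$: there the finite sum must be estimated by its largest term,
$$\sum_{j=1}^{N}2^{j(\alpha'-4s)}\leq C\,2^{N(\alpha'-4s)}\leq C\,\eps^{-(\alpha'-4s)}\,,\qquad N\sim\log_2\!\big(r_0/({\bf k}_*\eps)\big)\,,$$
so that $\sum_{j\geq1}\int_{A_j}W\leq C\eps^{\alpha'}\cdot\eps^{4s-\alpha'}=C\eps^{4s}$. Combined with $\int_{A_0}W\leq C\eps^{\alpha'}\leq C\eps^{4s}$ (since $\alpha'>4s$) and $\int_{A_\infty}W\leq C\eps^{4s}$, this gives $\int W\leq C\eps^{4s}=C\eps^{\min(4s,\alpha')}$, which is exactly what the paper establishes (in its Step, by the inequality $\tau^{p_1(\eps)(\alpha-4s)}\leq C\eps^{\alpha-4s}$ applied when $\alpha<4s$ in the $\tau$-adic convention). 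Adding this partial-sum analysis for the divergent regime closes the gap; everything else in your write-up is sound.
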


\begin{proof} Without loss of generality, we may assume that $\alpha\not=4s$. 
We use the notation of the proof of Theorem  \ref{volesti}, and we assume (without loss of generality) that $\eps\in(0,\boldsymbol{\eps}_0)$.  
Let us set $\mathcal{V}_\eps:=\{|u_\eps|<1-\boldsymbol{\delta}_W\}$, and $\rho_k:=\frac{r_0\tau^{k}}{25}$ for $k\in\mathbb{N}$. Notice that 
$$\rho_{p_1(\eps)-1}\in({\bf k}_*\varepsilon,{\bf k}_*\tau^{-1}\eps)\,. $$
Hence, by Theorem  \ref{volesti}, we have 
\begin{equation}\label{olala2251}
\mathscr{L}^n\left(\mathscr{T}_{\rho_k}(\mathcal{V}_\eps\cap\Omega^{r_0}\right)\leq C \rho_k^{\alpha}\leq C \tau^{\alpha k}\quad\text{for $k=0,\ldots,p_1(\eps)-1$}\,,
\end{equation}
where the constant $C$ may depend on the announced parameters. In particular, 
\begin{equation}\label{toto2253}
\int_{\mathscr{T}_{\rho_{p_1(\eps)-1}}(\mathcal{V}_\eps\cap\Omega^{r_0})}W(u_\eps)\,\de x\leq C\|W\|_{L^\infty(-b,b)}\rho_{p_1(\eps)-1}^\alpha\leq C\eps^\alpha\,.
\end{equation}
On the other hand, by Lemma \ref{potbddfint}, we have 
\begin{equation}\label{titi2254}
W\big(u_\eps(x,0)\big)\leq \frac{C\eps^{4s}}{\big({\rm dist}(x,\mathcal{V}_\eps)\big)^{4s}}\leq \frac{C\eps^{4s}}{\big({\rm dist}(x,\mathcal{V}_\eps\cap\Omega^{r_0})\big)^{4s}} \quad\text{in $\Omega^{2r_0}\setminus \mathcal{V}_\eps$}\,.
\end{equation}
Writing $\mathscr{A}_k:=\big(\mathscr{T}_{\rho_{k-1}}(\mathcal{V}_\eps\cap\Omega^{r_0})\setminus\big(\mathscr{T}_{\rho_k}(\mathcal{V}_\eps\cap\Omega^{r_0})\big)$, 
we have
\begin{multline*}
\int_{\Omega^{2r_0}}W(u_\eps)\,\de x  = 
\int_{\Omega^{2r_0}\setminus\mathscr{T}_{\rho_0}(\mathcal{V}_\eps)}W(u_\eps)\,\de x+\int_{\mathscr{T}_{\rho_{p_1(\eps)-1}}(\mathcal{V}_\eps\cap\Omega^{r_0})\cap\Omega^{2r_0}}W(u_\eps)\,\de x \\
+\sum_{k=1}^{p_1(\eps)-1}\int_{\mathscr{A}_k\cap\Omega^{2r_0}}W(u_\eps)\,\de x\,,
 \end{multline*}
 We may now estimate by \eqref{olala2251}, \eqref{toto2253}, and \eqref{titi2254}, 
 \begin{equation}\label{tata2256}
\int_{\Omega^{2r_0}}W(u_\eps)\,\de x \leq C\left(\eps^{4s}+ \eps^\alpha+\eps^{4s}\sum_{k=1}^{p_1(\eps)-1}\tau^{k(\alpha -4s)}\right)\,.
\end{equation}
If $\alpha>4s$, then $\sum_{k\geq 1}\tau^{k(\alpha-4s)}<\infty$, and the result is proved. If $\alpha<4s$, then 
$$ \sum_{k=1}^{p_1(\eps)-1}\tau^{k(\alpha -4s)}\leq C\tau^{p_1(\eps)(\alpha-4s)}\leq C\eps^{\alpha-4s}\,.$$
Inserting this estimate in \eqref{tata2256} still yields the announced result. 
\end{proof}

\begin{corollary}\label{corpotquantbis}
For every $\bar p <1/2s$, 
$$\left\|W^\prime(u_\eps)\right\|_{L^{\bar p}(\Omega^{2r_0})}\leq C\eps^{2s} \,,$$
for some constant $C=C(\bar p,r_0,\|f_\eps\|_{L^\infty(\partial^0G)},H_0,\Lambda_0,W,b,n,s,q)$. 
\end{corollary}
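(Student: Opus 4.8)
The plan is to deduce Corollary \ref{corpotquantbis} directly from Corollary \ref{corpotquant} by combining the pointwise bound on $W'(u_\eps)$ furnished by Lemma \ref{potbddfint} with the quantitative volume estimate of Theorem \ref{volesti}, essentially reproducing the layer-cake argument used in the proof of Corollary \ref{corpotquant} but with a different integrand. First I would fix a radius $r_0>0$ and work throughout on the enlarged domain $\Omega^{2r_0}$, and I would use without further comment the notation and reductions of the proof of Theorem \ref{volesti}: the constant ${\bf k}_*$, the threshold $\boldsymbol{\eps}_0$, the $\tau$-adic radii $\rho_k=r_0\tau^k/25$, and the cutoff index $p_1(\eps)$, assuming (harmlessly, since the statement is trivial for $\eps\geq\boldsymbol{\eps}_0$) that $\eps\in(0,\boldsymbol{\eps}_0)$. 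Set $\mathcal V_\eps:=\{|u_\eps|<1-\boldsymbol{\delta}_W\}$.

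Next I would split $\Omega^{2r_0}$ into the ``bad'' tube $\mathscr{T}_{\rho_{p_1(\eps)-1}}(\mathcal V_\eps\cap\Omega^{r_0})$, its complement outside $\mathscr{T}_{\rho_0}(\mathcal V_\eps)$, and the dyadic annuli $\mathscr{A}_k:=\mathscr{T}_{\rho_{k-1}}(\mathcal V_\eps\cap\Omega^{r_0})\setminus\mathscr{T}_{\rho_k}(\mathcal V_\eps\cap\Omega^{r_0})$ for $k=1,\dots,p_1(\eps)-1$, exactly as in Corollary \ref{corpotquant}. On the bad tube I would use the crude bound $|W'(u_\eps)|\leq \|W'\|_{L^\infty(-b,b)}$ together with $\mathscr{L}^n\big(\mathscr{T}_{\rho_{p_1(\eps)-1}}(\mathcal V_\eps\cap\Omega^{r_0})\big)\leq C\rho_{p_1(\eps)-1}^{\alpha}\leq C\eps^\alpha$ from Theorem \ref{volesti}, choosing $\alpha$ close enough to $1$ so that $\eps^{\alpha/\bar p}$ is dominated by $\eps^{2s}$ once we raise to the power $1/\bar p$; since $\bar p<1/2s$ means $2s\bar p<1$, any $\alpha\in(2s\bar p,1)$ works. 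On the complement and on each annulus $\mathscr{A}_k$ I would use the pointwise decay $|W'(u_\eps(x,0))|\leq C\eps^{2s}/\operatorname{dist}(x,\mathcal V_\eps\cap\Omega^{r_0})^{2s}$ from Lemma \ref{potbddfint} (valid wherever $\big||u_\eps|-1\big|\leq\boldsymbol{\delta}_W$, i.e. off $\mathcal V_\eps$), so on $\mathscr{A}_k$ one has $|W'(u_\eps)|\leq C\eps^{2s}\rho_k^{-2s}$, and $\mathscr{L}^n(\mathscr{A}_k)\leq C\rho_{k-1}^\alpha\leq C\tau^{\alpha k}$.

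Then I would assemble the $L^{\bar p}$-norm:
\begin{multline*}
\int_{\Omega^{2r_0}}|W'(u_\eps)|^{\bar p}\,\de x\leq C\left(\eps^{2s\bar p}+\eps^\alpha+\eps^{2s\bar p}\sum_{k=1}^{p_1(\eps)-1}\tau^{k(\alpha-2s\bar p)}\right).
\end{multline*}
Since $\alpha>2s\bar p$ the geometric series converges, so the whole right-hand side is $\leq C\eps^{2s\bar p}$ (absorbing the $\eps^\alpha$ term, which is smaller because $\alpha\geq 2s\bar p$ could fail only at equality, and we chose $\alpha>2s\bar p$ strictly). Taking $\bar p$-th roots gives $\|W'(u_\eps)\|_{L^{\bar p}(\Omega^{2r_0})}\leq C\eps^{2s}$, as claimed. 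I do not anticipate a genuine obstacle here: the only point requiring a little care is bookkeeping the dependence of constants — the pointwise estimate of Lemma \ref{potbddfint} already encodes dependence on $\|f_\eps\|_{L^\infty}$, $b$ and $W$, and Theorem \ref{volesti} supplies the volume bound with constants depending on $\alpha, r_0, H_0, \Lambda_0, W, b, n, s, q$ and $\operatorname{diam}(\partial^0G)$, so the final constant depends on exactly the parameters listed in the statement. The mildly delicate step, if any, is making sure the ``bad tube'' contribution is controlled by $\eps^{2s}$ rather than only by $\eps^{\alpha/\bar p}$; this is precisely why one needs $\bar p<1/2s$ (strict), which gives room to pick $\alpha<1$ with $\alpha/\bar p>2s$.
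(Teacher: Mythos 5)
Your proof follows the paper's own argument for Corollary \ref{corpotquantbis} essentially step for step: the same choice $\alpha\in(2s\bar p,1)$, the same three-part decomposition into the bad tube $\mathscr{T}_{\rho_{p_1(\eps)-1}}(\mathcal V_\eps\cap\Omega^{r_0})$, the outer region, and the annuli $\mathscr{A}_k$, the same use of Lemma \ref{potbddfint} for the pointwise decay of $W'(u_\eps)$ and of the volume estimate \eqref{olala2251} from Theorem \ref{volesti}, and the same summation of a convergent geometric series in $\tau^{k(\alpha-2s\bar p)}$. This is correct and is exactly the paper's route, so nothing further is needed.
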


\begin{proof}
We proceed as in the proof Corollary \ref{corpotquant}, using $\alpha\in(2s\bar p,1)$. Keeping the same notations, we first derive as in \eqref{toto2253}, 
\begin{equation}\label{toto2253bis}
\int_{\mathscr{T}_{\rho_{p_1(\eps)-1}}(\mathcal{V}_\eps\cap\Omega^{r_0})}\big|W^\prime(u_\eps)\big|^{\bar p}\,\de x \leq C\eps^\alpha\,.
\end{equation}
Then Lemma \ref{potbddfint} yields, 
\begin{equation}\label{titi2254bis}
\big|W\big(u_\eps(x,0)\big)\big|\leq \frac{C\eps^{2s}}{\big({\rm dist}(x,\mathcal{V}_\eps\cap\Omega^{r_0})\big)^{2s}} \quad\text{in $\Omega^{2r_0}\setminus \mathcal{V}_\eps$}\,.
\end{equation}
Writing 
\begin{multline*}
\int_{\Omega^{2r_0}}\big|W^\prime (u_\eps)\big|^{\bar p}\,\de x  = 
\int_{\Omega^{2r_0}\setminus\mathscr{T}_{\rho_0}(\mathcal{V}_\eps)}\big|W^\prime(u_\eps)\big|^{\bar p}\,\de x\\
+\int_{\mathscr{T}_{\rho_{p_1(\eps)-1}}(\mathcal{V}_\eps\cap\Omega^{r_0})\cap\Omega^{2r_0}}\big|W^\prime(u_\eps)\big|^{\bar p}\,\de x 
+\sum_{k=1}^{p_1(\eps)-1}\int_{\mathscr{A}_k\cap\Omega^{2r_0}}\big|W^\prime(u_\eps)\big|^{\bar p}\,\de x\,,
 \end{multline*}
 we estimate by means of \eqref{olala2251}, \eqref{toto2253bis}, and \eqref{titi2254bis}, 
 $$\int_{\Omega^{2r_0}}\big|W^\prime (u_\eps)\big|^{\bar p}\,\de x \leq  C\left(\eps^{2s\bar p}+ \eps^\alpha+\eps^{2s\bar p}\sum_{k=1}^{p_1(\eps)-1}\tau^{k(\alpha -2s\bar p)}\right)\leq C\eps^{2s\bar p}\,,$$
 and the proof is complete. 
\end{proof}


\subsection{Application to the fractional Allen-Cahn equation}


Applying the estimates obtained in the previous section to the fractional Allen-Cahn equation, we obtain the following improvement of Theorem \ref{main1}. Together with Theorem \ref{main1}, it completes the proof 
of Theorem \ref{main1new} in the special case $f=0$. 

\begin{theorem}\label{thmlastone}
In addition to Theorem \ref{main1}, if $\sup_k\|f_k\|_{L^\infty(\Omega)}<\infty$, then for every open subset $\Omega^\prime\subset \Omega$ such that $\overline{\Omega^\prime}\subset \Omega$, 
\begin{enumerate}
\item[\rm (i)]  $v_k\to v_*$ strongly in $H^{s^\prime}(\Omega^\prime)$ for every $s^\prime\in\big(0,\min(2s,1/2)\big)$;
\vskip5pt

\item[\rm (ii)] $\int_{\Omega^\prime}W(v_k)\,\de x=O\big(\eps_k^{\min(4s,\alpha)}\big)$ for every $\alpha\in(0,1)$;  
\vskip5pt

\item[\rm (iii)] $\displaystyle f_k(x)-\frac{1}{\eps_k^{2s}}W(v_k(x))\rightharpoonup\left( \frac{\gamma_{n,s}}{2}\int_{\R^n}\frac{|v_*(x)-v_*(y)|^2}{|x-y|^{n+2s}}\,\de y\right) v_*(x)$ weakly in $L^{\bar p}(\Omega^\prime)$ 
\vskip3pt

\noindent for every $\bar p<1/2s$.  
\end{enumerate}
\end{theorem}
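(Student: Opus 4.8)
The plan is to transfer the quantitative estimates of Subsection \ref{subsectstratAC} (Theorem \ref{volesti}, Corollary \ref{corpotquant}, Corollary \ref{corpotquantbis}) from the degenerate boundary reaction setting to the fractional Allen-Cahn equation, exactly in the spirit of how Theorem \ref{main1} was deduced from Theorem \ref{asymptneum} in Section \ref{FGLasymp}. The key point is that the extensions $v_k^\e$ satisfy the degenerate Allen-Cahn equation \eqref{eqACagain} on any admissible $G$ with $\overline{\partial^0G}\subset\Omega$, that $\sup_k\|v_k^\e\|_{L^\infty}\leq b<\infty$ by \eqref{bdlinftyext} and Section \ref{FractAC}, that $\eps_k^{2s}\|f_k\|_{L^\infty(\partial^0G)}+\|f_k\|_{\dot W^{1,q}(\partial^0G)}\leq H_0<\infty$ by hypothesis, and that the densities $\boldsymbol{\Theta}^{\eps_k}_{v_k^\e}(f_k,\cdot,\cdot)$ are uniformly bounded on compact subsets of $\Omega$ by Lemma \ref{monotform} together with the uniform energy bound $\sup_k\mathcal{E}_{\eps_k}(v_k,\Omega)<\infty$ (which follows from $\sup_k\mathcal{F}_{\eps_k}(v_k,\Omega)<\infty$ as in the proof of Theorem \ref{main1}, Step 1). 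Thus assumptions \eqref{controlLinftyueps}--\eqref{controldensiteps} hold uniformly in $k$ on any fixed compact region, with constants independent of $k$.

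\textbf{Proof of (ii) and (iii).} First I would fix $\Omega'$ with $\overline{\Omega'}\subset\Omega$ and choose $r_0>0$ and an admissible bounded open set $G$ such that $\overline{\Omega'}\subset\Omega^{2r_0}\subset\Omega^{r_0}\subset\partial^0G$ and $\overline{\partial^0G}\subset\Omega$ (where $\Omega^{r}$ is defined in \eqref{defOmegar} relative to $G$). Applying Corollary \ref{corpotquant} to $u_k=v_k^\e$ on $G$ gives $\int_{\Omega'}W(v_k)\,\de x\leq \int_{\Omega^{2r_0}}W(v_k^\e)\,\de x\leq C\eps_k^{\min(4s,\alpha)}$ for every $\alpha\in(0,1)$, which is (ii); note the constant depends only on the uniform bounds above and on $\alpha,r_0,W,n,s,q$. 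Similarly, Corollary \ref{corpotquantbis} yields $\|\eps_k^{-2s}W^\prime(v_k)\|_{L^{\bar p}(\Omega')}\leq \|\eps_k^{-2s}W^\prime(v_k^\e)\|_{L^{\bar p}(\Omega^{2r_0})}\eps_k^{-2s}\cdot\eps_k^{2s}=C$, i.e. $\eps_k^{-2s}W^\prime(v_k)$ is bounded in $L^{\bar p}(\Omega')$ for every $\bar p<1/2s$. Hence a subsequence converges weakly in $L^{\bar p}(\Omega')$; since by item (iii) of Theorem \ref{main1} it already converges strongly in $H^{-s}(\Omega')$ to $\big(\tfrac{\gamma_{n,s}}{2}\int_{\R^n}|v_*(x)-v_*(y)|^2|x-y|^{-n-2s}\de y\big)v_*(x)$ (combined with $f_k\rightharpoonup f$; here $f=0$ when the hypothesis of Theorem \ref{thmlastone} on boundedness is combined with the $f=0$ case, but in general one carries the $f_k$ term), the weak $L^{\bar p}$ limit must be the same function by uniqueness of distributional limits. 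This gives (iii), and in particular the full sequence converges (not just a subsequence).

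\textbf{Proof of (i).} For the $H^{s'}_{\rm loc}$ strong convergence with $s'<\min(2s,1/2)$, I would combine the volume estimate with elliptic regularity in the bulk. The strategy: on $\Omega^{2r_0}\setminus\mathcal{V}_k$, where $\mathcal{V}_k:=\{|v_k|<1-\boldsymbol{\delta}_W\}$, Lemma \ref{potbddfint} and the $C^{1,\boldsymbol{\beta}_*}$ bounds give $|v_k-v_*|\leq C\eps_k^{2s}{\rm dist}(x,\mathcal{V}_k)^{-2s}$ and a comparable gradient bound, so on dyadic annuli $\mathscr{A}_k$ around $\mathcal{V}_k$ one controls the Gagliardo seminorm contribution; meanwhile Theorem \ref{volesti} bounds $\mathscr{L}^n(\mathscr{T}_r(\mathcal{V}_k\cap\Omega^{r_0}))\leq Cr^\alpha$ for $r\in({\bf k}_*\eps_k,r_0)$ for any $\alpha<1$. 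One writes $[v_k-v_*]^2_{H^{s'}(\Omega')}$, splits the double integral into the region where both points are far from $\mathcal{V}_k$ (controlled by the pointwise $C^1$ bound, giving $O(\eps_k^{\text{positive}})$ after integrating the annular estimate against $r^{\alpha}$-type volumes, using $4s-2\cdot 2s'>0$ i.e. $s'<2s$ and also the elementary constraint $s'<1/2$ ensuring the kernel is integrable against a Lipschitz difference), the region where one point is within $\mathscr{T}_{{\bf k}_*\eps_k}(\mathcal{V}_k)$ (whose measure is $O(\eps_k^\alpha)$ and where $|v_k-v_*|\leq 2b$, so this contributes $O(\eps_k^\alpha)$), and a tail interaction which is finite since $\Omega'$ has finite $2s'$-perimeter. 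I expect the bookkeeping of these dyadic sums — matching the exponent $\min(4s,\alpha)$ from the volume bound against the kernel singularity $|x-y|^{-n-2s'}$ and the threshold scale ${\bf k}_*\eps_k$ — to be the main technical obstacle, but it is precisely the computation already carried out in the proofs of Corollaries \ref{corpotquant} and \ref{corpotquantbis}, so the argument is structurally available. Finally, since $v_k\rightharpoonup v_*$ weakly in $H^{s'}(\Omega')$ is already known and $[v_k]_{H^{s'}(\Omega')}\to[v_*]_{H^{s'}(\Omega')}$ follows (or directly $[v_k-v_*]_{H^{s'}(\Omega')}\to0$), strong convergence in $H^{s'}(\Omega')$ follows.
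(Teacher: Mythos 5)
Your treatment of items (ii) and (iii) is essentially the paper's proof: apply Corollaries \ref{corpotquant} and \ref{corpotquantbis} to the extensions $v_k^\e$ on an admissible $G$ around $\Omega'$, and identify the weak $L^{\bar p}$ limit with the known strong $H^{-s}$ limit from Theorem~\ref{main1}(iii). This part is correct.

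For item (i), you take a genuinely different route from the paper. The paper does \emph{not} attempt a direct estimate of the Gagliardo seminorm near the transition set; instead it feeds the uniform $L^{\bar p}$ bound on $(-\Delta)^s v_k$ (which you correctly obtain from Corollary~\ref{corpotquantbis}) into Proposition~\ref{keypropimpr}, a general regularity statement based on Bessel potential $=$ Triebel--Lizorkin $\subset$ Besov $=$ Slobodeckij embeddings. Combined with the uniform $L^\infty$ bound, this yields a uniform $H^{s'}(\Omega'')$ bound (after interpolating the exponent $\bar p<2$ up to $2$ using $L^\infty$), and strong $H^{s''}$ convergence for $s''<s'$ follows by compactness plus uniqueness of the limit. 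Letting $s'$ range over $(0,\min(2s,1/2))$ and shrinking $\Omega''$ slightly gives the full claim. Your direct annular decomposition aims instead at the bilinear quantity $[v_k-v_*]_{H^{s'}}$, which introduces two difficulties you do not resolve: (a)~the cross terms where one point is near $\mathcal{V}_k$ (or $\partial E_*$) and the other far, and where $|x-y|$ is comparable to the distance to the interface, carry the jump and are precisely what makes the quantity comparable to the $2s'$-perimeter — the single-integral bookkeeping of Corollary~\ref{corpotquant}/\ref{corpotquantbis} does not transfer to this; (b)~you write ``$v_k\rightharpoonup v_*$ weakly in $H^{s'}(\Omega')$ is already known'', but for $s'>s$ this is exactly the uniform $H^{s'}$ bound you are in the process of establishing, so the logic is circular unless you instead aim (as the paper does) for a uniform a priori bound and then invoke compactness. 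Your constraint-tracking ($s'<2s$ so that $4s-2s'>0$, and $s'<1/2$) identifies the correct threshold, but the justification via ``kernel integrable against a Lipschitz difference'' is not the operative mechanism — the $s'<1/2$ constraint enters because characteristic functions of finite-$2s'$-perimeter sets must lie in $H^{s'}$, and in the paper's argument it emerges from $\bar p<\min(1/2s,2)$ together with $\bar s=s'/\bar p<s$. I would recommend replacing the direct Gagliardo-seminorm estimate with an invocation of Proposition~\ref{keypropimpr}, which is exactly the packaged form of the bound you are trying to reconstruct by hand.
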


\begin{proof}
The proof departs from the end of the proof of Theorem \ref{main1}. We apply the results of Subsection \ref{subsectstratAC} to the extended function $v_k^\e$. Then items (ii) and (iii) are straightforward consequences of Corollaries \ref{corpotquant} and \ref{corpotquantbis} (together with item (iii) in Theorem \ref{main1}). 

Let us now fix an open subset $\Omega^{\prime\prime}\subset \Omega^\prime$ with  Lipschitz boundary such that $\overline{\Omega^{\prime\prime}}\subset\Omega^\prime$. 
Since $s^\prime<2s$, we can find a number $\theta>\max(2,1/2s)$ such that $ \max(s,s^\prime)<1/\theta$. We set 
$\bar p:=1/(\theta s)<\min(1/2s,2)$, and $\bar s:=s^\prime/\bar p<s$. Since $\{f_k\}_{k\in\mathbb{N}}$ is assumed to be bounded in $L^\infty(\Omega)$, we infer from item (iii) that $\{(-\Delta)^sv_k\}_{k\in\mathbb{N}}$ remains bounded in $L^{\bar p}(\Omega^\prime)$.  On the other hand, we already proved that $\{v_k\}_{k\in\mathbb{N}}$ remains bounded in $L^\infty(\R^n)$. Hence Proposition~\ref{keypropimpr} shows that 
\begin{multline*}
\iint_{\Omega^{\prime\prime}\times\Omega^{\prime\prime}}\frac{|v_k(x)-v_k(y)|^{2}}{|x-y|^{n+2s^\prime}}\,\de x\de y\\
\leq 2^{2-\bar p}\|v_k\|^{2-\bar p}_{L^\infty(\R^n)} \iint_{\Omega^{\prime\prime}\times\Omega^{\prime\prime}}\frac{|v_k(x)-v_k(y)|^{\bar p}}{|x-y|^{n+2\bar s\bar p}}\,\de x\de y\leq C\,, 
\end{multline*}
for some constant $C$ independent of $k$. The  sequence $\{v_k\}_{k\in\mathbb{N}}$ is thus bounded in $H^{s^\prime}(\Omega^{\prime\prime})$. Finally, for an arbitrary $s^{\prime\prime}\in(0,s^\prime)$, the embedding $H^{s^{\prime\prime}}(\Omega^{\prime\prime})\subset H^{s^\prime}(\Omega^{\prime})$ is compact, and consequently $\{v_k\}_{k\in\mathbb{N}}$ is strongly relatively compact in 
$H^{s^{\prime\prime}}(\Omega^{\prime\prime})$ which proves (i). 
\end{proof}


\vskip15pt

 \noindent{\bf Aknowledgements.} 
Part of this article has been written while V.M. was visiting the Department of Mathematics at Johns Hopkins University.  
V.M. is supported by the {\sl Agence Nationale de la Recherche} through the projects  ANR-12-BS01-0014-01 and  ANR-14-CE25-0009-01.


\end{document}